\documentclass[12pt]{amsart}
\usepackage{xcolor}
\usepackage{cancel}
\usepackage{amsmath, amsthm, amssymb,esint,amscd}
\usepackage{fullpage}
\usepackage{enumitem}
\usepackage{algorithm}
\usepackage{bm}
\usepackage[bookmarks]{hyperref}
\usepackage{graphicx,amscd,amsfonts,amsopn,verbatim}
\usepackage{fullpage}
\usepackage[utf8]{inputenc}
\usepackage{yfonts}

\DeclareFontFamily{U}{mathb}{\hyphenchar\font45}
\DeclareFontShape{U}{mathb}{m}{n}{
<-6> mathb5 <6-7> mathb6 <7-8> mathb7
<8-9> mathb8 <9-10> mathb9
<10-12> mathb10 <12-> mathb12
}{}
\DeclareSymbolFont{mathb}{U}{mathb}{m}{n}
\DeclareMathSymbol{\llcurly}{\mathrel}{mathb}{"CE}
\DeclareMathSymbol{\ggcurly}{\mathrel}{mathb}{"CF}

\newtheorem{theorem}{Theorem}[section]
\newtheorem{lemma}[theorem]{Lemma}

\newtheorem{proposition}[theorem]{Proposition}
\newtheorem{definition}[theorem]{Definition}
\newtheorem{corollary}[theorem]{Corollary}

\theoremstyle{definition}
\newtheorem{remark}[theorem]{Remark}

\numberwithin{equation}{section}

\newcommand{\tu}{\tilde u}

\newcommand{\tv}{\tilde v}
\newcommand{\ta}{\tilde a}
\newcommand{\tp}{\tilde p}
\newcommand{\tX}{\tilde X}
\newcommand{\tY}{\tilde Y}
\newcommand{\tP}{\tilde P}
\newcommand{\hP}{\hat P}
\newcommand{\zX}{X^0}
\newcommand{\rA}{\mathring{A}}

\newcommand{\bu}{{\mathbf u}}
\renewcommand{\H}{\mathcal H}
\newcommand{\M}{{\mathfrak M}}
\newcommand{\tM}{\tilde{\mathfrak M}}

\newcommand{\CC}{\mathfrak{C}}

\newcommand{\DCC}{\mathfrak{DC}}

\newcommand{\D}{\partial}

\newcommand{\g}{{\bf \gamma}}

\renewcommand{\u}{{\bf u}}

\newcommand{\R}{{\mathbb R}}




\newcommand{\LL}{{\mathcal L}}

\newcommand{\tg}{{\tilde g}}
\newcommand{\hg}{{\hat g}} 
\newcommand{\tA}{{\tilde A}}
\newcommand{\tB}{{\tilde B}}
\newcommand{\tR}{{\tilde R}}
\renewcommand{\AA}{{\mathcal A}}
\newcommand{\BB}{{\mathcal B}}
\newcommand{\bB}{{\mathbf B}}
\newcommand{\PP}{{\mathfrak P}}
\newcommand{\DPP}{{\mathfrak{DP}}}

\newcommand{\AAs}{{\mathcal A^\sharp}}
\newcommand{\hc}{{\mathring c}}
\newcommand{\tc}{{\tilde c}}
\newcommand{\tb}{{\tilde b}}

\newcommand{\tq}{{\tilde q}}
\newcommand{\htc}{\hat{\tilde c}}
\newcommand{\hq}{{\mathring q}}
\newcommand{\ff}{\mathbf{f}}
\newcommand{\bv}{\mathbf{v}}
\newcommand{\bE}{\mathbf{E}}
\newcommand{\bw}{\mathbf{w}}

\newcommand{\half}{ \frac{1}{2}}
\newcommand{\myvec}[1]{\ensuremath{\begin{pmatrix}#1\end{pmatrix}}}
\newcommand{\bD}{\langle D_x \rangle}
\newcommand{\bapprox}{\overset{bal}{\approx}}
\newcommand{\iintT}{\int_0^T \!\!\!\! \int_{\R^n}}
\newcommand{\ag}{{\hat P}}



\begin{document}

\title{The time-like minimal surface equation in Minkowski space: low regularity solutions}

\author{Albert Ai}
\address{Department of Mathematics, University of Wisconsin, Madison}
\email{aai@math.wisc.edu}
\author{Mihaela Ifrim}
 \address{Department of Mathematics, University of Wisconsin, Madison}
\email{ifrim@wisconsin.edu}
\author{ Daniel Tataru}
\address{Department of Mathematics, University of California at Berkeley}
\email{tataru@math.berkeley.edu}

\begin{abstract}
  It has long been conjectured that for nonlinear wave equations which satisfy 
  a nonlinear form of the null condition, the low regularity well-posedness theory
  can be significantly improved compared to the sharp results of Smith-Tataru 
  for the generic case. The aim of this article is to prove the first result in this direction, namely for the time-like minimal surface equation in the Minkowski space-time. Further, our improvement is substantial, namely by $3/8$ derivatives in two space dimensions and by $1/4$ derivatives in higher dimensions.

\end{abstract}

\keywords{time-like minimal surface, low regularity, normal forms, nonlinear wave equations, paracontrolled distributions}
\subjclass[2020]{35L72, 35B65}
\setcounter{tocdepth}{1}
\maketitle

\tableofcontents

\section{Introduction}

The question of local well-posedness for nonlinear wave equations
with rough initial data is a fundamental question in the the study 
of nonlinear waves, and which has received a lot of attention over the years.
The result of Smith and Tataru \cite{ST}, proved almost 20 years ago, 
provides the sharp regularity threshold for generic nonlinear wave equations in view of Lindblad's counterexample \cite{L-blow}. On the other hand, it has also been conjectured \cite{T:ICM} that for nonlinear wave equations which satisfy a suitable nonlinear null condition, the result of \cite{ST} can be improved, and the well-posedness threshold can be lowered. In this paper we provide the first result which proves the validity of this conjecture, for a representative equation in this class, namely the hyperbolic minimal surface equation.
Further, our improvement turns out to be substantial; precisely, we gain $3/8$ derivatives in two space dimensions and $1/4$ derivatives in higher
dimension. At this regularity level, the Lorentzian metric $g$ in our problem 
is no better that $C_{x,t}^{\frac14+} \cap L^2_t C_x^{\frac12+}$, 
($C_{x,t}^{\frac38+} \cap L^4_t C_x^{\frac12+}$ in $2d$)
far below anything studied before.

Most of the ideas introduced in this paper will likely extend to other nonlinear wave models, and open the way toward further progress in the study of low regularity 
solutions.

\subsection{The Minimal Surface Equation in Minkowski Space}

Let $n \geq 2$, and $\M^{n+2}$ be the $n+2$ dimensional Minkowski space-time.
A codimension one time-like submanifold $\Sigma \subset \M^{n+2}$ is called a minimal 
surface if it is locally a critical point for the area functional 
\[
\mathcal L  = \int_{\Sigma} \, dA,
\]
where the area element is measured relative to the Minkowski metric.
A standard way to think of this equation is by representing $\Sigma$ as a graph
over $\M^{n+1}$,
\[
\Sigma = \{ x_{n+1} = u (t,x_1,\cdot,x_n)\},
\]
where  $u$ is a real valued function 
\[
u: D \subset \M^{n+1} \rightarrow \R ,
\]
which satisfies the constraint
\begin{equation}\label{time-like}
u_t^2 < 1+ |\nabla_x u|^2,
\end{equation}
expressing the condition that its graph is a time-like surface in $\M^{n+2}$.

Then the surface area functional takes the form 
\begin{equation}\label{L}
\mathcal L(u) = \int \sqrt{1-u_t^2 +|\nabla_x u|^2}\ dx.
\end{equation}
Interpreting this as a Lagrangian, the minimal surface equation can be thought of as the  associated Euler-Lagrange equation, which takes the form
\begin{equation}
\label{minimal surface eq}
- \frac{\D}{\D t} \left( \frac{u_t}{\sqrt{1 - u_t^2 + |\nabla_x u|^2}}\right) + \sum_{i = 1}^n \frac{\D}{\D x_i}  \left( \frac{u_{x_i}}{\sqrt{1 - u_t^2 + |\nabla_x u|^2}}\right) = 0.
\end{equation}
Under the condition \eqref{time-like}, the above equation is a quasilinear wave equation. 

The left hand side of the last equation can be also interpreted as 
the mean curvature of the hypersurface $\Sigma$, and as such the minimal surface equation is alternatively described as the \emph{zero mean curvature flow}. 

In addition to the above geometric interpretation,
the minimal surface equation for time-like surfaces in the Minkowski space is also known as the Born-Infeld model in nonlinear electromagnetism \cite{Zwiebach}, as well as a model for evolution of branes in string theory \cite{Hoppe}.

On the mathematical side, the question of global existence for small, smooth and  localized initial data was considered in work of Lindblad~\cite{L-small}, Brendle~\cite{B-small}, Stefanov~\cite{S-small} and Wong~\cite{MR3730736}. The stability 
of a nonflat steady solution, called the catenoid, was studied in \cite{KL,DKS}. Some blow-up scenarios due to failure of immersivity
were investigated by Wong~\cite{MR3786768}.
Minimal surfaces have also been studied as singular limits of certain semilinear wave equations by Jerrard~\cite{Jerrard}. The local well-posedness question fits into the similar theory for the broader class of quasilinear 
wave equations, but there is also one  result which is specific to minimal surfaces, due to Ettinger~\cite{boris}; this is discussed later in the paper.

In our study of the minimal surface equation, the above way of representing it is less useful, and instead it is better to think of it in geometric terms. 
In particular the fact that the above Lagrangian \eqref{L} and the equation 
\eqref{minimal surface eq} are formulated relative to a background Minkowski
metric is absolutely non-essential; one may instead use any flat Lorentzian metric. This is no surprise since any two such metrics are equivalent 
via a linear transformation. Perhaps less obvious is the fact that 
the equations may be actually written in an identical fashion, independent of the background metric; see Remark~\ref{r:all-Lorentz} in Section~\ref{s:equations}.

For full details on the structure of the equation we refer the reader to Section~\ref{s:equations} of the paper, but here we review the most important facts.

The main geometric object is the metric  $g$ which is the trace of the Minkowski metric in $\M^{n+2}$ on $\Sigma$, and which, expressed in the $(t=x_0,x_1,\cdots, x_n)$ coordinates, has the form
\begin{equation}\label{metric-g}
g_{\alpha \beta} := m_{\alpha \beta} + \D_{\alpha} u \D_{\beta} u,
\end{equation}
where $m_{\alpha \beta}$ denotes the Minkowski metric with signature $(-1, 1, ..., 1)$ in $\M^{n+1}$. Since $\Sigma$ is time-like, this is also a Lorentzian metric.
This has determinant 
\begin{equation}\label{def-g}
g: = |\det (g^{\alpha \beta})| =1+ m^{\alpha \beta} \partial_\alpha u \,  \partial_\beta u   ,   
\end{equation}
and the dual metric is 
\begin{equation}\label{dual-metric-intro}
g^{\alpha \beta}  := m^{\alpha \beta} - 
\frac{m^{\alpha \gamma} m^{\beta\delta} \D_{\gamma} u \, \D_\delta u}{1 + m^{\mu\nu} \D_\mu u \, \D_\nu u}.
\end{equation}
Here, and later in the paper, we carefully avoid raising indices with respect to the  Minkowski metric. Instead, all raised indices in this paper will be with respect to the metric $g$.

Relative  to this metric, the equation \eqref{minimal surface eq} can be expressed in the form
\begin{equation}\label{msf}
\Box_g u = 0,    
\end{equation}
where $\Box_g$ is the covariant d'Alembertian, and which in this problem will 
be shown to have the simple expression
\begin{equation}
 \Box_g = g^{\alpha \beta} \partial_\alpha \partial_\beta.  
\end{equation}
An important role will also be played by the associated linearized equation,
which, as it turns out, may be easily expressed in divergence form as 
\begin{equation}
\partial_\alpha \hg^{\alpha \beta} \partial_\beta v = 0, \qquad   \hg^{\alpha \beta}:= g^{-\frac12} g^{\alpha \beta}.
\end{equation}

Our objective in this paper will be to study the local well-posedness of the 
associated Cauchy problem with initial data at $t = 0$, 
\begin{equation}\label{cp-msf}
\left\{ 
\begin{aligned}
& \Box_g u = 0,\\
& u(t=0) = u_0, \\
& u_t(t=0) = u_1,
\end{aligned}
\right.
\end{equation}
where the initial data $(u_0,u_1)$ is taken in classical Sobolev spaces,
\begin{equation}\label{data}
  u[0]:= (u_0,u_1) \in \H^s := H^s \times H^{s-1},
\end{equation}
and is subject to the constraint 
\begin{equation}\label{time-like-data}
u_1^2 - |\partial_x u_0|^2 < 1.
\end{equation}

 Here we use the following notation for the Cauchy data in \eqref{minimal surface eq} at time $t$,
\[
u[t]:= (u(t, \cdot), u_t(t, \cdot)).
\]
We aim to investigate the range of exponents $s$ for which local well-posedness holds, and significantly improve the lower bound for this range.

\subsection{Nonlinear wave equations}

The hyperbolic minimal surface equation \eqref{minimal surface eq} can be seen as a special case of more general scalar quasilinear wave equations,
which have the form
\begin{equation}\label{NLW-gen}
g^{\alpha \beta}(\partial u) \partial_\alpha \partial_\beta u = N(u,\partial u) ,
\end{equation}
where, again, $g^{\alpha\beta}$ is assumed to be Lorentzian, but without any further structural properties, and where $u$ may be a vector valued function. This generic equation will serve as a reference. 

As a starting point, we note that the equation \eqref{minimal surface eq} (and also \eqref{NLW-gen} if $N=0$) admits the scaling law 
\[
u(t,x) \to \lambda^{-1} u(\lambda t,\lambda x).
\]
This allows us to identify the  critical Sobolev exponent  as 
\[
s_c=\frac{n+2}{2}.
\]
Heuristically, $s_c$ serves as a universal threshold for local well-posedness,
i.e. we have to have $s > s_c$. Taking a naive view, one might think of trying
to reach the scaling exponent $s_c$. However, this is a quasilinear wave equation, and getting to $s_c$ has so far proved impossible in any problem of this type.

As a good threshold from above, one might start with the classical well-posedness result, due to Hughes, Kato, and Marsden~\cite{HKM}, and which asserts that local well-posedness holds for $s > s_c+1$. This applies to all
equations of the form \eqref{NLW-gen}, and can be proved solely by using energy estimates. These have  the form
\begin{equation}
\| u[t]\|_{\H^s} \lesssim e^{\int_0^t \| \partial^2 u(s)\|_{L^\infty} ds} \| u[0] \|_{\H^s}. 
\end{equation}

They may also be restated in terms of quasilinear energy functionals 
$E^s$ which have the following two properties:

\begin{enumerate}[label=(\alph*)]
    \item Coercivity,
\[
 E^s(u[t]) \approx \| u[t]\|_{\H^s}^2.
\]
\item Energy growth,
\begin{equation}\label{ee-clasic-diff}
\frac{d}{dt} E^s(u) \lesssim \| \partial^2 u\|_{L^\infty} \cdot  E^s(u).
\end{equation}
\end{enumerate}
To close the energy estimates, it then suffices to use Sobolev embeddings,
which allow one to bound the above $L^\infty$ norm, which we will refer to as a \emph{control parameter}, in terms of the $\H^s$
Sobolev norm provided that $s > \frac{n}{2}+2$, which is one derivative above scaling.

The reason a derivative is lost in the above analysis is that one 
would only need to bound $\|\partial^2 u\|_{L^1 L^\infty}$, whereas
the norm that is actually controlled is $\|\partial^2 u\|_{L^\infty L^\infty}$; this exactly accounts for the one derivative difference in scaling. It also suggests that the natural way to improve the classical result is to control the $L^p L^\infty$ norm directly. This is indeed 
possible in the context of the Strichartz estimates, which in dimension three
and higher give the bound
\[
\|\partial^2 u\|_{L^2 L^\infty} \lesssim \| u[0]\|_{\H^{\frac{n+3}{2}}}, 
\]
with another $\epsilon$ derivatives loss in three space dimensions. When true, such a bound yields well-posedness for $s > \frac{n+3}{2}$, which is $1/2$ derivatives above scaling. The numerology changes slightly in two space dimensions, where the best possible Strichartz estimate has the form 
\[
\|\partial^2 u\|_{L^4 L^\infty} \lesssim \|u[0]\|_{\H^{\frac{n}2+\frac{7}4}},
\]
 which is $3/4$ derivatives above scaling. 

The difficulty in using Strichartz estimates is that, while these are well known in the constant coefficient case \cite{GV,Keel-Tao} and even for smooth variable coefficients \cite{Kap,MSS}, that is not as simple in the case of rough coefficients. Indeed, as it turned out, the full Strichartz estimates 
are true for $C^2$ metrics, see \cite{Sm} ($n = 2,3$), \cite{T:nlw2} (all $n$), but not, in general, for $C^\sigma$ metrics when $\sigma <2$, see the counterexamples of \cite{SS-counter,ST-counter}. This difficulty was resolved in two stages:

\begin{enumerate}[label=(\roman*)]
\item \emph{Semiclassical time scales and Strichartz estimates with loss of derivatives.}   The idea here, 
which applies even for $C^\sigma$ metrics with $\sigma <2$, is that, associated to each dyadic frequency scale $2^k$,
there is a corresponding `` semiclassical " time scale $T_k = 2^{-\alpha k}$, with $\alpha$ dependent on $\sigma$, so that full Strichartz estimates hold at frequency $2^k$ on the  scale $T_k$. Strichartz estimates with loss of derivatives   are then obtained by summing up the short time 
estimates with respect to the time intervals, separately at each frequency.
This idea was independently introduced in \cite{BC} and \cite{T:nlw1},
and further refined in \cite{BC1} and \cite{T:nlw3}.

\item \emph{Wave packet coherence and parametrices.} The observation here 
is that in the study of nonlinear wave equations such as \eqref{NLW-gen},
in addition to Sobolev-type regularity for the metric, we have an additional piece of information, namely that the metric itself can be seen as a solution to a nonlinear wave equation. This idea was first introduced and partially exploited 
in \cite{KR}, but was brought to full fruition in \cite{ST}, where it was shown that almost loss-less Strichartz estimates hold for the solutions 
to \eqref{NLW-gen} at exactly the correct regularity level.
\end{enumerate}

The result in \cite{ST} represents the starting point of the present work,
and is concisely stated as follows\footnote{The primary result in \cite{ST} is
for the case when $g=g(u)$, but it directly carries over to equations 
of the form \eqref{NLW-gen}.  The result as stated below applies equally 
to both cases, but if $g=g(u)$ then $s_c$ is one unit lower.}:

\begin{theorem}[Smith-Tataru \cite{ST}]\label{t:ST-intro}
\eqref{NLW-gen} is locally well-posed in $\H^s$ provided that 
\begin{equation}
s> s_c+\frac34,   \qquad n = 2  ,
\end{equation}
respectively
\begin{equation}
s> s_c+\frac12,   \qquad n \geq 3.
\end{equation}
\end{theorem}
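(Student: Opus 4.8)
The plan is to reduce the theorem to an a~priori \emph{Strichartz estimate for the solution}, and then to prove that estimate by a frequency-localized semiclassical parametrix construction that exploits the structure of \eqref{NLW-gen} beyond the mere Sobolev regularity of its coefficients. I would run a bootstrap on a short interval $[0,T]$: assuming that the solution exists and that the control parameter $\|\partial u\|_{L^\infty} + \|\partial^2 u\|_{L^2_t L^\infty_x}$ (with $\|\partial^2 u\|_{L^4_t L^\infty_x}$ in place of the last term when $n=2$) stays comparable to its value at $t=0$, one shows this self-improves for $T$ small. Granting such control, the energy inequality \eqref{ee-clasic-diff} closes the $\H^s$ bound for $u$, since on $[0,T]$ we have $\|\partial^2 u\|_{L^1_t L^\infty_x} \lesssim T^{1/2}\|\partial^2 u\|_{L^2_t L^\infty_x}$ (resp. $T^{3/4}\|\partial^2 u\|_{L^4_t L^\infty_x}$), which is small. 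Two preliminaries: reduce to smooth data by approximation, and replace the quasilinear equation by its paradifferential form $T_{g^{\alpha\beta}(\partial u)}\,\partial_\alpha\partial_\beta u = (\text{perturbative})$, with the difference, the dyadic truncation errors, and all commutators absorbed by energy estimates under the bootstrap hypothesis. It is also convenient to track everything against a frequency envelope for $\|u[0]\|_{\H^s}$, which provides uniformity and, later, the continuity statements.

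Thus the entire problem reduces to the Strichartz estimate: the linear evolution associated to the metric $g(\partial u)$, applied to $u$ and its derivatives, should obey the \emph{same} $L^q_t L^\infty_x$ bounds as the flat wave, up to an arbitrarily small loss of derivatives (which, in flat space, is needed only when $n=3$). The relevant flat facts are $\|\partial^2 v\|_{L^2_t L^\infty_x} \lesssim \|v[0]\|_{\H^{s_c+1/2}}$ for $n\geq3$ and $\|\partial^2 v\|_{L^4_t L^\infty_x} \lesssim \|v[0]\|_{\H^{s_c+3/4}}$ for $n=2$; these are exactly the thresholds in the statement, so reproducing them for the variable metric closes the bootstrap. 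I would prove the variable-coefficient version by dyadic frequency localization $u = \sum_k u_k$ together with a decomposition of $[0,T]$ into \emph{frequency-dependent subintervals} of length $T_k$; on each such subinterval one regularizes the metric by truncating it below frequency $2^k$ (the truncation error handled perturbatively) and proves a \emph{lossless} Strichartz estimate for the regularized equation on $[0,T_k]$. Summing the $O(T/T_k)$ subintervals, and using energy estimates to carry the data bound from one to the next, produces a loss of order $(T/T_k)^{1/q}$ in the frequency-$2^k$ norm; for the aggregate loss over all $k$ to be at most $\epsilon$ derivatives, $T_k$ must be taken close to unit length — which is precisely what is false for a generic rough metric, and what has to be gained from the equation.

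Establishing the lossless Strichartz estimate on $[0,T_k]$ is the main obstacle, and is where the structure of \eqref{NLW-gen} — as opposed to a generic $C^{1/2+}$ metric, for which such estimates are known to fail — must be used. The strategy is to build a wave-packet (Fourier integral operator) parametrix for the regularized operator $\Box_{g_{<k}}$: its phase $\phi$ solves the eikonal equation $g_{<k}^{\alpha\beta}\,\partial_\alpha\phi\,\partial_\beta\phi = 0$, and the fixed-time $L^1 \to L^\infty$ dispersive bound for the parametrix — hence Strichartz via $TT^*$ and the Keel--Tao argument — reduces to a bound, uniform over $[0,T_k]$, on the geometry of the characteristic hypersurfaces $\{\phi = \mathrm{const}\}$, i.e.\ on their null second fundamental form. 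That quantity is governed by the curvature of $g$, which in turn is controlled by $\partial^2 u$; the crux — the ``wave-packet coherence'' of step~(ii) above — is that, because $u$ itself solves $\Box_g u = 0$, the relevant curvature components are far better behaved than the raw $C^{1/2+}$ regularity of the coefficients would suggest, so the characteristic surfaces stay coherent over a time scale $T_k$ close to unit length rather than collapsing. Quantifying this — through energy estimates for the eikonal function and for an adapted null frame, carried out at exactly the regularity that the Strichartz control provides, and keeping the parametrix error perturbative — is the technically heaviest ingredient, and it must be run inside the same bootstrap loop as the energy and Strichartz estimates. (For a $C^2$ metric this degenerates into the classical parametrix with $T_k\sim1$; the whole point is to recover essentially the same conclusion at the rough regularity level.)

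Finally, uniqueness and Lipschitz/continuous dependence follow by writing the equation for the difference of two solutions as a linear wave equation with the same paradifferential metric and running energy estimates at a lower regularity, now using the already-established Strichartz bounds to control the coefficients; combined with the frequency-envelope estimates this upgrades convergence of the smooth approximants to convergence in $C_t\H^s$, which completes the proof of well-posedness.
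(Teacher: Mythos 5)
The paper does not prove this theorem; it is cited from Smith--Tataru~\cite{ST}, and the introduction only sketches the two-stage strategy (semiclassical time scales, then wave-packet coherence and parametrices for $\Box_{g_{<k}}$ exploiting that $g$ itself satisfies a wave equation). Your proposal faithfully reconstructs exactly that program — bootstrap on $\|\partial^2 u\|_{L^q_t L^\infty_x}$, paradifferential reduction with frequency envelopes, frequency-localized Strichartz via an eikonal/FIO parametrix, and recovery of a near-unit coherence time from the equation's structure rather than from the raw $C^{1/2+}$ coefficient regularity — so it matches the paper's account of the cited proof.
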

As part of this result, almost loss-less Strichartz estimates were obtained both directly for the solution $u$, and more generally for the associated linearized evolution.
We will return to these estimates in Section~\ref{s:ST} for a more detailed statement and an in-depth discussion.

The optimality of this result, at least in dimension three, follows from work of Lindblad \cite{L-blow}, see also the  more recent two dimensional result in \cite{2d-counter}. However, this counterexample should only apply to ``generic'' models, and the local well-posedness threshold might possibly be improved in problems with additional structure, i.e. some form of null condition.

Moving forward, we recall that in \cite{T:ICM}, a null condition 
was formulated for quasilinear equations of the form \eqref{NLW-gen}. 

\begin{definition}[\cite{T:ICM}]\label{d:null}
The nonlinear wave equation \eqref{NLW-gen} satisfies the \emph{nonlinear null condition}
if
\begin{equation}\label{null}
   \frac{\partial g^{\alpha\beta}(u,p)}{\partial p_\gamma} \xi_\alpha \xi_\beta \xi_\gamma = 0 \qquad \text{in} \qquad 
g^{\alpha\beta}(u,p) \xi_\alpha \xi_\beta = 0.
\end{equation}
\end{definition}
Here we use the terminology ``nonlinear null condition" in order to distinguish it 
from the classical null condition, which is relative to the Minkowski metric, and was heavily used in the study of global well-posedness for problems with small localized data, see \cite{null} as well as the books \cite{Sogge,H-book}. In geometric terms, this null condition may be seen as a cancellation condition for the self interactions
of wave packets traveling along null geodesics. In Section~\ref{s:equations} we verify that the minimal surface equation indeed satisfies the nonlinear null condition.

Further, it was conjectured in \cite{T:ICM} that, for problems satisfying \eqref{null},
the local well-posedness threshold can be lowered below the one in \cite{ST}. This conjecture  
has remained fully open until now, though one should mention two results in \cite{KRS} and \cite{boris} for the Einstein equation, respectively the minimal surface equation, where the endpoint in Theorem~\ref{t:ST-intro} is reached but not crossed.

The present work provides the first positive result in this direction, specifically for the minimal surface equation. Indeed, not only are we able to lower the local well-posedness threshold in 
Theorem~\ref{t:ST-intro}, but in effect we obtain a substantial improvement, namely by $3/8$ derivatives in two space dimensions and by $1/4$ derivatives in higher dimension.

\subsection{The main result}

Our main result, stated in a succinct form,  is as follows:

\begin{theorem} \label{t:main}
The Cauchy problem for the minimal surface equation \eqref{cp-msf} is locally well-posed for initial data $u[0]$ in $\mathcal{H}^s$ which satisfies the constraint \eqref{time-like-data}, where
\begin{equation}\label{s-AIT2}
s> s_c+\frac38,   \qquad n = 2 , 
\end{equation}
respectively
\begin{equation}\label{s-AIT3}
s> s_c+\frac14,   \qquad n \geq 3.
\end{equation}
\end{theorem}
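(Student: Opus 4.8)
The plan is to prove local well-posedness via the now-classical scheme for quasilinear waves — (i) prove \emph{a priori} energy estimates for solutions in $\H^s$ controlled by a Strichartz-type control parameter; (ii) prove the corresponding estimates for the linearized equation $\partial_\alpha \hg^{\alpha\beta}\partial_\beta v=0$ in a lower norm; (iii) establish frequency-envelope / Strichartz bounds that allow one to bound the control parameter by the $\H^s$ norm of the data; (iv) assemble these via a standard iteration / continuity argument to obtain existence, uniqueness, and continuous dependence. The novelty lies entirely in (iii), and subordinately in (i)--(ii): the metric here is only $C_{x,t}^{\frac14+}\cap L^2_tC_x^{\frac12+}$ (resp.\ $C_{x,t}^{\frac38+}\cap L^4_tC_x^{\frac12+}$ in $2d$), well below the $C^2$ threshold where genuine Strichartz estimates hold, and even below the Smith--Tataru regularity. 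The gain must come from exploiting the nonlinear null condition \eqref{null}, which the minimal surface equation satisfies.

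Concretely, I would first normalize the equation into the geometric form $\Box_g u = 0$ with $g$ as in \eqref{metric-g}--\eqref{dual-metric-intro}, and set up the quasilinear energy functionals $E^s$ with the coercivity and growth properties recalled in the introduction, but where the growth bound \eqref{ee-clasic-diff} is replaced by one in which the control parameter is a spacetime Strichartz-type norm rather than $\|\partial^2 u\|_{L^\infty}$. The key mechanism replacing the missing Strichartz estimates is to implement a normal form / paradifferential renormalization: the null condition forces the worst self-interactions of the metric with the wave to be in \emph{divergence} form (cancellation along the characteristic cone $g^{\alpha\beta}\xi_\alpha\xi_\beta=0$), which can be removed by a bounded change of variables, a normal-form transformation of the type hinted at by the keyword ``normal forms'' and ``paracontrolled distributions.'' One then runs the Bahouri--Chemin / Tataru short-time Strichartz machinery (semiclassical time scales $T_k=2^{-\alpha k}$ together with wave-packet coherence à la Smith--Tataru) on the \emph{renormalized} equation, where the effective metric regularity is improved by the normal form, so that the losses in the short-time Strichartz estimates, when summed dyadically, are now affordable down to $s> s_c+\frac38$ (resp.\ $+\frac14$). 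The gains $3/8$ and $1/4$ should come out precisely as the difference between the Smith--Tataru losses and the improvement afforded by the null-structure cancellation.

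The main obstacle — and the technical heart of the paper — will be step (iii) at this very low regularity: constructing a wave-packet parametrix (or a suitable paradifferential/paracontrolled approximate flow) for $\Box_g$ when $g$ is only Hölder of exponent $<\frac12$, controlling the geometry of the characteristic foliation (null geodesics, density of null hyperplanes) with only this much regularity, and showing that the normal-form transformation is well-defined, invertible, and maps $\H^s$ to $\H^s$ boundedly — all while respecting the bootstrap structure in which the bounds on $g$ are themselves only what one gets from $u\in\H^s$. A secondary difficulty is proving the linearized estimates in a norm low enough (roughly $\H^{s_c-1+}$, i.e.\ one derivative below the energy space, as needed for the iteration) for the \emph{same} rough metric, and then upgrading uniqueness and continuous dependence from the weak linearized bounds back to the $\H^s$ level via frequency-envelope arguments. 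Once the renormalized Strichartz estimate $\|\partial^2 u\|_{L^p L^\infty}\lesssim \|u[0]\|_{\H^s}$ is in hand for the stated range of $s$, closing the energy estimates, the continuity argument, and the full well-posedness package (existence by regularization, uniqueness and stability by the linearized analysis, continuous dependence by Bona--Smith) is routine and I would treat it as such.
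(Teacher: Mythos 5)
Your outline of the overall scheme is right as far as it goes (energy estimates, linearized estimates, Strichartz control, iteration), and you correctly identify the null condition, normal forms, paracontrolled distributions, and Bahouri--Chemin/Tataru semiclassical interval summation as the main ingredients. But the conceptual architecture is inverted: the paper does \emph{not} improve the Strichartz estimates, and does not construct any new parametrix for the rough metric. You propose to renormalize the equation so that ``the effective metric regularity is improved by the normal form'' and then run the Smith--Tataru wave-packet machinery at this improved regularity, so that the dyadically-summed short-time Strichartz losses become affordable. This is not what happens and, as stated, would not work: the normal-form transformations in the paper leave the metric $g(\partial u)$ exactly as rough as it was, and no claim is made that the parametrix construction of Smith--Tataru extends below their threshold. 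The Strichartz estimates used are worse than Smith--Tataru's, not better --- they are obtained by a soft argument (apply \cite{ST} as a black box on semiclassical intervals $|I_h|\sim T_h$ and sum), and the resulting $S_{AIT}$ norms carry an \emph{additional} loss of $1/4$ ($1/8$ in $2d$) over $S_{ST}$.

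The actual gain comes entirely from the other side of the ledger: a new, \emph{balanced cubic} energy estimate $\frac{d}{dt}E^s\lesssim_{\AA}\BB^2 E^s$ with $\BB\sim\|\partial u\|_{B^{1/2}_{\infty,2}}$, i.e.\ the control parameter is half a derivative below the classical $\|\partial^2 u\|_{L^\infty}$. This is what lets the (lossy) Strichartz estimates close the argument at $s>s_c+1/4$ (resp.\ $+3/8$): one only needs $\BB\in L^2_t$ rather than $\partial^2 u\in L^1_tL^\infty_x$. The null condition enters not through any metric-regularity gain but through the construction of a paracontrolled ``para-Killing'' multiplier symbol $X$ (and its pseudodifferential extension $X_s$) whose deformation tensor is balanced; the multiplier method with this $X$ produces the balanced energies, the normal form for the full equation corrects $u$ so that it solves the paradifferential flow with a perturbative source, and a separate normal form handles the linearized flow in $\H^{1/2}$ (resp.\ $\H^{5/8}$). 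If you pursued your route, you would need to exhibit a transformation that genuinely raises the Hölder regularity of the metric below $C^{1/2}$ to something for which ST-type Strichartz estimates hold --- the null condition does not provide this, and no such transformation is known.
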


The result is valid regardless of the $\H^s$ size of the initial data. 
Here we interpret local well-posedness in a strong Hadamard sense, including:

\begin{itemize}
\item \emph{existence of solutions} in the class $u([\cdot]) \in C[0,T;\H^s]$,
with $T$ depending only on the $\H^s$ size of the initial data.
\item \emph{uniqueness of solutions}, in the sense that they are the  unique limits of smooth solutions.
\item \emph{higher regularity}, i.e. if in addition the initial data $u[0] \in \H^m$ with $m > s$, then the solution satisfies $u([\cdot]) \in C(0,T;\H^m)$, with a bound depending only on the $\H^m$ size of the data,
\[
\| u([\cdot])\|_{C(0,T;\H^m)} \lesssim \| u[0]\|_{\H^m}.
\]

\item \emph{continuous dependence} in $\H^s$, i.e. continuity of the 
the data  to solution map
\[
\H^s \ni u[0] \to u ([ \cdot ]) \in C[0,T;\H^s].
\]

\item \emph{weak Lipschitz dependence}, i.e. for two $\H^s$ solutions $u$ and $v$ we have the difference bound
\[
\| u([\cdot])-v([\cdot])\|_{C(0,T;\H^\frac12)} \lesssim 
\| u[0]-v[0]\|_{\H^\frac12}.
\]
\end{itemize}

In addition to the above components of the local well-posedness result,
a key intermediate role in the proof of the above theorem is played by 
the Strichartz estimates, not only for the solution $u$, but also, more
importantly, for the linearized problem 
\begin{equation}\label{cp-lin}
\left\{ 
\begin{aligned}
& \partial_\alpha \hg^{\alpha\beta} \partial_\beta v = 0,\\
& v(t=0) = v_0, \\
& v_t(t=0) = v_1,
\end{aligned}
\right.
\end{equation}
as well as its paradifferential counterpart
\begin{equation}\label{cp-Tlin}
\left\{ 
\begin{aligned}
& \partial_\alpha T_{\hg^{\alpha\beta}} \partial_\beta v  = 0,\\
& v(t=0) = v_0, \\
& v_t(t=0) = v_1,
\end{aligned}
\right.
\end{equation}
 Here the paraproducts are defined using the Weyl quantization, see Section~\ref{s:para} for more details.
For later reference, we state the Strichartz estimates in a separate theorem:

\begin{theorem}
Then there exists some $\delta_0 > 0$, depending on $s$ in \eqref{s-AIT2}, \eqref{s-AIT2} so that the following properties hold for every solution $u$ as in Theorem~\ref{t:main}: 

a) The solution $u$ in Theorem~\ref{t:main} satisfies the Strichartz estimates 
\begin{equation}
\begin{aligned}
\| \bD^{\frac12+\delta_0} \partial u \|_{L^4 L^\infty} \lesssim 1, \qquad n = 2,
\\
\| \bD^{\frac12+\delta_0} \partial u \|_{L^2 L^\infty} \lesssim 1, \qquad n \geq 3.
\end{aligned}
\end{equation}

b) Both the linearized equation \eqref{cp-lin} and its paradifferential version \eqref{cp-Tlin} are well-posed in $\H^{\frac58}$ for $(n=2)$
respectively $\H^\frac12$ for $n \geq 3$, and the following 
Strichartz estimates hold for each\footnote{Of course with an implicit constant which may depend on $\delta$.} $\delta > 0$: 
\begin{equation}\label{v-long2-intro}
\begin{aligned}
\|v\|_{L^\infty \H^{\frac58}}+
\| \bD^{-\frac{n}2-\frac14-\delta} \partial v \|_{L^4(0,1; L^\infty)}  \lesssim    & \ \| v[0]\|_{\H^\frac58}
\qquad n =2, 
\end{aligned}
\end{equation}
respectively
\begin{equation}\label{v-long3-intro}
\begin{aligned}
\|v\|_{L^\infty \H^{\frac12}} + \| \bD^{-\frac{n}{2}-\frac14-\delta} \partial v \|_{L^2(0,1; L^\infty)}  \lesssim   & \  \| v[0]\|_{\H^\frac12}
 \qquad n \geq 3, 
\end{aligned}
\end{equation}
\end{theorem}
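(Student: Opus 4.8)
\emph{Proof strategy.} The plan is to prove parts (a) and (b) together, inside the main bootstrap (continuity) argument for Theorem~\ref{t:main}, organized along a frequency envelope for the data $u[0]\in\H^s$. The first reduction is paradifferential: the full linearized operator $\partial_\alpha\hg^{\alpha\beta}\partial_\beta$ and the operator in \eqref{cp-lin} differ from the paradifferential operator $\partial_\alpha T_{\hg^{\alpha\beta}}\partial_\beta$ in \eqref{cp-Tlin} only through balanced (low--high and high--high) interactions of the coefficients with $v$; these are perturbative at the $\H^{\frac12}$, resp.\ $\H^{\frac58}$, level once the Strichartz bounds on $u$ from (a) are available, so it suffices to prove \eqref{v-long2-intro}, \eqref{v-long3-intro} for the paradifferential flow \eqref{cp-Tlin}. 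The $\H^{\frac12}$, resp.\ $\H^{\frac58}$, well-posedness of \eqref{cp-lin} and \eqref{cp-Tlin} then follows by combining the divergence-form energy functional associated to $\partial_\alpha T_{\hg^{\alpha\beta}}\partial_\beta$ with those same Strichartz bounds, which control the growth of the energy. Part (a) is not a formal consequence of (b) applied to $\partial u$, but is run in parallel by the same machinery, exploiting the fact that $u$ itself generates the metric.

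For \eqref{cp-Tlin} I would frequency localize, $v=\sum_k v_k$, and prove for each dyadic $2^k$ a frequency-localized Strichartz estimate on all of $[0,1]$; summation in $k$ against the frequency envelope yields \eqref{v-long2-intro}, \eqref{v-long3-intro}. The estimate at frequency $2^k$ is obtained on a partition of $[0,1]$ into subintervals of length $T_k=2^{-\alpha k}$ --- the semiclassical time scale --- on each of which the coefficients $T_{\hg^{\alpha\beta}}$ may be replaced by a regularization at frequency below $2^{\epsilon k}$, with the truncation and regularization errors absorbed into energy estimates. This is the heart of the matter: with only the generic regularity $g\in C^{\frac14+}_{x,t}\cap L^2_t C^{\frac12+}_x$ (resp.\ $C^{\frac38+}_{x,t}\cap L^4_t C^{\frac12+}_x$ in $2d$) one is stuck at the Smith--Tataru thresholds, so the improvement must come from extracting additional structure from $\Box_g u=0$. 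Here the nonlinear null condition, verified for the minimal surface equation in Section~\ref{s:equations}, is decisive: it forces the self-interaction of wave packets travelling along null geodesics to cancel, so the geometry of the characteristic (null) foliation of $g$ --- the object governing dispersion in the parametrix --- is better behaved than $g$ itself. I would make this quantitative through a normal-form transformation removing the non-resonant interactions together with a paracontrolled description of $\hg$: up to a smoother remainder, $\hg$ is paracontrolled by $\partial u$, and the wave equation for $u$ propagates the regularity of the good components along the characteristic foliation. This allows $\alpha$ to be taken strictly smaller than in the generic case, which is exactly what produces the gain of $\tfrac14$, resp.\ $\tfrac38$, derivatives.

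On each subinterval of length $T_k$ the short-time Strichartz estimate for the regularized paradifferential equation is proved by a wave packet parametrix: one builds an approximate solution as a superposition of wave packets transported along the Hamilton (bicharacteristic) flow of the regularized symbol --- equivalently along the regularized eikonal foliation --- and the required ingredients are (i) $L^2\to L^2$ boundedness of the wave packet transform and its adjoint, (ii) an $L^2\to L^2$ error bound showing the parametrix solves \eqref{cp-Tlin} up to an acceptable remainder on the time scale $T_k$, and (iii) the dispersive $L^1\to L^\infty$ bound, which reduces to a stationary-phase estimate for the overlap of curved tubes whose curvature is precisely the second fundamental form of the null hypersurfaces controlled in the previous step. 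Summing the $T_k$-localized Strichartz bounds over the $\sim T_k^{-1}$ subintervals of $[0,1]$ gives the frequency-$2^k$ estimate with a loss of $\sim\alpha$ derivatives relative to the free evolution; the $k$-summation against the frequency envelope then closes (b), and feeding this back together with the energy estimates closes the bootstrap and yields (a) and Theorem~\ref{t:main}. The main obstacle, where essentially all the new work lies, is step (iii) at this metric regularity: a naive perturbative treatment of $\hg$ fails by far more than the gain we seek, so the curved-tube dispersive estimate must be run through the full paracontrolled structure, using the null condition to see that the relevant curvature bounds survive in the anisotropic $L^p_t$-based spaces available here even though $g$ itself is only $C^{\frac14+}_{x,t}$ (resp.\ $C^{\frac38+}_{x,t}$).
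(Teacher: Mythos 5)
Your skeleton is right in outline — bootstrap on frequency envelopes, paradifferential reduction, semiclassical time scales $T_k$, summing short-time Strichartz bounds over intervals — but the mechanism you propose to fill it in is not what the paper does, and at one critical point it is the opposite of what the paper does.

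The decisive misreading is in your ``heart of the matter'' paragraph, where you claim the gain of $1/4$ (resp.\ $3/8$) derivatives comes from using the nonlinear null condition to make the null foliation of $g$ better behaved than $g$ itself, hence allowing a \emph{smaller} semiclassical exponent $\alpha$ and a \emph{sharper} dispersive estimate via an improved wave-packet parametrix. The paper emphatically does not do this. Section~\ref{s:ST} contains no new parametrix construction or dispersive estimate; it simply rescales and localizes the Smith--Tataru result (Theorems~\ref{t:ST}, \ref{t:ST-lin}) to obtain the same short-time Strichartz estimates on intervals $I_h$ of length $T_h = M_h^{-1/\sigma}$ for the large-data problem. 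When these are summed over the $\sim T_h^{-1}$ subintervals of $[0,1]$, the resulting long-time Strichartz estimates \eqref{Bdelta3}, \eqref{Bdelta2} have \emph{more} loss than Smith--Tataru's, not less — indeed the statement you are proving has a loss of $1/2+\delta_0$ on $\partial u$ rather than $\delta_0$ on $\partial^2 u$, and the paper explicitly says (end of the introduction) that it ``end[s] up losing essentially $1/8$ derivatives in two space dimensions, and $1/4$ derivatives in higher dimension.'' The gain in the well-posedness threshold comes entirely from the other side of the ledger: the balanced energy estimate $\dfrac{d}{dt}E^s\lesssim_\AA \BB^2 E^s$ with $\BB \sim \|\partial u\|_{B^{1/2}_{\infty,2}}$, i.e.\ half a derivative below the classical control parameter $\|\partial^2 u\|_{L^\infty}$. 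Because $\BB$ requires less regularity, lossier Strichartz estimates suffice to place $\BB\in L^2_t$ (resp.\ $L^4_t$). The null condition enters not in a dispersive estimate but in the construction of the paracontrolled multiplier — the ``para-Killing'' vector field $X\llcurly\partial u$ of Lemma~\ref{l:X} and its pseudodifferential upgrade $\tX_s$ in Proposition~\ref{p:Xs} — whose deformation tensor is balanced. This is the modified-energy / paradiagonalization route (Sections~\ref{s:control}--\ref{s:ee-full-eqn}), and it is what produces the theorem. Trying to prove a sharper short-time dispersive estimate for a $C^{1/4+}$ metric, even with null structure, is a much harder (and unproved, possibly false) project, and is not needed.

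A secondary but still substantial error is your opening claim that the linearized equation \eqref{cp-lin} differs from the paradifferential equation \eqref{cp-Tlin} ``only through balanced interactions'' that ``are perturbative.'' The paper explicitly rules this out at the start of Section~\ref{s:linearized}: the difference contains unbalanced terms, and the passage from \eqref{cp-lin} to \eqref{cp-Tlin} requires the normal-form renormalization $v\mapsto\tv$ of \eqref{lin-nf} (Proposition~\ref{p:pre-paradiff-eqn}, Theorem~\ref{t:linearized}), which exploits the specific paradifferential expansion \eqref{g-paradiff} of the metric coefficients, not merely a low--high/high--high classification.

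In short: you have correctly identified that one localizes in frequency and time and sums, and that a normal form and paracontrolled structure are involved, but you have located the payoff of the null structure in the wrong half of the argument. It lives in the energy flux, not in the dispersive kernel; the Strichartz side is imported wholesale from Smith--Tataru with no improvement (indeed a deliberate concession of loss), and the paradifferential reduction of the linearized equation needs a normal form rather than a perturbative bound.
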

We note that the Strichartz estimates in both parts (a) and (b) 
have derivative losses, namely $1/8$ derivatives in the $L^4 L^\infty$ bound in two dimensions, respectively $1/4$ derivatives in higher dimensions. These estimates only represent the tip of the iceberg.
One may also consider the inhomogeneous problem, 
allow source terms in dual Strichartz spaces, etc. These and other 
variations which play a role in this paper are discussed in Section~\ref{s:Strichartz}.

\bigskip
To understand the new ideas in the proof of our main theorem, we recall the 
two key elements of the proof of the result in \cite{ST}, namely (i)
the classical energy estimates \eqref{ee-clasic-diff} and (ii) the nearly lossless Strichartz estimates; at the time, the chief difficulty was 
to prove the Strichartz estimates.

In this paper we completely turn the tables, taking part (ii) above for 
granted, and instead work to improve the energy estimates. Let us begin with a simple observation, which is that the minimal surface equation \eqref{msf}  has a cubic nonlinearity, which allows one to 
replace \eqref{ee-clasic-diff} with 
\begin{equation}\label{ee-clasic-cubic}
\frac{d}{dt} E^s(u) \lesssim \| \partial u\|_{L^\infty} \| \partial^2 u\|_{L^\infty} \cdot  E^s(u).
\end{equation}
This is what one calls a \emph{cubic energy estimate}, which is useful in the study of long time solutions but does not yet help with the low regularity well-posedness question. The key to progress
lies in developing a much stronger form of this bound, which roughly has the form\footnote{See Section~\ref{s:not} for our Besov norm notations.}
\begin{equation}\label{ee-clasic-balanced}
\frac{d}{dt} E^s(u) \lesssim \| \partial u\|_{B^{\frac12}_{\infty,2}}^2  \cdot  E^s(u),
\end{equation}
where the two control norms on the right are now balanced, and only require 
$1/2$ derivative less than \eqref{ee-clasic-cubic}. 
This is what we call a \emph{balanced energy estimate}, which may only hold for a very carefully chosen energy functional $E^s$.

This is an idea which originates in our recent work on 2D water waves (see \cite{ai2019dimensional}), where balanced energy estimates are also used 
in order to substantially lower the low regularity well-posedness threshold.
Going back further, this has its roots in earlier work of the last two authors \cite{BH}, \cite{HIT}, in the context of trying to apply normal form methods in order to obtain long time well-posedness results in quasilinear
problems.  There we have introduced what we called the \emph{modified energy method}, which in a nutshell asserts that in quasilinear problems 
it is far better to modify the energies in a normal form fashion, rather than to transform the equation. It was the cubic energy estimates of \cite{HIT}  
which were later refined in \cite{ai2019dimensional} to balanced energy estimates. Along the way, we have also borrowed and adapted another idea from
Alazard and Delort \cite{AD,AD1}, which is to prepare the problem with a partial normal form transformation, and is a part of their broader concept of paradiagonalization; that same idea is also used here.

There are several major difficulties in the way of proving 
energy estimates such as \eqref{ee-clasic-balanced}:  
\begin{itemize}
\item The normal form structure is somewhat weaker in the case of the minimal surface equation, compared to water waves. As a consequence, we have to carefully understand which components of the equation can be improved 
with a normal form analysis and which cannot, and thus have to be estimated directly.

\item Not only are the energy functionals $E^s$ not explicit, they have 
to be constructed in a very delicate way, following a procedure which is reminiscent of Tao's renormalization idea in the context of wave-maps \cite{Tao-wm2d}, as well as the subsequent work \cite{T:wm2} of the third author  on the same problem. 

\item Keeping track of symbol regularities in our energy functionals
and in the proof of the energy estimates is also a difficult task. To succeed, here we adapt and refine a suitable notion of paracontrolled 
distributions, an idea which has already been used successfully in the realm of stochastic pde's \cite{GIP,KO}.

\item The balanced energy estimates need to be proved not only for the full equation, but also for the associated linear paradifferential equation, as a
key intermediate step, as well as for the full linearized flow. In 
particular, when  linearizing,  some of the favourable normal form structure (or null structure, to use the nonlinear wave equations language) is lost,
and the proofs become considerably more complex.

\end{itemize}

Finally, the Strichartz estimates of \cite{ST} cannot be used directly here.
Instead, we are able to reformulate them in a paradifferential fashion, and to apply them on appropriate semiclassical time scales. After interval summation, this leads to Strichartz estimates on the unit time scale but with derivative losses. Precisely, in our main Strichartz estimates, whose aim is to bound the control parameters in \eqref{ee-clasic-balanced}, we end up losing essentially $1/8$ derivatives in two space dimensions, and  $1/4$ derivatives in higher dimension. These losses eventually determine the regularity thresholds in our main result in Theorem~\ref{t:main}.

One consequence of these energy estimates is the following continuation result for the solutions:

\begin{theorem}\label{t:continuation}
The $\H^s$ solution $u$ given by Theorem~\ref{t:main} can be continued 
for as long as the following integral remains finite:
\begin{equation}
\int_0^T   \| \partial u(t)\|_{B^{\frac12}_{\infty,2}}^2 dt < \infty  .
\end{equation}
\end{theorem}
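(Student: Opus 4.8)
The plan is to deduce this continuation criterion from the balanced energy estimate \eqref{ee-clasic-balanced} by a standard bootstrap/Gr\"onwall argument, combined with the local well-posedness machinery of Theorem~\ref{t:main}. First I would observe that, by the higher regularity part of Theorem~\ref{t:main}, it suffices to prove an a priori bound: if $u$ is the $\H^s$ solution on $[0,T)$ and the integral $\int_0^T \|\partial u(t)\|_{B^{1/2}_{\infty,2}}^2\, dt$ is finite, then $\|u[t]\|_{\H^s}$ remains bounded as $t \uparrow T$, so that $u[T]$ is well-defined in $\H^s$ and one may restart the local existence theory from time $T$. Thus the core of the argument is the differential inequality
\begin{equation}
\frac{d}{dt} E^s(u(t)) \lesssim \|\partial u(t)\|_{B^{1/2}_{\infty,2}}^2 \cdot E^s(u(t)),
\end{equation}
which is exactly the balanced energy estimate established (for the carefully constructed functional $E^s$) in the body of the paper, together with the coercivity $E^s(u[t]) \approx \|u[t]\|_{\H^s}^2$.

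Granting this, the argument is short: set $f(t) = \|\partial u(t)\|_{B^{1/2}_{\infty,2}}^2$, which by hypothesis lies in $L^1(0,T)$, and apply Gr\"onwall's inequality to get
\begin{equation}
E^s(u(t)) \leq E^s(u[0]) \, \exp\Bigl( C \int_0^t f(\tau)\, d\tau \Bigr) \leq E^s(u[0])\, \exp\Bigl( C \int_0^T f(\tau)\, d\tau \Bigr),
\end{equation}
which is a finite bound uniform in $t \in [0,T)$. By coercivity, $\sup_{t \in [0,T)} \|u[t]\|_{\H^s} < \infty$. Standard arguments then show $u[t]$ is Cauchy in $\H^s$ as $t \uparrow T$ — for instance via the weak Lipschitz dependence bound applied to time-translates, upgraded to $\H^s$ by interpolation with the uniform $\H^s$ bound, or more directly by revisiting the construction of $u$ as a limit of smooth solutions and passing the uniform energy bound to the limit. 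Hence $u[T] \in \H^s$, and Theorem~\ref{t:main} applied with data $u[T]$ extends the solution past $T$; iterating, the solution persists as long as the integral is finite.

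The one point requiring a little care — and the only genuine obstacle — is that the energy functional $E^s$ is not a fixed quadratic form but is itself built out of the solution in a normal-form / paracontrolled fashion, so one must ensure that the construction of $E^s$, the coercivity estimate, and the balanced energy estimate are all valid on the \emph{entire} interval $[0,T)$ under only the finiteness of the control integral, rather than merely on a short time interval. Concretely, this means checking that every auxiliary control parameter appearing in the construction of $E^s$ and in the proof of \eqref{ee-clasic-balanced} (the symbol-regularity bounds, the paracontrolled-distribution norms, the Strichartz norms of $u$) is itself controlled by the $\H^s$ norm of $u$ together with $\int f\, d\tau$; this is precisely what the Strichartz estimates of the accompanying theorem, run on semiclassical time scales and summed, provide. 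Once one knows these remain finite on $[0,T)$, the Gr\"onwall step closes and the continuation follows. I would therefore present the proof as: (1) reduce to the a priori $\H^s$ bound via higher regularity; (2) invoke the balanced energy estimate and coercivity, noting their validity on $[0,T)$ under the stated hypothesis; (3) Gr\"onwall; (4) Cauchy-in-$\H^s$ as $t \uparrow T$ and restart.
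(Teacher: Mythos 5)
The paper records Theorem~\ref{t:continuation} without a written proof, presenting it as a direct corollary of the balanced energy estimates, so your proposal is supplying the argument the authors leave implicit. Your overall structure — reduce to an a~priori $\H^s$ bound on $[0,T)$ via the higher-regularity statement, invoke the balanced estimate $\tfrac{d}{dt}E^s \lesssim_{\AAs} \BB^2 E^s$ together with coercivity, run Gr\"onwall, then pass to $u[T]$ and restart — is exactly right, and you are also right to flag that the solution-dependence of $E^s$ is the one real issue.

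Where the proposal goes slightly astray is in the diagnosis of how that issue is resolved. The construction of $E^s$ and the estimate in Theorem~\ref{t:ee} require only that $\AAs$ be finite (the paper takes $\AAs \ll 1$ for convenience) and that $\BB \in L^2_t$; the Strichartz estimates on semiclassical time scales play no role there — they are used in the paper to \emph{bound} $\BB$ in terms of $\H^s$ data, whereas here $\BB\in L^2_t$ is the \emph{hypothesis}, so invoking them is a red herring. The genuine point is that the implicit constant in Gr\"onwall depends on $\AAs$, which in turn depends on the solution; since $s>\tfrac{n}{2}+1$, one has $\AAs \lesssim \|u[t]\|_{\H^s}$ by Sobolev embedding, so the differential inequality is really of the form $\tfrac{d}{dt}E^s \leq C\bigl(\sqrt{E^s}\bigr)\,\BB^2\, E^s$, a nonlinear Gr\"onwall. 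This does not close by a single application of Gr\"onwall as your displayed estimate suggests. Instead one should argue by a local bootstrap: fix $t_0<T$, use the finiteness of $\int_0^T \BB^2\,dt$ to choose $\delta>0$ with $\int_{t_0}^{t_0+\delta}\BB^2\,dt$ small relative to $\|u[t_0]\|_{\H^s}$, and show by a continuity argument that on $[t_0,t_0+\delta]$ the quantity $\|u[t]\|_{\H^s}$ at most doubles (hence $\AAs$ stays in a fixed range and the constant stays uniform); then cover $[0,T)$ by finitely many such intervals, obtaining a finite (though possibly iterated) bound. With that correction the remaining steps of your argument — continuity of $u[\cdot]$ up to $t=T$ and restart via Theorem~\ref{t:main} — are standard and close the proof.
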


\subsection{An outline of the paper} 

\subsubsection*{Paraproducts and paradifferential calculus}
The bulk of the paper is written in the language of paradifferential calculus. The 
notations and some of the basic product and paracommutator bounds are introduced in Section~\ref{s:not}. Importantly, we use the Weyl quantization throughout; this plays a substantial role 
as differences between quantizations are not always perturbative in our analysis. Also of note,
we emphasize the difference between balanced and unbalanced bounds, so some of our 
$\Psi$DO product or commutator expansions have the form
\[
\text{commutator} = \text{principal part} + \text{unbalanced lower order} + \text{balanced error}. 
\]

\subsubsection*{The geometric form of the minimal surface equation}
While the flat d'Alembertian may naively appear to play a role in the expansion  \eqref{minimal surface eq} of the minimal surface equation, this is not at all useful, and instead we need to adopt a geometric viewpoint. As a starting point, in Section~\ref{s:not} we consider several equivalent formulations of the minimal surface equation, leading to its geometric form in \eqref{msf}. This is based on the metric $g$ associated to the solution $u$ by \eqref{metric-g}, whose dual we also compute. Two other conformally equivalent metrics will also play a role. In the same section we derive the linearized equation, and also introduce the associated linear paradifferential flow.

\subsubsection*{Strichartz estimates}
As explained earlier, Strichartz estimates play a major role in our analysis. These are applied to several equations, namely the full 
evolution, the linear paradifferential evolution and finally the linearized equation; in the present paper, we view the bounds for the 
paradifferential equation as the core ones, and the other bounds
as derived bounds, though not necessarily in a directly perturbative fashion. The Strichartz estimates  admit a number of formulations:
in direct form for the homogeneous flow, in dual form for the inhomogeneous one, or in the full form. The aim of Section~\ref{s:Strichartz}
is to introduce all these forms of the Strichartz estimates, as well as to describe the relations between them, in the context of this paper.
A new idea here is to allow source terms which are time derivatives of
distributions in appropriate spaces; this is achieved by reinterpreting the wave equation as a system.

\subsubsection*{Control parameters in energy estimates}
We begin Section~\ref{s:control} by defining the control parameters
$\AA$ and $\BB$, which will play a fundamental role in our energy estimate. Here $\AA$
is a scale invariant norm, at the level of $\| \partial u\|_{L^\infty}$, which will remain small uniformly in time. $\BB$, on the other hand, is time dependent and at the level of $\||D_x|^\frac12 \partial u\|_{L^\infty}$, and will control the energy growth. Typically, our \emph{balanced cubic  energy estimates} will have the form 
\[
\frac{\partial E}{\partial t} \lesssim_{\AA} \BB^2 E.
\]
To propagate energy bounds we will need to know that $\BB \in L^2_t$.
Also in the same section we prove a number of core bounds for our solutions in terms of the control parameters.

\

\subsubsection*{The multiplier method and paracontrolled distributions}
Both the construction of our energies and the proof of the energy estimates are based on a paradifferential implementation of the multiplier method, which leads to space-time identities of the form
\[
\iintT \Box_g u \cdot X u \, dx  dt = \left. E_X(u) \right|_{0}^T + \iintT R(u) \, dx  dt
\]
in a paradifferential format, where the vector field $X$ is our multiplier and $E_X$ is its associated energy, while $R(u)$ is the energy flux term which will have to be estimated perturbatively. A fundamental difficulty is that the multiplier $X$, which should heuristically be at the regularity level of $\partial u$
cannot be chosen algebraically, and instead has to be constructed in an inductive manner relative to the dyadic frequency scales. In order to accurately quantify the regularity of $X$, in Section~\ref{s:paracontrol} we use and refine the notion of paracontrolled distributions; in a nutshell, 
while $X$ may not be chosen to be a function of $\partial u$, it will still have to be paracontrolled by $\partial u$, which we denote by $X \llcurly u$.

\subsubsection*{Energy estimates for the paradifferential equation}
The construction of the energy functionals is carried out in Section~\ref{s:para}, primarily at the level of the linear paradifferential equation, first in $\H$
and then in $\H^s$. In both cases there are two steps: first the construction of the symbol of the multiplier $X$, as a paracontrolled distribution, and then the proof of the energy estimates. 
The difference between the two cases is that $X$ is a vector field in the first case,
but a full pseudodifferential operator in the second case; because of this, we prefer to present the two arguments separately.

\subsubsection*{Energy estimates for the full equation}
The aim of Section~\ref{s:ee-full-eqn} is to prove that balanced cubic energy estimates hold for the full equation in all $\H^s$ spaces with $s \geq 1$. We do this by thinking about the full equation in a paradifferential form, i.e. as a linear paradifferential equation with a nonlinear source term, and then by applying a  normal form transformation to the unbalanced part of the source term.

\subsubsection*{Well-posedness  for the linearized equation}
The goal of Section~\ref{s:linearized} is to establish both energy and Strichartz estimates for $\H^\frac12$ solutions ($\H^{\frac58}$ in dimension two) to the linearized equation.
This is achieved under the assumption that both energy and Strichartz estimates for $\H^\frac12$ solutions ($\H^{\frac58}$ in dimension two) for the linear paradifferential equation hold. We remark that, while the energy estimates for the linear paradifferential equation have already been established by this point in the paper,
the corresponding Strichartz estimates have yet to be proved.

\subsubsection*{Short time Strichartz estimates for the full equation}
The local well-posedness result of Smith and Tataru~\cite{ST} yields well-posedness and nearly sharp Strichartz estimates on the unit time scale for initial data which is small in the appropriate Sobolev space. Our objective in Section~\ref{s:ST} is to recast this result as a short time result 
for a corresponding large data problem. This is a somewhat standard scaling/finite speed of propagation argument, though with an interesting twist due to the need to use homogeneous Sobolev norms.

\subsubsection*{Small vs. large $\H^s$ data}  In our main well-posedness
prof, in order to avoid more cumbersome notations and estimates,
it is convenient to work with initial data which is small in $\H^s$. 
This is not a major problem, as this is a nonlinear wave equation which exhibits finite speed of propagation. This allows us to reduce the large data problem 
to the small data problem by appropriate localizations. This argument is carried out at the beginning of Section~\ref{s:final}.

\subsubsection*{Rough solutions as limits of smooth solutions}
Our sequence of modules discussed so far comes together in 
Section~\ref{s:final}, where we finally obtain  our rough solutions $u$ as a limit of smooth solutions $u^h$ 
with initial data frequency localized below frequency $2^h$. 
The bulk of the proof is organized as a bootstrap argument, where the 
bootstrap quantities are uniform energy type bounds for both 
$u^h$ and for their increments $v^h = \dfrac{d}{dh} u^h$, which solve the corresponding linearized equation. The main steps are as follows:

\begin{itemize}
    \item we use the short time Strichartz estimates derived from \cite{ST} for $u^h$ and $v^h$ in order to obtain long time Strichartz estimates for $u^h$, which in turn implies energy estimates for both the full equation and the paradifferential equation, and closes one half of the bootstrap.

\item  we combine the short time Strichartz estimates and the long time 
energy estimates for the paradifferential equation in $\H^\frac12$
($\H^\frac58$ if $n=2$) to obtain long time Strichartz estimates for the same paradifferential equation.
    
\item we use the energy and Strichartz estimates for the paradifferential equation to obtain similar bounds for the linearized equation. This in turn implies long time energy estimates for $v^h$, closing the second half of the bootstrap loop.    
\end{itemize}

\subsubsection*{The well-posedness argument} Once we have a complete collection of energy estimates and Strichartz estimates for both the full equation 
and the linearized equation, we are able to use frequency envelopes in order 
to prove the remaining part of the well-posedness results, namely the strong convergence of the smooth solutions,  the continuous dependence, and the associated uniqueness property. In this we follow the strategy outlined
in the last two authors' expository paper \cite{IT-primer}.

\subsection{Acknowledgements}
The first author was supported by the Henry Luce Foundation. The second author was supported by a Luce Associate Professorship, by the Sloan Foundation, and by an NSF CAREER grant DMS-1845037. The third author was supported by the NSF grant DMS-2054975 as well as by a Simons Investigator grant from the Simons Foundation.  

This material is also based upon work supported by the National Science Foundation under Grant No. DMS-1928930 while all three authors participated in the program \textit{Mathematical problems in fluid dynamics} hosted by the Mathematical Sciences Research Institute in Berkeley, California, during the Spring 2021 semester.

\section{Notations, paraproducts and some commutator type bounds}
 \label{s:not}
 
 We begin with some standard notations and conventions:
 \begin{itemize}
\item The greek indices $\alpha,\beta,\gamma,\delta$ etc. in expressions range from $0$
to $n$, where $0$ stands for time. Roman indices $i, j$ are limited to the range from $1$ to $n$,
and are associated only to spatial coordinates.

\item The differentiation operators with respect to all coordinates are $\partial_\alpha$,
$\alpha = 0,..., n$. By $\partial$ without any index we denote the full space-time gradient.
To separate only spatial derivatives we use the notation $\partial_x$.

\item We consistently use the Einstein summation convention, where repeated indices 
are summed over, unless explicitly stated otherwise.   

\item The inequality sign $x \lesssim y$ means $x \leq Cy$ with a universal implicit constant $C$. If instead the implicit constant $C$ depends on some parameter $A$ then we write
instead $x \lesssim_A y$. 

\end{itemize}

\subsection{Littlewood-Paley decompositions and  Sobolev spaces} 

We denote the Fourier variables by $\xi_\alpha$ with $\alpha = 0,...,n$. To separate 
the spatial Fourier variables we use the notation $\xi'$.

\subsubsection{Littlewood-Paley decompositions}
For distributions in $\R^n$ we will use the standard inhomogeneous Littlewood-Paley decomposition
\[
u = \sum_{k=0}^\infty P_k u, 
\]
where $P_k = P_k (D_x)$ are multipliers with smooth symbols $p_k(\xi')$, localized in the dyadic frequency region  $\{|\xi| \approx 2^k\}$ (unless $k=0$, where we capture the entire unit ball). We emphasize that no such decompositions are used in the paper with respect to the time variable. We will also use the notations $P_{<k}$, $P_{>k}$ with the standard meaning.
Often we will use shorthand for the Littlewood-Paley pieces of $u$, such as
$u_k :=P_k u$ or $u_{<k}:= P_{<k} u$.

\subsubsection{Function spaces}
For our main evolution we will use inhomogeneous Sobolev space $H^s$,
often combined as product spaces $\H^s = H^s \times H^{s-1}$ for the
position/velocity components of our evolution. In the next to last section of the paper only we will have an auxiliary use for the corresponding homogeneous spaces $\dot H^s$, in connection with scaling analysis.

For our estimates we will use $L^\infty$ based control norms.
In addition to the standard $L^\infty$ norms, in many estimates we will use the standard inhomogeneous $BMO$ norm, as well as its close relatives $BMO^s$, with norm defined as 
\[
\| f\|_{BMO^s} = \| \bD^s f\|_{BMO}.
\]
We will also need several related $L^\infty$ based Besov
norms $B^{s}_{\infty,q}$, defined as 
\[
\|u\|_{B^{s}_{\infty,q}}^q = \sum_k 2^{pks} \|P_k u\|_{L^\infty}^p
\]
with the obvious changes if $p = \infty$. In particular 
the spaces $ B^{0}_{\infty,1}$ and $B^{\frac12}_{\infty,2}$
will be used for our control norms $\AA$ and $\BB$.

\subsubsection{Frequency envelopes} 

Throughout the paper we will use the notion of \emph{frequency envelopes}, introduced by Tao (see for example \cite{Tao-wm2d}), which is  a very useful device that tracks the evolution of the energy of solutions between dyadic energy shells.
\begin{definition}
We say that $\{c_k\}_{k\geq 0} \in \ell^2$ is a frequency envelope for a function $u$ in $H^s$ if we have the following two properties:

a) Energy bound:
\begin{equation}
\|P_k u\|_{H^s} \leq c_k, 
\end{equation}

b) Slowly varying
\begin{equation}
\frac{c_k}{c_j} \lesssim 2^{c|j-k|} , \quad j,k\in \mathbb{N}.
\end{equation}
\end{definition}
Here  $c$ is a positive constant, which is taken small enough in order to account for energy leakage between nearby frequencies.

One can also limit from above the size of a frequency envelope, 
by requiring that 
\[
\| u\|_{H^s}^2 \approx \sum c_k^2.
\]
Such frequency envelopes always exist, for instance one can define 
\[
c_k = \sup_j 2^{-\delta|j-k|} \|P_j u\|_{H^s}.
\]
The same notion can be applied to any Besov norms. In particular 
we will use it jointly for the Besov norms which define our control parameters 
$\AA$ and $\BB$.

 \subsection{Paraproducts and paradifferential operators} \label{s:para} \
  For multilinear analysis, we will consistently use paradifferential calculus, for which we refer the reader to \cite{Bony,Metivier}.
 
 We begin with the simplest bilinear expressions, namely products,
 for which we will use the Littlewood-Paley trichotomy 
 \[
 f\cdot g = T_f g + \Pi(f,g) + T_g f, 
 \]
 where the three terms capture the \emph{low$\times$high} frequency interactions,
 the \emph{high$\times$high} frequency interactions and the \emph{low$\times$high} frequency interactions. The paraproduct $T_f g$ might be heuristically thought of as the dyadic sum
 \[
 T_{f} g = \sum_{k} f_{<k-\kappa} g_k
 \]
 where the frequency gap $\kappa$ can be simply chosen as a universal parameter, say $k = 4$,
 or on occasion may be increased and used as a smallness parameter in a large data context.
 However, in our context a definition such as the above one is too imprecise, and 
the difference between usually equivalent choices is nonperturbative. Also, the symmetry properties of $T_f$ as an operator in $L^2$ are important in our energy estimates.
For this reason, we choose to work with the Weyl quantization, and we define 
\[
\mathcal F (T_f g)(\zeta) = \int_{\xi+\eta = \zeta}   \hat f(\eta) \chi\left( \frac{|\eta|}{\langle \xi+\frac12\eta\rangle}  \right)  \hat g(\xi) d\xi.
\]
Here $\chi$ is a smooth function supported in a small ball and which equals $1$ near the origin. With this convention, if $f$ is real then $T_f$ is an $L^2$
self-adjoint operator.

For paraproducts we have  a number of standard bounds which we list below, and we will refer to as Coifman-Meyer estimates:
\begin{equation}
\| T_f g\|_{L^p} \lesssim \| f\|_{L^\infty} \|g\|_{L^p},
\end{equation} 

 \begin{equation}
\| T_f g\|_{L^p} \lesssim \| f\|_{L^p} \|g\|_{BMO},  \end{equation}
 
\begin{equation}
\|\Pi(f,g)\|_{L^p} \lesssim \| f \|_{L^p} \| g\|_{BMO}  .  
\end{equation}
These hold for $1 < p < \infty$, but there are also endpoint results 
available roughly corresponding to $p = 1$ and $p = \infty$.
 
 Paraproducts may also be thought of as 
 belonging to the larger class of translation invariant bilinear operators.
 Such operators 
 \[
 f,g \to B(f,g)
 \]
 may be described by their symbols $b(\eta,\xi)$ in the Fourier space, by
 \[
 \mathcal F B(u,v)(\zeta) = \int_{\xi+\eta = \zeta} b(\eta,\xi) \hat f(\eta) \hat g(\xi) d \xi.
 \]
 A special class of such operators, which we denote by $L_{lh}$, will play 
 an important role later in the paper:
 
 \begin{definition}\label{d:Llh}
a) By $L_{lh}$ we denote translation invariant bilinear forms whose symbol
$\ell_{lh}(\eta,\xi)$ is supported in $\{|\eta| \ll |\xi|+1\}$ and satisfies 
bounds of the form
\[
|\partial^i_\eta \partial^j_\xi \ell_{lh}(\eta,\xi) | \lesssim \langle \xi \rangle^{-i-j}.
\]
\end{definition}

We remark that in particular the bilinear form $B(f,g) = T_f g$
is an operator of type $L_{lh}$, with symbol
\[
b(\eta,\xi) = \chi\left( \frac{|\eta|}{\langle \xi+\frac12\eta\rangle}  \right).
\]
Here the factor in the denominator $\xi+\eta/2$ is the average of the $g$ input frequency 
and the output frequency, and corresponds exactly to our use of the Weyl calculus. 
The $L^p$ bounds and the commutator estimates for such bilinear form mirror exactly the similar bounds for paraproducts.

 \subsection{Commutator and other paraproduct bounds} 
Here we collect a number of general paraproduct estimates, 
which are relatively standard. See for instance Appendix B of \cite{HIT} and Section 2 of \cite{ai2019dimensional} for proofs of the following estimates as well as further references.

We begin with the following standard commutator estimate:

\begin{lemma}[$P_k$ commutators]\label{l:para-com-pk}
 We have
\begin{equation}
\| [T_f,P_k] \|_{\dot H^{s} \to \dot H^{s}} \lesssim 2^{-k} \| \partial f\|_{L^\infty},
\end{equation}
also in $L^\infty$.
\end{lemma}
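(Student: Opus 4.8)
The plan is to prove the commutator bound $\| [T_f, P_k] \|_{\dot H^s \to \dot H^s} \lesssim 2^{-k} \|\partial f\|_{L^\infty}$ by a direct frequency-space analysis, exploiting the standard gain of one derivative in paraproduct commutators with Littlewood--Paley projectors. First I would reduce to the $L^2 \to L^2$ case: since $P_k$ is a Fourier multiplier it commutes with $\langle D_x\rangle^s$, and conjugating $[T_f, P_k]$ by $\langle D_x\rangle^s$ produces an operator of the same form up to symbols that are harmless on the relevant frequency supports (the low frequency $f_{<k-\kappa}$ contributes nothing to the weight, and the weight on the high-frequency input cancels against the weight on the output, modulo bounded multiplier errors). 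So it suffices to show $\| [T_f, P_k]\|_{L^2 \to L^2} \lesssim 2^{-k}\|\partial f\|_{L^\infty}$, and the $L^\infty \to L^\infty$ claim follows from the same computation.

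Next, using the Weyl-quantized definition of $T_f$, I would write both $T_f P_k g$ and $P_k T_f g$ on the Fourier side and subtract, obtaining a bilinear operator with symbol
\[
m(\eta,\xi) = \chi\!\left(\frac{|\eta|}{\langle \xi + \tfrac12 \eta\rangle}\right)\bigl(p_k(\xi) - p_k(\xi+\eta)\bigr),
\]
acting on $(\hat f(\eta), \hat g(\xi))$. The key point is that on the support of the cutoff $\chi$, we have $|\eta| \lesssim \langle \xi\rangle$ and, because $p_k$ is supported in $\{|\xi|\approx 2^k\}$, the difference is nontrivial only when $\xi$ is within $O(2^k)$ of the dyadic shell; there the mean value theorem gives $|p_k(\xi) - p_k(\xi+\eta)| \lesssim |\eta| \sup |\nabla p_k| \lesssim |\eta| 2^{-k}$, with analogous bounds on derivatives. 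Thus $m$ is, after extracting the factor $2^{-k}$ and writing $\eta \hat f(\eta) = \widehat{\partial f}(\eta)$ (up to the vector/scalar bookkeeping, handled componentwise), the symbol of an $L_{lh}$-type operator in the sense of Definition~\ref{d:Llh} applied to $(\partial f, g)$. The Coifman--Meyer / $L_{lh}$ $L^p$ bounds listed in the excerpt then give $\| [T_f,P_k] g\|_{L^2} \lesssim 2^{-k}\|\partial f\|_{L^\infty}\|g\|_{L^2}$, and likewise in $L^\infty$.

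Alternatively, and perhaps more cleanly, I would argue in physical space: decompose $[T_f,P_k] = \sum_j [f_{<j-\kappa} (\cdot)_j, P_k]$, note that only the terms with $|j-k| \lesssim 1$ survive (for $|j - k|$ large the two pieces $f_{<j-\kappa} g_j$ and $P_k(f_{<j-\kappa} g_j)$ each vanish or coincide), and on each such term write the commutator kernel as $\int K_k(x-y)(f_{<j}(y) - f_{<j}(x)) g_j(y)\, dy$ where $K_k$ is the kernel of $P_k$. Then $|f_{<j}(y) - f_{<j}(x)| \le \|\partial f\|_{L^\infty} |x-y|$, and $\int |K_k(x-y)||x-y|\, dy \lesssim 2^{-k}$ since $K_k(z) = 2^{nk}K(2^k z)$ with $K$ Schwartz; Schur's test (or Young's inequality, observing the resulting kernel has $L^1$ norm $\lesssim 2^{-k}$) closes both the $L^2$ and $L^\infty$ estimates.

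I expect the main obstacle to be purely bookkeeping rather than conceptual: carefully tracking the Weyl-quantization shift $\xi + \tfrac12\eta$ inside $\chi$ so that the support conditions $|\eta| \ll \langle\xi\rangle$ and $|\xi|\approx 2^k$ are used consistently, and making sure the reduction from $\dot H^s$ to $L^2$ does not secretly cost a derivative (one must check that replacing $p_k(\xi+\eta)\langle\xi+\eta\rangle^s$ by $p_k(\xi+\eta)\langle\xi\rangle^s$ only introduces a bounded multiplier error on the support where $|\eta| \ll \langle\xi\rangle$). None of this is serious, and the whole lemma is standard; the statement in the excerpt even points to Appendix B of \cite{HIT} and Section 2 of \cite{ai2019dimensional} for the details, so I would simply cite those after indicating the mean-value-theorem mechanism above.
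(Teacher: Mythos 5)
The paper does not give its own proof of this lemma; it simply cites Appendix~B of \cite{HIT} and Section~2 of \cite{ai2019dimensional}, so there is no internal argument to compare against. Your argument is correct, and either of your two routes works. The Fourier-side one is cleanest with the Weyl quantization: the commutator symbol is $\chi(|\eta|/\langle\xi+\tfrac12\eta\rangle)\bigl(p_k(\xi)-p_k(\xi+\eta)\bigr)$, and writing $p_k(\xi)-p_k(\xi+\eta)=-\sum_j\eta_j\int_0^1\partial_j p_k(\xi+t\eta)\,dt$ exhibits it as $2^{-k}\sum_j L_{lh}^j(\partial_j f,\cdot)$ with $2^k\ell^j$ a genuine $L_{lh}$ symbol (the $\chi$-factor and $\partial_j p_k$ give the required $\langle\xi\rangle^{-|\alpha|}\approx 2^{-k|\alpha|}$ symbol decay on the relevant support $|\xi|\approx 2^k$), after which the Coifman--Meyer bounds give the $L^2$ and $L^\infty$ estimates. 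One simplification over what you wrote: the $\dot H^s\to\dot H^s$ reduction is immediate and requires no multiplier-error argument, because $[T_f,P_k]$ is frequency-localized near $2^k$ on \emph{both} input and output (for $T_fP_k$ the input must lie in the shell, and for $P_kT_f$ the output shell together with $|\eta|\ll|\xi|$ forces the input into the shell as well), so $|D_x|^s$ acts as a constant $\approx 2^{sk}$ on both sides. For the physical-space variant, note that the Weyl-quantized $T_f$ is not literally $\sum_j f_{<j-\kappa}g_j$; the discrepancy is harmless (a smoothing operator with the same bound), but this is precisely the bookkeeping that the Fourier-side route avoids. Since the proof is standard, citing \cite{HIT,ai2019dimensional} as you suggest is entirely appropriate.
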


\

The following commutator-type estimates are either exact reproductions of, or closely follow, statements from Section 2 of \cite{ai2019dimensional}:

\begin{lemma}[Para-commutators]\label{l:para-com}
 Assume that $\gamma_1, \gamma_2 < 1$. Then we have
\begin{equation}
\| T_f T_g - T_g T_f \|_{\dot H^{s} \to \dot H^{s+\gamma_1+\gamma_2}} \lesssim 
\||D|^{\gamma_1}f \|_{BMO}\||D|^{\gamma_2}g\|_{BMO}.
\end{equation}
\end{lemma}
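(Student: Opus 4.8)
The plan is to prove the para-commutator bound
$\| T_f T_g - T_g T_f \|_{\dot H^{s} \to \dot H^{s+\gamma_1+\gamma_2}} \lesssim \||D|^{\gamma_1}f \|_{BMO}\||D|^{\gamma_2}g\|_{BMO}$
by a Littlewood--Paley decomposition of the two inputs $f$ and $g$, tracking the frequency interactions term by term. First I would write $T_f = \sum_k T_{f_k}$ and $T_g = \sum_\ell T_{g_\ell}$, where $f_k = P_k f$, $g_\ell = P_\ell g$. Since $T_{f_k}$ maps frequency $2^m$ to frequency $2^m$ modulo $O(1)$ shifts, and requires $2^k \ll 2^m$ for a nonzero contribution, the operator $T_{f_k} T_{g_\ell}$ acting on a frequency-$2^m$ packet is nonzero only when $2^k, 2^\ell \ll 2^m$. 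Crucially, in the difference $T_{f_k} T_{g_\ell} - T_{g_\ell} T_{f_k}$ the two leading (order-zero) symbols cancel, leaving a commutator which, by the Weyl symbol calculus, is of one order lower: schematically its symbol is $\{\text{symbol of } T_{f_k}, \text{symbol of } T_{g_\ell}\}$ plus higher-order corrections, which gains one derivative — i.e. a factor $2^{k+\ell-m}$ — relative to the naive product bound.

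The key estimate is thus a per-block bound of the form
\[
\| T_{f_k} T_{g_\ell} - T_{g_\ell} T_{f_k} \|_{\dot H^s \to \dot H^{s+\gamma_1+\gamma_2}} \lesssim 2^{(1-\gamma_1)k} 2^{(1-\gamma_2)\ell} 2^{-m_{kl}} \, 2^{-\gamma_1 k}\|f_k\|_{L^\infty} \cdot 2^{-\gamma_2\ell}\|g_\ell\|_{L^\infty},
\]
where $m_{kl} \geq \max(k,\ell)$ is the output frequency; more precisely, after localizing the output to frequency $2^m$, one gains a factor $2^{k+\ell-m}$ from the commutator structure and loses $2^{\gamma_1 m + \gamma_2 m}$ from the $\dot H^s \to \dot H^{s+\gamma_1+\gamma_2}$ requirement, so the net weight on the $(k,\ell,m)$ block is controlled by $2^{(\gamma_1+\gamma_2)(k+\ell-m)} \cdot 2^{(1-\gamma_1)\cdot 0}$-type gains — the point being that because $\gamma_1, \gamma_2 < 1$, the commutator gain of one full derivative beats the loss of $\gamma_1 + \gamma_2$ derivatives, leaving a geometrically convergent sum in $m - \max(k,\ell) \geq 0$. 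To organize this cleanly I would use the $P_k$-commutator bound of Lemma~\ref{l:para-com-pk} together with the standard Coifman--Meyer $L^p$ bounds for paraproducts, replacing $\|f_k\|_{L^\infty}$ by $2^{\gamma_1 k}\||D|^{-\gamma_1} f_k\|_{L^\infty}$ and then summing: $\sum_k 2^{\gamma_1 k} \||D|^{-\gamma_1} f_k\|_{L^\infty}$-type sums over the relevant range $k \lesssim m$ are what produces, after pairing with the Besov-to-BMO passage, the factors $\||D|^{\gamma_1} f\|_{BMO}$ and $\||D|^{\gamma_2} g\|_{BMO}$ on the right-hand side. The $BMO$ rather than $L^\infty$ norms appear precisely because we are summing $\ell^2$-type frequency sums of the blocks against a fixed $\dot H^s$ input, rather than taking a crude $\ell^1$ sum.

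The main obstacle I anticipate is getting the symbol calculus for the commutator correct in the Weyl quantization and at this low regularity. Specifically, one must verify that the leading symbols of $T_{f_k}$ and $T_{g_\ell}$ genuinely cancel in the commutator and that the remainder is governed by a Poisson-bracket-type expression with the claimed one-derivative gain — and this must be done with symbols that are only as regular as $f$, $g$, i.e. merely bounded (after the $|D|^{\gamma_i}$ normalization), not smooth. The cleanest route is probably not to invoke a general pseudodifferential expansion but to argue by hand: decompose $T_{f_k} T_{g_\ell} - T_{g_\ell} T_{f_k}$ using the explicit Weyl-quantized paraproduct kernels, exploit that the two inner frequencies $2^k, 2^\ell$ are both much smaller than the output frequency, and extract the cancellation directly from the difference of the two symbol products, with the $\chi$ cutoffs evaluated at nearby arguments contributing the derivative gain via a mean-value estimate. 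Once the per-block bound is in hand, the summation over $k, \ell, m$ is routine given $\gamma_1, \gamma_2 < 1$, and the passage to $BMO$ norms is the standard one (as in Section~2 of \cite{ai2019dimensional} and Appendix~B of \cite{HIT}).
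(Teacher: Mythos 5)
Your overall strategy (Littlewood--Paley decomposition of $f$ and $g$, a per-block commutator gain, summation using $\gamma_1,\gamma_2<1$, and an upgrade from $L^\infty$ to $BMO$) is the correct skeleton, and the paper itself cites this estimate to \cite{ai2019dimensional} without proof. However, your per-block commutator bound is wrong in both its claimed support and its claimed gain, and the error is not cosmetic: your version makes the summation diverge.

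You assert that $[T_{f_k},T_{g_\ell}]$ acting on a frequency-$2^m$ packet is nonzero only for $2^k,2^\ell\ll 2^m$ and gains a factor $2^{k+\ell-m}$. Both statements are incorrect. If $k,\ell<m-O(1)$, then the $\chi$ cutoffs defining $T_{f_k}$ and $T_{g_\ell}$ are identically $1$ at the relevant frequency triples, so both operators act as plain multiplication by $f_k$, $g_\ell$ on frequency-$2^m$ data and the commutator is \emph{exactly zero}. The commutator is supported only where $\max(k,\ell)=m+O(1)$ -- the opposite of what you wrote. In that regime, say $k=m+O(1)$, $\ell\le m$, the operator $T_{f_k}$ restricted to frequencies near $2^m$ is (up to the $f_k$ factor) a Fourier multiplier with symbol varying on scale $2^m$, while $T_{g_\ell}$ is multiplication by $g_\ell$; the standard multiplier-commutator estimate gives
\[
\bigl\| [T_{f_k},T_{g_\ell}] \bigr\|_{L^2\to L^2}\lesssim 2^{-m}\,\|\partial_x g_\ell\|_{L^\infty}\,\|f_k\|_{L^\infty}\lesssim 2^{\ell-m}\|f_k\|_{L^\infty}\|g_\ell\|_{L^\infty},
\]
i.e.\ the gain is $2^{\min(k,\ell)-m}$ (equivalently $2^{k+\ell-2m}$ on the support $\max(k,\ell)\approx m$), not $2^{k+\ell-m}$. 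Note your claimed factor $2^{k+\ell-m}\approx 2^{\min(k,\ell)}$ even exceeds $1$ when $\min(k,\ell)>0$, contradicting the trivial bound.

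To see that the discrepancy is fatal for the summation, insert $\|f_k\|_{L^\infty}\lesssim 2^{-\gamma_1 k}\||D|^{\gamma_1}f\|$, $\|g_\ell\|_{L^\infty}\lesssim 2^{-\gamma_2\ell}\||D|^{\gamma_2}g\|$. With your gain the block sum at fixed $m$ is
\[
\sum_{k,\ell\lesssim m}2^{k+\ell-m}\,2^{(\gamma_1+\gamma_2)m}\,2^{-\gamma_1k-\gamma_2\ell}
= 2^{(\gamma_1+\gamma_2-1)m}\Bigl(\sum_{k\lesssim m}2^{(1-\gamma_1)k}\Bigr)\Bigl(\sum_{\ell\lesssim m}2^{(1-\gamma_2)\ell}\Bigr)\sim 2^{m},
\]
which diverges as $m\to\infty$; the hypothesis $\gamma_1,\gamma_2<1$ does not help here. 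With the corrected support and gain, taking $k=m+O(1)$ and summing over $\ell\le m$ (and symmetrically with the roles swapped),
\[
\sum_{\ell\le m}2^{\ell-m}\,2^{(\gamma_1+\gamma_2)m}\,2^{-\gamma_1 m}2^{-\gamma_2\ell}
= \sum_{\ell\le m}2^{(1-\gamma_2)(\ell-m)}\lesssim 1,
\]
a convergent geometric series precisely because $\gamma_2<1$. This is where the hypothesis $\gamma_1,\gamma_2<1$ actually enters. So you must repair the per-block estimate (correct gain and support) before the rest of the argument -- the passage from $\ell^\infty$-Besov to $BMO$ via the Carleson/almost-orthogonality structure -- can be carried out as you sketch.
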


\begin{lemma}[Para-associativity]\label{l:para-assoc}
For $s + \gamma_2 \geq 0, s + \gamma_1 + \gamma_2  \geq 0$, and $\gamma_1 < 1$
we have
\begin{equation}
\| T_f \Pi(v, u) - \Pi(v, T_f u)\|_{\dot H^{s + \gamma_1+\gamma_2}} \lesssim 
\||D|^{\gamma_1}f \|_{BMO}\||D|^{\gamma_2}v\|_{BMO} \|u\|_{\dot H^{s}}.
\end{equation}
\end{lemma}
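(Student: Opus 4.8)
The plan is to prove this by the same Littlewood--Paley bookkeeping that underlies the para-commutator bound of Lemma~\ref{l:para-com}, since $T_f\Pi(v,u)-\Pi(v,T_fu)$ is exactly the commutator $[T_f,\Pi(v,\cdot)]u$ of two operators of order $0$ with rough coefficients $f$ and $v$. First I would write $f=\sum_j f_j$ and localize the output at a single frequency $2^k$. Because $\Pi$ is a high$\times$high paraproduct and $T_f$ only attaches a strictly lower frequency, both $T_f\Pi(v,u)$ and $\Pi(v,T_fu)$ see $v$ and $u$ only at frequencies $\gtrsim 2^k$. So the estimate reduces to the per-frequency bound
\[
\| P_k\bigl(T_f\Pi(v,u)-\Pi(v,T_fu)\bigr)\|_{L^2}\lesssim 2^{-\gamma_1 k}\||D|^{\gamma_1}f\|_{BMO}\cdot 2^{-\gamma_2 k}\||D|^{\gamma_2}v\|_{BMO}\cdot\|u\|_{\dot H^s},
\]
after which $\ell^2_k$ summation against the weight $2^{(s+\gamma_1+\gamma_2)k}$ gives the claim. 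Here I would use the elementary single-frequency fact $\|P_kg\|_{L^\infty}\lesssim\|g\|_{BMO}$, which gives $\|v_{\sim k}\|_{L^\infty}\lesssim 2^{-\gamma_2 k}\||D|^{\gamma_2}v\|_{BMO}$ and $\|f_j\|_{L^\infty}\lesssim 2^{-\gamma_1 j}\||D|^{\gamma_1}f\|_{BMO}$; note that no summation against $BMO$ is ever attempted, so the usual obstruction there does not arise.

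The heart of the matter is the gain obtained when $f$ sits at frequency $2^j$ with $j\le k$. I would split the difference of the trilinear Weyl symbols into two pieces: (i) the effect of moving $f$ from the output slot of $\Pi$ to the $u$-input slot, which produces a finite difference $\pi(\eta,\xi)-\pi(\eta,\xi+\zeta)$ of the order-zero bilinear symbol $\pi$ of $\Pi$, with $|\zeta|\sim 2^j$; by the mean value theorem and $|\partial_\xi\pi|\lesssim 2^{-k}$ on the relevant region, this contributes a factor $2^{j-k}$; and (ii) the mismatch of the two Weyl cutoffs, which is supported in the regime $2^j\sim 2^k$ and therefore is a mere high$\times$high$\times$high trilinear Coifman--Meyer expression, bounded by $\|f_k\|_{L^\infty}\|v_{\sim k}\|_{L^\infty}\|u_{\sim k}\|_{L^2}$ with no loss. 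Combining (i) and (ii), for each $k$ one obtains $\|P_k(\cdots)\|_{L^2}\lesssim\sum_{j\le k}\min(1,2^{j-k})\|f_j\|_{L^\infty}\,\|v_{\sim k}\|_{L^\infty}\|u_{\sim k}\|_{L^2}$, and the sum $\sum_{j\le k}\min(1,2^{j-k})2^{-\gamma_1 j}$ is comparable to $2^{-\gamma_1 k}$ \emph{precisely because} $\gamma_1<1$ --- this is exactly where that hypothesis is used. Inserting the $v$ and $u$ bounds above yields the displayed per-frequency estimate.

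Two bookkeeping points remain. When the output frequency $2^k$ is much smaller than the common frequency $2^l$ of $v$ and $u$ --- the high$\times$high-to-low cascade --- there is no commutator gain, and one instead estimates $T_f\Pi(v,u)$ and $\Pi(v,T_fu)$ separately, summing the cascade in $l$ by Cauchy--Schwarz against $u\in\dot H^s$; this is the routine step requiring exactly the hypotheses $s+\gamma_2\ge 0$ and $s+\gamma_1+\gamma_2\ge 0$. Also, the localization step $P_kT_f=T_fP_{\sim k}$ costs a remainder controlled by Lemma~\ref{l:para-com-pk}, which is of strictly lower order and harmless. The one genuinely delicate point --- and the step I expect to absorb most of the technical work --- is item (ii): keeping precise track of the two Weyl cutoffs $\chi\bigl(|\eta|/\langle\xi_2+\xi_3+\tfrac12\eta\rangle\bigr)$ and $\chi\bigl(|\eta|/\langle\xi_3+\tfrac12\eta\rangle\bigr)$, whose denominators are not literally comparable near the transition region, and verifying that their difference is still a legitimate symbol of $L_{lh}$ type (Definition~\ref{d:Llh}) with the expected decay. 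This is the place where the choice of quantization cannot be treated as perturbative, and it is where the estimate has to be handled with some care rather than by a black-box symbol calculus.
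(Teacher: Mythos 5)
Your commutator analysis for $f$ at frequency $2^j\le k$ in the balanced case $l\sim k$ is sound, but the argument breaks down in the cascade case (output frequency $2^k\ll 2^l$, the common frequency of $v$ and $u$), and this is precisely the regime where $\gamma_1>0$ causes trouble. You propose to estimate $T_f\Pi(v,u)$ and $\Pi(v,T_fu)$ separately in the cascade, but for $T_f\Pi(v,u)$ this requires $T_f$ to act boundedly at output frequency $2^k$, and the hypothesis $|D|^{\gamma_1}f\in BMO$ with $\gamma_1>0$ controls neither $\|f\|_{BMO}$ nor $\|f_{<k}\|_{L^\infty}$ (the sum $\sum_{j<k}2^{-\gamma_1 j}$ diverges as $j\to-\infty$ in the homogeneous setting). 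So the two terms cannot be separated in the cascade: the commutator structure must be retained there as well, and the correct mean-value gain is $2^{j-l}$ relative to the \emph{input} scale $2^l$ (where $\partial_\xi\pi$ lives, since $\pi$ is supported on $|\eta_v|\sim|\xi_u|\sim 2^l$ and its $\xi$-derivatives are of size $2^{-l}$), not your $2^{j-k}$ --- it is exactly the extra factor $2^{k-l}$ that makes the sum over $l>k$ converge.

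There is a further omission that appears only in the cascade: for $k+O(1) < j < l-O(1)$, only $\Pi(v,T_fu)$ contributes, since the Weyl cutoff $\chi\bigl(|\eta_f|/\langle\eta_v+\xi_u+\eta_f/2\rangle\bigr)$ in $T_f\Pi(v,u)$ vanishes (its argument is of size $\sim 1$ because $\eta_v+\xi_u\approx-\eta_f$), while the cutoff in $T_fu_l$ is still $\approx 1$ since $j<l$. Your per-frequency sum runs only over $j\le k$, so this entire band of $f$-frequencies is skipped. It carries no commutator gain; for $\gamma_1>0$ the $j$-sum produces $\sim 2^{-\gamma_1 k}$ and lands the remaining $\ell^2_k$ summation at the borderline $s+\gamma_2=0$, where the pointwise $\ell^\infty$ Littlewood--Paley bound on $v$ no longer closes and the Carleson-measure content of $v\in BMO$ must genuinely be used. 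Both the missing commutator in the cascade and the intermediate $j$-band need to be treated before the proposal can be called a proof.
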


\begin{lemma}[Para-Leibniz rule]\label{l:para-leibniz}
For the balanced Leibniz rule error 
\[
E^{\pi}_L (u,v) = T_{f}\D_\alpha \Pi(u, v) - \Pi(T_{f}\D_\alpha u, v) - \Pi(u, T_{f}\D_\alpha v)
\]
we have the bound
\begin{equation}
\|  E^{\pi}_L (u,v) \|_{H^s} \lesssim \|f\|_{BMO^\frac12} \| u\|_{BMO^{-\frac12 - \sigma}} \| v\|_{H^{s+\sigma}}, \qquad 
\sigma \in \R.
\end{equation}
\end{lemma}

\

Next, we state paraproduct estimates which also may be found in \cite{ai2019dimensional}:

\begin{lemma}[Para-products]\label{l:para-prod}
Assume that $\gamma_1, \gamma_2 < 1$, $\gamma_1+\gamma_2 \geq 0$. Then
\begin{equation}
\| T_f T_g - T_{fg} \|_{\dot H^{s} \to \dot H^{s+\gamma_1+\gamma_2}} \lesssim 
\||D|^{\gamma_1}f \|_{BMO}\||D|^{\gamma_2}g\|_{BMO}.
\end{equation}
\end{lemma}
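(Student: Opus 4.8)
The plan is to reduce the para-product estimate to the para-commutator estimate of Lemma~\ref{l:para-com} combined with a one-sided (triangular) bound. The key observation is that the symbol of $T_f T_g - T_{fg}$ splits naturally according to Littlewood--Paley trichotomy applied to the two inputs $f$ and $g$: writing $f = \sum_k f_k$ and $g = \sum_j g_j$, the operator $T_f T_g$ has a kernel that, acting on a function frequency-localized at $2^\ell$, picks up contributions from $f_k$ and $g_j$. Morally $T_f T_g \approx T_{f_{<\ell}} T_{g_{<\ell}}$ on the $2^\ell$ shell, whereas $T_{fg} \approx T_{(fg)_{<\ell}}$, and $(fg)_{<\ell} = (f_{<\ell} g_{<\ell})_{<\ell} + (\text{high}\times\text{high tails})$. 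So the difference is governed by two mechanisms: (i) the failure of $T$ to be multiplicative, i.e. $T_{f_{<\ell}} T_{g_{<\ell}} - T_{f_{<\ell} g_{<\ell}}$, which is a genuine symbol-calculus error, and (ii) the mismatch between which pieces of $f$ and $g$ are ``seen'' by the composition versus the product, i.e. terms where $f_k$ has frequency comparable to or larger than the intermediate frequency.

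First I would set up the dyadic decomposition carefully using the Weyl-quantized definition of the paraproduct from Section~\ref{s:para}, so that the intermediate frequency in $T_f T_g$ is unambiguous. Then I would dispose of the ``diagonal'' symbol-calculus error using a standard stationary-phase / Taylor expansion of the composition of two symbols in the Weyl calculus: the composition $T_{f} \# T_{g}$ equals $T_{fg}$ plus a term whose symbol involves $\partial_\xi$ derivatives hitting one factor and $\partial_x$ derivatives hitting the other, which one estimates by distributing the $\gamma_1$ derivatives onto $f$ in $BMO$ and the $\gamma_2$ derivatives onto $g$ in $BMO$, exactly as in the proof of Lemma~\ref{l:para-com} (indeed the antisymmetric part of this expansion \emph{is} the para-commutator, so the two lemmas share essentially the same proof, with $T_f T_g - T_{fg}$ being ``half'' of what appears in Lemma~\ref{l:para-com}). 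The hypotheses $\gamma_1, \gamma_2 < 1$ are precisely what make the first-order term of this expansion the main one and the remainder summable; the condition $\gamma_1 + \gamma_2 \geq 0$ ensures the target space $\dot H^{s+\gamma_1+\gamma_2}$ is a genuine gain (or at worst no loss) so that the dyadic sum over the output frequency $2^\ell$ converges after one applies the slowly-varying/orthogonality structure of Littlewood--Paley pieces.

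Second I would handle the ``off-diagonal'' pieces: terms in $T_f T_g$ where the frequency of $f_k$ is not small compared to the running frequency, and correspondingly the high$\times$high tail of $fg$. These are not commutator errors but are even better behaved — one simply bounds $\|f_k\|$ in $BMO^{\gamma_1}$ against $2^{-\gamma_1 k}\| |D|^{\gamma_1} f\|_{BMO}$ (and similarly for $g$), and the geometric series in $k$ converges because $\gamma_1 > 0$ would be needed for convergence from one side while $\gamma_1 < 1$ controls the other; since we only assume $\gamma_1 + \gamma_2 \ge 0$, the convergence comes from pairing the $f$ and $g$ decays together, which is where $\gamma_1 + \gamma_2 \geq 0$ is used a second time. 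Throughout, $BMO$ rather than $L^\infty$ is the natural space because the low-frequency truncations $f_{<\ell}$ appearing inside paraproducts are controlled by $\|f\|_{BMO}$ uniformly in $\ell$, via the standard $\|P_{<\ell} f\|_{\text{(Carleson)}} \lesssim \|f\|_{BMO}$ heuristic that underlies all the Coifman--Meyer bounds quoted above.

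The main obstacle I anticipate is purely bookkeeping: keeping the Weyl-quantization frequency weights $\langle \xi + \tfrac12 \eta\rangle$ straight through the symbol composition so that the first-order term is \emph{exactly} $T_{fg}$ with no spurious lower-order leftover, and then organizing the remainder into finitely many pieces each of which is either a para-commutator-type term (handled by the expansion above) or an off-diagonal term (handled by summing geometric series). No single estimate here is deep — the content is that the Weyl calculus makes the leading term clean and the hypotheses $\gamma_1, \gamma_2 < 1$, $\gamma_1 + \gamma_2 \geq 0$ are exactly matched to the convergence of the two relevant dyadic sums. Since the statement is explicitly flagged as standard and reducible to material in \cite{ai2019dimensional} and Appendix~B of \cite{HIT}, I would ultimately present it by citing that the proof is identical in structure to that of Lemma~\ref{l:para-com}, differing only in that one retains the full composition error rather than its antisymmetrization.
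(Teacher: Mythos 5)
The paper does not supply a proof of Lemma~\ref{l:para-prod}: the preamble to this block of lemmas flags them as standard and defers to Appendix~B of \cite{HIT} and Section~2 of \cite{ai2019dimensional}, so there is no in-paper argument to compare against. Judged on its own, your two-part decomposition --- a symbol-composition (``diagonal'') error controlled by $\gamma_i<1$, plus an ``off-diagonal'' error coming from the high$\times$high part of $fg$ controlled by $\gamma_1+\gamma_2\ge 0$ --- is the right skeleton, and the Weyl-quantization bookkeeping you anticipate is indeed where the work lies.

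The one claim I would push back on is that the proof is ``identical in structure to Lemma~\ref{l:para-com}'' and that $T_fT_g-T_{fg}$ is ``half'' of the commutator. Compare the hypotheses: Lemma~\ref{l:para-com} requires only $\gamma_1,\gamma_2<1$, while Lemma~\ref{l:para-prod} additionally requires $\gamma_1+\gamma_2\ge 0$, and that gap is exactly your off-diagonal piece. Both $T_fT_g$ and $T_gT_f$ sample $f$ and $g$ only at frequencies below the running frequency, so their difference never sees the balanced interaction $\Pi(f,g)$; the commutator is a pure symbol-calculus error, and $\gamma_i<1$ alone closes it. By contrast $T_{fg}$ carries the contribution $T_{\Pi(f,g)}$, in which $f_k g_k$ with $k\gtrsim\ell$ produces a low-frequency coefficient absent from $T_fT_g$; this term is genuinely new, does not appear in the commutator, and is the sole consumer of $\gamma_1+\gamma_2\ge 0$. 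Writing $T_fT_g-T_gT_f=(T_fT_g-T_{fg})-(T_gT_f-T_{fg})$ shows the commutator is a \emph{difference} of two such expressions in which the high$\times$high pieces cancel, not a doubling of one. So the product lemma is the commutator-type estimate \emph{plus} a distinct bilinear high$\times$high estimate; keeping the two hypotheses attached to their respective pieces both cleans up the bookkeeping and explains why the extra hypothesis appears here but not in Lemma~\ref{l:para-com}. (Also be a bit careful at the endpoint $\gamma_1+\gamma_2=0$: the naive termwise bound $\|f_k g_k\|_{L^\infty}\lesssim 2^{-(\gamma_1+\gamma_2)k}$ gives a non-summable series, so the high$\times$high piece really needs the square-function structure of $BMO$ or an almost-orthogonality argument, not just a pointwise geometric sum.)
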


\begin{lemma}[Low-high para-products]\label{l:para-prod2}
Assume that $\gamma_1, \gamma_2 < 1$, $\gamma_1+\gamma_2 \geq 0$. Then
\begin{equation}
\| T_f T_g - T_{T_fg} \|_{\dot H^{s} \to \dot H^{s+\gamma_1+\gamma_2}} \lesssim 
\||D|^{\gamma_1}f \|_{BMO}\||D|^{\gamma_2}g\|_{BMO}.
\end{equation}
\end{lemma}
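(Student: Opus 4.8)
\textbf{Proof proposal for Lemma~\ref{l:para-prod2} (Low-high para-products).}

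The plan is to reduce the bound for $T_f T_g - T_{T_f g}$ to the already-established para-product bound of Lemma~\ref{l:para-prod}, by comparing $T_{T_f g}$ with $T_{fg}$. Writing
\[
T_f T_g - T_{T_f g} = \left( T_f T_g - T_{fg} \right) + \left( T_{fg} - T_{T_f g} \right),
\]
the first term is controlled directly by Lemma~\ref{l:para-prod}, which gives exactly the claimed bound $\||D|^{\gamma_1} f\|_{BMO} \||D|^{\gamma_2} g\|_{BMO}$ on $\dot H^s \to \dot H^{s+\gamma_1+\gamma_2}$ under the hypotheses $\gamma_1,\gamma_2 < 1$, $\gamma_1+\gamma_2 \geq 0$. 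So everything comes down to estimating $T_{fg - T_f g}$. By the Littlewood–Paley trichotomy, $fg - T_f g = \Pi(f,g) + T_g f$, so I must bound $T_{\Pi(f,g)}$ and $T_{T_g f}$ as operators on the appropriate homogeneous Sobolev spaces.

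For these two pieces I would use the standard fact that $\|T_h\|_{\dot H^s \to \dot H^{s+\gamma_1+\gamma_2}} \lesssim \||D|^{-(\gamma_1+\gamma_2)} h\|_{BMO}$ (a paraproduct with a low-frequency symbol loses no derivatives beyond the regularity of its argument; this is the $BMO$-endpoint Coifman–Meyer bound, as quoted in Section~\ref{s:para}). Thus it suffices to show
\[
\||D|^{-(\gamma_1+\gamma_2)} \Pi(f,g)\|_{BMO} + \||D|^{-(\gamma_1+\gamma_2)} T_g f\|_{BMO} \lesssim \||D|^{\gamma_1} f\|_{BMO}\||D|^{\gamma_2} g\|_{BMO}.
\]
For $T_g f$: since $T_g f = \sum_k g_{<k-\kappa} f_k$, the output at frequency $2^k$ has size $\lesssim \|g_{<k-\kappa}\|_{L^\infty} \|f_k\|_{L^\infty} \lesssim 2^{-\gamma_2(k)}\||D|^{\gamma_2}g\|_{\cdots} \cdot 2^{-\gamma_1 k}\||D|^{\gamma_1}f\|_{\cdots}$ up to the usual $BMO$/$\ell^2$ bookkeeping (here one uses $\gamma_2 < 1$ to sum the low-frequency piece $g_{<k}$, which costs a factor growing at most polynomially — in fact $\|g_{<k}\|_{L^\infty} \lesssim 2^{-\gamma_2 k}\||D|^{\gamma_2} g\|_{BMO}$ when $\gamma_2 < 0$ and only a harmless $2^{(\gamma_2)_+ k}$ loss otherwise, which is absorbed since $\gamma_1 + \gamma_2 \geq 0$ means we gain $2^{(\gamma_1+\gamma_2)k}$ overall). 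Multiplying by $|D|^{-(\gamma_1+\gamma_2)}$ gives an output bounded in $L^\infty$ (hence $BMO$) at each frequency, and the frequency-localized pieces are square-summable, giving the $BMO$ bound. The $\Pi(f,g)$ piece is handled the same way: the diagonal sum $\sum_{|j-k| \leq \kappa} f_j g_k$, after applying $|D|^{-(\gamma_1+\gamma_2)}$ and distributing the derivative gains as $2^{-\gamma_1 j} \cdot 2^{-\gamma_2 k}$, is square-summable in the output frequency precisely because $\gamma_1 + \gamma_2 \geq 0$ ensures the resulting geometric series converges (and $\gamma_1, \gamma_2 < 1$ gives enough room in the off-diagonal high-high bookkeeping).

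I expect the main obstacle to be the bookkeeping at the $BMO$ endpoint rather than any conceptual difficulty: one must be careful that $|D|^{-(\gamma_1+\gamma_2)}\Pi(f,g)$ and $|D|^{-(\gamma_1+\gamma_2)}T_g f$ genuinely lie in $BMO$ (not merely in some negative Besov space), which forces a square-summability argument over output frequencies and a correct splitting into the cases $\gamma_i \gtrless 0$ when estimating $\|f_{<k}\|_{L^\infty}$ and $\|g_{<k}\|_{L^\infty}$. The constraint $\gamma_1 + \gamma_2 \geq 0$ is exactly what is needed for this summation to close, and $\gamma_1, \gamma_2 < 1$ is what allows the low-frequency truncations to be controlled — so the hypotheses are used in precisely the expected places. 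Alternatively, and perhaps more cleanly, one may avoid the endpoint issue entirely by invoking Lemma~\ref{l:para-prod} twice, noting that $T_{T_f g} - T_{fg} = -T_{\Pi(f,g) + T_g f}$ and that both $\Pi(f,g)$ and $T_g f$ already have enough regularity, via the Coifman–Meyer bounds in Section~\ref{s:para}, to feed into the operator-norm estimate for a paraproduct; I would present whichever of these two routes produces the shorter argument.
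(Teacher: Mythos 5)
Your overall decomposition
\[
T_f T_g - T_{T_f g} = \bigl( T_f T_g - T_{fg} \bigr) - \bigl( T_{T_f g} - T_{fg} \bigr)
\]
with the first bracket controlled by Lemma~\ref{l:para-prod} is a natural starting point, and the reduction of the second bracket to a single paraproduct, $T_{T_f g} - T_{fg} = T_{T_f g - fg} = -T_{T_g f + \Pi(f,g)}$, is algebraically correct. The problem is the ``standard fact'' you invoke to control $T_{T_g f + \Pi(f,g)}$.

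The displayed bound $\|T_h\|_{\dot H^s \to \dot H^{s+\gamma_1+\gamma_2}} \lesssim \| |D|^{-(\gamma_1+\gamma_2)} h\|_{BMO}$ is not correct: with $\sigma := \gamma_1 + \gamma_2 \geq 0$, the right-hand side $\||D|^{-\sigma} h\|_{BMO}$ is a \emph{weaker} hypothesis on $h$ than $h \in BMO$, yet the left-hand side asserts a \emph{gain} of $\sigma$ derivatives, which would be strictly stronger than boundedness on $\dot H^s$. Even after correcting the sign to $\||D|^{\sigma} h\|_{BMO}$ (which is what you in effect verify for $T_g f$ and $\Pi(f,g)$ in the middle of the argument), no such paraproduct operator bound holds for $\sigma > 0$: $T_h$ at output frequency $2^k$ multiplies $u_k$ by $h_{<k}$, and $\|h_{<k}\|_{L^\infty}$ is governed by the \emph{low}-frequency content of $h$, which does not decay like $2^{-\sigma k}$ — indeed, once a fixed symbol $h$ has any nontrivial content at frequencies $O(1)$, the quantity $2^{\sigma k}\|h_{<k}\|_{L^\infty}$ diverges as $k \to \infty$, so a single paraproduct $T_h$ can never map $\dot H^s$ to $\dot H^{s+\sigma}$ for $\sigma > 0$. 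The Coifman--Meyer estimates recorded in Section~\ref{s:para} of the paper confirm this: they give $T_h: L^p \to L^p$ bounded for $h \in L^\infty$ (or for $h$ in $BMO$ with the $L^p$ factor on the other side), but no estimate with derivative gain. Consequently the step ``Thus it suffices to show $\||D|^{-(\gamma_1+\gamma_2)}(\Pi(f,g) + T_g f)\|_{BMO} \lesssim \cdots$'' does not actually reduce the lemma: even if you proved the (sign-corrected) symbol estimate, you would still have no operator bound to feed it into, and the alternative ``cleaner route'' you sketch at the end suffers from exactly the same gap.

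The genuine content of Lemma~\ref{l:para-prod2} is a cancellation \emph{between} the two compositions $T_f T_g$ and $T_{T_f g}$ that is only visible once both outer paraproducts are unpacked at a common output frequency; it cannot be recovered after collapsing one side to a paraproduct $T_{h}$ with a fixed symbol $h$, because that step forgets the constraint tying the paracoefficient frequency to the output frequency. A working proof must estimate the operator $\sum_k \bigl( f_{<k} g_{<k} - (T_f g)_{<k}\bigr) P_k u$ directly, frequency-by-frequency in the output, and then sum using the $\ell^2$/Carleson structure of the $BMO$ hypotheses rather than the $L^\infty$ bookkeeping you set up. I would encourage you to look at the cited sources (Appendix~B of~\cite{HIT} and Section~2 of~\cite{ai2019dimensional}) for the square-function machinery that makes this summation close.
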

These are stated here in the more elegant homogeneous setting, but 
there are obvious modifications which also apply in the inhomogeneous case.
We end with the following Moser-type result:

\begin{lemma}\label{l:R}
Let $F$ be smooth with $F(0)=0$, and $w \in H^s$.
Set
\[
R(w) = F(w) - T_{F'(w)} w.
\]
Then we have the estimate
\begin{equation}\label{est-R}
\|R(w)\|_{H^{s+\frac12}} \lesssim C(\|w\|_{L^\infty})\| D^\frac12 w\|_{BMO} \| w\|_{H^s}.
\end{equation}
\end{lemma}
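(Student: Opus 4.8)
The plan is to prove the Moser-type bound $\|R(w)\|_{H^{s+\frac12}} \lesssim C(\|w\|_{L^\infty}) \| D^\frac12 w\|_{BMO} \| w\|_{H^s}$ by a Littlewood-Paley decomposition of $F(w)$, isolating the paraproduct $T_{F'(w)}w$ as the leading low$\times$high contribution and showing that the remainder gains half a derivative with one balanced factor. First I would write $F(w) = \sum_k \big(F(w_{<k+1}) - F(w_{<k})\big)$, a telescoping sum which equals $F(w)$ since $F(0) = 0$; by the fundamental theorem of calculus each increment is $\int_0^1 F'(w_{<k} + t w_k)\, dt \cdot w_k$, so that $F(w) = \sum_k G_k\, w_k$ with $G_k := \int_0^1 F'(w_{<k} + t w_k)\, dt$. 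Heuristically $G_k \approx F'(w_{<k}) \approx F'(w)_{<k}$, so $\sum_k G_k w_k$ should agree with $T_{F'(w)}w$ up to the claimed error. The two places where this approximation costs something are: (i) replacing $G_k$ by $F'(w_{<k-\kappa})$, and (ii) replacing $F'(w_{<k-\kappa})$ by the low-frequency truncation appearing in the Weyl paraproduct $T_{F'(w)}$.

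For step (i), I would estimate $G_k - (F'(w))_{<k-\kappa}$ in a suitable $L^\infty$-based space: both the difference $F'(w_{<k} + t w_k) - F'(w_{<k})$ and the difference $F'(w_{<k}) - F'(w)_{<k-\kappa}$ are, by another application of the fundamental theorem of calculus and the smoothness of $F'$, controlled by $C(\|w\|_{L^\infty})$ times a high-frequency tail of $w$, namely something of the shape $\sum_{j \geq k-\kappa} \|w_j\|_{L^\infty}$. When this is paired against $w_k$ (which is $H^s$-localized at frequency $2^k$) and one asks for the $H^{s+\frac12}$ norm of the sum, one factor of $\|w_j\|_{L^\infty}$ gets a weight $2^{j/2}$ — producing exactly the $\|D^{1/2} w\|_{BMO}$ (or $B^{1/2}_{\infty,2}$) control norm — while the other $w_k$ carries the $\|w\|_{H^s}$ norm. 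The summation over $j \geq k - \kappa$ and over $k$ is handled by Schur's test / Young's inequality for the resulting almost-diagonal dyadic kernel, using the gain $2^{-(j-k)/2}$ or an off-diagonal decay coming from the frequency localization of the product $G_k w_k$ at frequency $\lesssim 2^k$. I would invoke the standard paraproduct and commutator machinery already collected in Section~\ref{s:not}, in particular the Coifman-Meyer bounds and Lemma~\ref{l:para-prod}, to package the bilinear estimates cleanly rather than re-deriving them.

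For step (ii), the discrepancy between the ad hoc dyadic paraproduct $\sum_k f_{<k-\kappa} g_k$ and the Weyl-quantized $T_f g$ defined in Section~\ref{s:para} is itself a bounded bilinear operator of type $L_{lh}$ with a symbol supported in $\{|\eta| \lesssim |\xi|\}$, and the difference of the two symbols is supported in the region $|\eta| \sim |\xi|$, i.e. it is morally a high$\times$high (balanced) operator; hence it obeys the same half-derivative-gaining balanced bound with one $BMO^{1/2}$ factor and one $H^s$ factor. This is where I would be most careful, since the paper explicitly warns that "the difference between usually equivalent choices is nonperturbative" in its main estimates — but for this particular Moser lemma the difference is in fact perturbative at the stated regularity, and I would spell that out. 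I expect the main obstacle to be the bookkeeping in step (i): correctly identifying which of the two $w$-factors in each error term can be made to carry the $2^{1/2}$ weight and which carries $\|w\|_{H^s}$, so that no derivative is lost overall, and then verifying that all the dyadic sums converge with the claimed constant $C(\|w\|_{L^\infty})$ — the nonlinearity $F$ enters only through $F'$ evaluated at arguments of size $\lesssim \|w\|_{L^\infty}$, together with finitely many of its derivatives, so the constant is indeed of the stated form. Once steps (i) and (ii) are in place, summing the two contributions in $H^{s+\frac12}$ gives \eqref{est-R}.
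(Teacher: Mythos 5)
The paper does not prove Lemma~\ref{l:R}; Section~\ref{s:not} lists it among the ``relatively standard'' paraproduct estimates and points to Appendix~B of \cite{HIT} and Section~2 of \cite{ai2019dimensional} for proofs. Your telescoping $F(w)=\sum_k G_k w_k$ with $G_k=\int_0^1 F'(w_{<k}+tw_k)\,dt$, followed by comparison of $G_k$ against a low-frequency truncation of $F'(w)$ and absorption of the Weyl-vs-dyadic paraproduct mismatch as a balanced error, is the standard paralinearization argument that those references use, and it is exactly the scheme the paper itself runs when proving the closely related Moser bound Lemma~\ref{l:Moser-control0}(b) (continuous telescoping $F(v)=F(v_0)+\int_0^\infty F'(v_{<j})v_j\,dj$, then a case analysis in $j$ versus $k$). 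So you are on the same route.

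One step in your sketch needs tightening. Your claim that $F'(w_{<k})-(F'(w))_{<k-\kappa}$ is controlled ``by another application of the fundamental theorem of calculus and the smoothness of $F'$'' is not literal. Splitting it as $(F'(w_{<k}))_{\geq k-\kappa}+(F'(w_{<k})-F'(w))_{<k-\kappa}$, the second piece is the genuine FTC piece (pointwise $\lesssim C(\|w\|_{L^\infty})\,\|w_{\geq k}\|_{L^\infty}$), but for the first, smoothness of $F'$ plus Bernstein only gives $\|(F'(w_{<k}))_j\|_{L^\infty}\lesssim 2^{-N(j-k)}$, which is merely $O(1)$ in the diagonal range $j\sim k$, because $\|\partial_x F'(w_{<k})\|_{L^\infty}$ grows like $2^k$. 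To extract the needed $2^{-j/2}\|D^{1/2}w\|_{BMO}$ smallness there you must telescope $F'(w_{<k})$ once more — precisely the move in step~(iii.2) of the proof of Lemma~\ref{l:Moser-control0}(b), where $F'(v_{<j})=F'(v_0)+\int F''(v_{<l})v_l\,dl$ is re-expanded. With that extra expansion the dyadic summation closes as you describe; and since only the uniform dyadic bound $\|P_j\cdot\|_{L^\infty}\lesssim\|\cdot\|_{BMO}$ is used on the control-norm factor, with the $\ell^2$ summability coming entirely from the $H^s$ factor, you do recover the stated $BMO^{1/2}$ version and not merely a $B^{1/2}_{\infty,2}$ version.
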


 \subsection{Paradifferential operators}
 
As a generalization of paraproducts, we will also work with paradifferential operators.
Precisely, given a symbol $a(x,\xi)$ in $\R^n$, we define its paradifferential Weyl 
quantization $T_a$ as the operator
\[
\mathcal F (T_f g)(\zeta) = \int_{\xi+\eta = \zeta}   \hat f(\eta) \chi\left( \frac{|\eta|}{\langle \xi+\frac12\eta\rangle}  \right) \hat{a}(\eta,\xi)  \hat g(\xi) d\xi ,  
\]
 where 
 \[
 \hat a(\eta,\xi) = \mathcal F_x a(x,\xi).
 \]
 The simplest class of symbols one can work with is $L^\infty S^m$, which contains symbols
 $a$ for which 
\begin{equation}
| \partial_\xi^\alpha a(x,\xi) |\leq c_\alpha \langle \xi \rangle^{m-|\alpha|}     
\end{equation}
for all multi-indices $\alpha$. For such symbols, the Calderon-Vaillancourt theorem insures
appropriate boundedness in Sobolev spaces,
\[
T_a: H^s \to H^{s-m}.
\]

More generally, given a translation invariant space of distributions $X$, 
we can define an associated symbol class $X S^m$ of symbols with the property that 
\begin{equation}
\| \partial_\xi^\alpha a(x,\xi) \|_{X} \leq c_\alpha \langle \xi \rangle^{m-|\alpha|}     
\end{equation}
for each $\xi \in \R^n$.  Later in the paper, we will use several choices of symbols
of this type, using function spaces which we will associate to our problem.

\section{A complete set of equations}\label{s:equations}

Here we aim to further describe the minimal surface equation and the underlying geometry, and, in particular, its null structure. We also derive the linearized equation, and introduce the paralinearization of both the main  equation and its linearization.

\subsection{The Lorentzian geometry of the minimal surface}
Starting from the expression of the metric $g$ in \eqref{metric-g}, the dual metric is easily computed to be
\begin{equation}\label{dual-metric}
g^{\alpha \beta}  := m^{\alpha \beta} - 
\frac{m^{\alpha \gamma} m^{\beta\delta} \D_{\gamma} u \D_\delta u}{1 + m^{\mu\nu} \D_\mu u \D_\nu u}.
\end{equation}
Also associated to the metric $g$ is its determinant
\[
g = \det (g_{\alpha\beta}) = \det(g^{\alpha\beta})^{-1},
\]
and the associated volume form
\[
dV = \sqrt{g}\,  dx.
\]
This can be easily computed as
\[
g = 1+m^{\mu\nu} \D_\mu u \, \D_\nu u.
\]

In the sequel, we will always raise indices with respect to the metric $g$, never with respect to Minkowski. In particular we will
use the standard notation
\begin{equation}\label{d-up}
\D^\alpha = g^{\alpha \beta} \partial_\beta.
\end{equation}
We remark that, when applied to the function $u$, this operator
has nearly the same effect as the corresponding Minkowski 
operator,
\begin{equation}\label{g-vs-m}
\partial^\alpha u = \frac{1}g m^{\alpha \beta}\D_\beta u.
\end{equation}

\subsection{The minimal surface equation}
Here we rewrite the minimal surface equation in covariant form.
Using the $g$ notation above and the 
Minkowski metric, we rewrite \eqref{minimal surface eq} as 
\[
m^{\alpha\beta} \partial_\alpha ( g^{-\frac12} \partial_\beta u) = 0,
\]
or equivalently 
\[
m^{\alpha\beta} (\partial_\alpha \partial_\beta u 
- \frac1{2g} \partial_\alpha g \partial_\beta u)= 0 .
\]
Expanding the $g$ derivative, we have 
\begin{equation}\label{first-dg}
\partial_\alpha g = 2 m^{\mu \nu} \partial_\mu u \, \partial_\alpha \partial_\nu u.
\end{equation}
Then in the previous equation we recognize the expression for the 
dual metric, and the minimal surface equation  becomes
\begin{equation}\label{msf-short}
g^{\alpha \beta} \D_\alpha \D_\beta u = 0.
\end{equation}

Using the notation \eqref{d-up}, this is written in an even shorter form,
\begin{equation}\label{msf-short-plus}
\D^\alpha \D_\alpha u = 0.
\end{equation}
Similarly, using also \eqref{g-vs-m}, the 
relation \eqref{first-dg} becomes
\begin{equation}\label{second-dg}
\frac{1}{2g} \partial_\alpha g = \partial^\nu u \, \partial_\alpha \partial_\nu u.
\end{equation}

\subsection{The covariant d'Alembertian}
The covariant d'Alembertian associated to the metric $g$ has the form \[
\Box_g = \frac{1}{\sqrt{g}} \partial_\alpha \sqrt{g} g^{\alpha \beta}
\partial_\beta,
\]
which we can rewrite as 
\[
\begin{aligned}
\Box_g &
=  \partial_\alpha g^{\alpha \beta} \partial_\beta
+ \frac1{2 g} (\partial_\alpha g) g^{\alpha\beta} \partial_\beta
\\
&=  g^{\alpha \beta}\partial_\alpha  \partial_\beta+ \left(\partial_\alpha g^{\alpha \beta}\right) \partial_\beta
+ \frac1{2 g} (\partial_\alpha g) g^{\alpha\beta} \partial_\beta.
\end{aligned}
\]
Next we need to compute the two coefficients in round brackets. 
The second one is given by \eqref{second-dg}. For the first one,
for later use, we perform a slightly more general 
computation where we differentiate $g^{\alpha \beta}(\partial_{\gamma}u)$ as a function of its arguments $p_{\gamma}:=\partial_\gamma u$,
\begin{equation}
\label{dg-dp}
\frac{\partial g^{\alpha \beta}}{\partial p_{\gamma}} =-\partial^{\alpha}u\, g^{\beta \gamma} -\partial^{\beta}u\, g^{\alpha \gamma}.
\end{equation}
This formula follows by directly  differentiating \eqref{dual-metric} and from  \eqref{g-vs-m}, 
\[
\frac{\partial g^{\alpha \beta}}{\partial p_{\gamma}} =-m^{\alpha \gamma} \partial^{\beta} u \,  - m^{\beta \gamma}\partial^{\alpha}u \,+2g\partial^{\alpha}\, u\partial^{\beta}u\,  \partial^{\nu} u  .
\]
We  use \eqref{dual-metric} once again  to get \eqref{dg-dp}
\[
\begin{aligned}
\frac{\partial g^{\alpha \beta}}{\partial p_{\gamma}} &=-[g^{\alpha \gamma }+g\partial^{\alpha}u \partial^{\gamma}u] \partial^{\beta} u -[g^{\beta \gamma }+g\partial^{\gamma}u \partial^{\beta}u]\partial^{\alpha}u+2\partial^{\alpha} u\partial^{\beta}u g\partial^{\nu} u,\\
&= -g^{\alpha \gamma}\partial^{\beta}u - g^{\beta \gamma }\partial^{\alpha}u.
\end{aligned}
\]

From \eqref{dg-dp} and chain rule, we arrive at
\begin{equation}\label{d-gab}
  \D_\gamma g^{\alpha\beta} =  - \D^\alpha u \, g^{\beta\delta}
\D_\gamma \D_\delta u  - \D^\beta u\,  g^{\alpha \sigma} \D_\gamma \D_\sigma u.
\end{equation}

Setting $\gamma=\alpha$ and using the minimal surface equation in the \eqref{msf-short} formulation, we get
\begin{equation}\label{div-g}
 \D_\alpha g^{\alpha\beta} =  - \D^\alpha u \, g^{\beta\delta}
\D_\alpha \D_\delta u.  
\end{equation}
Comparing this with \eqref{second-dg}, we see that the last two terms 
in the $\Box_g$ expression above cancel, and we obtain
the following simplified form for the covariant d'Alembertian: 
\begin{equation} \label{box-g-cov}
\Box_g = g^{\alpha \beta} \partial_\alpha \partial_\beta.
\end{equation}
In particular, we get the covariant form of the minimal surface equation for $u$:
\begin{equation}\label{u-cov}
\Box_g u = 0.
\end{equation}

For later use, we introduce the notation
\begin{equation}\label{def-A}
A^\alpha = - \D_\beta g^{\alpha\beta} = \frac{1}{2g} \D^\alpha g = 
\D^\beta u \, g^{\alpha\delta}
\D_\beta \D_\delta u.
\end{equation}

An interesting observation is that from here on, the Minkowski metric
plays absolutely no role:

\begin{remark}\label{r:all-Lorentz}
In order to introduce the minimal surface equations we have started from the 
Minkowski metric $m^{\alpha\beta}$. However, the formulation \eqref{msf-short} of the equations together with the relations \eqref{dg-dp} provide a complete 
description of the equations without any reference to the Minkowski metric, 
and which is in effect valid for any other Lorentzian metric. Indeed, the 
equation \eqref{msf-short} together with the fact that the metric components
$g^{\alpha \beta}$ are smooth functions of $\partial u$ satisfying \eqref{dg-dp}
are all that is used for the rest of the paper. Thus, our results apply equally 
for any other Lorentzian metric in $\R^{n+2}$.
\end{remark}

\subsection{The linearized equations}
Our objective now is to derive the linearized minimal surface equations. We will denote by $v$ the linearized variable. 
Then, by \eqref{dg-dp},  the linearization of the dual metric $g^{\alpha\beta} = g^{\alpha\beta}(u)$ takes the form 
\[
 \delta g^{\alpha\beta} =  - \D^\alpha u \, g^{\beta\nu}
 \D_\nu v  - \D^\beta u\,  g^{\alpha \sigma} \D_\sigma v.
\]
Then the linearized equation is directly computed, using the symmetry in $\alpha$ and $\beta$, as
\[
g^{\alpha\beta} \D_\alpha \D_\beta v - 2 \D^\alpha u \, g^{\beta\gamma}
 \D_\gamma v   = 0.
\]
Using the expression of $A$ in \eqref{def-A}, 
the linearized equations take the form
\begin{equation}\label{linearizedeqn}
(g^{\alpha \beta} \D_{\alpha} \D_{\beta} - 2 A^\gamma\D_\gamma) v = 0.
\end{equation}
Alternatively this may also be written in a divergence form,
\begin{equation}\label{divlinearizedeqn}
(\D_{\alpha} g^{\alpha \beta} \D_{\beta} - A^\gamma \D_\gamma )v = 0.
\end{equation}
or in covariant form,
\begin{equation} \label{v-cov}
\begin{aligned}
 \Box_g v &= 2 A^{\beta} \partial_\beta v. 
\end{aligned}
\end{equation}

\subsection{ Null forms and the nonlinear null condition} \ 
The primary null form which plays a role in this article is 
$Q_0$, defined by 
\begin{equation}\label{def-Q0}
    Q_0 (v,w) :=g^{\alpha \beta} \D_{\alpha}v
    \D_{\beta} w  = \D^\alpha v \D_\alpha w.
\end{equation}
Now, we verify that the nonlinear null condition \eqref{null} holds; for this we use \eqref{dg-dp}  to compute
\[
\frac{\partial g^{\alpha \beta}}{\partial_{g_{\gamma}}} \xi_{\alpha}\xi_{\beta}\xi_{\gamma} =\left( -g^{\alpha \beta}\partial^{\beta}u -g^{ \beta \gamma}\partial^{\alpha}u\right)\xi_{\alpha}\xi_{\beta}\xi_{\gamma} ,
\]
which vanishes on the null cone $  g^{\alpha\beta} \xi_\alpha \xi_\beta = 0$.
\medskip

In addition we  would like the contribution of $A$ to the linearized equation to be a null form. We get 
\[
A^\beta \partial_\beta v ={\D^{\alpha}u}\,  Q_0(\D_{\alpha}u, v).
\]

\subsection{ Two conformally equivalent metrics}
While the metric $g$ is the primary metric we use in this paper, for technical reasons we will also introduce 
two additional, conformally equivalent metrics, as follows:

\bigskip

(i) The metric $\tg$ is defined by 
\begin{equation}\label{def-tg}
\tg^{\alpha \beta}   := (g^{00})^{-1} g^{\alpha \beta}.
\end{equation}
Then the minimal surface equation can be written as 
\begin{equation}
\label{ms-tg}
  \tg^{\alpha \beta}  \partial_\alpha \partial_\beta u = 0
\end{equation}
while the linearized equation, written in divergence form is
\begin{equation}
\label{ms-tg-v}
(  \tg^{\alpha \beta}  \partial_\alpha \partial_\beta 
  - \tA^\alpha \partial_\alpha) v= 0,
\end{equation}
where, still raising indices only with respect to $g$, 
\begin{equation}\label{ta-def}
\tA^\alpha = (g^{00})^{-1} A^\alpha -  \tg^{\alpha \beta} \partial_\alpha (\ln g^{00})  = 
\partial^\beta u \tg^{\alpha \delta} \partial_\beta \partial_\delta u
+ 2 \partial^0 u \tg^{0\delta} \tg^{\alpha \beta} \partial_\beta \partial_\delta u
.    
\end{equation}

The main feature of $\tilde g$ is that $\tilde g^{00}=1$.
Because of this, it will be useful in the study of the linear paradifferential flow, in order to prevent 
a nontrivial paracoefficient in front of $\partial_0^2 v$
in the equations.

\bigskip

(ii) The metric $\hg$ is defined by 
\begin{equation}\label{def-hg}
\hg^{\alpha \beta}   = g^{-\frac12} g^{\alpha \beta}.
\end{equation}
Then the minimal surface equation can be written as 
\begin{equation}
  \hg^{\alpha \beta}  \partial_\alpha \partial_\beta u = 0,
\end{equation}
which is not so useful. Instead, the advantage of using this metric is that, using \eqref{def-A},  the linearized equation can now be written in divergence form,
\begin{equation}\label{hg-lin}
\partial_\alpha \hg^{\alpha \beta}  \partial_\beta  v
  = 0.
  \end{equation}
This will be very useful when we study the linearized equation in $\H^\frac12$ (respectively $\H^\frac58$ in two dimensions).

\subsection{Paralinearization and the linear paradifferential flow}
A key element in our study of the minimal surface equation is the 
associated linear paradifferential flow, which is derived from the 
linearized flow \eqref{linearizedeqn}. In inhomogeneous form, 
this is
\begin{equation}\label{paralin-inhom}
(\D_{\alpha} T_{g^{\alpha \beta}} \D_{\beta} -  T_{A^\gamma}\D_\gamma) w  = f.
\end{equation} 
Similarly we can write the paradifferential equations associated to $\tg$,
namely
\begin{equation}\label{paralin-inhom-tg}
(\D_{\alpha} T_{\tg^{\alpha \beta}}  \D_{\beta} - T_{\tA^\gamma}\D_\gamma) w  = f.
\end{equation} 
as well as $\hg$, which can be written in divergence form:
\begin{equation}\label{paralin-inhom-hg}
\D_{\alpha} T_{\hg^{\alpha \beta}}  \D_{\beta} w  = f.
\end{equation} 
These are all equivalent up to perturbative errors. Accordingly, we introduce the notation 
 \begin{equation}
T_{P} =  \partial_\alpha T_{g^{\alpha \beta}} \partial_\beta 
\end{equation}
for the paradifferential wave operator
 as well as its counterparts $T_{\tP}$ and $T_{\hP}$ with the metric $g$ replaced by  $\tg$, respectively $\hg$.

We will first use the paradifferential equation in the study of the 
minimal surface problem \eqref{msf-short}, which we rewrite in the form 
\begin{equation}\label{paralin-nonlin}
(T_{g^{\alpha \beta}} \D_{\alpha} \D_{\beta} - 2 T_{A^\gamma}\D_\gamma) u  = N(u).
\end{equation} 

A key contention of our paper is that the nonlinearity $N$ plays a perturbative 
role. However, this has to be interpreted in a more subtle way, 
in the sense that $N$ becomes perturbative only after a well chosen 
partial, variable coefficient normal form transformation.

Secondly, we will use it in the study of the linearized minimal surface
equation, which we can write in the form
\begin{equation}\label{para-linearized}
\D_{\alpha}T_{\hg^{\alpha \beta}} \D_{\beta} v  = N_{lin}(u) v.
\end{equation} 
Here the nonlinearity $N_{lin}$ will also play a perturbative 
role, in the same fashion as above. We caution the reader that this is \emph{not}
the linearization of $N$.


\section{Energy and Strichartz estimates}
\label{s:Strichartz}

Both energy and Strichartz estimates play an essential role in this paper,
in various forms and combinations. These are primarily applied first 
to the linear paradifferential flow, and then to the linearized flow 
associated to solutions to our main equation \eqref{msf}.  Our goal here 
is to provide a brief overview of these estimates.

Importantly, in this section we do not prove any energy or Strichartz estimates.
Instead, we simply provide definitions and context for what will be proved later in the paper,
and prove a good number of equivalences between various well-posedness statements and 
estimates.  We do this under absolutely minimal assumptions (e.g. boundedness) on the metric $g$,
in order to be able to apply these properties easily later on. In particular there are no 
commutator bounds needed or used in this section. The structure
of the minimal surface equations also plays no role here.

\subsection{The equations}
For context, here we consider a pseudo-Riemannian metric $g$ in $I \times \R^n$, where
$I=[0,T]$ is a time interval of unspecified length. We will make some minimal universal assumptions on the metric $g$:
\begin{itemize}
    \item  both $g$ and its inverse are uniformly bounded,
    \item the time slices are uniformly space-like.
\end{itemize}
Associated to this metric $g$, we will consider several equations:
\begin{description}
\item[ The linear paradifferential flow in divergence form]
\begin{equation}\label{Tbox-g-div}
\partial_\alpha  T_{g^{\alpha \beta}} \partial_\beta v = f, \qquad 
 v[0] = (v_0,v_1) .
\end{equation} 
\item[The linear paradifferential flow in non-divergence form]
\begin{equation}\label{Tbox-g-nodiv}
  T_{g^{\alpha \beta}} \partial_\alpha \partial_\beta v = f, \qquad 
 v[0] = (v_0,v_1) .
 \end{equation}

\item[ The linear flow in divergence form]
\begin{equation}\label{box-g-div}
\partial_\alpha  g^{\alpha \beta} \partial_\beta v = f, \qquad 
 v[0] = (v_0,v_1) .
\end{equation} 
\item[ The linear flow in non-divergence form]
\begin{equation}\label{box-g-nodiv}
  g^{\alpha \beta} \partial_\alpha \partial_\beta v = f \qquad
 v[0] = (v_0,v_1) .
 \end{equation}
 \end{description}

Several comments are in order:

\begin{itemize}
    \item As written, the above evolutions are inhomogeneous. If $f = 0$
    then we will refer to them as the \emph{homogeneous} flows.
    
    \item In the context of this paper, we are primarily interested in the metric $\hg$,
    in which case the equation \eqref{box-g-div} represents our main linearized flow,
    and \eqref{Tbox-g-div} represents our main linear paradifferential flow. The metric $g$ and the nondivergence form of the equations will be used in order to connect our results with the result of Smith-Tataru, which will be used in our proofs. 
    
    \item One may also add a gradient potential in the equations above; with the gradient potential added there is no difference between the divergence and the non-divergence form of the equations. We omit it in this section, as it plays no role.
\end{itemize}

We will consider these evolutions in the inhomogeneous Sobolev spaces $\H^s$. In order 
to do this uniformly, we will assume that $|I| \leq 1$; else using homogeneous spaces 
would be more appropriate. The exponent $s$ will be an arbitrary real number in the case 
of the paradifferential flows, but will have a restricted range otherwise.

\subsection{ Energy estimates and well-posedness for the homogeneous problem}
Here we review some relatively standard definitions and facts about local well-posedness.

\begin{definition}
For any of the above flows in the homogeneous form, we say that they are (forward) well-posed in $\H^s$ in the time interval $I=[0,T]$ if for each initial data $u[0] \in \H^s$ there exists a unique solution $u$ with the property that 
\[
u([\cdot]) \in C(I;\H^s).
\]
\end{definition}
This corresponds to a linear estimate of the form
\begin{equation}
\| v[\cdot] \|_{L^\infty(I;\H^s)} \lesssim \|v[0]\|_{\H^s}.
\end{equation}
Sometimes one establishes additional bounds for the solution (e.g. Strichartz estimates)
and these are then added in to the class of solutions for which uniqueness is established.
We will comment on this where needed. If no such assumption is used, we call this \emph{unconditional uniqueness}. 

For completeness and reference, we now state without proof a classical well-posedness result:

\begin{theorem} \label{t:easy-wp}
Assume that $\D g \in L^1(I;L^\infty)$. Then 

a) The paradiffererential flows \eqref{Tbox-g-div} and \eqref{Tbox-g-nodiv} are wellposed 
in $\H^s$ for all real $s$.

b) The divergence form evolution \eqref{box-g-div} is well-posed in $\H^s$ for $s \in [0,1]$, and the non-divergence form evolution \eqref{box-g-nodiv} is well-posed in $\H^s$ for $s \in [1,2]$.
\end{theorem}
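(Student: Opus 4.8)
The plan is to prove this via the standard energy method, treating the four flows uniformly by reducing the non-divergence forms to the divergence forms up to lower-order terms absorbed by the hypothesis $\D g \in L^1(I;L^\infty)$. First I would set up the basic $L^2$-type energy estimate for the divergence-form paradifferential flow \eqref{Tbox-g-div}. Writing the equation as a first-order system in $(v, \partial_t v)$ by solving for $T_{g^{00}}\partial_t^2 v$ — here the uniform space-likeness of the time slices and the uniform bounds on $g$ and $g^{-1}$ guarantee that $T_{g^{00}}$ is invertible on $L^2$ with uniformly bounded inverse after a harmless frequency truncation, since $g^{00}$ is bounded away from zero — one obtains a system $\partial_t V = \mathcal{A}(t) V + F$ where $\mathcal{A}(t)$ is a first-order paradifferential operator. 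The natural energy is the quadratic form $E[v](t) = \langle T_{\hg^{ij}} \partial_i v, \partial_j v\rangle + \langle \partial_t v, \partial_t v\rangle$ (or its $\H^s$ analogue obtained by conjugating with $\bD^s$), which is coercive, i.e. $E[v] \approx \|v[t]\|_{\H^0}^2$, precisely because of the space-likeness assumption and the $L^2$-boundedness properties of paraproducts (the Coifman–Meyer bounds in Section~\ref{s:para}). Differentiating $E$ in time, the principal terms cancel by the self-adjointness of the Weyl paraproduct $T_{g^{\alpha\beta}}$ for real $g^{\alpha\beta}$ (this is exactly why the Weyl quantization was chosen), and the remaining terms involve $\partial_t T_{g^{\alpha\beta}} = T_{\partial_t g^{\alpha\beta}}$, which is bounded on $L^2$ with norm $\lesssim \|\partial_t g(t)\|_{L^\infty}$, plus commutator terms between spatial derivatives and $T_{g^{\alpha\beta}}$ controlled by $\|\partial_x g(t)\|_{L^\infty}$ (Lemma~\ref{l:para-com-pk} applied at each dyadic scale, summed). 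This yields
\[
\frac{d}{dt} E[v] \lesssim \|\partial g(t)\|_{L^\infty} E[v] + \|f(t)\|_{L^2}\, E[v]^{1/2},
\]
and Gronwall's inequality, using $\int_I \|\partial g\|_{L^\infty}\,dt < \infty$, gives the a priori bound $\|v[\cdot]\|_{L^\infty(I;\H^0)} \lesssim \|v[0]\|_{\H^0} + \|f\|_{L^1(I;L^2)}$.

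Next I would upgrade this to general real $s$ for the paradifferential flows, which is the point where the paradifferential (as opposed to variable-coefficient differential) structure pays off: conjugating the equation by $\bD^s$ produces $\bD^s T_{g^{\alpha\beta}} \bD^{-s}$, which differs from $T_{g^{\alpha\beta}}$ by a paradifferential operator of order $0$ whose symbol regularity is again controlled by $\partial g$ (Lemma~\ref{l:para-com-pk} and the para-commutator bounds of Section~\ref{s:not}), so the $\H^s$ estimate reduces to the $\H^0$ estimate with the same right-hand side. Existence then follows from the a priori estimate by the standard route: approximate $g$ by smooth metrics $g^{(\eps)}$ (and/or truncate frequencies), solve the regularized equations by classical theory (the regularized operators are smooth, so ODE/energy theory applies), obtain uniform-in-$\eps$ bounds from the energy estimate, and pass to the limit; uniqueness is immediate from the energy estimate applied to the difference of two solutions (hence \emph{unconditional} in the class $C(I;\H^s)$ after pairing with the natural $C^1(I;\H^{s-1})$ information the equation forces). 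This proves part (a).

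For part (b), I would pass from the paradifferential flow to the genuine variable-coefficient flow by writing, via the Littlewood–Paley trichotomy, $g^{\alpha\beta}\partial_\alpha\partial_\beta v = T_{g^{\alpha\beta}}\partial_\alpha\partial_\beta v + T_{\partial_\alpha\partial_\beta v}\, g^{\alpha\beta} + \Pi(g^{\alpha\beta}, \partial_\alpha\partial_\beta v)$, and similarly in divergence form. The last two terms are treated as source terms $f' \in L^1(I;\H^{s-1})$: for the divergence form $\partial_\alpha g^{\alpha\beta}\partial_\beta v$ with $s\in[0,1]$, after extracting the paradifferential part one is left with $\partial_\alpha(T_{\partial_\beta v} g^{\alpha\beta}) + \partial_\alpha \Pi(g^{\alpha\beta},\partial_\beta v)$, and the Coifman–Meyer estimates bound these in $\H^{s-1}$ by $\lesssim \|\partial g\|_{L^\infty}\|v[t]\|_{\H^s}$ precisely when $s-1 \le 0 \le s$, i.e. $s\in[0,1]$ — this is where the restricted range comes from, since a paraproduct of type $T_{\partial v} g$ lands in a negative-regularity space controlled by $\|\partial v\|$ in that space. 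For the non-divergence form $g^{\alpha\beta}\partial_\alpha\partial_\beta v$ one carries two derivatives on $v$, shifting the admissible range to $s\in[1,2]$ by the same bookkeeping. Feeding these source terms back into the paradifferential well-posedness of part (a) and closing by Gronwall gives the a priori bound; existence and uniqueness follow as before.

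The main obstacle I anticipate is not any single estimate but the bookkeeping of the reduction to a first-order system: solving for the top time derivative requires inverting $T_{g^{00}}$ (equivalently $T_{\hg^{00}}$), and one must check carefully that, under only the boundedness and space-likeness hypotheses, this inverse exists as a bounded operator on every $\H^\sigma$ with the commutator error $[\bD^\sigma, T_{g^{00}}^{-1}]$ still controlled by $\partial g$ — otherwise the time-derivative loss is not recovered uniformly. This is routine given Calderón–Vaillancourt and the para-commutator lemmas, but it is the step that genuinely uses both structural assumptions on $g$ and must be done before the energy argument even begins; for the metric $\tg$ with $\tg^{00}=1$ it is of course trivial, which is one reason that conformal normalization was introduced in Section~\ref{s:equations}.
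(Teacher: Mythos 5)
The paper states Theorem~\ref{t:easy-wp} explicitly \emph{without} proof (``we now state without proof a classical well-posedness result''), so there is no argument in the paper to compare against; I will assess your plan on its own merits.

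Part (a) is fine and is the standard route: reduce to a first-order system, use the self-adjointness of the Weyl paraproduct to cancel principal terms, control the remainder by $\|\partial g\|_{L^\infty}$, and conjugate by $\bD^\sigma$ (which is harmless for a \emph{paradifferential} operator because the commutator $[\bD^\sigma, T_{g^{\alpha\beta}}]$ is of one order lower, bounded by $\|\partial g\|_{L^\infty}$).

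Part (b), however, has a genuine gap at the endpoints of the stated intervals. You argue by treating the difference $\partial_\alpha\bigl(T_{\partial_\beta v}\, g^{\alpha\beta} + \Pi(g^{\alpha\beta},\partial_\beta v)\bigr)$ (and its non-divergence analogue) as a perturbative source in $L^1 H^{s-1}$, claiming the Coifman--Meyer estimates give the bound precisely for $s\in[0,1]$. But with $\partial g\in L^\infty$ one only has $\|P_k g\|_{L^\infty}\lesssim 2^{-k}\|\partial g\|_{L^\infty}$, i.e. $g \in B^1_{\infty,\infty}$, and this yields only Besov regularity $B^{s-1}_{2,\infty}$ for these paraproducts, not $H^{s-1}=B^{s-1}_{2,2}$. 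Concretely, at $s=1$ in the divergence case one needs $T_{\partial_\beta v}\,g^{j\beta}\in H^1$, but $2^k\|P_k T_{\partial v}g\|_{L^2}\lesssim\|\partial v\|_{L^2}\|\partial g\|_{L^\infty}$ is merely $\ell^\infty_k$, not $\ell^2_k$; frequency envelopes for $v$ do not help because the low-frequency factor $\|(\partial v)_{<k}\|_{L^2}$ has no room to decay at $s=1$. Symmetrically, at $s=0$ the balanced piece $\Pi(g^{\alpha\beta},\partial_\beta v)$ is borderline for $L^2$. The same phenomenon occurs at $s=1$ and $s=2$ for the non-divergence form. So the perturbative route only closes on the \emph{open} intervals.

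The classical fix, which is what ``well-posed for $s\in[0,1]$'' is really encoding, does not perturb off the paradifferential flow at the endpoints at all. For the divergence form at $s=1$ one multiplies $\partial_\alpha g^{\alpha\beta}\partial_\beta v=f$ directly by $\partial_t v$, integrates by parts, and reads off the energy $\int -g^{00}|\partial_t v|^2 + g^{ij}\partial_i v\,\partial_j v\,dx$ whose time derivative is controlled by $\|\partial g\|_{L^\infty}$; no paradifferential decomposition is needed and $\partial g\in L^1_t L^\infty$ suffices. For $s=0$ one uses the duality identity \eqref{dual}: the adjoint of the divergence-form operator is itself, so forward well-posedness in $\H^0$ is equivalent to backward well-posedness in $\H^1$. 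For the non-divergence form, $s=1$ is again a direct energy estimate, and $s=2$ follows by differentiating the equation once and noting the commutator $\partial_x g^{\alpha\beta}\cdot\partial_\alpha\partial_\beta v\in L^1_t L^2$ when $v\in H^2$, $\partial g\in L^1_tL^\infty$. The interior of each interval then follows either by your perturbative argument (which does close for $s$ strictly interior, using envelopes) or simply by interpolation between the endpoints. You should rewrite part (b) along those lines, or at least acknowledge that the endpoints require a separate, non-perturbative argument.
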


We remark that the metrics $g$ associated with the solutions of Smith-Tataru satisfy the above hypothesis, but the solutions in our paper do not.

A slightly stronger form of well-posedness is to assert the existence of a suitable 
(time dependent) energy functional $E^s$ in $\H^s$:

\begin{definition}\label{d:eest}
An energy functional for either of the above problems in $\H^s$ is a bounded quadratic form in $\H^s$ which has the following two properties:

\begin{enumerate}[label = \alph*)]
    \item Coercivity,
\begin{equation}\label{ee-equiv}
E^s(v[t])  \approx \| v[t] \|_{\H^s}^2  .
\end{equation}

\item Bounded growth for solutions $v$ to the homogeneous equation,
\begin{equation}\label{ee-hom}
\frac{d}{dt}E^s(v[t])  \lesssim B(t) \| v[t] \|_{\H^s}^2,  
\end{equation}
where $B \in L^1$ depends only on $g$.
\end{enumerate}
\end{definition}
Later we will also interpret $E^s$ as a symmetric bilinear form in $\H^s$. Such an interpretation is unique.

We remark that, in the context of Theorem~\ref{t:easy-wp}, where $\partial g \in L^1 L^\infty$, 
an energy functional $E^1$ corresponding to $s = 1$ is classically obtained by multiplying the equation with a suitable smooth time-like vector field  and integrating by parts; we refer the reader to Section~\ref{s:multiplier} where this procedure is described in greater detail.
Then for $s \neq 1$ one simply defines 
\[
E^s(v[0]) = E^1(\bD^{s-1} v[0]),
\]
and the corresponding control parameter $B$ may be taken as 
\[
B(t) = \| \partial g(t)\|_{L^\infty}.
\]

\subsection{ The wave equation as a system and the inhomogeneous problem}
\label{s:Duhamel}

Switching now to the associated inhomogeneous flows, the classical set-up is to take 
a source term $f \in L^1 H^{s-1}$, and then look for solutions $v$ in $C(I;\H^s)$ as above. This is commonly done using the \emph{Duhamel principle}, which is most readily applied 
by rewriting the wave equation as a system. We next describe this process.

A common choice is to write the system 
for the pair of variables $(v,\partial_t v)$. However, for us it will be more convenient
to to make a slightly different linear transformation, and use instead the pair
\begin{equation}\label{bv}
\bv[t] := \myvec{v(t)\\g^{0\alpha}\partial_\alpha v(t)}:= Q\myvec{v \\ \partial_t v}, \qquad
Q = \myvec{1 &  0 \\  g^{0j} \partial_j & g^{00}  }
\end{equation}
for \eqref{box-g-div} and \eqref{box-g-nodiv}, with products replaced by paraproducts in the case of the equation \eqref{Tbox-g-div} or \eqref{Tbox-g-nodiv}. For later use, we record the inverse
of $Q$; this is either 
\begin{equation}\label{Q-inverse}
 Q^{-1} = \myvec{ 1 & 0 \\ -(g^{00})^{-1} g^{0j} \partial_j  & (g^{00})^{-1}}   ,
\end{equation}
or its version with products replaced by paraproducts, as needed.

The system for $\bv$ will have the form 
\begin{equation}\label{bv-syst}
\frac{d}{dt} \bv[t] = \LL \bv[t],
\end{equation}
with the appropriate choice for the matrix operator $\LL$.
For instance in the case of the homogeneous equation \eqref{box-g-div} we have
\begin{equation}\label{calA}
 \LL = \myvec{ - (g^{00})^{-1} g^{0j} \partial_j &  (g^{00})^{-1} \\  - (g^{00})^{-1} \partial_i g^{ij}  \partial_j + \partial_i g^{i0}   (g^{00})^{-1} g^{0j} \partial_j   & -\partial_j (g^{00})^{-1} g^{0j}},
\end{equation}
which has the antisymmetry property (only for the principal part, in the non-divergence case)
\begin{equation}\label{A-sym}
\LL^* = - J \LL J^{-1}, \qquad J = \myvec{0 & 1 \\ -1 & 0}.
\end{equation}

We will always work in settings where $Q$ is bounded and invertible in $\H^s$. This is 
nearly automatic in the paradifferential case; there we only need to make sure that the operator 
$T_{g^{00}}$ is invertible. In the differential case we will have to ask that multiplication by $g$
and by $(g^{00})^{-1}$ are bounded in $H^{s-1}$. In such settings, $\H^s$ well-posedness 
for our original wave equation and for the associated system are equivalent. If a good energy functional $E^s$ exists for the wave equation, then we may define an associated energy functional
for the system by setting 
\begin{equation}\label{bE}
  \bE^s(\bv[t]) = E^s(Q^{-1} \bv[t]).
\end{equation}
Then the properties \eqref{ee-equiv} and \eqref{ee-hom} directly transfer to the homogeneous system
\eqref{bv-syst}.

If our system is (forward) well-posed in $\H^s$, then solving it generates a (forward) evolution operator $S(t,s)$ which is bounded in $\H^s$ and maps the data at time $s$ to the solution at time $t$,
\[
S(t,s) \bv[s] = \bv[t].
\]
For the system it is easy to consider the inhomogeneous version
\begin{equation}\label{bv-syst-inhom}
\frac{d}{dt} \bv[t] = \LL \bv[t] + \ff[t]. 
\end{equation}
If $f \in L^1 \H^s$ then the solution to \eqref{bv-syst-inhom} is given by 
Duhamel's formula,
\begin{equation}
\bv[t] = S(t,0) \bv[0] + \int_{0}^t S(t,s) \ff[s] \, ds,    
\end{equation}
and satisfies the bound
\begin{equation}\label{eb-bf}
\| \bv \|_{L^\infty \H^s} \lesssim \| \bv[0]\|_{H^s} + \| \ff\|_{L^1 \H^s}.    
\end{equation}
If we have a good energy $\bE^s$ for the homogeneous system, then Duhamel's formula
easily allows us to obtain the corresponding energy estimate for the inhomogeneous one,
namely
\begin{equation}\label{ee-bf}
\frac{d}{dt}\bE^s(\bv[t])  \lesssim \bE^s(\bv[t], \ff[t] )+  B(t) \| \bv[t] \|_{\H^s}^2.     \end{equation}

\bigskip

 Now we are ready to return to our original set of equations, add the source term $f$ and reinterpret the above consequences of Duhamel's formula there. As in the homogeneous case, we 
 define $\bv[t] = Q v[t]$. Then adding the source term $f$ in the original equation is equivalent to adding a source term $\ff$ in the  above system. Indeed, it is readily seen that for all our four equations, $\ff$ is given by
\begin{equation}\label{first-ff}
\ff[t] = \myvec{0 \\  f(t)}.   
\end{equation}
To complete the correspondence, we note that for such $\ff$ we have 
\[
Q^{-1} \ff = ({g^{00}})^{-1} \ff[t].
\]
Then we immediately arrive at the following result:

\begin{theorem}\label{t:lin-wp-inhom}
a) Assume that either of the homogeneous paradifferential flows \eqref{Tbox-g-div} or  \eqref{Tbox-g-nodiv} are well-posed in $\H^s$. Then the associated inhomogeneous 
flows are well-posed in $\H^s$ for $f \in L^1 H^{s-1}$, and the following estimate holds
\begin{equation}
\| v[\cdot] \|_{L^\infty(I;\H^s)} \lesssim \|v[0]\|_{\H^s} + \| f \|_{L^1 H^{s-1}}.
\end{equation}
In addition, if an energy functional $E^s$ in $\H^s$ exists, then 
\begin{equation}\label{ee-f}
\frac{d}{dt}E^s(v[t])  \lesssim E^s(v[t], (T_{g^{00}})^{-1} \ff[t] )+  B(t) \| u[t] \|_{\H^s}^2.
\end{equation}

b) The same holds for the  flows \eqref{box-g-div} or  \eqref{box-g-nodiv} under the additional assumption that multiplication by $g$ and $({g^{00}})^{-1}$ is bounded in $H^{s-1}$, with 
the paraproduct replaced by the corresponding product. 
\end{theorem}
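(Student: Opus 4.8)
The plan is to take the first-order system reformulation and Duhamel's principle set up in the paragraphs preceding the statement entirely at face value; the proof is then just a dictionary translating the abstract evolution-operator bounds back to the four scalar equations.

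First, for part (a), I would check that the transformation $Q$ from \eqref{bv} and its inverse $Q^{-1}$ from \eqref{Q-inverse}, taken in their paraproduct versions, are bounded and invertible on $\H^s$ for every real $s$. As already noted in the text, the only nontrivial requirement is the invertibility of $T_{g^{00}}$ on $H^{s-1}$, and this follows from the standing assumptions: since $g$ and its inverse are uniformly bounded and the time slices are uniformly space-like, $g^{00}$ is bounded above and below, so a paraproduct decomposition together with a Neumann-series argument yields a bounded inverse for $T_{g^{00}}$. With $Q^{\pm1}$ bounded on $\H^s$, the assumed $\H^s$ well-posedness of the homogeneous paradifferential flow \eqref{Tbox-g-div} or \eqref{Tbox-g-nodiv} transfers to the homogeneous system $\frac{d}{dt}\bv = \LL \bv$, producing an evolution operator $S(t,s)$ bounded on $\H^s$.

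Next I would insert the source term. By \eqref{first-ff} the scalar source $f$ corresponds to $\ff[t] = \myvec{0 \\ f(t)}$ in the system, whose $\H^s$ norm is exactly $\|f\|_{H^{s-1}}$. Duhamel's formula gives $\bv[t] = S(t,0)\bv[0] + \int_0^t S(t,s)\ff[s]\,ds$, hence the bound $\|\bv\|_{L^\infty \H^s} \lesssim \|\bv[0]\|_{\H^s} + \|\ff\|_{L^1\H^s} = \|\bv[0]\|_{\H^s} + \|f\|_{L^1 H^{s-1}}$ as in \eqref{eb-bf}; applying $Q^{-1}$ and using the boundedness of $Q^{\pm1}$ converts this into the stated estimate for $v$ and gives uniqueness of $v$ in $C(I;\H^s)$. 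For the energy statement, if an energy functional $E^s$ exists for the wave equation then $\bE^s(\bv):=E^s(Q^{-1}\bv)$ from \eqref{bE} is an energy functional for the system, so the inhomogeneous energy identity \eqref{ee-bf} holds; substituting $Q^{-1}\ff = (T_{g^{00}})^{-1}\ff$ and rewriting $\bE^s$ in terms of $E^s$ yields \eqref{ee-f}.

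Finally, part (b) runs through the same argument, now starting from the assumed $\H^s$ well-posedness of the homogeneous differential flows \eqref{box-g-div} or \eqref{box-g-nodiv} and using the product versions of $Q$, $Q^{-1}$: one checks that $Q$, which involves multiplication by components of $g$ on the $H^{s-1}$ slot, and $Q^{-1}$, which involves multiplication by $(g^{00})^{-1}$ there, are bounded and invertible on $\H^s$, which is exactly the added hypothesis (and is what pins down the admissible range of $s$). The only point requiring attention — the ``main obstacle,'' such as it is — is purely notational: in the paradifferential cases one must be consistent about quantizations, using paraproducts throughout $Q$, $Q^{-1}$, and the identity $Q^{-1}\ff=(T_{g^{00}})^{-1}\ff$, so that no commutator errors are silently introduced. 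Since this section assumes nothing beyond boundedness of $g$ and uses no commutator estimates, this matching is entirely formal.
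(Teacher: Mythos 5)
Your proposal is correct and is essentially the same argument the paper has in mind: pass to the first-order system via $Q$, invoke Duhamel and the bounds \eqref{eb-bf}, \eqref{ee-bf}, identify $\ff=(0,f)^T$ and $Q^{-1}\ff=(T_{g^{00}})^{-1}\ff$ (resp. $(g^{00})^{-1}\ff$), and translate back using boundedness and invertibility of $Q^{\pm1}$. The only thing to keep in mind is that the invertibility of $T_{g^{00}}$ via Neumann series really uses that $g^{00}$ is a perturbation of a constant (which is the regime in which the paper applies this); but since well-posedness of the homogeneous paradifferential flow already presupposes this, it is not a gap relative to the hypotheses.
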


For our purposes in this paper, we  will also need to allow for a larger class of source terms
of the form 
\begin{equation}\label{f+}
 f = \partial_t f_1 +f_2.   
\end{equation} 
To understand why this is natural, it is instructive 
to start from the inhomogeneous system \eqref{bv-syst-inhom} and argue backward. 

Above, we have used the inhomogeneous system in the case where the first component of $\ff$ was zero. Now we will allow for both terms to be nonzero in $\ff$, and derive the corresponding wave equation. For clarity we do this in the context of the equation \eqref{box-g-div}, for which we have computed the corresponding operator $\LL$ in \eqref{calA}; however, a similar computation will apply in all four cases. 

We begin by defining 
\begin{equation}\label{new-v}
 v(t) = \bv_1(t)
\end{equation}
as our candidate for the wave equation solution. Then the first equation of the system reads
\begin{equation}\label{new-vt}
\partial_t v = - (g^{00})^{-1} g^{0j} \partial_j v + (g^{00})^{-1} \bv_2 + \ff_1,
\end{equation}
or equivalently
\[
g^{0\alpha} \partial_\alpha v = \bv_2 + g^{00} \ff_1.
\]
Differentiating this with respect to time we obtain
\[
\begin{aligned}
\partial_\beta g^{\beta\alpha} \partial_\alpha v = & \ \partial_j g^{j\alpha} \partial_\alpha v
+ \partial_t \bv_2 + \partial_t g^{00} \ff_1
\\
= & \ \partial_j g^{jk} \partial_k v + \partial_j g^{j0} \partial_t v 
+ \partial_t \bv_2 + \partial_t g^{00} \ff_1.
\end{aligned}
\]
Finally we substitute $\partial_t v$ from \eqref{new-vt} and $\partial_t \bv_2$ from the second equation of the system. We already know the right hand side should vanish if $\ff=0$, so it 
suffices to track the $\ff$ terms. Then we easily obtain the desired equation for $v$:
\begin{equation}
\partial_\beta g^{\beta\alpha} \partial_\alpha v = \partial_\alpha g^{\alpha 0} \ff_1 + \ff_2. 
\end{equation}
Comparing this with \eqref{f+}, we obtain the correspondence between the source terms for the 
wave equation and the system:
\begin{equation}\label{f-vs-ff}
f_1 = g^{00} \ff_1, \qquad f_2 =  \partial_k g^{k0} \ff_1 + \ff_2.  
\end{equation}
We also record here the correspondence between the solutions, in the form
\begin{equation}\label{v-vs-bv}
\bv_1 = v, \qquad \bv_2 = g^{0\alpha} \partial_\alpha v - g^{00}\ff_1,    
\end{equation}
noting that this is no longer homogeneous, as in \eqref{bv}.

The last step in our analysis is to reinterpret the bounds \eqref{eb-bf} and \eqref{ee-bf}
in terms of $v$ and $f$. To do this we make the assumption that multiplication by $g$ and $(g^{00})^{-1}$ is bounded in both $H^s$ and $H^{s-1}$. 
Then from \eqref{eb-bf} we get
\[
\begin{aligned}
\|v\|_{L^\infty \H^s} \lesssim & \ \| \bv\|_{L^\infty \H^s} + \| \ff_1\|_{L^\infty \H^s}
\\
\lesssim & \ \|\bv[0]\|_{\H^s} + \| \ff\|_{L^1 \H^s} + \| \ff_1\|_{L^\infty \H^s}
\\
\lesssim & \ \|v[0]\|_{\H^s} + \|f_1\|_{L^1 H^{s-1}} + \| f_2\|_{L^1 H^s \cap L^\infty \H^s}.
\end{aligned}
\]

Similarly, from \eqref{ee-bf} and \eqref{bE} we obtain the energy bound
\begin{equation}
  \frac{d}{dt}E^s(Q^{-1}\bv[t])  \lesssim \bE^s(Q^{-1}\bv[t], Q^{-1}\ff[t] )+  B(t) \| v[t] \|_{\H^s}^2.
\end{equation}
Here we use \eqref{Q-inverse} and \eqref{v-vs-bv} to compute 
\[
Q^{-1}\bv[t] = v[t] - \myvec{0 \\ \ff_1 } = v[t] - \myvec{0 \\ (g^{00})^{-1} f_1 } := v[t] - \tilde v[t], 
\]
respectively, using also \eqref{f-vs-ff},
\begin{equation}\label{new-source}
\begin{aligned}
 \tilde f[t]: = Q^{-1}\ff[t] = & \  Q^{-1} \myvec{ (g^{00})^{-1} f_1 \\ f_2 - \partial_k g^{k0} (g^{00})^{-1} f_1}
\\
 = & \ \myvec{ (g^{00})^{-1} f_1 \\ (g^{00})^{-1}( f_2 - \partial_k g^{k0} (g^{00})^{-1} f_1 -  g^{0k} \partial_k (g^{00})^{-1} f_1) }.
\end{aligned}
\end{equation}
Thus we obtain the following natural extension of  Theorem~\ref{t:lin-wp-inhom} above:

\begin{theorem}\label{t:lin-wp-inhom+}
a) Assume that the homogeneous evolution \eqref{box-g-nodiv} or \eqref{box-g-div} is well-posed in $\H^s$, and that  multiplication by $g$ and  $(g^{00})^{-1}$ is bounded in $H^s$ and in $H^{s-1}$.
Consider the evolution \eqref{box-g-nodiv} with a source term $f$ of the form
\[
f = \partial_t f_1+ f_2,  \qquad f_1 \in L^1 H^s \cap C H^{s-1}, \qquad f_2 \in L^1 H^{s-1}.
\]
Then a unique solution $u \in C(I, \H^s)$ exists. If in addition the homogeneous problem admits an energy functional $E^s$ as in Definition~\ref{d:eest} then we have the energy estimate
\begin{equation}\label{ee-corect}
\frac{d}{dt}E^s(v[t]-\tilde v[t])  \lesssim E^s(v[t]-\tilde v[t], \tilde f[t] )+  B(t) \| v[t]-\tilde v[t] \|_{\H^s}^2  
\end{equation}
with $\tilde v$ and $\tilde f$ defined above and $B$ as in \eqref{ee-hom}.

b) The same result applies for the paradifferential equations \eqref{Tbox-g-div}, respectively 
\eqref{Tbox-g-nodiv}, where all instances of $g$ above are replaced by the corresponding paraproducts $T_g$.
\end{theorem}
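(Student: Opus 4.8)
The plan is to reduce Theorem~\ref{t:lin-wp-inhom+} to the system-level statements already assembled in Section~\ref{s:Duhamel}, following exactly the ``reinterpretation'' argument that precedes it, but carried out cleanly. First I would set up the system: given the homogeneous flow \eqref{box-g-div} (resp. \eqref{box-g-nodiv}), pass to the first-order system \eqref{bv-syst} via the transformation $Q$ in \eqref{bv}. Since multiplication by $g$ and by $(g^{00})^{-1}$ is bounded on $H^s$ and $H^{s-1}$, the operators $Q, Q^{-1}$ are bounded isomorphisms of $\H^s$, so homogeneous $\H^s$ well-posedness for the wave flow transfers to homogeneous $\H^s$ well-posedness for the system, and an energy functional $E^s$ for the wave flow yields $\bE^s(\bv) = E^s(Q^{-1}\bv)$ for the system, satisfying \eqref{ee-equiv} and \eqref{ee-hom}. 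This gives a forward evolution operator $S(t,s)$ bounded on $\H^s$.

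Next I would handle the general source term $f = \partial_t f_1 + f_2$. The key algebraic input is the computation already displayed in the excerpt: choosing $\ff[t] = Q^{-1}\binom{(g^{00})^{-1} f_1}{\, f_2 - \partial_k g^{k0}(g^{00})^{-1} f_1}$ (i.e. $\ff_1 = (g^{00})^{-1} f_1$, $\ff_2 = f_2 - \partial_k g^{k0}(g^{00})^{-1} f_1$ before applying $Q^{-1}$), the inhomogeneous system \eqref{bv-syst-inhom} with this $\ff$ has its first component $v := \bv_1$ solving \eqref{box-g-div} (resp. \eqref{box-g-nodiv}) with source $\partial_t f_1 + f_2$; this is precisely the backward computation in the paragraph preceding the theorem, via \eqref{f-vs-ff} and \eqref{v-vs-bv}. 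Then Duhamel's formula for the system produces $\bv$, hence $v$, and the bound \eqref{eb-bf} together with the boundedness of $g$, $(g^{00})^{-1}$ on $H^s$ and $H^{s-1}$ gives the chain of estimates already laid out, yielding $v \in C(I;\H^s)$ with the stated norm bound; uniqueness follows from uniqueness for the system, which in turn follows from the homogeneous well-posedness hypothesis. For the energy estimate, I would start from \eqref{ee-bf} for $\bE^s$ and substitute $\bE^s(\bv) = E^s(Q^{-1}\bv)$; using \eqref{Q-inverse} and \eqref{v-vs-bv} one identifies $Q^{-1}\bv[t] = v[t] - \tilde v[t]$ with $\tilde v[t] = \binom{0}{(g^{00})^{-1} f_1}$, and $Q^{-1}\ff[t] = \tilde f[t]$ as in \eqref{new-source}, giving exactly \eqref{ee-corect}. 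Part (b) is identical, with every product replaced by the corresponding paraproduct and invertibility of $Q$ following from invertibility of $T_{g^{00}}$, which is automatic under the standing assumptions.

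The main obstacle, such as it is, is bookkeeping rather than conceptual: one must be careful that the correspondence \eqref{f-vs-ff}--\eqref{v-vs-bv} is applied consistently, since $Q^{-1}\ff$ is not simply $\binom{0}{\ast}$ anymore (the transformation mixes the two slots through the $g^{0j}\partial_j$ entry), and one must verify that each term appearing in $\tilde f$ in \eqref{new-source} lands in the right space given the hypotheses $f_1 \in L^1 H^s \cap CH^{s-1}$ and $f_2 \in L^1 H^{s-1}$ — in particular that the cross terms $\partial_k g^{k0}(g^{00})^{-1} f_1$ and $g^{0k}\partial_k (g^{00})^{-1} f_1$ are controlled, which is where the boundedness of multiplication by $g$ and $(g^{00})^{-1}$ on \emph{both} $H^s$ and $H^{s-1}$ is used (not just $H^{s-1}$ as in Theorem~\ref{t:lin-wp-inhom}). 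No commutator estimates and no structure of the minimal surface equation are needed; everything is a direct consequence of the Duhamel analysis of Section~\ref{s:Duhamel} together with the boundedness assumptions.
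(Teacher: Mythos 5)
Your proposal is correct and takes essentially the same route as the paper: the paper does not give a separate proof of Theorem~\ref{t:lin-wp-inhom+} but derives it directly from the system reformulation, Duhamel's formula, and the source-term correspondence \eqref{f-vs-ff}--\eqref{new-source}, which is exactly what you do. One small slip to fix: you write ``choosing $\ff[t] = Q^{-1}\binom{(g^{00})^{-1}f_1}{f_2 - \partial_k g^{k0}(g^{00})^{-1}f_1}$'' --- that displayed vector \emph{is} $\ff$ itself, not $Q^{-1}$ applied to it (applying $Q^{-1}$ gives $\tilde f$, as in \eqref{new-source}); your parenthetical remark immediately afterward states the correct assignment, so this is only a typo, not a conceptual error.
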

We remark that in the situations where we apply this result, the mapping properties 
for $g$ and $(g^{00})^{-1}$
will be fairly straightforward to verify. In the paradifferential case, for instance,
the continuity of $g$ will suffice.

\subsection{A duality argument}
Duality plays an important role in many estimates for evolution equations. We will also 
use duality considerations in this paper for several arguments. We restrict the discussion below
to the problems written in divergence form, as this is what we will use later in the paper.
However, similar versions may be formulated in the nondivergence case.

At heart, this is based on the following identity, which in the context of the 
operator $\partial_\alpha g^{\alpha \beta} \partial_\beta$ is written as follows:
\begin{equation}\label{dual}
\iintT   \partial_\alpha g^{\alpha \beta}\partial_\beta  v \cdot w -  v \cdot\partial_\alpha g^{\alpha \beta} \partial_\beta w\, dx dt
= \left.\int g^{0j} \partial_j v \cdot w - v \cdot g^{0j} \partial_j w\,  dx \right|_0^T.
\end{equation}
This holds for any test functions $v$ and $w$. The integral on the right can be viewed as 
a duality relation between $u[t]$ and $v[t]$,
\[
B(u[t],v[t]) = \int g^{0j} \partial_j u \cdot v - u \cdot g^{0j} \partial_j v\,  dx.
\]
Precisely, assuming that $g:H^{s-1} \to H^{s-1}$ and that $g^{00}$ is invertible, this expression 
has the following two properties
\begin{enumerate}
    \item Boundedness,
\[
B: \H^{s} \times \H^{1-s} \to \R.
\]

\item Coercivity, 
\[
\sup_{\| v\|_{\H^{1-s}} \leq 1} B(u,v) \approx \| u \|_{\H^s}  .
\]
\end{enumerate}

A standard consequence of this relation is the following property:

\begin{proposition}
The evolutions \eqref{box-g-div}, respectively \eqref{Tbox-g-div} are forward well-posed in 
$\H^s$ iff they are backward well-posed in $\H^{1-s}$.
\end{proposition}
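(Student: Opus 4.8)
The plan is to use the bilinear pairing $B(\cdot,\cdot)$ together with the energy identity \eqref{dual} to set up a standard duality argument: forward well-posedness in $\H^s$ produces a bounded solution operator, whose adjoint (with respect to $B$) will turn out to solve the backward problem in $\H^{1-s}$, and conversely. First I would record the precise meaning of the two well-posedness statements in terms of linear estimates. Forward well-posedness in $\H^s$ on $I=[0,T]$ means that for data $v[0]\in\H^s$ there is a unique $v\in C(I;\H^s)$ solving \eqref{box-g-div} (resp. \eqref{Tbox-g-div}) with $f=0$, equivalently the bound $\|v[\cdot]\|_{L^\infty(I;\H^s)}\lesssim\|v[0]\|_{\H^s}$; backward well-posedness in $\H^{1-s}$ means the analogous statement for data prescribed at $t=T$ and solved on $[0,T]$ going backward, with the bound $\|w[\cdot]\|_{L^\infty(I;\H^{1-s})}\lesssim\|w[T]\|_{\H^{1-s}}$. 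I would also note that, since the equation is time-reversible in form (the operator $\partial_\alpha g^{\alpha\beta}\partial_\beta$ is preserved under $t\mapsto T-t$ after a harmless relabeling of the metric components), backward well-posedness in $\H^{1-s}$ is itself equivalent to forward well-posedness in $\H^{1-s}$ for the reflected metric; but the cleaner route is the direct duality computation below, which I now sketch.

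Assume forward well-posedness in $\H^s$. Then, via Theorem~\ref{t:lin-wp-inhom} and the system reformulation, the forward evolution operator $S(t,\sigma)$ is bounded on $\H^s$, and in particular $\|S(T,0)\|_{\H^s\to\H^s}\lesssim 1$. Take any test function $w$ with $w[T]\in\H^{1-s}$ and let $w$ be the backward solution candidate. Apply \eqref{dual} with $v$ the forward homogeneous solution with arbitrary data $v[0]\in\H^s$: the spacetime integrand vanishes since both $v$ and $w$ solve the homogeneous equation, so
\[
B(v[T],w[T]) = B(v[0],w[0]).
\]
Using the coercivity property (2) of $B$, namely $\|w[0]\|_{\H^{1-s}}\approx\sup_{\|v[0]\|_{\H^s}\le 1}B(v[0],w[0])$, together with the boundedness property (1) and the forward bound $\|v[T]\|_{\H^s}\lesssim\|v[0]\|_{\H^s}$, I get
\[
\|w[0]\|_{\H^{1-s}}\approx\sup_{\|v[0]\|_{\H^s}\le1}B(v[T],w[T])\lesssim\sup_{\|v[0]\|_{\H^s}\le1}\|v[T]\|_{\H^s}\|w[T]\|_{\H^{1-s}}\lesssim\|w[T]\|_{\H^{1-s}},
\]
and the same estimate at an intermediate time $t$ (replacing $T$ by $t$, using $v$ the forward solution on $[0,t]$) gives $\|w[t]\|_{\H^{1-s}}\lesssim\|w[T]\|_{\H^{1-s}}$ uniformly, i.e. the backward energy bound. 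Existence of the backward solution for arbitrary $\H^{1-s}$ data then follows by the usual density/duality argument: the a priori bound lets one define the backward solution operator as the adjoint (with respect to $B$) of $S(T,0)$, first on smooth data and then by density, and uniqueness follows from the a priori bound applied to a difference of solutions. The converse direction — backward well-posedness in $\H^{1-s}$ implies forward well-posedness in $\H^s$ — is symmetric: run the identity \eqref{dual} with $w$ the backward homogeneous solution and $v$ the forward candidate, use coercivity of $B$ in the first slot, and conclude $\|v[t]\|_{\H^s}\lesssim\|v[0]\|_{\H^s}$, then promote the a priori bound to existence and uniqueness by the same adjoint construction.

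For the paradifferential flow \eqref{Tbox-g-div} the argument is identical, with the only change being that the pairing $B$ and the boundary identity \eqref{dual} use $T_{g^{\alpha\beta}}$ in place of $g^{\alpha\beta}$; the needed structural facts — self-adjointness of the Weyl paraproduct $T_{g^{\alpha\beta}}$ up to the already-accounted-for terms, invertibility of $T_{g^{00}}$, and boundedness of $T_g$ on Sobolev spaces — are all available under the minimal assumptions on $g$ made in this section, so the properties (1) and (2) of $B$ hold verbatim. The main obstacle I anticipate is purely bookkeeping rather than conceptual: one must be careful that the boundary bilinear form in \eqref{dual} genuinely realizes the $\H^s$–$\H^{1-s}$ duality (this is where property (2), the coercivity of $B$, does the real work and where the hypotheses $g:H^{s-1}\to H^{s-1}$ and invertibility of $g^{00}$ enter), and that passing from the a priori estimate to actual existence and uniqueness of the backward solution is done cleanly via the adjoint of the forward propagator — there is no analytic difficulty there, only the need to state the density argument precisely.
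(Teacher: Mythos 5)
Your proof matches the route the paper intends: it invokes \eqref{dual} (equivalently \eqref{dual-system}) together with the boundedness and coercivity of the pairing $B$ to transfer the forward $\H^s$ bound to a backward $\H^{1-s}$ bound and then define the backward evolution as the adjoint of $S(T,0)$; the paper itself declares this argument standard and omits it, quoting only the relation $S(t,s)=S(s,t)^\ast$ and the two properties of $B$ as the key ingredients. One small refinement: under the minimal assumptions on $g$ made in this section there is no smooth theory to regularize against, so the ``density/test function'' framing you use to get existence is cleaner if reversed --- first \emph{define} $w[\sigma]:=S(T,\sigma)^\ast w[T]$ via the adjoint (already bounded on $\H^{1-s}$ by your a priori computation), then verify distributionally that it solves the backward equation, rather than assuming a candidate solution $w$ and appealing to density of smooth data.
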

We remark that in the context of this paper forward and backward well-posedness are 
almost identical, so for us this property says that well-posedness in $\H^s$ and $\H^{1-s}$ are equivalent. 

The above proposition may be equivalently reformulated as the corresponding result for the 
system \eqref{bv-syst}. It will be more convenient to view it in this context. 
To do this, we  reinterpret the  above duality, in terms of the associated system \eqref{bv-syst-inhom}. In view of the  symmetry property \eqref{A-sym}, we have the relation
\begin{equation}\label{dual-system}
\iintT   (\partial_t - \LL) \bv \cdot J \bw -  J \bv \cdot (\partial_t - \LL) \bw\, dx dt
= \left.\int  \bv \cdot J \bw  \, dx\  \right|_0^T,
\end{equation}
where the corresponding duality relation is 
\begin{equation}
\bB(\bu,\bv) =   \int  \bv \cdot J \bw  \, dx,  
\end{equation}
which provides the duality between $\H^s$ and $\H^{1-s}$.
Incidentally, a consequence of \eqref{dual-system} is the duality relation
\[
S(t,s) = S(s,t)^\ast ,
\]
where the duality between $\H^s$ and $\H^{1-s}$ is the one given by the bilinear form $\bB$ above.
This can be used to construct the backward evolution in $\H^{1-s}$ given the forward evolution in $\H^s$, and vice-versa. The full equivalence argument is standard, and is omitted.

\subsection{Strichartz estimates}
Here we discuss several versions of Strichartz estimates, as well as the connection between them.

\subsubsection{Estimates for homogeneous equations} In the context of this paper,
these have the form 
\begin{equation}\label{Str-hom}
\| v \|_{S^r} + \| \partial_t v\|_{S^{r-1}} \lesssim \| v[0]\|_{H^r},    
\end{equation}
where for the Strichartz space $S$ we will consider two different choices:
\begin{enumerate}[label=\roman*)]
    \item Almost lossless estimates, akin to those established in Smith-Tataru~\cite{ST}.
The corresponding Strichartz norms, denoted by $S=S_{ST}$ are defined as 
\begin{equation}
\begin{aligned}
\| v\|_{S_{ST}^r} = \| v \|_{L^\infty H^r} + \|\bD^{r-\frac{3}4-\delta} v \|_{L^4 L^\infty}, \qquad n = 2,
\\
\| v\|_{S_{ST}^r} =\| v \|_{L^\infty H^r} + \|\bD^{r-\frac{n-1}2-\delta}  v \|_{L^2 L^\infty},
\qquad n \geq 3.
\end{aligned}    
\end{equation}
Here the loss of derivatives is measured by $\delta > 0$, which is an arbitrarily small parameter.

 \item Estimates with  derivative losses, precisely the type that will be established in this paper. The corresponding Strichartz norms, denoted by $S=S_{AIT}$ are defined as 
\begin{equation}
\begin{aligned}
\| v\|_{S_{AIT}^r} = \| v \|_{L^\infty H^r} + \|\bD^{r-\frac{3}4 - \frac18-\delta}  v \|_{L^4 L^\infty}, \qquad n = 2,
\\
\| v\|_{S_{AIT}^r} =\| v \|_{L^\infty H^r} + \|\bD^{r-\frac{n-1}2-\frac14-\delta}  v \|_{L^2 L^\infty},
\qquad n \geq 3.
\end{aligned}    
\end{equation}
Here $\delta > 0$ is again an arbitrarily small parameter, but we allow for an additional loss
of derivatives in the endpoint (Pecher) estimate, namely $1/8$ derivatives in two space dimensions and $1/4$ in higher dimensions.
\end{enumerate}

These estimates can be applied to any of the four equations discussed in this section.
There are also  appropriate counterparts for the corresponding system \eqref{bv-syst}, 
which have the form
\begin{equation}\label{Str-hom-syst}
\| \bv_1 \|_{S^r} + \| \bv_2\|_{S^{r-1}} \lesssim \| \bv[0]\|_{\H^r},  \qquad S \in \{ S_{ST},S_{AIT}\}  .
\end{equation}
Under very mild assumptions on $g$, these are equivalent to the ones for the corresponding wave equation:

\begin{proposition}\label{p:v-vs-bv-hom}
The Strichartz estimates \eqref{Str-hom} for the homogeneous wave equation are equivalent to the Strichartz estimates \eqref{Str-hom-syst} for the associated system.
\end{proposition}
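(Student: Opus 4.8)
The plan is to transfer the wave-equation--to--system dictionary of Section~\ref{s:Duhamel}, based on the operator $Q$ in \eqref{bv} and its inverse \eqref{Q-inverse}, from the energy level to the Strichartz level. Recall that $\bv = Q v$ amounts to $\bv_1 = v$ and $\bv_2 = g^{0\alpha}\partial_\alpha v = g^{00}\partial_t v + g^{0j}\partial_j v$, while $v = Q^{-1}\bv$ amounts to $v = \bv_1$ and $\partial_t v = (g^{00})^{-1}\bv_2 - (g^{00})^{-1}g^{0j}\partial_j \bv_1$, together with the same identities evaluated at $t=0$ relating $v[0]$ and $\bv[0]$. In particular each component of $\bv$ is a linear combination, with coefficients among the bounded metric entries $g^{00},g^{0j}$, of $v$ and its first-order derivatives, and symmetrically each component of $v$ (and of $\partial_t v$) is such a combination of $\bv_1,\bv_2,\partial_j\bv_1$ with coefficients $(g^{00})^{-1}$ and $(g^{00})^{-1}g^{0j}$. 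Thus the asserted equivalence of \eqref{Str-hom} and \eqref{Str-hom-syst} is exactly the statement that $Q$, and likewise $Q^{-1}$, is bounded as a map between the corresponding Strichartz-product spaces, together with boundedness on $\H^r$ for the data terms on the right-hand sides.

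Concretely I would proceed as follows. First, boundedness of $Q$ and $Q^{-1}$ on $\H^r$ is part of the standing set-up: $g$ and $g^{-1}$ are uniformly bounded, and $g^{00}$ is bounded away from zero by the uniform space-likeness of the time slices, so multiplication by $g^{00},g^{0j},(g^{00})^{-1}$ and the first-order operators appearing in $Q^{\pm 1}$ act boundedly on $\H^r$; hence $\|v[0]\|_{\H^r}\approx\|\bv[0]\|_{\H^r}$. Second, for the solution norms it suffices to check that multiplication by each of $g^{00},g^{0j},(g^{00})^{-1}$ and by $(g^{00})^{-1}g^{0j}$ is bounded on the Strichartz spaces $S^r$ and $S^{r-1}$ ($S\in\{S_{ST},S_{AIT}\}$), and that the first-order operators hidden in $Q^{\pm 1}$ map $S^r$ into $S^{r-1}$; one then applies the two sets of identities above to pass between $\|v\|_{S^r}+\|\partial_t v\|_{S^{r-1}}$ and $\|\bv_1\|_{S^r}+\|\bv_2\|_{S^{r-1}}$ in both directions. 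The $L^\infty_t H^\rho_x$ component of the $S^\rho$ norm is plainly preserved by multiplication by a bounded, mildly regular function, and the $\bD$-weighted $L^p_t L^\infty_x$ component is handled by moving the $\bD$-weight past the multiplication, which costs at most lower-order terms that are reabsorbed since the time interval has length $\le 1$; this is the only place where a (very mild) regularity assumption on $g$ is invoked, consistent with the minimalist framework of this section.

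The main --- in fact essentially the only --- obstacle is this last bookkeeping step: verifying that multiplication by the metric coefficients and the first-order operators in $Q,Q^{-1}$ are bounded on the $\bD$-weighted $L^p_t L^\infty_x$ piece of the Strichartz norms, while keeping the hypotheses on $g$ as weak as claimed. Everything else is the purely algebraic change of unknowns $v\leftrightarrow\bv$ already recorded in Section~\ref{s:Duhamel} for the energy estimates, now reread on the Strichartz side; applying it once with $Q$ and once with $Q^{-1}$ yields the two-sided comparison and completes the proof.
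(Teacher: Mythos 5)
Your proof is correct and is exactly the natural elaboration of the argument the paper intends: the paper states the proposition without proof, viewing it as an immediate consequence of the change of unknowns $v \leftrightarrow \bv = Qv$ and the boundedness of $Q$, $Q^{-1}$ on the data spaces $\H^r$ and on the solution spaces $S^r \times S^{r-1}$, which is precisely what you spell out. Two small remarks: the mapping properties of multiplication by $g^{00}$, $g^{0j}$, $(g^{00})^{-1}$ already require mild spatial regularity of $g$ at the level of the $L^\infty_t H^r_x$ components (not only on the $\bD$-weighted $L^p_t L^\infty_x$ piece), and the "reabsorbed since the time interval has length $\le 1$" phrase is slightly misleading --- the relevant mechanism is a commutator estimate in $x$ controlled by the regularity of $g$, with the short time interval only serving, as the paper notes, to justify working with inhomogeneous Sobolev scales.
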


We also remark on a very mild extension of the estimate \eqref{Str-hom}
to the inhomogeneous case. Precisely, if \eqref{Str-hom} holds then we also have the inhomogeneous bound
\begin{equation}\label{Str-hom-ext}
\| v \|_{S^r} + \| \partial_t v\|_{S^{r-1}} \lesssim \| v[0]\|_{H^r} + \|f\|_{L^1 H^{r-1}}.
\end{equation}
This follows in a straightforward manner by the Duhamel formula, see the discussion in Section~\ref{s:Duhamel}.

We conclude the discussion of the Strichartz estimates for the homogeneous equation 
with a simple but important case, which will be useful for us in the sequel, and applies in particular to the solutions in \cite{ST}.

\begin{proposition}\label{p:Str-allr}
Assume that $\partial g \in L^1L^\infty$ and that the Strichartz estimates for the homogeneous equation \eqref{box-g-nodiv} hold in $\H^1$. Then the Strichartz estimates for the homogeneous equation hold in $\H^r$ for all $r \in \R$ for both paradifferential flows \eqref{Tbox-g-div}
and \eqref{Tbox-g-nodiv}.
\end{proposition}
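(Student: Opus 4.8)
The plan is to reduce the claim for the paradifferential flows at an arbitrary regularity level $r$ to the assumed $\H^1$ Strichartz bound for the differential non-divergence equation \eqref{box-g-nodiv}, in three moves: first pass from the differential equation to the paradifferential equation at the base regularity, then conjugate by $\bD^{r-1}$ to move between regularity levels, and finally pass freely between the divergence and non-divergence paradifferential forms. All error terms generated in these steps will be absorbed either into the $L^1 H^{r-1}$ source term, using the inhomogeneous extension \eqref{Str-hom-ext}, or into the energy via Theorem~\ref{t:easy-wp}, which applies precisely because $\partial g \in L^1 L^\infty$.

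First I would establish the base case $r = 1$ for a paradifferential flow. Write the non-divergence paradifferential equation as $T_{g^{\alpha\beta}}\partial_\alpha\partial_\beta v = 0$ and compare with the differential equation $g^{\alpha\beta}\partial_\alpha\partial_\beta v = 0$. The difference of the two operators applied to $v$ is $(g^{\alpha\beta} - T_{g^{\alpha\beta}})\partial_\alpha\partial_\beta v = (T_{\partial_\alpha\partial_\beta v}g^{\alpha\beta} + \Pi(g^{\alpha\beta},\partial_\alpha\partial_\beta v))$, which by the Coifman--Meyer bounds is controlled in $L^1 L^2 = L^1 H^0$ by $\|\partial^2 v\|_{L^\infty L^2}\,\|\partial g\|_{L^1 L^\infty}$ — here one uses that $\partial g \in L^1 L^\infty$ and that $v$ enjoys energy estimates in $\H^1$ from Theorem~\ref{t:easy-wp}(a). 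Thus a solution of the paradifferential equation solves the differential equation with a source $f \in L^1 H^0$, and \eqref{Str-hom-ext} (applied to \eqref{box-g-nodiv} in $\H^1$, which holds by hypothesis) gives the Strichartz bound for $v$ in $S^1$. Conversely, and more to the point for what we need, the same comparison runs backward: a differential solution solves the paradifferential equation with an $L^1 H^0$ source, so the paradifferential flow also satisfies Strichartz in $\H^1$.

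Next I would upgrade from $r = 1$ to general $r \in \R$ by conjugation. Set $w = \bD^{r-1} v$; then $w$ solves $T_{g^{\alpha\beta}}\partial_\alpha\partial_\beta w = [T_{g^{\alpha\beta}}\partial_\alpha\partial_\beta, \bD^{r-1}]\,\bD^{1-r}w =: f$. The commutator of a paradifferential operator of order $2$ with $\bD^{r-1}$ has order $r$ as a map, but its \emph{symbol} is one degree lower in the $x$-regularity that matters — concretely it is, to leading order, controlled by $\partial_x g$, so $f$ is bounded in $L^1 H^{0}$ by $\|\partial g\|_{L^1 L^\infty}\|w\|_{L^\infty H^1}$; the latter norm is finite by the energy estimate for the paradifferential flow at level $\H^1$. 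Applying the $r=1$ Strichartz result together with \eqref{Str-hom-ext} to $w$ gives $\|w\|_{S^1} \lesssim \|w[0]\|_{H^1} + \|f\|_{L^1 H^0} \lesssim \|v[0]\|_{H^r}$, and unwinding $w = \bD^{r-1}v$ yields \eqref{Str-hom} for $v$ at level $r$. Finally, the divergence and non-divergence paradifferential operators differ by $\partial_\alpha T_{g^{\alpha\beta}}\partial_\beta - T_{g^{\alpha\beta}}\partial_\alpha\partial_\beta = T_{\partial_\alpha g^{\alpha\beta}}\partial_\beta$, a first-order operator with coefficient $\partial g \in L^1 L^\infty$, so one solution solves the other equation with a source in $L^1 H^{r-1}$, and \eqref{Str-hom-ext} transfers the estimate between the two forms.

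The main obstacle I anticipate is bookkeeping the paradifferential commutators cleanly at a general regularity $r$, in particular making sure that every error term that appears lands in $L^1 H^{r-1}$ (so that \eqref{Str-hom-ext} applies) rather than merely in some larger space, and that the intermediate quantities such as $\|\partial^2 v\|_{L^\infty L^2}$ or $\|w\|_{L^\infty H^1}$ are a priori finite — which is exactly what Theorem~\ref{t:easy-wp} is there to guarantee under the hypothesis $\partial g \in L^1 L^\infty$. Once these mapping properties are checked, the chain of reductions is routine.
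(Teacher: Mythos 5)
Your proof follows the same three-move reduction as the paper: compare the non-divergence paradifferential equation to the differential one at $r=1$, conjugate by $\bD^{r-1}$ to move between regularity levels, and then transfer between divergence and non-divergence forms using the inhomogeneous extension \eqref{Str-hom-ext}. However, there is a genuine gap in the first two moves, and it is exactly the one the paper's proof is organized to avoid.

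The paper's Step 3 first multiplies the equation through by $(T_{g^{00}})^{-1}$, reducing perturbatively to the case $g^{00}=-1$. This is not a cosmetic normalization: with $g^{00}=-1$ the $(\alpha,\beta)=(0,0)$ contributions to both the comparison error $(g^{\alpha\beta}-T_{g^{\alpha\beta}})\partial_\alpha\partial_\beta v$ and the conjugation commutator $[T_{g^{\alpha\beta}},\bD^{r-1}]\partial_\alpha\partial_\beta v$ vanish identically, so every surviving term contains at least one spatial derivative and the $L^1 L^2$ bound closes using only the energy control $\|\partial v\|_{L^\infty L^2}$. You skip this reduction, and as a result both of your claimed error estimates fail on the $(0,0)$ piece: it involves $\partial_t^2 v$, which the $\H^1$ energy estimate does not put in $L^\infty L^2$ (only in $L^\infty H^{-1}$, via the equation). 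Relatedly, the bound you write in the base case, $\|(g-T_g)\partial^2 v\|_{L^1 L^2}\lesssim \|\partial^2 v\|_{L^\infty L^2}\|\partial g\|_{L^1 L^\infty}$, invokes a norm that is one derivative above what Theorem~\ref{t:easy-wp} provides; the correct right-hand side, and the one the paper uses, is $\|\partial v\|_{L^\infty L^2}\|\partial g\|_{L^1 L^\infty}$, which requires transferring a derivative onto $g$ and is only manifestly available once the $\partial_t^2$ term has been removed by the $g^{00}=-1$ reduction. Insert that reduction as a separate step before the comparison and conjugation, correct the control norm to $\|\partial v\|_{L^\infty L^2}$, and the rest of your argument — including the final passage between \eqref{Tbox-g-nodiv} and \eqref{Tbox-g-div} via the first-order term $T_{\partial_\alpha g^{\alpha\beta}}\partial_\beta$ — goes through as intended.
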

We remark that the implicit constant in these Strichartz estimates depends on the implicit constant in the Strichartz estimate in the hypothesis and on the bound for $\|\partial g\|_{L^1 L^\infty}$.
Later when we apply this result we will have uniform control over both, so we obtain 
uniform control over the $\H^r$ Strichartz norm.

\begin{proof}
It will be easier to work with the inhomogeneous bound \eqref{Str-hom-ext}, as it is more stable with respect to perturbations. We divide the proof into  several steps, all of which are relatively standard.

\emph{Step 1:}  We start with the case $r=1$ with the additional assumption $g^{00}= -1$.
Then the second equation in \eqref{Tbox-g-nodiv} can be seen as a perturbation of \eqref{box-g-nodiv} with an $L^1 L^2$ source term. Hence the bound \eqref{Str-hom-ext}
for \eqref{box-g-nodiv} implies the same bound for \eqref{Tbox-g-nodiv}.

\medskip
\emph{Step 2:} Next, assuming still that $g^{00}= -1$, we extend the 
bound \eqref{Str-hom-ext} for \eqref{Tbox-g-nodiv} to all integers $r$
by conjugating by $\bD^{\sigma}$ with $\sigma = r-1$, where we can estimate perturbatively the commutator 
\begin{equation}
\|  [T_{g^{\alpha \beta}},\bD^{\sigma}]  \partial_\alpha  \partial_\beta \bD^{-\sigma} v\|_{L^1 L^2} \lesssim \| \partial g\|_{L^1 L^\infty} \| \partial v\|_{L^\infty L^2}.
\end{equation}

\medskip
\emph{Step 3:} We multiply by $(T_{g^{00}})^{-1}$ to reduce the problem with nonconstant 
$g^{00}$ to the case when $g^{00}= -1$. At the conclusion of this step, we have 
the bound \eqref{Str-hom-ext} for \eqref{Tbox-g-nodiv} for all $r$.

\medskip
\emph{Step 4:} We commute the paracoefficients $T_{g^{\alpha\beta}}$ inside $\partial_\alpha$
perturbatively, in order to obtain the bound \eqref{Str-hom-ext} for \eqref{Tbox-g-div} for all $r$.
\end{proof}

\subsubsection{Dual Strichartz estimates}

Here one considers the corresponding inhomogeneous problems, with source terms in dual Strichartz spaces. The estimates have the form
\begin{equation}\label{Str-dual}
\| v \|_{L^\infty \H^r} \lesssim \| v[0]\|_{\H^r} + \| f \|_{(S^{1-r})'},  \qquad S \in \{ S_{ST},S_{AIT}\}   .
\end{equation}
Classically, these are obtained by duality from the homogeneous estimates, as follows:

\begin{proposition}\label{p:dual}
If the homogeneous estimates \eqref{Str-hom} hold in $\H^{r}$ for the forward (backward) evolution then the dual estimates \eqref{Str-dual} hold in $\H^{1-r}$ for the backward (forward) evolution.
\end{proposition}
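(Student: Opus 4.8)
The plan is to run the standard duality ($TT^\ast$-type) argument, transferring the homogeneous Strichartz bound for the forward flow into the dual bound for the backward flow by means of the bilinear identity \eqref{dual} (equivalently, of \eqref{dual-system} together with the adjoint relation $S(t,s) = S(s,t)^\ast$ at the level of the associated system). Since forward and backward well-posedness are interchanged under time reversal, it suffices to prove the implication ``forward Strichartz in $\H^r$ $\Rightarrow$ backward dual Strichartz in $\H^{1-r}$''. Unwinding \eqref{Str-dual} with exponent $1-r$, the claim is that every solution of the backward inhomogeneous problem $\partial_\alpha g^{\alpha\beta}\partial_\beta v = f$ with prescribed data $v[T]$ satisfies
\[
\|v\|_{L^\infty(I;\H^{1-r})} \lesssim \|v[T]\|_{\H^{1-r}} + \|f\|_{(S^r)'},
\]
where $(S^r)'$ denotes the dual of $S^r$ under the space-time $L^2$ pairing. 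Splitting $v = v^{\mathrm{hom}} + v^{\mathrm{inh}}$ into the backward homogeneous solution with data $v[T]$ and the backward solution with zero data at $t=T$, the estimate $\|v^{\mathrm{hom}}\|_{L^\infty\H^{1-r}} \lesssim \|v[T]\|_{\H^{1-r}}$ is just backward well-posedness in $\H^{1-r}$, which follows since forward Strichartz in $\H^r$ implies forward well-posedness in $\H^r$ (as $S^r \hookrightarrow L^\infty H^r$), hence backward well-posedness in $\H^{1-r}$ by the duality Proposition above. So the whole content is the bound for $v^{\mathrm{inh}}$.

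To estimate $v^{\mathrm{inh}}$ I would fix $t_0 \in I$ and test $v^{\mathrm{inh}}[t_0]$ against the forward homogeneous flow. By coercivity of the pairing $B$ from \eqref{dual}, $\|v^{\mathrm{inh}}[t_0]\|_{\H^{1-r}} \approx \sup B(v^{\mathrm{inh}}[t_0], w[t_0])$, the supremum taken over data with $\|w[t_0]\|_{\H^r} \le 1$, where $w$ is the forward homogeneous solution on $[t_0,T]$ with that data prescribed at $t_0$. Inserting $\partial_\alpha g^{\alpha\beta}\partial_\beta w = 0$, $\partial_\alpha g^{\alpha\beta}\partial_\beta v^{\mathrm{inh}} = f$ and $v^{\mathrm{inh}}[T] = 0$ into \eqref{dual} over $[t_0,T]$, the boundary terms at $T$ vanish and we are left with
\[
B(v^{\mathrm{inh}}[t_0], w[t_0]) = -\int_{t_0}^{T}\!\!\int_{\R^n} f\, w \, dx\, dt .
\]
Bounding the right-hand side by $\|f\|_{(S^r)'}\,\|w\|_{S^r([t_0,T])}$ and invoking the homogeneous Strichartz estimate for $w$ on $[t_0,T]$ --- available with a constant uniform in $t_0$ because $|I| \le 1$ and the standing hypotheses on $g$ are interval-independent --- gives $\|w\|_{S^r([t_0,T])} \lesssim \|w[t_0]\|_{\H^r}$, and taking the supremum yields $\|v^{\mathrm{inh}}[t_0]\|_{\H^{1-r}} \lesssim \|f\|_{(S^r)'}$ uniformly in $t_0$. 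This is the desired bound. The paradifferential flow \eqref{Tbox-g-div} is treated in exactly the same way: the analogue of \eqref{dual} holds because $T_{g^{\alpha\beta}}$ is $L^2$-self-adjoint in the Weyl quantization, and one may alternatively argue via \eqref{dual-system}.

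I do not expect a genuine obstacle here; the argument is routine and the only points needing care are bookkeeping. One must ensure that the pairing defining $(S^{1-r})'$ in \eqref{Str-dual} is precisely the one under which $B$ --- and, in the system formulation, the $J$-twisted form $\bB$ --- realizes the duality between $\H^s$ and $\H^{1-s}$; and one must check that the homogeneous Strichartz estimate passes from $I$ to an arbitrary subinterval with a uniform constant, which is immediate given $|I|\le1$ and the interval-independence of the hypotheses.
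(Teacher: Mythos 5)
Your proof is correct and is exactly the standard duality/$TT^\ast$ argument the paper alludes to with the remark ``Classically, these are obtained by duality from the homogeneous estimates''; the paper in fact states Proposition \ref{p:dual} without proof. Your decomposition of the backward solution, the use of coercivity of $B$ together with the identity \eqref{dual} on $[t_0,T]$, and the invocation of the forward homogeneous Strichartz bound for the test solution $w$ are all the intended ingredients, and the two bookkeeping points you flag (that the pairing in $(S^{1-r})'$ agrees with the one realized by $B$, and that the homogeneous estimate passes to subintervals $[t_0,T]$ with a uniform constant) are indeed the only things requiring care.
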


However, one can do better than this by going instead through the system form of the equations
\eqref{bv-syst-inhom}. The dual estimates for \eqref{bv-syst-inhom} have the form
\begin{equation}\label{Str-dual-syst}
\| \bv \|_{L^\infty \H^r} \lesssim \| \bv[0]\|_{\H^r} + \| \ff_1 \|_{(S^{-r})'}
+ \|\ff_2 \|_{(S^{-r+1})'} ,  \qquad S \in \{ S_{ST},S_{AIT}\}  . 
\end{equation}
These are directly obtained from the homogeneous estimates for the system \eqref{bv-syst}
via the duality \eqref{dual-system}:

\begin{proposition}
If the homogeneous estimates hold in $\H^{r}$ for the forward (backward) evolution \eqref{bv-syst} then the dual estimates hold in $\H^{1-r}$ for the backward (forward) evolution \eqref{bv-syst-inhom}.
\end{proposition}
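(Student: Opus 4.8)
The plan is to deduce the dual estimate \eqref{Str-dual-syst} for the inhomogeneous system \eqref{bv-syst-inhom} directly from the homogeneous Strichartz estimate \eqref{Str-hom-syst} for the reverse evolution, using the pairing identity \eqref{dual-system} as the bridge. Concretely, I would test the solution $\bv$ of \eqref{bv-syst-inhom} against an arbitrary pair $\bw$ solving the backward homogeneous system $(\partial_t - \LL)\bw = 0$, and use \eqref{dual-system} to convert the space-time pairing of the source $\ff = (\ff_1,\ff_2)$ with $\bw$ into the boundary pairing $\int \bv \cdot J \bw\, dx$ evaluated between the two endpoints. Since $J$ is a bounded invertible intertwining operator between $\H^r$ and $\H^{1-r}$ via the bilinear form $\bB$, the boundary term is controlled by $\|\bv[0]\|_{\H^r}$ together with $\sup_t \|\bv[t]\|_{\H^r}$, while the space-time side is bounded by $\|\ff_1\|_{(S^{-r})'}\|\bw_1\|_{S^{-r}} + \|\ff_2\|_{(S^{-r+1})'}\|\bw_2\|_{S^{-r+1}}$ after splitting according to the two components of $\ff$ and noting that in $\bB$ the first component of $\ff$ pairs (through $J$) against the second component of $\bw$ and vice versa — this is where the shift by one derivative in the exponent $-r$ versus $-r+1$ enters, matching exactly the structure $S^r \times S^{r-1}$ in \eqref{Str-hom-syst}.

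The key steps in order: first, fix a time $t_0 \in I$ and an arbitrary $\bw[t_0] \in \H^{?}$, solve the homogeneous backward system from $t_0$ to obtain $\bw$ on all of $I$, and invoke \eqref{Str-hom-syst} for the backward evolution to get $\|\bw_1\|_{S^{-r}} + \|\bw_2\|_{S^{-r+1}} \lesssim \|\bw[t_0]\|_{\H^{1-r}}$ (reindexing $r \mapsto 1-r$ in the homogeneous statement, which is legitimate since the homogeneous estimate is assumed for the relevant exponent). Second, apply the duality identity \eqref{dual-system} on $[0,t_0]$ (or $[t_0,T]$): since $(\partial_t - \LL)\bw = 0$, the left side collapses to $\iintT (\partial_t - \LL)\bv \cdot J\bw = \iintT \ff \cdot J\bw$, and the right side is $\int \bv \cdot J\bw\, dx$ evaluated at $t_0$ minus at $0$. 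Third, rearrange to isolate $\bB(\bv[t_0], \bw[t_0]) = \int \bv[t_0] \cdot J\bw[t_0]\, dx$, bound it by $|\bB(\bv[0],\bw[0])| + |\iintT \ff \cdot J\bw|$, estimate the first term by $\|\bv[0]\|_{\H^r}\|\bw[0]\|_{\H^{1-r}}$, and estimate the space-time term componentwise using the dual Strichartz pairing. Fourth, take the supremum over $\bw[t_0]$ in the unit ball of $\H^{1-r}$ and use the coercivity of $\bB$ (property (2) of the pairing, i.e. $\sup_{\|v\|_{\H^{1-s}}\le 1} B(u,v) \approx \|u\|_{\H^s}$) to recover $\|\bv[t_0]\|_{\H^r}$ on the left; since $t_0$ was arbitrary this yields the $L^\infty \H^r$ bound in \eqref{Str-dual-syst}.

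The main obstacle I anticipate is bookkeeping rather than anything deep: one must verify that the off-diagonal structure of $J = \left(\begin{smallmatrix} 0 & 1 \\ -1 & 0\end{smallmatrix}\right)$ pairs $\ff_1$ against $\bw_2$ and $\ff_2$ against $\bw_1$ in precisely the way that matches the Strichartz exponents $S^{-r}$ and $S^{-r+1}$ appearing in \eqref{Str-dual-syst}, so that the dual norms $(S^{-r})'$ and $(S^{-r+1})'$ come out correctly; a sign or index slip here would produce the wrong statement. A secondary point requiring a little care is the density/approximation argument needed to justify \eqref{dual-system} for the actual (non-smooth) $\bv$ and $\bw$ — one runs the identity for smooth approximants and passes to the limit using the $L^\infty\H^r$ bounds, which is standard and, as the excerpt says, can be omitted or compressed. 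Finally, I would note that the passage back from the system estimate \eqref{Str-dual-syst} to the wave-equation dual estimate \eqref{Str-dual}, if desired, is handled exactly as in Proposition~\ref{p:v-vs-bv-hom} together with the source-term correspondence \eqref{f-vs-ff}, so no new ideas are needed there.
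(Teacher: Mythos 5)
Your approach is exactly the standard duality argument the paper has in mind; it states the proposition without proof, calling it a direct consequence of \eqref{dual-system}, and your four-step outline (solve the backward homogeneous problem from time $t_0$, apply \eqref{dual-system}, bound the space-time pairing by dual Strichartz norms, take a supremum over $\bw[t_0]$ in the unit ball of $\H^{1-r}$ and use coercivity of $\bB$) is precisely how one fills that in.

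One index slip, though, which you yourself flag as the likely failure mode. You display the space-time bound as $\|\ff_1\|_{(S^{-r})'}\|\bw_1\|_{S^{-r}} + \|\ff_2\|_{(S^{-r+1})'}\|\bw_2\|_{S^{-r+1}}$, i.e. pairing $\ff_i$ against $\bw_i$. But as you correctly note two lines later, $J = \left(\begin{smallmatrix} 0 & 1 \\ -1 & 0 \end{smallmatrix}\right)$ makes the pairing off-diagonal: $\ff \cdot J\bw = \ff_1\bw_2 - \ff_2\bw_1$. Meanwhile the homogeneous Strichartz estimate \eqref{Str-hom-syst} for the backward flow in $\H^{1-r}$, after the substitution $r \mapsto 1-r$, reads $\|\bw_1\|_{S^{1-r}} + \|\bw_2\|_{S^{-r}} \lesssim \|\bw[t_0]\|_{\H^{1-r}}$ — note $\bw_1 \in S^{1-r}$, not $S^{-r}$, and $\bw_2 \in S^{-r}$, not $S^{-r+1}$. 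So the correct bound on the space-time term is
\[
\left|\iintT \ff \cdot J\bw \, dx\, dt\right| \lesssim \|\ff_1\|_{(S^{-r})'}\|\bw_2\|_{S^{-r}} + \|\ff_2\|_{(S^{1-r})'}\|\bw_1\|_{S^{1-r}},
\]
and since $(S^{1-r})' = (S^{-r+1})'$ this does reproduce the two dual norms in \eqref{Str-dual-syst} exactly. With this correction the argument closes as you describe; the rest of your outline, including the density remark and the passage back to the scalar equation via \eqref{f-vs-ff}, \eqref{v-vs-bv}, is fine.
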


One can  now further return to the original inhomogeneous equation with a source term 
as in \eqref{f+}, and use the correspondence \eqref{f-vs-ff} and \eqref{v-vs-bv}, 
in order to transfer the dual bounds back. These dual estimates, which represent a generalization
of \eqref{Str-dual}, have the form 
\begin{equation}\label{Str-dual+}
\| v \|_{L^\infty \H^r} \lesssim \| v[0]\|_{\H^r} + \| f_1 \|_{L^\infty H^{r-1} \cap (S^{-r})'}
+ \|f_2\|_{(S^{1-r})'},  \qquad S \in \{ S_{ST},S_{AIT}\}   .
\end{equation}
We obtain the following strengthening of Proposition~\ref{p:dual}: 
\begin{proposition}\label{p:dual+}
If the homogeneous estimates \eqref{Str-hom} hold in $\H^{r}$ for the forward (backward) evolution then the dual estimates \eqref{Str-dual+} hold in $\H^{1-r}$ for the backward (forward) evolution.
\end{proposition}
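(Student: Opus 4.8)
\subsection*{Proof proposal for Proposition~\ref{p:dual+}}

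The plan is to route everything through the associated first-order system, in exactly the same spirit as the passage from Theorem~\ref{t:lin-wp-inhom} to Theorem~\ref{t:lin-wp-inhom+}. Assume the homogeneous Strichartz estimates \eqref{Str-hom} hold in $\H^r$ for the forward evolution of the wave equation. By Proposition~\ref{p:v-vs-bv-hom} these are equivalent to the homogeneous Strichartz estimates \eqref{Str-hom-syst} for the associated system in $\H^r$, and the preceding system-level duality proposition then upgrades the latter to the dual estimates \eqref{Str-dual-syst} for the inhomogeneous system \eqref{bv-syst-inhom} in $\H^{1-r}$, namely
\[
\| \bv \|_{L^\infty \H^{1-r}} \lesssim \| \bv[0]\|_{\H^{1-r}} + \| \ff_1 \|_{(S^{r-1})'} + \|\ff_2 \|_{(S^{r})'}, \qquad S \in \{ S_{ST},S_{AIT}\}.
\]
For the backward-from-forward variant one uses instead $S(t,s) = S(s,t)^*$ from \eqref{dual-system}; the two cases are symmetric.

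Next I would transfer this bound back to the wave equation \eqref{box-g-div} with source $f = \partial_t f_1 + f_2$. The correspondence \eqref{f-vs-ff}, \eqref{v-vs-bv} identifies this with the system \eqref{bv-syst-inhom} driven by $\ff_1 = (g^{00})^{-1} f_1$ and $\ff_2 = f_2 - \partial_k g^{k0} (g^{00})^{-1} f_1$, whose solution has components $\bv_1 = v$ and $\bv_2 = g^{0\alpha}\partial_\alpha v - g^{00}\ff_1$. Feeding these into the displayed estimate and reading it back in terms of $v$, using the boundedness of multiplication by $g$ and $(g^{00})^{-1}$ on the relevant Sobolev and dual Strichartz spaces (the standing mild assumptions of this section), one obtains
\[
\| v \|_{L^\infty \H^{1-r}} \lesssim \| v[0]\|_{\H^{1-r}} + \| f_1\|_{L^\infty H^{-r}} + \| f_1 \|_{(S^{r-1})'} + \|f_2\|_{(S^{r})'},
\]
which is precisely \eqref{Str-dual+} at regularity $\H^{1-r}$. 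The $L^\infty H^{-r}$ norm of $f_1$ enters for two reasons, both already visible in the proof of Theorem~\ref{t:lin-wp-inhom+}: it is needed to make sense of the system data $\bv[0]$, whose second component carries the term $g^{00}\ff_1(0)$, and to absorb the inhomogeneous piece $g^{00}\ff_1$ in $\bv_2 = g^{0\alpha}\partial_\alpha v - g^{00}\ff_1$ when reconstructing $\|v\|_{L^\infty \H^{1-r}}$ from $\|\bv\|_{L^\infty \H^{1-r}}$. The nondivergence-form and paradifferential versions follow in the same way, replacing products by paraproducts.

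The only step requiring genuine care is the norm bookkeeping for $\ff_1$ and $\ff_2$: one must verify $\|\ff_1\|_{(S^{r-1})'} \lesssim \|f_1\|_{(S^{r-1})'}$ and $\|\ff_2\|_{(S^{r})'} \lesssim \|f_2\|_{(S^{r})'} + \|f_1\|_{(S^{r-1})'}$. Since each dual Strichartz space $(S^\rho)'$ is a sum of an $L^1$-in-time Sobolev space and a dual of an $L^p L^\infty$ space ($p = 4$ or $2$), multiplication by a bounded metric component preserves it, and the differentiation $\partial_k$ appearing in $\ff_2$ costs exactly one derivative --- which is why the $f_1$ term is required one derivative smoother than the $f_2$ term, consistent with the appearance of the $L^\infty H^{-r}$ norm of $f_1$ in \eqref{Str-dual+}. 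I expect this to be the main (and essentially the only) obstacle, and it is routine; everything else is an assembly of results already established in this section.
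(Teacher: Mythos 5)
Your proposal is correct and follows exactly the route the paper takes: pass to the first-order system via Proposition~\ref{p:v-vs-bv-hom}, apply the system-level duality \eqref{dual-system} to obtain \eqref{Str-dual-syst} in $\H^{1-r}$, and transfer back to the wave equation using the correspondences \eqref{f-vs-ff} and \eqref{v-vs-bv}. The paper itself gives no separate proof environment for this proposition --- it is stated as a direct consequence of the preceding paragraph --- and your account, including the explanation of why $f_1$ must be controlled in $L^\infty H^{-r}$ and the bookkeeping for $\ff_1,\ff_2$, fills in precisely the details the paper leaves implicit.
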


\subsubsection{ Full (retarded) Strichartz estimates}
Here we combine the homogeneous and dual Strichartz estimates in a single bound for the 
inhomogeneous problem. The classical form is 
\begin{equation}\label{Str-full}
\| v \|_{S^r} + \| \partial_t v\|_{S^{r-1}} \lesssim \| v[0]\|_{\H^r} 
+ \|f\|_{(S^{1-r})'},  \qquad S \in \{ S_{ST},S_{AIT}\}   .
\end{equation}
However, here we need to take the extra step where we allow source terms 
of the form $f = \partial_t f_1 + f_2$, and then the estimates have the form
\begin{equation}\label{Str-full+}
\| v \|_{S^r} + \| \partial_t v\|_{S^{r-1}} \lesssim \| v[0]\|_{\H^r} 
+ \|f_1\|_{S^{r-1}\cap (S^{-r})'} + \|f_2\|_{(S^{1-r})'},  \qquad S \in \{ S_{ST},S_{AIT}\}  . 
\end{equation}
As we will see, this is closely related to the corresponding bound for the 
associated inhomogeneous system \eqref{bv-syst-inhom}:
\begin{equation}\label{Str-full-syst}
\| \bv_1 \|_{S^r} + \|\bv_2\|_{S^{r-1}} \lesssim \| \bv[0]\|_{\H^r} + \| \ff_1 \|_{(S^{-r})'}
+ \|\ff_2 \|_{(S^{-r+1})'} ,  \qquad S \in \{ S_{ST},S_{AIT}\}  . 
\end{equation}

Our main result here is as follows:

\begin{theorem}\label{t:Str-move-around}
Consider either the equation \eqref{box-g-div} or \eqref{Tbox-g-div}. If the homogeneous 
problem is well-posed forward in $\H^r$ and backward in $\H^{1-r}$ and satisfies the homogeneous Strichartz estimates \eqref{Str-full} in both cases, then the solutions to the associated forward inhomogeneous problem with source term $f = \partial_t f_1 + f_2$ satisfy the bounds \eqref{Str-full+}.
\end{theorem}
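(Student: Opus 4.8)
\textbf{Proof proposal for Theorem~\ref{t:Str-move-around}.}

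The plan is to pass through the associated first-order system \eqref{bv-syst-inhom}, where all three ingredients—the homogeneous Strichartz bound, its dual, and the Duhamel gluing—are cleanest, and then transfer back to the wave equation via the correspondences \eqref{f-vs-ff} and \eqref{v-vs-bv}. Concretely, the argument will have the following structure. First I would record that the homogeneous well-posedness forward in $\H^r$ together with the Strichartz bound \eqref{Str-full} gives, via Proposition~\ref{p:v-vs-bv-hom}, the homogeneous system estimate \eqref{Str-hom-syst} for the forward evolution $S(t,s)$; the backward $\H^{1-r}$ hypothesis gives the same for the backward evolution. Next, by the duality identity \eqref{dual-system} and the relation $S(t,s) = S(s,t)^\ast$, the backward homogeneous system estimates convert into the dual system estimates \eqref{Str-dual-syst} for the forward inhomogeneous system, exactly as in the Proposition preceding \eqref{Str-dual+}. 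At this stage one has, for the forward solution of \eqref{bv-syst-inhom},
\[
\| \bv_1\|_{S^r} + \|\bv_2\|_{S^{r-1}} \lesssim \|\bv[0]\|_{\H^r} + \|\ff_1\|_{(S^{-r})'} + \|\ff_2\|_{(S^{-r+1})'},
\]
which is precisely \eqref{Str-full-syst}. The homogeneous part $\|\bv_1\|_{S^r} + \|\bv_2\|_{S^{r-1}} \lesssim \|\bv[0]\|_{\H^r}$ of this bound simply reuses \eqref{Str-hom-syst}; the two contributions are glued by Duhamel's formula $\bv[t] = S(t,0)\bv[0] + \int_0^t S(t,s)\ff[s]\,ds$ together with Minkowski's integral inequality in the $S$-norms, as in the discussion of \eqref{Str-hom-ext}.

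The remaining work is bookkeeping: translate \eqref{Str-full-syst} into \eqref{Str-full+} using the dictionary between $(v,f_1,f_2)$ and $(\bv,\ff_1,\ff_2)$. Here I would invoke the computation already carried out in Section~\ref{s:Duhamel}: by \eqref{first-ff}, \eqref{f-vs-ff} and \eqref{v-vs-bv}, a source term $f = \partial_t f_1 + f_2$ for \eqref{box-g-div} corresponds to $\ff_1 = (g^{00})^{-1} f_1$ and $\ff_2 = f_2 - \partial_k g^{k0}(g^{00})^{-1} f_1$, while $\bv_1 = v$ and $\bv_2 = g^{0\alpha}\partial_\alpha v - g^{00}\ff_1$. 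Feeding the first relations into \eqref{Str-full-syst} and using that multiplication by $g$ and $(g^{00})^{-1}$ is bounded on the relevant Sobolev and dual Strichartz spaces, the source contributions become $\|f_1\|_{(S^{-r})'} + \|f_2\|_{(S^{1-r})'}$. For the left side, $\|\bv_1\|_{S^r} = \|v\|_{S^r}$ directly, and from $\bv_2 = g^{0\alpha}\partial_\alpha v - g^{00}\ff_1$ one recovers a bound on $\|\partial_t v\|_{S^{r-1}}$ after moving the elliptic term $g^{0j}\partial_j v$ (controlled by $\|v\|_{S^r}$) and the $\ff_1$ term (controlled by $\|f_1\|_{S^{r-1}}$, explaining the $S^{r-1}$ component in \eqref{Str-full+}) to the right. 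The paradifferential case \eqref{Tbox-g-div} is handled identically, replacing every product by the corresponding paraproduct and using that $T_{g^{00}}$ is invertible; note that no commutator estimates are needed, only boundedness.

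The step I expect to be the main obstacle is not any single inequality but the careful tracking of function-space indices through the $Q$, $Q^{-1}$ conjugation—in particular making sure that $\ff_1 = (g^{00})^{-1}f_1$ lands in $(S^{-r})'$ and simultaneously in $S^{r-1}$ (so that its contribution to $\bv_2$ is acceptable), and that the extra $\partial_k g^{k0}$ factor appearing in $\ff_2$ does not cost a derivative one cannot afford. This is precisely why the theorem's hypothesis asks for the two-sided mapping property of $g$ and $(g^{00})^{-1}$ on both $H^s$ and $H^{s-1}$, and why the statement of \eqref{Str-full+} carries the slightly asymmetric norm $\|f_1\|_{S^{r-1} \cap (S^{-r})'}$; once these index alignments are set up correctly, each individual estimate is a direct consequence of \eqref{Str-full-syst} and boundedness of multiplication. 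Since the excerpt has already done the analogous computation at the level of $L^\infty\H^r$ bounds in Theorem~\ref{t:lin-wp-inhom+}, the proof here amounts to repeating that argument with $L^\infty\H^r$ replaced throughout by the Strichartz norm $S^r$ and its dual.
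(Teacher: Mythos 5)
Your reduction to the system and back — passing to \eqref{bv-syst-inhom} via Proposition~\ref{p:v-vs-bv-hom}, invoking duality via \eqref{dual-system} to get \eqref{Str-dual-syst}, and then translating back through \eqref{f-vs-ff}, \eqref{v-vs-bv} — matches the paper exactly, and the bookkeeping you describe in the last paragraph is the intended one. The gap is in how you glue the homogeneous Strichartz bound to the dual bound. You write that the two "are glued by Duhamel's formula together with Minkowski's integral inequality in the $S$-norms, as in the discussion of \eqref{Str-hom-ext}," but that discussion handles an $L^1\H^r$ source, not a dual-Strichartz source, and for dual-Strichartz sources Minkowski's inequality does not close the argument. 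Concretely, what the concatenation of (dual bound at fixed $t$) $\circ$ (homogeneous Strichartz with data at fixed $s$) controls is the \emph{non-retarded} operator
\begin{equation*}
\ff \longmapsto \int_0^T S(t,s)\,\ff[s]\,ds \;=\; S(t,0)\int_0^T S(0,s)\,\ff[s]\,ds ,
\end{equation*}
whereas Duhamel produces the \emph{retarded} operator $\int_0^t S(t,s)\,\ff[s]\,ds$. Passing from the former to the latter is exactly the content of the Christ--Kiselev lemma (or the $U^p$--$V^p$ machinery), which the paper invokes explicitly at this point; there is no elementary Minkowski-type shortcut when the source is only in a dual Strichartz space rather than in $L^1\H^r$.

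There is also a second, smaller omission: Christ--Kiselev fails at the diagonal $L^2_t\to L^2_t$ endpoint, which is precisely the pair of time exponents that occurs in $n\geq 3$. The paper resolves this by observing that the estimates already carry a $\delta$ loss of derivatives, so one can interpolate to off-diagonal exponents $L^{2-}_t\to L^{2+}_t$, apply Christ--Kiselev there, and return to $L^2_t$ by Bernstein in space and H\"older in time at the cost of a slightly larger $\delta$. Your proof would need both of these ingredients — Christ--Kiselev and the $\delta$-loss interpolation trick — inserted at Step~3 before the transfer back to the wave equation.
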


\begin{proof}
The proof consists of a number of steps:

Step 1: If the  homogeneous problem is well-posed forward in $\H^r$ and satisfies the homogeneous Strichartz estimates \eqref{Str-hom-syst}, then so does the corresponding system, see Proposition~\ref{p:v-vs-bv-hom}.

Step 2: If the  homogeneous problem is well-posed backward in $\H^{1-r}$ and satisfies the homogeneous Strichartz estimates, then so does the corresponding system. By duality, 
the inhomogeneous system is well-posed forward in $H^r$ and satisfies the dual Strichartz bounds
\eqref{Str-dual-syst}.

Step 3: We represent the forward $\H^r$ solution by the Duhamel formula
\[
\bv[t] = S(t,0) \bv[0] + \int_{0}^t S(t,s) \ff(s) \, ds.
\]
The first term represents the solution to the homogeneous equation, and is estimated by 
\eqref{Str-hom-syst}. For the second term we have two bounds at our disposal: 
the dual bound where we fix $t$ and estimate the output in $\H^s$ in terms of the input 
in the dual Strichartz space, and the homogeneous bound where we fix $s$, set $\ff(s) \in \H^r$
and estimate the output as a function of $t$ in the Strichartz space. Concatenating the two,
we get the restricted bound 
\begin{equation}\label{Str-full-syst-part}
\| \bv_1 \|_{S^r(J)} + \|\bv_2\|_{S^{r-1}(J)} \lesssim  \| \ff_1 \|_{(S^{-r})'(I)}
+ \|\ff_2 \|_{(S^{-r+1})'(I)} ,  \qquad S \in \{ S_{ST},S_{AIT}\},   \end{equation}
where the source $\ff$ is supported in an interval $I$ and the output $\bv$ is measured in an interval $J$ so that $I$ precedes $J$.  In two dimensions we can now apply the Christ-Kiselev lemma
\cite{CK} (or the $U^p$-$V^p$ spaces, see \cite{UV}) to get the full estimate.  In three and higher dimensions we have a slight problem which is that neither method applies for bounds from $L^2_t$
to $L^2_t$. However in our case this is not an issue, because our estimates allow for at least a loss of $\delta$ derivatives. Then we can afford to interpolate the two endpoints and use 
the Christ-Kiselev lemma for bounds from $L^{2-}_t$ to $L^{2+}_t$ and then return to the endpoint setting by Bernstein's inequality in space and Holder's inequality in time, all at the expense of
an arbitrarily small increase in the size $\delta$ of the loss.

Step 4. We transfer the estimate \eqref{Str-full-syst} back to the original system 
via the correspondence \eqref{f-vs-ff}, \eqref{v-vs-bv}, in order to obtain \eqref{Str-full+}.
\end{proof}

We conclude with a corollary of Theorem~\ref{t:Str-move-around}, which will be used later in the paper and follows by combining this result with Proposition~\ref{p:Str-allr}:

\begin{corollary}
Assume that $\partial g \in L^1L^\infty$ and that the Strichartz estimates for the homogeneous equation \eqref{box-g-nodiv} hold in $\H^1$. Then the full Strichartz estimates 
\eqref{Str-full+} hold in $\H^r$ for all $r \in \R$ for both paradifferential flows \eqref{Tbox-g-div}
and \eqref{Tbox-g-nodiv}. 
\end{corollary}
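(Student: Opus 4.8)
The plan is to obtain this corollary as a direct consequence of Theorem~\ref{t:Str-move-around}, whose hypotheses we check using Proposition~\ref{p:Str-allr} together with classical well-posedness. Throughout, $g$ is the metric from the statement, so $\partial g \in L^1 L^\infty$ and the homogeneous equation \eqref{box-g-nodiv} satisfies the $\H^1$ Strichartz bounds. First I would record well-posedness of the paradifferential flows in all Sobolev scales, in both time directions: since $\partial g \in L^1 L^\infty$, Theorem~\ref{t:easy-wp}(a) gives forward well-posedness of \eqref{Tbox-g-div} and \eqref{Tbox-g-nodiv} in $\H^s$ for every real $s$, and since the hypothesis $\partial g \in L^1 L^\infty$ is invariant under $t\mapsto -t$, the same holds for the backward flows; in particular the homogeneous paradifferential flow is well-posed forward in $\H^r$ and backward in $\H^{1-r}$, as required by Theorem~\ref{t:Str-move-around} (alternatively one invokes the standard forward/backward duality for divergence-form flows).

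Next I would invoke Proposition~\ref{p:Str-allr}, which applies verbatim under our hypotheses and yields the homogeneous Strichartz estimates \eqref{Str-hom} in $\H^\rho$ for every $\rho \in \R$, for both \eqref{Tbox-g-div} and \eqref{Tbox-g-nodiv}; by the time-reversal remark above this holds for the backward flow as well. Specializing to $\rho = r$ (forward) and $\rho = 1-r$ (backward) supplies exactly the homogeneous Strichartz hypotheses of Theorem~\ref{t:Str-move-around}. (One also recalls that, via Duhamel, \eqref{Str-hom} upgrades automatically to its inhomogeneous extension \eqref{Str-hom-ext} with an $L^1 H^{\rho-1}$ source, which is the form actually used inside the proof of Theorem~\ref{t:Str-move-around}.) With Steps~1 and~2 in hand, Theorem~\ref{t:Str-move-around} applied to \eqref{Tbox-g-div} gives the full Strichartz estimate \eqref{Str-full+} in $\H^r$ for every $r \in \R$.

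It then remains to pass from the divergence form \eqref{Tbox-g-div} to the non-divergence form \eqref{Tbox-g-nodiv}. The cleanest route is to observe that the system reduction of Section~\ref{s:Duhamel} and the proof of Theorem~\ref{t:Str-move-around} apply equally to the non-divergence form (as already noted there, ``a similar computation will apply in all four cases''), so the same argument delivers \eqref{Str-full+} for \eqref{Tbox-g-nodiv}. Alternatively, one converts perturbatively: writing
\[
\partial_\alpha T_{g^{\alpha\beta}} \partial_\beta v = T_{g^{\alpha\beta}} \partial_\alpha \partial_\beta v + T_{\partial_\alpha g^{\alpha\beta}} \partial_\beta v,
\]
a solution of \eqref{Tbox-g-nodiv} with data $v[0]$ and source $f = \partial_t f_1 + f_2$ solves \eqref{Tbox-g-div} with the same data and with $f_2$ replaced by $f_2 + T_{\partial_\alpha g^{\alpha\beta}} \partial_\beta v$; since $\partial g \in L^1 L^\infty$, the added term is bounded by $\|v\|_{L^\infty \H^r}$ in $L^1 H^{r-1} \hookrightarrow (S^{1-r})'$, and one absorbs it by subdividing $[0,T]$ into finitely many intervals on which $\|\partial g\|_{L^1 L^\infty}$ is small and iterating, using the a priori $L^\infty\H^r$ bound from the well-posedness theory (Theorems~\ref{t:lin-wp-inhom}--\ref{t:lin-wp-inhom+}).

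\emph{Main obstacle.} No new estimate is required: all the analytic content sits in Theorem~\ref{t:Str-move-around} and Proposition~\ref{p:Str-allr} (and the Christ--Kiselev/interpolation device within). The only points needing care are bookkeeping ones: (i) ensuring that \emph{both} the forward-$\H^r$ and backward-$\H^{1-r}$ versions of well-posedness and of the homogeneous Strichartz estimates are genuinely on the table, which rests on the time-reversal symmetry of the hypothesis; and (ii) in the divergence-to-non-divergence conversion, the commutator term $T_{\partial_\alpha g^{\alpha\beta}} \partial_\beta v$ cannot be absorbed directly because $\|\partial g\|_{L^1 L^\infty}$ is not assumed small, so one must run the standard interval-subdivision argument. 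The one-derivative $L^1$ gain from $\partial g \in L^1 L^\infty$ together with $L^1 H^{r-1} \hookrightarrow (S^{1-r})'$ is exactly what makes this work.
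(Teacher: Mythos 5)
Your proof is correct and takes exactly the route the paper indicates (the paper gives no proof beyond the remark that the corollary ``follows by combining this result with Proposition~\ref{p:Str-allr}''). You supply the necessary bookkeeping: Theorem~\ref{t:easy-wp}(a) and time-reversal symmetry for the forward/backward well-posedness hypotheses, Proposition~\ref{p:Str-allr} for the homogeneous Strichartz estimates in all $\H^\rho$, and then Theorem~\ref{t:Str-move-around}. Your treatment of the passage from \eqref{Tbox-g-div} to \eqref{Tbox-g-nodiv} is a genuine and necessary addition, since Theorem~\ref{t:Str-move-around} is stated only for the divergence form; either of your two routes (rerunning the system reduction for the non-divergence case, or the perturbative commutator absorption with interval subdivision using $\partial g \in L^1 L^\infty$ and $L^1 H^{r-1} \hookrightarrow (S^{1-r})'$) is sound and fills a small gap the paper leaves implicit.
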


 \section{Control parameters and related bounds}
\label{s:control}
 
\subsection{ Control parameters } 
  Here we introduce our main control parameters associated to a solution $u$
  to the minimal surface equation, which serve to bound the growth of energy
 for both solutions to the minimal surface flow and for its linearization. 
 We will use two such primary quantities, $\AA$ and $\BB$, which are defined as $L^\infty$ based Besov norms of the solution $u$, as follows:
  \begin{equation}\label{def-AA}
 \AA = \sup_{t \in [0,T]} \sum_k \| P_k \partial u\|_{L^\infty},
 \end{equation}
respectively
 \begin{equation} \label{def-BB}
\BB(t) = \left( \sum_k 2^k \| P_k \partial u\|_{L^\infty}^2 \right)^\frac12      .
 \end{equation}
In connection with $\AA$, we will also need the slightly stronger variant
$\AAs \gtrsim \AA$,
\begin{equation}\label{def-AAs}
 \AAs = \sup_{t \in [0,T]} \sum_k 2^{\frac{k}2} \| P_k \partial u\|_{L^{2n}}, \qquad 2 < p < \infty.
\end{equation}
Here the choice of the exponent $2n$ is in no way essential, though 
it does provide some minor simplifications in one or two places.

In a nutshell, the energy functionals we construct later in the paper will be shown to satisfy \emph{cubic balanced}
bounds of the form 
\begin{equation}\label{ee-model}
\frac{dE}{dt} \lesssim_{\AAs} \BB^2 E,
\end{equation}
which guarantee that energy bounds can be propagated for as long as $\AAs$ remains finite
and $\BB$ remains in $L^2_t$. One should compare these bounds with the 
classical energy estimates, which have the form
\begin{equation}\label{ee-classical}
\frac{dE}{dt} \lesssim_{\AA} \| \partial^2 u\|_{L^\infty} E,
\end{equation}
and which require an extra half derivative in the control parameter.
\bigskip

We continue with a few comments concerning  our choice of control parameters:

\begin{itemize}
    \item Here $\AA$ and $\AAs$ are critical norms for $u$, which may be 
    described using the Besov notation as capturing the norm 
    \[
    \AA = \| \partial u\|_{L^\infty_t B^{0}_{\infty,1}}, \qquad \AAs = \| \partial u\|_{L^\infty_t B^\frac{1}{2}_{2n,1}}.
    \]
    In a first approximation, the reader should think of $\AA$ as simply capturing the 
    $L^\infty$ norm of $\partial u$; the slightly stronger Besov norm above is needed
    for minor technical reasons, and allows us to work with scale invariant bounds. Often we will simply rely on the simpler $L^\infty$-bound, since
      \begin{equation}
 \| \partial u\|_{L^\infty} \lesssim \AA.
    \end{equation}
    
    \item The control norm $\BB$, taken at fixed time, is $1/2$ derivative above scaling,
    and may also be described using the Besov notation as 
    \[
    \BB(t) = \| \partial u(t)\|_{B^\frac12_{\infty,2}}.
    \]
    Again, in a first approximation one should simply think of it as $\|\partial u\|_{BMO^\frac12}$,
    which in effect suffices for most of the analysis. Indeed, we have
     \begin{equation}
 \| \partial u\|_{BMO^{\frac12}} \lesssim \BB.
    \end{equation}

\item Given the choice of these control parameters, it is not difficult to see 
that our energy estimates of the form \eqref{ee-model} are invariant with respect to scaling. 
This by itself does not mean much; even the classical energy estimates, of the form 
\eqref{ee-classical}, are scale invariant, but much less useful for low regularity 
well-posedness. What is important here is that our energy estimates are \emph{cubic} and 
\emph{balanced}.

\item The fact that our control norms are based on uniform, rather than $L^2$-bounds, particularly at the level of $\BB$, is also critical.
This is what allows us to use Strichartz estimates to further improve the low regularity  well-posedness threshold in our results.
\end{itemize}

For bookkeeping reasons we will use a joint frequency envelope $\{c_k\}_k$ for the dyadic components of both $\AAs$ and $\BB$, so that 

(i) $\{c_k\}_k$ is normalized in $\ell^2$ and slowly varying,
\[
\sum c_k^2 = 1;
\]

(ii) We have control of dyadic Littlewood-Paley pieces as follows for $\partial u$:
\begin{equation}\label{fe-control}
 2^{-\frac{k}2} \| P_k \partial u\|_{L^{2n}} \lesssim \AAs c_k^2, \qquad 2^{\frac{k}2} \| P_k  \partial u\|_{L^\infty} \lesssim \BB c_k. 
\end{equation}
A-priori, these frequency envelopes depend on time.
However, at the conclusion of the paper, we will see that for our rough solutions they can be taken to be independent of time, essentially equal to appropriate $L^2$-type frequency envelopes
for the initial data.

\subsection{Related bounds}
We will frequently need to use  bounds which are similar to  \eqref{fe-control} in nonlinear expressions, so it is convenient to have a notation for the corresponding space:

\begin{definition}\label{d:CC}
The space $\CC_0$ is the Banach space of all distributions $v$ which satisfy the bounds
\begin{equation}\label{fe-control-C}
 \|v\|_{L^\infty} \leq C, \qquad 
 2^{\frac{k}2} \| P_k v\|_{L^{2n}} \leq C \AAs c_k^2, \qquad 2^{\frac{k}2} \| P_k v\|_{L^\infty} \leq C \BB c_k
\end{equation}
with the norm given by the best constant $C$
in the above inequalities.
\end{definition}

For this space we have the following
algebra and Moser-type result:

\begin{lemma}\label{l:Moser-control0}
a) The space $\CC_0$ is closed with respect to multiplication and para-multiplication. In particular $\CC_0$ is an algebra.

b) Let $F$ be a smooth function, and $v \in \CC_0$. Then $F(v) \in \CC_0$. In particular if $\|v\|_{\CC_0} \lesssim 1$ then $F(v)$
satisfies
\begin{equation}\label{fe-control-v0}
\| F(v) \|_{\CC_0} \lesssim_{\AA} \|v\|_{\CC_0}.
\end{equation}

\end{lemma}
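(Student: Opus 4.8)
The plan is to treat the three defining bounds in \eqref{fe-control-C} separately, since they are of different natures: an $L^\infty$ bound, a Besov-type $\ell^1$ summed bound in $L^{2n}$ (at regularity $\frac12$), and a Besov-type $\ell^2$ summed bound in $L^\infty$ (again at regularity $\frac12$). For part (a), consider a product $vw$ with $v,w\in\CC_0$ and run the Littlewood--Paley trichotomy $vw = T_v w + T_w v + \Pi(v,w)$. The $L^\infty$ bound is immediate from $\|vw\|_{L^\infty}\le \|v\|_{L^\infty}\|w\|_{L^\infty}$. For the remaining two norms, the paraproduct pieces $T_vw$ and $T_wv$ are the easy ``low$\times$high'' interactions: $P_k(T_vw)$ is essentially $v_{<k}w_k$, so $\|P_k(T_vw)\|_{L^{2n}}\lesssim \|v\|_{L^\infty}\|P_kw\|_{L^{2n}}$ and likewise in $L^\infty$, which directly inherits the frequency-envelope bounds for $w$ (using $\|v\|_{L^\infty}\le \|v\|_{\CC_0}$). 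The balanced/diagonal piece $\Pi(v,w)$ requires summing: $P_k\Pi(v,w) = \sum_{j\ge k-\kappa} P_k(P_jv\, \tilde P_jw)$, and one estimates $\|P_jv\,\tilde P_jw\|_{L^{2n}}\lesssim \|P_jv\|_{L^{2n}}\|\tilde P_jw\|_{L^\infty}$ (or the symmetric split), giving a term $\lesssim (\AAs c_j^2)(\BB c_j)2^{-j}$; summing the geometric-type series in $j\ge k$ and using the slowly varying property $c_j\lesssim 2^{c|j-k|}c_k$ yields $2^{\frac k2}\|P_k\Pi(v,w)\|_{L^{2n}}\lesssim \AAs\BB\, c_k^3 \cdot(\text{small})$, which is controlled by $\AAs c_k^2$ since $\BB c_k\lesssim \BB\le$ (bounded, or absorbed into the constant), and similarly for the $L^\infty$ bound one pairs an $L^\infty$ factor with an $L^\infty$ factor. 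Para-multiplication $T_vw$ is just the sub-case already handled.

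For part (b), write $F(v) = F(0) + (F(v)-F(0))$ and reduce to $F(0)=0$. The natural route is the paralinearization $F(v) = T_{F'(v)}v + R(v)$ from Lemma~\ref{l:R}, or more simply to expand $F$ directly: since $F(0)=0$ we have $F(v) = v\cdot G(v)$ with $G$ smooth, and then a power-series / Moser argument together with part (a) (the algebra property) handles polynomials in $v$ of each degree, with constants growing acceptably in the degree because each additional factor of $v$ is bounded by $\|v\|_{L^\infty}\lesssim \AA$. More carefully: $F(v)-F(0) = \sum_{m\ge 1}\frac{F^{(m)}(0)}{m!}v^m$ converges since $\|v\|_{L^\infty}\lesssim 1$, and by the algebra estimate from part (a), each $v^m\in\CC_0$ with $\|v^m\|_{\CC_0}\lesssim C^{m-1}\|v\|_{\CC_0}$ for a universal $C$ depending on $\AAs$ (tracking that the cubic-type gains in the $\Pi$ terms actually keep the geometric growth in $m$ summable against the Taylor coefficients of the smooth function $F$ on the bounded range $|v|\lesssim 1$). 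Summing gives $\|F(v)\|_{\CC_0}\lesssim_{\AA}\|v\|_{\CC_0}$, which is \eqref{fe-control-v0}.

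The main obstacle is bookkeeping in the high$\times$high interaction $\Pi(v,w)$: one must verify that after summing the dyadic series, the output still satisfies the \emph{same} frequency-envelope bounds with the \emph{same} envelope $\{c_k\}$, rather than a worse one. This works precisely because the defining bounds in \eqref{fe-control-C} carry a genuine derivative gain ($2^{\frac k2}$ weight) while the product of two inputs at frequency $\sim 2^j$ produces, after projecting to frequency $2^k\le 2^j$, a factor $\|P_jv\|\|P_jw\|$ with a \emph{double} $2^{-j/2}$ gain against only a single $2^{-k/2}$ loss from the output weight --- this surplus half-derivative is what makes the $j$-sum converge geometrically and lets the slowly-varying property transfer $c_j$ to $c_k$ with no loss. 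A secondary subtlety is that the $L^{2n}$ bound and the $L^\infty$ bound feed into each other in the $\Pi$ estimate (one needs to split $P_jv\cdot P_jw$ with one factor in $L^{2n}$ and one in $L^\infty$, or alternatively use $L^{2n}\cdot L^{2n}\hookrightarrow L^n$ plus Bernstein), so one should be slightly careful to choose the split that lands in the target norm; either choice works because of the room provided by the envelope being $\ell^2$-normalized with $\sum c_k^2 = 1$.
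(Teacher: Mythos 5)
Part (a) of your proposal is essentially the paper's argument: Littlewood--Paley trichotomy, with the paraproduct pieces $T_vw$ and $T_wv$ controlled by putting the low-frequency factor in $L^\infty$, and the diagonal piece $\Pi(v,w)$ controlled by summing over $j\ge k$ with one factor estimated via Bernstein from $L^{2n}$ and the other via the $\BB$-bound, using the half-derivative surplus to make the $j$-sum converge and the slowly-varying property to pass from $c_j$ to $c_k$. Your bookkeeping remark at the end is exactly the right point, and the paper also exploits the fact that the extra factors of $\AAs$ or $\BB$ that appear can be absorbed since the lemma permits implicit constants depending on $\AA$.

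Part (b) as written has a genuine gap. You propose expanding $F(v)-F(0) = \sum_{m\ge 1}\frac{F^{(m)}(0)}{m!}v^m$ and appealing to the algebra property from part (a) term by term. This only makes sense when $F$ is real-analytic on a neighbourhood of the range of $v$: for a merely smooth $F$ (which is all the lemma assumes --- and all that is needed, even though the actual metrics in this paper happen to be rational in $\partial u$), the Taylor series need not converge, and when it converges it need not converge to $F$. The standard counterexample $F(x)=e^{-1/x^2}$ kills the argument. Your alternative route via Lemma~\ref{l:R} does not close the gap either: that lemma gives an $H^{s+\frac12}$ bound for the paralinearization remainder $R(v)=F(v)-T_{F'(v)}v$, not the pointwise $L^{2n}$ and $L^\infty$ dyadic bounds that define $\CC_0$, so one still needs a separate argument for $R(v)$ --- which is precisely where the real work of the Moser estimate lives. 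The paper instead uses a continuous Littlewood--Paley expansion
\[
F(v) = F(v_0) + \int_0^\infty F'(v_{<j})\, v_j \, dj,
\]
projects onto $P_k$, and splits into the three regimes $j = k + O(1)$, $j < k-4$, and $j > k+4$. In the first regime one simply bounds $F'(v_{<j})$ in $L^\infty$; in the second, the factor $P_kF'(v_{<j})$ gains $2^{-N(k-j)}$ because $v_{<j}$ has frequency well below $2^k$; and in the third regime one re-expands $F'(v_{<j}) = F'(v_0) + \int_0^\infty F''(v_{<l}) v_l\, dl$ and produces a genuinely balanced bilinear expression $F''(v_{<l}) v_l v_j$ that can be estimated exactly as in part (a). This continuous-LP argument is the correct replacement for the Taylor series and handles every smooth $F$; you should adopt it (or an equivalent discrete Bony-type paralinearization) in place of the power-series expansion.
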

In particular the above result applies to the metrics $g$, $\tg$ and $\hg$,
all of which are smooth functions of $\partial u$, and thus belong to $\CC_0$. 
\begin{proof}
a) We first estimate the $\CC_0$ norm for the paraproduct $T_f g$ for $f,g \in \CC_0$.
This is straightforward, using the $L^\infty$ bound for $f$, for both the second and the third norms in \eqref{fe-control-C}. It remains to obtain a pointwise bound,
for which we change the summation order in the Littlewood-Paley expansion to obtain
\[
\| T_f g\|_{L^\infty} \lesssim \sum_k \|P_k f \|_{L^\infty} \| P_{> k} g \|_{L^\infty}
\lesssim \| f\|_{\CC_0} \|g\|_{L^\infty}.
\]

It now remains to estimate $\Pi(f,g)$ in $\CC_0$. The uniform bound is almost identical to the one above. For the $\AAs$ norm we use Bernstein's inequality
\[
2^{\frac{k}2} \| P_k \Pi(f,g) \|_{L^{2n}} 
\lesssim \sum_{j \geq k} 2^{k} \|  f_j g_j \|_{L^n}
\lesssim \sum_{j \geq k} 2^{k} \|  f_j \|_{L^p} \|g_j \|_{L^p}
\lesssim  \sum_{j \geq k} 2^{k-j} c_j^2 \AAs^2 \|f\|_{\CC_0} \|g\|_{\CC_0} ,
\]
and now the $j$ summation is straightforward.

For the $\BB$ norm, on the other hand, we estimate
\[
 \| P_k \Pi(f,g) \|_{L^\infty} 
\lesssim \sum_{j \geq k} \|  f_j g_j \|_{L^\infty}
\lesssim  \sum_{j \geq k} \|  f_j \|_{L^\infty} \|g_j \|_{L^\infty}
\lesssim  \sum_{j \geq k} 2^{\frac{j}2} c_j \AAs \BB \|f\|_{\CC_0} \|g\|_{\CC_0} ,
\]
and again the $j$ summation is straightforward.

\medskip

b) To prove the Moser inequality we use a continuous Littlewood-Paley decomposition,
which leads to the expansion
\[
F(v) = F(v_0) + \int_0^\infty F'(v_{<j}) v_j \, dj. 
\]
To estimate $P_k F(v)$ we consider several cases:

\medskip

i) $j = k+ O(1)$. Then $c_j \approx c_k$, $F'(v_{<j})$ is directly bounded in $L^\infty$ and our bounds are straightforward.

\medskip

ii) $j < k-4$. Then we can insert an additional localization,
\[
 P_k(F'(v_{<j}) v_j) = P_k(\tilde P_k F'(v_{<j}) v_j),
\]
where we gain from the frequency difference
\begin{equation}\label{Fprime-inf}
\|  P_k F'(v_{<j}) \|_{L^\infty} \lesssim 2^{-N(k-j)},
\end{equation}
which more than compensates for the difference (ratio) between $c_j$ and $c_k$.

\medskip

iii) $j > k+4$. In this case we reexpand $F'(v_{<j})$ and write
\[
 F'(v_{<j}) v_j = F'(v_{0}) v_j + \int_0^\infty F''(v_{<l}) v_l v_j \, dl .
\]
We further separate into two cases:

\medskip

(iii.1) $l = j + O(1)$. Then we simply bound $F''(v_{<l})$ in $L^\infty$,
and estimate first for the $\AAs$ bound using Bernstein's inequality
\[
2^{\frac{k}2} \| P_k (F''(v_{<l}) v_l v_j) \|_{L^{2n}} 
\lesssim 2^{k} \|  v_l v_j \|_{L^n}
\lesssim 2^{k} \|  v_l \|_{L^{2n}} \|v_j \|_{L^{2n}}
\lesssim  2^{k-j} c_j^2 \AAs^2,
\]
where the $j$ and $l$ integrations are trivial. Next we estimate for the $\BB$-bound
\[
\| P_k (F''(v_{<l}) v_l v_j) \|_{L^\infty} \lesssim \|v_l\|_{L^\infty} \| v_j\|_{L^\infty} \lesssim 2^{-\frac{j}2} \AA \BB c_j,
\]
again with easy $j$ and $l$ integrations.

\medskip

(iii.2) $l < j-4$. Then we can insert another frequency localization,
\[
 P_k (F''(v_{<l}) v_l v_j) = P_k (\tilde P_j F''(v_{<l}) v_l v_j),
\]
and repeat the computation in (b.ii)  but using \eqref{Fprime-inf} to account for the 
difference between $l$ and $j$.

\medskip
\end{proof}

 In order to avoid tampering with causality, the Littlewood-Paley projections we use in this paper
are purely spatial. This is more of a choice between different evils than a necessity; see for instance the alternate  choice made in \cite{ST}. A substantial
but worthwhile price to pay is that on occasion we will need 
to separately estimate double time derivatives, in a somewhat imperfect but sufficient fashion.

A good starting point in this direction is to think of bounds for second derivatives of our solution $u$. If at least one of the derivatives 
is spatial, then this is straightforward:
\begin{equation}\label{dxdu}
 \| P_{<k} \partial_x \partial u \|_{L^\infty} \lesssim  2^{\frac{k}{2}} \BB c_k.
 \end{equation} 
However, matters become more complex if instead we look at the second time derivative of $u$. The natural idea is to use the main equation \eqref{msf-short}
to estimate $\partial_t^2 u$, by writing
of spatial derivatives, 
\[
\partial_t^2 u = - \sum_{(\alpha,\beta) \neq (0,0)} \tg^{\alpha\beta} \partial_\alpha \partial_\beta u.
\]
If one takes this view, the main difficulties we face are 
with the high-high interactions in this expression. But  these high-high interactions have the redeeming feature that they are balanced, so they will often play a perturbative role. This leads us to define a corrected expression as follows:

\begin{definition} \label{d:dt2u}
 We denote
 \begin{equation}\label{hat-dt2}
\hat \partial_t^2 u = \partial_t^2 u +  \sum_{(\alpha,\beta) \neq (0,0)} \Pi(\tg^{\alpha\beta}, \partial_\alpha \partial_\beta u).
\end{equation}
More generally, by 
$\widehat{\partial_\alpha \partial_\beta} u$
we denote ${\partial_\alpha \partial_\beta} u$
if $(\alpha,\beta) \neq (0,0)$.
\end{definition}
 With this notation, we have

\begin{lemma}\label{l:utt}
Assume that $u$ solves the equation \eqref{msf-short}. Then for its second time derivative we have the decomposition 
\begin{equation}
 \partial_t^2 u =  \hat \partial_t^2 u + \pi_2(u),
\end{equation}
where the two components satisfy the uniform bounds
\begin{equation}\label{fe-dt2u}
\| P_{<k}  \hat \partial_t^2 u \|_{L^\infty} \lesssim  2^{\frac{k}{2}} \BB c_k,  \qquad \| P_{<k}  \hat \partial_t^2 u \|_{L^\infty} \lesssim  2^{k} \AA c_k^2,\end{equation}
respectively 
\begin{equation}\label{fe-dt2u-err}
\| \pi_2(u) \|_{L^\infty} \lesssim_{\AA} \BB^2, \qquad    \| \pi_2(u) \|_{L^n} \lesssim \AAs^2.    
\end{equation}
\end{lemma}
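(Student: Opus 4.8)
The strategy is to decompose $\partial_t^2 u$ using the equation \eqref{msf-short} written in terms of the metric $\tg$ (for which $\tg^{00}=1$), separating the low$\times$high paraproduct contributions from the high$\times$high (balanced) contributions. Concretely, starting from $\tg^{\alpha\beta}\partial_\alpha\partial_\beta u = 0$ we isolate $\partial_t^2 u = -\sum_{(\alpha,\beta)\neq(0,0)} \tg^{\alpha\beta}\partial_\alpha\partial_\beta u$, and then apply the Littlewood-Paley trichotomy to each product $\tg^{\alpha\beta}\partial_\alpha\partial_\beta u = T_{\tg^{\alpha\beta}}\partial_\alpha\partial_\beta u + T_{\partial_\alpha\partial_\beta u}\tg^{\alpha\beta} + \Pi(\tg^{\alpha\beta},\partial_\alpha\partial_\beta u)$. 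By Definition~\ref{d:dt2u}, the $\Pi$ terms are exactly what is subtracted to form $\hat\partial_t^2 u$, so $\pi_2(u) = -\sum \Pi(\tg^{\alpha\beta},\partial_\alpha\partial_\beta u)$ and $\hat\partial_t^2 u = -\sum (T_{\tg^{\alpha\beta}}\widehat{\partial_\alpha\partial_\beta}u + T_{\widehat{\partial_\alpha\partial_\beta}u}\tg^{\alpha\beta})$, where each term in the latter sum has at least one spatial derivative falling on $u$ in the high-frequency input.

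\textbf{Bounds for $\hat\partial_t^2 u$.} For the two estimates in \eqref{fe-dt2u}, I would bound $P_{<k}$ applied to each of the two paraproduct pieces. For $T_{\tg^{\alpha\beta}}\widehat{\partial_\alpha\partial_\beta}u$: since $\tg^{\alpha\beta}\in\CC_0$ by Lemma~\ref{l:Moser-control0}, it is bounded in $L^\infty$, and the high-frequency factor $\widehat{\partial_\alpha\partial_\beta}u = \partial_x\partial u$ (at least one spatial derivative) is controlled at frequency $\lesssim 2^k$ by \eqref{dxdu}, which gives $\|P_{<k}\partial_x\partial u\|_{L^\infty}\lesssim 2^{k/2}\BB c_k$; this yields the first bound in \eqref{fe-dt2u}. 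The second bound ($\lesssim 2^k\AA c_k^2$) follows the same way but using the $\AAs$-type frequency envelope control from \eqref{fe-control}, i.e.\ $\|P_j\partial u\|_{L^\infty}\lesssim 2^{-j/2}\BB c_j$ summed against $2^j$ from the double derivative, or more simply using that $\partial u$ is controlled in $B^0_{\infty,1}$ with envelope $c_k^2$. For the term $T_{\widehat{\partial_\alpha\partial_\beta}u}\tg^{\alpha\beta}$ one uses the Coifman-Meyer bound $\|T_f g\|_{L^\infty}\lesssim \|f\|_{L^\infty}\|g\|_{L^\infty}$ with $f$ frequency-localized: $\|P_{<k}T_{\partial_x\partial u}\tg\|_{L^\infty}\lesssim \|P_{<k}\partial_x\partial u\|_{L^\infty}\|\tg\|_{L^\infty}\lesssim 2^{k/2}\BB c_k$, and similarly for the $\AA c_k^2$ bound; here one should be a little careful that the low-frequency factor in the paraproduct is the one carrying the derivatives, so the frequency gap in $T$ still localizes the output below $2^k$.

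\textbf{Bounds for $\pi_2(u)$.} For \eqref{fe-dt2u-err} I would estimate $\pi_2(u) = -\sum_{(\alpha,\beta)\neq(0,0)}\Pi(\tg^{\alpha\beta},\partial_\alpha\partial_\beta u)$ directly. Write $\partial_\alpha\partial_\beta u = \partial_\alpha(\partial_\beta u)$ and move one derivative onto $\tg$ via a para-Leibniz/integration-by-parts-type identity, or more directly estimate $\Pi(\tg^{\alpha\beta},\partial_\alpha\partial_\beta u)$ by pairing high-high frequencies: $\|\Pi(\tg^{\alpha\beta},\partial_\alpha\partial_\beta u)\|_{L^\infty}\lesssim \sum_j \|P_j\tg^{\alpha\beta}\|_{L^\infty}\|P_j\partial_\alpha\partial_\beta u\|_{L^\infty}$. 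Since $\tg^{\alpha\beta}$ is a smooth function of $\partial u$, by Lemma~\ref{l:Moser-control0} it lies in $\CC_0$, hence $\|P_j\tg^{\alpha\beta}\|_{L^\infty}\lesssim 2^{-j/2}\BB c_j$ (and $\lesssim 2^{j/2}\AAs\BB c_j^2$ in $L^{2n}$ via the $\CC_0$ bounds), while at frequency $2^j$ the second factor satisfies $\|P_j\partial^2 u\|_{L^\infty}\lesssim 2^j\|P_j\partial u\|_{L^\infty}\lesssim 2^{j/2}\BB c_j$. Multiplying and summing the geometric-type series $\sum_j 2^{-j/2}\cdot 2^{j/2}\BB^2 c_j^2 = \BB^2\sum_j c_j^2\lesssim \BB^2$ gives the first bound in \eqref{fe-dt2u-err}; here the implicit constant depends on $\AA$ through the Moser estimate, hence the $\lesssim_\AA$. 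For the $L^n$ bound one uses instead the $L^{2n}$ control of the dyadic pieces on both factors and Hölder/Bernstein: $\|P_j\tg\|_{L^{2n}}\lesssim 2^{-j/2}\AAs c_j^2$ paired with $\|P_j\partial^2 u\|_{L^{2n}}\lesssim 2^j\|P_j\partial u\|_{L^{2n}}\lesssim 2^{j/2}\AAs c_j^2$, summing $\sum_j \AAs^2 c_j^4\lesssim \AAs^2$.

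\textbf{Main obstacle.} The routine part is the dyadic bookkeeping; the one point that needs genuine care is the high-high term $\pi_2(u)$, specifically whether one can absorb the \emph{two} derivatives landing on $u$ in $\Pi(\tg,\partial^2 u)$ using only the $B^{1/2}_{\infty,2}$ and $B^{1/2}_{2n,1}$-type control on $\partial u$ and the matching regularity of $\tg$. The resolution is that both inputs sit at a comparable high frequency $2^j$, so the combined regularity available is $\tfrac12+\tfrac12 = 1$ derivatives of smoothing relative to $\partial^2 u$ — exactly enough to land at the level of $\partial u$ — and the slowly-varying frequency envelope makes the $j$-sum converge. One must also double-check that the case $\beta=0$ (mixed $\partial_t\partial_x$) and $\alpha=0\neq\beta$ is handled by \eqref{dxdu} exactly as the purely spatial case, which is immediate since $\widehat{\partial_\alpha\partial_\beta}u$ by definition always has at least one spatial derivative whenever $(\alpha,\beta)\neq(0,0)$.
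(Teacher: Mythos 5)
Your proof is correct and follows essentially the same route as the paper: rewrite \eqref{msf-short} via $\tg$, apply the Littlewood--Paley trichotomy so that $\hat\partial_t^2 u$ absorbs the two $T$-paraproducts while $\pi_2$ is the balanced $\Pi$ term, and bound each using $\tg\in\CC_0$ together with the frequency-envelope control on $\partial u$ at dyadic frequencies below $2^k$. (Minor slip: the first alternative you offer for the $2^k\AA c_k^2$ bound, summing $2^{j/2}\BB c_j$, actually reproduces the $2^{k/2}\BB c_k$ estimate; your second alternative, $\|P_j\partial u\|_{L^\infty}\lesssim\AA c_j^2$ from the $\AA$-envelope, is the one that gives the stated bound.)
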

 One should compare this with the easier direct bound \eqref{dxdu} for spatial derivatives; the good part $\hat \D_t^2 u$ satisfies a similar bound,
 but the error $\pi_2(u)$ does not. Later, when such expressions are involved,
 we will systematically peel off perturbatively the error, and always avoid differentiating it further.
 
 \begin{proof}
The main ingredient here is the Littlewood-Paley decomposition. For expository simplicity we prove \eqref{fe-dt2u} at fixed frequency $k$. Using the notation in \eqref{hat-dt2} we can rewrite equation \eqref{ms-tg} as
\begin{equation}
\label{wave decomp}
\hat \partial_t^2 u + \sum_{(\alpha,\beta) \neq (0,0)} T_{\tg^{\alpha \beta}}\partial_{\alpha}\partial_{\beta}u+T_{\partial_{\alpha}\partial_{\beta}u}\tg^{\alpha \beta} =0.
\end{equation}
To finish the proof we consider the expression above localized at frequency $k$, and evaluated in the $L^{\infty}$-norm
\begin{equation}
\label{decomp}
\begin{aligned}
\Vert P_{k}\hat \partial_t^2 u \Vert_{L^{\infty}} &\leq
\Vert P_{<k} (\tg^{\alpha \beta}) P_k( \partial_{\alpha}\partial_{\beta}u )\Vert_{L^{\infty}}+ \Vert P_{<k}(\partial_{\alpha}\partial_{\beta}u) P_k(\tg^{\alpha \beta})\Vert_{L^{\infty}}.\\
\end{aligned}
\end{equation}
We bound each of the terms separately. For the second we use that $\tg$ is bounded in $L^\infty$, and   get 
\[
\Vert P_{<k}(\partial_{\alpha}\partial_{\beta}u) P_k(\tg^{\alpha \beta})\Vert_{L^{\infty}} \leq 2^{\frac{k}{2}}\BB c_k.
\]
For the first term we rely on the same procedure, and hence finish the proof of the first bound in \eqref{fe-dt2u}. The second bound in \eqref{fe-dt2u} has as a starting point the decomposition in \eqref{decomp}, only that this time we want to bound the RHS terms using the control norm $\AA$. Here we use Lemma~\ref{l:Moser-control0}, part b), and the algebra property of $L^{\infty}$ so that
\[
\Vert P_{<k}(\partial_{\alpha}\partial_{\beta}u) P_k(\tg^{\alpha \beta})\Vert_{L^{\infty}} \leq \Vert  P_{<k}(\partial_{\alpha}\partial_{\beta}u) \Vert_{L^{\infty}} \Vert P_k(\tg^{\alpha \beta})\Vert_{L^{\infty}}\leq 2^k\AA c_k^2,
\]
where for both   factors we used \eqref{fe-control-v0} in order to arrive at the result.  

The last bound to prove is \eqref{fe-dt2u-err}, where  because of the balanced frequencies we can easily even out the derivatives balance and  estimate each of the factors using the $\BB$ norm. Explicitly, $\tg^{\alpha\beta}$ is in $\CC_0 $ by Lemma~\ref{l:Moser-control0} and hence, we get that for $(\alpha, \beta)\neq (0, 0)$:
\[
\begin{aligned}
\Vert  \Pi(\tg^{\alpha\beta}, \partial_\alpha \partial_\beta u)\Vert_{L^{\infty}} &\leq \sum_k  \Vert  P_k(\tg^{\alpha\beta}) P_k(\partial_\alpha \partial_\beta u) \Vert_{L^{\infty}}\\
&\leq \sum_k 2^{-\frac{k}{2}}\Vert 2^{\frac{k}{2}}P_k(\tg^{\alpha\beta})\Vert_{L^{\infty}}2^{\frac{k}{2}}\Vert 2^{-\frac{k}{2}}P_k(\partial_\alpha \partial_\beta u)\Vert_{L^{\infty}}\\
&\lesssim_{\AA} \BB^2.
\end{aligned}
\]
which is the first bound in \eqref{fe-dt2u-err}. The second bound in \eqref{fe-dt2u-err} is similar, but replacing the $L^\infty$ norms with  $L^{2n}$
norms.
\end{proof}

The above lemma motivates narrowing the space $\CC_0$, in order to also include information 
about $\partial_t v$. For later use, we also 
define two additional closely related spaces.

\begin{definition}
a) The space $\CC$ is the space of distributions $v$ which satisfy \eqref{fe-control-C} and, in addition, $\partial_t v$ admits a decomposition
$\partial_t v = w_1+w_2$ so that 
\begin{equation}\label{fe-dt2u-defCC}
\| P_{k}  w_1 \|_{L^\infty} \leq C  2^{\frac{k}{2}} \BB c_k,  \qquad 
\| w_2 \|_{L^\infty} \leq  C\BB^2,    
\end{equation}
endowed with the norm defined as the best possible constant $C$ in  \eqref{fe-control-C} and in the above inequality relative to all such possible decompositions.

b) The space $\DCC$ consists of all functions $f$ which admit a decomposition 
$f = f_1+f_2$ so that 
\begin{equation}\label{DC-decomp}
\| P_k f_1\|_{L^\infty} \leq C 2^{\frac{k}2} \BB c_k, \qquad  \| f_2\|_{L^\infty} \leq C \BB^2,    
\end{equation}
endowed with the norm defined as the best possible constant $C$ in the above inequality relative to all such possible decompositions.

c) The space $\partial_x \DCC$ consists of functions $f$ which admit a decomposition 
$f = f_1+f_2$ so that 
\begin{equation}
\label{dxdc-norm}
\| P_k f_1\|_{L^\infty} \leq C 2^{\frac{3k}2} \BB c_k,   \qquad \| P_k f_2\|_{L^\infty} \leq C 2^k \BB^2,
\end{equation}
endowed also  with the corresponding  norm. 
\end{definition}
 
 We remark that, by definition, 
 we have the simple inclusions
 \begin{equation}
 \label{inclus}
\CC\subset \CC_0, \qquad \partial:\CC \to \DCC,
\qquad \partial_x: \DCC \to \partial_x \DCC.
 \end{equation}
 Based on what we have so far,  we begin by identifying some 
elements of these spaces:

\begin{lemma}\label{l:du-in-dcc}
We have 
\begin{equation}\label{fe-utt}
\|\partial u\|_{\CC} \lesssim 1, \qquad \| \partial^2 u \|_{\DCC} \lesssim 1. 
\end{equation}
\end{lemma}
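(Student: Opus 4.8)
The plan is to prove the two claimed bounds \eqref{fe-utt} separately, using the structural results already established. For the first bound $\|\partial u\|_{\CC} \lesssim 1$, recall that membership in $\CC$ requires two things: first, that $\partial u$ satisfies \eqref{fe-control-C}, which is immediate from the definition of the control parameters $\AAs$ and $\BB$ together with the frequency envelope properties \eqref{fe-control}; and second, that $\partial_t(\partial u) = \partial_t \partial u$ admits a decomposition $w_1 + w_2$ satisfying \eqref{fe-dt2u-defCC}. Here I would split by cases on which derivative is the time derivative. If $\partial = \partial_x$ is spatial, then $\partial_t \partial_x u$ is a mixed second derivative, and \eqref{dxdu} (or rather its un-truncated dyadic form, $\|P_k \partial_x \partial u\|_{L^\infty} \lesssim 2^{k/2}\BB c_k$) puts it entirely in the $w_1$ piece. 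If $\partial = \partial_t$, then $\partial_t \partial_t u = \partial_t^2 u$, and this is exactly the content of Lemma~\ref{l:utt}: we set $w_1 = \hat\partial_t^2 u$, which satisfies the dyadic bound $\|P_k \hat\partial_t^2 u\|_{L^\infty} \lesssim 2^{k/2}\BB c_k$ by \eqref{fe-dt2u}, and $w_2 = \pi_2(u)$, which satisfies $\|\pi_2(u)\|_{L^\infty} \lesssim_{\AA} \BB^2$ by \eqref{fe-dt2u-err}. This is precisely the decomposition required by \eqref{fe-dt2u-defCC}.

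For the second bound $\|\partial^2 u\|_{\DCC} \lesssim 1$, I would again split into the spatial-derivative case and the pure-time case. Recall $\DCC$ asks for a decomposition $f = f_1 + f_2$ with $\|P_k f_1\|_{L^\infty} \leq C 2^{k/2}\BB c_k$ and $\|f_2\|_{L^\infty} \leq C\BB^2$. For the components $\partial_\alpha \partial_\beta u$ with $(\alpha,\beta) \neq (0,0)$, i.e. at least one spatial derivative, we take $f_1 = \partial_\alpha\partial_\beta u$ and $f_2 = 0$; the bound $\|P_k \partial_\alpha\partial_\beta u\|_{L^\infty} \lesssim 2^{k/2}\BB c_k$ is exactly \eqref{dxdu} (in its dyadic form). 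For the remaining component $\partial_t^2 u$, we again invoke Lemma~\ref{l:utt}, taking $f_1 = \hat\partial_t^2 u$ and $f_2 = \pi_2(u)$, with the required bounds coming from \eqref{fe-dt2u} and \eqref{fe-dt2u-err} respectively. In fact this second bound can also be deduced more cheaply from the first via the inclusion $\partial : \CC \to \DCC$ recorded in \eqref{inclus}, applied to $\partial u \in \CC$; I would mention this as the slick route, perhaps as the main argument, and keep the explicit case check as the underlying justification of that inclusion.

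The work here is genuinely light: essentially everything has been pre-packaged by Lemma~\ref{l:utt}, the direct bound \eqref{dxdu}, and the definitions of $\AAs$, $\BB$, and the joint frequency envelope $\{c_k\}$. The one point requiring a small amount of care — and the closest thing to an obstacle — is making sure that the dyadic (un-truncated-in-$k$) versions of \eqref{dxdu} and \eqref{fe-dt2u} are what is actually needed, since those estimates are stated with $P_{<k}$ truncations; but these follow immediately by applying the stated bounds with a telescoping/square-function argument, or simply by noting that the proofs of those estimates produce the single-frequency bound directly (as is visible in the proof of Lemma~\ref{l:utt}). A second minor point is bookkeeping of the $\AA$-dependence of the implicit constants, which is harmless since all our estimates are allowed to depend on $\AA$. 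So the proof is short: assemble the decomposition from Lemma~\ref{l:utt} for the pure-time second derivative, use \eqref{dxdu} for everything with a spatial derivative, and read off membership in $\CC$ and $\DCC$ from the definitions.
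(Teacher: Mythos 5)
Your proof is correct and follows essentially the same route as the paper, which simply observes that the claims are trivial except when both derivatives are time derivatives, in which case Lemma~\ref{l:utt} supplies the required decomposition. You have merely unpacked the case analysis (spatial versus time derivative, checking the $\CC_0$ conditions, noting the $P_{<k}$-to-$P_k$ conversion) that the paper leaves implicit, and your slick route via the inclusion $\partial : \CC \to \DCC$ from \eqref{inclus} is a fine alternative presentation for the second bound.
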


\begin{proof}
The  bounds in \eqref{fe-utt} are trivial unless both derivatives are time derivatives, in which case it follows directly from the previous Lemma~\ref{l:utt}. 
\end{proof}

The Moser estimates of Lemma~\ref{l:Moser-control0} may be extended to this setting to include all smooth functions of $\partial u$:

\begin{lemma}\label{l:Moser-control}
a) We have the bilinear multiplicative relations
\begin{equation}\label{C-DC}
\CC_0 \cdot \DCC \to \DCC, \qquad     T_{\CC_0} \cdot \DCC \to \DCC, 
\qquad  T_{\DCC} \CC_0 \to \AA \DCC,
\end{equation}
as well as 
\begin{equation}\label{TDCC-CC}
T_{\DCC} \CC_0 \to \BB^2 L^\infty, \qquad \Pi(\DCC,\CC_0) \to \BB^2 L^\infty.
\end{equation}

b) The space $\CC$ is closed under multiplication and para-multiplication; in particular it is an algebra. 

c) Let $F$ be a smooth function, and $v \in \CC$. Then $F(v) \in \CC$. In particular if $\|v\|_{\CC} \lesssim 1$ then $F(v)$
satisfies
\begin{equation}\label{fe-control-v}
\| F(v) \|_{\CC} \lesssim_{\AA} \|v\|_{\CC}.
\end{equation}

d) In addition we also have the paralinearization error bound
\begin{equation}
\| \partial R(v)\|_{L^\infty}    \lesssim \BB^2, \qquad R(v) = F(v) - T_{F'(v)} v. \end{equation}
\end{lemma}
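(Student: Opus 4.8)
The plan is to make part (a) the technical core, deduce (b) and (c) from it as short corollaries, and treat (d) as an $L^\infty$-version of the paralinearization bound of Lemma~\ref{l:R}. The reductions are immediate once (a) is available. For (b): given $v,w\in\CC$ we have $vw,T_vw\in\CC_0$ by Lemma~\ref{l:Moser-control0}(a); by Leibniz $\partial_t(vw)=(\partial_t v)w+v\,\partial_t w$, and since the Littlewood--Paley projections are spatial the paraproduct is a derivation in $\partial_t$, so $\partial_t(T_vw)=T_{\partial_t v}w+T_v\,\partial_t w$; each summand lies in $\DCC$ by (a) because $\partial_t v,\partial_t w\in\DCC$ (by \eqref{inclus}) and $v,w\in\CC_0$, while $\partial_x(vw),\partial_x(T_vw)$ land in $\DCC$ using $\partial_x:\CC\to\DCC$ from \eqref{inclus}. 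For (c): $F(v)\in\CC_0$ by Lemma~\ref{l:Moser-control0}(b), and $\partial_t F(v)=F'(v)\,\partial_t v$; writing $F'(v)=F'(0)+(F'(v)-F'(0))$ with $F'(v)-F'(0)\in\CC_0$ by Lemma~\ref{l:Moser-control0}(b), the trivial inclusion $F'(0)\,\partial_t v\in\DCC$ together with (a) gives $\partial_t F(v)\in\DCC$; chasing constants through (a) and Lemma~\ref{l:Moser-control0}(b) yields the quantitative bound \eqref{fe-control-v}.

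For part (a), I would argue by the Littlewood--Paley trichotomy $fg=T_fg+T_gf+\Pi(f,g)$, applied to each of the five relations, tracking which of two output norms each piece falls into: the $\DCC$-principal bound $\|P_k\cdot\|_{L^\infty}\lesssim 2^{k/2}\BB c_k$ (produced by low$\times$high interactions whose low factor is merely bounded) versus the $\BB^2L^\infty$ error bound (produced by balanced high$\times$high interactions and by low$\times$high interactions whose low factor already sits at $\DCC$-principal level). The three bounds available on a $\CC_0$-element $f$ are $\|f\|_{L^\infty}\le C$, the \emph{critical} dyadic bound $\|P_kf\|_{L^\infty}\lesssim\AAs c_k^2$ (by Bernstein from the $L^{2n}$ bound in Definition~\ref{d:CC}), and the \emph{half-derivative} bound $\|P_kf\|_{L^\infty}\lesssim 2^{-k/2}\BB c_k$; the slow variation of $\{c_k\}$ gives $\|(h_1)_{<k}\|_{L^\infty}\lesssim 2^{k/2}\BB c_k$ for the $\DCC$-principal part $h_1$ of $h\in\DCC$. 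With these: for $h=h_1+h_2\in\DCC$ and $f\in\CC_0$, $T_fh_1$ resp. $T_fh_2$ stay at $\DCC$-principal resp. $\BB^2L^\infty$ level since $T_f$ is $L^\infty$-bounded and frequency-localizing; $T_{h_1}f$ stays at $\DCC$-principal level from $\|(h_1)_{<k}\|_{L^\infty}\lesssim 2^{k/2}\BB c_k$ and $\|f_k\|_{L^\infty}\le C$; the gain in $T_{\DCC}\CC_0\to\AA\DCC$ comes from instead using $\|f_k\|_{L^\infty}\lesssim\AAs c_k^2$; and every resonant/high$\times$high sum $\sum_j\|P_j\cdot\|_{L^\infty}\|P_j\cdot\|_{L^\infty}$ pairs a $2^{j/2}\BB c_j$ against a $2^{-j/2}\BB c_j$ (or an $\AAs c_j^2$ against a $\BB^2$) and sums via $\sum_jc_j^2=1$. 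The parallel $L^{2n}$-versions needed for the $\AAs$-component are proved the same way, Bernstein converting between $L^{2n}$, $L^n$ and $L^\infty$.

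The heart of the argument is part (d). Using that the paraproduct is a derivation I write $\partial R(v)=F'(v)\,\partial v-T_{F''(v)\partial v}v-T_{F'(v)}\partial v$, and applying the trichotomy to the product $F'(v)\,\partial v$ I obtain
\[
\partial R(v)=T_{\partial v}F'(v)-T_{F''(v)\partial v}v+\Pi(F'(v),\partial v).
\]
The resonant term $\Pi(F'(v),\partial v)$ lies in $\BB^2L^\infty$ by part (a). For the low$\times$high commutator $T_{\partial v}F'(v)-T_{F''(v)\partial v}v$ I would paralinearize the high factor, $F'(v)=T_{F''(v)}v+\rho$ with $\rho=F'(v)-T_{F''(v)}v$, so that $T_{\partial v}F'(v)=T_{\partial v}T_{F''(v)}v+T_{\partial v}\rho$; the leading piece cancels $T_{F''(v)\partial v}v$ up to the para-product error $(T_{\partial v}T_{F''(v)}-T_{(\partial v)F''(v)})v$, which is balanced by the $\pm\frac12$-derivative split of Lemma~\ref{l:para-prod} (read off in $L^\infty$ through the dyadic bounds: $\||D|^{-\frac12}\partial v\|_{BMO}\,\||D|^{\frac12}F''(v)\|_{BMO}\lesssim\BB^2$), while $T_{\partial v}\rho$ is balanced since $\rho$ is half a derivative smoother than $v$ with a $\BB$ weight and, after one further paralinearization, reduces to more instances of the bilinear bounds from (a). A more transparent route is to run the Bony expansion $F(v)=F(v_0)+\int_0^\infty F'(v_{<j})v_j\,dj$ (as in the proof of Lemma~\ref{l:Moser-control0}(b)), subtract $T_{F'(v)}v$ term by term, and observe that every surviving contribution at output frequency $k$ is a product of two factors each supported at frequencies $\gtrsim 2^k$; differentiating and distributing $2^{k/2}\cdot 2^{k/2}$ over the two factors then produces pairings of $2^{j/2}\BB c_j$ with $2^{-j/2}\BB c_j$ (or with $\AAs c_j^2$), summable by $\sum_jc_j^2=1$. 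I expect the main obstacle to be exactly this cancellation: one must check that after removing $T_{F'(v)}v$ the paralinearization error retains \emph{no} unbalanced low$\times$high piece, and that the para-product and para-Leibniz remainders (Lemmas~\ref{l:para-prod},~\ref{l:para-leibniz}) are genuinely controlled by $\BB^2$ rather than by $\|\partial^2u\|_{L^\infty}$ — this is precisely where the $\frac12+\frac12$ balancing, rather than $1+0$, is essential, and where a stray $\BB^3$ must be avoided by reaching for the critical bound $\|P_kf\|_{L^\infty}\lesssim\AAs c_k^2$ on the $\CC_0$-type factor instead of its half-derivative bound. The bookkeeping in (a) separating $\DCC$-principal output from $\BB^2L^\infty$ output is the secondary technical point; everything else is routine paraproduct calculus.
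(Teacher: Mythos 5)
Your treatment of parts (a), (b), (c) matches the paper's: (a) is the technical core done by trichotomy and dyadic bookkeeping with the three available bounds on a $\CC_0$-element, (b) reduces to (a) via Leibniz because the Littlewood--Paley projectors are purely spatial so $\partial_t$ passes through paraproducts, and (c) reduces to (a) and Lemma~\ref{l:Moser-control0}(b) via $\partial_0 F(v) = F'(v)\,\partial_0 v$.

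Where you diverge from the paper, and where you are doing significantly more work than necessary, is part (d). You correctly arrive at the identity
\[
\partial R(v)=\Pi(F'(v),\partial v)+T_{\partial v}F'(v)-T_{\partial F'(v)}v,
\]
but you then treat the last two terms as a low$\times$high commutator whose two halves must cancel against each other, and propose to extract that cancellation via a second paralinearization $F'(v)=T_{F''(v)}v+\rho$ followed by a para-product estimate --- the step you yourself flag as ``the heart'' and ``the main obstacle.'' In fact no cancellation is used in the paper, because each of the three terms is \emph{separately} of size $\BB^2$ once (a) and (c) are in hand. Indeed, $\Pi(F'(v),\partial v)$ falls under $\Pi(\DCC,\CC_0)\to\BB^2L^\infty$; $T_{\partial v}F'(v)$ falls under $T_{\DCC}\CC_0\to\BB^2L^\infty$; and since (c) gives $F'(v)\in\CC$, hence $\partial F'(v)=F''(v)\,\partial v\in\DCC$ (also directly from $\CC_0\cdot\DCC\to\DCC$), the third term $T_{\partial F'(v)}v$ again falls under $T_{\DCC}\CC_0\to\BB^2L^\infty$. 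So (d) is a two-line corollary of \eqref{TDCC-CC} together with (c), and the recursive paralinearization/Bony-expansion machinery, the $\frac12+\frac12$ balancing of Lemma~\ref{l:para-prod} in $L^\infty$, and the ``unbalanced remainder'' check are all avoidable. The commutator route would probably work if pushed through, but it is the wrong reflex here: $T_fg$ and $T_gf$ are not being compared --- the low-frequency factor in $T_{\partial v}F'(v)$ and the low-frequency factor in $T_{\partial F'(v)}v$ both carry a full $\DCC$-level derivative, which is already enough.
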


Here part (a) is the main part, after which parts (b) and (c) become immediate improvements of Lemma~\ref{l:Moser-control0}. But the new interesting bound is the one in part (d),
where, notably, we also bound the time derivative of $R(v)$.

\begin{proof}
a) Let $z \in \CC_0$ and $w \in \DCC$ with the decomposition $w=w_1+w_2$ as in \eqref{DC-decomp}. We skip the first bound in \eqref{C-DC}, as it is a consequence of the rest
of the estimates in \eqref{C-DC} and \eqref{TDCC-CC}, and first consider the paraproduct $T_z w$.
We will bound the contributions of $w_1$ and $w_2$ in the 
same norms as $w_1$, respectively $w_2$. Precisely, 
we have 
\[
\| P_{k} T_z w_1 \|_{L^\infty} \lesssim \| z\|_{L^\infty} \| \tilde P_k w_1\|_{L^\infty} \lesssim  \| z\|_{\CC_0} 2^{\frac{k}2} \BB c_k \| w \|_{\DCC} ,
\]
respectively 
\[
\| T_z w_2 \|_{L^\infty} \lesssim \sum_k \|z_k\|_{L^\infty} \| P_{>k} w_2\|_{L^\infty}
\lesssim \| z\|_{\CC_0} \|w_1\|_{L^\infty}.
\]

Next we consider $T_w z$, where we have two choices. The first choice is to use 
only the $\AA$ component of the $\CC_0$ norm of $z$, and prove the last bound in \eqref{C-DC}.
Precisely, we have 
\[
\| P_{k} T_{w_1} z \|_{L^\infty} \lesssim \| w_{1,<k}\|_{L^\infty} \| \tilde P_k z\|_{L^\infty} \lesssim   2^{\frac{k}2} \BB c_k \| w \|_{\DCC} \cdot \AA \|z\|_{\CC_0}, 
\]
respectively
\[
\| T_{w_2} z \|_{L^\infty} \lesssim \sum_k \| w_{2,<k}\|_{L^\infty} \| P_{k} z\|_{L^\infty}
\lesssim \| w_2\|_{L^\infty} \cdot \AA \|z\|_{\CC_0}.
\]

Alternatively, we can use the $\BB$ component of the $\CC_0$ norm of $z$ in the bound for the $w_1$ component,
\[
\| T_{w_1} z \|_{L^\infty} \lesssim \sum_k \| w_{1,<k}\|_{L^\infty} \| z_k\|_{L^\infty} \lesssim  \sum_k 2^{\frac{k}2} \BB c_k \| w \|_{\DCC} \cdot  2^{-\frac{k}2} \BB c_k\|z\|_{\CC_0} \lesssim \BB^2  \| w \|_{\DCC}\|z\|_{\CC_0},
\]
which leads to the first bound in \eqref{TDCC-CC}. 

It remains to consider the second bound in \eqref{TDCC-CC}, where we have
\[
\| \Pi(w_1,z) \|_{L^\infty} \lesssim \sum_k \| w_{1,k}\|_{L^\infty} \|  z_k \|_{L^\infty} \lesssim  \sum_k 2^{\frac{k}2} \BB c_k \| w \|_{\DCC} \cdot  2^{-\frac{k}2} \BB c_k\|z\|_{\CC_0} \lesssim \BB^2  \| w \|_{\DCC}\|z\|_{\CC_0},
\]
respectively 
\[
\| \Pi(w_2,z) \|_{L^\infty} \lesssim \sum_k \| w_{2,k}\|_{L^\infty} \| z_k\|_{L^\infty}
\lesssim \| w_2\|_{L^\infty} \cdot \AA \|z\|_{\CC_0} \lesssim A \BB^2\| w\|_{\DCC} \cdot \AA \|z\|_{\CC_0}.
\]

\bigskip

b) Compared with part (a) of Lemma~\eqref{l:Moser-control0}, it remains to estimate
the time derivative of products and paraproducts. Using Leibniz's rule, 
this reduces directly to the multiplicative bounds in (a).
\bigskip

c) Compared with part (b) of Lemma~\eqref{l:Moser-control0} it remains to estimate
\[
\partial_0 F(v) = F'(v) \partial_0 v 
\]
in $\DCC$.
By Lemma~\eqref{l:Moser-control0} we have $F'(v) \in \CC_0$, while $\partial_0 v \in \DCC$. 
Then we can bound the product in $\DCC$ by part (a) of this Lemma.

\bigskip

d) We have 
\[
\partial R = (F'(v)-T_{F'(v)}) \partial v - T_{\partial F'(v)} v
= \Pi(F'(v), \partial v) + T_{\partial v} F'(v) - T_{\partial F'(v)} v.
\]
Now it suffices to use \eqref{fe-control-v} for both $v$ and $F'(v)$. 
\end{proof}

Applying the above lemma shows  that $F(\partial u) \in \CC$,
and in particular all components of the metrics $g$, $\tg$ and $\hg$ are in $\CC$.
We also have $F(\partial u) \partial^2 u \in \DCC$, which in particular shows that the gradient potentials $A$ and $\tA$ belong to $\DCC$.

 We will use part (d) when $w=\partial u$ and $F=g$, in which case
\eqref{est-R} reads
\begin{equation}\label{est-R-for-du-inf}
\|\partial R(\partial u)\|_{L^\infty} \lesssim_\AA \BB^2,
\qquad R = g^{\alpha \beta} + T_{\partial^{\alpha} u g^{\beta \gamma}}
\partial_\gamma u + T_{\partial^{\beta} u g^{\alpha \gamma}}
\partial_\gamma u .
\end{equation}
We remark that a similar $H^s$ type bound for the same $R$ is provided 
by \eqref{est-R}, namely
\begin{equation}\label{est-R-for-du}
\|R(\partial u)\|_{H^{s-\frac12}} \lesssim_\AA \BB \| \partial u\|_{H^{s-1}}.
\end{equation}

The next lemma provides us with the 
primary example of elements of the space $\partial_x \DCC$:

\begin{lemma}\label{l:d3u-in-d2cc}
We have 
 \begin{equation}\label{fe-uttt}
 \|  \partial_\alpha \widehat{\partial_\beta \partial_\gamma} u \|_{\partial_x \DCC} \lesssim 1.
 \end{equation}
\end{lemma}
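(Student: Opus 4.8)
The plan is to split into cases according to the value of $(\beta,\gamma)$ and according to whether $\alpha$ is a time or a spatial index. Every case except one will reduce immediately to the facts $\|\partial^2 u\|_{\DCC} \lesssim 1$ and $\|\partial u\|_{\CC} \lesssim 1$ from Lemma~\ref{l:du-in-dcc}, to the bound $\|\hat\partial_t^2 u\|_{\DCC} \lesssim 1$ (which is contained in the first estimate of \eqref{fe-dt2u}), and to the inclusion $\partial_x : \DCC \to \partial_x\DCC$ in \eqref{inclus}. Indeed, if $(\beta,\gamma) \neq (0,0)$ then $\widehat{\partial_\beta\partial_\gamma}u = \partial_\beta\partial_\gamma u \in \DCC$, and $\partial_\alpha$ of this is $\partial_x$ of a $\DCC$ element: directly when $\alpha$ is spatial, and, when $\alpha = 0$, after relabelling one of $\beta,\gamma$ as a spatial index $j$ and writing $\partial_0\partial_\beta\partial_\gamma u = \partial_j(\partial_0\partial_\mu u)$ with $\partial_0\partial_\mu u \in \DCC$. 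Similarly, if $(\beta,\gamma) = (0,0)$ and $\alpha$ is spatial, then $\partial_\alpha\hat\partial_t^2 u = \partial_x\hat\partial_t^2 u \in \partial_x\DCC$. In all these cases the bound $\lesssim 1$ is immediate.

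The only substantial case is $(\beta,\gamma) = (0,0)$, $\alpha = 0$, i.e. $\|\partial_0\hat\partial_t^2 u\|_{\partial_x\DCC} \lesssim 1$. A naive spatial-derivative argument fails here, since this is morally a triple time derivative, so the idea is to trade two of the time derivatives for spatial ones using the equation. Starting from the paralinearized form \eqref{wave decomp} and using $\tg^{00} \equiv 1$, I would first write
\[
\hat\partial_t^2 u = -\sum_{(\alpha,\beta) \neq (0,0)}\left( T_{\tg^{\alpha\beta}}\partial_\alpha\partial_\beta u + T_{\partial_\alpha\partial_\beta u}\tg^{\alpha\beta} \right),
\]
and then apply $\partial_0$. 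By Leibniz this produces, for each $(\alpha,\beta) \neq (0,0)$, the four terms $T_{\partial_0\tg^{\alpha\beta}}\partial_\alpha\partial_\beta u$, $T_{\tg^{\alpha\beta}}\partial_0\partial_\alpha\partial_\beta u$, $T_{\partial_0\partial_\alpha\partial_\beta u}\tg^{\alpha\beta}$ and $T_{\partial_\alpha\partial_\beta u}\partial_0\tg^{\alpha\beta}$. Since $(\alpha,\beta) \neq (0,0)$, each of these has exactly one factor carrying an exposed spatial derivative (writing $\partial_\alpha\partial_\beta u = \partial_j\partial_\mu u$ with $j$ spatial), and the plan is to relocate that derivative to an overall $\partial_x$ by the paraproduct Leibniz rule $\partial_j(T_f g) = T_f\partial_j g + T_{\partial_j f}g$, then apply the multiplicative bounds of Lemma~\ref{l:Moser-control}. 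Here one uses that $\tg^{\alpha\beta} \in \CC \subset \CC_0$ (Lemma~\ref{l:Moser-control}(c)), so $\partial_0\tg^{\alpha\beta} \in \DCC$ by \eqref{inclus}, that $\partial_\mu u \in \CC_0$, and that $\partial_0\partial_\mu u \in \DCC$ (again Lemma~\ref{l:du-in-dcc}). Then $T_{\tg^{\alpha\beta}}\partial_0\partial_\alpha\partial_\beta u = T_{\tg^{\alpha\beta}}\partial_j(\partial_0\partial_\mu u)$ is of the form $T_{\CC_0} \cdot \partial_x\DCC$; the term $T_{\partial_0\partial_\alpha\partial_\beta u}\tg^{\alpha\beta}$ is of the form $T_{\partial_x\DCC} \cdot \CC_0$; the term $T_{\partial_0\tg^{\alpha\beta}}\partial_\alpha\partial_\beta u = \partial_j(T_{\partial_0\tg^{\alpha\beta}}\partial_\mu u) - T_{\partial_j\partial_0\tg^{\alpha\beta}}\partial_\mu u$ is of the form $\partial_x(T_{\DCC}\CC_0) + T_{\partial_x\DCC}\CC_0$; and the term $T_{\partial_\alpha\partial_\beta u}\partial_0\tg^{\alpha\beta} = \partial_j(T_{\partial_\mu u}\partial_0\tg^{\alpha\beta}) - T_{\partial_\mu u}\partial_j(\partial_0\tg^{\alpha\beta})$ is of the form $\partial_x(T_{\CC_0}\DCC) + T_{\CC_0} \cdot \partial_x\DCC$. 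Using $T_{\CC_0}\DCC \to \DCC$ and $T_{\DCC}\CC_0 \to \AA\DCC \subset \DCC$ from \eqref{C-DC}, the $\partial_x(\cdots)$ pieces land in $\partial_x\DCC$.

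It remains to supply the two auxiliary mapping statements used above, $T_{\CC_0} \cdot \partial_x\DCC \to \partial_x\DCC$ and $T_{\partial_x\DCC} \cdot \CC_0 \to \partial_x\DCC$, which are not literally among those in Lemma~\ref{l:Moser-control} but follow by the same computations: for $T_z h$ with $z \in \CC_0$ one uses only $\|z\|_{L^\infty} \lesssim \|z\|_{\CC_0}$, so $\|P_k T_z h\|_{L^\infty} \lesssim \|z\|_{L^\infty}\|P_k h\|_{L^\infty}$, which reproduces both defining bounds \eqref{dxdc-norm}; and for $T_h z$ with $h \in \partial_x\DCC$ one estimates $\|P_k T_h z\|_{L^\infty} \lesssim \|h_{<k}\|_{L^\infty}\|P_k z\|_{L^\infty} \lesssim (2^{\frac{3k}{2}}\BB c_k + 2^k\BB^2)\|z\|_{L^\infty}$, summing $\|h_{<k}\|_{L^\infty}$ via the slowly varying property of $\{c_k\}$, after which a standard frequency-by-frequency splitting produces the required decomposition. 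I expect the main obstacle to be precisely the bookkeeping in this last case: one must be disciplined about always moving the surviving spatial derivative to the outside as an honest $\partial_x$ before invoking the mapping properties — if it is left buried inside a paracoefficient one only obtains an $L^\infty$ bound, which does not place the term in $\partial_x\DCC$.
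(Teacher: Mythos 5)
Your proposal is correct and follows essentially the same approach as the paper: the same reduction to the all-time-derivative case $\alpha=\beta=\gamma=0$, followed by differentiating the paralinearized equation \eqref{wave decomp} in time and estimating the resulting four terms using the $\CC/\DCC$ machinery. The only (minor) difference is bookkeeping: for the terms where a spatial derivative is buried on the high-frequency factor, you extract it via the paraproduct Leibniz rule and then invoke mapping properties such as $T_{\CC_0}\cdot\partial_x\DCC\to\partial_x\DCC$ and $T_{\partial_x\DCC}\cdot\CC_0\to\partial_x\DCC$ (both of which you correctly justify), whereas the paper estimates those terms directly frequency by frequency; both routes land in the same place.
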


\begin{proof}
The  bound in \eqref{fe-uttt} is trivial if at least two derivatives are spatial, and follows from \eqref{fe-utt}  unless all indices are zero. It remains to consider the case
$\alpha = \beta = \gamma = 0$. Here we rely on the earlier decomposition  \eqref{wave decomp}  to which we further apply a $\partial_t$:
\begin{equation}
\label{dhatdd}
\partial_t \hat{\partial}_t^2u  =- \sum_{(\alpha, \beta)\neq (0, 0)}\left(  T_{\partial_t \tilde{g}^{\alpha \beta}} \partial_{\alpha}\partial_{\beta}u + T_{ \tilde{g}^{\alpha \beta}} \partial_t\partial_{\alpha}\partial_{\beta}u+ T_{\partial_t \partial_{\alpha \beta}u} \tg^{\alpha \beta}+T_{ \partial_{\alpha}\partial_{ \beta} u} \partial_t\tg^{\alpha \beta}\right) .
\end{equation}
We now investigate each term separately. We begin with the first term, which needs to be bounded in the $\partial_x \DCC$ norm given in \eqref{dxdc-norm}. We have
\[
\Vert P_k T_{\partial_t \tilde{g}^{\alpha \beta}} \partial_{\alpha}\partial_{\beta}u \Vert_{L^{\infty}}  \leq \Vert P_{<k} (\partial_t \tilde{g}^{\alpha \beta})\Vert_{L^{\infty}} \Vert P_k (\partial_{\alpha}\partial_{\beta}u) \Vert_{L^{\infty}}.
\]
The term that contains the time derivative falling onto the metric will be bounded using the Moser estimate Lemma~\ref{l:Moser-control}. Explicitly, we have $\tg^{\alpha \beta}\in \CC$, and  due to Lemma~\ref{l:Moser-control}, part $c)$, we get $\partial_t\tg^{\alpha \beta}\in \DCC $ which allows us to decompose it as 
in \eqref{DC-decomp}, $\partial_t\tg^{\alpha \beta}=\tg_1^{\alpha \beta} + \tg_2^{\alpha \beta}$ where
\[
\Vert P_{<k}\partial_t \tg^{\alpha \beta}\Vert_{L^{\infty}} \leq \Vert  \tg_1^{\alpha \beta}\Vert_{L^{\infty}} +\Vert P_{<k} \tg_2^{\alpha \beta}\Vert_{L^{\infty}} \leq C\BB^2 +C2^{\frac{k}{2}}\BB c_k.
\]
We now turn to the last bound which we can estimate in two ways
\[
  \Vert P_k (\partial_{\alpha}\partial_{\beta}u) \Vert_{L^{\infty}} \leq 2^{\frac{k}{2}} \Vert  2^{-\frac{k}{2}}P_k(\partial_\alpha \partial_\beta u)\Vert_{L^{\infty}}\leq 2^{\frac{k}{2}}\BB c_k,
\]
or
\[
  \Vert P_k (\partial_{\alpha}\partial_{\beta}u) \Vert_{L^{\infty}} \leq 2^{k }\Vert  2^{-k}P_k(\partial_\alpha \partial_\beta u)\Vert_{L^{\infty}}\leq 2^{k}\AA  c_k^2.
\]

Putting together the bounds we have, leads to
\[
\Vert P_{k}T_{\partial_t \tg^{\alpha \beta}} \partial_{\alpha}\partial_{\beta}u\Vert_{L^{\infty}}\leq C2^k\AA \BB^2 c_k^2 + C2^k \BB^2 c_k^2.
\]

We now bound the second term in \eqref{dhatdd}
\[
\Vert T_{\tg^{\alpha \beta}}\partial_t \partial_{\alpha}\partial_{\beta} u\Vert_{L^{\infty}} \leq \Vert  \tg^{\alpha \beta}\Vert_{L^{\infty}}\Vert  \partial_t\partial_{\alpha}\partial_{\beta} u\Vert_{L^{\infty}}.
\]
For the last term in the above estimate we know that $(\alpha, \beta) \neq (0,0)$ hence there are two cases to consider: (i) we have either   $\alpha =0$ or $\beta=0$, but not both zero, which overalls means we need to bound $\partial_t ^2 \partial_x u$, or (ii) we have  both $\alpha, \beta \neq 0$, in which case we need a pointwise bound for $\partial_t\partial^2_x  u$. However, both cases can be handled  in the same way if we observe that $\partial_x (\partial_x\partial_t u)$ and $\partial_x (\partial^2_t u)$ are elements in $\partial_x\DCC$; this is a direct consequence of $\partial^2 u \in \DCC $ as shown in \eqref{fe-utt}, followed by the inclusion in \eqref{inclus}. 

Finally, the third and fourth terms in \eqref{dhatdd} can be treated in the same way the first term in \eqref{dhatdd} was shown to be  bounded.
\end{proof}
 
 We continue with another, slightly more subtle balanced bound:
 
 \begin{lemma}\label{l:para-p+}
 For $g, h \in \CC$ define
 \[
 r =( T_g T_h - T_{gh} ) \partial^2 u.
 \]
 Then we have the balanced bound
\begin{equation}\label{para-p-r}
\| r\|_{L^\infty} \lesssim \BB^2. 
\end{equation}
\end{lemma}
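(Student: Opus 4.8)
The plan is to combine the balanced (commutator-type) structure of the operator $K := T_gT_h - T_{gh}$ with the fact, from Lemma~\ref{l:du-in-dcc}, that $\partial^2 u \in \DCC$. First I would fix a decomposition $\partial^2 u = f_1 + f_2$ realizing $\|\partial^2 u\|_\DCC \lesssim 1$, i.e.\ $\|P_k f_1\|_{L^\infty} \lesssim 2^{\frac k2}\BB c_k$ and $\|f_2\|_{L^\infty} \lesssim \BB^2$, and estimate $K f_1$ and $K f_2$ separately. The key input for both is a blockwise description of $K$: since $g,h \in \CC \subset \CC_0$ we have $\|P_j g\|_{L^\infty} + \|P_j h\|_{L^\infty} \lesssim 2^{-\frac j2}\BB c_j$ and $\|g\|_{L^\infty} + \|h\|_{L^\infty}\lesssim 1$, and since the Weyl paraproducts $T_g,T_h,T_{gh}$ each preserve the frequency of their argument up to an $O(1)$ shift, one reduces to understanding $K$ on a single dyadic block. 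There, modulo perturbative errors coming from the mismatch of the non-sharp cutoffs (controlled by Lemma~\ref{l:para-com-pk} and the $L_{lh}$-calculus), $K P_k v$ behaves like $m_k\, P_k v$ with symbol $m_k := g_{<k}h_{<k} - P_{<k}(gh)$; a Littlewood--Paley trichotomy expansion of $m_k$ exhibits it as a cutoff-leakage term $P_{\geq k}(g_{<k}h_{<k})$, which forces one of $g,h$ to sit at frequency $\approx 2^k$ and is hence bounded by $2^{-\frac k2}\BB c_k$, plus high$\times$high and truncation remainders (of type $P_{<k}(g_{\geq k} h_{<k})$, $P_{<k}(g_{<k}h_{\geq k})$, $P_{<k}(g_{\geq k}h_{\geq k})$) which force \emph{both} factors near frequency $2^k$ and are bounded by $2^{-k}\BB^2 c_k^2$. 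Thus $\|m_k\|_{L^\infty} \lesssim 2^{-\frac k2}\BB c_k$.

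With this in hand, the $f_1$ contribution is immediate: $\|K P_k f_1\|_{L^\infty} \lesssim 2^{-\frac k2}\BB c_k \cdot 2^{\frac k2}\BB c_k = \BB^2 c_k^2$, and because the pieces $K P_k f_1$ stay frequency-localized near $2^k$ one may sum to get $\|K f_1\|_{L^\infty} \lesssim \BB^2 \sum_k c_k^2 = \BB^2$, using $\sum_k c_k^2 = 1$. For the $f_2$ contribution one cannot afford to throw away the cancellation, since the naive bound $\|K\|_{L^\infty\to L^\infty} \lesssim \BB\sum_k 2^{-\frac k2}c_k \lesssim \BB$ would only give $\|K f_2\|_{L^\infty}\lesssim \BB^3$; instead I would exploit the extra structure of $f_2$. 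Recall from Lemma~\ref{l:utt} that $f_2 = \pi_2(u)$ is a finite sum of high$\times$high paraproducts $\Pi(\tg^{\alpha\beta},\partial_\alpha\partial_\beta u)$ with $(\alpha,\beta)\neq(0,0)$; one then moves the outer paracoefficients inside by para-associativity (Lemma~\ref{l:para-assoc}, at the cost of balanced errors), reducing $K\,\Pi(\tg^{\alpha\beta},\partial_\alpha\partial_\beta u)$ to $\Pi(\tg^{\alpha\beta},\, K\partial_\alpha\partial_\beta u)$ plus acceptable remainders, and bounds this term using $\|P_m(K\partial_\alpha\partial_\beta u)\|_{L^\infty}\lesssim \BB^2 c_m^2$ — from the previous paragraph applied to the genuinely balanced block $\partial_\alpha\partial_\beta u$ — against $\|P_m\tg^{\alpha\beta}\|_{L^\infty}\lesssim 2^{-\frac m2}\BB c_m$, with the $m$-sum then closing. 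Collecting the two contributions yields $\|r\|_{L^\infty}\lesssim\BB^2$.

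The main obstacle is twofold: first, making the blockwise reduction of $K$ precise — tracking the mismatch between the nested Weyl cutoffs of $T_gT_h$ and the single cutoff of $T_{gh}$ so that the leakage estimate $\|m_k\|_{L^\infty}\lesssim 2^{-\frac k2}\BB c_k$ holds with the advertised (unbalanced but $\ell^1$-summable after pairing with $f_1$) constant; and second — this is the genuinely delicate point — disposing of the non-frequency-localized ``bad'' part $f_2=\pi_2(u)$ of $\partial_t^2 u$ so as to land in $L^\infty$ with the balanced constant $\BB^2$ rather than the naive $\BB^3$, which is exactly what forces one to re-expose the high$\times$high structure of $\pi_2(u)$ and use para-associativity rather than a crude operator bound. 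Both steps are otherwise routine applications of the paradifferential calculus of Section~\ref{s:not} (Lemmas~\ref{l:para-com-pk}, \ref{l:para-assoc}, \ref{l:para-prod}), but keeping every step balanced is what requires care, which is why the estimate is isolated as a separate lemma.
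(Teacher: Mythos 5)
Your analysis of the contribution of $f_1$ (with $\partial^2 u = f_1 + f_2$ the $\DCC$ decomposition) is in substance the paper's argument: the paper expands $\sum_k(T_gT_h-T_{gh})f_{1,k}$ directly over the $g,h$ frequencies, while you package the same interaction into a blockwise multiplier $m_k$ with $\|m_k\|_{L^\infty}\lesssim\AA\,2^{-k/2}\BB c_k$, then pair against $\|(f_1)_k\|_{L^\infty}\lesssim 2^{k/2}\BB c_k$ and sum.

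Your handling of $f_2$, however, rests on a false premise. You claim the naive bound is $\|T_gT_h-T_{gh}\|_{L^\infty\to L^\infty}\lesssim\BB$, so that $\|Kf_2\|_{L^\infty}$ would only come out $\lesssim\BB^3$, and you then build the entire second half of your argument --- re-exposing $\pi_2(u)$ as a high$\times$high $\Pi$-form and invoking para-associativity --- to get around this. But the $\BB$ in your operator bound comes from insisting on the \emph{sharp} block estimate $\|m_k\|_{L^\infty}\lesssim 2^{-k/2}\BB c_k$; for the non-frequency-localized piece $f_2$ you should not spend $\BB$ on the operator. Since $g,h\in\CC_0$, each of $T_g$, $T_h$, $T_{gh}$ is bounded on $L^\infty$ with norm $\lesssim 1$ (this is exactly the ``change the summation order'' estimate in the proof of Lemma~\ref{l:Moser-control0}; equivalently, Bernstein applied to the $L^{2n}$ piece of the $\CC_0$ norm gives $\|P_j g\|_{L^\infty}\lesssim\AAs c_j^2$, whence $\sum_j\|P_j g\|_{L^\infty}\lesssim\AAs\lesssim 1$). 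Hence $\|T_gT_h-T_{gh}\|_{L^\infty\to L^\infty}\lesssim 1$ by the triangle inequality, with no cancellation whatsoever, and $\|Kf_2\|_{L^\infty}\lesssim\|f_2\|_{L^\infty}\lesssim\BB^2$ is immediate. This is why the paper carries out only the $f_1$ estimate explicitly and passes over $f_2$ in silence. As a secondary point, the fallback route you sketch is also underjustified on its own terms: Lemma~\ref{l:para-assoc} supplies $\dot H^s$-type bounds, not the $L^\infty$ bound needed here, so commuting $K$ past the $\Pi$-form ``at the cost of balanced errors'' would require re-proving those error estimates in $L^\infty$ from scratch. The detour should simply be removed.
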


\begin{proof}
For $\partial^2 u$ we use the $\DCC$ decomposition as in \eqref{DC-decomp},
\[
 \partial^2 u = f_1 + f_2.
\]
We begin with the contribution $r_1$ of $f_1$, which we expand as
\[
r_1 = \sum_{k} ( T_g T_h - T_{gh} ) f_{1,k}.
\]
This vanishes unless the frequencies $k_1$, $k_2$ of $g$ and $h$ are either 

(i) $k_1, k_2 \leq k$ and $\max\{k_1,k_2\} = k+O(1)$, or

(ii) $k_1 = k_2 > k + O(1) $.

Then we use the $\AA$ component of the $\CC_0$ norm for the lower
frequency  and and the $\BB$ component for the lower frequency
to estimate
\[
\begin{aligned}
\|r_1\|_{L^\infty} \lesssim & \ \sum_k \| f_{1,k}\|_{L^\infty}
\left( \sum_{k_1 < k+O(1)} \| g_{k_1}\|_{L^\infty}    \sum_{k_2=k + O(1)}
 \| h_{k_2}\|_{L^\infty} \right.\\
&\,   +
\left. \sum_{k_1 = k+O(1)} \| g_{k_1}\|_{L^\infty}    \sum_{k_2< k + O(1)}
 \| h_{k_2}\|_{L^\infty} +
\sum_{k_1 = k_2 \geq k+O(1)} \| g_{k_1}\|_{L^\infty}
 \| h_{k_2}\|_{L^\infty}  \right)
\\
\lesssim & \ 2^{\frac{k}2} \BB c_k ( \AA \cdot 2^{-\frac{k}2} \BB c_k
+ 2^{-\frac{k}2} \BB c_k \cdot \AA + \AA \cdot 2^{-k} \BB c_k) \\
\lesssim  & \ \AA \BB^2,
\end{aligned}
\]
as needed.
\end{proof}
 
 As already discussed in the introduction, the 
 paradifferential wave operator
 \begin{equation}
T_{P} =  \partial_\alpha T_{g^{\alpha \beta}} \partial_\beta,  \end{equation}
 as well as its counterparts $T_{\tP}$ and $T_{\hP}$ with the metric $g$ replaced by  $\tg$, respectively $\hg$, play an important role in our context.

 Throughout the paper, we will interpret various objects related to $u$ as approximate solutions for the $T_{P}$ equation.
 We provide several results of this type, where we use our control parameters $\AA,\BB$ in order to estimate the source term in the 
 paradifferential equation for both $u$ and for its 
 derivatives.
 
 \begin{lemma}\label{l:tp-u}
 We have 
 \begin{equation}
 \label{5.33}
\| T_{P} u \|_{L^\infty} \lesssim \BB^2,
 \end{equation}
 as well as the similar bounds for $T_{\tP}$ and $T_{\hP}$.
 \end{lemma}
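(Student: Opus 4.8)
The starting point is the observation that $u$ solves the exact equation $g^{\alpha\beta}\partial_\alpha\partial_\beta u = 0$, so $T_P u = \partial_\alpha T_{g^{\alpha\beta}}\partial_\beta u$ differs from $0$ only through paraproduct/commutator errors. The first step is to move the outer $\partial_\alpha$ inside: write $T_P u = \partial_\alpha T_{g^{\alpha\beta}}\partial_\beta u$ and commute, using $\partial_\alpha T_{g^{\alpha\beta}} = T_{g^{\alpha\beta}}\partial_\alpha + T_{\partial_\alpha g^{\alpha\beta}}$, so that
\[
T_P u = T_{g^{\alpha\beta}}\partial_\alpha\partial_\beta u + T_{\partial_\alpha g^{\alpha\beta}}\partial_\beta u.
\]
Recalling $A^\alpha = -\partial_\beta g^{\alpha\beta}$ from \eqref{def-A}, the second term is $-T_{A^\beta}\partial_\beta u$. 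For the first term, paralinearize the product $g^{\alpha\beta}\partial_\alpha\partial_\beta u$: since $g^{\alpha\beta}\partial_\alpha\partial_\beta u = 0$, we have
\[
T_{g^{\alpha\beta}}\partial_\alpha\partial_\beta u = -T_{\partial_\alpha\partial_\beta u}\, g^{\alpha\beta} - \Pi(g^{\alpha\beta}, \partial_\alpha\partial_\beta u),
\]
so that altogether
\[
T_P u = -T_{\partial_\alpha\partial_\beta u}\, g^{\alpha\beta} - \Pi(g^{\alpha\beta},\partial_\alpha\partial_\beta u) - T_{A^\beta}\partial_\beta u.
\]

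Now I estimate the three terms in $L^\infty$ using the control-parameter machinery already set up. The paraproduct $T_{\partial_\alpha\partial_\beta u}g^{\alpha\beta}$: here $\partial^2 u \in \DCC$ by Lemma~\ref{l:du-in-dcc} and $g^{\alpha\beta}\in\CC_0$ (indeed $\CC$) by Lemma~\ref{l:Moser-control0}, so the bound $T_{\DCC}\CC_0 \to \BB^2 L^\infty$ from \eqref{TDCC-CC} gives $\|T_{\partial_\alpha\partial_\beta u}g^{\alpha\beta}\|_{L^\infty}\lesssim\BB^2$. The high-high term $\Pi(g^{\alpha\beta},\partial_\alpha\partial_\beta u)$ is handled by the companion bound $\Pi(\DCC,\CC_0)\to\BB^2 L^\infty$ in \eqref{TDCC-CC}. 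For the gradient-potential term $T_{A^\beta}\partial_\beta u$: $A^\beta \in\DCC$ (noted right after the proof of Lemma~\ref{l:Moser-control}, since $A$ is of the form $F(\partial u)\partial^2 u$) and $\partial_\beta u\in\CC_0$, so once more \eqref{TDCC-CC}, in the form $T_{\DCC}\CC_0\to\BB^2 L^\infty$, applies. Summing the three contributions gives \eqref{5.33}. The identical argument works verbatim for $\tg$ and $\hg$ in place of $g$, using that $\tg^{\alpha\beta}, \hg^{\alpha\beta}\in\CC$ and that the associated potentials $\tA$ (and the divergence-form structure for $\hg$) lie in $\DCC$; note for $\hg$ the equation $\hg^{\alpha\beta}\partial_\alpha\partial_\beta u = g^{-1/2}g^{\alpha\beta}\partial_\alpha\partial_\beta u = 0$ still holds, and $\partial_\alpha\hg^{\alpha\beta} = -\hA^\beta$ for the corresponding potential, so the structure is preserved.

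The only genuinely delicate point — hence the main obstacle — is bookkeeping the frequency localizations so that the commutator $\partial_\alpha T_{g^{\alpha\beta}} = T_{g^{\alpha\beta}}\partial_\alpha + T_{\partial_\alpha g^{\alpha\beta}}$ is \emph{exact} rather than merely approximate, which it is for the Weyl-quantized paraproduct by the Leibniz-type property of the symbol $\chi$; and, relatedly, ensuring the paralinearization identity $fg = T_f g + T_g f + \Pi(f,g)$ is used with exactly the quantization fixed in Section~\ref{s:para}, so no extra quantization-change errors appear. Everything else is a direct application of \eqref{TDCC-CC} together with the membership statements $\partial^2 u\in\DCC$, $A,\tA\in\DCC$, $g,\tg,\hg\in\CC_0$ that are already recorded in the excerpt.
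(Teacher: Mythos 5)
Your proof is correct and follows essentially the same route as the paper's: paralinearize the equation $g^{\alpha\beta}\partial_\alpha\partial_\beta u = 0$, use the exact Leibniz rule for the Weyl-quantized paraproduct to pull the divergence outside and identify $T_P u$ with the remaining three terms, then apply the bilinear bounds $T_{\DCC}\CC_0 \to \BB^2 L^\infty$ and $\Pi(\DCC,\CC_0)\to\BB^2 L^\infty$ from \eqref{TDCC-CC} together with the memberships $g^{\alpha\beta}\in\CC_0$, $\partial g^{\alpha\beta},\ \partial^2 u\in\DCC$. Writing the commutator term as $-T_{A^\beta}\partial_\beta u$ is a purely cosmetic difference from the paper's $T_{\partial_\beta g^{\alpha\beta}}\partial_\alpha u$; your note that $\partial^2 u\in\DCC$ already encodes the $\partial_t^2 u$ decomposition from Lemma~\ref{l:utt} via Lemma~\ref{l:du-in-dcc} is also right.
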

 \begin{proof}
We first  prove the bound \eqref{5.33}, and  for this we begin with the paradifferential equation associated  to the minimal surface equation  \eqref{msf-short}
 \[
 T_{g^{\alpha \beta}}\partial_{\alpha}\partial_{\beta}u +T_{\partial_{\alpha}\partial_{\beta}u}g^{\alpha \beta}+\Pi (g^{\alpha,\beta}, \partial_{\alpha}\partial_{\beta}u)=0,
 \]
 and further isolate the part we are interested in estimating
 \[
 \partial_{\beta}T_{g^{\alpha \beta}}\partial_{\alpha}u - T_{\partial_{\beta} g^{\alpha \beta}}\partial_{\alpha}u +T_{\partial_{\alpha}\partial_{\beta}u}g^{\alpha \beta}+\Pi (g^{\alpha\beta}, \partial_{\alpha}\partial_{\beta}u)=0.
 \]
 The bound we want relies on getting bounds for the following terms
 \[
 \Vert T_Pu\Vert _{L^\infty} =\Vert \partial_{\beta}T_{g^{\alpha \beta}}\partial_{\alpha}u   \Vert_{L^{\infty}}\leq 
 \Vert T_{\partial_{\beta} g^{\alpha \beta}}\partial_{\alpha}u \Vert_{L^{\infty}} +  \Vert T_{\partial_{\alpha}\partial_{\beta}u}g^{\alpha \beta} \Vert_{L^{\infty}}+ \Vert\Pi (g^{\alpha\beta}, \partial_{\alpha}\partial_{\beta}u) \Vert_{L^{\infty}.}
 \]
 However, the bounds  for all of these terms  rely on the use  of the fact that $g^{\alpha \beta} \in \CC$, $\partial g^{\alpha \beta}, \partial_{\alpha}\partial_{\beta}u \in \DCC$ (consequence of Lemma~\ref{l:Moser-control}), as well as on the  bound given by Lemma~\ref{l:utt}. Precisely the estimate \eqref{TDCC-CC} implies that
 \[
 \Vert T_{\partial_{\beta} g^{\alpha \beta}}\partial_{\alpha}u \Vert_{L^{\infty}} +  \Vert T_{\partial_{\alpha}\partial_{\beta}u}g^{\alpha \beta} \Vert_{L^{\infty}}+ \Vert\Pi (g^{\alpha\beta}, \partial_{\alpha}\partial_{\beta}u) \Vert_{L^{\infty}}\lesssim \BB^2.
 \]
 Similar bounds will be obtained for $T_{\hat{P}}$ and  $T_{\tilde{P}}$ by the use of the same results mentioned in the proof of bound \eqref{5.33}.
 
 \end{proof}
 
 We next consider similar bounds for derivatives of $u$.
 Here we will differentiate between space and time derivatives.
 We begin with spatial derivatives:
 
 \begin{lemma}\label{l:tp-dxu}
 We have 
 \begin{equation}
\| P_{<k} T_{P} \partial_x u \|_{L^\infty} \lesssim 2^k \BB^2,
 \end{equation}
 as well as the similar bounds for $T_{\tP}$ and $T_{\hP}$.
 \end{lemma}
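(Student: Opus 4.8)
The plan is to commute $\partial_x$ past the paradifferential wave operator and thereby reduce the statement to the bound for $u$ already established in Lemma~\ref{l:tp-u}. Since for a paraproduct one has the exact Leibniz identity $[\partial_x, T_f] = T_{\partial_x f}$, the commutator factors as $[\partial_x, T_P] = \partial_\alpha T_{\partial_x g^{\alpha\beta}}\partial_\beta$, so that
\[
T_P \partial_x u = \partial_x\bigl(T_P u\bigr) - \partial_\alpha\bigl(T_{\partial_x g^{\alpha\beta}}\partial_\beta u\bigr).
\]
I would stress at the outset why only the $P_{<k}$ truncation can be estimated: at frequency $2^k$ the left-hand side is at the level of $\partial^3 u$, which lies outside all of our function spaces, and the whole point of the lemma is that $P_{<k}\partial_x$ improves an $L^\infty$ bound by exactly the factor $2^k$ that we can afford to lose.

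For the first term this is immediate: $\|P_{<k}\partial_x(T_P u)\|_{L^\infty}\lesssim 2^k\|T_P u\|_{L^\infty}\lesssim 2^k\BB^2$ by Lemma~\ref{l:tp-u}. For the commutator term, when the outer index $\alpha$ is spatial the derivative $\partial_\alpha$ again gains a factor $2^k$ on $L^\infty$, and it suffices to bound $T_{\partial_x g^{\alpha\beta}}\partial_\beta u$ in $L^\infty$ by $\BB^2$. This follows from the facts that $\partial_x g^{\alpha\beta}\in\DCC$ — it is $\partial$ of a metric coefficient, which lies in $\CC$ by Lemma~\ref{l:Moser-control}, and $\partial:\CC\to\DCC$ by \eqref{inclus} — that $\partial_\beta u\in\CC\subset\CC_0$, and the multiplicative estimate $T_{\DCC}\CC_0\to\BB^2 L^\infty$ of \eqref{TDCC-CC}. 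The analogous reductions apply with $g$ replaced by $\tg$ or $\hg$, using in the latter case the $T_{\hP}$-version of Lemma~\ref{l:tp-u}.

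The real work is the remaining case, the time-derivative commutator $P_{<k}\partial_0\bigl(T_{\partial_x g^{0\beta}}\partial_\beta u\bigr)$, where $\partial_0$ does \emph{not} gain $2^k$. Here I would expand by the (exact) Leibniz rule, $\partial_0(T_{\partial_x g^{0\beta}}\partial_\beta u) = T_{\partial_0\partial_x g^{0\beta}}\partial_\beta u + T_{\partial_x g^{0\beta}}\partial_0\partial_\beta u$, and estimate the two pieces by direct dyadic summation after localizing to $P_{<k}$. For the first, $\partial_0\partial_x g^{0\beta} = \partial_x(\partial_0 g^{0\beta})\in\partial_x\DCC$, and against $\partial_\beta u\in\CC_0$ one pairs the $c_j$-weighted part of the $\partial_x\DCC$ bound with $\|P_j\partial u\|_{L^\infty}\lesssim 2^{-j/2}\BB c_j$ and the non-decaying $\BB^2$-type part with $\|\partial u\|_{L^\infty}\lesssim\AA$; both contributions sum like $\sum_{j<k}2^j c_j^2\lesssim 2^k$, so this piece is $\lesssim 2^k\BB^2$. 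For the second, $\partial_0\partial_\beta u$ is a second derivative carrying a time derivative, hence in $\DCC$; when $\beta=0$ one should split $\partial_0^2 u = \hat\partial_0^2 u + \pi_2(u)$ as in Lemma~\ref{l:utt}. Against the leading component $\hat\partial_0^2 u$ (or $\partial_0\partial_x u$ when $\beta$ is spatial) one pairs the $c_j$-weighted $\DCC$ bounds of both factors, and when the $\partial_x g$ factor must be taken at low frequency one uses the cheap derivative-counting bound $\|P_{<j}\partial_x g^{0\beta}\|_{L^\infty}\lesssim 2^j\AA^2$ — valid because $g^{\alpha\beta}-m^{\alpha\beta}=O(|\partial u|^2)$; against the remainder $\pi_2(u)$, which is only $L^\infty$-bounded by $\BB^2$ with no frequency gain, one always uses this same cheap bound, so its contribution is $\lesssim\sum_{j<k}2^j\AA^2\BB^2\lesssim 2^k\BB^2$.

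The main obstacle, then, is precisely this last step. Once a time derivative has been commuted inside, one must estimate honest $\DCC\times\DCC$ and $\partial_x\DCC\times\CC_0$ paraproducts localized below $2^k$, and any careless pairing produces spurious $\BB^3$ (or $2^{3k/2}\BB$) terms. Resolving this is a matter of careful bookkeeping: for each sub-paraproduct one has to decide whether to spend the frequency-localized $\BB$-level ($c_j$-weighted) bound or the crude $\AA$-level derivative-counting bound on the metric-derivative factor, and one must use the $\hat\partial_0^2 u/\pi_2$ decomposition to isolate the single non-decaying, genuinely $L^\infty$-bounded piece of $\partial^2 u$ so that it is only ever paired against the cheap (small, $O(\AA^2)$) bound on $\partial_x g$.
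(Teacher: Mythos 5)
Your proof follows essentially the same route as the paper: commute $\partial_x$ through $T_P$ using $[\partial_x,T_f]=T_{\partial_x f}$, bound the main term $\partial_x(T_P u)$ by Lemma~\ref{l:tp-u}, and estimate the commutator $\partial_\alpha T_{\partial_x g^{\alpha\beta}}\partial_\beta u$ by distributing the derivative and invoking the $\CC$, $\DCC$, $\partial_x\DCC$ bounds of Section~\ref{s:control} together with the $\hat\partial_t^2 u + \pi_2(u)$ decomposition of Lemma~\ref{l:utt}. The only organizational difference is that you split off spatial $\alpha$ first (using $T_{\DCC}\CC_0\to\BB^2 L^\infty$ directly on $T_{\partial_x g^{j\beta}}\partial_\beta u$) before treating $\alpha=0$ by Leibniz, whereas the paper distributes $\partial_\alpha$ uniformly into $e_1+e_2$; both are correct. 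One small imprecision in your final paragraph: against the leading component $\hat\partial_0^2 u$ the cheap bound $\|P_{<m}\partial_x g^{0\beta}\|_{L^\infty}\lesssim 2^m\AA^2$ would give only $2^{3k/2}\AA^2\BB$ after summation, which is not $\lesssim 2^k\BB^2$; there the correct pairing is the one you state first, the $c_j$-weighted $\BB$-bound on both factors (i.e.\ $\|P_{<m}\partial_x g\|_{L^\infty}\lesssim 2^{m/2}\BB c_m$), and the cheap $2^m\AA^2$ bound is reserved for the $\pi_2(u)$ piece, where you do apply it correctly.
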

 \begin{proof}
 For this proof we rely on the previous Lemma~\ref{l:tp-u} and on Lemma~\ref{l:utt} . This becomes obvious after we commute the $\partial_x$  across the $T_P$ operator
 \[
 P_{k} T_{P} \partial_x u  = \partial_x P_{k} \partial_\alpha T_{g^{\alpha \beta}} \partial_\beta u-  P_{k} \partial_\alpha T_{ \partial_xg^{\alpha \beta}} \partial_\beta u
\]
The first term on the RHS of the identity above is bounded using \eqref{5.33} as follows
\[
\Vert   \partial_x P_{k} \partial_\alpha T_{g^{\alpha \beta}} \partial_\beta u\Vert_{L^{\infty}}\lesssim 2^k \Vert \partial_\alpha T_{g^{\alpha \beta}} \partial_\beta u\Vert_{L^{\infty}} \lesssim 2^k\BB^2.
\]
Here we took advantage of the $\partial_x$ accompanied by the frequency projector $P_k$. A similar advantage will not present itself for the last term, where we need to distribute the $\alpha$ derivative 
\[
P_{k} \partial_\alpha T_{ \partial_xg^{\alpha \beta}} \partial_\beta u =P_{k}  T_{ \partial_\alpha\partial_xg^{\alpha \beta}} \partial_\beta u +  P_{k}  T_{ \partial_xg^{\alpha \beta}}\partial_{\alpha} \partial_\beta u :=e_1+e_2.
\]
We bound  $e_1$ using Lemma~\ref{l:Moser-control}, by placing $\partial_x \partial g^{\alpha\beta} \in \partial_x\DCC$ which means it will admit a decomposition as follows 
\[
\partial_x \partial g^{\alpha\beta} =f_1+f_2,
\]
where 
\[
\Vert P_kf_1\Vert_{L^{\infty}}\lesssim 2^{\frac{3k}{2}}\BB c_k, \quad \Vert P_kf_2\Vert_{L^{\infty}}\lesssim 2^k\BB^2.
\]
Thus, we get
\[
\Vert e_1\Vert_{L^{\infty}} \lesssim \left( \Vert P_kf_1\Vert_{L^{\infty}}+\Vert P_kf_2\Vert_{L^{\infty}} \right)\Vert \partial_{\beta}u\Vert_{L^{\infty}}\lesssim \left( 2^k\BB^2 + 2^{\frac{3k}{2}}\BB c_k\right)  \Vert \partial_{\beta}u\Vert_{L^{\infty}},
\]
which leads to the desired bound once we estimate the last term accordingly. The bounds can be one of the following
\[
\Vert \partial_\beta u \Vert_{L^{\infty}} \lesssim  2^{-\frac{k}{2}}\Vert 2^{\frac{k}{2}}\partial_{\beta}u\Vert_{L^{\infty}}\lesssim  2^{-\frac{k}{2}}\BB \qquad \mbox{ or }  \qquad  \Vert \partial_\beta u \Vert_{L^{\infty}} \lesssim \AA. 
\]
For the first term in the bracket we  use the control norm $\AA$, and for the second term  we use the $\BB$ norm bound.

For $e_2$ we use  the decomposition in Lemma~\ref{l:utt}  for $\partial_{\alpha}\partial_{\beta}u$ and for  $ g$ we use the fact that  $g\in \CC_0$ where we can use either the $\AA$ bound or the $\BB$ bound. The computations are similar to the case of $e_1$. 

The bounds for $T_{\tP}$ and $T_{\hP}$ follow from the exact argument as the one used above in the $T_P$ case.

  \end{proof}

 \begin{lemma}\label{l:tp-du}
 a) We have 
 \begin{equation}
T_{P} \partial u \in \partial (\BB^2 L^\infty),   
 \end{equation}
 i.e. there exists a representation
 \begin{equation}
T_{P} \partial u = \partial_\alpha f^\alpha, \qquad  |f| \lesssim \BB^2.
 \end{equation}
b) We also have
\begin{equation}
\| P_{k} \partial_\alpha T_{g^{\alpha\beta}}  \widehat{ \partial_\beta \partial_\gamma} u\|_{L^\infty}
+ \| P_{k} \partial_\gamma T_{g^{\alpha\beta}}  \widehat{ \partial_\beta \partial_\alpha} u\|_{L^\infty}
\lesssim 2^k \BB^2.
\end{equation}
Similar results hold with $g$ replaced by $\tg$ or $\hg$.

 \end{lemma}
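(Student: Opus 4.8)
Both parts are obtained by differentiating the paradifferential equation, commuting the extra derivative past $T_P$, and then systematically trading pure time derivatives of $u$ for spatial ones by means of the equation, all the while tracking the $2^k$ gains produced by $P_k$ and the balanced ($\BB^2$) remainders.

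For part (a) the plan is to commute one derivative past $T_P$. Since $\partial_\gamma T_P u = \partial_\alpha\partial_\gamma(T_{g^{\alpha\beta}}\partial_\beta u) = T_P\partial_\gamma u + \partial_\alpha T_{\partial_\gamma g^{\alpha\beta}}\partial_\beta u$, we get
\[
T_P\partial_\gamma u = \partial_\alpha\big(\delta^\alpha_\gamma\, T_P u - T_{\partial_\gamma g^{\alpha\beta}}\partial_\beta u\big),
\]
which exhibits $T_P\partial u$ in divergence form. It then remains to bound the bracket in $L^\infty$ by $\BB^2$: the first term is Lemma~\ref{l:tp-u}, while for the second we use that $\partial_\gamma g^{\alpha\beta}\in\DCC$ (a derivative of the metric, by Lemma~\ref{l:Moser-control}), that $\partial_\beta u\in\CC_0$, and the mapping $T_{\DCC}\,\CC_0\to\BB^2 L^\infty$ from \eqref{TDCC-CC}. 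That is the whole of part (a).

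For part (b), the first expression differs from $T_P\partial_\gamma u$ only in the $(\beta,\gamma)=(0,0)$ slot, where $\widehat{\partial_0\partial_0}u-\partial_0\partial_0 u=-\pi_2(u)$ by Lemma~\ref{l:utt}, so $\partial_\alpha T_{g^{\alpha\beta}}\widehat{\partial_\beta\partial_\gamma}u = T_P\partial_\gamma u - \delta_{\gamma 0}\,\partial_\alpha T_{g^{\alpha 0}}\pi_2(u)$. For the second expression the paradifferential equation itself gives $\sum_{\alpha,\beta}T_{g^{\alpha\beta}}\widehat{\partial_\beta\partial_\alpha}u = -T_{\partial_\alpha\partial_\beta u}g^{\alpha\beta} - \Pi(g^{\alpha\beta},\partial_\alpha\partial_\beta u) - T_{g^{00}}\pi_2(u)$, and all three terms lie in $\BB^2 L^\infty$ by \eqref{TDCC-CC} and \eqref{fe-dt2u-err}. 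Hence in both cases we must bound $P_k$ applied to an outer derivative of either the divergence-form object of part (a) or an explicit $\BB^2 L^\infty$ function; whenever that outer derivative (or the divergence index $\alpha$ in part (a)) is spatial, $P_k$ supplies the factor $2^k$ and the estimate follows, and likewise $\partial_j T_{g^{j0}}\pi_2(u)$ is harmless since $T_{g^{j0}}\pi_2(u)\in\BB^2 L^\infty$.

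The \textbf{main obstacle} is the case where the outer derivative is the time derivative $\partial_0$, where $P_k$ gives no gain; here one cannot argue abstractly, since the individual summands of $\partial_\alpha T_{g^{\alpha\beta}}\widehat{\partial_\beta\partial_\gamma}u$ only lie in $\partial_x\DCC$, whose frequency-localized bound $2^{3k/2}\BB c_k$ is strictly weaker than $2^k\BB^2$, so the claimed estimate genuinely rests on the cancellation built into $T_P$ — the balanced/null structure — captured through part (a) and the equation. The remedy is to expand the offending $\partial_0$ by the Leibniz rule and, each time a second or third pure time derivative of $u$ is produced, substitute for it using the paradifferential equation (inverting $T_{g^{00}}$), respectively its once-differentiated version; every such substitution turns a bad pure-time derivative into mixed or spatial derivatives, on which $P_k$ then recovers $2^k$ via \eqref{dxdu} and the computation in Lemma~\ref{l:tp-dxu}, plus balanced remainders of the types $T_{\DCC}\CC_0$, $\Pi(\DCC,\CC_0)$ and $T_{\partial_x\DCC}\CC_0$, controlled by \eqref{TDCC-CC}, \eqref{C-DC} and Lemma~\ref{l:tp-dxu}. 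Throughout one must peel off the factor $\pi_2(u)$ at the first opportunity and never differentiate it, in accordance with the remark after Lemma~\ref{l:utt}. Finally, the statements with $g$ replaced by $\tg$ or $\hg$ follow by the same argument, since only the membership of the metric in $\CC$ and of its derivatives in $\DCC$ is used.
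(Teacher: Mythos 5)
Your part (a) is fine and essentially coincides with the paper's argument: you commute one derivative through $T_P$, and the residue $\partial_\alpha T_{\partial_\gamma g^{\alpha\beta}}\partial_\beta u$ is estimated by $T_{\DCC}\CC_0\to\BB^2 L^\infty$ together with Lemma~\ref{l:tp-u}.

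Part (b) has a genuine gap, and it is concentrated exactly where you flag the ``main obstacle.'' You keep the metric $g$ throughout and write
$T_{g^{\alpha\beta}}\widehat{\partial_\beta\partial_\alpha}u = -T_{\partial_\alpha\partial_\beta u}g^{\alpha\beta} - \Pi(g^{\alpha\beta},\partial_\alpha\partial_\beta u) - T_{g^{00}}\pi_2(u)$,
then propose to apply $P_k\partial_0$ by Leibniz plus substitution from the equation. But both $\Pi(g^{\alpha\beta},\partial_\alpha\partial_\beta u)$ and $T_{g^{00}}\pi_2(u)$ are merely $\BB^2 L^\infty$ at fixed time; once you hit them with $\partial_0$ and $P_k$ you produce balanced $\Pi$-type objects such as $\Pi(\partial_0 g,\partial^2 u)$ or $\partial_0\pi_2(u)$, for which there is \emph{no} gain of $2^k$ --- these are high--high interactions with no low factor to localize, and dyadic summation of $\|f_j\|\|g_j\|\sim 2^j\BB^2 c_j^2$ over $j\geq k$ does not produce $2^k\BB^2$. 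Saying ``peel off $\pi_2(u)$ and never differentiate it'' is not an option once $\partial_0$ is the outer derivative on the full sum. The paper sidesteps this entirely by first reducing to the metric $\tilde g$ (your last sentence treats the three metrics as interchangeable, but they are not here): since $\tilde g^{00}=1$, the hat correction is built to \emph{exactly} cancel the $\Pi$-term of the paralinearized equation, giving the clean identity $T_{\tilde g^{\alpha\beta}}\widehat{\partial_\beta\partial_\alpha}u = -T_{\partial_\alpha\partial_\beta u}\tilde g^{\alpha\beta}$, and moreover the $(\alpha,\beta)=(0,0)$ summand on the right vanishes because $\tilde g^{00}$ is constant. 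It is precisely these two cancellations --- both unavailable for $g$ --- that make $\partial_0$ of the resulting expression controllable. You also omit the paper's step where, for fixed $\alpha,\gamma$, one shows $\partial_\alpha T_{\tilde g^{\alpha\beta}}\widehat{\partial_\beta\partial_\gamma}u = \partial_\gamma T_{\tilde g^{\alpha\beta}}\widehat{\partial_\beta\partial_\alpha}u + f$ with $\|P_{<k}f\|_{L^\infty}\lesssim 2^k\BB^2$; this is the link between the two expressions in (b) (and is nontrivial when $\beta=0$ and exactly one of $\alpha,\gamma$ vanishes). In short: you have the right instincts about where the danger lies, but the essential mechanism --- reduction to $\tilde g$ so that the $\Pi$ and $\pi_2$ pieces drop out identically --- is missing, and without it the Leibniz-plus-substitution remedy does not close.
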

 
 \begin{proof}
a)  We write
 \[
 \begin{aligned}
 \partial_\alpha T_{g^{\alpha\beta}} \partial_\beta
 \partial u = & \
 \partial  \partial_\alpha T_{g^{\alpha\beta}} \partial_\beta u 
 - \partial_\alpha T_{\partial g^{\alpha\beta}} \partial_\beta u .
 \end{aligned}
 \]
 Here by \eqref{TDCC-CC} we have 
 \[
 \| T_{\partial g} \partial u \|_{L^\infty}
\lesssim \BB^2,
 \]
so we get
\[
 \begin{aligned}
 \partial_\alpha T_{g^{\alpha\beta}} \partial_\beta
 \partial u = & \
 \partial  T_{g^{\alpha\beta}} \partial_\alpha \partial_\beta u + \partial ( \BB^2 L^\infty)
 = \partial  \Pi( g^{\alpha\beta}, \partial_\alpha \partial_\beta u) + \partial T_{\partial_\alpha \partial_\beta u} g^{\alpha\beta}
 + \partial ( \BB^2 L^\infty),
 \end{aligned}
\]
where we can use again the previous lemma.

b) The first step here is to reduce to the case of the metric $\tg$. Each of the other two metrics may be written in the form $h \tg$, with $h= h(\partial u)$.
Then we can write 
\begin{equation}
\label{expresion}
\partial_\alpha T_{h \tg^{\alpha\beta}} \widehat{ \partial_\beta \partial_\gamma} u
= T_h \partial_\alpha T_{\tg^{\alpha\beta}}  \widehat{ \partial_\beta \partial_\gamma} u -
T_{\partial_\alpha h}  T_{\tg^{\alpha\beta}}  \widehat{ \partial_\beta \partial_\gamma} u
+ \partial_\alpha (T_{h \tg^{\alpha\beta}}
- T_{h} T_{\tg^{\alpha\beta}}) \widehat{ \partial_\beta \partial_\gamma} u.
\end{equation}
The first term corresponds to our reduction,  and the remaining terms 
need to be estimated perturbatively. This is straightforward unless $\alpha = 0$,
so we focus now on this case.

For the middle term in \eqref{expresion} we can use the last part of \eqref{fe-control-v} in Lemma~\ref{l:Moser-control} for $\partial_0 h$. Using a $\DCC$ decomposition for 
it, $\partial_0 h = h_1+h_2$, we can match the two terms 
with the two pointwise bounds for $ \widehat{ \partial_\beta \partial_\gamma} u$,
namely
\[
\|P_k \widehat{ \partial_\beta \partial_\gamma} u\|_{L^\infty} \lesssim 2^k \AA c_k^2,
\qquad \| P_k \widehat{ \partial_\beta \partial_\gamma} u\|_{L^\infty} \lesssim 2^{\frac{k}2} \BB c_k.
\]
This yields
\[
\begin{aligned}
\| P_{k} T_{\partial_\alpha h}  T_{\tg^{\alpha\beta}}  \widehat{ \partial_\beta \partial_\gamma} u\|_{L^\infty} \lesssim & \ \| h_1\|_{L^\infty} \cdot \| \tilde P_k  \widehat{ \partial_\beta \partial_\gamma} u\|_{L^\infty} +
\| h_2 \|_{L^\infty} \cdot \| \tilde P_k  \widehat{ \partial_\beta \partial_\gamma} u\|_{L^\infty} 
\\
\lesssim & \  \BB^2 \cdot 2^k \AA c_k^2 + 2^{\frac{k}2} \BB c_k \cdot 
 2^{\frac{k}2} \BB c_k ,
\end{aligned}
\]
where the frequency envelope provides the summation with respect to $k$. 

For the last expression in \eqref{expresion} we distribute $\partial_0$. If it falls on the main term,
then we can combine the bound \eqref{fe-uttt} with the para-composition bound in Lemma~\ref{l:para-com} exactly as in the proof of Lemma~\ref{l:para-p+}. If it falls on $h$ (or similarly on $\tg$) then we use the same decomposition as above for $\partial_0 h$.
For the $h_1$ contribution we have a direct bound without using any cancellations, while for $h_2$ we use again
Lemma~\ref{l:para-com}.

We continue with the second\footnote{Note that these two steps are interchangeable.} step, which is to switch $\partial_\alpha$ and $\partial_\gamma$. For fixed $\alpha$ and $\gamma$,
we write
\begin{equation}
\partial_\alpha T_{\tg^{\alpha\beta}}  \widehat{ \partial_\beta \partial_\gamma} u
= \partial_\gamma T_{\tg^{\alpha\beta}}  \widehat{ \partial_\beta \partial_\alpha} u
+ f,
\end{equation}
where $f$ satisfies 
\begin{equation}\label{good-f}
\| P_{<k} f \|_{L^\infty}  \lesssim 2^k \BB^2.
\end{equation}

This is trivial if  $\alpha = \gamma = 0$. If both are nonzero, or if one of them is zero but $\beta \neq 0$, then there is no hat correction and this is a straightforward commutator bound. 
It remains to discuss the case when $\beta = 0$ and exactly one of $\alpha$ and $\gamma$ are zero, say $\gamma = 0$. Then we need to consider the difference
\[
\begin{aligned}
f = & \ \partial_\alpha T_{\tg^{\alpha 0 }}   \widehat{\partial_0 \partial_0} u
-  \partial_0 T_{\tg^{\alpha 0 }}   \partial_0 \partial_\alpha u
\\
= & \ \partial_\alpha T_{\tg^{\alpha 0 }}   \Pi(u, \partial_x \partial u) 
+ T_{\partial_\alpha\tg^{\alpha 0 }}   \partial_0 \partial_0 u
-  T_{\partial_0 \tg^{ \alpha 0 }}   \partial_0 \partial_\alpha u,
\end{aligned}
\]
which can be estimated as in \eqref{good-f}  using the fact that $\alpha \neq 0$
and the bound \eqref{fe-dt2u} for the second time derivative of $u$, respectively the 
similar bound \eqref{fe-control-v} (third estimate) for $\partial_0 \tg$.

It remains to examine the expression 
\[
g_\gamma = \partial_\gamma T_{\tg^{\alpha\beta}}  \widehat{ \partial_\beta \partial_\alpha} u,
\]
where, unlike above, we take advantage of the summation with respect to $\alpha$ and $\beta$. Then, using the $u$ equation, we have
\[
g_\gamma = \partial_\gamma T_{\partial_\alpha \partial_\beta u} \tg^{\alpha \beta}.
\]
The term where  both $\alpha$ and $\beta$ are zero vanishes, so this can be estimated directly as in \eqref{good-f} if $\gamma \neq 0$, and using either \eqref{fe-dt2u} 
or \eqref{fe-control-v} (third estimate) for $\partial_0 \tg$, otherwise.

\end{proof}

 \section{Paracontrolled distributions}
 \label{s:paracontrol}
To motivate this section, we start from the classical energy estimates for the wave equation, which are obtained using 
the multiplier method. Precisely, one multiplies the equation
$\Box_g u = f$ by $Xu$ and simply integrates by parts. Here 
$X$ is any regular time-like vector field. In the next section,
we prove energy estimates for the paradifferential equation 
\eqref{paralin-inhom}, by emulating this strategy at the paradifferential level. The challenge is then to uncover 
a suitable vector field $X$. Unlike the classical case, 
here not every time-like vector field $X$ will suffice.
Instead $X$ must be carefully chosen, and in particular it will inherently have a limited regularity. 

Since the metric $g$ is a function of $\nabla u$, scaling considerations indicate that the vector field $X$ should be at the same regularity level. Naively, one might hope to have an explicit expression $X = X(\nabla u)$ for our vector field. Unfortunately,
seeking such an $X$ eventually leads to an overdetermined system.
At the other extreme, one might enlarge the class of $X$ to 
all distributions which satisfy the same $H^s$ and Besov norms
as $\partial u$, which is essentially the class of functions 
which satisfy \eqref{fe-control-v}. While this class will turn out  to contain the 
correct choice for $X$, it is nevertheless too large to allow
for a clean implementation of the multiplier method.

Instead, there is a more subtle alternative, namely to have 
the vector $X$ to be \emph{paracontrolled} by $\partial u$.
This terminology was originally introduced by Gubinelli, Imkeller and 
Perkowski~\cite{GIP} in connection to Bony's calculus,  in order to study stochastic pde problems, see also \cite{KO}. However, similar constructions have been carried earlier in the renormalization arguments e.g. for wave maps,
in work of Tao~\cite{Tao-wm2d}, Tataru~\cite{T:wm2} and Sterbenz-Tataru~\cite{ST-wm1}; the last 
reference used the name \emph{renormalizable} for the corresponding class of distributions. 

In the standard usage, this is more of a principle than an exact 
notion, which needs to be properly adapted to one's purposes.
For our own objective here, we provide a very precise definition
of this notion, which is exactly tailored to the problem at hand.

\subsection{ Definitions and key properties}

\begin{definition}\label{d:paracontrol}
We say that a function $z$ is \textbf{paracontrolled} by 
$\partial u$ in a time interval $I$
if it admits a representation\footnote{Such a representation might not be unique
in general, though later in the paper we often identify specific choices for the paracoefficients.} of the form
\begin{equation} \label{parac-rep}
z = T_a \partial u + r   , 
\end{equation}
where the vector field $a$ and the error $r$ have the 
following properties:

(i) bounded para-coefficients $a$:
\begin{equation}\label{parac-1}
\| a \|_{\CC} \leq  C. 
\end{equation}

(ii) balanced error $r$:
\begin{equation}\label{parac-2}
\|r\|_{L^\infty} \leq C \AA^2, \qquad \|\partial r\|_{L^\infty} \leq C \BB^2   . 
\end{equation}

\end{definition}
It is convenient to think of the space of distributions $z$ paracontrolled
by $\partial u$ as a Banach space, which we denote by $\PP(\partial u)$,
or simply $\PP$. The norm in this Banach space is defined to be the 
largest implicit constant in \eqref{parac-1} and \eqref{parac-2}, minimized over  all representations of the form \eqref{parac-rep}. If $\|z\|_{\PP} \lesssim_{\AAs} 1$ then
we will simply write 
\[
 z \llcurly \partial u.
\]

While for the most part this definition can be applied separately at each time $t$, in our context we will think of both $u$ and $z$ as functions of time, 
and think of these bounds as uniform in $t$.
Precisely, above we think of $\AA$ as a global, time independent parameter,
whereas $\BB$ is allowed to be a possibly unbounded function of $t$.

To better understand the space  $\PP$ of paracontrolled distributions,
it is useful to relate it to the objects we have already discussed
in the previous section:

\begin{lemma}
a) If $F$ is a smooth function, then $F(\partial u) \in \PP$.

b) We have the inclusion $\PP \subset \CC$. 
\end{lemma}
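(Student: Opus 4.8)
For part (a), the plan is to paralinearize $F(\partial u)$ and read off the paracontrolled representation directly. Write $F(\partial u) = T_{F'(\partial u)} \partial u + R(\partial u)$, where $R$ is the Moser-type paralinearization remainder. This is exactly the representation \eqref{parac-rep} with $a = F'(\partial u)$ and $r = R(\partial u)$. To check condition (i), I would invoke Lemma~\ref{l:Moser-control}(c): since $\partial u \in \CC$ by Lemma~\ref{l:du-in-dcc}, the composition $F'(\partial u) \in \CC$ with $\|F'(\partial u)\|_{\CC} \lesssim_\AA 1$, which is \eqref{parac-1}. To check condition (ii), the bound $\|\partial r\|_{L^\infty} \lesssim \BB^2$ is precisely Lemma~\ref{l:Moser-control}(d) applied with $v = \partial u$, $F \rightsquigarrow F$. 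The remaining bound $\|r\|_{L^\infty} \lesssim \AA^2$ is the zeroth-order statement: expanding $R(\partial u) = \Pi(F'(\partial u), \partial u) + T_{\partial u} F'(\partial u)$ and estimating each piece by its $L^\infty$ norm using $\|\partial u\|_{L^\infty} \lesssim \AA$ and $\|F'(\partial u)\|_{L^\infty} \lesssim_\AA 1$ (the $\AA^2$ comes from pairing the $\AA$-size of $\partial u$ against the $\AA$-size of the fluctuation of $F'(\partial u)$, since the low-frequency constant part of $F'$ is absorbed into $T_a$). This last estimate is the most delicate point in part (a): one must make sure both factors genuinely carry a critical ($\AA$-level) norm rather than just an $L^\infty$ bound, so the high-high and low-high pieces of $R$ must be treated so that each input contributes an $\AA$ factor; this is a routine Littlewood-Paley computation in the style of Lemma~\ref{l:Moser-control0}.

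For part (b), I would argue that if $z = T_a \partial u + r \in \PP$ then $z \in \CC$. The term $T_a \partial u$ lies in $\CC$ because $\CC$ is closed under para-multiplication (Lemma~\ref{l:Moser-control}(b)) and both $a \in \CC$ and $\partial u \in \CC$. For the error $r$, one checks it satisfies the $\CC_0$-bounds \eqref{fe-control-C} together with the extra $\partial_t$-decomposition \eqref{fe-dt2u-defCC}. The $L^\infty$ bound on $r$ is immediate from \eqref{parac-2}. For the dyadic $L^{2n}$ and $L^\infty$ frequency-envelope bounds one uses $\|\partial r\|_{L^\infty} \lesssim \BB^2$: this gives $\|P_k r\|_{L^\infty} \lesssim 2^{-k}\BB^2$, which is stronger than the required $2^{-k/2}\BB c_k$ for $k$ in the relevant range; combined with the uniform bound $\|r\|_{L^\infty} \lesssim \AA^2$ for low frequencies, one interpolates to get the claimed envelope bounds (using $\BB^2 \lesssim \AA \BB$ and the normalization of $c_k$). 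Finally, for the $\partial_t r$ part of the $\CC$-norm, the decomposition is trivial: take $w_1 = 0$ and $w_2 = \partial_t r$, which satisfies $\|w_2\|_{L^\infty} \lesssim \BB^2$ directly from \eqref{parac-2}. Hence $r \in \CC$ and therefore $z \in \CC$.

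\textbf{Main obstacle.} The genuinely substantive step is the $\AA^2$ bound $\|r\|_{L^\infty} \leq C\AA^2$ in part (a): one has to verify that the paralinearization remainder $R(\partial u)$ is not merely bounded but quadratically small in the critical norm. This requires carefully tracking that in each of the paraproduct pieces of $R$, both arguments carry a full $B^0_{\infty,1}$-type (i.e. $\AA$-level) norm, which is where the structure of $\AA$ as a Besov rather than merely $L^\infty$ norm is used; the analogous computation for the $\AAs$ (i.e. $L^{2n}$) component, needed to verify $r$ meets the $\CC$-requirements in part (b), is similar but slightly more technical due to the Bernstein inequalities. Everything else is a direct citation of Lemma~\ref{l:Moser-control} and the algebra property of $\CC$.
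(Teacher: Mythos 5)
Your plan matches the paper's (very terse) proof: part (a) is cited there as a direct consequence of Lemma~\ref{l:Moser-control}(c),(d), and part (b) as a consequence of Lemma~\ref{l:Moser-control}(a) — essentially the same ingredients you invoke. One cosmetic slip worth flagging: the expansion $R(\partial u) = \Pi(F'(\partial u),\partial u) + T_{\partial u}F'(\partial u)$ is not the Moser paralinearization remainder for nonlinear $F$; it is exact only when $F$ is a product $F(v)=vg$. For instance with $F(v)=v^2$ one has $R(v)=\Pi(v,v)$, whereas your formula gives $2\Pi(v,v)+2T_vv$. Since you immediately defer to the continuous Littlewood--Paley expansion of Lemma~\ref{l:Moser-control0}(b), this does not derail the argument, but the displayed formula should not be read literally.

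The substantive gap is in your verification of the $\CC_0$ envelope bounds for the error $r$ in part (b). You interpolate $\|r\|_{L^\infty}\lesssim \AA^2$ against $\|\partial r\|_{L^\infty}\lesssim\BB^2$ and invoke ``$\BB^2\lesssim\AA\BB$,'' which requires $\BB\lesssim\AA$; this is false in general, since $\BB$ is the time-dependent half-derivative control parameter and is not dominated by the scale-invariant $\AA$. Even working directly from $\|P_k r\|_{L^\infty}\lesssim\min(\AA^2,2^{-k}\BB^2)$, the geometric mean only gives $2^{k/2}\|P_k r\|_{L^\infty}\lesssim\AA\BB$, which is $\lesssim\BB c_k$ only if $\AA\lesssim c_k$ for all $k$ — and the $\ell^2$-normalized envelope $c_k$ can be arbitrarily small at high frequencies. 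The resolution is that the balanced conditions \eqref{parac-2} alone do \emph{not} encode the dyadic envelope decay needed for $\CC_0$; in every instance where the paper produces a paracontrolled representation (see e.g.\ \eqref{bd-ra} in the proof of Lemma~\ref{l:X}), the error $r$ in fact carries an extra factor $c_k^2$, and it is this stronger structure that makes the $\CC_0$ bounds close. Your argument needs to appeal to this additional frequency decay rather than to interpolation between the two uniform bounds of Definition~\ref{d:paracontrol}.
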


\begin{proof}
Part (a) is a direct consequence of parts (c), (d) of Lemma~\ref{l:Moser-control}. Part (b) follows instead from part (a) 
of the same Lemma.
\end{proof}
Thus one may think of the class $\PP$ of paracontrolled distributions
as an intermediate stage between the class of smooth functions of $\partial u$, which is too narrow for our purposes, and the larger
class $\CC$, which does not carry sufficient structure.

Next we consider nonlinear properties:

\begin{lemma}\label{l:Moser}
a) [Algebra property] The space $\PP(\partial u)$ is an algebra. Further,
if $z_1, z_2 \in \PP$ have paracoefficients $a_1$, respectively $a_2$,
then the paracoefficients of $z_1 z_2$ can be taken to be $ z_1 a_2+z_2 a_1$.

b) [Moser inequality] If $F$ is a smooth function with $F(0)=0$ and $z \llcurly \partial u$, then $F(z) \llcurly \partial u$.  
\begin{equation}
 \| F(z) \|_{\PP} \lesssim_{\AA,\|z\|_{L^\infty}} \|z\|_{\PP}   .
\end{equation}
Further, if $z \in \PP$ has paracoefficients $a$,
then the paracoefficients of $F(z)$ can be taken to be $F'(z) a$.
\end{lemma}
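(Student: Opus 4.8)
The plan is to prove both parts by a direct paralinearization computation, reducing everything to the commutator and product bounds from Section~\ref{s:not} (Lemmas~\ref{l:para-com}--\ref{l:para-prod2}, Lemma~\ref{l:R}) and to the $\CC$/$\DCC$ machinery of Section~\ref{s:control} (Lemmas~\ref{l:Moser-control0}, \ref{l:Moser-control}). Throughout, I will systematically use the decomposition $\partial^2 u = f_1 + f_2 \in \DCC$ from Lemma~\ref{l:utt}, and the fact (already noted after Lemma~\ref{l:Moser-control}) that $F(\partial u)\partial^2 u \in \DCC$, to handle all terms in which a derivative lands on a paracoefficient or on $u$.

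\smallskip
\emph{Part (a): the algebra property.} Suppose $z_i = T_{a_i}\partial u + r_i$ with $a_i \in \CC$, $\|r_i\|_{L^\infty}\lesssim \AA^2$, $\|\partial r_i\|_{L^\infty}\lesssim \BB^2$. Expanding the product via the Littlewood-Paley trichotomy,
\[
z_1 z_2 = T_{z_1} z_2 + T_{z_2} z_1 + \Pi(z_1,z_2).
\]
For the paracontrolled structure, the candidate representation is $z_1 z_2 = T_{z_1 a_2 + z_2 a_1}\partial u + r$, so I must show the residual
\[
r = z_1 z_2 - T_{z_1 a_2 + z_2 a_1}\partial u
\]
satisfies the balanced bounds \eqref{parac-2}. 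I substitute $z_i = T_{a_i}\partial u + r_i$ into $T_{z_1} z_2 + T_{z_2} z_1$; the main terms are $T_{z_1} T_{a_2}\partial u$ and $T_{z_2} T_{a_1}\partial u$, which I reassemble into $T_{z_1 a_2 + z_2 a_1}\partial u$ at the cost of: (i) the low-high paraproduct errors $T_{z_i}T_{a_j} - T_{T_{z_i}a_j}$ and $T_{T_{z_i}a_j} - T_{z_i a_j}$, controlled by Lemmas~\ref{l:para-prod2} and \ref{l:para-prod} (both $z_i, a_j \in \CC\subset \CC_0$, so using $\gamma_1 = \gamma_2 = 0$ in $BMO$ and a half-derivative gain where needed, these land in the right $L^\infty$/balanced space — here is where I expect to lean on $\CC$-algebra bounds to get an $L^\infty$ rather than merely Sobolev conclusion, possibly re-running the dyadic estimates of Lemma~\ref{l:Moser-control0}(a) by hand); (ii) the paraproduct-of-remainder terms $T_{z_i} r_j$, bounded in $L^\infty$ by $\|z_i\|_{L^\infty}\|r_j\|_{L^\infty}\lesssim \AA^4$ and whose time derivative is handled by Leibniz and $\partial r_j \in \BB^2 L^\infty$, $\partial z_i$ via $z_i \in \CC$; (iii) the term $T_{z_2}r_1 + T_{z_1}r_2$ similarly; and (iv) the high-high term $\Pi(z_1,z_2)$, which since $z_i \in \CC_0$ is balanced and satisfies $\|\Pi(z_1,z_2)\|_{L^\infty}\lesssim \AA\BB^2$-type bounds with controllable time derivative, exactly as in the proof of Lemma~\ref{l:Moser-control0}(a). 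Collecting, $r$ obeys \eqref{parac-2}, and $z_1 a_2 + z_2 a_1 \in \CC$ by Lemma~\ref{l:Moser-control}(b). Taking $F(w) = w$ trivially, and then iterating, gives that $\PP$ is an algebra.

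\smallskip
\emph{Part (b): the Moser inequality.} Write $R(z) = F(z) - T_{F'(z)} z$ for the paralinearization remainder. Then
\[
F(z) = T_{F'(z)} z + R(z) = T_{F'(z)}\big(T_a \partial u + r\big) + R(z) = T_{F'(z)} T_a \partial u + T_{F'(z)} r + R(z),
\]
and I reassemble $T_{F'(z)}T_a \partial u = T_{F'(z)a}\partial u + (\text{para-product error})$ using Lemma~\ref{l:para-prod} (note $F'(z) \in \CC$ by Lemma~\ref{l:Moser-control}(c), since $z\in\CC$ by part (b) of the preceding lemma; and $\|z\|_{L^\infty}$ enters through the constant $C(\|z\|_{L^\infty})$ in Moser). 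So the candidate paracoefficient is $F'(z)a \in \CC$, and the error is
\[
r_F = \big(T_{F'(z)}T_a - T_{F'(z)a}\big)\partial u + T_{F'(z)} r + R(z).
\]
The first piece is a balanced $L^\infty$ error by Lemma~\ref{l:para-prod} (with a $BMO^{1/2}$ gain), with time derivative controlled as in part (a). The second piece: $\|T_{F'(z)}r\|_{L^\infty}\lesssim \|F'(z)\|_{L^\infty}\AA^2 \lesssim_{\AA,\|z\|_{L^\infty}} \AA^2$, and $\partial(T_{F'(z)}r) = T_{\partial F'(z)} r + T_{F'(z)}\partial r$; the second term is $\lesssim \BB^2$, and for the first I use the $\DCC$-decomposition of $\partial F'(z) = \partial_\alpha(F''(z)\partial_\alpha z)$ — which lies in $\DCC$ since $F''(z)\in\CC_0$ and $\partial z\in\DCC$ (here I use $z\in\CC$ so $\partial z\in\DCC$) — paired against $r\in L^\infty$ via \eqref{TDCC-CC}, yielding $\lesssim \BB^2$. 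The third piece $R(z)$: by Lemma~\ref{l:R} applied in a suitable form (or rather by Lemma~\ref{l:Moser-control}(d), which is stated precisely for this purpose) we get $\|\partial R(z)\|_{L^\infty}\lesssim_{\AA}\BB^2$, and $\|R(z)\|_{L^\infty}\lesssim \AA^2$ follows from the $L^\infty\cap BMO^{1/2}$-type bounds on $z$ (i.e. $\|z\|_{L^\infty}\lesssim\AA$, $\|D^{1/2}z\|_{BMO}\lesssim\BB$ for $z\in\CC_0$) via the standard Moser argument as in Lemma~\ref{l:R}'s proof. Combining, $r_F$ satisfies \eqref{parac-2}, so $F(z)\in\PP$ with paracoefficient $F'(z)a$, and tracking constants gives $\|F(z)\|_{\PP}\lesssim_{\AA,\|z\|_{L^\infty}}\|z\|_{\PP}$.

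\smallskip
\emph{Main obstacle.} The delicate point is part (b)'s bound on $\partial R(z)$ — and more generally, keeping every error term at the level of a \emph{balanced} $L^\infty$ bound for the time derivative, rather than merely a Sobolev bound. This is precisely why Lemma~\ref{l:Moser-control}(d) was isolated earlier with the remark that it is ``the new interesting bound''; the whole argument hinges on being able to invoke it (and the $\DCC$/$\CC$ multiplicative structure \eqref{C-DC}--\eqref{TDCC-CC}) to absorb the terms where a time derivative falls on a nonlinear coefficient. If that input is taken as given, the rest is a bookkeeping exercise in the paraproduct calculus of Section~\ref{s:not}.
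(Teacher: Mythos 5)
Your proposal is correct and follows essentially the same route as the paper: expand via the Littlewood-Paley trichotomy, substitute the paracontrolled representations, reassemble the paracoefficients with paraproduct composition lemmas, control $\Pi$ and the $T_{z_i}r_j$ terms as balanced errors, and in part (b) reduce via $R(z)=F(z)-T_{F'(z)}z$ to the algebra case using Lemma~\ref{l:Moser-control}(d) for $\partial R$; the use of the $\DCC$-decomposition of $\partial^2 u$ (Lemma~\ref{l:utt}) when a time derivative hits $\partial_\gamma u$ is also exactly what the paper does. The one spot you flag as uncertain — upgrading the Sobolev paraproduct-composition lemmas to an $L^\infty$ conclusion — is handled in the paper via Lemma~\ref{l:para-p+} (a direct dyadic $L^\infty$ estimate for $(T_gT_h-T_{gh})\partial^2 u$), which is the tool you were reaching for.
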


\begin{proof}
a) We consider the algebra property. 
Let 
\[
z_1 = T_{a_1} \partial u + r_1, 
\qquad z_2 = T_{a_2} \partial_u + r_2
\]
and expand $z_1z_2$. 

We first observe that we can place 
$\Pi(z_1,z_2)$ into the error term. For this it suffices to use the $\CC$ norm
and apply the second bound in \eqref{TDCC-CC}.

We next consider $T_{z_1} z_2$ where for $z_1$ we again use only the $\CC$ norm. We begin with $T_{z_1} r_2$, which we also estimate as an error term. Here we estimate again the more difficult time derivative.
If it falls on the first term then we can bound the output 
exactly as the balanced case above, see \eqref{TDCC-CC}.
Else, it suffices to use the uniform bound on $z_1$.

Finally, we consider the expression 
\[
T_{z_1} T_{a_2^\gamma} \partial_\gamma u = 
T_{z_1 a_2^\gamma} \partial_\gamma u + (T_{z_1} T_{a_2^\gamma}- 
T_{z_1 a_2^\gamma})
\partial_\gamma u .
\]
where the first term has a $\CC$ coefficient by the $\CC$ algebra property, and the second may be estimated perturbatively.
Here if the time derivative goes on the first factor then we are 
back to the previous case and no cancellation is needed. 
Else for $\partial_t \partial_\gamma u$ we use the decomposition in Definition~\ref{d:CC}(a) (or simply Lemma~\ref{l:utt}), combined with Lemma~\ref{l:para-prod}.

\medskip

b) To prove the Moser inequality, our starting point 
is Lemma~\ref{l:Moser-control}(d), which allows us to reduce the problem to estimating $T_{F'(z)} z$, using only the $\CC$ norm of $z$.
But here we can bound $F'(z)$ in $\CC$ using the Moser bound in $C$,
which allows us to conclude as in part (a).
\end{proof}

In addition to the above lemmas, functions in $\PP$
essentially solve a paradifferential $\Box_{\tP}$ equation.
This will be used later to estimate lower order terms 
in the proof of Theorem~\ref{t:para-wp}, and for \eqref{def-J-Y}:

\begin{lemma}\label{l:pcbounds-extra}
Let $h \in \PP$. Then there exist functions $f^\alpha$ so that 
we have the representation
\begin{equation*}
    \partial_\alpha T_{g^{\alpha\beta}}\partial_\beta h = \partial_\alpha f^\alpha ,
\end{equation*}
which satisfy the following bounds:
\begin{equation}\label{f-a}
|f_\alpha| \lesssim \BB^2    ,
\end{equation}
respectively
\begin{equation}\label{f-b}
\| P_{<k} (T_{g^{00}} \partial_0 h - f^0) \|_{L^\infty} \lesssim 2^k.    
\end{equation}
The same result holds for the metrics $\tg$, $\hg$.
\end{lemma}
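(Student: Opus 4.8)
The idea is to write $h = T_a\partial u + r$ as in Definition~\ref{d:paracontrol}, and to analyze the two contributions separately, treating the $r$ part as an error and the $T_a\partial u$ part as the main term. For the error $r$, we have $\|r\|_{L^\infty}\lesssim \AA^2$ and $\|\partial r\|_{L^\infty}\lesssim \BB^2$, so $r$ behaves essentially like $u$ itself relative to the paradifferential flow; indeed, $\partial_\alpha T_{g^{\alpha\beta}}\partial_\beta r$ should be analyzed exactly as in Lemma~\ref{l:tp-du}(a), producing a divergence form $\partial_\alpha f^\alpha$ with $|f^\alpha|\lesssim\BB^2$, and the difference $T_{g^{00}}\partial_0 r - f^0$ will be of the required size once we peel off one $\partial_\alpha$ and bound the remaining expression, using that $\partial r$ is merely bounded by $\BB^2$, so $T_{g^{00}}\partial_0 r$ and $f^0$ are each $O(\BB^2)$ in $L^\infty$ and hence certainly $O(2^k)$ after $P_{<k}$. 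So the bound \eqref{f-b} for the $r$ contribution is actually the easy part.

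The substantial part is the main term $T_a\partial u$, where $a\in\CC$. Here I would commute:
\[
\partial_\alpha T_{g^{\alpha\beta}}\partial_\beta T_{a^\gamma}\partial_\gamma u
= \partial_\alpha \left( T_{g^{\alpha\beta}} T_{a^\gamma}\partial_\beta\partial_\gamma u + T_{g^{\alpha\beta}} T_{\partial_\beta a^\gamma}\partial_\gamma u\right).
\]
The second piece inside the parentheses is of the form $T_{\CC_0} T_{\DCC}\CC_0$ (using $\partial a\in\DCC$ by Lemma~\ref{l:Moser-control}(c) and the inclusion \eqref{inclus}), which is $O(\BB^2)$ in $L^\infty$ by \eqref{TDCC-CC}, so it already contributes a good $f^\alpha$. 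For the first piece, I would first replace $T_{g^{\alpha\beta}}T_{a^\gamma}$ by $T_{g^{\alpha\beta}a^\gamma}$ up to a balanced error: by Lemma~\ref{l:para-p+} (Lemma~\ref{l:para-prod} applied with the $\DCC$ decomposition of $\partial^2 u$), the commutator $(T_{g^{\alpha\beta}}T_{a^\gamma}-T_{g^{\alpha\beta}a^\gamma})\widehat{\partial_\beta\partial_\gamma} u$ is $O(\BB^2)$ in $L^\infty$ — here the hat correction is needed only when $\beta=\gamma=0$, which is handled as in Lemma~\ref{l:tp-du} and Lemma~\ref{l:tp-du}(b) by using the main equation to rewrite $\partial_0^2 u$. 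Thus I reduce to $\partial_\alpha T_{g^{\alpha\beta}a^\gamma}\widehat{\partial_\beta\partial_\gamma}u$. Now I invoke Lemma~\ref{l:tp-du}(b) (with the symbol $g^{\alpha\beta}a^\gamma$ in place of $g^{\alpha\beta}$, noting $g^{\alpha\beta}a^\gamma\in\CC$ by the $\CC$ algebra property): it gives $\|P_k\partial_\alpha T_{g^{\alpha\beta}a^\gamma}\widehat{\partial_\beta\partial_\gamma}u\|_{L^\infty}\lesssim 2^k\BB^2$, and by the symmetry statement in that lemma we may freely interchange $\partial_\alpha$ and $\partial_\gamma$ modulo a good error. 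This allows us to pull $\partial_\alpha$ outside as $\partial_\alpha f^\alpha$: schematically, $\partial_\alpha T_{g^{\alpha\beta}a^\gamma}\partial_\beta\partial_\gamma u = \partial_\gamma( T_{g^{\alpha\beta}a^\gamma}\partial_\beta\partial_\alpha u) + \partial(\BB^2 L^\infty)$, and the inner expression, after using the $u$ equation to sum in $\alpha,\beta$ as in the last display of the proof of Lemma~\ref{l:tp-du}, is controlled by $\BB^2$ in $L^\infty$. This produces the desired $f^\alpha$ with $|f^\alpha|\lesssim\BB^2$, establishing \eqref{f-a}.

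For \eqref{f-b}, I would track which of the $f^\alpha$ above carry the $\alpha=0$ index and compare with $T_{g^{00}}\partial_0 h = T_{g^{00}}\partial_0 T_{a^\gamma}\partial_\gamma u + T_{g^{00}}\partial_0 r$. The point is that the two $\partial_0$-components differ only by terms where no cancellation was actually used to produce $f^0$ — namely the pieces picked up when $\partial_\alpha$ was moved inside or when $\partial_\gamma$ was swapped with $\partial_0$ — and each such term is either $O(\BB^2)$ in $L^\infty$, or of the form $T_{\CC_0}\partial_x\partial u$-type, which is $O(2^k)$ after $P_{<k}$ by Lemma~\ref{l:tp-dxu} and the mapping \eqref{dxdu}. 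The remaining genuinely problematic term is the one involving $\partial_0^2 u$; here the hat-correction $\Pi(\tg^{\alpha\beta},\partial_\alpha\partial_\beta u)$ is what is being subtracted off in passing from $\partial_0^2 u$ to $\widehat{\partial_0\partial_0}u$, and by Lemma~\ref{l:utt} its $P_{<k}$ piece is $O(2^{k/2}\BB c_k)\lesssim 2^k$. The extension to $\tg$ and $\hg$ follows by the reduction in the first step of the proof of Lemma~\ref{l:tp-du}(b), writing the other metrics as $h\tg$ with $h=h(\partial u)$ and absorbing the extra paracoefficient into $a$ (using $\PP\cdot\CC\subset\PP$, which is part of Lemma~\ref{l:Moser}(a)).

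\textbf{Main obstacle.} The delicate point is \eqref{f-b}: we must not merely produce \emph{some} divergence representation with bounded $f^\alpha$, but one whose $\partial_0$-component agrees with $T_{g^{00}}\partial_0 h$ up to an $O(2^k)$ (at frequency $<k$) error — i.e. the error may grow linearly in frequency but no faster. This forces us to be careful never to "lose a derivative" when moving $\partial_\alpha$ across the paracoefficients or when reorganizing the time derivatives; in particular every commutator or rearrangement step must be shown to land in $\partial_x\DCC$ or in $\BB^2 L^\infty$, never merely in $\partial_x^2(\text{bounded})$. This is precisely the role of the refined bounds in Lemmas~\ref{l:tp-dxu}, \ref{l:tp-du} and the $\partial_x\DCC$ calculus (Lemma~\ref{l:d3u-in-d2cc}), and the bookkeeping of which terms are balanced versus merely-$2^k$-bounded is where the real work lies.
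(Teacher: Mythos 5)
Your overall plan — split $h = T_a\partial u + r$, treat $r$ as an easy error, refine the main term into a leading part plus balanced corrections, then exploit the summation over $\alpha,\beta$ via the $u$-equation — matches the paper's. But there is a genuine flaw at precisely the step you flag as "actually the easy part."

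For the $r$ contribution, you conclude \eqref{f-b} by observing that $T_{g^{00}}\partial_0 r$ and $f^0$ "are each $O(\BB^2)$ in $L^\infty$ and hence certainly $O(2^k)$ after $P_{<k}$." This inference is invalid: $\BB(t)$ is a time-dependent quantity controlled only in $L^2_t$, not pointwise in time, so an $O(\BB^2)$ estimate does \emph{not} yield an $O(2^k)$ estimate with a universal constant. The bound \eqref{f-b} is specifically a $\BB$-independent statement (the paper later uses it as an $Err(\AA)$ correction), so the $\BB$-dependent argument simply does not give it. The correct and much simpler argument is the one the paper uses: take $f^\alpha = T_{g^{\alpha\beta}}\partial_\beta r$ exactly (no commutation needed, $\partial_\alpha f^\alpha = \partial_\alpha T_{g^{\alpha\beta}}\partial_\beta r$ is an identity); then $T_{g^{00}}\partial_0 r - f^0 = -T_{g^{0j}}\partial_j r$ because the $\beta = 0$ term cancels \emph{exactly}, and what remains is a purely spatial derivative, bounded at frequency $<k$ by $2^k\|r\|_{L^\infty} \lesssim 2^k\AA^2 \lesssim 2^k$ via the \emph{first} part of \eqref{parac-2}. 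The cancellation, not a size estimate, is what drives \eqref{f-b} here; the $\|\partial r\|_{L^\infty}\lesssim\BB^2$ bound is reserved for \eqref{f-a}.

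The same confusion recurs in your treatment of the main term: you invoke the $\BB$-dependent bound from Lemma~\ref{l:utt} for the hat correction and write "$O(2^{k/2}\BB c_k)\lesssim 2^k$," which again asserts a $\BB$-controlled quantity is $O(2^k)$. (To salvage it you would need to switch to the $\AA$- or $\AAs$-dependent bounds in \eqref{fe-dt2u}, \eqref{fe-dt2u-err}, possibly with a Bernstein step.) The paper sidesteps this entirely by never converting $\partial_\beta h$ into a $\partial^2 u$ factor: it keeps $T_{g^{\alpha\beta}}\partial_\beta(T_{a^\gamma}\partial_\gamma u)$ with the $u$-derivative as a single $\partial_\gamma u$ inside, pulls $\partial_\gamma$ out as the leading divergence, swaps $\partial_\alpha\leftrightarrow\partial_\gamma$ once (at $O(2^k)$ cost in \eqref{f-b}), and then recognizes $T_{a^\gamma}\partial_\alpha T_{g^{\alpha\beta}}\partial_\beta u = T_{a^\gamma}T_P u$, which is $O(\BB^2)$ by Lemma~\ref{l:tp-u}. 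No $\widehat{\partial\partial}u$ corrections, and no appeal to Lemma~\ref{l:tp-du}(b) with a composite symbol $g^{\alpha\beta}a^\gamma$ — the latter would require extra work anyway, since that lemma's proof uses the $\alpha,\beta$-summation against the bare metric to invoke the $u$-equation, and $a^\gamma \in \CC$ is not a smooth function of $\partial u$ so it cannot simply be absorbed into the metric factor.
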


\begin{proof}
We use the representation \eqref{parac-rep} for $h$.
The property in the lemma holds trivially for the $r$ component
of $h$, with 
\[
f^\alpha = T_{\tg^{\alpha \beta}} \partial_\beta r.
\]
Precisely, the bound \eqref{f-a} holds  due to the second part of 
\eqref{parac-2}, while for the bound \eqref{f-b}, the $\partial_0 r$ component cancels and then we can use the first part of \eqref{parac-2}.

It remains to consider $h$ of the form $h = T_{a^\gamma} \partial_\gamma u$. We write 
\[
\partial_\alpha T_{\tg^{\alpha\beta}}\partial_\beta h := \partial_\alpha h^\alpha, 
\]
noting that the expression on the left hand side of \eqref{f-b}
is exactly $h^0-f^0$.
We begin by refining the expression for $h^\alpha$, noting that corrections of size $\BB^2$ may be directly included into $f_\alpha$ without harming \eqref{f-b}. For this we write
\[
\begin{aligned}
h^\alpha 
= & \ \partial_\gamma T_{a^\gamma} T_{g^{\alpha \beta}} \partial_\beta   u +  T_{\tg^{\alpha \beta}} 
T_{\partial_\beta a^\gamma} \partial_\gamma u
+ [T_{\tg^{\alpha \beta}}, 
T_{a^\gamma}] \partial_\gamma \partial_\beta u
-  T_{a^\gamma} T_{\partial_\gamma \tg^{\alpha\beta}} \partial_\beta u
- \partial_\alpha T_{\partial_\gamma a^\gamma} T_{ \tg^{\alpha\beta}} \partial_\beta u,
\end{aligned}
\]
where the first term on the right is the leading term, while 
the remaining terms can be estimated by $\BB^2$ as follows:

\begin{itemize}
\item The second, fourth and fifth terms are estimated directly using \eqref{parac-1} for $\partial_\beta a^\gamma$, 
$\partial_\gamma g^{\alpha\beta}$
respectively $\partial_\gamma a^\gamma$. 
\item The third term is estimated  using the commutator bound in Lemma~\ref{l:para-com}, as well as Lemma~\ref{l:utt} if both $\beta$ and $\gamma$ are zero.
\end{itemize}

We have reduced the problem to the case when 
\[
h^\alpha =  \partial_\gamma T_{a^\gamma} T_{g^{\alpha \beta}} \partial_\beta   u.
\]
At this point we rewrite
\[
\partial_\alpha h^\alpha = \partial_\gamma \tilde h^\gamma, \qquad 
\tilde h^\gamma = \partial_\alpha T_{a^\gamma} T_{g^{\alpha \beta}} \partial_\beta   u,
\]
noting that 
\[
\| P_{<k} (h^0 - \tilde h^0) \|_{L^\infty} \lesssim 2^{k}, 
\]
which allows us to switch $\alpha$ and $\gamma$ also in \eqref{f-b}.
Then we are allowed to correct $\tilde h^\gamma$, by writing
\[
\tilde h^\gamma = T_{\partial_\alpha a^\gamma} T_{g^{\alpha \beta}} \partial_\beta   u + T_{a^\gamma} \partial_\alpha  T_{g^{\alpha \beta}} \partial_\beta   u.
\]
Now both terms on the right can be estimated by $\BB^2$ as follows:

\begin{itemize}
\item The first term is estimated directly using \eqref{parac-1} for $\partial_\alpha a^\gamma$.

\item The second term is estimated using Lemma~\ref{l:tp-u}.
\end{itemize}
Hence the proof of the lemma is concluded.

\end{proof}

In addition to the class of paracontrolled distributions $\PP$ we also define a secondary class of distributions, which
roughly speaking corresponds to derivatives of $\PP$ functions.

\begin{definition}
The space $\DPP$ of distributions consists of functions $y$ which admit a representation 
\begin{equation}
y = \partial_\alpha z^\alpha + r   , 
\end{equation}
where
\begin{equation}
\| z^\alpha \|_{\PP} \lesssim 1, \qquad \| r \|_{L^\infty} \lesssim \BB^2  .  
\end{equation}
\end{definition}

Due to the inclusion $\BB \subset \CC$,
we can directly relate it to the class $\DCC$ introduced earlier.

\begin{lemma}\label{l:DP-in-Q}
We have $\DPP \subset \DCC$. 
\end{lemma}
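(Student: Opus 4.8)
The plan is to unfold both definitions and match the pieces. Suppose $y \in \DPP$, so that we have a representation $y = \partial_\alpha z^\alpha + r$ with $\|z^\alpha\|_{\PP} \lesssim 1$ and $\|r\|_{L^\infty} \lesssim \BB^2$. The goal is to produce a $\DCC$ decomposition $y = f_1 + f_2$ with $\|P_k f_1\|_{L^\infty} \lesssim 2^{k/2} \BB c_k$ and $\|f_2\|_{L^\infty} \lesssim \BB^2$. The term $r$ already has the form of an $f_2$ component, so it suffices to treat $\partial_\alpha z^\alpha$ for a single $\alpha$ with $z := z^\alpha \in \PP$.

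First I would invoke the inclusion $\PP \subset \CC$ (the lemma immediately following Definition~\ref{d:paracontrol}), so that $z \in \CC$, and in particular $\partial_\alpha z$ admits, by the definition of $\CC$, a decomposition relevant to time derivatives; but more directly, I would split into the spatial and temporal cases. If $\alpha = j$ is spatial, then writing $\partial_j z = \sum_k \partial_j P_k z$ and using the $\CC_0$-type bounds $\|P_k z\|_{L^\infty} \lesssim 2^{-k/2}\BB c_k$ from \eqref{fe-control-C}, together with Bernstein, gives $\|P_k \partial_j z\|_{L^\infty} \lesssim 2^k \cdot 2^{-k/2}\BB c_k = 2^{k/2}\BB c_k$, which is exactly the $f_1$ bound; here one must be slightly careful that $P_k \partial_j z$ is frequency-localized at $2^k$, which it is since $P_k$ is a spatial projector and $\partial_j$ is a spatial derivative. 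If $\alpha = 0$, then $\partial_0 z$ is handled by the structure built into the space $\CC$: since $z \in \CC$, by definition $\partial_t z = w_1 + w_2$ with $\|P_k w_1\|_{L^\infty} \lesssim 2^{k/2}\BB c_k$ and $\|w_2\|_{L^\infty} \lesssim \BB^2$, which is precisely a $\DCC$ decomposition. So in either case $\partial_\alpha z \in \DCC$, and adding back the error $r$ (which goes into the $f_2$ slot) completes the argument.

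The only genuinely delicate point — and the one I would expect to be the main obstacle — is confirming that the $\PP$ structure really does feed into the $\CC$ structure in the temporal case with the right constants, i.e. that the paracontrolled representation $z = T_a \partial u + r$ yields a $\partial_t z$ decomposition of the required $\DCC$ form. This is where one uses that the paracoefficient $a \in \CC$ and that $\partial r$ satisfies $\|\partial r\|_{L^\infty} \lesssim \BB^2$ from \eqref{parac-2}: writing $\partial_0 z = T_{\partial_0 a}\partial u + T_a \partial_0 \partial u + \partial_0 r$, the last term is an $f_2$ term, the first term is handled by $\partial_0 a \in \DCC$ (Lemma~\ref{l:Moser-control}(c)) paired against $\partial u \in \CC_0$ via the multiplicative bound $T_{\DCC}\CC_0 \to \AA\,\DCC$ in \eqref{C-DC}, and the middle term uses $a \in \CC_0$ against $\partial^2 u \in \DCC$ (Lemma~\ref{l:du-in-dcc}) via $T_{\CC_0}\cdot\DCC \to \DCC$ in \eqref{C-DC}, where for the $\partial_0\partial_0 u$ piece one inserts the hat-correction decomposition from Lemma~\ref{l:utt}. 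But in fact the cleanest route simply cites the already-established inclusion $\PP \subset \CC$ and then the definitional unpacking of $\CC$, so the whole proof reduces to two short observations plus a Bernstein estimate; no new multilinear estimates are needed beyond those collected in Section~\ref{s:control}.

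\begin{proof}
Let $y \in \DPP$, with representation $y = \partial_\alpha z^\alpha + r$ where $\|z^\alpha\|_{\PP} \lesssim 1$ and $\|r\|_{L^\infty} \lesssim \BB^2$. By the inclusion $\PP \subset \CC$, each $z^\alpha \in \CC$. It thus suffices to show that $\partial_\alpha z \in \DCC$ for a single index $\alpha$ and a fixed $z \in \CC$, since the error $r$ may be absorbed into the $f_2$ component of a $\DCC$ decomposition.

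If $\alpha = j$ is a spatial index, then from the second and third bounds in \eqref{fe-control-C} we have $\|P_k z\|_{L^\infty} \lesssim 2^{-k/2}\BB c_k$, whence by Bernstein's inequality
\begin{equation*}
\| P_k \partial_j z \|_{L^\infty} \lesssim 2^k \| P_k z\|_{L^\infty} \lesssim 2^{\frac{k}{2}} \BB c_k,
\end{equation*}
so $\partial_j z$ has the form of an $f_1$ component in \eqref{DC-decomp}, and in particular $\partial_j z \in \DCC$.

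If $\alpha = 0$, then since $z \in \CC$, by definition $\partial_t z$ admits a decomposition $\partial_t z = w_1 + w_2$ with $\|P_k w_1\|_{L^\infty} \lesssim 2^{k/2}\BB c_k$ and $\|w_2\|_{L^\infty} \lesssim \BB^2$, which is exactly a $\DCC$ decomposition as in \eqref{DC-decomp}. Hence $\partial_0 z \in \DCC$.

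In either case $\partial_\alpha z^\alpha \in \DCC$, and adding $r$ (with $\|r\|_{L^\infty} \lesssim \BB^2$) into the $f_2$ part yields $y \in \DCC$.
\end{proof}
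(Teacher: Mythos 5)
Your proof is correct and is exactly what the paper leaves implicit here: the paper states Lemma~\ref{l:DP-in-Q} without proof because it follows immediately from the already-recorded facts $\PP \subset \CC$ and $\partial : \CC \to \DCC$ (see \eqref{inclus}), plus the observation that the $\BB^2 L^\infty$ error in the $\DPP$ representation slots directly into the $f_2$ component of a $\DCC$ decomposition. One small terminological nit: in the spatial case the bound $\|P_k \partial_j z\|_{L^\infty} \lesssim 2^k \|P_k z\|_{L^\infty}$ is a frequency-localized multiplier estimate rather than Bernstein's inequality (which concerns $L^p$-to-$L^q$ gains), but the estimate itself is of course valid.
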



Next we verify that it is stable under multiplication by 
$\PP$ functions.

\begin{lemma}\label{l:ppxDPP}
We have the bilinear bound
\begin{equation}
\PP \times \DPP \to \DPP    .
\end{equation}
\end{lemma}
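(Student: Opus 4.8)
The goal is the bilinear bound $\PP \times \DPP \to \DPP$. Fix $z \in \PP$ and $y \in \DPP$. Using the definition of $\DPP$, write $y = \partial_\alpha w^\alpha + r$ with $\|w^\alpha\|_{\PP} \lesssim 1$ and $\|r\|_{L^\infty} \lesssim \BB^2$. The plan is to expand the product $zy$ via the Littlewood--Paley trichotomy, $zy = T_z y + T_y z + \Pi(z,y)$, and to show that each piece lands in $\DPP$, i.e.\ admits a decomposition of the form $\partial_\alpha(\text{something in }\PP) + (\text{something of size }\BB^2\text{ in }L^\infty)$.

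First I would dispatch the contribution of $r$: since $z \in \PP \subset \CC \subset \CC_0$, by Lemma~\ref{l:Moser-control}, part (a), in particular the relation $\CC_0 \cdot \DCC \to \DCC$ and the bound $T_{\DCC}\CC_0 \to \BB^2 L^\infty$ together with $\Pi(\DCC,\CC_0)\to \BB^2 L^\infty$ (noting $r \in \BB^2 L^\infty \subset \DCC$ trivially), the product $z\cdot r$ is already in $\DCC$; but more is true, and the simplest route is to observe $\|zr\|_{L^\infty}\lesssim \|z\|_{L^\infty}\|r\|_{L^\infty}\lesssim \BB^2$, so $zr$ sits in the $r$-slot of the $\DPP$ decomposition directly. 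Thus it remains to treat $z \cdot \partial_\alpha w^\alpha$ for $w^\alpha \in \PP$. Here the natural move is to write $z\, \partial_\alpha w^\alpha = \partial_\alpha(z w^\alpha) - (\partial_\alpha z)\, w^\alpha$. The first term is $\partial_\alpha$ of a $\PP$ function by the algebra property of $\PP$ (Lemma~\ref{l:Moser}, part (a)), so it lies in the leading slot of $\DPP$. For the second term we must show $(\partial_\alpha z)\, w^\alpha \in \DPP$; since $z \in \PP$ we have $\partial_\alpha z \in \DCC$ (this is part of the definition of $\PP$, combined with Lemma~\ref{l:DP-in-Q} or directly from \eqref{parac-2} plus the paracontrolled structure), but $\DCC$ alone is not quite $\DPP$, so a little more care is needed: one uses the paracontrolled representation $z = T_a\partial u + r_z$, so $\partial_\alpha z = \partial_\alpha T_a \partial u + \partial_\alpha r_z = \partial_\alpha(T_a\partial u) + (\text{size }\BB^2)$, and then $(\partial_\alpha z) w^\alpha = \partial_\alpha(T_a \partial u\, w^\alpha) - T_a\partial u\,\partial_\alpha w^\alpha + (\BB^2\text{ in }L^\infty)\cdot w^\alpha$; the first term is $\partial_\alpha$ of a $\PP$ element (using $T_a\partial u \in \PP$ and the algebra property), the last term is $O(\BB^2)$ in $L^\infty$, and the middle term $T_a\partial u\,\partial_\alpha w^\alpha$ is handled by iterating: it is a $\PP$ function times $\partial_\alpha$ of a $\PP$ function, which brings us back to a product of the same type, but now with one of the factors at the regularity of $\partial u$ rather than a general derivative — so this can be closed, or alternatively absorbed by Leibniz into $\partial_\alpha(\PP\cdot\PP) + (\partial_\alpha \PP)\cdot\PP$ and then using that $\partial_\alpha$ of $\PP$ composed with $\PP$ is controlled by the $\DCC$-style estimates of Lemma~\ref{l:Moser-control}.

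The cleanest organization, which I would actually adopt, is to avoid the recursion: write $zy$ and peel off the high--high interaction $\Pi(z,y)$ first. Since $z \in \CC_0$ and $y \in \DCC$ (by Lemma~\ref{l:DP-in-Q}, $\DPP \subset \DCC$), the bound $\Pi(\DCC,\CC_0)\to \BB^2 L^\infty$ from \eqref{TDCC-CC} shows $\Pi(z,y) \in \BB^2 L^\infty$, hence in the $r$-slot of $\DPP$. Likewise $T_y z$: by \eqref{TDCC-CC}, $T_{\DCC}\CC_0 \to \BB^2 L^\infty$, so $T_y z \in \BB^2 L^\infty$. This leaves only the low--high piece $T_z y$, which is the main term. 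For this, write $y = \partial_\alpha w^\alpha + r$ again; then $T_z y = T_z \partial_\alpha w^\alpha + T_z r$, the latter being $O(\BB^2)$ in $L^\infty$ by $\|z\|_{L^\infty}\lesssim \AA \lesssim 1$, and $T_z\partial_\alpha w^\alpha = \partial_\alpha T_z w^\alpha - T_{\partial_\alpha z} w^\alpha$. The first term: $T_z w^\alpha$ differs from $z w^\alpha$ by $\Pi(z,w^\alpha) + T_{w^\alpha}z$, both of which are $O(\AA^2)$-type errors controllable in $\PP$ by Lemma~\ref{l:Moser}, so $T_z w^\alpha \in \PP$ and thus $\partial_\alpha T_z w^\alpha$ is in the leading slot of $\DPP$. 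The second term $T_{\partial_\alpha z} w^\alpha$: here $\partial_\alpha z \in \DCC$ and $w^\alpha \in \CC_0$, and by \eqref{C-DC} the bound $T_{\DCC}\CC_0 \to \AA\,\DCC$ puts it in $\DCC$; but we need it in $\DPP$, so instead use the paracontrolled structure $z = T_a\partial u + r_z$ so that $\partial_\alpha z = T_{a}\partial_\alpha\partial u + T_{\partial_\alpha a}\partial u + \partial_\alpha r_z$, giving $T_{\partial_\alpha z}w^\alpha$ as a sum whose pieces are either $\partial_\alpha$ of an explicit $\PP$-paraproduct (after a Leibniz/para-Leibniz rearrangement, Lemma~\ref{l:para-leibniz}) or balanced $\BB^2$ errors by Lemma~\ref{l:para-p+} and \eqref{parac-2}.

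\textbf{Main obstacle.} The genuine difficulty is not the trichotomy bookkeeping but the fact that $\DPP$ demands its ``derivative part'' lie in $\PP$ — a space with a paracontrolled \emph{structure} — rather than merely in $\DCC$. Consequently, whenever a derivative lands on the $\PP$-factor $z$ (producing the term $T_{\partial_\alpha z}w^\alpha$ and its analogues), one cannot simply invoke the coarse multiplicative bounds of Lemma~\ref{l:Moser-control}; one has to re-enter the paracontrolled representation $z = T_a\partial u + r_z$ and explicitly exhibit the new paracoefficients, using the para-Leibniz rule (Lemma~\ref{l:para-leibniz}) and the para-associativity/para-product lemmas to commute paraproducts past the derivative at the cost of balanced $\BB^2$ errors. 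Keeping track of which error terms are genuinely balanced (hence go into the $L^\infty$, size-$\BB^2$ remainder) versus which must be organized as $\partial_\alpha$ of a bona fide $\PP$ element is the crux, and mirrors exactly the care taken in the proof of Lemma~\ref{l:Moser} and Lemma~\ref{l:pcbounds-extra}. I expect the proof to invoke those two lemmas, the algebra property of $\PP$, the inclusion $\DPP\subset\DCC$, and the bounds \eqref{C-DC}, \eqref{TDCC-CC} repeatedly, with no essentially new estimate required.
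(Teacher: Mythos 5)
Your ``cleanest organization'' is in fact the route taken in the paper, and it closes without the detour you propose at the end. With $y = \partial_\alpha w^\alpha + r$, the paper expands the main piece $z\,\partial_\alpha w^\alpha$ by the trichotomy and the paraproduct Leibniz rule as
\[
z\,\partial_\alpha w^\alpha = \partial_\alpha T_z w^\alpha - T_{\partial_\alpha z}w^\alpha + \Pi(z,\partial_\alpha w^\alpha) + T_{\partial_\alpha w^\alpha}z,
\]
places $T_z w^\alpha \in \PP$ into the leading $\partial_\alpha(\PP)$ slot of $\DPP$ by the algebra property (Lemma~\ref{l:Moser}(a)), and estimates all three remaining terms directly in $\BB^2 L^\infty$ via \eqref{TDCC-CC}. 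This is exactly your steps 1--5, and you already invoke \eqref{TDCC-CC} correctly for $T_y z$ and $\Pi(z,y)$.

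The one genuine misstep is your treatment of $T_{\partial_\alpha z}w^\alpha$: you write that ``by \eqref{C-DC} the bound $T_{\DCC}\CC_0 \to \AA\,\DCC$ puts it in $\DCC$; but we need it in $\DPP$,'' and then propose to re-enter the paracontrolled representation of $z$ and invoke the para-Leibniz rule. This is a misdiagnosis of what is required. The definition of $\DPP$ has two slots: a structured slot $\partial_\alpha(\PP)$ and an unstructured error slot of size $\BB^2$ in $L^\infty$. Since $\partial_\alpha z \in \DCC$ (because $z \in \PP \subset \CC$ and $\partial\colon\CC\to\DCC$) and $w^\alpha \in \CC_0$, the first bound of \eqref{TDCC-CC} — not the weaker $T_{\DCC}\CC_0\to\AA\DCC$ bound from \eqref{C-DC} that you cite at this point — gives $T_{\partial_\alpha z}w^\alpha = O(\BB^2)$ in $L^\infty$ directly, so this term lands in the error slot with nothing further to do. The elaborate fallback you sketch, expanding $\partial_\alpha z = T_a\partial_\alpha\partial u + T_{\partial_\alpha a}\partial u + \partial_\alpha r_z$ and invoking Lemma~\ref{l:para-leibniz}, would also terminate (each piece again lands in $\BB^2 L^\infty$ by \eqref{TDCC-CC} and \eqref{parac-2}), but it effectively re-derives a special case of \eqref{TDCC-CC} and needlessly obscures a three-line argument; the para-Leibniz rule plays no role here.
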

As a corollary of this lemma, it follows that our gradient potentials $A^\gamma$
and $\tA^\gamma$ are in $\DPP$. 

\begin{proof}
For $h,z \in \PP$ we consider the expansion
\[
\begin{aligned}
q =& \  h \partial_\alpha z
\\
= & \ \partial_\alpha T_h z - T_{\partial_\alpha h} z + \pi(h,\partial_\alpha z) + T_{\partial_\alpha z} h .
\end{aligned}
\]
The first term is in $\DPP$ by Lemma~\ref{l:Moser}(a).
The three remaining terms can be perturbatively estimated by $\BB^2$, using the bounds in \eqref{TDCC-CC}.
\end{proof}

Finally, we consider decompositions for $\DPP$ functions which are akin
to Lemma~\ref{l:utt}. We will do this in two different ways, one 
which is shorter but loses some structure, and another which is 
more involved but retains full structure.

\begin{lemma}\label{l:switch-dt}
Let $ w  \in \DPP$. Then $w$ admits a representation of the form
\[
w = \partial_x w_1 + r,
\]
where 
\begin{equation}
\| w_1\|_{\PP} \lesssim \|w\|_{\DPP}, \qquad \|r\|_{L^\infty} \lesssim \BB^2 \|w\|_{\PP}.    
\end{equation}

\end{lemma}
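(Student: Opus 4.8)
The starting point is the defining representation of a $\DPP$ function: $w = \partial_\alpha z^\alpha + r_0$ with $z^\alpha \in \PP$ and $\|r_0\|_{L^\infty} \lesssim \BB^2$. The error $r_0$ already has the required form (take it into $r$), so the whole content is to rewrite the ``bad'' piece $\partial_0 z^0$ as a spatial derivative of a $\PP$ function, modulo an $L^\infty$ error of size $\BB^2$. In other words, the entire lemma reduces to the single claim: for $z = z^0 \in \PP$, one has $\partial_0 z = \partial_x \tilde z + (\text{error in } \BB^2 L^\infty)$ with $\tilde z \in \PP$. The spatial derivatives $\partial_j z^j$ are of course already of the desired form, with $w_1$-contribution simply $z^j$, which lies in $\PP$ by hypothesis.

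To handle $\partial_0 z$ for $z \in \PP$, I would use the paracontrolled representation $z = T_{a^\gamma} \partial_\gamma u + r$ from Definition~\ref{d:paracontrol}. Applying $\partial_0$ and using Leibniz:
\[
\partial_0 z = T_{a^\gamma} \partial_0 \partial_\gamma u + T_{\partial_0 a^\gamma} \partial_\gamma u + \partial_0 r.
\]
The last term $\partial_0 r$ is in $\BB^2 L^\infty$ by \eqref{parac-2}, so it goes into the error. For the middle term, $\partial_0 a^\gamma \in \DCC$ (since $a^\gamma \in \CC$, by Lemma~\ref{l:Moser-control}(c) applied componentwise, or directly by the definition of $\CC$); then $T_{\partial_0 a^\gamma} \partial_\gamma u$ is a paraproduct of a $\DCC$ function against a $\CC_0$ function, which lies in $\BB^2 L^\infty$ by the bound $T_{\DCC}\CC_0 \to \BB^2 L^\infty$ in \eqref{TDCC-CC}. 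So that term is also an acceptable error. The crux is the first term $T_{a^\gamma}\partial_0 \partial_\gamma u$: when $\gamma \neq 0$ this is $T_{a^\gamma} \partial_\gamma \partial_0 u = \partial_\gamma(T_{a^\gamma}\partial_0 u) - T_{\partial_\gamma a^\gamma}\partial_0 u$; the first piece is $\partial_x$ of a $\PP$ function (using that $T_{a}\partial u \in \PP$, i.e. $\PP$ is stable under the relevant paraproduct operations — this follows from the algebra structure and Lemma~\ref{l:Moser}), and the second piece is again of type $T_{\DCC}\CC_0$, hence in $\BB^2 L^\infty$. The genuinely delicate case is $\gamma = 0$, i.e. the term $T_{a^0}\partial_0^2 u$.

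For $T_{a^0}\partial_0^2 u$ I would invoke Lemma~\ref{l:utt}: $\partial_0^2 u = \hat\partial_0^2 u + \pi_2(u)$, where $\pi_2(u) \in \BB^2 L^\infty$ (so $T_{a^0}\pi_2(u) \in \BB^2 L^\infty$ by boundedness of $a^0$, using the $\DCC$-type structure of $\pi_2$ or simply the $L^\infty$ bound paired with the $L^\infty$-boundedness of the paraproduct), and $\hat\partial_0^2 u = -\sum_{(\alpha,\beta)\neq(0,0)} \tg^{\alpha\beta}\partial_\alpha\partial_\beta u$ up to the $\Pi$ correction — crucially this involves only second derivatives of $u$ \emph{with at least one spatial index}. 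For each such term, $\partial_\alpha\partial_\beta u = \partial_x(\partial u)$-type object, so after commuting the paracoefficient $a^0$ (and the metric component $\tg^{\alpha\beta}$, which is in $\CC$) past a spatial derivative one writes $T_{a^0\tg^{\alpha\beta}}\partial_\alpha\partial_\beta u$ as $\partial_x$ of a $\PP$ function plus a commutator error; the commutators involve $\partial_x$ of a $\CC$ symbol hitting $\partial u$, which is controlled in $\BB^2 L^\infty$ exactly as in the proof of Lemma~\ref{l:tp-dxu}. I expect \textbf{this case $\gamma=0$ — extracting the spatial derivative structure from $T_{a^0}\partial_0^2 u$ via the equation — to be the main obstacle}, since it is where one must use that $u$ solves the minimal surface equation and carefully track that no purely-temporal second derivative survives outside the balanced $\pi_2$ error. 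All the resulting paraproduct/commutator estimates are of the type already assembled in Section~\ref{s:control} (Lemmas~\ref{l:para-com}, \ref{l:Moser-control}, and the bounds \eqref{TDCC-CC}), so once the algebra is organized the verification is routine.
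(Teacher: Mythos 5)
Your proposal follows the same route as the paper's proof: reduce to $w = \partial_0 z$ with $z \in \PP$, expand $\partial_0 z$ by Leibniz using the paracontrolled representation $z = T_{a^\gamma}\partial_\gamma u + r$, discard $\partial_0 r$ and $T_{\partial_0 a^\gamma}\partial_\gamma u$ via \eqref{parac-2} and \eqref{TDCC-CC}, pull a spatial derivative out of $T_{a^\gamma}\partial_0\partial_\gamma u$ when $\gamma \neq 0$, and when $\gamma = 0$ substitute the minimal surface equation for $\partial_0^2 u$ so that every surviving nonperturbative term carries at least one spatial derivative. One minor imprecision worth flagging: after applying the equation, $\hat\partial_0^2 u$ equals $-\sum_{(\alpha,\beta)\neq(0,0)}\bigl(T_{\tg^{\alpha\beta}}\partial_\alpha\partial_\beta u + T_{\partial_\alpha\partial_\beta u}\tg^{\alpha\beta}\bigr)$, so in addition to the $\Pi$ piece $\pi_2(u)$ you also need to discard the high-low paraproduct $T_{\partial_\alpha\partial_\beta u}\tg^{\alpha\beta}$, which again falls under \eqref{TDCC-CC}; the paper handles this explicitly ("the last two terms are perturbative"), whereas your write-up only names the $\Pi$ correction.
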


\begin{proof}
It suffices to consider $w$ of the form $w = \partial_0 z$ where
$z \in \PP$, with a representation as in \eqref{parac-rep},
\[
z = T_{a^\beta} \partial_\beta u + r ,
\]
with $a^\beta,r$ as in \eqref{parac-1},\eqref{parac-2}. The bound \eqref{parac-2}
allows us to discard the contribution of $r$ to \eqref{choose-d}. It remains to produce an appropriate modification $\partial_x z_1$,
with $z_1 \in \partial_x \PP$, for the expression 
\[
q =  \partial_0 T_{a^\beta} \partial_\beta u.
\]
We successively peel off perturbative $O(\BB^2)$ layers from $q$.
First we use \eqref{parac-1} to write
\[
q
= 
 T_{a^\beta} \partial_0\partial_\beta u + g^{00} T_{\partial_0 a^\beta} \partial_\beta u
= g^{00}  T_{a^\beta} \partial_0 \partial_\beta u + O(\BB^2).
\]

At this point we have two cases to consider:

(i) $\beta \neq 0$.
Then we write
\[
q = \partial_\beta T_{g^{00} a^\beta}\partial_0 u + O(\BB^2),
\]
and the remaining expression is in $\partial_x \PP$.

(ii) $\beta = 0$.  Here we use the equation for $u$ to write
\[
\partial_t^2 u = - \sum_{(\alpha,\beta) \neq (0,0)} T_{\tg^{\alpha\beta}} \partial_\alpha \partial_\beta u + \Pi( \tg^{\alpha\beta}, \partial_\alpha \partial_\beta u)
+ T_{\partial_\alpha \partial_\beta u} \tg^{\alpha\beta}.
\]
Here the first term on the right involves at least one spatial derivative and is treated as before, in the case $\gamma \neq 0$, while the contributions of the last two terms are perturbative, and can be bounded  by $\BB^2$.

\end{proof}

Our second representation provides a more explicit recipe 
to obtain the corrected version not only of $\DPP$ functions, but also of $\PP \times \DPP$ functions:

\begin{lemma}\label{l:circle-app}
Let $w = z_1^\alpha \partial_\alpha z$, where $z_1, z_2 \in \PP$,
and $z_2$ has the $\PP$ representation 
\[
z = T_{a^\gamma} \partial_\gamma u + r.
\]
Define
\[
\mathring{w} = T_{z_1^\alpha a^\gamma}\widehat{\partial_\alpha\partial_\gamma} u.
\]
Then we have
\begin{equation}\label{ring-error}
    \|w-\mathring w\|_{L^\infty} \lesssim \BB^2,
\end{equation}
while
\begin{equation}\label{ring-bd}
    \| P_k \mathring w\|_{L^\infty} \lesssim 2^{\frac{k}2} \BB c_k.
\end{equation}
\end{lemma}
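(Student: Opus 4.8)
The statement asserts two facts about $\mathring w = T_{z_1^\alpha a^\gamma}\widehat{\partial_\alpha\partial_\gamma} u$: first, that it differs from $w = z_1^\alpha\partial_\alpha z$ only by an $O(\BB^2)$ error, and second, that it satisfies the unbalanced pointwise bound \eqref{ring-bd}. The plan is to treat the second bound first, since it is essentially immediate, and then to extract the first bound by carefully unwinding $w$ and peeling off balanced errors.

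\emph{Step 1: the bound \eqref{ring-bd}.} By Lemma~\ref{l:Moser}, $z_1\in\PP\subset\CC\subset\CC_0$, and by the algebra property its paracoefficients combine, so $z_1^\alpha a^\gamma \in \CC\subset\CC_0$; in particular $z_1^\alpha a^\gamma$ is bounded in $L^\infty$. On the other hand, by Lemma~\ref{l:d3u-in-d2cc} (or more precisely by the defining bounds \eqref{fe-dt2u} applied to the hatted second derivative, recalling that $\widehat{\partial_\alpha\partial_\gamma} u = \partial_\alpha\partial_\gamma u$ unless $\alpha=\gamma=0$, in which case $\widehat{\partial_0\partial_0} u$ is the corrected object $\hat\partial_t^2 u$ of Definition~\ref{d:dt2u}), we have $\|P_k\widehat{\partial_\alpha\partial_\gamma} u\|_{L^\infty}\lesssim 2^{\frac{k}2}\BB c_k$. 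Then a standard paraproduct estimate gives $\|P_k\mathring w\|_{L^\infty}\lesssim \|z_1^\alpha a^\gamma\|_{L^\infty}\|\tilde P_k\widehat{\partial_\alpha\partial_\gamma} u\|_{L^\infty}\lesssim 2^{\frac{k}2}\BB c_k$ (using that $c_k$ is slowly varying), which is \eqref{ring-bd}.

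\emph{Step 2: the error bound \eqref{ring-error}.} Here I would write $w = z_1^\alpha\partial_\alpha z$ and substitute the $\PP$ representation $z = T_{a^\gamma}\partial_\gamma u + r$. The contribution of $r$ is $z_1^\alpha\partial_\alpha r$; since $\|\partial r\|_{L^\infty}\lesssim\BB^2$ by \eqref{parac-2} and $z_1$ is bounded, this is $O(\BB^2)$. For the main piece $z_1^\alpha\partial_\alpha(T_{a^\gamma}\partial_\gamma u)$, first expand the product $z_1^\alpha \cdot \partial_\alpha(T_{a^\gamma}\partial_\gamma u)$ into Littlewood--Paley trichotomy: the high--high term $\Pi$ and the term $T_{\partial_\alpha(T_{a^\gamma}\partial_\gamma u)}z_1^\alpha$ are perturbative by the $\DPP\subset\DCC$ inclusion (Lemma~\ref{l:DP-in-Q}) together with the bounds \eqref{TDCC-CC} — indeed $\partial_\alpha(T_{a^\gamma}\partial_\gamma u)$ is of the form $\partial(\PP)$-type and lands in $\DCC$, while $z_1^\alpha\in\CC_0$, so $\Pi(\DCC,\CC_0)\to\BB^2 L^\infty$ and $T_{\DCC}\CC_0\to\BB^2 L^\infty$ apply. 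This reduces us to $T_{z_1^\alpha}\partial_\alpha(T_{a^\gamma}\partial_\gamma u) = T_{z_1^\alpha} T_{a^\gamma}\partial_\alpha\partial_\gamma u + T_{z_1^\alpha}T_{\partial_\alpha a^\gamma}\partial_\gamma u$. The second summand is $O(\BB^2)$ directly from \eqref{parac-1} and the $L^\infty$ bound on $z_1$. For the first, apply the para-product composition Lemma~\ref{l:para-prod} to replace $T_{z_1^\alpha}T_{a^\gamma}$ by $T_{z_1^\alpha a^\gamma}$ modulo an operator $\dot H^s\to\dot H^{s+1}$ bounded by $\|z_1^\alpha\|_{BMO^{1/2}}\|a^\gamma\|_{BMO^{1/2}}$; applied to $\partial_\alpha\partial_\gamma u$ this contributes $O(\BB^2)$ exactly as in the proof of Lemma~\ref{l:para-p+}, \emph{provided} $(\alpha,\gamma)\neq(0,0)$ so that no hat correction is needed. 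Finally, one must replace $T_{z_1^\alpha a^\gamma}\partial_\alpha\partial_\gamma u$ by $T_{z_1^\alpha a^\gamma}\widehat{\partial_\alpha\partial_\gamma} u$; the discrepancy occurs only for the $(\alpha,\gamma)=(0,0)$ term and equals $-T_{z_1^0 a^0}\Pi(\tg^{\mu\nu},\partial_\mu\partial_\nu u)$ summed over $(\mu,\nu)\neq(0,0)$ (Definition~\ref{d:dt2u}), which is $O(\BB^2)$ by \eqref{fe-dt2u-err} and boundedness of the paracoefficient. Collecting all the $O(\BB^2)$ pieces gives \eqref{ring-error}.

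\emph{Main obstacle.} The delicate point is the $(\alpha,\gamma)=(0,0)$ contribution: one cannot blindly apply the commutator/composition estimates to $\partial_0\partial_0 u$, because $\partial_t^2 u$ fails the balanced pointwise bound (only $\hat\partial_t^2 u$ does, per Lemma~\ref{l:utt}). The fix is to systematically use the hatted second derivative throughout and absorb the correction term $\Pi(\tg^{\alpha\beta},\partial_\alpha\partial_\beta u)$ into the $\BB^2$ error via \eqref{fe-dt2u-err}; this is precisely why $\mathring w$ is defined with $\widehat{\partial_\alpha\partial_\gamma}u$ rather than $\partial_\alpha\partial_\gamma u$. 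A secondary bookkeeping nuisance is keeping the frequency-envelope summations honest when several paraproduct expansions are chained, but these are routine given the slowly-varying property of $\{c_k\}$.
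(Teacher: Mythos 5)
Your argument follows essentially the same route as the paper's: discard the $r$ contribution using \eqref{parac-2}, reduce $w$ to $T_{z_1^\alpha}T_{a^\gamma}\partial_\alpha\partial_\gamma u$ via \eqref{TDCC-CC}, insert the hat on the double time derivative via Lemma~\ref{l:utt}, and compound the paraproducts via Lemma~\ref{l:para-prod}/Lemma~\ref{l:para-p+}. The bound \eqref{ring-bd} is handled exactly as the paper does.

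One small wrinkle in your Step~2 is worth flagging. You apply the paraproduct-compounding estimate only for $(\alpha,\gamma)\neq(0,0)$ and then try to add the hat afterwards, but this leaves the $(0,0)$ compounding step implicit. The clean fixes are either: (i)~observe that Lemma~\ref{l:para-p+} already applies to $\partial_0^2 u$ directly — its proof uses the $\DCC$ decomposition $\partial^2 u = f_1 + f_2$ of Lemma~\ref{l:du-in-dcc}, which absorbs the bad part of $\partial_t^2 u$ — so you can compound for all index pairs at once and then swap $\partial_\alpha\partial_\gamma u$ for $\widehat{\partial_\alpha\partial_\gamma} u$ at the end, at cost $O(\BB^2)$; or (ii)~as the paper does, insert the hat \emph{first} via Lemma~\ref{l:utt} (the error $\pi_2(u)$ is $O(\BB^2)$ uniformly) and only then compound the paraproducts acting on $\widehat{\partial_\alpha\partial_\gamma} u$, which satisfies the balanced pointwise bounds needed. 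Your ``Main obstacle'' paragraph actually describes remedy~(ii) correctly; the body of Step~2 just needs to commit to one of these orderings rather than switching mid-stream.
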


\begin{proof}
The contribution of $r$ is directly perturbative so we discard it.
Furthermore, the bounds in \eqref{TDCC-CC} allow us to replace perturbatively $w$ by
\[
w = T_{z_1^\alpha} \partial_\alpha z + O(\BB^2) 
= T_{z_1^\alpha} T_{a^\gamma} \partial_\alpha \partial_\gamma u + O(\BB^2).
\]
Using also Lemma~\ref{l:utt} we obtain
\[
w = T_{z_1^\alpha} T_{a^\gamma} \widehat{\partial_\alpha \partial_\gamma} u + O(\BB^2).
\]
Finally, we use Lemma~\ref{l:para-prod} to combine the two 
paraproducts, arriving at 
\[
w = \mathring w + O(\BB^2),
\]
as needed. Finally, the bound \eqref{ring-bd} is also a consequence of Lemma~\ref{l:utt}.

\end{proof}

The last lemma helps us uncover a more subtle, hidden $\Box_g$ structure which appears if we compute the double divergence of the metric $\tg$.

\begin{lemma} \label{l:ddiv-g}
We have 
\begin{equation}\label{ddiv-g}
   \| P_{<k} \partial_\alpha \partial_\beta \tg^{\alpha\beta} \|_{L^\infty} \lesssim 2^{k} \BB^2 .
\end{equation}
\end{lemma}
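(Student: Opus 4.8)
The plan is to exploit the structure of the minimal surface equation \eqref{ms-tg} to rewrite $\partial_\alpha\partial_\beta\tg^{\alpha\beta}$ in a way that reveals a hidden $\Box_{\tg}$ acting on the paracoefficients. First I would recall from \eqref{ta-def} that $\partial_\beta\tg^{\alpha\beta} = -\tA^\alpha$ up to manageable terms, so that $\partial_\alpha\partial_\beta\tg^{\alpha\beta} = -\partial_\alpha\tA^\alpha$; by the corollary to Lemma~\ref{l:ppxDPP}, $\tA^\alpha\in\DPP$, hence $\partial_\alpha\tA^\alpha$ has the shape $\partial_\alpha\partial_\gamma z^{\alpha\gamma} + \partial_\alpha r^\alpha$ with $z^{\alpha\gamma}\in\PP$ and $r^\alpha$ perturbative. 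The essential content is therefore to show that the worst piece, which has the form $\partial_\alpha\partial_\gamma T_{a^{\alpha\gamma}}\partial u$ after using the $\PP$-representation, is bounded by $2^k\BB^2$ at frequency $<k$. Equivalently, writing $\tg^{\alpha\beta}$ itself as a smooth function $F(\partial u)$ and paralinearizing via Lemma~\ref{l:Moser-control}(d), one reduces to controlling $\partial_\alpha\partial_\beta T_{F'(\partial u)}\partial u$, the paralinearization error $R$ contributing only an $O(\BB^2)$-type term after two derivatives (here one must be a little careful: two spatial derivatives on $R$ land in $\partial_x\DCC\subset$ and are fine, but a double time derivative on $R$ is not directly controlled — this is handled by using the equation once, as in Lemma~\ref{l:utt}).

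The key step is then the following: with $\tg^{\alpha\beta} = T_{a^\gamma_{\alpha\beta}}\partial_\gamma u + (\text{error})$, one must estimate $\partial_\alpha\partial_\beta T_{a^\gamma_{\alpha\beta}}\partial_\gamma u$ summed over $\alpha,\beta$. The point is that the pairing with $\xi_\alpha\xi_\beta$ is exactly the null structure computed in Section~\ref{s:equations}: the principal symbol $\tg^{\alpha\beta}\xi_\alpha\xi_\beta$ together with $\frac{\partial g^{\alpha\beta}}{\partial p_\gamma}\xi_\alpha\xi_\beta\xi_\gamma = 0$ on the cone means that the genuinely dangerous second-order term, $T_{a^\gamma}\partial_\alpha\partial_\beta\partial_\gamma u$ contracted with the metric, reorganizes into $T_{a^\gamma}\partial_\gamma\Box_{\tg}u$ plus balanced commutators — and $\Box_{\tg}u = 0$. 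Concretely I would: (i) distribute the two outer derivatives, peeling off as perturbative (bounded by $2^k\BB^2$ at frequency $<k$, using \eqref{TDCC-CC} and Lemma~\ref{l:para-com}) every term where at least one derivative hits a paracoefficient; (ii) for the surviving term $T_{a^\gamma_{\alpha\beta}}\partial_\alpha\partial_\beta\partial_\gamma u$ (summed over $\alpha,\beta$), recognize using \eqref{d-gab}/\eqref{second-dg} that the paracoefficients $a^\gamma_{\alpha\beta}$ are precisely such that $\sum_{\alpha,\beta}a^\gamma_{\alpha\beta}\partial_\alpha\partial_\beta = $ (something proportional to $\partial^\gamma u \cdot \tg^{\alpha\delta}\partial_\alpha\partial_\delta = \partial^\gamma u \cdot(\text{the }\Box_{\tg}u\text{ expression})$), so that this term is, up to balanced errors, $\partial^\gamma u$ times $T_{(\cdot)}\Box_{\tg}u$; (iii) invoke Lemma~\ref{l:tp-u} (the bound $\|T_{\tP}u\|_{L^\infty}\lesssim\BB^2$) together with its differentiated versions Lemma~\ref{l:tp-dxu} and Lemma~\ref{l:tp-du} to absorb the remaining derivatives and close with $2^k\BB^2$.

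In more detail on step (ii): rather than manipulating symbols, the cleanest route is to write $\mathring{w}$-style corrected expressions à la Lemma~\ref{l:circle-app}, replacing $\partial_\alpha\partial_\beta$ by $\widehat{\partial_\alpha\partial_\beta}$ at the cost of $O(\BB^2)$, then use the equation \eqref{wave decomp} to substitute for $\widehat{\partial_0\partial_0}u = \partial_0^2 u - \Pi$ wherever a double time derivative appears, converting everything into expressions involving at least one spatial derivative on $u$ or into $T_{\tP}$-type quantities; each spatial derivative then supplies the needed factor of $2^k$ via Bernstein, and Lemma~\ref{l:d3u-in-d2cc} ($\partial_\alpha\widehat{\partial_\beta\partial_\gamma}u\in\partial_x\DCC$) handles the triple-derivative terms.

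The main obstacle I expect is bookkeeping the double time derivatives: $\partial_\alpha\partial_\beta\tg^{\alpha\beta}$ with $\alpha=\beta=0$ forces $\partial_0^2$ onto the metric, which is genuinely not in any good space, so one cannot simply cite $\partial^2\tg\in\partial_x\DCC$. The resolution is to never let both outer derivatives be time derivatives acting on the same unstructured factor: use the summation over $\alpha,\beta$ and the equation to trade one $\partial_0^2$ for spatial derivatives before estimating, exactly as in the proof of Lemma~\ref{l:d3u-in-d2cc} and the second step of Lemma~\ref{l:tp-du}(b). Once that trade is done, the estimate is a routine (if lengthy) application of \eqref{TDCC-CC}, Lemma~\ref{l:para-com}, Lemma~\ref{l:para-prod}, and Lemma~\ref{l:tp-u}.
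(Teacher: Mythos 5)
Your proposal is correct and follows essentially the same route as the paper: expand $\partial_\beta\tg^{\alpha\beta}$ via \eqref{d-gab}/\eqref{div-g}, introduce $\mathring{\cdot}$-corrected paradifferential expressions as in Lemma~\ref{l:circle-app} so that the dangerous $\partial_0^2 u$ factors are replaced by $\widehat{\partial_0\partial_0}u$, dispatch the difference as a perturbative $O(2^k\BB^2)$ error (using the $\CC$-regularity of $\partial u$, $\tg$ when $\alpha=0$), and then hit the remaining sum $\partial_\alpha T_{\PP}\,T_{\tg^{\cdot\cdot}}\widehat{\partial\partial}u$ with Lemma~\ref{l:tp-du}(b) after peeling off the paracoefficients. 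Your ``hidden $\Box_{\tg}$'' heuristic is exactly what Lemma~\ref{l:tp-du}(b) encodes, so the two write-ups differ only in whether that cancellation is made explicit or delegated to the lemma.
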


\begin{proof}
For fixed $\beta$ we expand $\partial_\beta \tg^{\alpha\beta}$ using the relations \eqref{d-gab} and \eqref{div-g} to obtain
\[
\partial_\beta \tg^{\alpha\beta} 
= - \partial^\beta u \, \tg^{\alpha \delta} \partial_\beta \partial_\delta u 
- \partial^\alpha u \, \tg^{\beta \delta} \partial_\beta \partial_\delta u 
+ 2\partial^0 u \, \tg^{0\delta} \tg^{\alpha\beta}  \partial_\beta \partial_\delta u.
\]
For this expression we define a corresponding ring correction
\[
\partial_\beta \mathring{\tg}^{\alpha\beta} := - T_{\partial^\beta u} \, T_{\tg^{\alpha \delta}} \widehat{\partial_\beta \partial_\delta} u 
- T_{\partial^\alpha u} \, T_{\tg^{\beta \delta}} \widehat{\partial_\beta \partial_\delta} u 
+ 2T_{\partial^0 u} \, T_{\tg^{0\gamma}} T_{\tg^{\alpha\beta}} \widehat{\partial_\gamma \partial_\beta} u,
\]
which is also chosen to vanish if $(\alpha,\beta)=(0,0)$. We claim that the difference
is perturbative for fixed $\alpha$ and $\beta$,
\[
|P_{<k} \partial_{\alpha}( \partial_\beta \tg^{\alpha\beta} -  \partial_\beta \mathring{\tg}^{\alpha\beta}) |\lesssim 2^k \BB^2.
\]
Indeed, if $\alpha \neq 0$ then this follows directly from Lemma~\ref{l:circle-app}.
On the other hand if $\alpha=0$ then $\beta \neq 0$ in which case the hat correction 
can be discarded and we may distribute the time derivative, using the fact that 
$\partial u, \tg \in \CC$, see Lemma~\ref{l:Moser-control}.

It remains to estimate the expression $\partial_\alpha(\partial_\beta \mathring{\tg}^{\alpha\beta}) $, where we return to the standard summation convention
and take the sum with respect to all $(\alpha,\beta)$. Here we separate the 
three terms in $\partial_\beta \mathring{\tg}^{\alpha\beta}$, in particular forfeiting 
the cancellation when $(\alpha,\beta)=(0,0)$. By Lemma~\ref{l:Moser-control} all paracoefficients are in $\CC$, which allows us to perturbatively commute $\partial_\alpha$ with them   as needed. Then it suffices to estimate the expression
\[
- T_{\partial^\beta u} \, \partial_\alpha T_{\tg^{\alpha \delta}} \widehat{\partial_\beta \partial_\delta} u 
- T_{\partial^\alpha u} \, \partial_\alpha T_{\tg^{\beta \delta}} \widehat{\partial_\beta \partial_\delta} u 
+ 2T_{\partial^0 u} \, T_{\tg^{0\gamma}} \partial_\alpha T_{\tg^{\alpha\beta}} \widehat{\partial_\gamma \partial_\beta} u.
\]
For all terms here we may directly use Lemma~\ref{l:tp-du}(b) directly. Hence the proof of the lemma is concluded.

\end{proof}

  \

\subsection{Symbol classes and the $\PP$DO calculus}
In a similar fashion to the  $L^\infty S^m$ classes of symbols, our analysis will involve paradifferential operators with symbols which on the physical side are at either the $\PP$ or the $\DPP$ level. Precisely, we will work with both the symbol 
classes $\PP S^m$ and with the classes $\DPP S^m$.

For comparison purposes, we recall that for just paraproducts 
with $\PP$ functions $f,g \llcurly \partial u$
we have the uniform in time product bounds 
\begin{equation}\label{pdo-ref1}
\|T_f T_g - T_{fg}\|_{H^s \to H^{s}} \lesssim \AA^2,    
\end{equation}
as well as the time dependent bounds 
\begin{equation}\label{pdo-ref2}
\|T_f T_g - T_{fg}\|_{H^s \to H^{s-1}} \lesssim \BB^2,    
\end{equation}
and the corresponding commutator estimates. We also have, for $h \in \DPP$,
\begin{equation}\label{pdo-ref3}
\|T_f T_h - T_{T_f h}\|_{H^s \to H^{s}} \lesssim \BB^2.    
\end{equation}

Our objective in what follows is to expand these kinds of bounds
to the $\Psi$DO setting. We will see that things become more complex there.
Fortunately, in the present paper we will only need such results primarily 
when one of the operators is a paraproduct, so we only prove our results in this case
and merely make some comments about the general case. 

We begin with the uniform in time bounds, i.e. the counterpart of 
\eqref{pdo-ref1}, where not much changes:
\begin{lemma}\label{l:para-pdo-A}
 Let $f \in \PP S^j$, $g \in \PP S^k$. Then
\begin{equation}
\|T_f T_g - T_{fg}\|_{H^s \to H^{s-j-k}} \lesssim \AA.    
\end{equation}
\end{lemma}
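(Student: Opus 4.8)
The plan is to follow the same strategy as in the proof of the product bound \eqref{pdo-ref1} for paraproducts, but now tracking the extra $\xi$-dependence coming from the symbol classes $\PP S^j$ and $\PP S^k$. The key point is that a symbol $a(x,\xi) \in \PP S^m$ is, for each fixed $\xi$, a function of $x$ which lies in $\PP(\partial u)$, with $\PP$-norm controlled by $\langle \xi \rangle^{m}$, and with the usual symbol-type decay $\langle \xi\rangle^{m-|\alpha|}$ after differentiating $|\alpha|$ times in $\xi$. So first I would freeze $\xi$ and use the already-established $\PP$-calculus at the paraproduct level. Writing $f = T_{a^\gamma}\partial_\gamma u + r_f$ and $g = T_{b^\delta}\partial_\delta u + r_g$ (at each frequency $\xi$), the composition $T_f T_g$ decomposes into a main term $T_{fg}$ plus a collection of commutator/error terms; the point of the statement is only that these errors are bounded by $\AA$ rather than the better $\AA^2$ one might hope for, because we are gaining just one power of the good control norm from the $\PP$-structure and keeping the $S^{j+k}$-level mapping property, which already carries a $\langle\xi\rangle^{j+k}$ worth of derivatives.

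The key steps, in order: (1) Reduce to the case where $f, g$ are of the pure paraproduct form $T_{a}\partial u$ plus an $L^\infty$-bounded error (in $x$, uniformly in $\xi$ up to symbol weights), by invoking the definition of $\PP$ and the Moser/algebra lemmas (Lemma~\ref{l:Moser}, Lemma~\ref{l:Moser-control}); the error terms $r_f, r_g$ contribute only lower-order pieces that are handled by the $L^\infty$ symbol calculus ($L^\infty S^m$, Calderon--Vaillancourt). (2) For the principal parts, apply the standard symbol composition expansion for Weyl-quantized paradifferential operators: $T_f \# T_g = T_{fg} + (\text{terms involving one } x\text{-derivative of one factor paired with one } \xi\text{-derivative of the other})+\dots$; crucially, each such term has an honest $x$-derivative falling on either $a^\gamma \partial_\gamma u$ or $b^\delta\partial_\delta u$, which under the $\PP$-bound \eqref{parac-1} costs $\AA$ (via $\|\partial(T_a\partial u)\|$ being at the $\AA \cdot (\text{frequency})$ level, not merely bounded). (3) Use the $\xi$-decay of the symbols to absorb the frequency gain into the claimed $H^s \to H^{s-j-k}$ mapping, and use Lemma~\ref{l:para-com-pk}, Lemma~\ref{l:para-prod}, Lemma~\ref{l:para-prod2} to bound the resulting bilinear paraproduct operators. (4) Handle the high-high interaction piece $\Pi(f,g)$-type contributions separately, again using the $\CC \subset$-type bounds; here the $\PP$ structure is not even needed, as $\|f\|_{L^\infty}, \|g\|_{L^\infty} \lesssim \AA$ already suffices.

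The main obstacle, I expect, is the careful bookkeeping of which factor the $x$-derivative lands on in the symbol expansion, and ensuring that in every term at least one derivative hits a $\partial u$-type factor so that one genuinely extracts a factor of $\AA$ (rather than being stuck with $\|f\|_{L^\infty}\|g\|_{L^\infty}$, which would only give $\AA^2$ — acceptable here, but the point is the expansion has to be organized so nothing worse appears). A secondary technical nuisance is that the Weyl quantization, rather than the standard one, is used throughout, so the symbol of the commutator $[T_f, P_k]$ and of the composition $T_f \# T_g$ has the Weyl-specific midpoint frequency $\xi + \eta/2$ in the cutoff; this does not change the estimates but must be respected when invoking Lemma~\ref{l:para-com-pk} and the para-product lemmas, which are stated precisely for this convention. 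Since the paper explicitly says it only needs the case where one operator is a paraproduct, the cleanest route is to prove it in that case directly, reducing $T_f T_g - T_{fg}$ to a sum of terms each estimated by the already-available lemmas from Section~\ref{s:not}.
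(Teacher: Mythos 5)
Your proposal is organized around the wrong structural picture of the symbol class and, as a consequence, reaches for machinery that the lemma does not need. A symbol $f\in\PP S^j$ is \emph{not} of size $O(\AA)\langle\xi\rangle^j$ in $L^\infty_x$; rather, it has an honest $x$-independent multiplier part of size $O(1)$, with an $O(\AA)$ $x$-dependent correction. This is visible from the way such symbols are built in the paper: $X_s$ is initialized with $\partial_t$, i.e.\ with the multiplier $\xi_0$, and only the dyadic corrections are paracontrolled by $\partial u$. Thus the decomposition one must use is $f = f_1 + f_2$ with $f_1 = f_1(\xi)$ a pure $S^j$ multiplier and $f_2 \in \AA\, L^\infty S^j$, and similarly $g = g_1 + g_2$. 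With that in hand, the proof is two lines: $T_{f_1} = f_1(D)$ and $T_{g_1}=g_1(D)$ because paraproduct quantization of an $x$-independent symbol is exactly the Fourier multiplier, so $T_{f_1}T_{g_1} = (f_1 g_1)(D) = T_{f_1 g_1}$ cancels exactly against the leading part of $T_{fg}$. Every remaining cross term ($T_{f_1}T_{g_2}$, $T_{f_2}T_{g_1}$, $T_{f_2}T_{g_2}$, $T_{f_1 g_2}$, $T_{f_2 g_1}$, $T_{f_2 g_2}$) involves at least one $O(\AA)$ factor and is estimated by plain Calderon--Vaillancourt boundedness in $L^\infty S^m$ with no commutator or cancellation structure at all, giving $O(\AA)$.

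Your step (4) asserts that ``$\|f\|_{L^\infty},\|g\|_{L^\infty}\lesssim\AA$ already suffices,'' which is false for the class $\PP S^j$ as actually used (it would contradict the presence of the leading multiplier); and your step (2), which relies on a full symbol composition expansion in the Weyl calculus with subsequent tracking of where each $x$-derivative lands, is both unnecessary and risky here, since such an expansion for Weyl-quantized paradifferential operators with rough coefficients does not terminate cleanly and is precisely what the paper's constant/variable splitting is designed to avoid. The parenthetical remark in the paper that one ``only needs the case where one operator is a paraproduct'' refers to the subsequent, genuinely harder, balanced composition/commutator statements (Lemma~\ref{l:para-pdo-BDB}, Propositions~\ref{p:com-pdo}, \ref{p:prod-pdo}), not to this lemma, which is proved directly for general $\PP S^j$, $\PP S^k$ by the multiplier splitting described above.
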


\begin{proof}
By definition we have $f = f_1 + f_2$ where $f_1$ is an $S^j$ multiplier 
and $f_2 \in A L^\infty S^j$, and similarly for $g$. Since $T_{f_1} = f_1(D)$
and $T_{g_1} = g_1(D)$, the leading parts cancel and we are left only with $O(\AA)$
terms, which can be estimated directly without using any cancellation.

\end{proof}

Our next result is concerned with the counterpart of \eqref{pdo-ref3}, where 
again the result is similar:

\begin{lemma}\label{l:para-pdo-BDB}
 For $g \in \PP$ and  $h \in \DPP S^m$ we have
\begin{equation*}
\|T_g T_h -    T_{T_g h}\|_{H^s \to H^{s-m}} \lesssim \BB^2.
\end{equation*}
\end{lemma}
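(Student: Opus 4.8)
The statement to prove is: for $g \in \PP$ and $h \in \DPP S^m$, we have $\|T_g T_h - T_{T_g h}\|_{H^s \to H^{s-m}} \lesssim \BB^2$. This is the $\Psi$DO analogue of the paraproduct bound \eqref{pdo-ref3}, and the strategy is to reduce to that scalar case by peeling off the symbol structure layer by layer. First I would unwind the definition of $\DPP S^m$: a symbol $h(x,\xi) \in \DPP S^m$ means that for each $\xi$, $\partial_\xi^\alpha h(\cdot,\xi) \in \DPP$ with the $\DPP$-norm bounded by $\langle \xi\rangle^{m-|\alpha|}$. Thus, writing the $\DPP$ decomposition $h = \partial_\beta z^\beta + r$ (now $\xi$-dependent), the remainder term $r(x,\xi) \in \BB^2 L^\infty S^m$, whose contribution to $T_g T_h - T_{T_g h}$ is perturbative of size $\BB^2$ — here one just uses boundedness of $T_g$ on $H^s$ (from $g \in \PP \subset \CC_0$ being $L^\infty$) together with the fact that the composition of two paraproducts with an $L^\infty S^m$ symbol loses only the expected derivatives, with an $O(\BB^2)$ gain from $r$. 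So it remains to treat the principal part $h = \partial_\beta z^\beta$ with $z^\beta \in \PP S^m$ (appropriately, $z^\beta \in \PP$ after normalizing the $\xi$ dependence).

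For the principal part, the idea is to commute the derivative $\partial_\beta$ out of the paraproduct. We write $T_h = T_{\partial_\beta z^\beta}$ and use a Leibniz-type identity at the paradifferential level: $T_{\partial_\beta z^\beta} = \partial_\beta T_{z^\beta} - T_{z^\beta}\partial_\beta$ modulo symbol-smoothing errors, or more precisely handle the $\xi$-dependence via the Weyl symbol calculus. The cleaner route is to recall that $\partial_\beta$ applied to a paraproduct symbol corresponds to adding a factor, and to reduce $T_g T_{\partial_\beta z^\beta} - T_{T_g \partial_\beta z^\beta}$ to an expression of the form $\partial_\beta (T_g T_{z^\beta} - T_{T_g z^\beta}) + (\text{commutator of } \partial_\beta \text{ with } T_g) \circ T_{z^\beta} + \ldots$. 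Each inner difference $T_g T_{z^\beta} - T_{T_g z^\beta}$ is governed by the $\Psi$DO low-high product bound (the analogue of Lemma~\ref{l:para-prod2}/\eqref{pdo-ref3} in symbol form, which follows from para-associativity Lemma~\ref{l:para-assoc} applied uniformly in $\xi$), giving a gain of $\BB^2$ since $z^\beta \in \PP$ has $\CC$-paracoefficients and we can use the $\DCC$-type bounds of Lemma~\ref{l:Moser-control}. The commutator $[\partial_\beta, T_g]$ produces $T_{\partial_\beta g}$, and $\partial_\beta g \in \DCC$ by Lemma~\ref{l:Moser-control}(c), so $T_{\partial_\beta g} T_{z^\beta}$ is bounded by $\BB^2$ (in $H^s \to H^{s-m}$) using the bilinear bound $T_{\DCC}\cdot\CC_0 \to \BB^2 L^\infty$ from \eqref{TDCC-CC} combined with Calderón–Vaillancourt for the $S^m$ part. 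Finally the outer $\partial_\beta$ is absorbed into the Sobolev index shift from $H^{s-m}$ since it costs only one derivative which is already accounted for.

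The main obstacle, I expect, is bookkeeping the $\xi$-dependence carefully — making sure that the $\DPP$ and $\PP$ structures of the \emph{symbols} (not just functions) behave well under $\xi$-differentiation, so that all the $\CC$, $\DCC$, $\DPP$ bounds (Lemmas~\ref{l:Moser-control}, \ref{l:circle-app}, \eqref{TDCC-CC}) apply uniformly in $\xi$ with the correct powers of $\langle\xi\rangle$. In particular, when the derivative $\partial_\beta$ in $h = \partial_\beta z^\beta$ is a \emph{time} derivative $\partial_0$, one cannot simply commute it freely, and one must invoke the decomposition of $\partial_t$ acting on $\PP$ functions — either Lemma~\ref{l:switch-dt} to trade the time derivative for a spatial one modulo $O(\BB^2)$, or the equation for $u$ via Lemma~\ref{l:utt} — exactly as in the proof of Lemma~\ref{l:pcbounds-extra}. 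Once that reduction to spatial derivatives is in place, the remaining estimates are routine applications of the $\Psi$DO calculus, Calderón–Vaillancourt, and the paraproduct bounds already established, with every error term collecting the desired $\BB^2$ factor.
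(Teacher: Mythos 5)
Your approach---expanding $h = \partial_\beta z^\beta + r$ via the $\DPP$ definition and commuting the derivative out of the paraproduct---is a genuinely different route from the paper's, and it has a real gap. Using the exact identities $T_{\partial_\beta z} = [\partial_\beta, T_z]$ and $T_g(\partial_\beta z) = \partial_\beta(T_g z) - T_{\partial_\beta g}z$, the correct algebraic decomposition is
\[
T_g T_{\partial_\beta z} - T_{T_g \partial_\beta z} = [\partial_\beta, \, T_g T_z - T_{T_g z}] - \bigl(T_{\partial_\beta g} T_z - T_{T_{\partial_\beta g} z}\bigr).
\]
You keep $T_{\partial_\beta g}T_{z^\beta}$ by itself, drop its partner $-T_{T_{\partial_\beta g}z^\beta}$, and assert the remaining piece is $O(\BB^2)$ bounded on $H^s\to H^{s-m}$. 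That assertion is false: take $z^\beta$ essentially constant; then $T_{\partial_\beta g} T_{z^\beta}$ reduces to $T_{\partial_\beta g}$, whose operator norm on $L^2$ is of order $\|\partial_\beta g\|_{L^\infty} \sim \|\partial^2 u\|_{L^\infty}$, which carries an uncontrolled half-derivative relative to $\BB^2$. The bound \eqref{TDCC-CC} you cite governs $T_f v$ with $v\in\CC_0$ and gives no operator bound for $T_f T_z$ on general $H^s$ inputs. Even with the corrected identity, the subtracted difference $T_{\partial_\beta g}T_z - T_{T_{\partial_\beta g} z}$ is an operator of precisely the shape you are trying to estimate (with $\partial_\beta g\in\DCC$ in the role of $h$), while the commutator $[\partial_\beta, T_g T_z - T_{T_g z}]$ has no ready-made $\BB^2$ bound at the needed level $H^s\to H^{s-m+1}$ for $z\in\PP S^m$; \eqref{pdo-ref2} and Lemma~\ref{l:para-pdo-A} give $\BB^2$ and $\AA$ bounds respectively, but neither at that shifted level for symbols. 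So the reduction does not close.

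The paper's proof proceeds differently: it fixes a dyadic frequency $2^k$ and reduces to $m=0$, replaces $T_g$ by multiplication by $g_{<k}$ and $T_g h$ by $g_{<k}h$ at $O(\BB^2)$ cost (using $\|(T_g-g_{<k})P_k\|_{L^2\to L^2} \lesssim 2^{-k/2}\BB$ and $\|T_h P_k\|_{L^2\to L^2}\lesssim 2^{k/2}\BB$), and then separates variables in the symbol twice to reduce the residual operator to the commutator $[g_{<k}, b_{<k}(D)]\,f\,c(D)P_k$, where $\|[g_{<k},b_{<k}(D)]\|_{L^2\to L^2}\lesssim 2^{-k}\|\partial_x g_{<k}\|_{L^\infty}\lesssim 2^{-k/2}\BB$ pairs with $\|P_{<k}f\|_{L^\infty}\lesssim 2^{k/2}\BB$. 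Note that the explicit $\DPP$ decomposition $h = \partial_\beta z^\beta + r$ is never invoked; only the consequence $\DPP\subset\DCC$, i.e.\ the Besov bound $\|P_{<k}h\|_{L^\infty S^0}\lesssim 2^{k/2}\BB$. That Besov bound, matched against the $2^{-k/2}\BB$ commutator gain at a single dyadic scale, is the engine of the estimate and is absent from your plan.
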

Here, by a slight abuse of notation, by $T_g h$ we mean the symbol paraproduct, where
the Fourier variable is viewed as a parameter.
\begin{proof}
All operators in the lemma preserve dyadic frequency localization, so it suffices to 
fix a dyadic frequency size $k$ and then show that we have
\[
\| (T_g T_h -    T_{T_g h}) P_k u \|_{L^2} \lesssim 2^{mk} \|u\|_{L^2}.
\]
Here we can include the $2^{mk}$ factor in $h$ and reduce the problem to the case 
when $m=0$. 

In the first term we can also harmlessly replace $g$ by $g_{<k}$
and $T_g$ by multiplication by $g_{<k}$, as 
\begin{equation}\label{trick1}
\| (T_g - g_{<k}) P_k u \|_{L^2 \to L^2} \lesssim 2^{-\frac{k}2} \BB ,
\end{equation}
while $h_{<k} \in 2^\frac{k}{2} \BB L^\infty S^0$ therefore
\[
\| T_{h} P_k \|_{L^2 \to L^2} \lesssim 2^\frac{k}{2}\BB.
\]
Similarly, in the second term we can replace $T_g h$ by $g_{<k} h$, 
as 
\begin{equation}\label{trick2}
\| P_{<k} (T_g h - g_{<k} h)\|_{L^\infty S^0} \lesssim \BB^2,  
\end{equation}
akin to Lemma~\ref{l:ppxDPP}.

Thus it remains to bound in $L^2$ the simpler operator 
\[
R = (g_{<k} T_h - T_{g_{<k} h}) P_k.
\]
Our last simplification here is to separate variables in $h$, and reduce 
to the case where $h$ has a product form at frequency $2^k$, 
namely 
\[
h(x,\xi) = f(x) a(\xi), \qquad |\xi| \approx 2^k,
\]
where $ f \in \DPP$ and $a \in S^0$. In this case we may represent 
the operator $T_{h}$ in the form 
\[
T_h P_k u = L_{lh}(f, P_k u),
\]
where the symbol of the bilinear form $L_{lh}$ depends linearly (and explicitly)
on $a$. In this case we may rewrite the operator $R$ in the form 
\[
R u = g_{<k} L_{lh}(f, P_k u) - L_{lh}(g_{<k} f, P_k u),
\]

At this point we can apply one last time the method of separation of variables
to the symbol of $L_{lh}$ to reduce the problem to the case when the bilinear form $L_{lh}$ is of product type,
\[
L_{lh}(f, P_k u) = b_{<k}(D) f c(D) P_k u, 
\]
where the symbols for both symbols $b_{<k}$ and $c P_k$ are bounded and smooth on the $2^k$ scale. After this final reduction the  operator $R$ has a commutator structure,
\[
R u =  [g_{<k}, b_{<k}(D)] f c(D) P_k u.
\]
Here $|P_{<k} f| \lesssim 2^{\frac{k}2} \BB$, while the commutator can be bounded by 
\[
\| [g_{<k}, b_{<k}(D)]\|_{L^2 \to L^2} \lesssim 2^{-k} \| \partial_x g_{<k} \|_{L^\infty}
\lesssim 2^{-\frac{k}2} \BB.
\]
Hence we obtain 
\[
\|R\|_{L^2 \to L^2} \lesssim \BB^2, 
\]
and the proof of the lemma is concluded.

\end{proof}

In very limited circumstances, we will also need a more precise
commutator expansion, which arises in the context where 
we commute one paradifferential operator with symbol $h \in \PP S^m$
with a function $g \in \PP$. This will be applied when 
$g = \tg^{\alpha\beta}$, but the result holds more generally.
The novelty in the commutator expansion below is that 
we do not simply expand
\[
\text{commutator} = \text{principal part} + \text{error}
\]
but instead we seek to better understand the structure of the error,
\[
\text{commutator} = \text{principal part} + \text{unbalanced subprincipal part}
+ \text{balanced error}
\]
The principal part corresponds exactly with the Lie bracket of the two symbols,
interpreted paradifferentially. For possible use later, we define this more 
generally for two symbols:

\begin{definition}
The para-Lie bracket of two symbols $f \in \PP S^j$, $g \in \PP S^k$ is defined as 
\begin{equation}
   \{ f,g\}_p = T_{\partial_\xi f} \partial_x g   -   T_{\partial_\xi g} \partial_x f .
\end{equation}
This belongs to $\DPP S^{j+k-1}$.
\end{definition}
We remark that if $f$ is merely a function, then the first term on the right drops.

While the principal part of the commutator can be described using a paradifferential
operator with an appropriate symbol, the unbalanced subprincipal part has a more complex structure which would be described best using a variable coefficient 
bilinear form. In order to be able to describe this structure, we need a slight expansion of the class $L_{lh}$ of bilinear operators in Definition~\ref{d:Llh}:

\begin{definition}
 By $\PP S^m L_{hl}$ we denote any bilinear operator which is a linear combination
of operators of the form
\[
T_{h} L_{lh}, \qquad h \in \PP S^m,
\]
which is either finite, or infinite but rapidly convergent.
\end{definition}

With this notation, we have the following commutator result:

\begin{proposition}\label{p:com-pdo}
For $g \in \PP$ and $h \in \PP S^m$ we have the commutator expansion
\begin{equation}\label{multi-com}
[T_g, T_h] =    -i  T_{\{g, h\}_p} + OP\PP S^{m-2} L_{lh}(\partial_x^2 g, \cdot) + R,
\end{equation}
where 
\begin{equation}\label{multi-R}
\| R \|_{H^{s} \to H^{s-m+1}} \lesssim \BB^2.    
\end{equation}
\end{proposition}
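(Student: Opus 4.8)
\textbf{Proof plan for Proposition~\ref{p:com-pdo}.}
The plan is to decompose the commutator $[T_g,T_h]$ using a standard symbol calculus expansion adapted to the Weyl quantization, and to carefully separate the three declared pieces according to how many derivatives land on the low-frequency factor $g$. First I would fix a dyadic frequency $2^k$ for the output and reduce, as in the proof of Lemma~\ref{l:para-pdo-BDB}, to replacing $T_g$ acting on that frequency block by multiplication by $g_{<k}$ modulo an error of size $2^{-k/2}\BB$ on $L^2$ (the trick \eqref{trick1}); the $2^{mk}$ weight from $h\in \PP S^m$ can be absorbed so that effectively $m=0$, and then restored by homogeneity in $\xi$. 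After this reduction, the commutator $[g_{<k}, T_h]P_k$ is a genuine $\Psi$DO commutator between a smooth (on scale $2^k$) symbol $h$ and a function $g_{<k}$ whose derivatives satisfy $\|P_{<k}\partial_x^j g\|_{L^\infty} \lesssim 2^{(j-\frac12)k}\BB c_k$ for $j\ge 1$ and $\|P_{<k}\partial_x^2 g\|_{L^\infty}\lesssim 2^{\frac{3k}2}\BB c_k$ together with the balanced $\partial_x^2 g \in \partial_x\DPP$ structure coming from Lemma~\ref{l:d3u-in-d2cc} and Lemma~\ref{l:switch-dt}.

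The core is the Weyl symbol expansion. Writing everything in the Weyl calculus, the commutator of two symbols has the Moyal-bracket expansion whose first term is exactly $-i\{g,h\}$ (the ordinary Poisson bracket, which in the paradifferential interpretation is $-iT_{\{g,h\}_p}$ since $g$ has no $\xi$-dependence, so $\{g,h\}_p = -T_{\partial_\xi h}\partial_x g$), the next term vanishes in the Weyl calculus by the antisymmetry of the Moyal bracket (only odd-order terms survive), and the following term is third order in derivatives. I would thus Taylor-expand the symbol of $[g_{<k},T_h]$ to second order in $\partial_x g$: the zeroth-order term cancels against $-i$ times the leading paradifferential piece up to a balanced error; the genuinely new point is that the remainder, which nominally is the full second-derivative term $\sim \partial_x^2 g_{<k}\, \partial_\xi^2 h$, must be split as ``unbalanced subprincipal part'' $+$ ``balanced error''. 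Here one uses that $\partial_x^2 g \in \partial_x\DPP$: writing $\partial_x^2 g = \partial_x q$ with $q\in \DPP$, the term with $q$ in the balanced ($w_2$-type, $O(\BB^2)$) component of its $\DCC$ decomposition goes directly into $R$ with the bound \eqref{multi-R}, while the unbalanced component contributes the term $OP\,\PP S^{m-2}L_{lh}(\partial_x^2 g,\cdot)$, realized concretely via separation of variables in $\partial_\xi^2 h$ as a sum of operators $T_{h'}L_{lh}$ with $h'\in \PP S^{m-2}$ exactly as in Definition of $\PP S^m L_{hl}$.

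For the error bookkeeping: the $L^2$-boundedness reductions produce errors of size $2^{-k/2}\BB$ against operators of size $2^{k/2}\BB$ (the $\DPP$ factor at frequency $2^k$ behaves like $2^{k/2}\BB c_k$ in $L^\infty$), giving $\BB^2$; the third-order Moyal term produces $\partial_x^3 g_{<k}\sim 2^{5k/2}\BB$ paired with $\partial_\xi^3 h\sim 2^{-3}2^{mk}2^{-3k}$ and an extra $2^{-k}$ from the symbol calculus normalization, again $2^{mk-1}\BB$ on $L^2$ after the $m$-reduction, i.e.\ $H^s\to H^{s-m+1}$ with constant $\BB\lesssim \BB^2$ when $\BB$ is bounded — but since we only claim $\BB^2$ and $\BB$ need not be bounded, one should instead always extract two factors from the low frequency: one $\partial_x g$ giving $2^{k/2}\BB$ and the remaining structure giving another $2^{-k/2}\BB$, which is exactly what the $\partial_x\DPP$ decomposition of $\partial_x^2 g$ furnishes. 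Finally, the errors from commuting paracoefficients and from the trichotomy $T_g = g_{<k} + (\text{high})$ are all of this balanced form by Lemma~\ref{l:para-com}, Lemma~\ref{l:para-prod}, and the bounds \eqref{trick1}, \eqref{trick2}. I expect the main obstacle to be precisely this last point: cleanly organizing the second-order remainder so that it splits into the declared unbalanced $L_{lh}$ piece plus a genuinely $\BB^2$-bounded error, since a naive estimate of $\partial_x^2 g_{<k}\,\partial_\xi^2 h$ only gives one factor of $\BB$ and a loss of a full derivative ($2^k$ rather than $2^{k}\cdot(\text{bounded})$), so the $\partial_x\DPP$ structure of $\partial_x^2 g$ — ultimately traceable to the wave equation via Lemma~\ref{l:d3u-in-d2cc} — is doing essential work and must be invoked with care.
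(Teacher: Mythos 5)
You have misread the structure of the proposition, and this leads the bulk of your plan astray. The term $OP\,\PP S^{m-2}L_{lh}(\partial_x^2 g,\cdot)$ is \emph{not} supposed to be partly absorbed into $R$ --- it is kept intact, with the full $\partial_x^2 g$, as a separate declared output. Your proposal to write $\partial_x^2 g=\partial_x q$ with $q\in\DPP$, feed the balanced component of $q$ into $R$, and retain only the unbalanced component as the $L_{lh}$ piece would prove a genuinely different (and stronger-looking) statement; it is also unnecessary, and it is not what is claimed. The $\BB^2$ remainder $R$ in the paper's proof does not come from the Taylor-expansion remainder at all. It comes from the reduction of the general symbol $h(x,\xi)$ to the multiplier case via separation of variables $h(x,\xi)=a(x)b(\xi)$: one writes $T_h = T_aB(D) + C_0$ with $C_0u=L_{lh}(a_x,u)$, and then the errors $[C_0,T_g]$, $[T_a,T_g]$, and $T_aT_{b_\xi g_x}-T_{T_{ab_\xi}g_x}$ are each $O(\BB^2)$ by Lemmas~\ref{l:para-com} and \ref{l:para-pdo-BDB}. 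In the pure multiplier case the identity $[T_g,h(D)]u=-iT_{\{g,h\}_p}u + L_{lh}(\partial_x^2g,u)$ is \emph{exact}, with no $R$ whatsoever; the balanced structure of $\partial_x^2 g$ plays no role inside this proposition and is only exploited downstream, when the contribution of the subprincipal term is estimated in the energy identity.

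Two further points. First, you appeal to the Moyal-bracket expansion for Weyl-quantized operators, but $T_g$, $T_h$ are not Weyl quantizations of $g$, $h$: they carry the cutoff $\chi\bigl(|\eta|/\langle\xi+\tfrac12\eta\rangle\bigr)$, so the Moyal formalism does not apply off the shelf. The paper instead directly computes the bilinear symbol of the commutator (which includes the cutoff) and Taylor-expands only the $h$-difference $h(\xi)-h(\xi+\eta)$ around the midpoint $\xi+\tfrac12\eta$, identifying $-\eta\cdot\nabla h(\xi+\tfrac12\eta)$ as the Weyl symbol of the Lie bracket and $\eta^2 r(\xi,\eta)$ as the $L_{lh}(\partial_x^2 g,\cdot)$ correction. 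Second, your initial reduction replacing $T_g$ by multiplication by $g_{<k}$ via \eqref{trick1} costs a factor $2^{-k/2}\BB$ in $L^2$; that is fine for proving an $O(\BB^2)$ bound as in Lemma~\ref{l:para-pdo-BDB}, but here you need to identify the leading and subprincipal pieces exactly, not just bound them, so this reduction throws away the structure you need. You should instead keep $T_g$ as is and work with the exact bilinear symbol.
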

\begin{proof}
As in the proof of Lemma~\ref{l:para-pdo-BDB}, we first localize in frequency to 
a dyadic scale $2^k$ for the input/output, and reduce to the case $s = 0$ and $m = 2$.

We consider first the special case when $h$ is a multiplier, $h(x,\xi)=h(\xi)$.
Then 
\[
\{h, g\}_p = h_\xi g_x. 
\]
In this case we claim that we have an exact formula,
\begin{equation}\label{exact-com}
[T_g, T_h] u = -i  T_{\{g, h\}_p} u + C, \qquad Cu = L_{lh}(\partial_x^2 g, u).
\end{equation}
A-priori the last term on the right, $C$, is a $lh$ type translation invariant bilinear 
form in $g,u$; all we need to do is to compute its symbol $R(\eta,\xi)$, and verify 
that it has symbol type regularity and vanishes of second order when $\eta=0$.
The symbol for $T_g u$ as a bilinear form in $g$ and $u$ is
\[
\ell(\eta,\xi) = \chi(\frac{|\eta|}{|\xi+\frac12 \eta|}) .
\]
Then the symbol for the commutator is 
\[
\chi(\frac{|\eta|}{|\xi+\frac12 \eta|}) (h(\xi) - h(\xi+\eta)) .
\]
We expand the last difference as a Taylor series around the middle as 
\[
h(\xi) - h(\xi+\eta) = - \eta \nabla h(\xi+\frac12 \eta) + \eta^2 r(\xi,\eta)
\]
with $r$ a smooth symbol in both $\eta$ and $\xi$ on the $2^k$ scale for 
$|\eta| \ll |\xi| \approx 2^k$. The middle term gives the symbol of the 
Weyl quantization for the Lie bracket $\{h, g\}_p$. The last term yields the  error term $C$, which has the $\eta^2$ factor corresponding to the two derivatives of $g$.

Next we turn our attention to the general case, which we seek to reduce to the special case above. This is achieved by separating variables in $h$, which allows us to assume without any restriction in generality that  the symbol $h$  has the form 
\[
h(x,\xi) = a(x) b(\xi).
\]
Then we have a corresponding decomposition at the operator level,
\begin{equation}\label{separate}
T_h u = T_a B(D) u  + C_0 u, \qquad C_0 u =    L_{lh}(a_x,u).
\end{equation}
Here we can estimate the commutator with $T_g$ as an error term,
\[
[C_0, T_g] = R.
\]
This is most readily seen using another separation of variables,
which allows us to reduce the problem to the case when
\[
C_0 u = C^1_0(D) a_x C_0^2(D) u,
\]
after which we may apply Lemma~\ref{l:para-com}.
The same lemma also shows  that the commutator $[T_a,T_g]$  yields an error term,
so we arrive at
\[
[T_g, T_h] = T_a [B(D), T_g] + R.
\]
For the commutator on the right we apply the formula \eqref{exact-com}, which yields
\[
[T_g, T_h] u = -i T_a ( T_{b_\xi g_x}  + L_{lh}(g_{xx},\cdot)).
\]
It remains to refine the first product,
\[
T_a T_{b_\xi g_x} = T_{T_{a b_\xi} g_x } + R
\]
for which we use Lemma~\ref{l:para-pdo-BDB}.
\end{proof}

Our final result here is a product formula where we also need an expansion akin to \eqref{multi-com}. One should contrast this with Lemma~\ref{l:para-pdo-BDB}, where
such expansion was not necessary.

\begin{proposition}\label{p:prod-pdo}
For $g \in \PP S^m$ and $h \in \DPP$ we have the commutator expansion
\begin{equation}\label{multi-prod}
T_g T_h =  T_{T_g h}  + OP\PP S^{m-1} L_{lh}(\partial_x h, \cdot) + R,
\end{equation}
where 
\begin{equation}\label{multi-pR}
\| R \|_{H^{s} \to H^{s-m}} \lesssim \BB^2.    
\end{equation}
\end{proposition}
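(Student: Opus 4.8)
The structure mirrors the proof of Proposition~\ref{p:com-pdo}, so the plan is to localize, separate variables, reduce to a model case, and then assemble. First, since all operators here preserve dyadic frequency localization in the input/output, I would fix a dyadic scale $2^k$ for the input $P_k u$, absorb the factor $2^{mk}$ into $g$, and thereby reduce to the case $s=0$, $m=1$. Then I would separate variables in $g \in \PP S^1$, writing $g(x,\xi) = a(x) b(\xi)$ with $a \in \PP$ and $b \in S^1$ (with the series rapidly convergent on the $2^k$ scale), so that $T_g u = T_a B(D) u + L_{lh}(\partial_x a, u)$ exactly as in \eqref{separate}. The $L_{lh}$ piece, when composed with $T_h$ on the right, is handled by the same separation-of-variables-plus-Lemma~\ref{l:para-com} argument used in Proposition~\ref{p:com-pdo}; it produces an $R$-type error bounded by $\BB^2$ (using $h \in \DPP \subset \DCC$ and the $\DCC$ decomposition of $h$). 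So the problem reduces to analyzing $T_a B(D) T_h$.

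\textbf{The model case.} With $B(D)$ a multiplier of order one and $h \in \DPP$, write $B(D) T_h = T_h B(D) + [B(D), T_h]$. The commutator $[B(D), T_h]$ is a $lh$-type translation invariant bilinear form in $(h,\cdot)$; expanding $b(\xi) - b(\xi + \eta)$ in a Taylor series around the midpoint $\xi + \tfrac12\eta$ exactly as in the proof of \eqref{exact-com}, the first-order term yields $-i\, T_{\partial_\xi b\, \partial_x h} = -i T_{\{b,h\}_p}$-type contribution, which after restoring the $a$ factor is precisely the $T_g T_h$ rearrangement; more carefully, the principal piece $T_a T_h B(D)$ gets reorganized via Lemma~\ref{l:para-pdo-BDB} into $T_{T_a h} B(D) = T_{T_g h}$ up to $\BB^2$ error, while $T_a \cdot (-i T_{\partial_\xi b \partial_x h})$ is exactly the $OP\PP S^{m-1} L_{lh}(\partial_x h, \cdot)$ term (here is where we keep, rather than discard, the unbalanced subprincipal term, since $\partial_x h$ is only in $\partial_x \DCC$ and not perturbative at the $\BB^2$ level). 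The higher-order Taylor remainder has an $\eta^2$ factor, i.e. carries two derivatives of $h$; combined with the one-derivative deficit from the symbol order this is $L_{lh}(\partial_x^2 h, \cdot)$ at order $m - 2$ acting on $h$-at-level-$\DPP$, which one checks \emph{is} perturbative, contributing to $R$ with the bound \eqref{multi-pR} via the $\DCC$ decomposition of $h$ and a commutator estimate of the type in Lemma~\ref{l:para-com}. A final application of Lemma~\ref{l:para-pdo-BDB} cleans up the leftover symbol paraproduct $T_a T_h \leadsto T_{T_a h}$.

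\textbf{Main obstacle.} The delicate point is the bookkeeping of \emph{which} error terms are genuinely $\BB^2$-perturbative and which must be retained in the explicit $OP\PP S^{m-1} L_{lh}(\partial_x h,\cdot)$ form. Because $h \in \DPP$ has a decomposition $h = \partial_\alpha z^\alpha + r$ with $z^\alpha \in \PP$ and $\|r\|_{L^\infty} \lesssim \BB^2$ (not $h \in \PP$), a naive estimate of $L_{lh}(\partial_x h, \cdot)$ costs an extra derivative and is \emph{not} $O(\BB^2)$ — this is exactly why the subprincipal term survives. One must instead track that the surviving term only ever sees $h$ through one $x$-derivative paired with one $\xi$-derivative of the order-$m$ symbol, keeping it at symbol order $m - 1$ and physical-side regularity $\PP$, which is the content of the class $OP\PP S^{m-1} L_{lh}$. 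Verifying that all genuinely-remaining error terms (the Taylor remainders, the $L_{lh}$-vs-$T_a$ discrepancies, the $[B(D),T_h]$ corrections beyond first order) land in $\BB^2 \cdot (H^s \to H^{s-m})$ requires careful use of the $\DCC$ decomposition of $h$ together with Lemmas~\ref{l:para-com} and \ref{l:para-pdo-BDB}, and is where most of the work lies; the remaining reductions are routine and parallel to Proposition~\ref{p:com-pdo}.
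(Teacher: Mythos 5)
Your overall scaffolding --- dyadic localization, normalization of $m$, separation of variables $g = a(x)b(\xi)$, the split $T_g = T_a B(D) + L_{lh}(\partial_x a,\cdot)$ from \eqref{separate}, the perturbative treatment of $L_{lh}(\partial_x a, T_h\cdot)$, and a final application of Lemma~\ref{l:para-pdo-BDB} to pass from $T_a T_h$ to $T_{T_a h}$ --- matches the paper's outline. The gap is the assertion that the higher-order Taylor remainder of $[B(D),T_h]$ is $O(\BB^2)$. For $h \in \DPP \subset \DCC$, the decomposition $h = f_1 + f_2$ gives only $\|P_{<k}\partial_x^2 h\|_{L^\infty}\lesssim 2^{5k/2}\BB + 2^{2k}\BB^2$, so the remainder $L_{lh}(\partial_x^2 h,\cdot)$ with an $S^{m-2}$ symbol applied to $P_k u$ is bounded by $2^{(m+\frac12)k}\BB\,\|P_k u\|_{L^2}$, which is the \emph{same} size as the retained piece $T_a T_{\partial_\xi b\,\partial_x h}\,P_k u$ and is not $O(2^{mk}\BB^2)$ when $\BB$ is small. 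The obstruction is structural: each $\BB$-factor in a $\BB^2$ estimate needs its own rough ingredient to harvest it from, and in $L_{lh}(\partial_x^2 h,\cdot)$ there is only the single $h$; the $T_a$ in front contributes at most an $L^\infty$ bound. So this piece cannot live in $R$, and the estimate \eqref{multi-pR} would fail for your $R$ as constructed. (Your companion claim that $T_{T_a h}B(D) = T_{T_g h}$ up to $\BB^2$ error is actually fine --- in the paper's quantization with the symbol evaluated at the input frequency $\xi$, it is an exact identity --- so that part is not the issue.)

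The fix, and the route the paper takes, is to not Taylor-truncate at all. The whole commutator is already of the right form: $[B(D),T_h] = -i\,L_{lh}(\partial_x h,\cdot)$ \emph{exactly}, with $L_{lh}$-symbol $\tilde b(\xi,\eta) = \int_0^1 b'(\xi+t\eta)\,dt$, which is a bona fide order-$(m-1)$ symbol on the low-high support; equivalently, $B(D)T_h - T_{bh} = L_{lh}(\partial_x h,\cdot)$ exactly, with the full Taylor series of $b$ absorbed into the one symbol. Then $T_a L_{lh}(\partial_x h,\cdot)$ sits entirely in the $OP\PP S^{m-1}L_{lh}(\partial_x h,\cdot)$ slot, and what is left to estimate as $R$ is the genuine commutator $a_{<k}T_{bh}P_k u - T_{a_{<k}bh}P_k u$ (after passing from $T_a$ to $a_{<k}$ via \eqref{trick1}, \eqref{trick2}), which does carry a double-$\BB$ gain: one $\BB$ from $\|[a_{<k},\cdot]\|_{L^2\to L^2}\lesssim 2^{-k}\|\partial_x a_{<k}\|_{L^\infty}\lesssim 2^{-k/2}\BB$ and one from $\|h_{<k}\|_{L^\infty}\lesssim 2^{k/2}\BB$. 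Once you replace your "leading term $+$ perturbative remainder" split of $[B(D),T_h]$ with the single exact $L_{lh}(\partial_x h,\cdot)$, your proof lines up with the paper's.
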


\begin{proof}
The proof follows the same outline as the proof of the previous proposition,
so we only outline the main points.

We localize first in frequency to a dyadic frequency region at scale $2^k$, and then separating variables in the first factor. If $g$ is simply a multiplier then 
then \eqref{multi-prod} is an exact identity akin to \eqref{exact-com} above.
If instead
\[
g = a(x) b(\xi), \qquad a \in \PP, 
\]
then we expand $T_g$ as in \eqref{separate}, and then replace $T_a$ by 
multiplication by $a_{<k}$, using \eqref{trick1}, \eqref{trick2}.
After these simplifications, we are left with estimating the difference
\[
R_0 = (g_{<k} T_{bh} - T_{g_{<k} bh})P_k u = g_{<k} L_{lh}(b,P_k u) -  L_{lh}(g_{<k} b,P_k u).
\]
This difference is easily turned into another commutator and estimated as in \eqref{multi-pR}; this is achieved by separating again variables in the symbol of $L_{lh}$ as in the analysis after \eqref{separate}. 

\end{proof}


\section{Energy estimates for the paradifferential equation}

Our objective in this section is to prove that the linear paradifferential flow
\begin{equation}\label{paralin-inhom-re}
( \D_{\alpha} T_{g^{\alpha \beta}} \D_{\beta} -  T_{A^\gamma}\D_\gamma) v  = f
\end{equation} 
is locally well-posed in a range of Sobolev spaces. Precisely, we will show that

\begin{theorem}\label{t:para-wp}
Let $u$ be a smooth solution for the minimal surface equation \eqref{msf-short}
in a time interval $I=[0,T]$, with associated control parameters $\AA$ and $\BB$ so that 
\begin{equation}
\AA \ll 1, \qquad \BB \in L^2_t.    
\end{equation}
Let $s \in \R$. Then the linear paradifferential flow \eqref{paralin-inhom-re} is locally well-posed in $\H^{s}$ in the time interval $I$. Furthermore, there exists an energy functional $E^s(v) = E^s(v[t])$, depending on $u$, which is smooth in $\H^{s+1}$, with the following two properties:

a) Energy equivalence:
\begin{equation}
E^s(v[t]) \approx \| v[t]\|_{\H^s}^2    .
\end{equation}

b) Energy estimate:
\begin{equation}
\frac{d}{dt} E^s(v[t]) \lesssim \BB^2 E^s(v[t])   + \|f\|_{H^{s-1}}
  E^s(v[t])^\frac12 .
\end{equation}

The same result is also valid for the paradifferential equations \eqref{paralin-inhom-tg}, respectively \eqref{paralin-inhom-tg}
associated to the metrics $\tg$ and $\hg$.
\end{theorem}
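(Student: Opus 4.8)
The plan is to implement the multiplier method at the paradifferential level, following the philosophy of Section~\ref{s:paracontrol}: multiply the equation \eqref{paralin-inhom-re} by $T_X v$ for a carefully chosen vector field $X = X^\gamma \partial_\gamma$ whose symbol is paracontrolled by $\partial u$, integrate by parts in space-time, and read off the energy functional $E^s$ together with its flux. I would first treat the base case $s=0$ (equivalently work with $\H^0 = L^2 \times H^{-1}$ after the reduction to the system form \eqref{bv-syst}), where $X$ is an honest vector field, and then handle general $s$ by conjugating with $\bD^s$, at which point $X$ must be upgraded to a genuine paradifferential operator; because these two cases are structurally different it is cleanest to present them separately, exactly as the paper's outline promises. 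The metric can be taken to be $\tg$ without loss of generality (so that $\tg^{00}=1$ and no paracoefficient sits in front of $\partial_0^2$), and then the results for $g$ and $\hg$ follow since all three flows differ by perturbative terms, the potentials $A^\gamma, \tA^\gamma$ lie in $\DPP$ (corollary to Lemma~\ref{l:ppxDPP}), and $T_{A^\gamma}\partial_\gamma v$ contributes only a balanced $\BB^2 E^s$ error.

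\textbf{Construction of the multiplier.} The heart of the argument is the construction of $X$. Writing the natural energy associated to a time-like vector field $X$ and differentiating in $t$, one gets the standard bulk term plus a flux of the schematic form $\iintT \big(\partial_\alpha \tg^{\alpha\beta} X^\gamma \partial_\beta\partial_\gamma v + (\text{terms with }\partial X)\big)\cdot v$; the obstruction to closing at low regularity is that the naive choice $X \sim \partial u$ leaves an unbalanced piece. I would construct $X$ inductively over dyadic frequency scales, choosing $X_{<k}$ so that, at frequency $k$, the unbalanced part of the flux is cancelled; this is Tao's renormalization mechanism, and the point of Section~\ref{s:paracontrol} is precisely that the resulting $X$ lies in $\PP(\partial u)$, i.e. $X = T_a \partial u + r$ with $a \in \CC$ and $r$ balanced. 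Once $X \llcurly \partial u$ is available, Lemma~\ref{l:pcbounds-extra} (the fact that $\PP$ functions nearly solve the $T_{\tP}$ equation up to $\partial(\BB^2 L^\infty)$ errors) and the $\Psi$DO calculus of Propositions~\ref{p:com-pdo} and \ref{p:prod-pdo} are exactly the tools needed to show that every remaining flux term is either a perfect time derivative (absorbed into $E^s$) or a balanced error bounded by $\BB^2 E^s$. Coercivity \eqref{ee-equiv} follows because $X$ is a small perturbation of a fixed time-like vector field (here $\AA \ll 1$ is used), and the source term contributes $\|f\|_{H^{s-1}} E^s(v)^{1/2}$ by Cauchy--Schwarz, as in Theorem~\ref{t:lin-wp-inhom}.

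\textbf{From energy estimates to well-posedness.} Given the energy functional with properties (a) and (b), Gronwall's inequality together with $\BB \in L^2_t$ yields the a priori bound $\|v[\cdot]\|_{L^\infty(I;\H^s)} \lesssim \|v[0]\|_{\H^s} + \|f\|_{L^1 H^{s-1}}$ for smooth solutions; existence and uniqueness in $C(I;\H^s)$ for all real $s$ then follow by the standard duality/approximation scheme, using the backward solvability in $\H^{1-s}$ and the pairing of Section~\ref{s:Strichartz} (the abstract machinery of Theorems~\ref{t:easy-wp}--\ref{t:lin-wp-inhom+} applies verbatim once the energy estimate is in hand). The higher regularity assertion ($E^s$ smooth on $\H^{s+1}$) is automatic from the construction, since $X$ depends smoothly on finitely many derivatives of $u$.

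\textbf{Main obstacle.} The crux is the inductive construction of the paracontrolled multiplier $X$ and the verification that it indeed lies in $\PP(\partial u)$ — i.e., that the correction terms generated at each dyadic scale are summable with the right frequency-envelope weights and that the leftover flux is genuinely balanced. This is where the hidden $\Box_{\tg}$ structure of Lemma~\ref{l:ddiv-g} (the double divergence $\partial_\alpha\partial_\beta \tg^{\alpha\beta}$ being $O(2^k\BB^2)$) and the para-Leibniz/para-commutator bounds of Section~\ref{s:not} all come into play, and keeping track of symbol regularities through the $OP\PP S^{m}L_{lh}$ bookkeeping is delicate. The $\H^s$ case adds the further difficulty that conjugation by $\bD^s$ forces $X$ to become a full pseudodifferential operator, so the commutator expansions of Propositions~\ref{p:com-pdo} and \ref{p:prod-pdo} must be applied with care to ensure the subprincipal $L_{lh}$-type terms are still controlled by $\BB^2$.
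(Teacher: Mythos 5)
Your high-level architecture matches the paper's: reduce to the metric $\tg$, implement the multiplier method paradifferentially with a paracontrolled vector field $X$ constructed inductively over dyadic scales, conjugate by $\bD^s$ and upgrade $X$ to a pseudodifferential symbol, then close via duality and Gronwall with $\BB\in L^2_t$. However, there is one genuine gap that would prevent the argument from closing, and one place where the description is too thin to carry the proof.

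The gap is the missing Lagrangian (zeroth-order) correction to the multiplier. You assert that after choosing $X\llcurly\partial u$ well, "every remaining flux term is either a perfect time derivative or a balanced error bounded by $\BB^2 E^s$." This is not achievable with a vector field alone. The best one can arrange (Lemma~\ref{l:X}) is $c_X^{\alpha\beta}\approx -\partial_\gamma X^\gamma\, g^{\alpha\beta}$ modulo $\BB^2$; asking $c_X^{\alpha\beta}$ itself to be balanced would force $X$ to be (para-)conformal Killing, an overdetermined system. The residual piece $\partial_\gamma X^\gamma\, T_{\tg^{\alpha\beta}}\partial_\alpha v\cdot\partial_\beta v$ is of size $O(\BB)$, not $O(\BB^2)$, and cannot be absorbed. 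The paper removes it by augmenting the multiplier to $T_{\tM}=2T_{\tX^\alpha}\partial_\alpha+\tfrac12 T_{\tq}$ with a Lagrangian weight $\tq\approx g^{00}\partial_\gamma X^\gamma$ chosen in $\partial_x\PP$ (this regularity requirement is itself nontrivial — one cannot take the obvious expression, which involves $\partial_t^2 u$, and must invoke Lemma~\ref{l:switch-dt}). The integration-by-parts identity that converts the $q\,g^{\alpha\beta}\partial_\alpha v\,\partial_\beta v$ flux into a $T_{\tP}$ term plus a lower-order $v^2$ term is then what makes the flux balanced, with the $v^2$ coefficient $Pq$ controlled via Lemma~\ref{l:pcbounds-extra}. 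In the general-$s$ case the same issue recurs and is handled by the zeroth-order piece $T_{\tY_{0s}}$ in \eqref{hatXs}, matched against the $\hq_0\tp$ part of the representation \eqref{cX2-rep}.

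Secondarily, for general $s$ your phrase "$X$ must be upgraded to a genuine paradifferential operator... the commutator expansions must be applied with care" undersells what is actually needed: the symbol $\tX_s$ must be a first-degree polynomial in $\xi_0$ (so that fixed-time energies make sense), and constructing it forces one first to build a quadratic-in-$\xi$ para-commuting symbol $\tilde Z_1\in\PP S^2$, set $\tX_s\sim X(\tilde Z_1/|\xi'|^2)^s$, and then reduce to a first-degree polynomial by interpolating at the two roots of $p(x,\cdot)$ — this produces precisely the $\tq_0\,\tp$ remainder that the Lagrangian correction must cancel. Finally, a small notational point: the paper's base case is $\H^1=H^1\times L^2$ (the natural level for a first-order vector-field multiplier), not $\H^0$.
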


We remark on the modular structure of our arguments. Precisely, from this section it is only the conclusion of this theorem which is used later in the paper. We also remark on the smallness condition for $\AA$:

\begin{remark}
The condition that $\AA \ll 1$ in the theorem is a technical convenience rather 
than a necessity. It is only used in the reduction in Proposition~\ref{p:reduction} in order to insure that the operator $T_{g^{00}}$ is invertible. Since $|g^{00}| \gtrsim 1$ this may be alternatively  guaranteed by a more careful choice of the quantization. Another minor advantage is that 
with this assumption we no longer need to track the dependence on $\AA$ of implicit constants in all the estimates.
\end{remark}

It will be easier to prove the result for the 
paradifferential flow associated to the metric $\tg$.
Because of this, our first step will be to reduce the problem to this case.
Then we will prove the result for $\tg$ in two steps. First, we show that the desired result holds for $s = 0$. Then, we use a paraconjugation argument to show that the same result holds for all  real $s$.

\subsection{ Equivalent metrics}
The idea here is that we can replace the metric $g$ with the conformally  equivalent metric $\tg$ given by \eqref{def-tg} in order to simplify the subsequent analysis. 
A similar equivalence holds for the metric $\hg$; the argument is completely identical.


Then we have the following equivalence:

\begin{proposition}\label{p:reduction}
Assume that $v$ solves \eqref{paralin-inhom-re}. Then it 
also satisfies an equation of the form
\begin{equation} \label{paralin-inhom-renorm}
(\D_{\alpha} T_{\tg^{\alpha \beta}}  \D_{\beta} -  T_{\tA^\gamma}\D_\gamma) v  = E f + \tR v,
\end{equation} 
where $E$ is invertible and elliptic,
\begin{equation}\label{E-ell}
\| Ef \|_{H^s} \approx \|f\|_{H^s},   
\end{equation}
and $\tR$ is balanced,
\begin{equation}
\| \tR v\|_{H^s} \lesssim \BB^2 \| \partial v\|_{H^s}.    
\end{equation}
\end{proposition}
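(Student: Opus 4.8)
## Proof proposal for Proposition~\ref{p:reduction}

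\textbf{Overall strategy.} The idea is to divide the equation \eqref{paralin-inhom-re} by the paracoefficient $T_{g^{00}}$, or more precisely to multiply it on the left by $T_{(g^{00})^{-1}}$, and then to carefully track the resulting error terms, sorting them into (a) a principal part which reassembles as the $\tg$-operator $\D_\alpha T_{\tg^{\alpha\beta}} \D_\beta - T_{\tA^\gamma}\D_\gamma$, (b) an elliptic factor $E$ in front of $f$, and (c) a collection of commutator errors, all of which must be shown to lie in the balanced class $\BB^2 L^\infty$ at the symbol level, so that they map $H^s \to H^s$ with a $\BB^2$ gain after accounting for the two derivatives in the operator. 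The smallness $\AA \ll 1$ enters precisely to guarantee that $T_{g^{00}}$ is invertible, so that $(T_{g^{00}})^{-1}$ exists and is a bounded operator of order $0$ with a symbol of the form $(g^{00})^{-1}$ plus $O(\AA)$ lower-order corrections; here one invokes Lemma~\ref{l:para-pdo-A} applied to $g^{00} \in \PP$ and its reciprocal, noting $(g^{00})^{-1} \in \PP$ by the Moser property Lemma~\ref{l:Moser}(b). Setting $E := (T_{g^{00}})^{-1}$ up to these corrections immediately gives \eqref{E-ell}.

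\textbf{The principal computation.} Write $\hat g^{\alpha\beta} := (g^{00})^{-1} g^{\alpha\beta}$ — wait, that is $\tg$ — so $\tg^{\alpha\beta} = (g^{00})^{-1} g^{\alpha\beta}$ as in \eqref{def-tg}. Apply $T_{(g^{00})^{-1}}$ to \eqref{paralin-inhom-re}. The main term is
\[
T_{(g^{00})^{-1}} \D_\alpha T_{g^{\alpha\beta}} \D_\beta v = \D_\alpha T_{(g^{00})^{-1}} T_{g^{\alpha\beta}} \D_\beta v - [\D_\alpha, T_{(g^{00})^{-1}}] T_{g^{\alpha\beta}} \D_\beta v,
\]
and the commutator term is $-T_{\partial_\alpha (g^{00})^{-1}} T_{g^{\alpha\beta}} \D_\beta v$, which by $\partial_\alpha(g^{00})^{-1} \in \DPP \subset \DCC$ (Lemma~\ref{l:Moser}(c) combined with the inclusions \eqref{inclus}) and Lemma~\ref{l:para-pdo-BDB}-type bounds is of the desired $\BB^2$ balanced type after pairing its one missing derivative against one of the $\D$'s. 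For the leading term, I would use the product rule for paraproducts, Lemma~\ref{l:para-prod}, to replace $T_{(g^{00})^{-1}} T_{g^{\alpha\beta}}$ by $T_{\tg^{\alpha\beta}}$ modulo an error in $\BB^2$-balanced symbol class (here $\gamma_1 = \gamma_2 = 0$, and the $BMO^{1/2}\times BMO^{1/2}$ version gives the $\BB^2$ gain since both factors lie in $\CC \subset BMO^{1/2}$ at the $\BB$-level). This produces $\D_\alpha T_{\tg^{\alpha\beta}} \D_\beta v$ plus balanced error. For the first-order term, $T_{(g^{00})^{-1}} T_{A^\gamma} \D_\gamma v$ should be reorganized into $T_{\tA^\gamma}\D_\gamma v$ plus error, using the definition \eqref{ta-def} of $\tA^\gamma$ (which precisely records $\tA^\gamma = (g^{00})^{-1}A^\gamma - \tg^{\alpha\beta}\partial_\alpha(\ln g^{00})$, the extra piece arising from commuting through the divergence structure above); that the paraproduct composition of $(g^{00})^{-1}$ and $A^\gamma \in \DPP$ equals $T_{\tA^\gamma}$ modulo $\BB^2$ is again a $\DPP$-product bound, Lemma~\ref{l:ppxDPP} together with Lemma~\ref{l:para-pdo-BDB}.

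\textbf{Collecting errors and the main obstacle.} All error terms accumulated above should be gathered into $\tR v$; each is manifestly of the form (balanced $\BB^2$-type operator of order $1$) acting on $\D v$, hence bounded $H^{s} \to H^s$ with norm $\lesssim \BB^2 \|\partial v\|_{H^s}$ — one should double-check that the order-counting works out: the paradifferential wave operators are order $2$, $E f$ keeps $f$ at the $H^{s-1}$ level after we absorb $T_{(g^{00})^{-1}}$ into $E$, and the errors each have one extra smoothing relative to the principal part, consistent with $\tR v \in \BB^2 H^s$ when $\partial v \in H^s$, i.e. $v \in \H^{s+1}$-scaled correctly with the statement. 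The main obstacle I anticipate is the bookkeeping around the \emph{divergence structure}: one cannot naively pull $T_{(g^{00})^{-1}}$ past the outer $\D_\alpha$ without generating the term $T_{\partial_\alpha (g^{00})^{-1}} T_{g^{\alpha\beta}} \D_\beta v$, and to recognize that this term, \emph{together} with the conformal-weight correction hidden in $\tA^\gamma$, is exactly what makes the $\tg$-divergence-form operator appear — rather than some spurious extra first-order term — requires matching \eqref{ta-def} carefully. A secondary subtlety is ensuring that when $\alpha = 0$ or indices coincide, the second time derivative $\partial_t^2 v$ is never differentiated further; but since here $v$ is the unknown and all the rough coefficients are functions of $u$, this is handled by Lemma~\ref{l:utt} / the $\widehat{\partial\partial}$ conventions only where $u$'s second derivatives appear (e.g. inside $\tA^\gamma$), and is routine. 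I would therefore structure the proof as: (1) invert $T_{g^{00}}$, define $E$, verify \eqref{E-ell}; (2) compute $E \cdot$ LHS term by term, peeling off balanced errors via Lemmas~\ref{l:para-prod}, \ref{l:para-pdo-BDB}, \ref{l:ppxDPP}; (3) identify the surviving principal part as the $\tg$-operator using \eqref{def-tg}, \eqref{ta-def}; (4) collect all errors into $\tR$ and bound $\|\tR v\|_{H^s} \lesssim \BB^2 \|\partial v\|_{H^s}$.
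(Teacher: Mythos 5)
Your overall approach — multiply by $T_{(g^{00})^{-1}}$, commute past the outer $\D_\alpha$, compound the paraproducts with Lemma~\ref{l:para-prod}, and match the resulting first-order pieces against $\tA^\gamma$ via \eqref{ta-def} — is the same as the paper's, and your identification of the correct lemmas and algebraic relations is accurate.

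However, there is one misstatement in your first paragraph that would be a real gap if taken at face value. You claim that the commutator term
$T_{\partial_\alpha (g^{00})^{-1}} T_{g^{\alpha\beta}} \D_\beta v$
is \emph{on its own} of the desired $\BB^2$-balanced type, invoking $\partial_\alpha(g^{00})^{-1}\in\DCC$ and a Lemma~\ref{l:para-pdo-BDB}-type bound. This is not so: the coefficient $\partial_\alpha(g^{00})^{-1}$ is at the level of $\partial^2 u$, and the $\DCC$ decomposition gives only $\|P_k f_1\|_{L^\infty}\lesssim 2^{k/2}\BB c_k$ plus an $O(\BB^2)$ remainder $f_2$, so $T_{f_1}T_{g^{\alpha\beta}}\D_\beta$ is a genuine first-order operator with a $\BB$-level coefficient, not a $\BB^2$-level one; the $\frac12$-derivative gain in $f_1$ does not produce a second factor of $\BB$, it only restores the correct Sobolev scaling. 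The term must instead be \emph{cancelled} against the extra piece $-\tg^{\alpha\gamma}\partial_\alpha(\ln g^{00})\D_\gamma$ hidden in $T_{\tA^\gamma}\D_\gamma$ by \eqref{ta-def}, after which the residual errors are exactly of the form $T_f T_g - T_{fg}$ (applied to $\D v$), and \emph{those} are $O(\BB^2)$ by Lemma~\ref{l:para-prod} together with the $\CC$-bounds of Lemma~\ref{l:Moser-control}. You in fact state this cancellation correctly in the later paragraph on $\tA^\gamma$ (``the extra piece arising from commuting through the divergence structure above'' being matched by \eqref{ta-def}), so the correct mechanism is present in your write-up; but the earlier assertion that the commutator term is perturbative by itself should be struck, since as written it gives a false justification and obscures that the algebraic identity $(g^{00})^{-1}A^\gamma = \tA^\gamma + \partial_\alpha(g^{00})^{-1}g^{\alpha\gamma}$ is what does the real work.
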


\begin{proof}
We first observe that, since $g^{00}$ is a small, $O(\AA)$ perturbation of a nonzero constant, it follows that $T_{(g^{00})^{-1}}$ is invertible elliptic, with elliptic inverse $E = (T_{(g^{00})^{-1}})^{-1}$, which satisfies \eqref{E-ell} for all real $s$.

Then $v$ solves \eqref{paralin-inhom-renorm} with $\tR$ of the form
\[
\tR = E\partial_{\alpha} ( T_{\tg^{\alpha \beta}} - T_{(g^{00})^{-1}} T_{g^{\alpha \beta}}) \partial_\beta - 
E ( T_{(g^{00})^{-1}} T_{A^{\gamma}} - T_{\tA^\gamma} - T_{\partial_\alpha (g^{00})^{-1}}  T_{g^{\alpha \gamma}}) \partial_{\gamma} .
\]
Here we have the algebraic relations 
\[
\tg^{\alpha \beta} = (g^{00})^{-1} g^{\alpha \beta}, \qquad 
(g^{00})^{-1} A^{\gamma} = \tA^\gamma + \partial_\alpha (g^{00})^{-1}  g^{\alpha \gamma}.
\]
This allows us  to estimate $\tR$ in a balanced fashion using Lemma~\ref{l:Moser-control} and Lemma~\ref{l:para-prod}, as desired.
\end{proof}

As a consequence of this result, we see that it suffices now 
to prove the result in Theorem~\ref{t:para-wp} but with
the equation \eqref{paralin-inhom-renorm} replaced by 
\begin{equation}
   \label{paralin-inhom-new}
(\D_{\alpha} T_{\tg^{\alpha \beta}}  \D_{\beta} - 2 T_{\tA^\gamma}\D_\gamma) v  = f .
\end{equation}

\subsection{ The $H^1 \times L^2$ bound.}
For expository purposes, we first review the multiplier method for proving energy estimates for the wave equation
in a simplified setting. Then we construct a suitable vector field, to be used as our multiplier.
Finally, we reinterpret the energy estimates at the paradifferential level, 
and prove Theorem~\ref{t:para-wp} with $s = 1$.

\subsubsection{Energy estimates via the multiplier method.}
\label{s:multiplier}
Suppose that we have a function $v$ which solves a divergence form wave equation,
\begin{equation}\label{v-eqn}
P v = f, \qquad P = \partial_\alpha g^{\alpha \beta} \partial_\beta - A^\alpha \partial_\alpha .
\end{equation}
Given a vector field $X = X^\alpha \partial_\alpha$, the 
standard strategy is to multiply the equation by $X v$ and integrate by parts. 
For expository purposes we will follow this path here, noting that 
another alternative would be  to interpret the vector field in the Weyl calculus,
and work instead with the skew-adjoint operator
\[
X^w = X^\alpha \partial_\alpha + \frac12 \partial_\alpha X^\alpha .
\]
At this point we only seek to identify the principal part of the energy estimates, 
which will lead us to the choice of the vector field $X$,
so we  do not follow this second path. However, later on, once $X$ is chosen
and we have switched to the paradifferential setting
we will need to also carefully track the lower order terms, and we will
add lower order corrections to our vector field. 

To further place the following computations into context, we remark that 
vector field energy identities for the wave equation 
are often employed in their covariant form, which is derived by contracting the 
divergence free relation for the energy momentum tensor with the vector field $Xu$, and integrating with respect to the measure associated with the metric $g$. Such a strategy would work but would be counterproductive in our setting, where we will reinterpret all these identities in a paradifferential fashion.

Assuming at first that the function $v$ is compactly supported, integrating by parts
several times, in order to essentially commute the second order part of $P$ with $X$,
one arrives at the identity

\begin{equation}\label{IBP-nobdr}
2\iint Pv \cdot X v \, dx dt = \iint c_X(v,v) \, dx dt, \qquad 
\end{equation}
where $c_X$ is a quadratic expression in $\partial v$
of the form
\begin{equation}
c_X (v,v) = c_{X}^{\alpha \beta} \partial_\alpha v \, \partial_\beta v
\end{equation}
with coefficients given by the relation
\begin{equation} \label{cX}
c_X(x,\xi):= c_{X}^{\alpha\beta} \xi_\alpha \xi_\beta = \{ p,X^\gamma \xi_\gamma\}(x,\xi) - 
\partial_\gamma X^\gamma p(x,\xi) + 2 A^\gamma \xi_\gamma X^\delta \xi_\delta,
\end{equation}
where we recall that $p(x,\xi) = g^{\alpha \beta}\xi_\alpha \xi_\beta$.
Removing the compact support assumption on $v$ and introducing boundaries at times 
$t = 0$ and $t = T$, the identity above with the integral taken over $[0,T]\times \R^n$ still holds but with added contributions at these times,
\begin{equation}\label{IBP}
    2\iintT Pv \cdot X v\, dx dt = \iintT c_X(v,v) \ dx dt 
    + \left. \int_{\R^n} e_X(v,v)\, dx \right|_0^T ,
\end{equation}
where the contributions at the initial and final time can be thought of as energies.
Here the energy density $e_X$ is a bilinear expression of the form
\begin{equation}\label{eXvv}
e_X(v,v) = e_{X}^{\alpha\beta} \partial_\alpha v \cdot \partial_\beta v  .
\end{equation}

This can be written  in terms of the energy momentum tensor associated to the $\Box_g$ operator,
\begin{equation}\label{T-def}
T_{\alpha\beta}[v] = \D_\alpha v \cdot \D_\beta v  - \half g_{\alpha \beta} g^{\gamma \delta}\D_{\delta} v \cdot\D_\gamma v .
\end{equation}
Then we have
\begin{equation}\label{eX}
e_{X}^{\alpha\beta} \partial_\alpha v \cdot \partial_\beta v = g^{0\alpha} T_{\alpha\beta} X^\beta = T(\partial_t,X).
\end{equation}
Thus we can define the energy functional associated to the vector field $X$ as
\begin{equation}\label{def-EX}
E_X[v] = \int_{\R^n} e_X(v,v)\, dx   . 
\end{equation}
The key property of the energy density $e_X$ is that it is classically known to be positive definite in a pointwise sense,
\begin{equation}\label{eX-positive}
    e_X(v,v) = T(\partial_t,X) \gtrsim |\partial v|^2,
\end{equation}
provided that the vector fields $\partial_t$ and  $X$ are uniformly forward time-like. Then we obtain the energy coercivity property
\[
 E_X[v(t)] \approx \| \partial v(t)\|_{L^2}^2. 
\]

With these notations, we can rewrite the integral identity \eqref{IBP} as a differential identity
\begin{equation}\label{energy-diff}
\frac{d}{dt} E_X(v) =     2\int_{\R^n} P v \cdot X v \, dx -  \int_{\R^n} c_X(v,v)\, dx.
\end{equation}
In a nutshell, this computation, interpreted paradifferentially, is at the heart
of our proof of the energy estimates. In this context, the choice of the vector field $X$ should naively be governed by the requirement that the energy flux form $c_X$ is 
balanced. We note that one cannot ask for $c_X$ to be zero, as this would produce an overdetermined system for $X$, which in particular implies the condition that $X$ is a conformal Killing field for the metric $g$. Even the requirement that $c_X$ is balanced turns out to be a bit too much, which is why we will need a second step
to the above computation.

Precisely, the second step is based on another interesting observation, 
namely that the contribution of terms  in  $ c_{X}^{\alpha \beta}$
of the form 
\[
I = \iintT q g^{\alpha\beta} \partial_\alpha v \cdot \partial_\beta v \, dxdt 
\]
has a favourable structure and can be eliminated using a suitable 
Lagrangian type energy correction.

Indeed, for compactly supported $v$, this contribution can be rewritten, integrating by parts, as 
\[
\begin{aligned}
I = & \ - \iint  q \partial_\alpha g^{\alpha\beta} \partial_\beta v \cdot  v \, dx dt + \frac12 \iint \partial_\alpha g^{\alpha\beta} \partial_\beta q\,  v^2 dx
dt\\ 
= & \ - \iint  P v \cdot  q v \, dx + \frac12 \iint  \left(P q
- q \partial_\gamma A^\gamma\right)
v^2 dxdt .
\end{aligned}
\]
The first term can be interpreted as a correction to $X$ in \eqref{IBP-nobdr}. Introducing the notation 
\begin{equation}
\M = 2X+q,    
\end{equation}
it now takes the form
\begin{equation}\label{IBP-nobdr-t}
    \iint  P v \cdot \M v\, dx dt =  \iint c_{X}(v,v)- q g^{\alpha\beta} \partial_\alpha v \cdot \partial_\beta v
    +   d v^2 \, dx dt,
\end{equation}
where the coefficient $ d$ of the additional zero order term is
\begin{equation}\label{tdx}
 d = P q- q \partial_\gamma A^\gamma .
\end{equation}

Finally, adding in boundaries at $t=0,T$ we obtain the integral relation
\begin{equation}\label{IBP-A-t}
    \iintT  P v \cdot \M v\, dx dt = \iintT  c_{X,q} (v,v)+ d v^2
    \ dx dt 
 + \left. \int_{\R^n} \tilde e_{X,q}(v,v)\, dx \right|_0^T,
\end{equation}
where the leading flux density is now
\[
c_{X,q}(v,v) =  c_{X} (v,v) -
    q  g^{\alpha\beta} \partial_\alpha v \cdot \partial_\beta v ,
\]
while the new energy density $ e_{X,q}$ has the form
\[
e_{X,q}(v,v) =  e_{X,A}(v,v) + q g^{0\beta} \partial_\beta v \cdot v - 
\frac12 (g^{0\beta} \partial_\beta q - q A^0) v^2   .  
\]
We can also convert this into a differential relation akin to \eqref{energy-diff}, namely 
\begin{equation}\label{energy-diff-t}
\frac{d}{dt}  E_{X,q}(v) =     \int_{\R^n} P v  \cdot \M
v \, dx -  \int_{\R^n} c_{X,q}(v,v) +  d v^2 \, dx.
\end{equation}
The identity \eqref{energy-diff-t} will be our main tool in establishing the desired energy estimate. The Lagrangian correction weight $q$ will have to be chosen carefully,
so that it satisfies multiple requirements:
\begin{enumerate} 
    \item Comparing  the form of $c_{X,q}$ with the earlier expression for $c_X$, 
a natural choice would seem to be 
\begin{equation}\label{choose-q-first}
q =     \partial_\gamma X^\gamma.
\end{equation}
\item Examining the lower order coefficient $d$ above, we will need to have good control over the function $P q$.
\end{enumerate}
Reconciling these two requirements will play an important role later in in this section.

\bigskip

To complete our discussion here, we need to carry 
out an additional step, namely to investigate what happens if we 
replace $g,A$ by $\tg, \tA$. Observing that 
\[
P v = g^{00} \tP v
\]
it becomes natural to replace the vector field $X$, the Lagrangian weight $q$ and the multiplier 
by 
\[
\tX = g^{00} X, \qquad \tq = g^{00} q, \qquad \tM = 2 \tX +  \tq.
\]
Then the relation \eqref{IBP-A-t} remains essentially unchanged,
\begin{equation}\label{IBP-A-tt}
    \iintT  \tP v \cdot \tM v\, dx dt = \iintT  c_{X,q} (v,v)+ d v^2
    \ dx dt 
 + \left. \int_{\R^n}  e_{X,q}(v,v)\, dx \right|_0^T,
\end{equation}
 and the same applies to the differential form \eqref{energy-diff-t} of the same relation. Here 
 the principal flux symbol can be equivalently expressed in the 
 form
 \begin{equation} \label{tcX}
c_{X,q}(x,\xi):= c_{X}^{\alpha\beta} \xi_\alpha \xi_\beta = \{ \tp,\tX\}(x,\xi) - 
(\partial_\gamma \tX^\gamma+\tq) p(x,\xi) + 2\tA^\gamma \xi_\gamma \tX^\delta \xi_\delta.
\end{equation}

 \bigskip

Our task is now
twofold:

\begin{itemize}
    \item To identify a suitable vector field $X$ so that the energy flux above satisfies
    a balanced energy estimate, and 
    \item To recast the above computation in the paradifferential setting without losing 
    the energy balance; this will also require a careful choice for $q$.
\end{itemize}

\bigskip

\subsubsection{The construction of the vector field $X$} \label{s:def-X}
Our objective here is to construct a vector field $X$ so that the
flux coefficients in $c_{X,q}$ are balanced for $q$ as in \eqref{choose-q-first}.
In essence, at this stage we disregard  any paradifferential frequency localizations, and work as if $v$ has infinite frequency. We also do not distinguish between $g$ and $\tg$, as this does not play a role in the choice of $X$.
Our main result governing the choice of the 
vector field $X$ is where our notion of paracontrolled distributions 
is first needed, and reads as follows:

\begin{lemma}\label{l:X}
There exists a vector field $X$ which is paracontrolled by $\partial u$, 
and so that we have the balanced bound
\begin{equation}\label{x3}
\|c_{X}^{\alpha \beta} + \partial_\gamma X^\gamma g^{\alpha\beta}\|_{L^\infty} \lesssim_{\AA} \BB^2.
\end{equation}
\end{lemma}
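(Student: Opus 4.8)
The plan is to choose $X$ so that, modulo the allowed $O(\BB^2)$ balanced errors, the flux symbol $c_X$ reduces to a multiple of $g^{\alpha\beta}$, which is precisely the content of \eqref{x3}. Write the candidate vector field as $X^\gamma = X^\gamma(\partial u)$ at leading order and recall from \eqref{cX} that
\[
c_X^{\alpha\beta}\xi_\alpha\xi_\beta = \{p, X^\gamma\xi_\gamma\}(x,\xi) - (\partial_\gamma X^\gamma)\,p(x,\xi) + 2 A^\gamma\xi_\gamma X^\delta\xi_\delta.
\]
Since $\partial_\gamma X^\gamma g^{\alpha\beta}$ appears on the left of \eqref{x3}, the real requirement is that the Poisson bracket term $\{p, X^\gamma\xi_\gamma\}$ together with the $2A^\gamma\xi_\gamma X^\delta\xi_\delta$ piece must be, up to a scalar multiple of $p$ and up to balanced errors, trivial. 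I would first compute $\{p, X^\gamma\xi_\gamma\}$ explicitly using $p = g^{\alpha\beta}\xi_\alpha\xi_\beta$ and $g^{\alpha\beta} = g^{\alpha\beta}(\partial u)$, so that $\partial_{x_\gamma}g^{\alpha\beta} = \tfrac{\partial g^{\alpha\beta}}{\partial p_\delta}\partial_\gamma\partial_\delta u$, which via \eqref{dg-dp} is $-\partial^\alpha u\, g^{\beta\delta}\partial_\gamma\partial_\delta u - \partial^\beta u\, g^{\alpha\delta}\partial_\gamma\partial_\delta u$. This brings $A^\gamma$ (as in \eqref{def-A}) naturally into the expression, and the hope — exploiting the nonlinear null condition, which the minimal surface equation satisfies — is that the $A$-terms from the bracket cancel against the explicit $2A^\gamma\xi_\gamma X^\delta\xi_\delta$ term for an appropriate algebraic choice of $X$, e.g. $X$ proportional to the gradient vector field $\partial^0 = g^{0\beta}\partial_\beta$, or more precisely a normalized timelike field built from $\partial u$.

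The key algebraic step is to verify that with this choice the residual
\[
c_X^{\alpha\beta} + (\partial_\gamma X^\gamma)\, g^{\alpha\beta}
\]
is, at the level of symbols in $\partial u$ and $\partial^2 u$, either identically zero or a genuinely balanced quadratic expression. Here "balanced" means: every term is a product of two factors each carrying (morally) half a derivative of $u$ beyond $L^\infty$, so that one can estimate it by $\|\partial u\|_{BMO^{1/2}}^2 \lesssim \BB^2$ using the $\CC$-algebra and the bounds on $\DCC$ established in Section~\ref{s:control} (in particular Lemma~\ref{l:Moser-control} and Lemma~\ref{l:du-in-dcc}). The subtle point is that $c_X$ contains terms like $X^\delta\partial_\delta g^{\alpha\beta}$ which are of the form $(\text{function of }\partial u)\cdot \partial^2 u$; the factor $\partial^2 u$ is only in $\DCC$, not balanced on its own, so these terms must be shown to cancel exactly (not just perturbatively) against the $\{p,\cdot\}$ and $A$ contributions — otherwise \eqref{x3} fails. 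This exact cancellation is exactly where the nonlinear null structure \eqref{null} enters, and I expect it to be the heart of the computation.

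Once the algebraic identity is in hand, the construction of $X$ as a \emph{paracontrolled distribution} is the second component. Since the leading order $X = X(\partial u)$ is a smooth function of $\partial u$, it lies in $\PP$ by the Moser-type Lemma~\ref{l:Moser} (with paracoefficients $X'(\partial u)$), so $X \llcurly \partial u$ automatically; and $\partial_\gamma X^\gamma \in \DPP$ by Lemma~\ref{l:switch-dt} or directly from the inclusion $\partial:\CC\to\DCC$. The bound \eqref{x3} is then a matter of collecting: write $c_X^{\alpha\beta} + \partial_\gamma X^\gamma g^{\alpha\beta}$ as a sum of (i) terms that vanish identically by the null-condition computation, and (ii) explicitly balanced remainders, which are controlled by $\BB^2$ via the $\CC$, $\DCC$ machinery.

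\textbf{Main obstacle.} The hardest part will be carrying out the symbol computation of $\{p, X^\gamma\xi_\gamma\} + 2A^\gamma\xi_\gamma X^\delta\xi_\delta + \partial_\gamma X^\gamma\, p$ and isolating, using \eqref{dg-dp} and the minimal surface equation \eqref{msf-short}, precisely which combination of the unbalanced $\partial^2 u$ terms cancels. One must find the right algebraic form of $X$ (likely not simply $\partial^0$ but a correction thereof, possibly involving $g^{00}$ normalization to match the $\tg$ setting) so that \emph{all} unbalanced terms drop, leaving only a balanced flux; getting this normalization wrong leaves an $O(\BB)\cdot\|\partial^2 u\|$-type term which is not in $L^1_t$. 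A secondary technical nuisance is that $X$ paracontrolled is not quite enough later — one also needs $\partial_\gamma X^\gamma$ to have the right $\DPP$ structure so the Lagrangian weight $q$ from \eqref{choose-q-first} can be handled — but that is downstream of this lemma and amounts to invoking Lemma~\ref{l:switch-dt}.
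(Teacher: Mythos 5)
Your proposal has a genuine gap at its center: you are seeking an algebraic vector field $X = X(\partial u)$ (perhaps a normalization of $\partial^0$), and then arguing that paracontrolledness follows automatically from the Moser lemma. But the paper shows this route is closed. When you carry out the computation of $c_X^{\alpha\beta} + \partial_\gamma X^\gamma g^{\alpha\beta}$ using \eqref{dg-dp}, the condition that the unbalanced $T_{(\cdot)}\partial^2 u$ terms drop amounts to a first-order PDE system
\[
\partial_\beta X^\alpha \approx - X^\delta \partial^\alpha u\,\partial_\beta\partial_\delta u - \partial^\delta u\, X^\alpha\,\partial_\delta\partial_\beta u,
\]
and this system is \emph{overdetermined}: it cannot be satisfied exactly, nor even to leading order, by any smooth function $X(\partial u)$ — indeed even imposing it only after symmetrization in $(\alpha,\beta)$ still leaves it overdetermined. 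So there is no "right algebraic form of $X$" to find, and the strategy of searching for one will dead-end no matter how the null condition is exploited.

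The correct way out — and the reason the lemma is framed in terms of \emph{paracontrolled} distributions rather than smooth functions of $\partial u$ — is to interpret the cancellation requirement paradifferentially as $X^\alpha \bapprox -T_{X^\delta\partial^\alpha u}\partial_\delta u - T_{\partial^\delta u X^\alpha}\partial_\delta u$ and to \emph{construct} $X$ by a dyadic fixed-point iteration: set $X_{<0}=\partial_t$ and $X_k^\alpha = -(T_{X^\delta}T_{\partial^\alpha u} + T_{X^\alpha}T_{\partial^\delta u})\partial_\delta u_k$ for $k\geq 1$, then sum over $k$. The smallness $\AA\ll 1$ makes this iteration converge in $\CC_0$ (and then $\CC$), and the resulting $X$ is paracontrolled by $\partial u$ with paracoefficients $-X^\delta\partial^\alpha u - X^\alpha\partial^\delta u$ — but it is emphatically \emph{not} a function of $\partial u$. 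One then verifies \eqref{x3} by substituting this $\PP$-representation into $c_X^{\alpha\beta}+\partial_\gamma X^\gamma g^{\alpha\beta}$, using Lemmas~\ref{l:para-prod} and \ref{l:para-p+} and the $\CC_0$/$\DCC$ machinery to show each residual term is balanced. Your framing of the "main obstacle" is inverted: the issue is not finding the magic normalization of an explicit $X$, but recognizing that no explicit $X$ exists and replacing it by an inductively defined paracontrolled one. Your downstream remarks (about $\partial_\gamma X^\gamma$ and the weight $q$) are correct in spirit but sit on top of this missing foundation.
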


We remark that the fact that such a vector field exists is closely 
connected to the fact that our equation satisfies the nonlinear null condition in a strong sense.  One should think of our vector field $X$ as the next best thing to a Killing 
or conformal Killing vector field. Perhaps a good terminology would a \emph{para-Killing vector field}, i.e. whose deformation tensor is balanced, rather than equal to zero or a multiple of the metric.

\begin{proof}

 We compute the  expression in \eqref{x3} as follows:
\[
\begin{aligned}
 (c_{X}^{\alpha \beta}+ \partial_\gamma \tX^\gamma g^{\alpha\beta}) \xi_\alpha \xi_\beta = & \ 2 \xi_\gamma \xi_\alpha g^{\gamma \beta} \partial_\beta X^\alpha
- X^\gamma \xi_\alpha \xi_\beta \partial_{\gamma} g^{\alpha \beta} + 2A^\gamma \xi_\gamma X^\gamma \xi_\gamma
\\
= & \ 2 \xi_\gamma \xi_\alpha g^{\gamma \beta} \partial_\beta X^\alpha
+ 2 X^\gamma \partial^\beta u \xi_\beta
g^{\alpha \delta} \partial_\delta  \partial_\gamma u \xi_\alpha+ 2 \partial^\alpha u \partial_\alpha \partial_\beta u  g^{\beta \delta} \xi_\delta X^\gamma \xi_\gamma
\\
= & \ 2 \xi_\gamma \xi_\alpha g^{\gamma \beta} \partial_\beta X^\alpha
+ 2  \xi_\gamma \xi_\alpha X^\beta \partial^\gamma u 
g^{\alpha \delta} \partial_\delta  \partial_\beta u +   
2 \xi_{\alpha}\xi_\gamma 
\partial^\delta u \partial_\delta \partial_\beta u  g^{\beta \alpha}  X^\gamma 
\\
= & \ 2 \xi_\gamma \xi_\alpha g^{\gamma \beta}(
\partial_\beta X^\alpha
+ 2   X^\delta \partial^\alpha u  \partial_\delta  \partial_\beta u +    2X^\alpha
\partial^\delta u \partial_\delta \partial_\beta u  ).
\end{aligned}
\]
Here one could freely symmetrize the coefficients relative to the pair of indices
$(\alpha, \gamma)$. We have chosen to neglect the symmetrization, but, instead,
we made favourable choices.
The above expression would cancel for instance if 
\begin{equation}\label{overdetermined}
 \partial_\beta X^\alpha = 
- X^\delta \partial^\alpha u 
 \partial_\beta  \partial_\delta u
- \partial^\delta u X^\alpha     \partial_\delta \partial_\beta u .  
\end{equation}
This is an overdetermined system, so we cannot hope for an exact cancellation. 
Even if we symmetrize (raising the $\beta$ index first) and equate the symmetric part of the two sides, it still remains overdetermined.

But we do not need exact cancellation, we only need the difference of the two sides to be balanced.
Assume for the moment that $X$ is at the same regularity level as $\partial u$.
Then, examining the right hand side, the expressions there are unbalanced only in the paraproduct case, where the $\partial^2 u$ term is the high frequency, i.e. for the terms $T_{h(h,\nabla u)} \partial^2 u$. Hence we heuristically arrive at the equivalent requirement 
\[
 \partial_\beta X^\alpha \bapprox 
- T_{X^\delta \partial^\alpha u} 
 \partial_\beta  \partial_\delta u
- T_{\partial^\delta u X^\alpha}     \partial_\delta \partial_\beta u ,
\]
where we introduce the notation "$\bapprox$" to indicate that the difference between the two expressions is balanced, i.e. can be estimated as in \eqref{x3}.
Then, at leading order we may cancel the $\beta$ derivative to obtain
a single paradifferential relation at one regularity level higher, namely
\[
X^\alpha \bapprox
- T_{X^\delta \partial^\alpha u} 
  \partial_\delta u
- T_{\partial^\delta u X^\alpha} \partial_\delta u  .
\]
Modulo balanced terms we may break the paraproducts above in two. This allows us to 
devise an inductive scheme to construct $X$ as a dyadic sum of frequency localized pieces,
by setting
\begin{equation}
X = X_{<0} + \sum_{k = 1}^\infty X_k    
\end{equation}
starting with the initialization 
\[
X_{<0} = \partial_t,
\]
and where the functions $X_k$, localized at frequency $2^k$,
are defined by
\begin{equation}\label{def-Xk}
X^\alpha_{k} = -( T_{X^\delta} T_{ \partial^\alpha u} +   T_{ X^\alpha} T_{\partial^\delta u}) \partial_\delta u_k. 
\end{equation}
It remains to show that, as defined above, the  vector field $X$ has all the properties in the Lemma.
We will achieve this in three stages:

\begin{itemize}
    \item We show that $X$ satisfies the same bounds as $\partial u$, 
    (see  \eqref{fe-control} and \eqref{fe-dt2u}),
\begin{equation}
\| X\|_{\CC} \lesssim 1    .
\end{equation}
    \item We show that $X$ is paracontrolled by $\partial u$.
    \item Finally, we establish the balanced bound \eqref{x3}.
\end{itemize}
To simplify the notations, we will write 
schematically that 
\[
X_k = T_X T_h \partial X 
\]
with coefficients $h$ of the form $h = F(\partial u) \in \CC$.

\bigskip

\emph{ I. Dyadic bounds for $X$.} These are proved at each dyadic frequency $k$ by induction on $k$. We do this in two stages, where we first 
estimate the $\CC_0$ norm of $X$.
Precisely, the first set of statements to be proved by induction for $k > 0$ is as follows:
\begin{equation}\label{induction1}
\| X_k\|_{L^\infty} \leq C \AA c_k^2   , 
\end{equation}
\begin{equation}\label{induction2}
\| X_k \|_{L^\infty} \leq C 2^{-\frac{k}{2}} \BB c_k,
\end{equation}
with a fixed large universal constant $C$. This implies 
that $\|X\|_{\CC_0} \lesssim 1$.
The induction hypothesis yields the bound 
\[
\| X_{<k} -X_0 \|_{L^\infty} \lesssim C \AA .
\]
Then we write
\[
X_k = T_{X_0} T_h  u_k +  T_{X_{<k} - X_0} T_h  u_k,
\]
which yields
\[
\| X_k\|_{L^\infty} \lesssim  (1 + C\AA) \AA c_k^2   ,
\]
respectively
\[
\| X_k \|_{L^\infty} \lesssim (1+ C\AA) 2^{-\frac{k}{2}} \BB c_k.
\]
Thus the induction argument closes if $C$ is a large constant and $\AA \ll 1$.

To finish proving that $X \in \CC$, the
second stage is to prove by induction that 
\begin{equation}\label{induction4}
\|\partial_t X_{\leq k}\|_{\DCC} \leq C, 
\end{equation}
i.e. that  
$\partial_t X_{\leq k}$ admits a decomposition $\partial_t X_{\leq k} = f_{k1}+f_{k2}$,
where 
\[
\| f_{1,\leq k}\|_{L^\infty} \leq C \BB^2 c_k^2, \qquad 
\| P_j f_{2,\leq k}\|_{L^\infty}  \leq  C 2^{\frac{j}{2}}  \BB c_j .     \]
Here again $C$ is a fixed large constant, unrelated to the earlier $C$.

For this we write
\[
\begin{aligned}
 \partial_t X_{\leq k} = & \  \partial_t (T_X T_h \partial u_{\leq k} ) 
 =   (T_{\partial_t X} T_h \partial u_{\leq k} 
+ T_X T_{\partial_t h}  \partial u_{\leq k})
+ T_X T_h \partial_t \partial u_{\leq k} .
\end{aligned}
 \]
Here the $X$ coefficients involve only frequencies below $2^k$, so we may use the induction hypothesis in the first term. For the second and third terms 
it suffices to use the $\CC_0$ bound for $X$, which we already have from the first induction. Hence, repeatedly applying the bounds in \eqref{C-DC}
we obtain 
\[
\begin{aligned}
\|  \partial_t X_{\leq k} \|_{\DCC} \lesssim & \
 \AA \|\partial_t X_{<k}\|_{\DCC}  \| h \|_{L^\infty} \|\partial u\|_{\CC_0} 
+ \AA \| X \|_{\CC_0} \|\partial_t h\|_{\DCC}  \|\partial u\|_{\CC_0}
+ \|X\|_{\CC_0} \|h\|_{\CC_0} \|\partial_t \partial u\|_{\DCC} 
\\ 
\lesssim & \
C \AA + 1,
\end{aligned}
\]
which closes the inductive proof of \eqref{induction4} if $\CC \gg 1$
and $\AA \ll 1$.

\medskip

\emph{ II. $X$ is paracontrolled by $\partial u$.} To prove this, we will establish the representation
\begin{equation}\label{X-paracontrol}
X^\alpha = 
-( T_{X^\delta \partial^\alpha u} 
+ T_{ X^\alpha \partial^\delta u}) \partial_\delta u + r^\alpha .
\end{equation}
This will play the role of \eqref{parac-rep}. The Moser estimates in Lemma~\ref{l:Moser-control} show that the paracoefficients above satisfy the bounds
required of $a$ in \eqref{parac-1}, so it remains to establish that the errors $r_\alpha$ satisfy the bounds \eqref{parac-2}. For this, we write
\[
\begin{aligned}
r^\alpha = & \ \sum_k X^\alpha_k + 
( T_{X^\delta \partial^\alpha u} 
+T_{ X^\alpha \partial^\delta u}) \partial_\delta u_k
\\
= & \ \sum_k  -
[  (T_{X^\delta \partial^\alpha u} - T_{X^\delta} T_{\partial^\alpha u})
+(T_{X^\alpha \partial^\delta u }- T_{X^\alpha} T_{\partial^\delta u})] \partial_\delta u_k
\\ 
:= & \ \sum_k r^\alpha_k .
\end{aligned}
\]
Now we apply Lemma~\ref{l:para-prod} to estimate
\begin{equation}\label{bd-ra}
\|r^\alpha_k\|_{L^\infty} \lesssim c_k^2 \AA^2, \qquad \|r^\alpha_k\|_{L^\infty} \lesssim 2^{-k} c_k^2 \BB^2
\end{equation}
as needed. It remains to bound the time derivative of $r^\alpha$ in $L^\infty$.
For this we distribute the time derivative. If it falls on any of the para-coefficients then we can directly use the bound \eqref{TDCC-CC}. Else, we use 
Lemma~\ref{l:para-p+}.

\medskip

\emph{III. The bound for $c_{X}^{\alpha\beta} - \partial_\gamma X^\gamma g^{\alpha\beta}$.}
Here we recall that 
\[
c_{X}^{\alpha\gamma}- \partial_\gamma X^\gamma g^{\alpha\beta} = 2 g^{\gamma \beta}(
\partial_\beta X^\alpha
+    X^\delta \partial^\alpha u  \partial_\delta  \partial_\beta u +    X^\alpha
\partial^\delta u \partial_\delta \partial_\beta u  ).
\]
To estimate this, our starting point is the relation \eqref{X-paracontrol}, together
with the bounds \eqref{bd-ra} for $r^\alpha$. Denoting
\[
h^{\alpha \delta} =   X^\delta \partial^\alpha u   +    X^\alpha \partial^\delta u 
\]
we write
\[
\begin{aligned}
c_{X}^{\alpha\gamma} - \partial_\gamma X^\gamma g^{\alpha\beta}
= & \  g^{\gamma \beta}( \partial_\beta X^\alpha + h^{\alpha \delta}
\partial_\delta \partial_\beta u)
\\ =  & \  T_{g^{\gamma\beta}} \partial_\beta r^\alpha +
( T_{g^{\gamma\beta} h^{\alpha \delta}} -   T_{g^{\gamma\beta}} T_{h^{\alpha \delta}})
\partial_\alpha \partial_\beta u 
\\
 & \ + T_{\partial_\beta X^\alpha}  g^{\gamma \beta}
+ T_{\partial_\delta \partial_\beta u} [g^{\gamma \beta}h^{\alpha \delta}]
+  \Pi(\partial_\beta X^\alpha,  g^{\gamma \beta})
+ \Pi({\partial_\delta \partial_\beta u},g^{\gamma \beta}h^{\alpha \delta}).
\end{aligned}
\]
For the $r^\alpha$ term we use \eqref{bd-ra}, for the next term we use the earlier bound \eqref{para-p-r}  and the terms on the last line are estimated directly using the algebra property for $\CC_0$ and the bilinear estimate \eqref{TDCC-CC}.
\end{proof}

\subsubsection{Paradifferential energy estimates associated to $X$}
Now we use our vector field $X$ to prove the balanced energy estimates for 
$v$. To do this, we repeat the computations leading to the
key energy relations \eqref{IBP-A-t} and \eqref{energy-diff-t}
at the paradifferential level. 

To fix the notations, we denote by
$T_{\tP}$ the operator in \eqref{paralin-inhom-new}, 
\[
T_{\tP} = \partial_\alpha T_{\tg^{\alpha \beta}} \partial_\beta - T_{\tA^\gamma} \partial_\gamma.
\]
By a slight abuse of notation, this is not exactly the same as the Weyl 
quantized operator with the corresponding symbol, though the difference 
between the two can be seen to be balanced and thus perturbative in our analysis.

For our multiplier, inspired by the energy relation \eqref{IBP-A-tt}, we  will use the paradifferential operator
\begin{equation}\label{def-hX}
 T_{\tM} := 2 T_{\tX^\alpha} \partial_\alpha  + \frac12 T_{\tq}. 
\end{equation}
Here ideally we would like to have 
\[
\tq = - g^{00} \partial_\alpha X^\alpha.
\]
However, such a choice causes some technical difficulties due to the lack of sufficient time regularity of $\tq$. To avoid this, we will forego the  
above explicit expression for $\tq$, and instead ask for $\tq$ to satisfy the
following two properties:
\begin{itemize}
    \item it is close to the ideal setting,
\begin{equation}\label{choose-d}
| \tq -     g^{00} \partial_\alpha X^\alpha | \lesssim \BB^2 .
\end{equation}    
    \item it has the form $\tq = \partial_x q_1$, where $q_1 \in \PP$.
\end{itemize}
We remark that the obvious choice $\tq_0 := - g^{00} \partial_\alpha X^\alpha$
for the first criteria does not satisfy the second criteria, as it contains
expressions involving $\partial_t^2 u$. However, by definition 
we have $\tq_0 \in \DPP$, therefore, a good approximation $\tq$
for $\tq_0$ as above exists by Lemma~\ref{l:switch-dt}.
Note that for this it suffices to use the fact that $X^\alpha \in \PP$ separately for each $\alpha$, rather than the more precise representation in \eqref{X-paracontrol}.
\medskip

\medskip

Now we implement the multiplier method to prove energy estimates in the paradifferential setting. We recall our objective, which is to establish an integral energy identity 
of the form
\begin{equation}\label{en0-int}
\iint T_{\tP} v \cdot T_{\tM} v\, dx dt = \left. E_X(v(t)) \right|_0^T   
+ \int_{0}^T O(\BB^2) \| v(t)\|_{\H}^2 \, dt     
\end{equation}
for a suitable positive definite energy functional $E_X$ in $\H$,
\begin{equation}\label{en0-pos}
    E_X(v(t)) \approx \|v[t]\|_{\H}^2 .
\end{equation}

This may also be interpreted as a differential energy identity,
\begin{equation}\label{en0-diff}
\frac{d}{dt}E_X(v(t)) = \int T_{\tP} v \cdot T_{\tM} v\, dx + O(\BB^2) \| v(t)\|_{\H}^2   .  \end{equation}
\medskip

\textbf{ Notation for errors:} 
There are two types of error/correction terms that appear in our computations:
\begin{itemize}
\item Corrections in the energy functional. Here we will denote by 
$Err(\AA)$ any fixed time expressions which have size $O(\AA) \| v[t]\|_{\H}^2$. A typical example here is a lower order term of the form
\[
\int_{\R^n} \partial v \cdot T_q v \, dx, \qquad q \in \partial_x \PP,
\]
where 
\[
\| P_{<k} q \|_{L^\infty} \lesssim 2^{k} \AA.
\]
This holds for instance if $q \in \partial_x \PP$.

\item Corrections in the energy flux term. These are like the last term on the right in \eqref{en0-int}, respectively \eqref{en0-diff}. For brevity we will denote the admissible errors in the two identities by $Err(\BB^2)$.
\end{itemize}

To establish \eqref{en0-int}, we consider the contributions of the two terms 
in $T_{\tM}$. 
\medskip

\emph{ I. The contribution of $T_{\tX^\alpha} \partial_\alpha$.} 
Integrating by parts and commuting, this is given by 
\[
\begin{aligned}
I_X = & \  \iintT T_{\tP_A} v \cdot  T_{\tX^\gamma} \partial_\gamma  v\, dxdt\\
= & \  \iintT ( \D_\alpha T_{\tg^{\alpha \beta}}
\D_{\beta} v - T_{\tA^\alpha} \partial_\alpha v)   \cdot T_{\tX^\gamma} \D_\gamma v  \, dx dt
\\
= & \  \iintT  (\D_\alpha T_{\tX^\gamma} T_{\tg^{\alpha \beta}}
\D_{\beta} v \cdot  \D_\gamma v -
T_{\D_\alpha X^\gamma} T_{\tg^{\alpha \beta}}
\D_{\beta} v \cdot  \D_\gamma v - T_{\tA^\alpha} \D_\alpha v \cdot   T_{\tX^\gamma} \D_\gamma v
\, dx dt 
\\
= & \  \iintT  \D_\gamma T_{X^\gamma} T_{\tg^{\alpha \beta}}
\D_{\beta} v \cdot  \D_\alpha v -
T_{\D_\alpha \tX^\gamma} T_{\tg^{\alpha \beta}}
\D_{\beta} v \cdot  \D_\gamma v - T_{\tA^\alpha} \D_\alpha v \cdot   T_{\tX^\gamma} \D_\gamma v
\, dx dt 
\\&  + \left.  \int   T_{\tX^\gamma} T_{\tg^{0 \beta}}
\D_{\beta} v \cdot  \D_\gamma v -    T_{\tX^0} T_{\tg^{\alpha \beta}}
\D_{\beta} v \cdot  \D_\alpha v          \, dx \right|_0^T
\\ = & \  \iintT  \frac12 (\D_\gamma T_{\tX^\gamma} T_{\tg^{\alpha \beta}}\! - \! T_{\tg^{\alpha \beta}} T_{\tX^\gamma} \D_\gamma)
\D_{\beta} v \cdot  \D_\alpha v \!-\!
T_{\D_\alpha \tX^\gamma} T_{\tg^{\alpha \beta}}
\D_{\beta} v \cdot  \D_\gamma v\!-\! T_{\tA^\alpha} \D_\alpha v \cdot   T_{\tX^\gamma} \D_\gamma v
\, dx dt 
\\&  + \left.  \int   T_{\tX^\gamma} T_{\tg^{0 \beta}}
\D_{\beta} v \cdot  \D_\gamma v -    T_{\tX^0} T_{\tg^{\alpha \beta}}
\D_{\beta} v \cdot  \D_\alpha v  + \frac12  T_{\tX^0} T_{\tg^{\alpha \beta}}
\D_{\beta} v \cdot \D_\alpha v     \, dx \right|_0^T.
\end{aligned}
\]
For the double integral we peel off some perturbative contributions. The first term has a commutator structure, and we distinguish several cases. If $(\alpha, \beta) = (0,0)$, 
then we simply write
\[
\D_\gamma T_{X^\gamma} T_{\tg^{0 0}} -  T_{\tg^{00}} T_{X^\gamma} \D_\gamma = \tg^{00} T_{\D_\gamma \tX^\gamma}.
\]
If $(\alpha,\beta)= (0,j)$ then we commute the derivative first,
\[
\D_\gamma T_{ \tX^\gamma} T_{\tg^{0 j}} -  T_{\tg^{0j}} T_{ X^\gamma} \D_\gamma
= T_{\D_\gamma \tX^\gamma} T_{\tg^{0j}} + T_{\tX^\gamma} T_{\D_\gamma\tg^{0j}} 
+ [T_{\tX^\gamma},T_{\tg^{0j}}] \partial_\gamma,
\]
where the contribution of the commutator term is estimated using Lemma~\ref{l:para-com},
\[
\| [T_{\tX^\gamma},T_{\tg^{0j}}] \partial_x\|_{L^2 \to L^2} \lesssim \BB^2.
\]
If $(\alpha,\beta)= (j,0)$ then we commute the paraproducts first,
\[
\D_\gamma T_{ \tX^\gamma} T_{\tg^{j0}} -  T_{\tg^{j0}} T_{ \tX^\gamma} \D_\gamma
=  T_{\D_\gamma\tg^{0j}} T_{\tX^\gamma} +  T_{\tg^{j0}} T_{\D_\gamma \tX^\gamma}
+ \partial_\gamma [T_{\tX^\gamma},T_{\tg^{j0}}] ,
\]
where the contribution of the commutator is again perturbative once we integrate by parts
with respect to $x^\gamma$. If $\gamma=0$ then this integration by parts  contributes 
to the energy with the expression 
\[
 \int [T_{\tX^0},T_{\tg^{j0}}] \partial_0 v \cdot \partial_j v \, dx,
\]
which also plays a perturbative role. For the double paraproducts we use Lemma~\ref{l:para-prod} to compound them, as in 
\[
\| T_g T_{\partial h} - T_{g \partial h}\|_{L^2 \to L^2} \lesssim \BB^2.
\]
We arrive at the relation 
\begin{equation}\label{IX-rep}
I_X =   \iint T_{\tP_A} v \cdot  T_{\tX^\gamma} \partial_\gamma  v\, dxdt = \iint T_{c^{\alpha\beta}_{X}} \partial_\alpha v 
  \cdot \partial_\beta v \, dx dt+ \left. E_{X}(v)\right|_0^T + Err(\BB^2),
\end{equation}
where we recall that $c^{\alpha\beta}_{X}$ is given by 
the relation \eqref{tcX},
and the energy functional $E_X$ is given by 
\[
E_X(v) =  \int   T_{\tX^\gamma} T_{\tg^{0 \beta}}
\D_{\beta} v \cdot  \D_\gamma v -    T_{\tX^0} T_{\tg^{\alpha \beta}}
\D_{\beta} v \cdot  \D_\alpha v  + \frac12  T_{\tX^0} T_{\tg^{\alpha \beta}}
\D_{\beta} v \cdot \D_\alpha v   +  [T_{X^0},T_{\tg^{j0}}] \partial_0 v \cdot \partial_j v
\, dx .
\]
Here we may compound all double paraproducts and discard the commutator term, 
at the expense of  $Err(\AA)$ errors.  We arrive at 
\begin{equation}\label{EX-main}
E_X(v) =  \int T_{e^{\alpha \beta}_{X}} \partial_\alpha v \partial_\beta v \, dx
+ Err(\AA),
\end{equation}
with $e^{\alpha \beta}_{X,2} \in \PP$ given by \eqref{eX}. Since $X = \partial_t +O(A)$
is uniformly time-like, it follows that this matrix is positive definite, which implies
the positivity property in \eqref{en0-pos}.

\medskip

\emph{II. The contribution of $T_q$.}
Here we need to consider the integral 
\[
I_q =  \iint (\D_\alpha T_{\tg^{\alpha\beta}} \partial_\beta + \tA^\gamma \partial_\gamma) v \cdot T_\tq v\, dx dt,
\]
where we recall that $\tq = \partial_x q_1$ with $q_1 \in \PP$.
The contribution of $\tA$ is directly perturbative,
as $\tA \in \DPP$. Integrating by parts and using Lemmas~\ref{l:para-prod},
\ref{l:para-prod2},
\[
\begin{aligned}
I_q = & \  \iint T_{\tg^{\alpha\beta}} \partial_\beta v \cdot ( T_q \partial_\alpha + T_{\partial_\alpha \tq}) v \, dx dt + \left. \int   T_{\tg^{0\beta}} \partial_\beta v \cdot T_\tq v\, dx \right|_0^T
\\
= & \  \iint  T_{\tg^{\alpha\beta} \tq} \partial_\beta v \cdot \partial_\alpha v \, dx dt + 
\iint
\partial_\beta v \cdot T_{T_{\tg^{\alpha\beta}} \partial_\alpha \tq} v \, dx dt + 
Err(\BB^2) + \left. \int   T_{\tg^{0\beta}} \partial_\beta v \cdot T_\tq v\, dx \right|_0^T.
\end{aligned}
\]
Here the first term on the right is the one we want and the last term on the right 
yields an energy correction which is perturbative, i.e. of size $Err(\AA)$.
It remains to show that the second term, which we shall denote by $I_q^2$, also yields only perturbative contributions.
Heuristically, this should be relatively simple, in that we can integrate once more by parts,
to obtain 
\[
I_q^2:=\iint
\partial_\beta v \cdot T_{T_{\tg^{\alpha\beta}} \partial_\alpha \tq} v \, dx dt  
= -\frac12 \iint
 v \cdot T_{ \partial_\beta T_{\tg^{\alpha\beta}} \partial_\alpha \tq} v \, dx dt 
+ \frac12 \left. \int
 v \cdot T_{T_{\tg^{\alpha0}} \partial_\alpha \tq} v \, dx \right|_0^T .
\]
Here we could estimate both integrals perturbatively and conclude directly if we knew that 
\[
\| P_{<k} \partial_\beta T_{\tg^{\alpha\beta}} \partial_\alpha \tq\|_{L^\infty} \lesssim 2^{2k}
\BB^2, \qquad \| P_{<k} \partial_\alpha \tq \|_{L^\infty} \lesssim 2^{2k} \AA.
\]
Both of these bounds would be true if $q$ contained no time derivatives of $u$ in its expression.
However, this is too much to hope for, so a more careful argument is needed. The first step 
in this argument has already been carried out earlier, where we saw that we may take 
$\tq$ of the form $\tq = \partial_x q_1$ with $q_1 \in \PP$. This removes one of the two 
potential time derivatives in $q$, but not the second. We can use this property to write
\[
I_q^2 = \iint
\partial_\beta v \cdot T_{\partial_x T_{\tg^{\alpha\beta}} \partial_\alpha q_1} v \, dx dt 
- \iint
\partial_\beta v \cdot T_{T_{\partial_x \tg^{\alpha\beta}} \partial_\alpha q_1} v \, dx dt ,
\]
where the uniform bound
\[
\| P_{<k} T_{\partial_x \tg^{\alpha\beta}} \partial_\alpha q_1 \|_{L^\infty}
\lesssim 2^{k} \BB^2
\]
shows that we can treat the second term perturbatively, to get
\[
I_q^2 = \iint
\partial_\beta v \cdot T_{\partial_x T_{\tg^{\alpha\beta}} \partial_\alpha q_1} v \, dx dt 
+ Err(\BB^2).
\]
At this point, we can use the fact that $q_1 \in \BB$ implies that $q_1$ solves an approximate
paradifferential wave equation. The precise statement we use is the one in Lemma~\ref{l:pcbounds-extra}, which yields the representation
\[
\partial_\alpha T_{\tg^{\alpha\beta}} \partial_\beta q_1 = \partial_\alpha f_\alpha
\]
with 
\begin{equation}\label{fa-bds}
\| f_\alpha\|_{L^\infty} \lesssim \BB^2, \qquad \|P_{<k} (\partial_0 q_1 -f^0)\|_{L^\infty}
\lesssim \AA.
\end{equation}
We use this representation to refine the outcome of the naive integration by parts above,
\[
\begin{aligned}
I_q^2 = & \  -\frac12 \iint
 v \cdot T_{ \partial_x \partial_\beta T_{\tg^{\alpha\beta}} \partial_\alpha q_1} v \, dx dt 
+ \frac12 \left. \int
 v \cdot T_{\partial_x T_{\tg^{\alpha0}} \partial_\alpha q_1} v \, dx \right|_0^T + Err(\BB^2)
 \\
  = & \  -\frac12 \iint
 v \cdot T_{ \partial_x \partial_\alpha f^\alpha} v \, dx dt 
+ \frac12 \left. \int
 v \cdot T_{\partial_x T_{\tg^{\alpha0}} \partial_\alpha q_1} v \, dx \right|_0^T + Err(\BB^2)
  \\
  = & \  \iint
 \partial_\alpha v \cdot T_{ \partial_x  f^\alpha} v \, dx dt 
+ \frac12 \left. \int
 v \cdot T_{\partial_x( T_{\tg^{\alpha0}} \partial_\alpha q_1- f^0)} v \, dx \right|_0^T + Err(\BB^2).
\end{aligned}
\]
By the pointwise bound on $f^\alpha$ in \eqref{fa-bds}, the first term is perturbative, i.e. $Err(\BB^2)$.
By the second bound in \eqref{fa-bds}, the second term can be seen as a perturbative $Err(\AA)$ energy correction. We conclude that for $I_q$ we have
\begin{equation}\label{Id-rep}
I_q = \iint  T_{\tg^{\alpha\beta} q} \partial_\beta v \cdot \partial_\alpha v \, dx dt
+ \left. \int   T_{\tg^{0\beta}} \partial_\beta v \cdot T_q v
+\frac12 v \cdot T_{\partial_x( T_{\tg^{\alpha0}} \partial_\alpha q_1- f^0)} v
\, dx \right|_0^T + Err(\BB^2).
\end{equation}

\medskip

\emph{ III. Conclusion.}
To finish the proof of \eqref{en0-int}, and thus of Theorem~\ref{t:para-wp} for $s=0$,
we combine the relations \eqref{IX-rep} and \eqref{Id-rep} to obtain
\begin{equation}
2\iint T_{\tP} v \cdot T_{\tM} v\, dx dt = 
\iint T_{c^{\alpha\beta}_{X} + \tg^{\alpha\beta} \tq} \partial_\alpha v
\cdot \partial_\beta v \, dx dt
+ \left. E_X(v(t)) \right|_0^T   
+ Err(\BB^2),
\end{equation}
where $E_X$ is redefined as the sum of the two contributions in \eqref{IX-rep} and \eqref{Id-rep}, which still has the leading order term as in \eqref{EX-main} plus an $Err(\AA)$ correction.

It remains to examine the paracoefficient in the integral on the right, and show that it has size $O(\BB^2)$. 
At this point, we simply invoke the choice of our para-Killing vector field $X$ in Lemma \eqref{l:X} for the first term
(which we have not used so far), and the choice of $\tq$ in \eqref{choose-d} for the second term,
thereby completing the proof  of \eqref{en0-int}.

\bigskip

\subsection{The  $H^{s+1} \times H^s$ bound for the linear paradifferential flow} Here we prove Theorem~\ref{t:para-wp} in the general case,
where $s \neq 1$. The argument will be a more complex variation of the 
argument in the case $s=1$, where paraproduct based multipliers 
have to be replaced by paradifferential multipliers.

\subsubsection{ The conjugated equation} For simplicity in notations we will consider the linear paradifferential 
equation in $\H^{s+1}$ with $s \neq 0$.
We begin by setting $w = \bD^s v$, which
solves a perturbed linear paradifferential equation of the form
\begin{equation}\label{paralin-inhom+}
( \D_{\alpha} T_{\tg^{\alpha \beta}} \D_{\beta} - 2 T_{\tA^\gamma}\D_\gamma)w  = \bD^s f + \tB w ,
\end{equation} 
where the conjugation error $\tB$ in the new source term is given by 
\begin{equation}\label{def-R}
\tB = \bD^s [ \D_{\alpha} T_{\tg^{\alpha \beta}} \D_{\beta} -  T_{\tA^\gamma}\D_\gamma, \bD^{-s}] .
\end{equation}
Then we need to construct an $H^1 \times L^2$ balanced energy for the solution $w$ to \eqref{paralin-inhom+}. 
\medskip

We note that $\tilde B$ is a paradifferential operator, whose principal symbol $\tb_0$ is homogeneous  of order one and a first degree polynomial in the time variable $\xi_0$, and is given by 
\[
  \tb_0(x,\xi)= - i |\xi'|^s \{ \tg^{\alpha \beta} \xi_{\alpha} \xi_{\beta},|\xi'|^{-s}\}.
\]
Using the expression \eqref{d-gab} for the derivatives of the metric $g$, this can be further written in the form
\begin{equation}\label{b0}
\begin{aligned}
\tb_0(x,\xi) = & \  2is (\partial^\beta u \tg^{\alpha \nu}  \partial_j \partial_\nu u\  -  \partial^0 u \tg^{0\nu} \partial_j \partial_\nu u \tg^{\alpha\beta}) \xi_\alpha \xi_\beta \xi_j |\xi'|^{-2}
\\
:= & \ 2 i s \tb_0^\alpha \xi_\alpha ,
\end{aligned}
\end{equation}
where
\begin{equation} \label{tb0-alpha}
 \tb_0^\alpha=   (\partial^\beta u \, \tg^{\alpha \nu} 
 - \partial^0 u \tg^{0\nu} \partial_j \partial_\nu u \tg^{\alpha\beta})
 \partial_j \partial_\nu u\ \xi_\beta \xi_j |\xi'|^{-2}  .
\end{equation}
Here the unbalanced part of the coefficients corresponds to the case when the factor $\partial^2 u$
is higher frequency compared to the $\partial^\beta u$ and $\tg^{\alpha \nu}$ factors. The important feature is that, at the operator level, $T_{\tb_0^\gamma} \partial_\gamma w$ presents a null form structure of the type $Q_0(\partial u, w)$, with added more regular paradifferential coefficients in $\PP$. 

We switch the term $2sT_{\tb_0^\gamma} \partial_\gamma$ to the left hand side of the equation; there it will play a role similar to the 
gradient term $\tA^\gamma \partial_\gamma$. The reminder $\tB- 2sT_{\tb_0^\gamma} \partial_\gamma$ will play a secondary role; one should think of it as renormalizable, though we will achieve this at the level of the energy, via an energy correction, rather than through an actual normal form transformation. Our equation \eqref{paralin-inhom+} becomes
\begin{equation}\label{paralin-inhom++}
(\D_{\alpha} T_{\tg^{\alpha \beta}} \D_{\beta} -  T_{\tA^\gamma}\D_\gamma - 2s T_{\tb_0^\gamma}\D_\gamma)w  = \bD^s f + (\tB-2s T_{\tb_0^\gamma}\partial_\gamma) w ,
\end{equation} 
where the leading operator is denoted by 
\begin{equation}
T_{\tP_{B}} =  \D_{\alpha} T_{\tg^{\alpha \beta}} \D_{\beta} -  T_{\tA^\gamma}\D_\gamma - 2s T_{\tb_0^\gamma}\D_\gamma.    
\end{equation}

As in the previous case of the $H^1 \times L^2$ bounds, our strategy will be to construct a suitable vector field, or multiplier, denoted $\tX_s$, which depends only on the principal symbol $b_0$ above, and which formally generates a balanced energy estimate at the leading order. Then, reinterpreting all the analysis at the paradifferential level, we will rigorously prove that the  generated energy satisfies favourable, balanced bounds.

\bigskip

\subsubsection{The multiplier $\tX_s$.}
In the previous section, the multiplier $\tX \in \PP$ was a well-chosen vector field which belongs to our space $\PP$ of paracontrolled distributions.
Here, this can no longer work due to the the presence of the operator $T_{\tb_0}$, which is a pseudodifferential rather than a differential operator. For this reason we will instead use a pseudodifferential ``vector field" $i \tX_s$, where $\tX_s$ has a real symbol of the form
\[
\tX_s(x,\xi) = \tX_{s1}(x,\xi') + \tX_{s0}(x,\xi') \xi_0, \qquad   
X_{sj} \in \PP S^j,
\]
which will be homogeneous away from frequency zero.
We carefully note that we want the symbol $\tX_s$ to be a first order polynomial in 
$\xi_0$; this is important so that  we can still do integration by parts in time and have a well defined fixed time energy. The symbol $\tX_s$ may be interpreted as a pseudodifferential operator using  the Weyl paradifferential quantization, 
\begin{equation}
T_{\tX_s} =    i T_{\tX_{s1}}  + T_{\tX_{s0}} \partial_0  +\frac12 T_{\partial_0 \tX_{s0}}.
\end{equation}
However, as in the $s=0$ case, we will allow a more general choice for the 
zero order component, and work instead with the modified multiplier
\begin{equation}\label{hatXs}
T_{i\tM_s} :=   i T_{\tX_{s1}} + T_{\tX_{s0}} \partial_0  +\frac12 T_{\tY_0}, 
\end{equation}
where the zero order symbol $\tY_0 \in \partial_x\PP S^0$ will be carefully chosen later on
in order to provide an appropriate Lagrangian correction in our energy estimates.

Repeating the heuristic computation in the previous subsection, in the absence of time boundaries we have  an identity of the form
\begin{equation}\label{IBP-nobdr-A+}
2\iint T_{\tP_{A,B}} v \cdot  (i T_{\tX_{s1}} + T_{\tX_{s0}} \partial_0) v \, dx dt = \iint c_{X_s} (v,v) \, dx dt,
\end{equation}
where $c_{\tX_s,B}(v,v)$ is a bilinear form whose
principal symbol $c_{\tX_s,B}$ is of order two, 
\begin{equation}\label{tcXA-s}
c_{\tX_s,B}(x,\xi) =  \{ \tp,\tX_s\}(x,\xi) +   2\tX_s(x,\xi)( \tA^\gamma + 2s \tb^\gamma_0(x,\xi))  \xi_\gamma - \partial_0 \tX_{s0} \tp(x,\xi).
\end{equation}

The objective would now be to choose the symbols $\tX_{sj} \in \PP S^j$
 so that we cancel the unbalanced part of the symbol $\tc_{X,B}$.
However, it is immediately clear that this may be a bit too much to ask, as it conflicts with the requirement  that $\tX_s$ is a first degree polynomial in $\xi_0$. Hence, as a substitute,  we will seek to  achieve this cancellation on the characteristic set $p(x,\xi) = 0$. Then, instead of asking for
\[
c_{\tX_s,B} \bapprox 0,
\]
we will settle for the slightly weaker property
\[
c_{\tX_s,B}(x,\xi) \bapprox  \tY(x,\xi') \cdot  \tp(x,\xi),
\]
where $\tY \in \DPP S^0$ is a purely spatial zero homogeneous symbol, with the spatial dependence at the level of $\partial^2 u$.  This term will be harmless, as we will also be able to remove it in our energy energy estimates
with a Lagrangian correction, by making a good choice for $\tY_0$.

An additional requirement on our paradifferential ``vector field" $\tX_s$ will be that, in the energy estimate generated by $\tX_s$, the associated energy functional $E_{\tX_s}$ should be positive definite at the level of it principal part. Earlier, in the case when $X$ was a vector field, this requirement was identified, via the energy momentum tensor, with the property that $X$ is forward time-like. Here we will generalize this notion to symbols:

\begin{definition}
We say that the (real) symbol $X= \xi_0 X_{0}+ X_{1} \in C^0 S^1$ is \emph{forward time-like} if the following two properties hold:

a) $X_{0}(x,\xi') > 0$.

b) $X(x,\xi_0^1,\xi') X(x,\xi_0^2, \xi') < 0$, where $\xi_0^1(x,\xi') < \xi_0^2(x,\xi')$ are the two real zeros of $p(x,\xi)$ as a polynomial of $\xi_0$.
\end{definition}

We remark that, using $X$ as a multiplier, relative to the metric $g$, we will obtain an energy  functional which at leading order can be described via the symbol 
\begin{equation}\label{eX-s}
\begin{aligned}
e_{X}(x,\xi) = & \ g^{\alpha \beta} \xi_\alpha \xi_\beta X_0 - 2 g^{0\alpha} \xi_\alpha
(X_1 + X_0 \xi_0)
\\
:= & \ e_{X_s}^0(x,\xi') \xi_0^2 + e_{X_s}^1(x,\xi') \xi_0 + e_{X_s}^2(x,\xi'),
\end{aligned}
\end{equation}
which should be compared with the expression \eqref{eX} defined earlier
in terms of the energy momentum tensor in the case when $X$ is a vector field. Correspondingly, we define the energy functional
\begin{equation}\label{EX-s}
E_{X}[w] =   \int - T_{e_{X}^0} \partial_t w \cdot \partial_t w
+ T_{ie_{X}^1}w \cdot \partial_t w
+ T_{e_{X}^2} w \cdot w \, dx.
\end{equation}
The main property of forward time-like symbols is as follows:

\begin{lemma}
The symbol $e_{X_s}$ is positive definite iff $X$ is forward time-like.
\end{lemma}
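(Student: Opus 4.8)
The claim is an ``iff'' about the quadratic form in $\xi_0$
\[
e_{X}(x,\xi) = e_{X_s}^0(x,\xi')\, \xi_0^2 + e_{X_s}^1(x,\xi')\, \xi_0 + e_{X_s}^2(x,\xi'),
\]
so the natural approach is to treat $\xi'$ as a fixed parameter and reason purely about one-variable quadratics. First I would record, from \eqref{eX-s}, the explicit identification of the coefficients: $e_{X_s}^0 = g^{00} X_0 - 2 g^{00} X_0 = - g^{00} X_0$ (using that $e_X$ collects the $\xi_0^2$ terms from $g^{\alpha\beta}\xi_\alpha\xi_\beta X_0$ and from $-2g^{0\alpha}\xi_\alpha X_0\xi_0$), and similarly read off $e_{X_s}^1$ and $e_{X_s}^2$ in terms of $X_0, X_1$ and the metric. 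Since the time slices are uniformly space-like, $g^{00} < 0$, so the sign of $e_{X_s}^0$ is exactly the sign of $X_0$; hence positive-definiteness of $e_X$ (as a quadratic form in $\xi_0$, i.e.\ $e_X(x,\xi_0,\xi') > 0$ for all real $\xi_0$, at each $\xi' \neq 0$, with the appropriate uniformity) forces $X_0 > 0$, which is condition (a). This is the easy half of one direction.

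The heart of the matter is the discriminant/root comparison. Write $p(x,\xi) = g^{00}(\xi_0 - \xi_0^1)(\xi_0 - \xi_0^2)$ with $\xi_0^1 < \xi_0^2$ the two real roots (real and distinct because $g$ is Lorentzian and the slices are space-like), and write $X(x,\xi) = X_0(\xi_0 - \zeta)$ where $\zeta = -X_1/X_0$ is the single root of the linear factor $X$ in $\xi_0$. The key computation is to evaluate $e_X$ at the two roots $\xi_0^1, \xi_0^2$ of $p$: at such a point $p = 0$, so from \eqref{eX-s} the only surviving contribution is $e_X(x,\xi_0^i,\xi') = -2 g^{0\alpha}\xi_\alpha\, X(x,\xi_0^i,\xi')$, where the covector $\xi = (\xi_0^i,\xi')$ is null. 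A short linear-algebra lemma — essentially that for a null covector $\xi$ relative to a Lorentzian metric with space-like slices, the quantity $g^{0\alpha}\xi_\alpha = -(\xi_0 - \xi_0^1)(\xi_0-\xi_0^2)$-type expression has a definite sign, concretely $g^{0\alpha}\xi_\alpha$ evaluated at $\xi_0^1$ and at $\xi_0^2$ have opposite signs (they are $g^{00}(\xi_0^1 - \xi_0^2)/\ldots$ type quantities, one positive and one negative) — lets me conclude: $e_X$ at the two null points equals $(\text{sign-definite factor}) \times X(x,\xi_0^i,\xi')$. Therefore $e_X > 0$ at both null points is equivalent to $X(x,\xi_0^1,\xi')$ and $X(x,\xi_0^2,\xi')$ having the sign dictated by $X_0 > 0$ and the opposite signs of the metric factor, which is precisely $X(x,\xi_0^1,\xi')\, X(x,\xi_0^2,\xi') < 0$, i.e.\ condition (b).

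Finally I would close the logical loop in both directions. ($\Rightarrow$) If $e_X$ is positive definite, then $e_{X_s}^0 = -g^{00}X_0 > 0$ gives (a), and evaluating at $\xi_0^1,\xi_0^2$ as above gives (b). ($\Leftarrow$) Conversely, assume (a) and (b). Since $X_0 > 0$, $e_{X_s}^0 > 0$, so $e_X$ is an upward-opening parabola in $\xi_0$; it is positive definite iff it has no real zero, iff its discriminant is negative, iff $e_X$ does not change sign, and since it is positive at $\pm\infty$ it suffices to check it stays positive at the vertex — but more cleanly, $e_X(x,\cdot,\xi')$ and $p(x,\cdot,\xi')$ are both quadratics in $\xi_0$; one shows $e_X = -X_0\, p + (\text{linear in }\xi_0)$ is impossible to have the wrong sign once we know its values at the two roots of $p$ are positive, because between and outside those roots the term $-X_0 p$ dominates the sign appropriately. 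Concretely: $e_X(\xi_0^i) > 0$ by (b) and the metric sign lemma, and for $\xi_0 \notin [\xi_0^1,\xi_0^2]$ we have $-X_0 p(\xi_0) > 0$ while a direct check (or a convexity argument on the parabola $e_X$, which can have at most one sign change on each side if it were to go negative, contradicting positivity at the roots and at infinity) rules out negativity; for $\xi_0 \in (\xi_0^1,\xi_0^2)$ one uses that $e_X$ restricted there is a quadratic positive at both endpoints, and its leading coefficient is positive, so it can dip negative only if the discriminant forces it — and that is exactly excluded by continuity together with positivity at $\xi_0^1,\xi_0^2$ and the single-root structure of the linear factor $X$. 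I would organize this last step as: the parabola $\xi_0 \mapsto e_X(x,\xi_0,\xi')$ opens upward and is $>0$ at $\xi_0^1$ and $\xi_0^2$; a upward parabola positive at two points is either positive everywhere or negative exactly on a subinterval strictly between those two points; the latter is excluded because on $(\xi_0^1,\xi_0^2)$ we can also write $e_X(\xi_0) = -2(g^{0\alpha}\xi_\alpha)X(\xi_0) - X_0 p(\xi_0)$ and both terms are nonnegative there given (a),(b) and $-p > 0$ on that interval. Hence $e_X$ is positive definite.

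\textbf{Expected main obstacle.} The only genuinely nontrivial ingredient is the ``metric sign lemma'': that for a null covector $\xi$ of a Lorentzian metric whose time-slices are space-like, the contraction $g^{0\alpha}\xi_\alpha$ is sign-definite in the appropriate sense, with opposite signs at the two roots $\xi_0^1 < \xi_0^2$. This is a standard but slightly fiddly fact about the geometry of the light cone (it is really the statement that $\partial_t$ is time-like and the null covectors split into forward/backward components); I would prove it by differentiating $p$ in $\xi_0$, noting $\partial_{\xi_0} p = 2 g^{0\alpha}\xi_\alpha$, and observing that $\partial_{\xi_0} p(\xi_0^1)$ and $\partial_{\xi_0} p(\xi_0^2)$ have opposite signs because $p = g^{00}(\xi_0-\xi_0^1)(\xi_0-\xi_0^2)$ with $g^{00}<0$ and $\xi_0^1 \neq \xi_0^2$. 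Everything else is elementary manipulation of the formula \eqref{eX-s}, so I do not expect difficulty there.
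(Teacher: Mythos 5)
Your $(\Rightarrow)$ direction is sound: $e_{X_s}^0 = -g^{00}X_0$ gives condition (a), and evaluating $e_X = p\,X_0 - (\partial_{\xi_0}p)\,X$ at the two roots of $p$, where the first term dies, yields $e_X(\xi_0^i) = -\partial_{\xi_0}p(\xi_0^i)\,X(\xi_0^i)$; since $\partial_{\xi_0}p(\xi_0^1) > 0 > \partial_{\xi_0}p(\xi_0^2)$, positivity at both null points forces $X(\xi_0^1) < 0 < X(\xi_0^2)$, which is (b). That part is correct and is essentially a re-derivation of why the time-like condition is stated as it is.

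The gap is in your $(\Leftarrow)$ argument. You correctly note that an upward parabola positive at $\xi_0^1$ and $\xi_0^2$ can still dip negative strictly between them, and you propose to exclude this by claiming that on $(\xi_0^1,\xi_0^2)$ both terms in $e_X = p\,X_0 - (\partial_{\xi_0}p)X$ are nonnegative. The second term $p\,X_0$ is indeed nonnegative there (you have a sign error — on $(\xi_0^1,\xi_0^2)$ one has $p>0$, not $-p>0$, since $p$ is a downward-opening parabola with those roots — but it cancels against the sign error in your displayed formula, so the net conclusion for that term is accidentally right). The first term, however, is \emph{not} sign-definite on $(\xi_0^1,\xi_0^2)$. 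Let $\zeta = -X_1/X_0$ be the zero of $X$ (which (b) places inside $(\xi_0^1,\xi_0^2)$) and let $\xi_0^v = (\xi_0^1+\xi_0^2)/2$ be where $\partial_{\xi_0}p$ vanishes. Unless $\zeta = \xi_0^v$, these differ, and on the subinterval between them $\partial_{\xi_0}p$ and $X$ carry the \emph{same} sign, so $-(\partial_{\xi_0}p)X < 0$ there. So the claim that both terms are nonnegative fails generically, and with it your argument that the parabola cannot dip negative. (Your fallback remark about the discriminant is not a proof either; the discriminant of $e_X$ does turn out to equal $4\,g^{00}\,X_0\,a_1 a_2\,(\xi_0^1-\xi_0^2)^2$ in the paper's parametrization, but that requires the computation you are trying to avoid.)

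The paper's proof closes this cleanly with a sum-of-squares identity that you did not produce: writing $X = X_0\bigl(a_1(\xi_0-\xi_0^1) + a_2(\xi_0-\xi_0^2)\bigr)$ with $a_1 + a_2 = 1$ and substituting into $e_X = p\,X_0 - (\partial_{\xi_0}p)X$ gives, after the cross terms cancel using $a_1+a_2=1$,
\[
e_X = -g^{00}\,X_0\bigl[a_1(\xi_0-\xi_0^1)^2 + a_2(\xi_0-\xi_0^2)^2\bigr],
\]
from which positive definiteness is manifestly equivalent to $X_0 > 0$ and $a_1, a_2 > 0$; the latter, given $a_1+a_2=1$, is equivalent to $a_1 a_2 > 0$, i.e.\ to $X(\xi_0^1)X(\xi_0^2) = -X_0^2 a_1 a_2 (\xi_0^1-\xi_0^2)^2 < 0$, which is (b). So the missing idea in your proposal is precisely this parametrization of $X$ by its values at the two roots of $p$, which converts $e_X$ into a visibly nonnegative combination; without it, your argument on the open interval does not go through.
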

\begin{proof}
Assuming $X_0$ is nonzero, we represent $X$ in the form 
\[
X = X_0 ( a_1 (\xi_0 -\xi_0^1) + a_2(\xi_0-\xi_0^2))  ,
\]
where $a_1+a_2 = 1$. Then $e_X$ has the form
\[
\begin{aligned}
e_X = & \  g^{00}(\xi_0 -\xi_0^1)(\xi_0-\xi_0^2)  X_0 
-g^{00}(2 \xi_0 - \xi_0^1-\xi_0^2)   X_0 ( a_1 (\xi_0 -\xi_0^1) + a_2(\xi_0-\xi_0^2))
\\
= & - g^{00} X_0 [a_1(\xi_0 - \xi_0^1)^2 +  a_2(\xi_0 - \xi_0^2)^2].
\end{aligned}
\]
Here $g^{00} = -1$ and at least one of $a_1$ and $a_2$ are positive.
Then $e_X$ is positive definite iff $X_0 > 0$ and $a_1, a_2 > 0$.
This is easily seen to be equivalent with the forward time-like
condition in the above definition. 

\end{proof}

\subsubsection{ The construction of $X^s$} Here we return to the matter of choosing $\tX_s$, whose properties almost exactly mirror  those of the vector field $\tX$ in the previous subsection:

\begin{proposition}\label{p:Xs}
There exists a real homogeneous symbol of order one $\tX_s \in \PP S^1$, which is a first degree polynomial in $\xi_0$, so that:

i) $\tX_s$ is forward time-like.

ii) The principal symbol $c_{X_s,B}$ of the $\tX_s$ energy flux admits a representation
of the form
\begin{equation}\label{cX2-rep}
c_{X_s,B}(x,\xi) = \tq_2(x,\xi) + \tq_0(x,\xi')\tp(x,\xi),
\end{equation}
where $\tq_2$ is balanced,
\begin{equation}\label{q2}
\|\tq_2\|_{L^\infty S^2} \lesssim \BB^2  ,
\end{equation}
and $\tq_0$ has $\DPP$ type regularity,
\begin{equation}\label{q0}
\| \tq_0\|_{ \DPP S^0} \lesssim A   .
\end{equation}

iii) The symbol $\tX_s$ admits the $\PP S^1$ representation
\begin{equation}\label{Xs-PP-re}
 X_s = T_{a^\gamma} \partial_\gamma u +  r_s,
\end{equation} 
where  the para-coefficients $a^\gamma(x,\xi) = a^\gamma_1(x,\xi') + a^\gamma_0(x,\xi')$ with $a^\gamma_j
\in \PP S^j$  have the form
 \begin{equation}\label{a-gamma}
 a^\gamma = - \xi_\delta \partial^\delta u  
\partial_{\xi_\gamma} \tX_s   -  \partial^\gamma u \tX_s -2 \tX^s \partial^0 u \tg^{0\gamma}  -   
s \xi_\delta \partial_{\xi_\gamma} \log |\xi'|^2
  \partial^\delta u \tX_s + p q_0^\gamma
\end{equation}
with $q_0^\gamma  \in \PP S^0$, independent of $\xi_0$.
\end{proposition}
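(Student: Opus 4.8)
The plan is to construct $\tX_s$ by essentially the same inductive, dyadic recipe used for the vector field $X$ in Lemma~\ref{l:X}, but now at the symbol level, so that the pseudodifferential corrections coming from $T_{\tb_0^\gamma}$ and from $\partial_0 \tX_{s0}$ are accounted for. First I would set up the algebraic target: expand the flux symbol $c_{X_s,B}$ in \eqref{tcXA-s} as a first-degree polynomial in $\xi_0$ (using that $\tX_s$ is a first-degree polynomial in $\xi_0$, $\tp$ is quadratic in $\xi_0$ with $\tg^{00}=1$, and $\tb_0^\gamma$ are homogeneous of degree zero in $\xi'$). Since we do not have the freedom to make $c_{X_s,B}$ balanced off the cone — that would overdetermine $\tX_s$ and conflict with the $\xi_0$-polynomial constraint — I would impose the weaker requirement $c_{X_s,B}(x,\xi) \bapprox \tq_0(x,\xi')\tp(x,\xi)$, i.e. cancellation of the unbalanced part modulo a multiple of $\tp$. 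Using the formula \eqref{dg-dp}/\eqref{d-gab} for the metric derivatives exactly as in the proof of Lemma~\ref{l:X}, one computes the unbalanced part of $\{\tp,\tX_s\} + 2\tX_s(\tA^\gamma + 2s\tb_0^\gamma)\xi_\gamma - \partial_0\tX_{s0}\,\tp$ and recognizes that, modulo terms proportional to $\tp$, it reduces to a transport-type relation for $\tX_s$ at one order of regularity higher. Breaking paraproducts in two modulo balanced errors, this gives the prescription \eqref{a-gamma} for the paracoefficients $a^\gamma$ and hence the frequency-localized recursion
\begin{equation*}
\tX_{s,<0} = \xi_0 + \text{(lower order initialization)}, \qquad
\tX_{s,k} = -\bigl( T_{\tX_s^\delta}T_{\partial^\delta u} \cdots \bigr)\partial_\delta u_k,
\end{equation*}
in schematic form $\tX_{s,k} = T_{\tX_s}T_h \partial u_k$ with $h = F(\partial u)\in \CC$ and with the extra $s$-dependent and $\tg^{0\gamma}$-dependent paracoefficients appearing in \eqref{a-gamma}; one checks that each term on the right is a symbol of the right order ($\PP S^0$ or $\PP S^1$) and the $pq_0^\gamma$ term collects the freedom modulo $\tp$.

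Next I would run the convergence/regularity argument for this recursion, which is the direct analogue of stage I in the proof of Lemma~\ref{l:X}. By induction on $k$ one proves the dyadic bounds $\|\tX_{s,k}\|_{\PP S^1} \lesssim \AA c_k^2$ and $\lesssim 2^{-k/2}\BB c_k$, using the $\CC_0$ algebra and Moser bounds of Lemma~\ref{l:Moser-control0}, with the smallness of $\AA$ closing the induction; here the symbols are handled by separation of variables in $\xi'$ so that each piece is a $\PP$ function times a fixed smooth homogeneous symbol, reducing everything to the scalar estimates already used for $X$. The same induction, differentiated in time, gives $\partial_0\tX_{s,\le k}\in \DPP S^1$ (the analogue of \eqref{induction4}), so that $\tX_s\in \PP S^1$ with the time-regularity needed for the later energy estimates. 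The representation \eqref{Xs-PP-re} with error $r_s$ then follows exactly as \eqref{X-paracontrol} did: $r_s = \sum_k r_{s,k}$ where $r_{s,k}$ is the difference between the paraproduct-of-a-product and the product-of-paraproducts in \eqref{def-Xk}, estimated by Lemma~\ref{l:para-prod} in $\PP S^1$, with the time derivative of $r_s$ handled by Lemma~\ref{l:para-p+} and \eqref{TDCC-CC}.

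For part (i), forward time-likeness, I would argue by perturbation: at frequency $<0$ the symbol is $\xi_0$ up to $O(\AA)$ lower order terms, which is forward time-like for the metric $\tg$ (whose null roots $\xi_0^{1,2}$ are an $O(\AA)$ perturbation of $\pm|\xi'|$), and the dyadic corrections $\tX_{s,k}$ are $O(\AA c_k^2)$ in $S^1$, hence their sum is an $O(\AA)$ perturbation of $\xi_0$ in $S^1$; since the forward time-like condition in the Definition is open, it is preserved. For part (ii), once $\tX_s$ has the representation \eqref{Xs-PP-re}, I would substitute it into \eqref{tcXA-s} and collect terms exactly as in stage III of the proof of Lemma~\ref{l:X}: the leading transport cancellation built into \eqref{def-Xk}/\eqref{a-gamma} is designed so that $c_{X_s,B} - \tq_0\tp$ is a sum of (a) a $T_{\tg}\partial r_s$-type term, bounded by $\BB^2$ via \eqref{bd-ra}-analogues, (b) a para-product-reassociation error of the form $(T_{gh}-T_gT_h)\partial^2 u$, bounded by $\BB^2$ via Lemma~\ref{l:para-p+} (resp.\ its $\Psi$DO version Proposition~\ref{p:prod-pdo}), and (c) purely unbalanced $\Pi$ and low-frequency-on-$\partial^2u$ terms, which land in $\BB^2 L^\infty S^2$ by \eqref{TDCC-CC}; together these give $\tq_2$ with $\|\tq_2\|_{L^\infty S^2}\lesssim \BB^2$, while $\tq_0$ is read off as a $\DPP S^0$ symbol (it is built from $\partial^2 u$ contracted with $\partial u$ and metric factors), giving \eqref{q0}. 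The main obstacle I expect is bookkeeping at the symbol level: one must carry the $\xi_0$-degree-one constraint through the commutator $\{\tp,\tX_s\}$ and the $T_{\tb_0^\gamma}$ term and verify that all the genuinely unbalanced pieces either cancel against the recursion or can be absorbed into $\tq_0\tp$ — in particular checking that the $\partial_0\tX_{s0}\,\tp$ term and the $s$-dependent $T_{\tb_0^\gamma}$ contribution, which have no analogue in the $s=0$ case, are exactly of the form that the extra $s$-terms in \eqref{a-gamma} are designed to kill modulo $\tp$. This is where Proposition~\ref{p:com-pdo} (the refined commutator expansion with its unbalanced subprincipal $L_{lh}$ term) is needed rather than the crude $\PP S^m$ calculus, and getting that expansion to line up with \eqref{a-gamma} is the delicate point.
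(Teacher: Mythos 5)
Your overall picture of what \eqref{a-gamma} encodes and of the weaker cancellation requirement $c_{X_s,B}\bapprox \tq_0\tp$ is correct, and the time-like argument by $O(\AA)$ perturbation of $\xi_0$ is fine. But the construction you propose --- a direct dyadic recursion for $\tX_s$ itself, inside the class of first-degree polynomials in $\xi_0$, with the $pq_0^\gamma$ freedom absorbed at each step --- is not the paper's approach, and as written it has a genuine gap: you never say how $q_0^\gamma$ is actually computed from $\tX_{s,<k}$ so as to keep $a^\gamma$ (hence $\tX_{s,k}$) a first-degree polynomial in $\xi_0$, nor why the modified recursion still contracts. The very reason the paper warns that requiring $c_{X_s,B}\bapprox 0$ is ``too much to ask'' is that the constraint and the polynomial structure conflict; the corrections by multiples of $\tp$ feed back into the recursion and it is not at all obvious the scheme closes.

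What the paper does instead, and what you are missing, is a three-step detour. First, it drops the $\xi_0$-polynomial constraint and constructs an auxiliary symbol $\zX_s$ solving the \emph{stronger} requirement $c^{red}_{\zX_s,B}\bapprox 0$ (no $q_0\tp$ slack). Second, it tames the $s$-dependence by a multiplicative ansatz $\zX_s = Z_s X$ with $X$ the $s=0$ para-Killing field of Lemma~\ref{l:X}, converts the resulting transport relation into one for $\log\tilde Z_s$ with $\tilde Z_s = Z_s|\xi'|^{2s}$, observes that $\log\tilde Z_s$ is linear in $s$, solves the recursion only at $s=1$ (where $\tilde Z_1$ is a genuine polynomial of degree $2$ in $\xi$, so the dyadic scheme of \eqref{tZs-def} converges just as in Lemma~\ref{l:X}), and then sets $\tilde Z_s = \tilde Z_1^s$. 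This step is what makes the fractional homogeneity in $s$ tractable; a direct recursion for $\tX_s$ gives you no handle on it. Third, it recovers the first-degree polynomial $X_s$ by matching $\zX_s$ on the two roots $\xi_0^{1,2}$ of $p$ (a Malgrange-type preparation), producing $X_s = \zX_s + dp$ with $d\in\PP S^{-1}$ given by an explicit divided-difference formula. Part (ii) then falls out because $c^{red}_{X_s,B} = c^{red}_{\zX_s,B} + p\,(\{p,d\}+\cdots)$, and the time-like claim reads off from $\zX_s = Z_s X$ with $Z_s>0$. Your proposal correctly anticipates the \emph{shape} of the final answer --- \eqref{a-gamma} and the $q_0^\gamma$ freedom --- but replaces the paper's factor-then-project construction, which is where the actual work lives, with an asserted recursion whose well-posedness you have not established.
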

From the perspective of energy estimates, it might seem that parts (i) and (ii) are 
the important ones. However, part (ii) will be seen as an immediate consequence 
of the representation in part (iii), which thus can be thought of as the more fundamental property. 
Also, in the proof of the energy estimates it will  on occasion be more convenient to 
directly use \eqref{a-gamma}.  In the sequel we will refer to $a^\gamma$ as the para-coefficients of $\tX_s$. We note that the choice of $q_0^\gamma$
is uniquely determined by the requirement that $a^\gamma$ are first degree polynomials in $\xi_0$.

\begin{proof}
It will be somewhat easier to construct the corresponding symbol $X_s$ as associated to  $p$, rather than to $\tp$; this avoids the slight symmetry breaking in the transition from $p$ to $\tp$. Precisely, we will choose $\tX_s$ of the form
\[
\tX_s = g^{00} X_s,
\]
and then express $c_{\tX_s,B}$ in terms of $X_s$ as follows:
\[
c_{\tX_s,B} = \{ p, X_s\} + 2 X_s (A^\gamma + 2sb^\gamma_0)\xi_\gamma + q_{00} p,
\]
where $q_{00}$ is given by
\[
q_{00} = \{ X_{1s}, \log g^{00}\} + \xi_0 \{X_{0s},\log g^{00}\} -  \partial_0 X_{0s}
- 2 s X_s \partial^0 u \, \tg^{0\nu} \partial_j \partial_\nu u 
   \xi_j |\xi'|^{-2}  ,
\]
and $b_0^\gamma$ has the form
\begin{equation}\label{b0-alpha}
b^\gamma_0 =  \partial^\beta u \, g^{\alpha \nu} 
 \partial_j \partial_\nu u\ \xi_\beta \xi_j |\xi'|^{-2}.  
\end{equation}
Here we have separated the two terms in $\tb_0^\gamma$; the first has contributed to $b_0^\gamma$, while the second has contributed the last term in the Lagrangian coefficient $q_{00}$. 

For clarity, we note that the exact expression of $q_{00}$
is not important, we will only use the fact that $q_{00} \in \DPP S^{0}$.
On the other hand, for $b_0^\gamma$ we will need the fact that it has a null structure.

Now we restate the proposition in terms of the new symbol $X_s$. Our goal will be 
to find $X_s$ in the same class as $\tX_s$, so that the reduced symbol
\begin{equation}\label{cred-def}
c^{red}_{X_s,B} = \{ p, X_s\} + 2X_s (A^\gamma + 2s b^\gamma_0)\xi_\gamma
\end{equation}
can be represented in the form 
\begin{equation}\label{cred-rep}
c^{red}_{X_s,B} =q_2(x,\xi) + q_0(x,\xi)p(x,\xi).
\end{equation}

Here there is a small twist in the argument. While $c_{X,B}$ is a second degree 
polynomial in $\xi_0$, this is no longer the case for $c^{red}_{X,B}$, 
which contains the term $\xi_0^3 \{ g^{00},X_{s0}\}$. For this reason, in \eqref{cred-rep} we apriori have to allow for symbols $q_2$, respectively $q_0$ which are third, respectively first degree polynomials in $\xi_0$. However, we can eliminate the $\xi_0^3$  term in $q_2$ with a $\xi_0$ correction in $q_0$. Then, returning to $c_{X,B}$, we obtain the representation \eqref{cX2-rep} with $\tq_2$ of second degree and $\tq_0$ of first degree. But $c_{X,B}$ is a second degree 
polynomial in $\xi_0$, so we finally conclude that $\tq_0$ must be independent of $\xi_0$.

We now proceed to construct the symbol $X_s$. 
As a first step in the proof, we seek to obtain a variant $\zX_s$ of the symbol $X_s$
where we drop the requirement that $\zX_s$ is a first order polynomial 
in $\xi_0$ but we ask for the stronger property that the associated symbol $c^{red}_{\zX_s,B}$ is fully balanced, which corresponds to $q_0=0$. Then, at the end, we 
choose $X_s$ to be first degree polynomial in $\xi_0$ which matches $\zX_s$ at the two roots of $p(x,\xi) = 0$ viewed as a polynomial in $\xi_0$.

The relation we seek for $\zX_s$ to satisfy on the characteristic set of $p$ is
\begin{equation}\label{tX-a}
  c^{red}_{\zX_s,B} \bapprox 0,  
\end{equation}
where, using the expressions \eqref{def-A} and \eqref{b0-alpha} for $A^\gamma$ and $b^\gamma$,
\[
c^{red}_{\zX_s,B} = \{ g^{\alpha \beta} \xi_\alpha \xi_\beta, \zX_s\} + 
2\partial^\alpha u \xi_\delta g^{\beta \delta} \partial_\alpha \partial_\beta u \cdot \zX_s 
+4s \zX_s \partial^\beta u g^{\alpha \nu}  \partial_j \partial_\nu u\ \xi_\alpha \xi_\beta \xi_j |\xi'|^{-2}.
\]

Here we recall the expression for the derivatives of $g$, see \eqref{d-gab},
\begin{equation}\label{dg}
\partial_\gamma g^{\alpha \beta} \xi_\alpha \xi_\beta
= -2 \partial^\beta u  g^{\alpha \delta} \partial_\delta \partial_\gamma u\,  \xi_\beta \xi_\alpha.
\end{equation}
Substituting this in the previous expression for $c^{red}_{\zX,B}$, we need the following relation to hold modulo balanced terms:
\[
\begin{aligned}
2 \xi_\alpha g^{\alpha \beta} \partial_\beta \zX_s
\bapprox & \ -2 \partial^\delta u \xi_\delta 
\partial_{\xi_\gamma} \zX_s
\cdot \xi_\alpha g^{\alpha \beta} \partial_\beta \partial_\gamma u  - 2\zX_s \partial^\gamma u \cdot 
\xi_\alpha g^{\beta \alpha} \partial_\gamma \partial_\beta u 
\\ & 
- 4s \zX_s  \partial^\delta u
\xi_\delta \xi_j |\xi'|^{-2}
  \cdot \xi_\alpha g^{\alpha \beta}  \partial_j \partial_\beta u .\ 
\end{aligned}
\]
  We can rewrite this using  the following operator
\[
L = \xi_\alpha g^{\alpha \beta} \partial_\beta
\]
in the form
\begin{equation}\label{tX-aa}
L \zX_s \bapprox  - \xi_\delta \partial^\delta u  
\partial_{\xi_\gamma} \zX_s
\cdot L \partial_\gamma u  -  \partial^\gamma u \zX_s \cdot 
L \partial_\gamma  u -  s  \partial^\delta u \zX_s
\xi_\delta \partial_{\xi_j} \log |\xi'|^2
  \cdot L \partial_j  u .
\end{equation}

By Lemma~\ref{l:X}, we already have a solution $X$ for $s=0$. 
Thinking of this multiplicatively, it is then natural  to look for $\zX_s$ of the form  
\[
\zX_s = Z_s X,
\]
where $Z_s \llcurly \partial u$ should be zero homogeneous in $\xi$ and must satisfy
\begin{equation}\label{LY}
L  Z_s \bapprox - \xi_\delta  \partial^\delta u( 
\partial_{\xi_\gamma}Z_s
 L \partial_\gamma u  + s   Z_s \partial_{\xi_j} \log |\xi|^2   
 L \partial_j  u ).
\end{equation}
We will also assume that $Z_s$ is a positive symbol; this will help later with the time-like condition. Then we can rewrite the above relation as a condition for $\log Z_s$, namely
\begin{equation}\label{LlogY}
L \log Z_s \bapprox - \xi_\delta  \partial^\delta u( 
\partial_{\xi_\gamma}( \log Z_s )
 L \partial_\gamma u  + s   \partial_{\xi_j} \log |\xi|^2   L \partial_j  u ).
\end{equation}
Here the inhomogeneous term is linear in $s$, so we will also look for a solution $\log Z_s$ which is linear in $s$.

There is one last algebraic simplification, which is to replace $Z_s$ by 
$\tilde Z_s = Z_s |\xi'|^{2s}$, which is $2s$-homogeneous and inherits the property that 
$\log \tilde Z_s$ is linear in $s$. Then $\log \tilde Z_s$ must solve
\begin{equation}\label{LlogtY}
 L \log \tilde Z_s \bapprox  - \xi_\delta \partial^\delta u 
\partial_{\xi_\gamma} \log \tilde Z_s  L \partial_\gamma u  .
\end{equation}
Dispensing with the log, we replace this by
\begin{equation}\label{LlogtYa}
 L \tilde Z_s \bapprox - \xi_\delta \partial^\delta u 
\partial_{\xi_\gamma}\tilde Z_s  L \partial_\gamma u  .
\end{equation}

Now we interpret the last relation paradifferentially, formally cancelling the $L$'s.
This suggests the following scheme to construct the dyadic parts of $\tilde Z_s$ inductively by setting
\begin{equation}\label{tZs-def}
\begin{aligned}
&\tilde Z_{s<0} =  \  |\xi|^{2s}
\\
&\tilde Z_{sk} =  \   \xi_\delta  T_{\partial^\delta u} 
T_{\partial_{\xi_\gamma} \tilde Z_{s}}
 \partial_\gamma u_k, \qquad  k \geq 1.
\end{aligned} 
\end{equation}
Since $\log \tilde  Z_s$ is linear in $s$, it suffices to solve this for some nonzero $s$.
The advantage here is that, if $s$ is a positive integer (say $s=1$) then all our iterates are polynomials of degree  $2s$ in $\xi$.  Hence the convergence issue disappears, due to our smallness condition for $u$, $\AA \ll 1$;  this is exactly as in the construction of $X$ in Section~\ref{s:def-X}.
 This defines $\tilde Z_1$ as a positive definite polynomial in $\xi$ of degree $2$, so that $\tilde Z_1 = \xi^2 (1+O(\AA))$.
Further, by the same argument as in the proof of Lemma~\ref{l:X}, it follows 
that the coefficients of $Z_1$ are paracontrolled by $\partial u$; 
in other words, $Z_1 \in \PP S^2$. In addition, by \eqref{tZs-def}, it also follows that (a choice for) the para-coefficients of $\tilde Z_1$,
as in Definition~\ref{d:paracontrol} is given by
\begin{equation}\label{tZ1-rep}
 \tilde Z_{1} =    T_{ \xi_\delta \partial^\delta u \partial_{\xi_\gamma} \tilde Z_{1}} \partial_\gamma u + r.
\end{equation}

\begin{remark}
We remark on the symbol $\tilde Z_1$, which is  quadratic in $\xi$
and para-commutes with $p$, in the sense that their Lie bracket is balanced and thus bounded by $\BB^2$. This symbol plays a role that is similar to that of the first order symbol $X$ constructed earlier. 
\end{remark}

Now that we have $\tilde Z_1$, for all real $s$ we may define 
\[
\tilde Z_s = \tilde Z_1^s,\qquad 
\zX_s = X (\tilde Z_1 /|\xi'|^2)^s.
\]
By Lemma~\ref{l:X} in the previous subsection we have $X \llcurly \partial u$.
Combining this with the similar property of $\tilde Z_1$, by the algebra and Moser properties of the space $\PP$ of paracontrolled distributions it follows that $\tX_s \llcurly \partial u$. Finally, combining the representations of $X$ and of $\tilde Z_1$ as paracontrolled distributions, as in \eqref{X-paracontrol}
and \eqref{tZ1-rep}, we obtain the corresponding $\PP$ representation for $\zX_s$ as in Definition~\ref{d:paracontrol} (see the relation \eqref{tX-aa})
\begin{equation}\label{tXs-PP}
 \zX_s =  - \xi_\delta T_{\partial^\delta u  
\partial_{\xi_\gamma} \zX_s}  \partial_\gamma u  -  T_{\partial^\gamma u \zX_s} \partial_\gamma  u -   
s \xi_\delta \partial_{\xi_j} \log |\xi'|^2
  T_{\partial^\delta u \zX_s} \partial_j  u + r_s.
\end{equation}
This in turn yields
the desired conclusion that $c^{red}_{\zX_s,B}$ is balanced, 
\begin{equation}\label{c-for-zX}
\|c_{\zX_s,B}\|_{L^\infty S^2} \lesssim \BB^2. 
\end{equation}
Indeed, the equivalent form \eqref{tX-aa} can be obtained by directly applying the operator $L$ in the relation \eqref{tXs-PP}; this is because the terms where the paracoefficients get differentiated are balanced, so we
are left with the terms where $L$ is applied to the main factors $\partial u$.

Now we carry out the last step of the proof, and define the symbol $X_s$ as the unique
first degree polynomial in $\xi_0$ with the property that 
\[
X_s(x,\xi) = \zX_s(x,\xi) \qquad \text{on} \quad \{g^{\alpha\beta}\xi_\alpha \xi_\beta = 0\}.
\]
We now show that this choice for $X_s$ has the desired properties. 

Recall that $\xi_0^1(x,\xi') < \xi_0^2(x,\xi')$ are the two real zeros of $p(x,\xi)$ as a polynomial of $\xi_0$, which are $1$-homogeneous and smooth in $\xi'$ and are also smooth functions of $\partial u$. Thus, 
\[
\xi_0^1, \xi_0^2 \in \PP S^1.
\]
The coefficients $X_{s0}$ and $X_{s1}$ in $X_s$ are obtained by solving a linear system,
\[
X_{s0} = \frac{ \zX_s(x,\xi_0^2,\xi') - \zX_s(x,\xi_0^1,\xi')}{ \xi_0^2 - \xi_0^1},
\qquad X_{s1} = \frac{ \zX_s(x,\xi_0^2,\xi')\xi_0^1 - \zX_s(x,\xi_0^1,\xi')\xi_0^2}{ \xi_0^2 - \xi_0^1}.
\]
By the algebra and Moser properties of the space $\PP$ of paracontrolled distributions,
it immediately follows that we have the symbol regularity properties $X_0 \in \PP S^0$ and $X_1 \in \PP S^1$.
By construction we also have a smooth division,
\begin{equation}\label{division}
X_s =  \zX_s + d p, 
\end{equation}
where we easily see that the quotient $d$ has regularity $d \in \PP S^{-1}$ by computing directly
\[
\begin{aligned}
d(x,\xi) = & \ \frac{X_s(x,\xi) - \zX_s(x,\xi)}{ p(x,\xi)}
\\
= & \ \frac{1}{g^{00}(\xi_0^1-\xi_0^2)}\left( \frac{ 
\zX_s(x,\xi) - \zX_s(x,\xi_0^1,\xi')}{\xi_0-\xi_0^1} -
\frac{\zX_s(x,\xi) - \zX_s(x,\xi_0^2,\xi')}{\xi_0-\xi_0^2}
\right).
\end{aligned}
\]
One may also interpret this as a form of the Malgrange preparation theorem in an easier case where the roots are separated.

We can now use \eqref{division} to relate $c^{red}_{X_s,B}$ with $c^{red}_{\zX_s,B}$:
\[
c^{red}_{X_s,B}= c^{red}_{\zX_s,B} + p(\{ p,d\} + d(A^\gamma + s b^\gamma_0) \xi_\gamma),
\]
which is exactly the desired representation \eqref{cred-rep}.

We can also use the relation \eqref{tXs-PP} for part (iii) of the proposition.
For this we first transition from $\zX_s$ to $X_s$. Using  \eqref{division}
and peeling off balanced terms, this gives the $\PP$ representation
\[
\begin{aligned}
X_s = & \  - \xi_\delta T_{\partial^\delta u  
\partial_{\xi_\gamma} X_s}  \partial_\gamma u  -  T_{\partial^\gamma u X_s} \partial_\gamma  u - s
\xi_\delta \partial_{\xi_j} \log |\xi'|^2
  T_{\partial^\delta u X_s} \partial_j  u 
\\
& \  \, - T_p \left(  \xi_\delta T_{\partial^\delta u  
\partial_{\xi_\gamma} d}  \partial_\gamma u  +   T_{\partial^\gamma u d} \partial_\gamma  u + s  
\xi_\delta \partial_{\xi_j} \log |\xi'|^2
  T_{\partial^\delta u d} \partial_j  u + d\right)
\\ 
&  \ \, + \left(T_d p + \xi_\delta T_{\partial^\delta u d \partial_{\xi_\gamma} p } \partial_\gamma u\right)+ r_s.
\end{aligned}
\]
In view of the paradifferential expansion  \eqref{est-R-for-du-inf}
for $g^{\alpha\beta}$,
 in the last bracket 
there is a leading order cancellation,
\[
T_d p + \xi_\delta T_{\partial^\delta u d \partial_{\xi_\gamma} p } \partial_\gamma u = r_s.
\]
This implies that $X_s$ admits a representation of the form
\begin{equation}\label{Xs-PP}
 X_s =  - \xi_\delta T_{\partial^\delta u  
\partial_{\xi_\gamma} X_s}  \partial_\gamma u  -  T_{\partial^\gamma u X_s} \partial_\gamma  u - \frac{s}2  
\xi_\delta \partial_{\xi_j} \log |\xi'|^2
  T_{\partial^\delta u X_s} \partial_j  u + 
 T_{p} z+
  r_s.
\end{equation}
where $z \in \PP S^{-1}$. 
At this stage we only know that $z$ and $r_s$ are smooth as functions of $\xi_0$.
On the other hand, the remaining terms are at most second degree polynomials in $\xi_0$.
We claim that, without any restriction in generality, we may take $z$ independent of $\xi_0$ and then $r_s$ has to be at most second degree polynomial in $\xi_0$.

Subtracting a multiple of $p$ from all the paracoefficients above and discarding balanced contributions, we may reduce to the case of a first degree polynomial, i.e. to a relation of the form
\[
T_p z(x,\xi) + r_s(x,\xi) = Z_1(x,\xi') + Z_0(x,\xi') \xi_0,
\]
where $z \in \PP S^{-1}$ and $Z_j \in \PP S^j$, while $\partial r_s = O(\BB^2)$,
with full symbol regularity in $\xi$. We will show that in this case we must have
$\partial Z_j= O(\BB^2)$, again with full symbol regularity. This would imply
that we may take $z=0$ in the last relation.

We begin by differentiating this relation in $x$ and $t$, noting that
$T_{\partial p} z$ may be placed in $\partial r_s$:
\[
T_p \partial z(x,\xi) + \partial r_s(x,\xi) = \partial Z_1(x,\xi') + \partial Z_0(x,\xi') \xi_0.
\]
We may also perturbatively replace $T_p$ with $p$, arriving at
\begin{equation}
p \partial z(x,\xi) + r^1_s(x,\xi) = \partial Z_1(x,\xi') + \partial Z_0(x,\xi') \xi_0,
\end{equation}
where $r^1$ has size $\BB^2$ and symbol regularity, 
\[
| \partial_\xi^\alpha r_1| \lesssim \BB^2 |\xi|^{1-|\alpha|}.
\]
For fixed $x$, we examine this relation on the characteristic cone $C = \{p(x,\xi)= 0\}$.
There we have 
\[
| \partial Z_1(x,\xi') + \partial Z_0(x,\xi') \xi_0| \lesssim \BB^2 |\xi| ,
\]
so we may directly conclude that $\partial Z_1(x,\xi'), \partial Z_0(x,\xi') = O(\BB^2)$.
Next we need a similar bound for their derivatives 
$ \partial_{\xi'}^\alpha \partial Z_1(x,\xi'), \partial_{\xi'}^\alpha \partial Z_0(x,\xi')$ with respect to $\xi'$. We fix $x$ and argue by induction in $|\alpha|$.
Then it suffices to use derivatives which are tangent to the cone at that $x$,
which on one hand kill $p$ but on the other hand give a full range of $\xi'$ derivatives
for $Z_j$.
 
Lastly we switch from $X_s$ to $\tX_s$. Again peeling off $r_s$ type contributions, we have
\[
\begin{aligned}
\tX_s = & \ g^{00} X_s
\\
= & \ T_{g^{00}} X_s + T_{X_s} g^{00} + r_s
\\
= & \ - \xi_\delta T_{\partial^\delta u  
\partial_{\xi_\gamma} \tX_s}  \partial_\gamma u  -  T_{\partial^\gamma u \tX_s} \partial_\gamma  u - s  
\xi_\delta \partial_{\xi_j} \log |\xi'|^2
  T_{\partial^\delta u \tX_s} \partial_j  u +  T_{\tX_s} \log g^{00} +
 T_{p} T_{g^{00}} z +
  r_s.
\end{aligned}
\]
It remains to expand the fourth term, using Lemma~\ref{l:g-para}:
\[
T_{\tX_s} \log g^{00} = -2 T_{\tX^s \partial^0 u \tg^{0\alpha}} \partial_\alpha u + r_s .
\]
This finally yields the representation \eqref{Xs-PP-re}  with the paracoefficients in \eqref{a-gamma}, thereby concluding the proof of part (iii) of Proposition~\ref{p:Xs}.

The final property of $X_s$ to be verified is that $X_s$ is time-like. This property is easily seen to depend only on the sign of the symbol $X_s$ on the characteristic set $\{p=0\}$. But 
by construction, $X_s$ has the same sign as $\tX_s$ there, which in turn has the same sign as the vector field $X$ in Section~\ref{s:def-X}. Then the time-like property for $X_s$ follows
from the similar property of $X$.
\end{proof}

\bigskip

While it is more streamlined to state Proposition~\ref{p:Xs} and its proof directly in terms 
of the symbol $c_{\tX_s,B}$, in order to prove energy estimates it is more efficient
to peel off balanced components of  $c_{\tX_s,B}$, so that we are left 
with less debris to contend with.

To start with, let us assume that $\tX_s \in \PP S^1$ admits the representation \eqref{Xs-PP-re} with $a^\gamma \in \PP$ but without requiring that $a^\gamma$ satisfy the relation \eqref{a-gamma}. For such $X_s$, we peel off balanced components of $c_{\tX_s,B}$ following the two steps in Lemma~\ref{l:circle-app}.
These steps are briefly reviewed in the sequel.

In a first stage, we note that all expressions in $c_{\tX_s,B}$ can be seen as linear combinations on the form $\PP \cdot \partial \PP$, where the output is balanced unless the second factor has higher frequency, 
see Lemma~\ref{l:Moser-control}(a). This allows us to replace such products by paraproducts of the form $T_{\PP} \partial \PP$.  Further, using the definition of paracontrolled distributions for the second
factor, we can discard the error term as balanced and arrive at  more precise paraproducts
of the form $ T_{\PP} \partial^2 u$, namely 
\begin{equation}\label{tc_para}
c_{\tX_s,B} \bapprox T_{a^{\alpha \beta}} \partial_\alpha \partial_\beta u,
\end{equation}
where the coefficients $a^{\alpha \beta}$ and $\ta^{\alpha\beta}$ are explicitly computable as algebraic expressions in terms of $u,X$ and the paracoefficients $a^\gamma$ of $X$. Precisely,
\[
\begin{aligned}
c_{\tX_s,B}(x,\xi) \bapprox & \   T_{\partial_{\xi_\gamma} \tp(x,\xi)} \partial_\gamma \tX_s
-  T_{\partial_{\xi_\gamma} \tX_s} \partial_\gamma \tp(x,\xi) 
+   2T_{\tX_s}( \tA^\gamma + 2s\tb^\gamma_0(x,\xi))   \xi_\gamma
+ T_p \partial_0 \tX_{s0}
\\
\bapprox & \ 2 T_{\xi_\alpha \tg^{\alpha\beta} a^\gamma} \partial_\beta \partial_\gamma u  
+ 2 T_{\xi_\alpha \xi_\beta \partial_{\xi_\gamma} \tX_s \partial^\beta u \tg^{\alpha \delta}} \partial_\delta \partial_\gamma u + 2 T_{\tX_s (\partial^\beta u\xi_\gamma \tg^{\gamma \alpha}
+  \partial^0 u \tg^{0\beta} \xi_\gamma \tg^{\gamma \alpha})}
\partial_{\alpha} \partial_{\beta} u
\\ & \ + 2 s T_{ \tX_s\partial^\beta u \xi_\gamma g^{\gamma \alpha} \xi_\beta \partial_{\xi_\delta} (\log |\xi'|^2)} \partial_\alpha \partial_\delta u
-  2s T_{ \tX_s\partial^0 u \tg^{0\nu} \xi_\alpha \tg^{\alpha\beta} \xi_\beta \partial_{\xi_\delta} (\log |\xi'|^2)} \partial_\nu \partial_\delta u
\\ & 
\ +  T_{\tp a^\gamma_0} \partial_0 \partial_\gamma u
\end{aligned}
\]
so we obtain, in unsymmetrized form, the relation \eqref{tc_para} with
\begin{equation}\label{a-ag}
\begin{aligned}
a^{\alpha \gamma} = & \  2\xi_\beta \tg^{\alpha\beta} 
( a^\gamma + \xi_\delta \partial^\delta u \partial_{\xi_\gamma} \tX_s + \tX_s \partial^\gamma u
+ \tX_s  \partial^0 u \tg^{0\beta}
+ \frac{s}2 \tX_s \xi_\delta \partial^\delta u \partial_{\xi_\gamma}(\log|\xi'|^2) )
\\ & \ - \tp(2s \tX_s\partial^0 u \tg^{0\alpha} \partial_{\xi_\gamma} (\log |\xi'|^2)- \delta^{\alpha}_0  a^\gamma_0).
\end{aligned}
\end{equation}

Finally, the last difficulty we face is that  we do not have good enough estimates for $\partial_t^2 u$.  This is rectified by using instead  the corrected expression
$\hat \partial_t^2 u $ introduced in \eqref{hat-dt2}.
This yields a corresponding correction of $c$, namely
\begin{equation}\label{hc-def}
\hc_{\tX_s,B} =   T_{a^{\alpha \beta}} \widehat{\partial_\alpha \partial_\beta} u.
\end{equation}

With these notations, we can now state a more refined version of Proposition~\ref{p:Xs}:

\begin{proposition}\label{p:Xs+}
Let $\tX_s$ be the symbol constructed in Proposition~\ref{p:Xs}.
Then the conclusion of Proposition~\ref{p:Xs} holds equally 
for $\hc_{\tX_s,B}$, with the corresponding expressions 
$\hq_2$ and $\hq_0$ satisfying a
stronger version of 
\eqref{q2}, 
\begin{equation}\label{c-q2}
\|\hq_2\|_{L^\infty S^2} \lesssim \BB^2, \qquad \|P_{<k} \partial_0 \hq_2\|_{ L^\infty S^2} \lesssim 2^k \BB^2,
\end{equation}
and with $\hq_0$ having $\partial_x \PP$ type regularity,
\begin{equation}\label{c-q0}
\| \hq_0\|_{\partial_x \PP S^0} \lesssim \AA    .
\end{equation}
\end{proposition}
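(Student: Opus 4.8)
The plan is to deduce Proposition~\ref{p:Xs+} from Proposition~\ref{p:Xs} by tracking what the replacement $\partial_\alpha \partial_\beta u \rightsquigarrow \widehat{\partial_\alpha\partial_\beta} u$ costs in the flux symbol, and then reading off the improved structure. First I would record the starting point: by the analysis leading to \eqref{tc_para} and \eqref{hc-def}, we have
\[
c_{\tX_s,B} = T_{a^{\alpha\beta}} \partial_\alpha\partial_\beta u + \text{(balanced)}, \qquad \hc_{\tX_s,B} = T_{a^{\alpha\beta}} \widehat{\partial_\alpha\partial_\beta} u ,
\]
with $a^{\alpha\beta} \in \PP S^0$ as in \eqref{a-ag}. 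So the difference is
\[
c_{\tX_s,B} - \hc_{\tX_s,B} = T_{a^{00}} (\partial_0^2 u - \widehat{\partial_0^2} u) + \text{(balanced)} = - T_{a^{00}} \sum_{(\alpha,\beta)\neq(0,0)} \Pi(\tg^{\alpha\beta}, \partial_\alpha\partial_\beta u) + \text{(balanced)},
\]
by Definition~\ref{d:dt2u}. The $\Pi$ term is balanced in $L^\infty S^0$ by \eqref{TDCC-CC} (or Lemma~\ref{l:utt}), so $T_{a^{00}}$ applied to it is bounded by $\BB^2$ in $L^\infty S^0$; hence $c_{\tX_s,B} - \hc_{\tX_s,B}$ is $O(\BB^2)$ in $L^\infty S^2$. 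Combined with the representation \eqref{cX2-rep} for $c_{\tX_s,B}$, this gives immediately a representation $\hc_{\tX_s,B} = \hq_2 + \hq_0\, \tp$ with $\hq_0 = \tq_0$ (so \eqref{q0} already gives $\hq_0 \in \DPP S^0$, and I will upgrade this below to $\partial_x\PP S^0$) and $\hq_2 = \tq_2 - (c_{\tX_s,B} - \hc_{\tX_s,B})$, which is $O(\BB^2)$ in $L^\infty S^2$, establishing the first half of \eqref{c-q2}.

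The substantive new content is the time-derivative bound $\|P_{<k}\partial_0 \hq_2\|_{L^\infty S^2} \lesssim 2^k \BB^2$ and the upgrade of $\hq_0$ to $\partial_x\PP S^0$. The point of the hat-correction is exactly that $\widehat{\partial_\alpha\partial_\beta} u$, unlike $\partial_\alpha\partial_\beta u$, lies in $\partial_x\DCC$ after one more spatial derivative — precisely Lemma~\ref{l:d3u-in-d2cc}, which says $\partial_\gamma \widehat{\partial_\alpha\partial_\beta} u \in \partial_x\DCC$. So I would write $\hq_2$, via \eqref{hc-def}, as a sum of terms $T_{a^{\alpha\beta}}\widehat{\partial_\alpha\partial_\beta} u$ with $a^{\alpha\beta} \in \PP S^0$, subtract off the $\tp$-multiple coming from the division, and then apply $\partial_0$. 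Distributing $\partial_0$: when it lands on $a^{\alpha\beta}$, we use part (c) of Lemma~\ref{l:Moser-control} to get $\partial_0 a^{\alpha\beta} \in \DPP S^0$, and pair it against the $L^\infty$-bound $\|P_k \widehat{\partial_\alpha\partial_\beta} u\|_{L^\infty} \lesssim 2^{k/2}\BB c_k$ from Lemma~\ref{l:utt}; the $\DCC$ decomposition of $\partial_0 a^{\alpha\beta}$ matched against this bound, together with the frequency envelope, yields $2^k\BB^2$. When $\partial_0$ lands on $\widehat{\partial_\alpha\partial_\beta} u$, we invoke Lemma~\ref{l:d3u-in-d2cc}: $\partial_0\widehat{\partial_\alpha\partial_\beta} u \in \partial_x\DCC$, whose frequency-$k$ piece is bounded by $2^{3k/2}\BB c_k + 2^k\BB^2$, and pairing against the $L^\infty$ bound on the $\PP S^0$ coefficient again gives $2^k\BB^2$ after frequency summation. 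The $\tp$-multiple $\hq_0\tp$ gets differentiated the same way and contributes consistently. For the $\partial_x\PP S^0$ claim on $\hq_0$: $\hq_0$ equals $\tq_0$ from \eqref{q0}, which in turn arises in the proof of Proposition~\ref{p:Xs} from the division term $d$ and the expression \eqref{a-ag}; all the pieces there that are genuinely in $\DPP$ but not $\partial_x\PP$ come precisely from the $\partial_t^2 u$-type terms, which after the hat-correction are replaced by $\widehat{\partial_\alpha\partial_\beta} u$, and then Lemma~\ref{l:switch-dt} (applied to $\tq_0 \in \DPP$ — note the symbol-level version, with $\xi'$ as parameter) produces the representation $\hq_0 = \partial_x(\text{something in }\PP S^0) + O(\BB^2)$, i.e. $\hq_0 \in \partial_x\PP S^0$ with the claimed $O(\AA)$ bound.

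The main obstacle I anticipate is bookkeeping rather than conceptual: one must be careful that the symbol classes survive all the manipulations with $\xi'$ as a parameter — in particular that Lemmas~\ref{l:utt}, \ref{l:d3u-in-d2cc}, \ref{l:Moser-control} and \ref{l:switch-dt}, all stated for functions, apply uniformly in $\xi'$ to the frequency-localized symbol pieces after separation of variables, so that the estimates assemble into genuine $S^0$ (resp. $S^2$) symbol bounds with all $\xi'$-derivatives. A secondary subtlety is making sure the $\tp$-multiple terms, and the $\hat{\cdot}$-correction terms, never get differentiated more than once in time — which is automatic here because we only ever apply a single $\partial_0$ and the structural lemmas \eqref{fe-dt2u}, \eqref{fe-dt2u-err} are tailored exactly to that. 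Beyond that, the proof is a direct combination of Proposition~\ref{p:Xs}, the definition of the hat-correction, and the already-established mapping properties of $\PP$, $\DPP$, $\DCC$ and $\partial_x\DCC$.
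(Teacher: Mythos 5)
Your first paragraph is fine: the difference $c_{\tX_s,B}-\hc_{\tX_s,B}$ is indeed balanced, so the first bound in \eqref{c-q2} does follow by shifting the balanced part of $c-\hc$ into $\hq_2$. But this argument produces no control on $\partial_0\hq_2$ — you inherit $\hq_2=\tq_2+O(\BB^2)$, and Proposition~\ref{p:Xs} gives no estimate for $\partial_0\tq_2$ — so for the second bound in \eqref{c-q2} and for the upgrade of $\hq_0$ you correctly switch to working with the explicit paradifferential expression $\hc=T_{a^{\alpha\beta}}\widehat{\partial_\alpha\partial_\beta} u$.

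The gap is in that second step, and it is a real one. You treat $a^{\alpha\beta}$ as a generic element of $\PP S^2$ (you wrote $\PP S^0$, a slip — it is order $2$). With only this information, the contribution where $\partial_0$ lands on $\widehat{\partial_\alpha\partial_\beta} u$ does \emph{not} close: Lemma~\ref{l:d3u-in-d2cc} gives $\partial_0\widehat{\partial_\alpha\partial_\beta} u=g_1+g_2\in\partial_x\DCC$ with $\|P_j g_1\|_{L^\infty}\lesssim 2^{3j/2}\BB c_j$, and pairing $g_1$ against the $L^\infty$ bound for the $\PP$-coefficient yields, after summation in $j<k$, a bound of size $2^{3k/2}\BB c_k$ (or at best $2^{3k/2}\BB^2$ using the $\BB$-part of $\|a\|_{\CC}$), not the required $2^k\BB^2$. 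Your proposal appeals to a cancellation with $\partial_0(\hq_0\tp)$ ("contributes consistently"), but that cancellation is exactly what has to be exhibited, and you have not exhibited it; moreover the $O(\BB^2)$ modification of $\hq_0$ produced by Lemma~\ref{l:switch-dt} carries no $\partial_0$ control either, so once it is moved into $\hq_2$ it breaks the second bound in \eqref{c-q2} anyway.

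What you are missing is the structural fact the paper actually uses: a direct computation combining \eqref{a-ag} with the explicit formula \eqref{a-gamma} for $a^\gamma$ (which is precisely why Proposition~\ref{p:Xs}(iii) insists on the specific paracoefficients \eqref{a-gamma}, not just membership in $\PP S^1$) shows that $a^{\alpha\beta}$ factors as $a^{\alpha\beta}=\tp\, q^{\alpha\beta}$ with $q^{\alpha\beta}\in\PP S^0$. This factorization is what lets one write, for $(\alpha,\beta)\neq(0,0)$,
\[
T_{\tp q}\partial_x\partial u = T_{\tp}\partial_x(T_q\partial u) - T_{\tp}T_{\partial_x q}\partial u + (T_{\tp q}-T_{\tp}T_q)\partial_x\partial u,
\]
and set $\hq_0\tp$ to be the first term and $\hq_2$ to be the remaining two; the case $(\alpha,\beta)=(0,0)$ is then reduced to this via the definition of the hat correction. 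In this expression $\hq_2$ involves only $\partial u$ and $\partial_x\partial u$ at highest order, never $\partial_0\widehat{\partial\partial} u$, and the $\partial_0\hq_2$ bound then follows from Lemma~\ref{l:utt}, Lemma~\ref{l:Moser-control}(a) and Lemma~\ref{l:para-p+} with no loose ends. Simultaneously $\hq_0$ is \emph{built} as $\partial_x(T_q\partial u)\in\partial_x\PP S^0$ rather than recovered by perturbing $\tq_0$, which removes the need for Lemma~\ref{l:switch-dt} and its uncontrolled $O(\BB^2)$ remainder. So the argument is not a perturbation of Proposition~\ref{p:Xs} but a re-derivation with the factorization $a^{\alpha\beta}=\tp q^{\alpha\beta}$ as the central input; identify and verify that factorization and the rest of your outline goes through.
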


\begin{proof}
A direct computation using \eqref{a-ag} and \eqref{a-gamma} shows that the coefficients $a^{\alpha \beta}$ have the form 
\[
a^{\alpha \beta} = \tp q^{\alpha\beta}, \qquad q^{\alpha\beta} \in \PP S^0,
\]
and thus
\[
\hc_{X,B} =  T_{ \tp q^{\alpha \beta}}
\widehat{\partial_\alpha \partial_\beta} u,
\]
 We need to express this in the form $T_{\tp} \partial_x \BB$ plus a balanced component.
For this we consider two cases:

\bigskip

a) If $(\alpha,\beta) \neq (0,0)$ then the above component of $\hc$ has the form
\[
T_{\tp q} \partial_x \partial u = T_{\tp } \partial_x (T_q \partial u) - T_{\tp} T_{\partial_x q} \partial u 
+ (T_{\tp q} - T_{\tp} T_{q}) \partial_x \partial u. 
\]
Here the first term on the right is as needed, so we set 
\[
\hq_2 = - T_{\tp} T_{\partial_x q} \partial u 
+ (T_{\tp q} - T_{\tp} T_{q}) \partial_x \partial u. 
\]
The first term is balanced by Lemma~\ref{l:Moser-control}(a) and the second 
is balanced by Lemma~\ref{l:para-p+}.
We still need to estimate $\partial_0 \hq_2$ as in \eqref{c-q2},
which is immediate using Lemma~\ref{l:utt}, Lemma~\ref{l:Moser-control}(a) and Lemma~\ref{l:para-p+}.

\medskip
b) If $(\alpha,\beta) = (0,0)$ then the above component has the form
\[
T_{\tp q} \hat\partial_t^2 u = \sum_{(\alpha,\beta) \neq (0,0)} T_{\tp q} (T_{\tg^{\alpha\beta}} \partial_\alpha \partial_\beta u
+ T_{\partial_\alpha \partial_\beta u} \tg^{\alpha \beta}).
\]
The first term on the right is treated exactly as in case (a), by pulling out one spatial derivative, while the second is directly placed in $\hq_2$ 
using again  Lemma~\ref{l:utt} and (a minor variation of) Lemma~\ref{l:Moser-control}(a).
\end{proof}

\subsubsection{Paradifferential energy estimates associated to $\tX_s$}
We now use the symbol $\tX_s$ given by Proposition~\ref{p:Xs} in order to construct an $H^1 \times L^2$ balanced energy functional for the conjugated problem \eqref{paralin-inhom+}. This in turn gives an
$H^{s+1} \times H^s$ balanced energy functional for the original linear paradifferential flow \eqref{paralin-inhom}, thus completing the proof of Theorem~\ref{t:para-wp}.

Broadly speaking, we will be following the analysis in the $s=0$ case, but with 
more care since we are replacing the vector field $X$ with the pseudodifferential multiplier $\tX_s$. In particular, here, instead of paraproducts we we will have to commute 
paraproducts with paradifferential operators. The difficulty  is that we 
will no longer be able to estimate the commutator contributions in a direct, perturbative fashion; instead, we will need to take into account unbalanced subprincipal commutator terms, and devise an additional zero order correction to $\tX_s$ in order to deal with them.

We begin by considering the conjugation operator $B$, for which we provide a favourable decomposition:

\begin{lemma}
The operator $B$ given by \eqref{def-R} admits a decomposition
\begin{equation}
B = B_0 + B_1 + B_2,
\end{equation}
where the three components are as follows:

(i) $B_0 = T_{b_0^\gamma} \partial_\gamma$ is the leading part, with symbol
\begin{equation}
b_0^\gamma(x,\xi)= i |\xi|^s \{ \tg^{\alpha \beta} \xi_{\alpha} \xi_{\beta},|\xi|^{-s}\}.
\end{equation}

(ii) $B_1$ is unbalanced but with a favourable null structure, 
\begin{equation}
B_1 w = T_{h(\partial u)} T_{\tg^{\alpha \beta}} L_{lh}(\partial_\alpha \partial u, \partial_\beta|D_x|^{-1}w)
\end{equation}
with $h$ depending smoothly on $\partial u$.

(iii) $B_2$ is balanced,
\begin{equation}
\| B_2 w \|_{L^2} \lesssim \BB^2 \|\partial w\|_{L^2}   .  
\end{equation}
\end{lemma}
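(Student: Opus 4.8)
The plan is to obtain the decomposition of $B = \bD^s[\D_\alpha T_{\tg^{\alpha\beta}}\D_\beta - T_{\tA^\gamma}\D_\gamma, \bD^{-s}]$ by a direct symbol calculus computation, separating the leading symbol, the unbalanced subprincipal part, and a genuinely balanced remainder. First I would dispose of the easy pieces. The term $\bD^s[T_{\tA^\gamma}\D_\gamma, \bD^{-s}]$ is already balanced: since $\tA^\gamma \in \DPP$ (as noted after Lemma~\ref{l:ppxDPP}), one can write $\bD^s[T_{\tA^\gamma}, \bD^{-s}]\D_\gamma$ and apply the commutator/product bounds for $\DPP S^0$ symbols against $\bD^{\pm s}$, which gain one derivative and cost $\|\tA\|_{\DCC}$, leading to an $O(\BB^2)$ bound; this contribution goes into $B_2$. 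Likewise, the part of $\bD^s[\D_\alpha T_{\tg^{\alpha\beta}}\D_\beta, \bD^{-s}]$ where the commutator with $\bD^{-s}$ acts on the outer $\D_\alpha$ or the inner $\D_\beta$ rather than on $T_{\tg}$ contributes nothing, since $\D_\alpha$ and $\bD^{-s}$ commute (purely spatial Littlewood--Paley/multipliers). So the whole commutator reduces to $\D_\alpha \bD^s [T_{\tg^{\alpha\beta}}, \bD^{-s}] \D_\beta$, plus already-balanced errors.

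Next I would analyze $\D_\alpha \bD^s[T_{\tg^{\alpha\beta}}, \bD^{-s}]\D_\beta$. Here I invoke Proposition~\ref{p:com-pdo} (or rather its analogue/ingredients) with $g$ replaced by the function $\tg^{\alpha\beta} \in \PP$ and $h$ the multiplier $\bD^{-s}$: the commutator expansion gives a principal part $-iT_{\{\tg^{\alpha\beta}, |\xi'|^{-s}\}_p}$, whose para-Lie bracket with a pure multiplier reduces to $-iT_{\partial_{\xi}|\xi'|^{-s} \cdot \partial_x \tg^{\alpha\beta}}$; conjugating back by $\bD^s$ and contracting with $\D_\alpha \cdots \D_\beta$ produces exactly the stated symbol $b_0^\gamma(x,\xi) = i|\xi|^s\{\tg^{\alpha\beta}\xi_\alpha\xi_\beta, |\xi|^{-s}\}$, i.e. $B_0 = T_{b_0^\gamma}\D_\gamma$. (This matches \eqref{b0} up to the overall normalization and the extraction $b_0 = 2sib_0^\gamma\xi_\gamma$ already recorded there.) The remaining terms from Proposition~\ref{p:com-pdo} are an $OP\PP S^{-2}L_{lh}(\partial_x^2 \tg^{\alpha\beta}, \cdot)$ piece and an $O(\BB^2)$ remainder $R$. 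The $R$ piece, once multiplied by $\D_\alpha, \D_\beta$ and conjugated by $\bD^{\pm s}$, stays $O(\BB^2)$ and goes into $B_2$.

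The heart of the argument, and the main obstacle, is to identify the $OP\PP S^{-2}L_{lh}(\partial_x^2\tg^{\alpha\beta},\cdot)$ term as the null-structured operator $B_1$. The point is that $\partial_x^2 \tg^{\alpha\beta}$ is \emph{not} balanced by itself, but it is a smooth function of $\partial u$ times $\partial^2_x u$ type expressions (using $\tg^{\alpha\beta} = \tg^{\alpha\beta}(\partial u)$ and the chain rule); more precisely $\D_x\D_x\tg^{\alpha\beta} = T_{h_1(\partial u)}\D_x\D u \cdot (\text{stuff}) + \cdots$. I would unfold the bilinear form $L_{lh}(\partial_x^2\tg^{\alpha\beta}, \cdot)$ by expanding $\partial_x^2\tg$ via paraproducts, keeping only the low-high piece $T_{h(\partial u)}(\cdots \D_\alpha\D u \cdots)$ where the rough factor $\D_\alpha\D u$ sits in the high slot, and recognizing that the outer $\D_\alpha, \D_\beta$ derivatives combine with the $|D_x|^{-1}$ left over from conjugation and the $\tg^{\alpha\beta}$ coefficient to give precisely the $Q_0$-type structure $T_{h(\partial u)}T_{\tg^{\alpha\beta}}L_{lh}(\partial_\alpha\partial u, \partial_\beta|D_x|^{-1}w)$. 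Any piece of $\partial_x^2\tg$ that is \emph{not} of low-high type — i.e. high-high or low-high with the smooth factor in the high slot — is balanced by the Coifman--Meyer/$\CC_0$ bounds of Lemma~\ref{l:Moser-control0} and absorbed into $B_2$; the final $L^2\to L^2$ bound $\|B_2 w\|_{L^2}\lesssim\BB^2\|\D w\|_{L^2}$ then follows by collecting all these contributions and using $\|\D^2 u\|_{\DCC}\lesssim 1$ together with $\|\D u\|_{\CC}\lesssim 1$. I expect the bookkeeping of exactly which frequency interactions are unbalanced versus balanced — and making sure the $|D_x|^{-1}$ and $\bD^{\pm s}$ weights land on the correct factors so the $L_{lh}$ structure is genuine — to be the delicate part; the individual estimates are all instances of results already proved in Sections~\ref{s:not} and~\ref{s:paracontrol}.
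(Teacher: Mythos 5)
Your overall plan matches the paper's: the lemma is stated to be a direct consequence of Proposition~\ref{p:com-pdo}, and your Steps 2--4 (commuting $\D_\alpha,\D_\beta$ through $\bD^{\pm s}$, applying Proposition~\ref{p:com-pdo} to $[T_{\tg^{\alpha\beta}},\bD^{-s}]$, and identifying $B_0,B_1,B_2$ with the principal, subprincipal, and balanced terms) are exactly the intended unpacking. The treatment of the $\tg$ commutator is correct.

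There is, however, a real gap in your dispatch of the gradient term. The claim that $\bD^s[T_{\tA^\gamma}\D_\gamma,\bD^{-s}]$ is already balanced is false. Localizing to input frequency $\approx 2^k$, the conjugated commutator costs $\|\partial_x(\tA^\gamma)_{<k}\|_{L^\infty}$; since $\tA^\gamma\in\DPP\subset\DCC$ decomposes as $\tA^\gamma = f_1+f_2$ with $\|P_j f_1\|_{L^\infty}\lesssim 2^{j/2}\BB c_j$ and $\|f_2\|_{L^\infty}\lesssim\BB^2$, one gets $\|\partial_x(\tA^\gamma)_{<k}\|_{L^\infty}\lesssim 2^{3k/2}\BB c_k + 2^k\BB^2$. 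After dividing by $2^k$ (for the $H^1\to L^2$ norm) the first term leaves $2^{k/2}\BB c_k$, and since the envelope $(c_k)$ is merely $\ell^2$, $\sup_k 2^{k/2}c_k$ is unbounded --- so this piece is not $O(\BB^2)$. Put differently, $\partial_x\tA^\gamma$ contains third derivatives of $u$ that are no better controlled than the $\partial_x^2\tg^{\alpha\beta}$ arising from the $\tg$ commutator, and you cannot discard them any more than you could there. What rescues this contribution is the null structure already built into $\tA^\gamma = \partial^\beta u\,\tg^{\gamma\delta}\partial_\beta\partial_\delta u + \cdots$: the $\tg^{\gamma\delta}$ contracting the outer $\D_\gamma$ (on $w$) with $\partial_\delta$ (on $\partial u$) reproduces exactly the $Q_0$-type form $T_{h(\partial u)}T_{\tg^{\alpha\beta}}L_{lh}(\partial_\alpha\partial u,\partial_\beta|D_x|^{-1}w)$ of $B_1$. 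The correct tool to extract it is Proposition~\ref{p:prod-pdo} (the $T_gT_h$ expansion for $h\in\DPP$), whose $OP\PP S^{m-1}L_{lh}(\partial_x h,\cdot)$ term is routed into $B_1$ while its genuine $O(\BB^2)$ remainder goes into $B_2$; a blanket commutator estimate in terms of $\|\tA^\gamma\|_{\DCC}$ does not suffice. With this correction the rest of your argument closes.
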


This result is a direct consequence of Proposition~\ref{p:com-pdo}; we have stated it here separately only for quick reference in this section.

At this point, we can repeat the multiplier computation in the previous section,
using as multiplier the operator $T_{\tM_s}$ defined in \eqref{hatXs}.
Here $\tX_s$ will be the symbol constructed in the previous subsection, so it remains 
to consider the choice of $\tY_{0s}$, which will be chosen as $\tY_{0s} = -\hq_0$
with $\hq_0$ as in Proposition~\ref{p:Xs+}.

Using the $T_{\tM_s}$ operator as a multiplier, we seek to derive 
an associated energy identity. Here, at leading order, we would like the 
energy functional to be described by the symbol $e_X$ defined as in \eqref{EX-s} by the symbol $E_{\tX_s}$ in  
\eqref{eX-s}. On the other hand the energy flux is to be described at leading order by the symbol $\hc_{X,B}$ in \eqref{hc-def} where we add the contribution of $\tY_0$.

To have a modular argument, at first we simply assume that 
\begin{itemize}
\item $\tX_s \in \PP S^1$, 
with the representation \eqref{Xs-PP-re} with $a^\gamma \in \PP S^1$, but without assuming 
that $a^\gamma$ are given by \eqref{a-gamma}.

\item $\tY_{0s} \in \partial_x \PP$, but  without assuming 
that $\tY_{0s}$ ias as in Proposition~\ref{p:Xs+}.
\end{itemize}

Given such $\tX_s$ and $\tY_{0s}$, we will describe the leading part of the energy flux
using the symbol
\begin{equation}\label{tc-s}
\tc_s = \hc_{X_s,B} + T_p \tY_{0s}.
\end{equation}
This is a second degree polynomial in $\xi_0$, which 
 we expand  as 
\begin{equation}\label{cXA2-exp}
\tc_s(x,\xi) = \tc_s^0(x,\xi') \xi_0^2 + \tc_s^1(x,\xi') \xi_0
+ \tc_{s}^2(x,\xi').
\end{equation}
To this expansion we associate the bilinear form 
\begin{equation}\label{CXAB}
C_{s}(w,w) =  \int - T_{\tc_{s}^0} \partial_t w \cdot \partial_t w + \frac12 T_{\tc_s^0} \partial w \cdot \partial_t w
+ T_{i\tc_{s}^1}w \cdot \partial_t w
+ T_{\tc_{s}^2} w \cdot w \, dx,
\end{equation}
which, integrated also over time, would yield exactly the quadratic form 
generated by the symbol $\tc_s$ in Weyl calculus.

Now we can state our main multiplier energy identity, which  is as follows:

\begin{proposition}\label{p:IBP-s}
Let $\tX_s \in \PP S^1$ and $\tY_{0s} \in \partial_x \PP$ be as above, and the multiplier $T_{\tM}$ be as in \eqref{hatXs}. Then there exists  an energy function $E_{X,B}$ with the following properties:

i) Leading order expression:
\begin{equation}\label{pos:IBP-s}
E_{\tX_s,B}[w] = E_{\tX_s}[w]+ Err(\AA) .
\end{equation}

ii) Energy identity:
\begin{equation}\label{e:IBP-s}
\frac{d}{dt} E_{\tX_s,B}[w] =  \int T_{\tP_{B}} w \cdot T_{\tM} w \, dx +  C_{s}(w,w) + Err(\BB^2).   
\end{equation}
\end{proposition}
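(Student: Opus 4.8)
\textbf{Plan of proof for Proposition~\ref{p:IBP-s}.}
The plan is to retrace the multiplier computation from the $s=0$ case of Theorem~\ref{t:para-wp}, now carried out with the pseudodifferential multiplier $T_{\tM_s}$ instead of the paraproduct-based one $T_{\tM}$, and to track carefully every commutator term that arises when $T_{\tX_{s1}}$ and $T_{\tX_{s0}}\partial_0$ are moved across the paracoefficients $T_{\tg^{\alpha\beta}}$ in $T_{\tP_B}$. First I would treat the contribution $I_{X_s}$ of the ``main'' part $iT_{\tX_{s1}}+T_{\tX_{s0}}\partial_0$ of the multiplier. Integrating by parts in $x$ and $t$, exactly as in \eqref{IX-rep}, one commutes each derivative and each paradifferential factor past $T_{\tX_{sj}}$. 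The novelty relative to $s=0$ is that the commutators $[T_{\tX_{sj}},T_{\tg^{\alpha\beta}}]$ are no longer directly $Err(\BB^2)$: by Proposition~\ref{p:com-pdo} they split as $-iT_{\{\tX_{sj},\tg^{\alpha\beta}\}_p}$ plus an $OP\PP S^{m-2}L_{lh}(\partial_x^2\tg,\cdot)$ term plus a balanced $Err(\BB^2)$. The para-Lie-bracket principal terms, together with $\{p,\tX_s\}$ coming from commuting $T_{\tX_{sj}}$ with $T_{\tg^{\alpha\beta}}\partial_\alpha\partial_\beta$, are precisely the terms assembled into the flux symbol $c_{\tX_s,B}$ in \eqref{tcXA-s}; the contributions of $\tA^\gamma$ and of the conjugation operator $B$ (via $B_0$ and $B_1$ in the previous lemma, with $B_2$ perturbative) supply the remaining pieces of $c_{\tX_s,B}$. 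The unbalanced subprincipal $L_{lh}(\partial_x^2\tg,\cdot)$ pieces are where the correction symbol $\tY_{0s}$ enters: they have the structure $\PP S^0 L_{lh}$ and, on the characteristic set, they can be matched against $T_p$ times a $\partial_x\PP S^0$ symbol, which is exactly what \eqref{tc-s} accounts for. Splitting off balanced remainders via Lemma~\ref{l:para-p+}, Lemma~\ref{l:para-com}, Lemma~\ref{l:Moser-control}, and Lemma~\ref{l:para-prod} exactly as in the $s=0$ argument, I arrive at
\[
2\iint T_{\tP_B} w\cdot (iT_{\tX_{s1}}+T_{\tX_{s0}}\partial_0) w\,dxdt
= \iint \hc_{\tX_s,B}(x,D)w\cdot w\,dxdt + \left.E_{\tX_s}[w]\right|_0^T + Err(\BB^2),
\]
where the flux symbol has already been replaced by its hatted version $\hc_{\tX_s,B}$ in \eqref{hc-def} using the corrected second time derivative $\hat\partial_t^2 u$ (Lemma~\ref{l:utt}), and where the boundary expression is $E_{\tX_s}[w]$ in \eqref{EX-s} up to compounding double paraproducts and discarding commutators, i.e. up to $Err(\AA)$ — this gives part (i).

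Next I would handle the contribution $I_Y$ of the zero-order correction $\tfrac12 T_{\tY_0}$. Since $\tY_0\in\partial_x\PP$, this is the analogue of the term $I_q$ in the $s=0$ proof: write $\tY_0=\partial_x Y_1$ with $Y_1\in\PP$, integrate by parts to move $\partial_x$ onto the other factor, and produce the leading contribution $T_p\tY_0$ to the flux (this is the second term in \eqref{tc-s}) plus a secondary term of the shape $\int\partial_\beta w\cdot T_{T_{\tg^{\alpha\beta}}\partial_\alpha\tY_0}w$. As in the $s=0$ case, this secondary term is estimated by integrating by parts once more and invoking Lemma~\ref{l:pcbounds-extra} for $Y_1\in\PP$, which furnishes a representation $\partial_\alpha T_{\tg^{\alpha\beta}}\partial_\beta Y_1 = \partial_\alpha f^\alpha$ with $|f^\alpha|\lesssim\BB^2$ and $\|P_{<k}(T_{\tg^{00}}\partial_0 Y_1-f^0)\|_{L^\infty}\lesssim 2^k$; the first bound makes the bulk term $Err(\BB^2)$ and the second makes the residual boundary term an $Err(\AA)$ energy correction. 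All other pieces from $I_Y$ are directly perturbative using \eqref{TDCC-CC} and $\tA\in\DPP$.

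Combining $I_{X_s}$ and $I_Y$ and converting the integral identity into the differential form, I obtain
\[
\frac{d}{dt}E_{\tX_s,B}[w] = \int T_{\tP_B}w\cdot T_{\tM}w\,dx + \int (\hc_{\tX_s,B}+T_p\tY_0)(x,D)w\cdot w\,dx + Err(\BB^2),
\]
and the middle term is exactly $C_s(w,w)$ once $\tc_s=\hc_{\tX_s,B}+T_p\tY_0$ is expanded in $\xi_0$ as in \eqref{cXA2-exp} and reassembled via the Weyl calculus into \eqref{CXAB} (the discrepancies between the naive symbol product and its $x$-space realization being $Err(\BB^2)$ by Lemma~\ref{l:para-prod} and Proposition~\ref{p:prod-pdo}). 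This is \eqref{e:IBP-s}, and the positivity/shape of $E_{\tX_s,B}$ up to $Err(\AA)$ is \eqref{pos:IBP-s}. I expect the main obstacle to be the bookkeeping of the unbalanced subprincipal commutator terms: showing that every $OP\PP S^{m-2}L_{lh}(\partial_x^2\tg,\cdot)$-type contribution produced by Proposition~\ref{p:com-pdo}, after the integrations by parts, can be absorbed either into $C_s$ through the $T_p\tY_0$ mechanism or into an $Err(\AA)$ boundary correction — in particular verifying that the polynomial-degree-in-$\xi_0$ constraints are respected throughout, so that the fixed-time energy $E_{\tX_s,B}$ remains well defined. The null structure of $B_1$ (and of the $\tb_0^\gamma$ terms) is what keeps these contributions tractable, so care is needed to use it rather than brute-force symbol bounds.
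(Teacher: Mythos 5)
Your outline tracks the paper's argument for the contributions of $T_{\tX_{s1}}$, $T_{\tY_{0s}}$, and the gradient potentials (Parts I, III, IV of the paper's proof), but there is a genuine gap in your handling of the unbalanced subprincipal commutator terms produced by Proposition~\ref{p:com-pdo}, and this is exactly the hardest part.

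You write that the $OP\,\PP S^{m-2} L_{lh}(\partial_x^2 \tg,\cdot)$ pieces ``are where the correction symbol $\tY_{0s}$ enters'' and expect them to ``be absorbed either into $C_s$ through the $T_p\tY_0$ mechanism or into an $Err(\AA)$ boundary correction.'' This is not how those terms are dealt with, and the proposed route would fail. The $T_p\tY_{0s}$ correction is designed to cancel the $\hq_0(x,\xi')\,\tp(x,\xi)$ component of the \emph{principal} flux symbol $\hc_{\tX_s,B}$ coming from Proposition~\ref{p:Xs+}; that is a para\emph{differential} term whose symbol factors through $\tp$. The subprincipal commutator pieces such as \eqref{IX-sub}, \eqref{IXsub-031}--\eqref{IXsub-043}, \eqref{IXsub-30} are translation-invariant bilinear forms $L_{lh}$, not operators with a symbol that factors through $\tp$, so they cannot be matched against $T_p$ times a $\partial_x\PP S^0$ symbol. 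In the paper these are a third, separate category of error (``unbalanced lower order''), and they are shown to be $Err(\BB^2)$ flux errors — not $Err(\AA)$ energy corrections — by a distinct mechanism: a circular triple integration by parts (Part V of the proof, culminating in \eqref{C0-est+} and the $\gamma=\delta=0$ variant) that converts them into expressions involving $T_{\tg^{\alpha\beta}}\partial_\alpha\partial_\beta w$ and $T_{\tg^{\alpha\beta}}\partial_\alpha\partial_\beta\partial_x^2 u$, which are then controlled via the paradifferential equation for $w$ (as in \eqref{Boxp-v}) and Lemmas~\ref{l:tp-dxu}, \ref{l:tp-du} for $u$. The crucial ingredient making this work is the null structure — the $T_{\tg^{\alpha\beta}}$ factor contracted against the derivatives inside $L_{lh}$ — and this is precisely what lets one ``see'' the $\Box_g$ operators; your plan doesn't exploit it in this way.

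A secondary omission: in the contribution of $T_{\tX_{s0}}\partial_0$ (Part II of the paper's proof) the commutator $C = \partial_0 T_{\tX_{s0}} T_{\tg^{\alpha\beta}} - T_{\tg^{\alpha\beta}} T_{\tX_{s0}} \partial_0$ must be expanded differently depending on whether $\alpha=0$ and/or $\beta=0$, precisely to avoid iterated time derivatives; this four-case split (and the resulting decomposition into $I_X^{01},\dots,I_X^{05}$) is what allows the hat-corrected $\widehat{\partial_0\partial_0} u$ to be used and is not visible in your sketch. The special normalization $\tg^{00}=1$ of the metric $\tg$ is what makes the $(\alpha,\beta)=(0,0)$ case benign, and you should flag explicitly that you use $\tg$, not $g$, for exactly this reason.
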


We recall again that here we do not assume neither that $\tX_s$ is the "vector field" constructed  in the previous subsection nor that $\tX_s$ is forward time-like.
Instead we will add these two assumptions later on when we apply the Proposition, in order to guarantee that $C_{s}(w,w)$ is controlled by $Err(\BB^2)$, respectively that $E_{\tX_s}$ is positive definite.

\begin{proof}
As stated, the result in the Proposition is linear with respect to both 
$\tX_s$ and $\tY_{0s}$, and also separately in $\tA$ and $\tb_0$. This allows us to divide the proof into several cases, which turn out to be easier to manage separately.

\bigskip

\emph{ I. The contribution of $X_{1s}$ with $\tA=0$ and $\tb_0=0$.}
Our starting point here is the integral
\[
I_{X}^1 = 2\iintT T_{\tP} w \cdot T_{i X_{1s}} w \, dx dt .
\]
The operator $T_{i X_{1s}}$ is purely spatial and antisymmetric, so we can 
integrate by parts three times in $[0,T] \times \R^n$ to rewrite $I_X^1$ in the form
\[
\begin{aligned}
I_X^1 = & \ 2\iintT T_{\tg^{\alpha \beta}} T_{i\partial_\beta X_{1s}} w \cdot \partial_\alpha w
\, dx dt + \iintT [T_{\tg^{\alpha\beta}}, T_{iX_{1s}}] \partial_\alpha w \cdot \partial_\beta w \, dx dt
\\
& \ + \left. 2 \int T_{\tg^{\alpha 0}} T_{i X_{1s}} w \cdot \partial_\alpha w \, dx
\right|_0^T.
\end{aligned}
\]
Here the expression on the second line should be thought of as the energy 
and the expression on the first line represents the energy flux. We remark that if there were no boundaries at times $t=0,T$ then this would be akin to computing the commutator
of $T_{\tP}$ and $T_{iX_{1s}}$. 

The above expression needs some further processing to put it in the desired form. 
We begin with the energy component, where we need to compound the paraproducts 
and separate the cases $\alpha=0$ and $\alpha \neq 0$. This is done using Lemma~\ref{l:para-pdo-A},
\[
\int T_{\tg^{\alpha 0}} T_{i X_{1s}} w \cdot \partial_\alpha w \, dx
= \int T_{\tg^{00} X_{1s}} w \cdot \partial_0 w \, dx + \int T_{\tg^{j 0} X_{1s}  } w \cdot \partial_\alpha w \, dx + O(A) \| w[t]\|_{\H}^2,
\]
as needed.

We now successively consider the space-time integrals on the first line in $I_X^1$.
In the first integral, the components where the $\tg^{\alpha\beta}$ frequency is at least comparable to the $X_{1s}$ frequency are balanced, and we can use Lemma~\ref{l:para-pdo-BDB} to compose the paraproducts  as 
\[
\iintT T_{\tg^{\alpha \beta}} T_{i\partial_\beta X_{1s}} w \cdot \partial_\alpha w
\, dx = \iintT T_{i T_{\tg^{\alpha \beta}} \partial_\beta X_{1s}} w \cdot \partial_\alpha w
\, dx + Err(\BB^2) ,
\]
where the integral on the right can be freely switched to the Weyl calculus if $\alpha \neq 0$, and represents one of the desired components of our energy flux.

For the second space-time integral in $I_X^1$ we use the commutator expansion in Proposition~\ref{p:com-pdo} to get a principal part, an unbalanced subprincipal part and a balanced term, 
\[
\iintT [T_{\tg^{\alpha\beta}}, T_{iX_{1s}}] \partial_\alpha w \cdot \partial_\beta w \, dx dt  = 
\iintT T_{ \{ \tg^{\alpha\beta},X_{1s}\}_p} \partial_\alpha w \cdot \partial_\beta w \, dx dt + I_{X,sub}^1 + Err(\BB^2) ,
\]
where the unbalanced subprincipal part $I_X^{sub}$ has the form 
\begin{equation}\label{IX-sub}
I_{X,sub}^1 = \iintT T_{\PP S^{-1}} T_{\tg^{\alpha\gamma}} L_{lh}(\partial_\gamma \partial_x^2 u,\  \partial_\alpha w) \cdot \partial_\beta w \, dx dt .
\end{equation}
We postpone the analysis of $I_{X,sub}^1$ for later, and focus now on the principal part,
which has symbol 
\[
 \partial_{\xi_j} X_{1s} \partial_j \tg^{\alpha \beta}.
\]
As in Lemma~\ref{l:ppxDPP}, we may perturbatively (with $O(\BB^2 L^\infty S^0)$ errors)
replace this by 
\[
h^{\alpha \beta} =  T_{\partial_{\xi_l} X_{1s}} \partial_l \tg^{\alpha \beta}.
\]
This is almost in the desired form, except that we need to switch it to Weyl 
calculus. We observe that we have no contribution if both $\alpha$ and $\beta$
are zero. We separate the remaining cases, where switching to the Weyl calculus yields errors as follows,
\[ 
\begin{aligned}
Err = & \  \iintT T_{\partial_j h^{j0}} w \cdot \partial_0 w \, dxdt 
+ \frac12 \iintT T_{\partial_j h^{jm}}  w \cdot \partial_m w \, dx dt
\\ = & \ 
 \frac14 \iintT T_{\partial_\alpha \partial_\beta h^{\alpha \beta}}  w \cdot w \, dx dt + \left. \int T_{  \partial_j h^{j0}} w \cdot  w    \,dx \right|_0^T.
\end{aligned}
\]

The last integral is an acceptable energy correction. For the first integral to be an acceptable  energy flux error, it suffices
to show that 
\[
\| P_{<k} \partial_\alpha \partial_\beta h^{\alpha \beta} \|_{L^\infty S^0} \lesssim 2^{2k} \BB^2.
\]
It is easily seen that this is indeed the case if any of the derivatives apply to $X_{1s}$, by using the time derivative component of the $\CC$ bound for either $X_{1s}$ or $\tg^{\alpha\beta}$ to bound time derivatives (of which we can have at most one). So we are left with showing that 
\[
\| P_{<k} \partial_\alpha \partial_\beta \tg^{\alpha \beta} \|_{L^\infty S^0} \lesssim 2^{k} \BB^2.
\]
But for this we use Lemma~\ref{l:ddiv-g}.

\bigskip

\emph{ I. The contribution of $X_{0s}$ with $A=0$.} Here we will follow the same road map 
as in the case of $X_{1s}$, but additional care will be needed in order to handle the additional time derivatives. The integral we need to consider is 
\[
I_{X}^0 = 2\iintT T_{\tP} w \cdot T_{X_{0s}} \partial_0 w \, dx dt .
\]
We can integrate by parts once in $[0,T] \times \R^n$ to rewrite $I_X^0$ in the form
\[
\begin{aligned}
I_X^0 = & 
\ - 2\iintT T_{\tg^{\alpha \beta}} T_{\partial_\beta X_{0s}} \partial_0
  w \cdot \partial_\alpha w
\, dx dt  
 - 2 \iintT T_{\tg^{\alpha\beta}} T_{X_{s0}} \partial_0 \partial_\beta w \cdot \partial_\alpha w \, dx dt \\ & \ \ +  \left. 2 \int T_{\tg^{\alpha 0}} T_{X_{0s}} \partial_0  w \cdot \partial_\alpha w
\, dx
\right|_0^T.
\end{aligned}
\]
In the middle term we switch the operator $T_{\tg^{\alpha\beta}} T_{X_{s0}} \partial_0$
to the right, while integrating by parts once in time, in order to put it in the more symmetric form
\[
\begin{aligned}
I^0_X = & 
\ - 2\iintT T_{\tg^{\alpha \beta}} T_{\partial_\beta X_{0s}} \partial_0
  w \cdot \partial_\alpha w
\, dx dt  
 +  \iintT ( \partial_0  T_{X_{s0}}  T_{\tg^{\alpha\beta}} - T_{\tg^{\alpha\beta}} T_{X_{s0}} \partial_0) \partial_\beta w \cdot \partial_\alpha w \, dx dt
 \\ & +  \left.  \int 2 T_{\tg^{\alpha 0}} T_{X_{0s}} \partial_0  w \cdot \partial_\alpha w
 -  T_{X_{0s}} T_{\tg^{\alpha \beta}} \partial_\alpha w \cdot \partial_\beta w
\, dx
\right|_0^T.
\end{aligned}
\]
For the energy term there is nothing new,  we use as before paraproduct rules to rewrite it as 
the desired leading part plus an acceptable error. We now consider the second space-time integral,
where more care is needed. The operator 
\[
C = \partial_0  T_{X_{s0}}  T_{\tg^{\alpha\beta}} - T_{\tg^{\alpha\beta}} T_{X_{s0}} \partial_0
\]
has a commutator structure, which is good. However we have to carefully decide on the order
in which we commute, because, depending on whether $\alpha=0$ or $\beta = 0$, we might
carelessly end up with a double time derivative. The positive feature, arising from the fact that 
we work with the metric $\tg$ rather than $g$, is that if $(\alpha,\beta)= (0,0)$
then there is a single commutator which does not involve time derivatives. For clarity we  consider the four cases separately:

\begin{enumerate} [label=\roman*)]
\item The case $\alpha \neq 0$, $\beta \neq 0$. This is the simplest case, where,
commuting and peeling off operators of size $O_{L^2}(\BB^2)$,
we write
\[
\begin{aligned}
C = & \   T_{\partial_0 X_{s0}}  T_{\tg^{\alpha\beta}}+ 
 T_{X_{s0}}  T_{\partial_0 \tg^{\alpha\beta}}
- [T_{\tg^{\alpha\beta}}, T_{X_{s0}}] \partial_0.
\end{aligned}
\]

\item The case $\alpha = 0$, $\beta \neq 0$.
Here we use the same order as before.

\item The case $\alpha \neq 0$, $\beta = 0$. Here we reverse the order,
to write
\[
\begin{aligned}
C = & \ T_{\tg^{\alpha\beta}}  T_{\partial_0 X_{s0}}  + 
  T_{\partial_0 \tg^{\alpha\beta}} T_{X_{s0}}  
-\partial_0 [T_{\tg^{\alpha\beta}}, T_{X_{s0}}] ,
\end{aligned}
\]
where the middle term is integrated by parts once more to move $\partial_0$ together 
with $\partial_\alpha$, at the expense of another negligible energy correction
\[
- \left.  \int   [T_{\tg^{\alpha 0}}, T_{X_{s0}}] \partial_{0} w \cdot \partial_\alpha w \, dx
\right|_0^T.
\]
\item The case $\alpha = 0$, $\beta = 0$. Here we simply have 
\[
C =  T_{\partial_0 X_{s0}}.
\]
\end{enumerate}

Now we put together the terms in the four cases.

a) In the $\partial_0 X_0$ term the multiplication order does not matter, and we can further 
replace it by $T_{ T_{\tg^{\alpha\beta}} \partial_0 X_{s0}}$ modulo $O_{L^2}(\BB^2)$ errors.
Thus we retain the integral
\[
\iintT T_{ T_{\tg^{\alpha\beta}} \partial_0 X_{s0}} \partial_\beta w \cdot \partial_\alpha w \, dx dt.
\]

b) In the $\partial_0 \tg^{\alpha \beta}$ term, however, the commutator is not negligible,
so in addition to $T_{X_{s0}}  T_{\partial_0 \tg^{\alpha\beta}}$ we also need the commutator 
$[T_{\partial_0 \tg^{\alpha 0}}, T_{X_{s0}}]$. Hence we get two contributions, 
\[
\iintT T_{X_{s0}}  T_{\partial_0 \tg^{\alpha\beta}} \partial_\beta u  \cdot \partial_\alpha u \, dx dt
+ \iintT [ T_{\partial_0 \tg^{\alpha 0}},T_{X_{s0}}]  \partial_0 u  \cdot \partial_\alpha u \, dx dt.
\]

c) In the $[T_{\tg^{\alpha\beta}}, T_{X_{s0}}]$ term where, distinguishing between $\beta = j \neq 0$
and $\beta = 0$, we get
\[
\iintT - [T_{\tg^{\alpha j}}, T_{X_{s0}}] \partial_0 \partial_j w \cdot \partial_\alpha w \, dx dt
- \iintT \partial_0 w \cdot [T_{\tg^{\alpha 0}},T_{X_{s0}}] \partial_0 \partial_\alpha w \, dx dt.
\]
In the first integral we move $\partial_j$ and the commutator term to the right, also commuting them,
so the above expression is rewritten as 
\[
\iintT \partial_0 w \cdot [T_{X_{s0}},T_{\tg^{\alpha \beta}}] \partial_\beta \partial_\alpha w \, dx dt
+ \iintT [T_{\partial_j \tg^{\alpha j}}, T_{X_{s0}}] \partial_0  w \cdot \partial_\alpha w
+  [T_{\tg^{\alpha j}}, \partial_j T_{X_{s0}}] \partial_0  w \cdot \partial_\alpha w \, dx dt.
\]
We retain the first term as it is, combine the second one with the second term in part (b) 
and discard the last one as perturbative, $Err(\BB^2)$.

Putting all terms together, we have rewritten $I^0_X$, modulo perturbative terms, as
\[
\begin{aligned}
I_X^0 = & \ - 2\iintT T_{T_{\tg^{\alpha \beta}} \partial_\beta X_{0s}} \partial_0
  w \cdot \partial_\alpha w
\, dx dt  + \iintT T_{ T_{\tg^{\alpha\beta}} \partial_0 X_{s0}} \partial_\beta w \cdot \partial_\alpha w \, dx dt
\\ & \ 
+ \iintT T_{X_{s0}}  T_{\partial_0 \tg^{\alpha\beta}} \partial_\beta u  \cdot \partial_\alpha u \, dx dt + \iintT \partial_0 w \cdot \partial_\beta [T_{X_{s0}},T_{\tg^{\alpha \beta}}]  \partial_\alpha w \, dx dt
\\ & \   - \iintT  \partial_0  w \cdot [\partial_\beta T_{X_{s0}}, T_{ \tg^{\alpha \beta}}] \partial_\alpha w
\, dx dt + Err(\BB^2) 
\\ 
:= & \ I_X^{01} + I_X^{02} +  I_X^{03} + I_X^{04} + I_X^{05} + Err(\BB^2) .
\end{aligned}
\]
This can be simplified further by observing that, in view of Lemma~\ref{l:para-com-pk}, the term $I_X^{05}$ is also perturbative.
Thus we arrive at 
\[
I_X^0 =  I_X^{01} + I_X^{02} +  I_X^{03} + I_X^{04} + Err(\BB^2). 
\]

We successively consider these terms:

\medskip

\emph{I. The contribution of $I_X^{01}$.} This corresponds to the symbol
\[
2 T_{T_{\tg^{\alpha \beta}} \partial_\beta X_{0s}} \xi_0 \xi_\alpha,
\]
which is akin to one of the components of $c_{X_s,B}$ in \eqref{tcXA-s}.
We can turn this into the corresponding component of $\hc_{X,B}$.
Precisely, given $X_{0s}$ as in the representation \eqref{Xs-PP-re}, 
that component is 
\[
2 T_{T_{\tg^{\alpha \beta} a^\gamma_0} \widehat {\partial_\beta \partial_\gamma} u} \xi_0 \xi_\alpha,
\]
where we recall that the hat above is understood as nonexistent 
unless $\beta = \gamma = 0$, in which case it is interpreted as the corrected expression \eqref{hat-dt2}. The difference between the two coefficients is easily seen to have size $\BB^2$, so it is perturbative, as in Lemma~\ref{l:circle-app}.
It remains to switch this modification of $I_X^{01}$ to the
 Weyl calculus, which requires estimating the integral
\[
\iintT T_{\partial_\alpha T_{\tg^{\alpha \beta} a^\gamma} \widehat{\partial_\beta  \partial_\gamma} u} \partial_0
  w \cdot  w
\, dx dt.
\]
This follows from the bound
\[
\| P_{<k} \partial_\alpha T_{\tg^{\alpha \beta} a^\gamma} \widehat{\partial_\beta  \partial_\gamma} u\|_{L^\infty} \lesssim 2^{k} \BB^2,
\]
which in turn follows from Lemma~\ref{l:tp-du}(b) after commuting $a^\gamma_0$ out.

\medskip

\emph{II. The contribution of $I_X^{02}$}. Exactly as above, this integral also corresponds 
to a term in $c_{X_s,B}$. Again, after a perturbative $Err(\BB^2)$ correction we can turn this 
into the corresponding term in $\hc_{XB}$, which has the para-coefficient 
\[
T_{\tg^{\alpha\beta} a^\gamma} \widehat{\partial_0 \partial_\gamma} u.
\]
The associated integral is 
\[
 \iintT T_{ T_{\tg^{\alpha\beta} a^\gamma} \widehat{\partial_0 \partial_\gamma} u} \partial_\beta w \cdot \partial_\alpha w \, dx dt.
\]
We would like to switch this to Weyl calculus, but we 
need to be careful here because the convention for the Weyl form  differs depending on whether $\beta$ is zero or not. 

If $\beta \neq 0$ then the error corresponds 
to switching the operator on the left to Weyl calculus,
and has the form
\begin{equation}\label{Weyl-cor}
\frac12 \iintT T_{ \partial_\beta T_{\tg^{\alpha\beta} a^\gamma} \widehat{\partial_0 \partial_\gamma} u} w \cdot \partial_\alpha w \, dx dt.
\end{equation}
The same applies if $\alpha = \beta = 0$.
But if $\alpha \neq 0$ and $\beta = 0$ then we 
have to switch the paraproduct to the right, and then the Weyl correction is 
\[
\frac12 \iintT \partial_\beta w \cdot T_{ \partial_\alpha T_{\tg^{\alpha\beta} a^\gamma} \widehat{\partial_0 \partial_\gamma} u} w \, dx dt.
\]
We can rectify this discrepancy and switch this 
correction to the form in \eqref{Weyl-cor} by integrating twice by parts, first in $x_\beta$ and then in $x_\alpha$. Since we are in the case $\beta = 0$, the first step yields an energy correction, namely
\[
\frac14 \left. \int   w \cdot T_{ \partial_\alpha T_{\tg^{\alpha0} a^\gamma} \widehat{\partial_0 \partial_\gamma} u} w \, dx  \right|_0^T .
\]
As $\alpha \neq 0$, for this to be an acceptable $Err(\AA)$ error
we need the bound
\[
\| P_{<k} \widehat{\partial_0 \partial_\gamma} u \|_{L^\infty} \lesssim 2^k \AA.
\]
This is obvious if $\gamma \neq 0$, and follows from \eqref{fe-dt2u} otherwise.

Thus we are left with considering the correction 
in \eqref{Weyl-cor} summed over all $\alpha$ and $\beta$,
and which we would like to estimate perturbatively.

Here there is no structure in the $\gamma$
summation, so we can fix $\gamma$. The easier case is when $\gamma \neq 0$. Then we can commute 
$\partial_\gamma$ out, as well as $a^\gamma$, and $\partial_\beta$ in, writing
\[
\partial_\beta T_{\tg^{\alpha\beta} a^\gamma} {\partial_0 \partial_\gamma} u
= T_{a^\gamma} \partial_\gamma T_{\tg^{\alpha\beta}} \partial_\beta \partial_0 u + f ,
\]
where the error term $f$ satisfies 
\begin{equation}\label{f-is-good}
\| P_{<k} f\|_{L^\infty} \lesssim 2^k \BB^2.
\end{equation}
We may also correct the second order time derivative, arriving at
\[
\partial_\beta T_{\tg^{\alpha\beta} a^\gamma} {\partial_0 \partial_\gamma} u
= T_{a^\gamma} \partial_\gamma T_{\tg^{\alpha\beta}} \widehat{\partial_\beta \partial_0} u + f.
\]
The remaining term is no longer perturbative, but its contribution may be instead estimated
integrating by parts,
\[
\begin{aligned}
 \iintT T_{ T_{a^\gamma} \partial_\gamma T_{\tg^{\alpha\beta}} \widehat{\partial_\beta \partial_0} u}
  w \cdot \partial_\alpha w \, dx dt =  & \ 
- \frac12   \iintT T_{ \partial_\alpha T_{a^\gamma} \partial_\gamma T_{\tg^{\alpha\beta}} \widehat{\partial_\beta \partial_0} u}  w \cdot  w \, dx dt
\\ & 
+ \left.  \int   T_{ T_{a^\gamma} \partial_\gamma T_{\tg^{0\beta}} \widehat{\partial_\beta \partial_0} u}  w \cdot w \, dx
\right|_0^T.
\end{aligned}
\]
The last term is a bounded energy correction, as 
\[
\| P_{k}  T_{a^\gamma} \partial_\gamma T_{\tg^{0\beta}} \widehat{\partial_\beta \partial_0} u\|_{L^\infty} \lesssim 2^{2k} \AA.
\]
It remains to show that the first term is also perturbative,
\[
\| P_{<k} T_{a^\gamma} \partial_\gamma T_{\tg^{\alpha\beta}} \widehat{\partial_\beta \partial_0} u\|_{L^\infty} \lesssim 2^{2k} \BB^2.
\]
Commuting $\partial_{\alpha} $ inside and discarding $a^\gamma \partial_\gamma$, this reduces to 
\[
\| P_{<k}  \partial_\alpha T_{\tg^{\alpha\beta}} \widehat{\partial_\beta \partial_0} u\|_{L^\infty} \lesssim 2^{k} \BB^2,
\]
which is again a consequence of Lemma~\ref{l:tp-du}(b).

It remains to consider the case $\gamma=0$, where 
we take advantage of the hat correction.
Precisely, using the $u$ equation, we write
\[
\widehat{\partial_0 \partial_0} u 
= - \sum_{(\mu,nu) \neq (0,0)} T_{\tg^{\mu \nu}}
\partial_\mu \partial_\nu u + T_{\partial_\mu \partial_\nu u} \tg^{\mu \nu}.
\]
We substitute this into the paracoefficient in \eqref{Weyl-cor}, peeling off perturbative contributions.
Fixing $\mu$ and $\nu$ we may assume $\mu \neq 0$ and arrive at
\[
\partial_\beta T_{\tg^{\alpha\beta} a^\gamma} 
\widehat{\partial_0 \partial_0} u
= - \sum_{\mu \neq 0}  T_{a^\gamma \tg^{\mu \nu}} \partial_\mu T_{\tg^{\alpha\beta}} \partial_\beta \partial_\nu u + f ,
\]
with $f$ as in \eqref{f-is-good}.
At this point we can repeat the argument in the case 
$\gamma \neq 0$.

\medskip

\emph{III. The contribution of $I_X^{03}$.} We recall that this is
\[
I_X^{03} = \iintT T_{X_{s0}}  T_{\partial_0 \tg^{\alpha\beta}} \partial_\beta u  \cdot \partial_\alpha u \, dx dt.
\]
This term is easily seen to be perturbative unless the spatial frequency of $X_{0s}$ is smaller than that of $\partial_0 \tg^{\alpha\beta}$, see Lemma~\ref{l:para-pdo-BDB}.  Thus we can think of the principal symbol of the 
product $T_{X_{s0}}  T_{\partial_0 \tg^{\alpha\beta}}$ as being
$T_{T_{X_{s0}} \partial_0 \tg^{\alpha\beta}}$. However some care
is needed here with the error, which is lower order but not necessarily balanced. Precisely, using Proposition~\ref{p:prod-pdo},
we can expand this product into a leading part, an unbalanced 
subprincipal part and a perturbative term,
\begin{equation}\label{Xdg-exp}
T_{X_{s0}}  T_{\partial_0 \tg^{\alpha\beta}} = T_{T_{X_{s0}} \partial_0 \tg^{\alpha\beta}} + T_{lh} (\partial_\xi X_{s0}, \partial_x \partial_0 \tg^{\alpha\beta}) + O_{L^2}(\BB^2).
\end{equation}
This yields a corresponding decomposition of $I_X^{03}$ into
\[
I_X^{03} = I_{X,main}^{03}+ I_{X,sub}^{03} + Err(\BB^2).
\]

To better describe the first two terms we 
take a closer look at the coefficient $\partial_0 \tg^{\alpha \beta}$, for which we compute
\begin{equation}\label{dtg}
\partial_0 \tg^{\alpha \beta} = 
-  \left(\partial^{\beta} u \tg^{\alpha \delta } \partial_\delta \partial_0 u +  \partial^{\alpha} u \tg^{\beta \delta } \partial_\delta \partial_0 u \right)
+ 2  \tg^{\alpha \beta} \partial^{0} u \tg^{0 \delta } \partial_\delta \partial_0 u.
\end{equation}
Here we have a double time derivative $\partial_t^2 u$ when $\delta = 0$, which we replace as before by $\hat \partial_t^2 u$ with perturbative errors.  Once this is done, we may also replace all products by paraproducts, arriving at the modified expression
\[
\begin{aligned}
\mathring \partial_0 \tg^{\alpha \beta} := & \ 
- \left( T_{\partial^{\beta} u \tg^{\alpha \delta }} \widehat{\partial_\delta \partial_0} u +  T_{\partial^{\alpha} u
\tg^{\beta \delta }} \widehat{\partial_\delta \partial_0} u \right)
+ 2 T_{\partial^{0} u \tg^{0 \delta }\tg^{\alpha \beta}} \widehat{\partial_\delta \partial_0} u 
\end{aligned}
\]
so that the difference is perturbative in the sense that
\[
\partial_0 \tg^{\alpha \beta} = \mathring \partial_0 \tg^{\alpha \beta} + O(\BB^2).
\]
Finally we return to the operator setting, where we make the above substitution. In the principal part can compound the 
outer paracoefficients at the expense of more negligible errors, 
writing it in a paradifferential form 
\[
T_{T_{X_{s0}} \partial_0 \tg^{\alpha\beta}} = 
T_{q^{\alpha\beta}}+ O_{L^2}(\BB^2), \qquad 
I_{X,main}^{03} = \iintT T_{q^{\alpha\beta}} \partial_\alpha w \cdot \partial_\beta w \, dx dt+ Err(\BB^2),
\]
where the order zero symbols $q^{\alpha\beta}$
are given by
\[
q^{\alpha\beta} = - \left( T_{X_{s0} \partial^{\beta} u \tg^{\alpha \delta }} \widehat{\partial_\delta \partial_0} u +  T_{X_{s0} \partial^{\alpha} u
\tg^{\beta \delta }} \widehat{\partial_\delta \partial_0} u \right)
+ 2 T_{X_{s0} \partial^{0} u \tg^{0 \delta }\tg^{\alpha \beta}} \widehat{\partial_\delta \partial_0} u.
\]
Here the symbol $q^{\alpha\beta} \xi_\alpha \xi_\beta$ is a component of $\hc_{X,B}$, as desired. All we need now is to 
convert the last expression for $I_{X,main}^{03}$ to Weyl form. 
This conversion yields the additional error
\[
\frac12 \iintT T_{\partial_\alpha q^{\alpha\beta}}  w \cdot \partial_\beta w \, dx dt,
\]
which we need to estimate. Here we separate the three terms in $q^{\alpha\beta}$.  For the first term, after one commutation it remains 
to show that 
\[
\| P_{<k} \partial_\alpha T_{\tg^\alpha\delta} \widehat{\partial_\delta \partial_0} u \|_{L^\infty} \lesssim 2^k \BB^2 ,
\]
which we get from Lemma~\ref{l:tp-du}. The second term is similar if we integrate 
by parts to switch $\alpha$ and $\beta$, at the expense of a bounded energy correction. Finally, the third term is exactly as in the case of $I_X^{02}$.

Similarly, in the subprincipal term in \eqref{Xdg-exp}   we may peel off perturbative errors to write it as a linear combination of expressions of the form
\[
T_{\PP S^{-1}} \tilde T_{T_{\tg^{\alpha \delta }} \partial_x \widehat{\partial_\delta \partial_0} u}  +T_{\PP S^{-1}} \tilde T_{T_{\tg^{\beta \delta }} \partial_x \widehat{\partial_\delta \partial_0} u} + T_{\PP S^{-1}}\tilde T_{T_{\tg^{\alpha \beta }}\partial_x  \widehat{\partial_\delta \partial_0} u}.
\]
We postpone their analysis for later, for now we simply list the two
types of contributions:
\begin{equation}\label{IXsub-031}
I_{X,sub}^{031} = \iintT  T_{\PP S^{-1}}  T_{\tg^{\alpha \delta }} L_{lh}(\partial_x \widehat{\partial_\delta \partial_0} u, \partial_\alpha w) \cdot  \partial w \, dx dt  . 
\end{equation}
\begin{equation}\label{IXsub-032}
I_{X,sub}^{032} = \iintT  T_{\PP S^{-1}}  T_{\tg^{\alpha \beta }} L_{lh}(\partial_x \widehat{\partial_\delta \partial_0} u, \partial_\alpha w) \cdot \partial_\beta w \, dx dt   . 
\end{equation}.

\medskip

\emph{III. The contribution of $I_X^{04}$.} We recall that this is
\[
I_X^{04} = \iintT \partial_0 w \cdot \partial_\alpha [T_{X_{s0}}, T_{\tg^{\alpha \beta}}] \partial_\beta  w \, dx dt.
\]
This has a similar treatment to $I_X^{03}$. For the commutator above we must have again a smaller frequency on $X_{0s}$, else this yields a perturbative contribution.
Using Proposition~\ref{p:com-pdo} we expand the commutator 
into a leading part, an unbalanced subprincipal part and a perturbative term,
\begin{equation}\label{Xdg-com}
[T_{X_{s0}},T_{\tg^{\alpha \beta}}] = T_{T_{\partial_{\xi_j}X_{s0}} \partial_{j} \tg^{\alpha \beta}} + T_{lh} (\partial_\xi^2 X_{s0}, \partial_{x}^2 \tg^{\alpha \beta})+ R^{\alpha\beta},
\end{equation}
where the remainder $R$ satisfies perturbative bounds of the form
\[
\| R\|_{L^2 \to H^1} \lesssim \BB^2, \qquad \| R\|_{L^2 \to H^1} \lesssim \BB^2,
\qquad \| \partial_0 R\|_{L^2 \to L^2} \lesssim \BB^2.
\]
We first consider the contribution of the leading part $I_{X,main}^{04}$.
For $\partial_{j}\tg^{\alpha\beta}$ we use the expansion in \eqref{dtg} with the subscript $0$ replaced by $j \neq 0$, and then correct the double time derivative of $u$ as before, arriving at 
\[
  T_{T_{\partial_{\xi_j}X_{s0}} \partial_{j} \tg^{\alpha \beta}} = 
T_{q^{\alpha\beta}}+ R^{\alpha\beta},
\]
where the order zero symbols $q^{\alpha\beta}$ are given by
\[
q^{\alpha\beta} = - \left( T_{\partial_{\xi_j} X_{s0} \partial^{\beta} u \tg^{\alpha \delta }} {\partial_\delta \partial_j} u +  T_{\partial_{\xi_j}X_{s0} \partial^{\alpha} u
\tg^{\beta \delta }} {\partial_\delta \partial_j} u \right)
+ 2 T_{\partial_{\xi_j} X_{s0} \partial^{0} u \tg^{0 \delta }\tg^{\alpha \beta}} {\partial_\delta \partial_j} u,
\]
and the remainder $R$ is as above. Then the leading part can be written as
\[
I_{X,main}^{04} = \iintT T_{q^{\alpha\beta}} \partial_\alpha \partial_\beta w \cdot \partial_0 w \, dx dt+ Err(\BB^2).
\]
Now the symbol $q^{\alpha\beta} \xi_0\xi_\alpha \xi_\beta$ is a component of $\htc_{X,B}$, as desired. It remains to  convert the last expression for $I_{X,main}^{04}$ to Weyl form. The error in doing that is
\[
\frac14 \iintT T_{\partial_\alpha \partial_\beta q^{\alpha\beta}}  w \cdot \partial_0 w \, dx dt.
\]
Estimating this expression requires the bound
\[
\| P_{<k} \partial_\alpha \partial_\beta q^{\alpha\beta} \|_{L^\infty}
\lesssim 2^{k} \BB^2.
\]
Here $q^{00}=0$ so we avoid the case of two time derivatives. This allows us 
to commute $\partial_\alpha \partial_\beta$ inside and take $\partial_j$ outside
modulo perturbative terms. Then $\partial_j$ yields the $2^k$ factor, and we have reduced the problem to proving that
\[
 \| P_k \left( T_{\partial_{\xi_j} X_{s0} \partial^{\beta} u \tg^{\alpha \delta }}  + T_{\partial_{\xi_j}X_{s0} \partial^{\alpha} u
\tg^{\beta \delta }} 
- 2 T_{\partial_{\xi_j} X_{s0} \partial^{0} u \tg^{0 \delta }\tg^{\alpha \beta}}\right) {\partial_\delta \partial_\alpha \partial_\beta} u\|_{L^\infty}
\lesssim \BB^2.
\]
Re-labeling this becomes
\[
 \| P_k T_{\partial_{\xi_j} X_{s0}( \partial^{\delta} u - \partial^{0} u \tg^{0 \delta })\tg^{\alpha \beta}} {\partial_\delta \partial_\alpha \partial_\beta} u\|_{L^\infty}
\lesssim \BB^2.
\]
The expression on the left vanishes if $\delta =0$. This allows us to break 
the para-coefficient in two using Lemma~\ref{l:para-com-pk} and replace this by 
\[
\| P_k  T_{\partial_{\xi_j} X_{s0}( \partial^{\delta} u - \partial^{0} u \tg^{0 \delta })} T_{\tg^{\alpha \beta}} \partial_\delta \partial_\alpha \partial_\beta u\|_{L^\infty}
\lesssim \BB^2,
\]
which is finally a consequence of Lemma~\ref{l:tp-du}.

Next we consider the subprincipal term. Here we use again 
the expansion in \eqref{dtg} and recombine paracoefficients to rewrite it 
as a linear combination of terms of the form
\begin{equation}\label{IXsub-041}
I_{X,sub}^{041} = \iintT \partial_0 w \cdot T_h \partial_\alpha T_{\tg^{\alpha\delta}} L_{lh}
(\partial_x^2 \partial_\delta u, \partial_\beta w) \,dx dt ,
\end{equation}
\begin{equation}\label{IXsub-042}
I_{X,sub}^{042} = \iintT \partial_0 w \cdot T_{\PP S^{-2}} \partial_\alpha T_{\tg^{\beta\delta}} L_{lh}
(\partial_x^2 \partial_\delta u, \partial_\beta w) \,dx dt ,
\end{equation}
respectively 
\begin{equation}\label{IXsub-043}
I_{X,sub}^{042} = \iintT \partial_0 w \cdot T_{\PP S^{-2}} \partial_\alpha T_{\tg^{\alpha \beta}} L_{lh}
(\partial_x^2 \partial_\delta u, \partial_\beta w) \,dx dt,
\end{equation}
where $h \in \PP S^{-2}$ roughly corresponds to $\partial_\xi^2 X_{0s}$.
Here we can freely separate variables and reduce to the case when $h$ is a function, 
including the multiplier part in $L_{lh}$. 

We remark that until now we were able to exclude the case when $\alpha = \beta = 0$.
However, at this point we need to separate the three types of contributions
in order to take advantage of their structures. Because of this, from here 
on we have to also allow for the case $\alpha=\beta = 0$. We postpone the estimate 
for the subprincipal terms for the end of the proof.

\bigskip

\emph{III. The contribution of $Y_0$ with $\tA=0$ and $\tb=0$.}
Here we consider the integral 
\[
I_Y = \iintT T_{\tP} w \cdot T_{Y_{0s}} w \, dx dt,
\]
where we recall that $Y_{0s} \in \partial_x \PP$. We integrate once by parts to write
\[
I_Y = \iintT T_{\tg^{\alpha\beta}} \partial_\alpha w \cdot T_{Y_{0s}}\partial_\beta w \, dx dt -  \iintT T_{\tg^{\alpha\beta}} \partial_\alpha w \cdot T_{\partial_\beta Y_{0s}} w \, dx dt 
+ \left. \int T_{\tg^\alpha 0} \partial_\alpha w \cdot T_{Y_{0s}} w \, dx
\right|_0^T.
\]
The last integral is an admissible energy correction.
In both space-time integrals we move $T_{Y_{0s}}$ to the left, and combine the two 
paraproducts as in Lemma~\ref{l:para-prod}, peeling off perturbative contributions,
to get
\[
I_Y = \iintT T_{T_{\tg^{\alpha\beta}}Y_{0s}} \partial_\alpha w \cdot \partial_\beta w \, dx dt -  \iintT T_{\partial_\beta T_{\tg^{\alpha\beta}} Y_{0s}} \partial_\alpha w \cdot w \, dx dt .
+ Err(\BB^2)+ Err(\AA)|_0^T.
\]
The symbol of the bilinear form in the first integral is the desired component of $\hc_s$, but we need to convert it to Weyl calculus. This yields an error which is half of the second integral, which in turn needs to be estimated perturbatively.
Commuting $\partial_\beta$ inside, 
we are left with 
\[
\iint T_{T_{\tg^{\alpha\beta}}\partial_\beta Y_0 } w \cdot \partial_\alpha w \, dx dt .
\]
Here $Y_0$ is of the form $Y_0= \partial_x h$, with $h \in \PP S^0$. We can harmlessly commute $\partial_x$ out, to arrive at 
\begin{equation}\label{def-J-Y}
J = \iint T_{\partial_x T_{\tg^{\alpha\beta}}\partial_\beta h } w \cdot \partial_\alpha w \, dx dt.
\end{equation}
In the absence of boundaries at $t = 0,T$ here we could integrate by parts once more to rewrite this as 
\[
-\frac12 \iint T_{\partial_x \partial_\alpha T_{\tg^{\alpha\beta}}\partial_\beta h } w \cdot \partial_\alpha w \, dx dt,
\]
and then use Lemma~\ref{l:pcbounds-extra}. The same argument applies if we add in the boundaries, by carefully tracking the boundary contributions.
Precisely, we use the lemma to rewrite the expression $J$ in \eqref{def-J-Y} as follows:
\[
\begin{aligned}
J = & \ \iint T_{\partial_x (T_{\tg^{\alpha\beta}}\partial_\beta h - \delta_0^\alpha f^0) } w \cdot \partial_\alpha w \, dx dt
+ \iint T_{\partial_x f_0} w \cdot \partial_0 w \, dx dt
\\
= & \ -\frac12 \iint T_{\partial_x (\partial_\alpha T_{\tg^{\alpha\beta}}\partial_\beta h - \partial_0 f^0) } w \cdot w \, dx dt
+ \iint T_{\partial_x f_0} w \cdot \partial_0 w \, dx dt 
\\ & \ + \frac12 \left. \int T_{\partial_x (T_{\tg^{0\beta}}\partial_\beta h - f^0) }w \cdot w \, dx \right|_0^T
\\
= & \ -\frac12 \iint T_{\partial_x \partial_j f^j} w \cdot w \, dx dt
+ \iint T_{\partial_x f_0} w \cdot \partial_0 w \, dx dt  + \frac12 \left. \int T_{\partial_x (T_{\tg^{0\beta}}\partial_\beta h - f^0) }w \cdot w \, dx \right|_0^T.
\end{aligned}
\]
Now, in view of Lemma~\ref{l:pcbounds-extra}, both the energy and the flux terms are perturbative. 

\bigskip

\emph{IV. The contribution of the gradient potential and of $b_0$.}
We discuss the two together, as their contributions are similar. This has the form
\[
I_X^2 = \iint T_{\tM_s} w \cdot (T_{\tA^\gamma}  + T_{b_0^\gamma})\partial_\gamma w\, dx dt,
\]
which we need to shift to Weyl calculus after peeling off a perturbative contribution. For instance the contribution of $Y_0$
is directly perturbative. On the other hand, $\tA^\gamma$
contains $\partial_0^2 u$ terms which need to be corrected, 
while $b_0^\gamma$ does not. In any case, the correction can be freely added as its
contribution has size $Err(\BB^2)$.

Next we consider the contribution of $X_{1s}$, where we need 
to shift the operator product $T_{X_{s1}} T_{\tA^\gamma}$ to the 
Weyl calculus via Lemma~\ref{l:para-prod2}:
\[
T_{X_{s1}} T_{\tA^\gamma} = T_{T_{X_{s1}} \hat{\tA}^\gamma} + 
T_{\PP S^{0}} T_{\tg^{\alpha\gamma}} L_{lh}(\partial_x \widehat{\partial_\alpha \partial} u, \cdot) + O_{H^1 \to L^2}(\BB^2),
\]
and similarly for $b_0$,
i.e. the desired term plus a null unbalanced lower order term 
plus a perturbative contribution. We note here that the 
contribution of the null unbalanced lower order term has the 
form 
\[
I_{X,sub}^{31} = \iintT  T_{\PP S^{0}} T_{\tg^{\alpha\gamma}} L_{lh}(\partial_x \widehat{\partial_\alpha \partial} u, \partial_{\gamma} w) \cdot w \, dx dt.
\]

Finally we consider the contribution of $X_{0s}$,
\[
I_{X}^{30}=\iint  T_{X_{0s}}\partial_0 w \cdot (T_{\hat{\tA}^\gamma}  + T_{b_0^\gamma})\partial_\gamma w\, dx dt = 
\iint  \partial_0 w \cdot T_{X_{0s}} (T_{\hat{\tA}^\gamma}  + T_{b_0^\gamma})\partial_\gamma w\, dx dt.
\]
We use again the product formula for paraproducts to 
write
\[
T_{X_{0s}} (T_{\hat{\tA}^\gamma}  + T_{b_0^\gamma}) = 
T_{T_{X_{0s}}(\tA^\gamma  + b_0^\gamma)} + T_{\PP S^{-1}} 
T_{\tg^{\alpha\gamma}} L_{lh}(\partial_x \widehat{\partial \partial_\alpha} u,\cdot)
+O_{L^2}(\BB^2),
\]
which generates a leading term and a subprincipal term.

The leading term is
\[
I_{X,main}^{30}= 
\iintT  \partial_0 w \cdot T_{X_{0s}} (T_{\hat{\tA}^\gamma}  + T_{b_0^\gamma})\partial_\gamma w\, dx dt.
\]
Its symbol is as needed, but we still have to switch it to Weyl calculus.
This switch introduces an error 
\[
\frac12 \iintT T_{\partial_\gamma [T_{X_{0s}}(\hat{\tA}^\gamma  + b_0^\gamma)]} w 
\cdot \partial_0 w \, dx dt.
\]
To bound its contribution, we would like to have the symbol bound
\begin{equation}
|P_{<k} \partial_\gamma T_{X_{0s}}(\hat{\tA}^\gamma  + b_0^\gamma)|
\lesssim 2^k \BB^2 .
\end{equation}
Here we use the expressions for $\tA^\gamma$ and $b_0^\gamma$, take out 
bounded paracoefficients, and we are left with
\[
\|P_{<k} \partial_\gamma T_{\tg^{\gamma \alpha}} \widehat{\partial_\alpha \partial_\beta} u\|_{L^\infty}
\lesssim 2^k \BB^2 .
\]
But this is in turn a consequence of Lemma~\ref{l:tp-du}.

To conclude, we record the form of the subprincipal term,
\begin{equation}\label{IXsub-30}
I_{X,sub}^{30}  = \iintT  T_{\PP S^{-1}} T_{\tg^{\alpha\gamma}} L_{lh}(\partial_x \widehat{\partial_\alpha \partial} u, \partial_{\gamma} w) \cdot \partial_0 w \, dx dt.
\end{equation}

\bigskip

\emph{ V. The unbalanced lower order terms.} These are the 
expressions identified earlier, which we recall here:
\begin{equation}\label{IX-sub-re}
I_{X,sub}^1 = \iint T_{\PP S^{-1}} T_{\tg^{\alpha\gamma}} L_{lh}(\partial_\gamma \partial_x^2 u,\  \partial_\alpha w) \cdot \partial_\beta w \, dx dt .
\end{equation}

\begin{equation}\label{IXsub-031-re}
I_{X,sub}^{031} = \iintT  T_{\PP S^{-1}}  T_{\tg^{\alpha \delta }} L_{lh}(\partial_x \widehat{\partial_\delta \partial_0} u, \partial_\alpha w) \cdot  \partial w \, dx dt   .
\end{equation}
\begin{equation}\label{IXsub-032-re}
I_{X,sub}^{032} = \iintT  T_{\PP S^{-1}}  T_{\tg^{\alpha \beta }} L_{lh}(\partial_x \widehat{\partial_\delta \partial_0} u, \partial_\alpha w) \cdot \partial_\beta w \, dx dt   .
\end{equation}
\begin{equation}\label{IXsub-041-re}
I_{X,sub}^{041} = \iintT \partial_0 w \cdot T_{\PP S^{-2}} \partial_\alpha T_{\tg^{\alpha\delta}} L_{lh}
(\partial_x^2 \partial_\delta u, \partial_\beta w) \,dx dt .
\end{equation}
\begin{equation}\label{IXsub-042-re}
I_{X,sub}^{042} = \iintT \partial_0 w \cdot T_{\PP S^{-2}} \partial_\alpha T_{\tg^{\beta\delta}} L_{lh}
(\partial_x^2 \partial_\delta u, \partial_\beta w) \,dx dt .
\end{equation}
\begin{equation}\label{IXsub-043-re}
I_{X,sub}^{043} = \iintT \partial_0 w \cdot T_{\PP S^{-2}} \partial_\alpha T_{\tg^{\alpha \beta}} L_{lh}
(\partial_x^2 \partial_\delta u, \partial_\beta w) \,dx dt.
\end{equation}
\begin{equation}\label{IXsub-30-re}
I_{X,sub}^{30}  = \iintT  T_{\PP S^{-1}} T_{\tg^{\alpha\gamma}} L_{lh}(\partial_x \widehat{\partial_\alpha \partial} u, \partial_{\gamma} w) \cdot \partial_0 w \, dx dt.
\end{equation}
All of these exhibit a null structure.

We compress four of these into the expression
\begin{equation}\label{IXsub-main}
I_{sub,\gamma\delta} = \iint T_{h} T_{\tg^{\alpha\beta}} L_{lh}(\partial_x \widehat{\partial_\alpha \partial_\gamma} u,\  \partial_\beta w) \cdot \partial_\delta w \, dx dt, \qquad h \in \PP S^1,
\end{equation}
where the analysis will be slightly different depending on whether 
$\gamma$ and $\delta$ are zero or not.

In $I_{X,sub}^{042}$ the case $\alpha \neq 0$ is included above. If instead 
$\alpha = 0$ then we integrate by parts $\partial_\alpha$ to the left, so that, 
after a perturbative energy correction,
we arrive at
\[
\iintT \partial_0^2 w \cdot T_{\PP S^{-2}}  T_{\tg^{\beta\delta}} L_{lh}
(\partial_x^2 \partial_\delta u, \partial_\beta w) \,dx dt .
\]
Now we use the paradifferential equation $T_{\tP}$ equation for $w$,
which after more perturbative errors allows us to replace the leading 
$\partial_0^2$ operator by $\partial \partial_x$, with a $\PP$ paracoefficient.
Then $\partial_x$ combines with $T_{\PP S^{-2}}$ to give $T_{\PP S^{-2}}$, 
thereby reducing the problem to the case of \eqref{IXsub-main}.

Finally in $I_{X,sub}^{041}$
we commute inside and distribute the $\partial_\alpha$ derivative,
peeling off perturbative errors. We arrive at
\[
\begin{aligned}
I_{X,sub}^{041} = & \  \iintT \partial_0 w \cdot T_{\PP S^{-2}} T_{\tg^{\alpha\delta}} L_{lh}
(\partial_x^2 \partial_\alpha \partial_\delta u, \partial_\beta w) \,dx dt 
\\ & +  \iintT \partial_0 w \cdot T_{\PP S^{-2}} T_{\tg^{\alpha\delta}} L_{lh}
(\partial_x^2  \partial_\delta u, \partial_\alpha \partial_\beta w) \,dx dt .
\end{aligned}
\]
The first term is estimated by commuting $T_{\tg^{\alpha\delta}}$ inside $L_{lh}$
and onto the first argument, after which we use Lemma~\ref{l:tp-du}. In the second term we pull $\partial_\beta$ out, reducing the problem either to 
$I_{X,sub}^{042}$, which was discussed earlier, or to
\[
 \iintT \partial_0 w \cdot T_{\PP S^{-2}} T_{\tg^{\alpha\delta}} L_{lh}
(\partial_x^2  \partial_\delta \partial_\beta u, \partial_\alpha  w) \,dx dt .
\]
But here we can pull out one of the $\partial_x$ operators to reduce to 
the case of \eqref{IXsub-main}.

After this discussion we have reduced the problem to the estimate for
$I_{sub,\gamma,\delta}$. Here, from easiest to hardest, we need to consider 
the case when neither of $\gamma$ or $\delta$ is zero, then when one of them is zero, and finally when none of them is zero. We will first illustrate the principle in the easiest case, and then describe the additional complications for the 
most difficult case. We leave the intermediate case for the reader.

\bigskip

\emph{ A. The case $\gamma,\delta \neq 0$.}
The argument here consists of three integrations by parts in a circular manner. 
Here we have $h \in \PP S^{-1}$. We may include  $\partial_\delta$ in $h$
in which case $h \in \PP S^0$. Separating variables and 
the problem can be further reduced to $h \in \PP$. In the computations below we omit $h$ altogether, as it does not play any role. Then it remains to bound the integral
\begin{equation}\label{IXsub-nz}
I_{sub} = \iint T_{\PP S^{0}} T_{\tg^{\alpha\beta}} L_{lh}(\partial_x^2 {\partial_\alpha} u,\  \partial_\beta w) \cdot  w \, dx dt .
\end{equation}
Similarly, derivatives applied to $g$ or  yield perturbative contributions,
of size $O(\BB^2)$, and will not be explicitly written in order
to avoid cluttering the formulas. In the absence of boundary terms, we compute as follows, integrating by parts in order to convert the null form into three $T_{\tP}$
operators modulo admissible errors: 
\[
\begin{aligned}
 I_{sub} = & \ \iintT
 w \cdot   T_{\tg^{\alpha\beta}} L_{lh}(\partial_\alpha \partial_x^2 
  u, \partial_\beta  w) \, dx dt
\\
= & \ - \iint 
\partial_\beta  w \cdot  T_{\tg^{\alpha\beta}} L_{lh}(\partial_\alpha \partial_x^2 u,  w) \, dx dt
- \iint 
 w \cdot   T_{\tg^{\alpha\beta}} L_{lh}(\partial_\beta \partial_\alpha \partial_x^2 u, \partial_0 w) \, dx dt + Err(\BB^2)
\\
= & \ \iint 
\partial_\alpha \partial_\beta  w \cdot   T_{\tg^{\alpha\beta}} L_{lh}( \partial_x^2 u,  w) \, dx dt
 \ +\iint 
\partial_\beta  w \cdot   T_{\tg^{\alpha\beta}} L_{lh}( \partial_x^2 u, \partial_\alpha  w) \, dx dt
\\
&  - \iint 
 w \cdot   T_{\tg^{\alpha\beta}} L_{lh}(\partial_\beta \partial_\alpha \partial_x^2 u, w) \, dx dt + Err(\BB^2)
\\
= & \ \iint 
\partial_\alpha \partial_\beta  w \cdot   T_{\tg^{\alpha\beta}} L_{lh}( \partial_x^2 u,  w) \, dx dt
 \ -  I_{sub} - \iint 
\ w \cdot   T_{\tg^{\alpha\beta}} L_{lh}( \partial_x^2 u, \partial_\alpha \partial_\beta  w) \, dx dt
\\
&  - \iint 
 w \cdot   T_{\tg^{\alpha\beta}} L_{lh}(\partial_\beta \partial_\alpha \partial_x^2 u, w) \, dx dt + Err(\BB^2).
\end{aligned}
\]
We now distribute $T_{\tg^{\alpha\beta}}$, noting that any commutator errors involve derivatives of $\tg$ and thus are perturbative.
We arrive at
\begin{equation}\label{C0-est}
\begin{aligned}
2I_{sub} = &  \  \iint 
T_{\tg^{\alpha\beta}} \partial_\alpha \partial_\beta  w \cdot    L_{lh}( \partial_x^2 u,  w) \, dx dt
 - \iint 
\  w \cdot   L_{lh}( \partial_x^2 u,  T_{\tg^{\alpha\beta}} \partial_\alpha \partial_\beta  w) \, dx dt
\\
&  - \iint 
 w \cdot  L_{lh}( T_{\tg^{\alpha\beta}} \partial_\beta \partial_\alpha \partial_x^2 u,  w) \, dx dt + Err(\BB^2).
\end{aligned}
\end{equation}
It remains to add the boundary terms at times $t=0,T$ into the above computation. Such boundary terms arise from the integration by parts with respect to $x_0$. We obtain the following enhanced version of 
\eqref{C0-est}:
\begin{equation}\label{C0-est+}
\begin{aligned}
2I_{sub} = &  \  \iintT 
T_{\tg^{\alpha\beta}} \partial_\alpha \partial_\beta  w \cdot    L_{lh}( \partial_x^2 u,  w) \, dx dt
 - \iintT 
\  w \cdot   L_{lh}( \partial_x^2 u,  T_{\tg^{\alpha\beta}} \partial_\alpha \partial_\beta  w) \, dx dt
\\
&  - \iintT 
 w \cdot  L_{lh}( T_{\tg^{\alpha\beta}} \partial_\beta \partial_\alpha \partial_x^2 u,  w) \, dx dt + Err(\BB^2)
\\ & + \left. \!\!\!
\int \! w \cdot   T_{\tg^{\alpha 0}} L_{lh}(\partial_\alpha \partial_x^2 u,  w) -\partial_\beta  w \cdot  T_{\tg^{0\beta}} L_{lh}( \partial_x^2 u,  w) +   w \cdot   T_{\tg^{\alpha 0}} L_{lh}( \partial_x^2 u, \partial_\alpha  w) \, dx
\right|_0^T. \!\!\!\!\!\!\!\!
\end{aligned}
\end{equation}
The boundary terms are easily seen as lower order energy corrections,
so it remains to estimate the interior contributions. For the first one
we can use the $w$ equation to get the fixed time bounds
\begin{equation}\label{Boxp-v}
\| T_{\tg^{\alpha\beta}} \partial_\alpha \partial_\beta  w\|_{L^2}
\lesssim \| \tP_{B} w\|_{L^2} + (\BB^2 + 2^{\frac{k}2} \BB c_k) \| \partial w\|_{L^2} ,
\end{equation}
which suffices\footnote{Here the $\tP_{B} w$ term may be interpreted as arising 
from a lower order correction to our multiplier $\tM$.}
by combining  the two components of the last term with either the $\AA$ or the $\BB$ bound for $u$ in $L_{lh}$.
The other two interior contributions reduce to the bound 
\begin{equation}\label{Boxp-u}
\|P_{<k}  T_{\tg^{\alpha\beta}} \partial_\alpha \partial_\beta \partial_x^2 u\|_{L^\infty}
\lesssim 2^{-k} \BB^2  ,
\end{equation}
which is a consequence of Lemma~\ref{l:tp-dxu} and which suffices to estimate the expressions in \eqref{C0-est+}.

\medskip

\emph{ B. The case $\gamma =\delta = 0$.}
Here we seek to estimate the integral
\begin{equation}\label{IXsub-00}
I_{sub,00} = \iint T_{\PP S^{-1}} T_{\tg^{\alpha\beta}} L_{lh}(\partial_x \widehat{\partial_\alpha \partial_0} u,\  \partial_\beta w) \cdot \partial_0 w \, dx dt .
\end{equation}
Here the hat correction plays a perturbative role and could be omitted.
However, in the computations below we need to keep it in order to be able to estimate
energy corrections. Our computations emulate the simpler case considered 
above, but with some care in order to avoid iterated time derivatives.
Integrating by parts in $\beta$ we get 
\[
\begin{aligned}
I_{sub,00} = & \  - \iint T_{\PP S^{-1}} T_{\tg^{\alpha\beta}} L_{lh}(\partial_x \partial_\beta \widehat{\partial_\alpha \partial_0} u,\  w) \cdot \partial_0 w \, dx dt  
\\ & - \iint T_{\PP S^{-1}} T_{\tg^{\alpha\beta}} L_{lh}(\partial_x  \widehat{\partial_\alpha \partial_0} u,\  w) \cdot \partial_0 \partial_\beta w \, dx
+ Err(\BB^2) + Err(\AA)|_0^T,
\end{aligned}
\]
where the last term accounts for the boundary contributions obtained when $\beta=0$.
In the first integral we perturbatively move $ T_{\tg^{\alpha\beta}}$
on the first $L_{lh}$ argument and then use Lemma~\ref{l:tp-dxu}; this allows us to move the entire first integral into the error, leading us to
\begin{equation}\label{Isub001}
   I_{sub,00} = - \iintT T_{\PP S^{-1}} T_{\tg^{\alpha\beta}} L_{lh}(\partial_x  \widehat{\partial_\alpha \partial_0} u,\  \partial_\beta w) \cdot \partial_0 \partial_\beta w \, dx
+ Err(\BB^2) + Err(\AA)|_0^T.
\end{equation}
On the other hand we can perturbatively drop the hat and integrate by parts in $\alpha$. This gives
\[
\begin{aligned}
I_{sub,00} = & \  - \iintT T_{\PP S^{-1}} T_{\tg^{\alpha\beta}} L_{lh}(\partial_x   \partial_0 u,\ \partial_\alpha \partial_\beta w) \cdot \partial_0 w \, dx dt  
\\ & - \iintT T_{\PP S^{-1}} T_{\tg^{\alpha\beta}} L_{lh}(\partial_x  \partial_0 u,\  w) \cdot \partial_0 \partial_\alpha w \, dx
+ Err(\BB^2) + Err(\AA)|_0^T.
\end{aligned}
\]
In the first integral we perturbatively move $ T_{\tg^{\alpha\beta}}$
on the second $L_{lh}$ argument; using the paradifferential $w$ equation where
the $\tA$ and $\tb$ terms are perturbative, this yields a lower order correction 
to our multiplier. Switching the $\alpha$ and $\beta$ indices we obtain 
\begin{equation}\label{Isub002}
\begin{aligned}
   I_{sub,00} = & \  - \iintT T_{\PP S^{-1}} T_{\tg^{\alpha\beta}} L_{lh}(\partial_x  \partial_0  u,\  \partial_\alpha w) \cdot \partial_0 \partial_\beta w \, dx
   + \iintT T_{\tP} w \cdot Q \partial_0 w \, dx dt
\\ & + Err(\BB^2) + Err(\AA)|_0^T,
\end{aligned}
\end{equation}
where $\|Q\|_{L^2 \to L^2} \lesssim A$.

The next step is to add the relations \eqref{Isub001} and \eqref{Isub002}.
Here we separate the cases $\alpha = j \neq 0$ and $\alpha=0$, where 
in the first case we can drop the hat and pull out the $\partial_\alpha$ in $L_{lh}$,
\[
\begin{aligned}
2 I_{sub,00} = & \  - \iintT T_{\PP S^{-1}} T_{\tg^{j\beta}} \partial_j L_{lh}(\partial_x  \partial_0  u,\   w) \cdot \partial_0 \partial_\beta w \, dx
\\ & \ - \iintT T_{\PP S^{-1}} T_{\tg^{0\beta}} [ L_{lh}(\partial_x  \widehat{\partial_0 \partial_0} u,\   w) + L_{hl}(\partial_x \partial_0 u, \partial_0 w)] \cdot \partial_0 \partial_\beta w \, dx
\\ & 
   + \iintT T_{\tP} w \cdot Q \partial_0 w \, dx dt + Err(\BB^2) + Err(\AA)|_0^T.
\end{aligned}
\]
In the first integral we integrate by parts to switch $\partial_0$ to the left
and then $\partial_j$ to the right. Then we distribute the $\partial_0$ on the left. This yields
\[
\begin{aligned}
2 I_{sub,00} = & \  - \iintT T_{\PP S^{-1}} T_{\tg^{j\beta}} \partial_j [ L_{lh}(\partial_x  \partial_0 \partial_0  u,\   w)
+ L_{lh}(\partial_x  \partial_0  u,\  \partial_0 w)] \cdot \partial_j \partial_\beta w \, dx
\\ & \ - \iintT T_{\PP S^{-1}} T_{\tg^{0\beta}} [ L_{lh}(\partial_x  \widehat{\partial_0 \partial_0} u,\   w) + L_{hl}(\partial_x \partial_0 u, \partial_0 w)] \cdot \partial_0 \partial_\beta w \, dx
\\ & 
   + \iintT T_{\tP} w \cdot Q \partial_0 w \, dx dt + Err(\BB^2) + Err(\AA)|_0^T.
\end{aligned}
\]
In the first integral we may perturbatively correct $\partial_0^2 u$; 
this allows us to put back together the cases when $\alpha$ is zero and nonzero,
\[
\begin{aligned}
2 I_{sub,00} = & \  - \iintT T_{\PP S^{-1}} T_{\tg^{\alpha\beta}} \partial_j [ L_{lh}(\partial_x  \widehat{\partial_0 \partial_0}  u,\   w)
+ L_{lh}(\partial_x  \partial_0  u,\  \partial_0 w)] \cdot \partial_\alpha \partial_\beta w \, dx
\\ & 
   + \iintT T_{\tP} w \cdot Q \partial_0 w \, dx dt + Err(\BB^2) + Err(\AA)|_0^T.
\end{aligned}
\]
Finally, we commute $T_{\tg^{\alpha\beta}} $ to the right factor, and use the 
$w$ equation to add another perturbative factor to our multiplier. This gives
\[
2 I_{sub,00} =  \iintT T_{\tP} w \cdot Q \partial_0 w \, dx dt + Err(\BB^2) + Err(\AA)|_0^T,
\]
with a modified $Q$, as desired.

\bigskip

The proof of the Proposition is now concluded.
\end{proof}

We now  conclude the proof of Theorem~\ref{t:para-wp}
using Proposition~\ref{p:IBP-s}, with the vector field $X_s$ and $Y_s$
chosen as in Proposition~\ref{p:Xs}. For these we have at our disposal not 
only the conclusion of Proposition~\ref{p:Xs}, but also the refined
version in  Proposition~\ref{p:Xs+}. This guarantees that the symbol $\tc_s$
in \eqref{tc-s} has size $\BB^2$, in the sense that its coefficients in 
\eqref{cXA2-exp} satisfy 
\[
\tc_s^j \in \BB^2 L^\infty S^j, \qquad 2^{-k} P_{<k} \partial_0 \tc_s^j \in \BB^2 L^\infty S^j.
\]
These conditions, in turn, guarantee that the  flux term $C_s$ in our energy estimate
\eqref{e:IBP-s} satisfies
\[
|C_s(w,w)| \lesssim \BB^2 \| \partial w\|_{L^2}^2,
\]
and thus the conclusion of Theorem~\ref{t:para-wp} follows.
\bigskip

\section{Energy estimates for the full equation} 
\label{s:ee-full-eqn}

Our objective here is to prove 
energy estimates for the solution $u$
to the minimal surface equation \eqref{msf} in $\H^s = H^s \times H^{s-1}$,
in terms of our control parameters $\AA$ and $\BB$.

\begin{theorem} \label{t:ee}
For each $s \geq 1$ there exists an energy functional $E^s_{NL}$ for the minimal surface  equation \eqref{msf-short} in $H^s \times H^{s-1}$ with the property that for all $\H^s$ solutions $u $ to \eqref{msf} with $\AAs \ll 1$  and $\BB \in L^2$ we have:

a) coercivity,
\begin{equation}
E^s_{NL} (u[t]) \approx \| u[t]\|_{\H^s}^2 
\end{equation}

b) energy bound, 
\begin{equation}
\frac{d}{dt} E^{s}_{NL}(u[t]) \lesssim \BB^2 E^{s}_{NL}(u[t]).
\end{equation}
\end{theorem}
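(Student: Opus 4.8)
The strategy is to transfer the energy estimates for the linear paradifferential equation (Theorem~\ref{t:para-wp}) to the full nonlinear equation, using a normal form correction to absorb the unbalanced part of the nonlinear source term. First I would write the minimal surface equation in the paradifferential form \eqref{paralin-nonlin},
\[
(T_{g^{\alpha\beta}} \D_\alpha \D_\beta - 2T_{A^\gamma} \D_\gamma) u = N(u),
\]
and then, since all the estimates are phrased in divergence form, pass to the equivalent equation $\partial_\alpha T_{g^{\alpha\beta}} \partial_\beta u = \tilde N(u)$ modulo lower order/balanced corrections, treating $u$ (or rather $\bD^{s-1} u$ after conjugation, exactly as in the proof of Theorem~\ref{t:para-wp}) as a solution of the linear paradifferential flow with source $\tilde N(u)$. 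The key point is to understand the structure of $\tilde N(u)$: it is a sum of a \emph{balanced} part, for which we can directly apply the source-term energy estimate of Theorem~\ref{t:para-wp}(b) and bound $\|\tilde N_{bal}(u)\|_{H^{s-1}} \lesssim \BB^2 \| u\|_{\H^s}$ using the paraproduct and Moser bounds of Section~\ref{s:not} and \ref{s:control}, and an \emph{unbalanced} part which is roughly of the form $T_{\partial^2 u}(\text{metric})$ or $\Pi(g, \partial^2 u)$-type and is not perturbative on its own.

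\textbf{The normal form step.} The heart of the argument is to show that the unbalanced part of $\tilde N(u)$ can be removed by a quadratic (partial, variable-coefficient) normal form correction of the energy functional — not of the equation, following the modified energy philosophy of \cite{HIT}, \cite{ai2019dimensional}. Concretely, starting from the paradifferential energy functional $E^s$ provided by Theorem~\ref{t:para-wp} (applied with the metric $\hg$, so that the flow is in divergence form with no gradient potential), I would define
\[
E^s_{NL}(u[t]) = E^s(\bD^{s-1} u[t]) + E^s_{corr}(u[t]),
\]
where $E^s_{corr}$ is a cubic-in-$u$ correction designed so that $\frac{d}{dt} E^s_{corr}$ exactly cancels, modulo $\BB^2 E^s_{NL}$ errors, the contribution $E^s(\bD^{s-1}u, \text{unbalanced part of } \tilde N)$ coming from the energy identity \eqref{ee-f}. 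The correction $E^s_{corr}$ is built from the Moser-type remainder $R(\partial u)$ of \eqref{est-R-for-du} and its companions: the point of Lemma~\ref{l:Moser-control}(d) and \eqref{est-R-for-du-inf}, \eqref{est-R-for-du} is precisely that such remainders are one degree more regular in a balanced sense, which is what makes the normal form both well-defined (no small divisors, by the nonlinear null condition) and bounded. Coercivity of $E^s_{NL}$ follows from coercivity of $E^s$ (Theorem~\ref{t:para-wp}(a)) together with the smallness $\AAs \ll 1$, which makes the cubic correction a small perturbation.

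\textbf{Carrying it out.} In order, the steps are: (1) reduce \eqref{msf-short} to divergence paradifferential form with source $\tilde N(u)$, identifying the balanced and unbalanced pieces via Littlewood--Paley trichotomy; (2) estimate the balanced piece in $H^{s-1}$ by $\BB^2\|u\|_{\H^s}$ using Lemmas~\ref{l:Moser-control}, \ref{l:R}, \ref{l:du-in-dcc} and the $\DCC$/$\CC$ calculus; (3) apply Theorem~\ref{t:para-wp}(b) with this source to get $\frac{d}{dt}E^s(\bD^{s-1}u) \lesssim \BB^2 E^s + (\text{unbalanced source contribution})$; (4) construct $E^s_{corr}$ and verify that its time derivative cancels the unbalanced contribution up to $\BB^2 E^s_{NL}$ — this is where one uses the structure of the unbalanced terms as paraproducts $T_{(\cdot)} \partial^2 u$ with paracoefficients that are smooth functions of $\partial u$, together with the remainder bounds \eqref{est-R-for-du-inf}, \eqref{est-R-for-du}, and, crucially, the fact that time derivatives of these remainders are also controlled (Lemma~\ref{l:Moser-control}(d)); (5) assemble coercivity and close. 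The main obstacle will be step (4): keeping the bookkeeping straight when $s \neq 1$ (so that the correction must be done at the level of $\bD^{s-1}u$ and interacts with the conjugation error $\tB$ from \eqref{def-R}), and ensuring that differentiating the cubic correction never forces us to differentiate one of the low-regularity remainder terms more than once — exactly the kind of difficulty flagged in the introduction, which is why the refined bounds of Section~\ref{s:control} on $\hat\partial_t^2 u$, $\DCC$, and $\partial_x\DCC$ were set up in advance.
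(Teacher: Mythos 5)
Your high-level plan — paralinearize, split the source into balanced and unbalanced parts, and remove the unbalanced part by a normal-form/modified-energy device while applying Theorem~\ref{t:para-wp} to the balanced remainder — is indeed the strategy the paper follows. However, two of the specifics are off in ways that would derail the argument. First, after the metric coefficients are paralinearized via Lemma~\ref{l:g-para}, the nonperturbative part of the source is not of the form $T_{(\cdot)}\partial^2 u$; it is the balanced term $N_1(u) = -2\Pi(\D_\alpha\D_\beta u,\, T_{g^{\alpha\gamma}\partial^\beta u}\D_\gamma u)$, a high$\times$high interaction in which $\partial^2 u$ sits at comparable frequency to the other factor. Consequently the correct normal-form correction is the explicit quadratic $\Pi$-paraproduct
\[
u_2 = \Pi\bigl(\D_\beta u,\, T_{\partial^\beta u}\, u\bigr),
\]
applied to the variable ($\tu = u - u_2$), not a quantity ``built from the Moser remainder $R(\partial u)$''; $R$ instead goes into the balanced piece $N_2$ together with the paraproduct composition errors, and one checks directly (Lemma~\ref{l:n2}) that $\|N_2(u)\|_{H^{s-1}} \lesssim \BB^2\|\partial u\|_{H^{s-1}}$. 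Your proposed correction built out of $R$ does not have the right structure to cancel $N_1$.

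Second, you correctly flag that differentiating the cubic correction in time produces terms you cannot afford to differentiate twice, but you do not supply the mechanism that resolves this. When $T_{\hat P}$ is applied to $\tu$, some contributions unavoidably appear in the form $\partial_t R_1(u) + R_2(u)$ with $R_1 \in H^s$ of size $\BB^2$ (and of size $\AAs^2$ in $H^{s-1}$) and $R_2 \in H^{s-1}$ of size $\BB^2$; these are not $L^1_t H^{s-1}$ sources, so the source-term estimate of Theorem~\ref{t:para-wp}(b) does not apply directly. The paper handles this by reinterpreting the wave equation as a first-order system and invoking Theorem~\ref{t:lin-wp-inhom+}, which produces the correct definition $E^s_{NL}(u[t]) := E^s(\tu[t]-r[t])$ with $r[t]$ carrying the $R_1$-part, so that the time-derivative source is absorbed into a shift of the state rather than into the flux. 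Without this device — which you neither name nor replace — step (4) of your outline does not close, even with the refined $\DCC$/$\partial_x\DCC$ bounds in hand. A final minor point: you do not need to re-conjugate by $\bD^{s-1}$ or re-encounter the conjugation error $\tB$ from \eqref{def-R}; Theorem~\ref{t:para-wp} already hands you the $\H^s$ energy as a black box, and the proof of Theorem~\ref{t:ee} lives entirely at the level of the untransformed scale $s$.
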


The rest of this section is devoted to the 
proof of the theorem. This has two main ingredients:
\begin{enumerate}
    \item Reduction to the paradifferential equation, using normal form analysis.
    
    \item Energy estimates for the paradifferential equation, which have already been proved in Theorem~\ref{t:para-wp}.
\end{enumerate}
    
Hence, our primary objective here will be to carry 
out the above reduction. We recall the minimal surface equation,
\[
g^{\alpha \beta}  \D_{\alpha} \D_{\beta} u = 0.
\]
In order to use the energy estimates obtained in the previous section, we write this in  paradifferential form: 
\begin{equation}
\label{para-org-eq}
(T_{g^{\alpha \beta}} \D_{\alpha} \D_{\beta} - 2 T_{A^\gamma}\D_\gamma) u  = N(u),
\end{equation}
where the source term $N(u)$ is given by
\begin{equation}
\label{para-nonlinear}
N(u) = -\Pi (\D_{\alpha} \D_{\beta}u, g^{\alpha \beta})-T_{\D_{\alpha} \D_{\beta}u}g^{\alpha \beta}-2 T_{A^\gamma}\D_\gamma u.
\end{equation}

Here we cannot treat $N$ perturbatively; precisely, we do not have an estimate of the form
\[
\| N(u)(t) \|_{H^{s-1}} \lesssim_{\AA} \BB^2 \|u[t]\|_{\H^s},
\]
even though $N(u)$ is cubic in $u$, and the above inequality is dimensionally correct. This is because $N$ contains  some unbalanced contributions.

To address this issue, our strategy will be to correct $u$ via a well chosen normal form transformation, in order to eliminate the unbalanced part of $N(u)$.
But in order to do this, we have to first identify the unbalanced part of 
$N(u)$, and reveal its null structure. A first step in this direction is to 
better describe the contributions of the metric coefficients $g^{\alpha \beta}$
in $N$; explicitly we want to extract the renormalizable terms (i.e. the terms to which we can apply a normal form correction). For this we express $g^{\alpha \beta}$ paradifferentially as follows:

\begin{lemma}\label{l:g-para} The  metric $g^{\alpha \beta}$ can be expressed paradifferentially as follows
\begin{equation}
\label{better g-para-rep}
    g^{\alpha \beta}(u) =-T_{g^{\alpha \gamma}\partial^\beta u } \partial_{\gamma}u-T_{g^{ \gamma \beta}\partial^\alpha u } \partial_{\gamma}u +R(u),
\end{equation}
where $R (u)$ satisfies the following balanced bounds:
\begin{equation}
\label{Rb}
    \Vert R(u)\Vert_{H^{s-\frac{1}{2}}}\lesssim_\AA \BB \| \partial u\|_{H^{s-1}},
\end{equation}
as well as 
\begin{equation}
\label{Reb}
 \Vert R(u)\Vert_{H^{s-1}}\lesssim_\AA \| \partial u\|_{H^{s-1}}.
\end{equation}

\end{lemma}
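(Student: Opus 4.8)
The statement \eqref{better g-para-rep} is a paralinearization identity for $g^{\alpha\beta}$ viewed as a smooth function of $\partial u$, combined with an algebraic identity for its derivative. The plan is to start from the Moser-type paralinearization in Lemma~\ref{l:R}, which gives us
\[
g^{\alpha\beta}(u) = T_{\frac{\partial g^{\alpha\beta}}{\partial p_\gamma}} \partial_\gamma u + R_0(u),
\]
where $R_0(u) = g^{\alpha\beta}(\partial u) - T_{\partial_{p_\gamma} g^{\alpha\beta}} \partial_\gamma u$ satisfies the bound \eqref{est-R}, namely $\|R_0(u)\|_{H^{s+\frac12}} \lesssim C(\|\partial u\|_{L^\infty}) \|D^\frac12 \partial u\|_{BMO} \|\partial u\|_{H^{s-1}}$. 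Then I would substitute the explicit formula \eqref{dg-dp} for the partial derivative of the metric,
\[
\frac{\partial g^{\alpha\beta}}{\partial p_\gamma} = -\partial^\alpha u\, g^{\beta\gamma} - \partial^\beta u\, g^{\alpha\gamma},
\]
which turns the paraproduct $T_{\partial_{p_\gamma}g^{\alpha\beta}}\partial_\gamma u$ into exactly the two terms displayed in \eqref{better g-para-rep}. This directly identifies $R(u)$ with $R_0(u)$, so that \eqref{Rb} follows from \eqref{est-R} once we note that $\|D^\frac12 \partial u\|_{BMO} \lesssim \BB$ (this is recorded right after \eqref{def-BB}, via $\|\partial u\|_{BMO^{1/2}} \lesssim \BB$) and that $C(\|\partial u\|_{L^\infty}) \lesssim_\AA 1$ since $\|\partial u\|_{L^\infty} \lesssim \AA$.

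For the second bound \eqref{Reb}, I would argue directly on the trichotomy decomposition of $R(u)$. Writing $g^{\alpha\beta}(u) = T_{g^{\alpha\beta}} 1 + \ldots$ is not quite the right bookkeeping; instead, since $R(u) = g^{\alpha\beta}(\partial u) - g^{\alpha\beta}(0) - T_{\partial_{p_\gamma} g^{\alpha\beta}} \partial_\gamma u$ modulo the constant $g^{\alpha\beta}(0)$, I would use a continuous Littlewood-Paley / chain-rule expansion as in the proof of Lemma~\ref{l:R}, expanding $F(\partial u) = F(0) + \int_0^\infty F'(\partial u_{<j}) \partial u_j \, dj$ with $F = g^{\alpha\beta}$; the paraproduct term $T_{F'(\partial u)}\partial u$ is precisely what is subtracted, and the remainder collects the $\Pi$ and $T_{\partial u} F'(\partial u)$ type contributions, which are each bounded in $H^{s-1}$ by $C(\|\partial u\|_{L^\infty}) \|\partial u\|_{H^{s-1}}$ using the standard Coifman–Meyer estimates together with the Moser inequality for $F'(\partial u)$ in $L^\infty$ and in $BMO$. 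This is the dimensionally "unbalanced" bound, where we simply place one factor of $\partial u$ in $L^\infty$ and the other in $H^{s-1}$, with no gain.

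The main obstacle is essentially bookkeeping rather than a genuine difficulty: one must make sure that Lemma~\ref{l:R} is applied with the correct argument. Lemma~\ref{l:R} is stated for $R(w) = F(w) - T_{F'(w)} w$ with $w \in H^s$, but here the natural variable is $w = \partial u$, which lives in $H^{s-1}$, so the role of $s$ in Lemma~\ref{l:R} is played by $s-1$, giving the gain of $\frac12$ derivative in \eqref{Rb} exactly as claimed (from $H^{s-1}$ to $H^{s-\frac12}$). A minor additional point is the handling of the constant $g^{\alpha\beta}(0) = m^{\alpha\beta}$: since we work in inhomogeneous spaces this constant is harmless, but one should note $F(w) - F(0)$ rather than $F(w)$ when invoking the Moser estimate; equivalently, one applies Lemma~\ref{l:R} to $\tilde F = F - F(0)$, which still satisfies $\tilde F(0) = 0$ and $\tilde F' = F'$, so \eqref{better g-para-rep} is unaffected. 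With these identifications in place, both \eqref{Rb} and \eqref{Reb} are immediate consequences of the cited estimates, and the proof is complete.
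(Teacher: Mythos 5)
Your proof is correct and follows essentially the same route as the paper: apply the Moser paralinearization Lemma~\ref{l:R} with $w = \partial u$ (so the exponent $s$ there becomes $s-1$), substitute the derivative formula \eqref{dg-dp}, and read off \eqref{Rb} from \eqref{est-R} and \eqref{Reb} by estimating each piece of the remainder directly without cancellation. Your remark about subtracting the constant $m^{\alpha\beta}$ so that $\tilde F(0)=0$ is a point the paper glosses over, and is handled correctly.
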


\begin{proof} The representation in \eqref{better g-para-rep} and the bound \eqref{Rb} for $R$ follow from \eqref{dg-dp} and Lemma~\ref{l:R}. To get \eqref{Reb} one estimates each term in $R$ separately, using  no cancellations. 
\end{proof}

This suggests that the nonlinear contribution $N(u)$ should be seen as the sum of two terms
\[
N(u) =N_1(u)+N_2(u),
\]
where $N_1$ has null structure and $N_2$ is balanced,
\[
\begin{aligned}
N_1 (u) &= -2 \Pi (\D_{\alpha} \D_{\beta}u, T_{g^{\alpha \gamma}\partial^\beta u } \partial_{\gamma}u), \\
N_2 (u)&= - 2\left( T_{\partial_{\alpha}\partial_{\beta} u}T_{g^{\alpha \gamma}\partial^{\beta}u}\partial_{\gamma}u   -T_{\partial_{\alpha}\partial_{\beta} ug^{\alpha \gamma}\partial^{\beta}u}\partial_{\gamma}u \right)+T_{\partial_{\alpha}\partial_{\beta} u}R(u)+ \Pi (\D_{\alpha} \D_{\beta}u, R(u)).
\end{aligned}
\]

We will first prove that  $N_2(u)$ is a perturbative term:
\begin{lemma}
\label{l:n2}
The expression given by $N_2(u)$ satisfies the bound
\begin{equation}
    \Vert N_2(u)\Vert _{H^{s-1}}\lesssim \BB^2 \Vert \partial  u\Vert_{H^{s-1}}.
\end{equation}
\end{lemma}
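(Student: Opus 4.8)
The plan is to estimate each of the four pieces of $N_2(u)$ separately, in each case distributing the derivatives so that the two factors carrying high frequency are each measured with half a derivative less than a naive count, thereby producing the balanced gain $\BB^2$ in place of $\AA \cdot \|\partial^2 u\|_{L^\infty}$. Throughout I will use that $g^{\alpha\beta}$ and $\partial^\beta u$, and more generally $g^{\alpha\gamma}\partial^\beta u$, belong to $\CC_0$ (indeed to $\CC$) by Lemma~\ref{l:Moser-control}, that $\partial u \in \CC$ and $\partial^2 u \in \DCC$ by Lemma~\ref{l:du-in-dcc}, and that $R(u)$ satisfies the two bounds \eqref{Rb}, \eqref{Reb} from Lemma~\ref{l:g-para}.

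First I would treat the commutator-type term
\[
r := T_{\partial_{\alpha}\partial_{\beta} u}T_{g^{\alpha \gamma}\partial^{\beta}u}\partial_{\gamma}u - T_{\partial_{\alpha}\partial_{\beta} u\, g^{\alpha \gamma}\partial^{\beta}u}\partial_{\gamma}u .
\]
This is exactly a para-product regrouping error $\bigl(T_f T_h - T_{fh}\bigr)\partial_\gamma u$ with $f = \partial_\alpha\partial_\beta u$ and $h = g^{\alpha\gamma}\partial^\beta u$; applying Lemma~\ref{l:para-prod} with $\gamma_1 = -\tfrac12$, $\gamma_2 = \tfrac12$ (in the inhomogeneous form) gives
\[
\| r \|_{H^{s-1}} \lesssim \| D^{-\frac12}(\partial^2 u) \|_{BMO}\, \| D^{\frac12}(g^{\alpha\gamma}\partial^\beta u)\|_{BMO}\, \|\partial_\gamma u\|_{H^{s-1}} \lesssim \BB \cdot \BB \cdot \|\partial u\|_{H^{s-1}},
\]
where $\|D^{-1/2}\partial^2 u\|_{BMO} \lesssim \BB$ follows from $\partial^2 u \in \DCC$ (or directly from $\|\partial u\|_{BMO^{1/2}} \lesssim \BB$), and $\|D^{1/2}(g^{\alpha\gamma}\partial^\beta u)\|_{BMO} \lesssim \BB$ follows since $g^{\alpha\gamma}\partial^\beta u$ is a smooth function of $\partial u$, hence in $\CC_0 \subset BMO^{1/2}$ with that norm controlled by $\BB$. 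One should be slightly careful when both indices of the $\partial^2 u$ factor are time derivatives, since then we do not directly control $\partial_t^2 u$ in $BMO^{1/2}$; here I would first substitute $\partial_t^2 u = \hat\partial_t^2 u + \pi_2(u)$ via Lemma~\ref{l:utt}, placing the $\pi_2(u)$ contribution directly into the error using $\|\pi_2(u)\|_{L^n} \lesssim \AAs^2$ together with the $L^\infty$ bound on the remaining factors, and for the $\hat\partial_t^2 u$ part use the bound $\|P_{<k}\hat\partial_t^2 u\|_{L^\infty} \lesssim 2^{k/2}\BB c_k$ which is exactly the $BMO^{1/2}$-type control needed for Lemma~\ref{l:para-prod} to apply.

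Next, the two terms involving $R(u)$: for $T_{\partial_\alpha\partial_\beta u} R(u)$ I use the first Coifman--Meyer bound together with \eqref{Rb}, namely
\[
\| T_{\partial_\alpha\partial_\beta u} R(u)\|_{H^{s-1}} \lesssim \|\partial^2 u\|_{BMO^{-1/2}}\, \|R(u)\|_{H^{s-1/2}} \lesssim \BB \cdot \BB \|\partial u\|_{H^{s-1}},
\]
(again splitting off $\pi_2(u)$ in the double-time-derivative case), and for $\Pi(\partial_\alpha\partial_\beta u, R(u))$ the $\Pi$-bound
\[
\| \Pi(\partial^2 u, R(u))\|_{H^{s-1}} \lesssim \|\partial^2 u\|_{BMO^{-1/2}}\, \|R(u)\|_{H^{s-1/2}} \lesssim \BB^2 \|\partial u\|_{H^{s-1}}.
\]
Summing the four contributions yields the claimed estimate $\|N_2(u)\|_{H^{s-1}} \lesssim \BB^2 \|\partial u\|_{H^{s-1}}$.

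I expect the only genuine obstacle to be the bookkeeping around the second time derivative: whenever a factor $\partial_\alpha\partial_\beta u$ appears with $(\alpha,\beta) = (0,0)$, the bounds used above require $BMO^{1/2}$-type (or $BMO^{-1/2}$-type) control of $\partial_t^2 u$, which is not available directly but is recovered through the decomposition in Lemma~\ref{l:utt}, at the cost of an extra balanced $\pi_2(u)$ term that must be absorbed. Once this substitution is made systematically, all four estimates reduce to routine applications of Lemma~\ref{l:para-prod}, the Coifman--Meyer bounds, and Lemma~\ref{l:g-para}, and no further subtlety arises.
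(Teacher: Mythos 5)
Your proposal follows the paper's proof almost verbatim: the same decomposition of $N_2$ into the para-product regrouping term plus the two $R(u)$ terms, the same application of Lemma~\ref{l:para-prod} with $\gamma_1 = -\tfrac12,\ \gamma_2 = \tfrac12$, and the same splitting of $\partial_t^2 u = \hat\partial_t^2 u + \pi_2(u)$ via Lemma~\ref{l:utt} to handle the double time derivative. The one place you deviate, however, is where the argument breaks: for the $\pi_2(u)$ contribution you invoke $\|\pi_2(u)\|_{L^n} \lesssim \AAs^2$, but this produces a final bound of size $\AAs^2 \|\partial u\|_{H^{s-1}}$ rather than the claimed $\BB^2 \|\partial u\|_{H^{s-1}}$. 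Worse, $\pi_2(u) \in L^n$ alone does not even give a paraproduct bound at the right regularity — converting to $L^\infty$ by Bernstein costs a full derivative, not half. The correct tool is the \emph{other} bound in Lemma~\ref{l:utt}, namely $\|\pi_2(u)\|_{L^\infty} \lesssim_{\AA} \BB^2$, which the paper uses and which directly supplies the balanced factor $\BB^2$ without any loss.

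A second, smaller bookkeeping point: once you use the $L^\infty$ bound on $\pi_2(u)$ in the terms $T_{\partial_\alpha\partial_\beta u}\, R(u)$ and $\Pi(\partial_\alpha\partial_\beta u, R(u))$, the full $\BB^2$ factor is already absorbed in $\pi_2(u)$, so the accompanying $R(u)$ factor must be measured with the weaker bound \eqref{Reb} (in $H^{s-1}$) rather than with \eqref{Rb} (in $H^{s-1/2}$), or you would overshoot. You list both bounds as available but write "again splitting off $\pi_2(u)$" without saying which $R$-estimate you pair it with; the paper is explicit about switching to \eqref{Reb} here and it is worth spelling out.

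With these two corrections — replace $\|\pi_2(u)\|_{L^n}\lesssim \AAs^2$ by $\|\pi_2(u)\|_{L^\infty}\lesssim_\AA \BB^2$, and pair it with \eqref{Reb} when an $R(u)$ factor is present — your argument becomes the paper's proof.
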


\begin{proof} We begin with the first difference in $N_2$, and look separately at each $\alpha, \beta $ and $\gamma$. If $(\alpha, \beta)\neq (0,0)$ then we apply Lemma~\ref{l:para-prod} to obtain
\[
\begin{aligned}
\Vert\! \left( T_{\partial_{\alpha}\partial_{\beta} u}T_{g^{\alpha \gamma}\partial^{\beta}u}  -T_{\partial_{\alpha}\partial_{\beta} ug^{\alpha \gamma}\partial^{\beta}u} \right)\! \partial_{\gamma}u \Vert_{H^{s-1}} &\lesssim \Vert  \vert D\vert ^{-\frac{1}{2}} \partial_\alpha \partial_{\beta} u\Vert _{BMO} \Vert  \vert D\vert ^{\frac{1}{2}}\!\left( g^{\alpha \gamma}\partial^{\beta}u\right)\!\Vert _{BMO}\Vert \partial u\Vert_{H^{s-1}}\\
&\lesssim \BB ^2 \Vert \partial u\Vert_{H^{s-1}}.
\end{aligned}
\]
If  $(\alpha, \beta)= (0,0)$ then we use the wave equation for $u$ \eqref{msf-short} with the $\tilde{g}$ metric to write
\begin{equation}\label{dt2u-dec}
\partial_t^2 u=\left( T_{\tilde{g}}\partial_x\partial u+ T_{\partial_x\partial u}\tilde{g}\right) +\Pi (\tilde{g},\partial_x\partial u)=:\hat \partial_0^2 u +\pi_2(u),
\end{equation}
exactly as in Lemma~\ref{l:utt}. 
Then for the first term  we have the estimate 
\begin{equation}
\label{f1}
\Vert \hat \partial_0^2 u
\Vert_{BMO^{-\frac12}} \lesssim_{\AA} \BB
\end{equation}
which suffices in order to apply Lemma~\ref{l:para-prod} as above in order to estimate its contribution. 

On the other hand, the bound for the contribution of $\pi_2(u)$ is easier because by Lemma~\ref{l:utt} we have the direct uniform bound
\begin{equation}
\label{f2}
\Vert \pi_2(u)\Vert_{L^{\infty}}\lesssim_{\AA} \BB^2.
\end{equation}

Now, we turn our attention to the second term in $N_2(u)$, where we again discuss separately the  $(\alpha, \beta)\neq (0,0)$ and  $(\alpha, \beta) = (0,0)$ cases. 

For the $(\alpha, \beta)\neq (0,0)$ case we use the bound in \eqref{Rb} to obtain
\[
\Vert T_{\partial_{\alpha}\partial_{\beta}u}R(u)\Vert_{H^{s-1}}\lesssim 
\Vert\partial_{\alpha}\partial_{\beta}u \Vert_{BMO^{-\frac12}} \Vert R(u)\Vert_{H^{s-\frac12}} \lesssim_{\AA}
\BB^2 \Vert \partial u\Vert_{H^{s-1}}.
\]

Next we consider the case $(\alpha, \beta)= (0,0)$, and observe that we again need to use the 
decomposition \eqref{dt2u-dec}.
The  contribution of $\hat \partial_0^2 u$ is estimated using \eqref{f1} and the bound \eqref{Rb} for $R$, exactly as above:
\[
\Vert T_{\hat \partial_0^2 u}R(u)\Vert_{H^{s-1}}\lesssim \Vert \vert D\vert^{-\frac12}\hat \partial_0^2 u \Vert_{BMO} \Vert R(u)\Vert_{H^{s-\frac12}}\lesssim _{\AA}\BB ^2\Vert \partial u\Vert_{H^{s-1}}.
\]
 For the $\pi_2(u)$ contribution we use the pointwise bound \eqref{f2} and the $H^{s-1}$ bound \eqref{Reb} for $R$,
 \[
\Vert T_{\pi_2(u)}R(u)\Vert_{H^{s-1}}\lesssim \Vert \vert \pi_2(u)\Vert_{L^{\infty}} \Vert R(u)\Vert_{H^{s-1}}\lesssim _{\AA}\BB ^2\Vert \partial u\Vert_{H^{s-1}}.
\]
Finally, a similar analysis  leads to the  bound for  the balanced term $\Pi (\D_{\alpha} \D_{\beta}u, R(u))$.

\end{proof}

To account for the unbalanced part $N_1 (u)$ of $N$ we introduce a normal form correction 
\begin{equation}
\label{u-nft}
\tu =u- \Pi (\D_{\beta} u,  T_{\partial^{\beta}u } u):=u-u_2.
\end{equation}
Our goal will be to show that  the normal form variable solves a linear inhomogeneous paradifferential equation with a balanced source term. 

\begin{lemma}
\label{l:nfu2}The normal form correction above has the following properties:
\begin{itemize}
\item[a)] It is bounded,
\[
\Vert \tilde{u}[t] \Vert_{\H^s}\approx  \Vert u[t] \Vert_{\H^s}.
\]
\item[b)] It solves the an equation of the form
\begin{equation}
\label{important}
( \D_{\alpha} T_{g^{\alpha \beta}}\D_{\beta} -  T_{A^\gamma}\D_\gamma) \tu =N_2(u) + \partial_t R_1(u) +R_2(u),
\end{equation}
where 
\begin{equation}
\label{R1R2}
\Vert R_{1}(u)\Vert_{H^{s}}\lesssim \BB^2 \Vert u\Vert_{\H^{s}}, \qquad \Vert R_{2}(u)\Vert_{H^{s-1}}\lesssim \BB^2 \Vert  u\Vert_{\H^{s}},
\end{equation}
and 
\begin{equation}
\label{R1}
\Vert R_1 (u)\Vert _{H^{s-1}}\lesssim \AAs^2 \Vert u\Vert_{\H^{s}}.
\end{equation}
\end{itemize}
\end{lemma}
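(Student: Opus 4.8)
\textbf{Proof plan for Lemma~\ref{l:nfu2}.}

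The plan is to treat the two parts in order. Part (a) is the easy one: the correction $u_2 = \Pi(\partial_\beta u, T_{\partial^\beta u} u)$ is a balanced bilinear expression in $u$ which is smoothing by half a derivative relative to scaling, so by the Coifman--Meyer bounds and the fact that $\partial u \in \CC \subset L^\infty$ with $\AAs \ll 1$ we obtain $\|u_2[t]\|_{\H^s} \lesssim \AAs \|u[t]\|_{\H^s}$; this already gives the equivalence $\|\tu[t]\|_{\H^s} \approx \|u[t]\|_{\H^s}$. A small point of care is that $u_2$ involves $\partial u$, not $u$, so one should check that the time-derivative component of $\H^s$ is also controlled; here one uses $\partial_t u_2 = \Pi(\partial_t \partial_\beta u, T_{\partial^\beta u} u) + \Pi(\partial_\beta u, T_{\partial_t \partial^\beta u} u) + \Pi(\partial_\beta u, T_{\partial^\beta u} \partial_t u)$ and the $\DCC$/$\CC$ structure of $\partial^2 u$ from Lemma~\ref{l:utt}, together with the equation to handle $\partial_t^2 u$ via the decomposition \eqref{dt2u-dec}.

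For part (b), the core computation is to apply the paradifferential wave operator $T_P = \partial_\alpha T_{g^{\alpha\beta}}\partial_\beta - T_{A^\gamma}\partial_\gamma$ to $\tu = u - u_2$. On $u$ itself we already have, from \eqref{para-org-eq}, $(T_{g^{\alpha\beta}}\partial_\alpha\partial_\beta - 2T_{A^\gamma}\partial_\gamma) u = N(u) = N_1(u) + N_2(u)$; rewriting the leading operator in divergence form costs only commutators of the form $T_{\partial_\beta g^{\alpha\beta}}\partial_\alpha$ which, since $\partial g \in \DCC$, are perturbative and go into $R_2$. So the whole point is to compute $T_P u_2$ and see that it cancels the unbalanced part $N_1(u)$ modulo balanced and $\partial_t$-of-balanced terms. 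The heuristic is the standard quasilinear normal form identity: for a quadratic correction $\Pi(\partial_\beta u, T_{\partial^\beta u} u)$, applying $\Box_g$ (paradifferentially) produces, at leading order, $2\Pi(T_{\partial^\beta u}\partial_\alpha\partial^\gamma u\,, \partial_\gamma\partial_\beta u)$-type terms via the para-Leibniz rule (Lemma~\ref{l:para-leibniz}), which matches $-N_1(u) = 2\Pi(\partial_\alpha\partial_\beta u, T_{g^{\alpha\gamma}\partial^\beta u}\partial_\gamma u)$ up to the conformal factor and up to terms in which a $\partial$ can be pulled fully outside. Concretely I would: (i) expand $T_P u_2$ using $\partial_\alpha T_{g^{\alpha\beta}}\partial_\beta \Pi(\partial_\beta u, T_{\partial^\beta u} u)$, distribute derivatives by Leibniz, and isolate the term where both output derivatives land on the high-frequency factor $T_{\partial^\beta u} u$; (ii) use the Para-Leibniz error bound, Lemma~\ref{l:para-leibniz}, and Para-associativity, Lemma~\ref{l:para-assoc}, to move the $T_{\partial^\beta u}$ paracoefficient in and out of $\Pi$ at the cost of $\BB^2\|u\|_{\H^s}$ errors; (iii) recognize the surviving principal term as $N_1(u)$ up to a factor involving $g^{00}$ (from the $\tg$ vs $g$ discrepancy), which again differs from the exact cancellation only by balanced terms since $g = g(\partial u) \in \CC$. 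Every term in which the outer $\partial_\alpha$ or $\partial_\beta$ can be extracted as an overall $\partial_t$ (the case where a time derivative falls on the low-frequency $\partial_\beta u$ factor and cannot be absorbed into a balanced estimate at the $H^{s-1}$ level, but can be at the $H^s$ level with a time derivative peeled off) is collected into $\partial_t R_1(u)$; the bound \eqref{R1R2} for $R_1$ in $H^s$ uses the Coifman--Meyer estimates with one factor in $BMO^{1/2}$ (i.e. the $\BB$ norm), while the coarser bound \eqref{R1} in $H^{s-1}$ only needs $\partial u \in B^0_{\infty,1} = \AA$-type control, hence $\AAs^2$. The remaining genuinely-lower-order debris — commutators between paracoefficients, the $R(u)$ contributions from Lemma~\ref{l:g-para}, and the divergence-form reshuffling errors — all go into $R_2(u)$ and are estimated in $H^{s-1}$ by $\BB^2\|u\|_{\H^s}$ using Lemmas~\ref{l:para-com}, \ref{l:para-prod}, \ref{l:para-prod2} together with $\partial^2 u \in \DCC$ and $g \in \CC$.

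The main obstacle, as usual in these normal-form reductions, is bookkeeping the case $\alpha = \beta = 0$, i.e. the appearance of $\partial_t^2 u$ both in the expansion of $T_P u_2$ and inside $N_1, N_2$: one cannot estimate $\partial_t^2 u$ directly in $L^\infty$-based norms, so one must systematically substitute the decomposition $\partial_t^2 u = \hat\partial_t^2 u + \pi_2(u)$ from \eqref{dt2u-dec}/Lemma~\ref{l:utt}, use \eqref{f1} for the good part $\hat\partial_t^2 u \in BMO^{-1/2}$-type control and \eqref{f2} for the uniformly small error $\pi_2(u)$, and make sure that whenever a further $\partial_t$ would have to act on a $\partial_t^2 u$ factor it is instead routed into the $\partial_t R_1$ term rather than differentiated. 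A secondary but nontrivial point is verifying that the principal terms match \emph{with the correct constant}, which is why the conformal factor $g^{00}$ and the precise paralinearization \eqref{better g-para-rep} of $g^{\alpha\beta}$ have to be tracked; any mismatch there is not automatically balanced and would break the argument, so this is where I would be most careful.
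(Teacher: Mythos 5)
Your plan matches the paper's proof in structure and in the key tools: part (a) via Coifman--Meyer bounds together with the $\CC/\DCC$ structure of $\partial u$ and $\partial^2 u$ (using the decomposition $\partial_t^2 u = \hat\partial_t^2 u + \pi_2(u)$ to handle $\partial_t u_2$), and part (b) via a Leibniz expansion of $T_P u_2$, para-associativity (Lemma~\ref{l:para-assoc}), and systematic routing of unavoidable time derivatives into $\partial_t R_1$, with the surviving one-derivative-on-each-factor terms cancelling $N_1(u)$. Two small caveats on your "most careful" points: the divergence-form rewriting $\partial_\alpha T_{g^{\alpha\beta}}\partial_\beta - T_{A^\gamma}\partial_\gamma = T_{g^{\alpha\beta}}\partial_\alpha\partial_\beta - 2T_{A^\gamma}\partial_\gamma$ is an exact identity (since $A^\alpha = -\partial_\beta g^{\alpha\beta}$ by \eqref{def-A}), not a commutator estimate; and there is no $g^{00}$ discrepancy to track in the principal cancellation, since $\tg$ enters only when the equation is used to substitute for $\partial_t^2 u$ in error estimates, while both $N_1$ and the main term from $T_P u_2$ carry the same metric $g^{\alpha\beta}$. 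You also do not mention the device the paper uses for $T_{A^\gamma}\partial_\gamma u_2$, namely replacing $A^\gamma$ with the corrected $\mathring A^\gamma = T_{\partial^\alpha u}T_{g^{\alpha\beta}}\widehat{\partial_\alpha\partial_\beta}u$ before moving $\partial_\gamma$ outside, which is needed precisely to avoid a third time derivative of $u$; this falls under your general principle, but is worth flagging as a concrete step.
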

We remark that here we expand the meaning of ``balanced source terms'' to include
expressions of the form $\partial_t R_1$ with $R_1$ as above. This is 
required due to the fact that time derivatives are often more difficult to estimate in our context, and are allowed in view of the result in Theorem~\ref{t:lin-wp-inhom+}.

\begin{proof}
a) In view of the smallness of $\AAs$, for the boundedness of the normal form it
suffices to  show that 
\begin{equation}
\label{u2}
\begin{aligned}
 \qquad \Vert u_2\Vert_{H^s} \lesssim \AAs^2 \Vert u\Vert_{_{\H^s}},\\
\end{aligned}
\end{equation}
as well as
\begin{equation}
\label{dtu2}
\begin{aligned}
\qquad \Vert \partial_t u_2\Vert_{H^{s-1}} \lesssim \AAs^2 \Vert u\Vert_{_{\H^{s}}} .
\end{aligned}
\end{equation}
For the first bound we directly have 
\begin{equation}
    \Vert \Pi (\partial_{\beta}u, T_{\partial^{\beta}u}u)\Vert_{H^s} \lesssim \Vert \partial_{\beta}u\Vert _{H^{s-1}} \Vert  T_{\partial^{\beta}u}u\Vert_{BMO^1}\lesssim \AA^2 \Vert u\Vert _{\H^s} .
    \end{equation}
We prove the second bound in a similar manner, but we first apply the time derivative  and analyze each term separately: 
\[
\partial_t\Pi (\partial_{\beta}u, T_{\partial^{\beta}u}u)=  \Pi (\partial_t\partial_{\beta}u, T_{\partial^{\beta}u}u) +  \Pi (\partial_{\beta}u, T_{\partial^{\beta}u}\partial_t u)+ \Pi (\partial_{\beta}u, T_{\partial_t\partial^{\beta}u}u):=r_1+r_2+r_3.
\]
There are multiple cases arising from the strategy we will implement for terms involving two times derivatives, as well as from the particular structure of each of the terms.

We begin with $r_1$, where we need to separate the  $\beta \neq 0$ and $\beta = 0$ cases. The easiest case is when $\beta \neq 0$, where we have
\begin{equation}
    \label{pi1}
  \Vert \Pi (\partial_t\partial_{\beta}u, T_{\partial^{\beta}u}u)\Vert_{H^{s-1}}  \lesssim  \Vert  T_{\partial^{\beta}u}u\Vert_{H^s}
  \sup_k 2^k \Vert P_{<k} \partial_t\partial_{\beta}u\Vert_{L^\infty} \lesssim \AA^2 \Vert u\Vert_{_{\H^{s}}} .
\end{equation}
Here we have used the energy control we have for $\partial_t u$, which in turn gives control of all spatial derivatives of $\partial_t u$. 
For the case $\beta=0$ we use the decomposition for $\partial_t^2 u$ as in Lemma~\ref{l:utt}. For the first component we use the second bound in \eqref{fe-dt2u},
\[
 \Vert \Pi (\hat \partial_t^2 u, T_{\partial^{\beta}u}u)\Vert_{H^{s-1}}  \lesssim  \Vert  T_{\partial^{\beta}u}u\Vert_{H^s}
 \sup_{k} 2^{-k} \Vert P_{<k} \hat \partial_t^2 u\Vert_{L^\infty} \lesssim \AA^2 \Vert u\Vert_{_{\H^{s}}}.
\]
For the second component we argue similarly but using  the second bound in \eqref{fe-dt2u-err} together with Bernstein's inequality,
\[
 \Vert \Pi (\pi_2(u), T_{\partial^{\beta}u}u)\Vert_{H^{s-1}}  \lesssim  \Vert  T_{\partial^{\beta}u}u\Vert_{H^s}
 \sup_{k} 2^{-k} \Vert P_{<k} \pi_2(u) u\Vert_{L^\infty} \lesssim \AAs^2 \Vert u\Vert_{_{\H^{s}}}.
\]

\bigskip

We continue with the bound for $r_2$, where we do not need to distinguish between the time and space derivatives, 
\[
\Vert \Pi (\partial_{\beta}u, T_{\partial^{\beta}u}\partial_t u)\Vert_{H^{s-1}}\lesssim \Vert T_{\partial^{\beta}u}\partial_t u\Vert_{H^{s-1}} \Vert \partial_{\beta} u\Vert_{L^\infty} \lesssim  \AA \Vert \partial^{\beta} u\Vert _{L^{\infty}}  \Vert \partial_t u\Vert_{H^{s-1}}\lesssim \AA^2 \Vert u[t]\Vert_{\H^s}.
\]


Lastly, we need to bound $r_3$. Here we argue as in \eqref{pi1},  
\[
\Vert \pi_3\Vert_{H^{s-1}} \lesssim \Vert \partial_\beta u \Vert_{L^{\infty}} \Vert  T_{\partial_t \partial^{\beta}u} u\Vert_{H^{s-1}}
\lesssim \AA \Vert u\Vert_{H^s}
  \sup_k 2^{-k} \Vert P_{<k} \partial_t\partial^{\beta}u\Vert_{L^\infty},
\]
so it remains to prove the following bound for $\partial_t \partial^\beta u$,
\begin{equation}\label{pi3}
\Vert P_{<k} \partial_t\partial^{\beta}u\Vert_{L^\infty} \lesssim 2^{k} \AAs.
\end{equation}
Here we use the equation \eqref{msf-short} and the chain rule for the paracoefficient to write $\partial_t \partial^\beta u$
as a linear combination of $\partial^2_t u$ and $\partial_t\partial_x u$,
\[
\partial_t\partial^{\beta} u :=   \partial_t h( \partial u ) = h( \partial u ) \partial_t^2 u+ h( \partial u ) \partial_t\partial_x u 
\]
where $h:= h(\partial u)$. For the $\partial^2_t$ term we use again the equation  and schematically obtain 
\[
\partial_t\partial^{\beta} u :=   \partial_t h( \partial u )= \tilde{h} (\partial u)  \partial_x\partial u ,
\]
where $\tilde{h}$ incorporates the corresponding metric coefficients. As before, we need to use a Littlewood-Paley decomposition 
\[
\tilde{h} (\partial u)  \partial_x\partial u =T_{\tilde{h} (\partial u) } \partial_x\partial u  +T_{ \partial_x\partial u }\tilde{h} (\partial u) + \Pi (\tilde{h} (\partial u) , \partial_x\partial u ).
\]
The first two terms are easy to estimate using only  $L^\infty$ bounds for 
$\partial u$ and $h(\partial u)$,
\[
\Vert P_{<k} T_{\tilde{h} (\partial u) } \partial_x\partial u\Vert_{L^\infty}+ \Vert P_{<k}T_{ \partial_x\partial u }\tilde{h} (\partial u) \Vert_{L^\infty} \lesssim 2^{k} \AA.
\]
Finally for the third we use instead the $\AAs$ component of the $\CC_0$ norm for both $\partial u$ and $h(\partial u)$,
\[
\Vert P_{<k}\Pi(\tilde{h} (\partial u),  \partial_x\partial u)\Vert_{L^\infty} \lesssim 2^{k} \Vert \Pi(\tilde{h} (\partial u),  \partial_x\partial u)\Vert_{L^n}
\lesssim 2^k \AAs^2.
\]
Hence \eqref{pi3} follows. This finishes the proof of \eqref{dtu2}, and thus of the  boundedness from above and below of the normal form transformation in our desired Sobolev space $\H^s$.

\bigskip

b) We begin with the supposedly easier contribution, meaning with  the term $T_{A^\gamma}\D_\gamma u_2$.   To bound this term we would like to commute the $\partial_{\gamma}$ and place it in front of the product, 
\[
T_{A^\gamma}\D_\gamma u_2=\D_\gamma T_{A^\gamma} u_2 - T_{\partial_{\gamma}A^{\gamma}}u_2.
\]
This would look good for the first term on the RHS. However the last term would be problematic, as it may contain three derivatives with respect to time.  To avoid this issue we first substitute $A^{\gamma}$, which by \eqref{def-A} is given by
\[
{A}^{\gamma}:= \partial^{\alpha}u g^{\alpha \beta} \partial_{\alpha}\partial_{\beta}u,
\]
with the more manageable leading part  $\mathring{A}^{\gamma}$ given by
\[
\mathring{A}^{\gamma}:= T_{\partial^{\alpha}u}T_{g^{\alpha \beta}}\widehat{\partial_{\alpha}\partial_{\beta}u}.
\]
 Here the hat correction is from the Definition~\ref{d:dt2u}.  Then
\[
\begin{aligned}
T_{A^\gamma}\D_\gamma u_2&=\left( T_{A^\gamma}-T_{\rA^\gamma} \right) \D_\gamma u_2+T_{\rA^\gamma}  \D_\gamma u_2\\
&=\left( T_{A^\gamma}-T_{\rA^\gamma} \right) \D_\gamma u_2+ \D_\gamma T_{\rA^\gamma} u_2 -T_{\partial_{\gamma}\rA^{\gamma}}u_2.
\end{aligned}
\]
We will successively place each of these terms in $\partial_t R_1$ and $R_2$. We place the first term in $R_2$ using the bounds \eqref{u2} and \eqref{dtu2} for $u_2$. Then it remains  to bound the coefficient
\begin{equation}
\label{diffA}
 \Vert A^\gamma-\rA^\gamma \Vert_{L^{\infty}}\lesssim \BB^2.
\end{equation}
This is a direct consequence of Lemma~\ref{l:circle-app}.
We will place the second term in $\partial_t R_1$ if $\gamma =0$ and in $R_2$ if $\gamma \neq 0$. For this we measure $u_2$ in $H^{s+\frac{1}{2}}$,
\[
\Vert u_2\Vert _{H^{s+\frac{1}{2}}} \lesssim \AA \BB \Vert u\Vert_{\H^s}.
\]
This implies that for \eqref{R1} it is sufficient to bound the coefficient  
\[
 \Vert \rA^{\gamma} \Vert_{BMO^{-\frac{1}{2}}}\lesssim  \BB.
\]
This is also a consequence of Lemma ~\ref{l:circle-app}, see \eqref{ring-bd}. On the other hand for $R_1$ we need 
\[
 \Vert \rA^{\gamma} \Vert_{BMO^{-1}}\lesssim  \AA,
\]
which is similar.

The last term is placed in $R_2$ using again the bounds \eqref{u2} and \eqref{dtu2} for $u_2$ on one hand and Lemma~\ref{l:tp-du} on the other hand, to obtain 
\[
 \Vert P_{<k} \partial_{\gamma} \rA^{\gamma} \Vert_{L^{\infty}}\lesssim 2^k\BB^2.
\]

Now we consider the main term $\partial_{\alpha} T_{g^{\alpha \beta}}\partial_{\beta}u_2$, which  can be written in divergence form as
\[
\begin{aligned}
\partial_{\alpha} T_{g^{\alpha \beta}}\partial_{\beta}u_2 &= \partial_{\alpha} T_{g^{\alpha \beta}} \partial_{\beta}  \left[  \Pi (\D_{\gamma} u,  T_{\partial^{\gamma}u } u)\right]  \\
&= \partial_{\alpha} T_{g^{\alpha \beta}}   \left[  \Pi (\partial_{\beta}\D_{\gamma} u,  T_{\partial^{\gamma}u } u) + \Pi (\D_{\gamma} u,  T_{\partial_{\beta}\partial^{\gamma}u } u) + \Pi (\D_{\gamma} u,  T_{\partial^{\gamma}u } \partial_{\beta} u) \right].
\end{aligned}
\]
Depending on whether $\alpha =0$ or $\alpha\neq 0$, we place the middle term into $\partial_tR_1$  or $R_2$, respectively:
\[
\Vert \Pi (\D_{\gamma} u,   T_{\partial_{\beta}\partial^{\gamma}u } u)\Vert_{H^s}\lesssim \BB^2 \Vert u\Vert_{\H^s},
\]
\[
\Vert \Pi (\D_{\gamma} u,   T_{\partial_{\beta}\partial^{\gamma}u } u)\Vert_{H^{s-1}}\lesssim \AAs^2 \Vert u\Vert_{\H^s}.
\]
Here we use the property $\partial_{\beta}\partial^{\gamma}u \in \DCC$ to handle the case when $\beta=0$ for the first bound, and \eqref{pi3} for the second.

The first term can be rewritten in the form
\begin{equation}
\label{scary1}
\partial_{\alpha} T_{g^{\alpha \beta}}     \Pi (\partial_{\beta}\D_{\gamma} u,  T_{\partial^{\gamma}u } u) =\partial_{\alpha}    \Pi ( T_{g^{\alpha \beta}}  \widehat{\partial_{\beta}\D_{\gamma}} u,  T_{\partial^{\gamma}u } u) +\partial_tR_1+R_2,
\end{equation}
by using Lemma~\ref{l:para-assoc}, as well as Lemma~\ref{l:utt} for the case $(\beta, \gamma)=(0,0)$. Similarly the last term can be rewritten in the analogous form
\begin{equation}
\label{scary3}
\partial_{\alpha} T_{g^{\alpha \beta}}     \Pi (\D_{\gamma} u,  T_{\partial^{\gamma}u }\partial_{\beta} u) =\partial_{\alpha}    \Pi (   \D_{\gamma} u ,  T_{\partial^{\gamma}u} T_{g^{\alpha \beta}}  \partial_{\beta} u) +\partial_tR_1+R_2.
\end{equation}
Finally we distribute the $\alpha$ derivative in both \eqref{scary1} and \eqref{scary3}. For the first term on the right in \eqref{scary1} we get
\[
\begin{aligned}
\partial_{\alpha}    \Pi ( T_{g^{\alpha \beta}}  \widehat{\partial_{\beta}\D_{\gamma}} u,  T_{\partial^{\gamma}u } u) &= 
\Pi (\partial_\alpha  T_{g^{\alpha \beta}}  \widehat{\partial_{\beta}\D_{\gamma}} u,  T_{\partial^{\gamma}u } u)  + \Pi ( T_{g^{\alpha \beta}}  \widehat{\partial_{\beta}\D_{\gamma}} u,  T_{\partial_{\alpha}\partial^{\gamma}u } u) \\
& \quad + \Pi ( T_{g^{\alpha \beta}}  \widehat{\partial_{\beta}\D_{\gamma}} u,  T_{\partial^{\gamma}u } \partial_{\alpha}u) \\
&:=s_1+s_2+s_3.
\end{aligned}
\]
We place $s_1$ in $R_2$ using Lemma~\ref{l:tp-du},
\[
\Vert s_1\Vert_{H^{s-1}} \lesssim  \sup_k 2^{-k} \Vert P_{<k} \partial_\alpha  T_{g^{\alpha \beta}}  \widehat{\partial_{\beta}\D_{\gamma}} u\Vert_{L^{\infty}} \Vert u\Vert_{\H^s}\lesssim \BB^2 \Vert u\Vert_{\H^s}.
\]
The term $s_2$  is also estimated perturbatively using 
the fact that $\partial_{\alpha}\partial^{\gamma}u \in \DCC$,
which allows us to decompose it as a sum $f_1+f_2$ as in \eqref{DC-decomp}. Then we estimate
\[
\begin{aligned}
\Vert s_2\Vert_{H^{s-1}} \lesssim & \  \sup_k 2^{-\frac{k}{2}} \Vert P_{<k} T_{g^{\alpha \beta}}  \widehat{\partial_{\beta}\D_{\gamma}} u\Vert_{L^{\infty}}\sup_{j} 2^{-\frac{j}{2}}\Vert P_{<j} f_1\Vert_{L^{\infty}} \Vert u\Vert_{\H^s}
\\ & \ +  \sup_k 2^{-k} \Vert P_{<k} T_{g^{\alpha \beta}}  \widehat{\partial_{\beta}\D_{\gamma}} u\Vert_{L^{\infty}} \Vert  f_2\Vert_{L^{\infty}} \Vert u\Vert_{\H^s}
\\
\lesssim & \  \BB^2 \Vert u\Vert_{\H^s},
\end{aligned}
\]
using Lemma~\ref{l:utt} for $\widehat{\partial_{\beta}\D_{\gamma}} u$. In $s_3$ we can switch $T_g$ onto the other argument of $\Pi$ using Lemma~\ref{l:para-assoc} and remove the hat correction, so that it becomes half of $N_1$. 

The last remaining term to bound is the one on the RHS of \eqref{scary3}. Here we distribute again the $\alpha$-derivative
\[
\begin{aligned}
\partial_{\alpha}    \Pi (   \D_{\gamma} u ,  T_{\partial^{\gamma}u} T_{g^{\alpha \beta}}  \partial_{\beta} u)&=  \Pi (  \partial_{\alpha} \D_{\gamma} u ,  T_{\partial^{\gamma}u} T_{g^{\alpha \beta}}  \partial_{\beta} u)
+  \Pi (   \D_{\gamma} u ,  T_{\partial_{\alpha}\partial^{\gamma}u} T_{g^{\alpha \beta}}  \partial_{\beta} u)\\
&\quad + \Pi (   \D_{\gamma} u ,  T_{\partial^{\gamma}u} T_{\partial_{\alpha}g^{\alpha \beta}}  \partial_{\beta} u) + 
\Pi (   \D_{\gamma} u ,  T_{\partial^{\gamma}u} T_{g^{\alpha \beta}}  \partial_{\beta} \partial_{\alpha}u).
\end{aligned}
\]
By inspection we observe that the first term in the equality above is the second  half of $N_1$. The remaining three terms can be estimated perturbatively using exactly the  same approach as in the case of \eqref{scary1}.
\end{proof}

In view of Lemma~\ref{l:n2} we can include $N_2(u)$ into $R_2(u)$ in \eqref{important}, obtaining the shorter representation of the source term
\begin{equation}
\label{fimportant}
( \D_{\alpha} T_{g^{\alpha \beta}}\D_{\beta} -  T_{A^\gamma}\D_\gamma) \tu = \partial_t R_1(u) +R_2(u),
\end{equation}
where $R_1$ and $R_2$ satisfy the bounds \eqref{R1R2}
and \eqref{R1}. 

For the homogeneous paradifferential problem we have the $\H^s$ energy $E^s$ given by Theorem~\ref{t:para-wp}. We will use this to construct our desired  nonlinear energy $E^s_{NL}$ in Theorem~\ref{t:ee}. Because we have the source term $\partial_tR_1$, the associated nonlinear energy will not be simply given by $E^{s}(\tilde{u}[t])$. Instead, the correct energy is the one provided by Theorem~\ref{t:lin-wp-inhom+}, namely
\begin{equation}
E^s_{NL}(u[t]):=E^s(\tilde{u}[t]-r[t]),
\end{equation}
where the correction $r[t]$ is given by 
\begin{equation}
r[t]=\myvec{0\\ (T_{g^{00}})^{-1}R_1(u)}.
\end{equation}
Then by the estimate in \eqref{ee-corect} we obtain
\begin{equation}
    \dfrac{d}{dt}E^s_{NL} (u[t])\lesssim E^s(\tilde{u}[t]-r[t],r_1 )+ \BB^2E^s_{NL}(u[t]),
\end{equation}
where $r_1[t]$ is as in \eqref{new-source},
\[
r_1[t]:= \ \myvec{ (T_{g^{00}})^{-1} R_1 \\ (T_{g^{00}})^{-1}( R_2 - \partial_k T_{g^{k0}} (T_{g^{00}})^{-1} R_1 -  T_{g^{0k}} \partial_k (T_{g^{00}})^{-1} R_1) }.
\]
Our nonlinear energy $E^s_{NL}$ is coercive because $r[t]$ is small, 
\[
\Vert r\Vert_{\H^s} \lesssim \AA^2 \Vert u\Vert_{\H^s},
\]
due to the bound \eqref{R1}. Finally, we control the time derivative of the energy, because 
\[
\Vert r_1\Vert_{\H^s}\lesssim \BB^2 \Vert u\Vert_{\H^s}.
\]
This is due to the bound in \eqref{R1R2}.

\section{Energy and Strichartz estimates for the linearized equation} \label{s:linearized}

Our objective here is to prove that the homogeneous linearized equation  is well-posed in the  space $\H^\frac12$ (respectively $\H^\frac58$ in two dimensions)
and satisfies Strichartz estimates with an appropriate loss of derivatives, namely 
\eqref{Str-hom} with $S=S_{AIT}$,
under the assumption that the associated 
linear paradifferential equation has similar
properties. The main result of this section is as follows:

\begin{theorem} \label{t:linearized}
Let $s$ be as in \eqref{s-AIT3} (respectively \eqref{s-AIT2} in dimension $n = 2$). Let $u$ be a smooth solution for the minimal surface equation in a unit time interval, and which satisfies the energy and Strichartz bounds 
\begin{equation}\label{u-have-now}
\| u \|_{S^s_{AIT}} + \| \partial_t u\|_{S^{s-1}_{AIT}} \ll 1.     
\end{equation}
Assume also that the associated linear paradifferential equation 
\begin{equation}\label{paralin-inhom-re+}
\begin{aligned}
\D_{\alpha} T_{\hg^{\alpha \beta}} \D_{\beta} v  = f
\end{aligned}
\end{equation}
is well-posed in $\H^\frac12$ (respectively $\H^\frac58$ in dimension $n = 2$) in a unit time interval, and satisfies the full Strichartz estimates \eqref{Str-full+}, with $S= S_{AIT}$, in the same interval.

Then the homogeneous linearized equation 
\begin{equation}\label{lin-inhom}
\begin{aligned}
\D_{\alpha} \hg^{\alpha \beta} \D_{\beta} v  = 0
\end{aligned}
\end{equation}
is also well-posed in $\H^\frac12$ (respectively $\H^\frac58$ in dimension $n = 2$), and satisfies the homogeneous form of the  Strichartz estimates in \eqref{Str-hom} with $S = S_{AIT}$.
\end{theorem}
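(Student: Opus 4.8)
The plan is to pass from the paradifferential flow \eqref{paralin-inhom-re+} to the full linearized flow \eqref{lin-inhom} by treating the difference
\[
\D_\alpha \hg^{\alpha\beta} \D_\beta - \D_\alpha T_{\hg^{\alpha\beta}} \D_\beta
= \D_\alpha (T_{\D_\gamma \hg^{\alpha\beta}}\!\cdot + \Pi(\hg^{\alpha\beta},\cdot)) \D_\beta
\]
as a perturbative source term $N_{lin}(u)v$, exactly in the spirit of \eqref{para-linearized}. The key point, which mirrors the structure already exploited for the full equation in Section~\ref{s:ee-full-eqn}, is that this source term is \emph{not} directly perturbative at the $\H^\frac12$ (resp.\ $\H^\frac58$) level, because it contains unbalanced high$\times$high and paraproduct contributions of the form $\Pi(\hg^{\alpha\beta},\D_\alpha\D_\beta v)$ and $T_{\D^2 u}\hg^{\alpha\beta}\cdot\D v$, but that these unbalanced pieces carry a null form structure inherited from the relations \eqref{dg-dp}, \eqref{d-gab}. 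Hence they can be removed by a normal form correction $v \to \tilde v = v - v_2$, where $v_2$ is bilinear in $(\partial u, \partial v)$, chosen so that $\tilde v$ solves an inhomogeneous paradifferential equation
\[
\D_\alpha T_{\hg^{\alpha\beta}} \D_\beta \tilde v = \partial_t R_1 + R_2
\]
with $R_1, R_2$ balanced: $\|R_1\|_{H^{\sigma}} + \|R_2\|_{H^{\sigma-1}} \lesssim \BB^2\|v\|_{\H^\sigma}$ for $\sigma = \frac12$ (resp.\ $\frac58$), together with the unbalanced bound $\|R_1\|_{H^{\sigma-1}} \lesssim \AAs^2 \|v\|_{\H^\sigma}$. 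One allows the $\partial_t R_1$ term precisely because Theorem~\ref{t:lin-wp-inhom+} (and its Strichartz counterpart in the form \eqref{Str-full+}) accepts source terms of the type $\partial_t f_1 + f_2$.

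\textbf{The main steps, in order.}
First, I would carefully compute the normal form correction $v_2$. Since $v$ is only at $\H^\frac12$ regularity while $u$ carries the $\CC$, $\DCC$, $\PP$ structures from Sections~\ref{s:control}--\ref{s:paracontrol}, the correction must be of the schematic form $v_2 = \Pi(\partial_\gamma v,\, T_{h(\partial u)} v) $-type together with low$\times$high pieces $T_{\partial^2 u}$-paired with $v$; the precise coefficients come from matching the unbalanced part of $N_{lin}$, exactly as in Lemma~\ref{l:g-para} and Lemma~\ref{l:nfu2} but with one factor of $u$ replaced by $v$. Second, I would verify the boundedness $\|\tilde v[t]\|_{\H^\sigma} \approx \|v[t]\|_{\H^\sigma}$: this reduces to showing $\|v_2\|_{H^\sigma} \lesssim \AAs^2\|v\|_{\H^\sigma}$ and $\|\partial_t v_2\|_{H^{\sigma-1}} \lesssim \AAs^2\|v\|_{\H^\sigma}$, where the time derivative estimates again require the $\hat\partial_t^2$ decomposition of Lemma~\ref{l:utt} (now applied to $u$) and the bound \eqref{pi3} for $\partial_t\partial^\beta u$ — crucially, no second time derivative of $v$ ever appears because we only differentiate $u$ factors. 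Third, I would establish the balanced bounds \eqref{R1R2}, \eqref{R1} for the new source terms, distributing derivatives and invoking the $\CC$/$\DCC$/$\PP$ multiplicative calculus (Lemmas~\ref{l:Moser-control}, \ref{l:ppxDPP}, \ref{l:tp-du}, \ref{l:para-p+}, \ref{l:circle-app}), together with the Strichartz bounds \eqref{u-have-now} on $u$ which provide the $\BB \in L^2_t$ and $\AAs$ control in the integrated sense. Fourth, having reduced to the paradifferential flow with balanced source, I apply the hypothesis: well-posedness of \eqref{paralin-inhom-re+} in $\H^\sigma$ plus the full Strichartz estimates \eqref{Str-full+} with $S = S_{AIT}$ yield both $\|\tilde v\|_{L^\infty\H^\sigma}$ and the $S_{AIT}^\sigma$ Strichartz norm of $\tilde v$ controlled by $\|v[0]\|_{\H^\sigma}$ (absorbing $\BB^2\|v\|_{L^\infty\H^\sigma}$ via Gronwall since $\BB \in L^2_t$). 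Finally, transferring back from $\tilde v$ to $v = \tilde v + v_2$ — where $v_2$ is both small in energy and, being a bilinear expression in $\partial u \in S_{AIT}$ and $v$, admits Strichartz bounds — completes both the well-posedness and the Strichartz conclusion for \eqref{lin-inhom}. For the Strichartz part one must check that $v_2$ itself lies in $S_{AIT}^\sigma$; this follows from the bilinear estimate pairing the Strichartz norm of $\partial u$ against the energy norm of $v$, so no new Strichartz input on $v$ is needed beyond what $\tilde v$ already provides.

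\textbf{The main obstacle.}
The hard part will be the control of the normal form correction and source terms when the time derivative falls in an unfavorable place — specifically, terms where differentiating produces $\partial_t^2 u$ or, worse, a configuration that threatens $\partial_t^2 v$. For the $u$ factors this is handled by the $\hat\partial_t^2 u$ machinery of Lemma~\ref{l:utt} and the bound \eqref{pi3}, but one must organize the normal form so that $\partial_t$ is never forced onto $v$ twice; this is why $v_2$ is chosen with the $v$-factors undifferentiated (or differentiated only spatially) wherever possible, and why the $\partial_t R_1$ escape hatch is essential — it absorbs precisely the one time derivative on $v$ that cannot be avoided. A secondary subtlety is that we are working at low regularity $\sigma = \frac12$ (resp.\ $\frac58$), so the $H^{\sigma-1}$ norms that appear are negative-order, and one must be careful that the paraproduct and $\Pi$ estimates (e.g.\ Lemma~\ref{l:para-prod}, Lemma~\ref{l:para-assoc}) are applied in ranges where they remain valid; the choice $\sigma = \frac58$ in two dimensions rather than $\frac12$ is dictated exactly by this constraint together with the Strichartz numerology. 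Modulo these bookkeeping issues, the argument is structurally parallel to the proof of Theorem~\ref{t:ee}, with $u$ replaced by $v$ in one slot throughout.
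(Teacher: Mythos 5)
Your overall strategy is the right one and coincides with the paper's: pass to a normal form $\tilde v = v - v_2$ with $v_2$ bilinear in $(\partial u, v)$, show $\tilde v$ solves a paradifferential equation with source of the form $\partial_t f_1 + f_2$, invoke Theorem~\ref{t:lin-wp-inhom+}/\eqref{Str-full+}, and transfer back. The specific normal form the paper chooses is \eqref{lin-nf}, which matches your schematic description up to a slip (the $\Pi$ corrections are $\Pi(T_{\D^\gamma u}\D_\gamma v, u)$ and $\Pi(T_{\D^\gamma u}v, \D_\gamma u)$, i.e.\ the other slot holds $u$, not $v$).

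However, there is a genuine gap in the claimed source estimates. You assert fixed-time balanced bounds $\|R_1\|_{H^\sigma} + \|R_2\|_{H^{\sigma-1}} \lesssim \BB^2\|v\|_{\H^\sigma}$, by analogy with Lemma~\ref{l:nfu2}, and then propose to close by Gronwall since $\BB \in L^2_t$. This analogy breaks at low regularity: in Lemma~\ref{l:nfu2} the ``linearized variable'' is $\partial u$, which carries the full $\CC/\DCC/\PP$ control structure; here $v \in \H^\frac12$ (resp.\ $\H^\frac58$) is far rougher, and those structures are not available for $v$. Concretely, after revealing the instance of $T_{\hP} v$ inside the corrected source (your ``step 3''), one faces a term of the schematic form $T_{T_{\D^\gamma u}T_{\hP} v}\, u$, which cannot be estimated by $\BB^2\|v\|_{\H^\sigma}$ at fixed time: the $u$ factor needs $\|u\|_{L^2 W^{\frac32+\delta,\infty}}$, a space-time Strichartz norm with no useful pointwise-in-time bound, and one needs a \emph{preliminary quadratic} (not cubic) estimate for $T_{\hP} v$ in dual Strichartz norms just to get started — this is Lemma~\ref{l:pre-paradiff-eqn} in the paper, and nothing in your outline supplies it. A second term, of the form $T_v\,\D_\alpha T_{\hg^{\alpha\beta}}\D_\beta\D_\gamma u$, genuinely requires the full Strichartz norm $\|v\|_{S^\sigma_{AIT}}$ on the right, not the energy norm, which rules out Gronwall: the paper's estimate \eqref{strong-F-est} is $\|f_2\|_{(S^\sigma_{AIT})'}\ll \|v\|_{S^\sigma_{AIT}}$, so the closing step is absorption against a small constant (coming from the hypothesis that the $u$-Strichartz norms are $\ll 1$), not a Gronwall iteration. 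You should also be aware of a para-associativity cancellation (between $T_{T_{\hg}\D v}\D^2 u$ and $T_{\D v}T_{\hg}\D^2 u$) that is needed to handle one of the remaining cubic terms; without it the count of derivatives on $v$ is too high. The rest of your outline — boundedness of $v_2$ via $\AAs$, the role of $\hat\partial_t^2 u$, the $\partial_t f_1$ escape hatch, and transferring $S_{AIT}$ bounds back from $\tilde v$ to $v$ — is accurate and matches the paper.
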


We continue with several comments on the result in the theorem, in order to better place it into context.
\begin{itemize}
\item Up to this point  we only know that both the full equation and the associated  linear paradifferential equation
satisfy good energy estimates, but we do not yet know that they also 
satisfy the corresponding Strichartz estimates. This is however not a problem, as the main result of this section, namely
Theorem~\ref{t:linearized} below, will only be used as a module within our main bootstrap argument in the last section of the paper,
by which time we will have already had established the energy and Strichartz estimates for both the full equation and for the
linear paradifferential equation.

\item The exponent $s$ in the above result need not be the same as the one in our main result in Theorem~\ref{t:main}; it can be taken to be smaller,
as long as it still satisfies the constraints in \eqref{s-AIT2}, \eqref{s-AIT3}. 

\item While we can no longer control the linearized evolution 
purely in terms of the control parameters $\AA$, $\AAs$, and $\BB$,
these still play role in the analysis. The hypothesis
of the theorem guarantees that
\[
 \AAs \ll 1, \qquad \| \BB \|_{L^2} \ll 1.
\]
\item The exponent $\delta$ in the \eqref{Str-full+} with $S= S_{AIT}$
should be thought of as being sufficiently small, compared with the distance between $s$  and the threshold in \eqref{s-AIT2}, \eqref{s-AIT3}. 

\item The smoothness assumption on $u$ is not used in any quantitative way. Its role 
is only to ensure that we already have solutions for the linearized problem, 
so we can skip an existence argument. Thus, by a density argument, the result of the theorem may be seen as an a-priori estimate for smooth solutions $v$ to the linearized equation.
As our rough solutions will be obtained in the last section as limits of smooth solutions, this assumption may be discarded a posteriori.

\item The reason we only consider the homogeneous case in the linearized equation 
\eqref{lin-inhom} is to shorten the proof, as this is all that is used later in the last section. However, the result also extends to the inhomogeneous case. In particular in dimension $n \geq 3$ this is an immediate consequence of Theorem~\ref{t:Str-move-around}, but in dimension $n=2$ an extra argument would be needed. 
\end{itemize}

One major simplification in this section, compared with the previous two sections,
is that  we no longer have the earlier difficulties in estimating the
second order time derivatives and even some third order time derivatives of $u$. In particular, we have  the following relatively straightforward lemma:

\begin{lemma}\label{l:all-u}
For solutions $u$ to the minimal surface equation as in
\eqref{u-have-now} we have the bounds
\begin{equation}\label{utt-easy}
\| \partial^2 u\|_{S^{s-2}_{AIT}}+ \| \partial \hg\|_{S^{s-2}_{AIT}} \ll 1,
\end{equation}
as well as 
\begin{equation}\label{hp-du}
\|\bD^{\sigma+\delta_0} \partial_\alpha T_{\hg^{\alpha \beta}} \partial_\beta \partial_\gamma  u\|_{L^p L^q} \ll 1  , 
\end{equation}
where $\delta_0 >0$ depends only on $s$ and
\[
\frac{1}{p} + \frac{1}q = 1, \qquad  \frac{2}{n-\frac12} \leq 
\frac{1}{q} \leq \frac12 +  \frac{2}{n-\frac12},
\qquad \sigma = \left(n-\frac12\right)\left(\frac{1}q - \frac{2}{n-\frac12}\right) \geq 0,
\qquad n \geq 3.
\]
respectively 
\[
\frac{2}{p} + \frac{1}q = 1, \qquad  \frac47 \leq 
\frac{1}{q} \leq \frac{11}{14},
\qquad \sigma = \frac{7}4 \left(\frac{1}q - \frac{4}7\right) \geq 0,
\qquad n =2.
\]
\end{lemma}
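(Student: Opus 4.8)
\textbf{Proof plan for Lemma~\ref{l:all-u}.}
The plan is to treat the three estimates in increasing order of difficulty, each time reducing to the already-available Strichartz bounds \eqref{u-have-now} together with the product/paraproduct calculus of Section~\ref{s:not} and the control-parameter bounds of Section~\ref{s:control}. First I would establish \eqref{utt-easy}. The bound $\|\partial^2 u\|_{S^{s-2}_{AIT}} \ll 1$ for the purely spatial or mixed second derivatives is immediate from \eqref{u-have-now}, since differentiating in $x$ only costs one power of frequency which is exactly absorbed by lowering the Sobolev index by one and since the $L^\infty H^r$ and $L^p L^\infty$ components of $S^r_{AIT}$ are both defined via $\bD$. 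The only genuine point is the pure time derivative $\partial_t^2 u$: here I would invoke the equation in the $\tg$ form \eqref{ms-tg}, writing $\partial_t^2 u = -\sum_{(\alpha,\beta)\neq(0,0)} \tg^{\alpha\beta}\partial_\alpha\partial_\beta u$, so that at least one spatial derivative always appears. The coefficients $\tg^{\alpha\beta}$ are smooth functions of $\partial u$, hence in $\CC$ by Lemma~\ref{l:Moser-control}; the Strichartz norm of a product $\tg^{\alpha\beta}\cdot\partial_x\partial u$ is controlled by $\|\tg^{\alpha\beta}\|_{L^\infty_{x,t}}\|\partial_x\partial u\|_{S^{s-2}_{AIT}}$ plus balanced high-high terms, and all of these are $\ll 1$ by \eqref{u-have-now} and the smallness of $\AA$. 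The bound for $\partial\hg$ then follows from the chain rule $\partial\hg^{\alpha\beta} = F(\partial u)\partial^2 u$ with $F$ smooth, combined with the $\CC_0$ Moser estimate and the bound just obtained for $\partial^2 u$.

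Next, for \eqref{hp-du}, the starting point is that $\partial_\alpha T_{\hg^{\alpha\beta}}\partial_\beta\partial_\gamma u$ is morally an application of the paradifferential wave operator $T_{\hP}$ to $\partial_\gamma u$, which by Lemma~\ref{l:tp-du}(b) (or Lemma~\ref{l:tp-dxu}) can be written, after commuting the frequency-localized spatial derivative outside, as $\partial_\alpha$ of something of size $\BB^2$ in $L^\infty$, plus lower-order commutator terms. However, here we need an $L^p L^q$-type Strichartz bound rather than a pointwise $\BB^2$ bound, so the cleaner route is: commute $\partial_\gamma$ across $T_{\hP}$ at the cost of terms involving $\partial\hg$, reducing to estimating $T_{\hP} u = \partial_\alpha f^\alpha$ with $f^\alpha$ expressible (via the paralinearization identity, as in Lemma~\ref{l:tp-u} and the decomposition $N(u)=N_1+N_2$ of Section~\ref{s:ee-full-eqn}) as a bilinear expression in $\partial u$ and $\partial^2 u$ with one paraproduct structure. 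Each such bilinear term is then estimated in $L^p L^q$ by Hölder, putting one factor in $L^\infty_{x,t}$ (controlled by $\AA$) and the other in the appropriate Strichartz space, using \eqref{utt-easy} together with \eqref{u-have-now}; the exponent relations $\tfrac1p+\tfrac1q=1$ (resp. $\tfrac2p+\tfrac1q=1$) and the formula for $\sigma$ are precisely those dictated by interpolating the $L^\infty L^2$ endpoint with the $L^2 L^\infty$ (resp. $L^4 L^\infty$) Strichartz endpoint in \eqref{u-have-now}, with the extra gain $\delta_0>0$ coming from the fact that \eqref{u-have-now} holds with room to spare below the threshold $s$. The high-high interactions are balanced and hence even better behaved, contributing terms controlled by $\|\bD^{1/2}\partial u\|_{L^2 L^\infty}^2$-type quantities which sit inside the admissible $L^p L^q$ range.

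The main obstacle I anticipate is bookkeeping of the various frequency regimes in \eqref{hp-du}: one must check that the full range of Hölder pairs $(p,q)$ listed — not just the two endpoints — is reachable, which requires interpolating the energy bound $\|\partial^2 u\|_{L^\infty H^{s-2}}$ with the Strichartz bound $\|\bD^{s-2-\frac{n-1}2-\frac14-\delta}\partial^2 u\|_{L^2 L^\infty}$ (resp. the $L^4 L^\infty$ version) and verifying that the resulting loss of derivatives, after subtracting the one derivative gained from $\partial_\alpha$ acting on a frequency-localized piece, still leaves a positive margin $\sigma+\delta_0$ for every intermediate exponent. This is a routine but delicate interpolation/Bernstein computation; once the two endpoints are handled, the intermediate cases follow by complex interpolation in the $L^p L^q$ scale together with the slowly-varying frequency-envelope summation of Section~\ref{s:control}. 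No new analytic input beyond \eqref{u-have-now}, the calculus of Section~\ref{s:not}, and Lemma~\ref{l:tp-du} is needed; the smoothness of $u$ is used only to make all manipulations legitimate a priori, exactly as noted in the remarks preceding the lemma.
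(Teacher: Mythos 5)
Your argument for \eqref{utt-easy} is essentially the paper's: both use the equation to express $\partial_t^2 u$ as a smooth function of $\partial u$ times $\partial_x \partial u$, and then invoke the multiplicative stability of $S^{s-2}_{AIT}$ under such coefficients; you phrase this via $L^\infty$ bounds while the paper uses the Moser bound $\|h(\partial u)\|_{H^{s-1}}\ll 1$ together with $s-1>n/2$, but the two are interchangeable. The bound for $\partial\hg$ follows in the same way in both.

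For \eqref{hp-du}, however, there is a genuine gap. You propose to estimate each bilinear term ``by H\"older, putting one factor in $L^\infty_{x,t}$ (controlled by $\AA$) and the other in the appropriate Strichartz space.'' Two things go wrong with this. First, the factors in the bilinear terms arising from the equation are not $\partial u$ but $\partial^2 u$ and $\partial\hg$ (which is at the level of $F(\partial u)\partial^2 u$), and at this regularity neither of these is in $L^\infty_{x,t}$, let alone controlled by $\AA$; $\AA$ only controls $\partial u$. Second, and decisively, the numerology does not close. If one factor is placed in $L^\infty_t L^\infty_x$, the product inherits the H\"older exponents of the remaining factor, which sits in the Strichartz scale $\tfrac1p+\tfrac1q=\tfrac12$ (for $n\ge 3$); but the lemma requires $\tfrac1p+\tfrac1q=1$, i.e.\ $\tfrac1q\in[\tfrac{2}{n-1/2},\,\min(1,\tfrac12+\tfrac{2}{n-1/2})]$, which is disjoint from the range $\tfrac1q\le\tfrac12$ your H\"older gives (for $n=3$ the required range starts at $\tfrac1q=\tfrac45$). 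The paper instead writes
\[
\D_\alpha T_{\hg^{\alpha\beta}}\D_\beta\D_\gamma u \ = \ T_{\partial_x\partial\hg}\,\partial u + T_{\partial\hg}\,\partial^2 u + \Pi(\partial\hg,\partial^2 u) + \Pi(\hg,\partial^3 u),
\]
with $\partial^3 u$ re-expressed via the equation, and then estimates each term by putting \emph{both} factors in $S^{s-2}_{AIT}$, interpolating each factor separately between its $L^\infty H^{s-2}$ and $L^2 L^\infty$ (or $L^4 L^\infty$ in $2d$) endpoints. It is precisely the product of two Strichartz factors that produces $\tfrac1p+\tfrac1q=1$, and the constraint that neither factor's Sobolev index be negative --- with one of them set to zero to avoid Sobolev embedding --- is what determines the stated range of $(p,q)$ and the formula for $\sigma$. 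Your appeal to Lemma~\ref{l:tp-du} is also not the right tool here: that lemma gives a pointwise $\BB^2$ bound, whereas what is needed is a mixed $L^pL^q$ Strichartz-type estimate coming from the $S_{AIT}$ norms of the two bilinear factors.
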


\begin{proof}
For \eqref{utt-easy} we only need to consider the second order 
time derivatives, which we can write using the minimal surface equation as
\[
\partial_t^2 u= h(\partial u) \partial_x \partial u.
\]
By Moser inequalities we have $\|h(\partial u)\|_{H^{s-1}} \ll 1$.
Since $s -1 > n/2$, it is easily verified that the space $S^{s-2}_{AIT}$
is left unchanged by multiplication by $h(\partial u)$.
The same argument applies to derivatives of the metric $\partial \hg$.

For \eqref{hp-du} we can use again the minimal surface equation to obtain the representation
\[
\D_\alpha T_{\hg^{\alpha \beta}}  \D_\beta \D_\gamma u
= T_{\partial_x \partial \hg} \partial u + T_{\partial \hg} \partial^2 u + \Pi(\partial \hg, \partial^2 u) + \Pi(\hg,\partial^3 u),
\]
where we can further write 
\[
\partial^3 u = \partial_x (\hg \partial^2 u) + \partial \hg \partial^2 u.
\]
Hence we need to multiply two functions in $S^{s-2}_{AIT}$, which contains a range of mixed $L^p$ norms at varying spatial Sobolev regularities. We can do this optimally if both mixed norms can be chosen to have non-negative Sobolev index. In order to avoid using Sobolev embeddings we further limit the 
range of exponents to the case when one of the Sobolev indices may be taken to be zero. This gives the range of exponents in the lemma.
\end{proof}

Next we discuss the strategy of the proof. The first potential strategy here would 
be to try to view the equation \eqref{lin-inhom}
as a perturbation of \eqref{paralin-inhom-re+}.
Unfortunately such a strategy does not seem to 
work in our context, because this would require 
a balanced estimate for the difference 
between the two operators, whereas this difference
contains some terms which are clearly unbalanced.

To address the above difficulty, the key observation is that the aforementioned difference
exhibits a null structure, at least in its unbalanced part. This opens the door to 
a partial normal form analysis, in order 
to develop a better reduction of \eqref{lin-inhom}
to \eqref{paralin-inhom-re+}.
Because of this, the proof of the theorem will be done in two steps:

\begin{enumerate}
    \item The normal form analysis, where a suitable normal form transformation is constructed.
    
    \item Reduction to the paradifferential equation, using the above normal form.
    
\end{enumerate}

\subsection{Preliminary bounds for the linearized variable}
The starting point for the proof of the theorem is to rewrite the divergence form of the linearized equation \eqref{lin-inhom}
as an inhomogeneous paradifferential evolution \eqref{paralin-inhom-re+} with a perturbative source $f$, as follows:
\begin{equation}\label{paralin}
\begin{aligned}
T_\ag v = -\D_{\alpha} T_{\D_{\beta} v} \hg^{\alpha \beta} - \D_{\alpha} \Pi(\D_{\beta} v, \hg^{\alpha \beta}) =: f.
\end{aligned}
\end{equation}
While we cannot directly prove a balanced cubic estimate for $f$,
a useful initial step is to establish a quadratic estimate for it.
The expression for $f$ involves $v$ and $\partial_t v$, which we already control,
but also $\partial_t^2 v$, which we do not. So we estimate it first:

\begin{lemma}\label{l:vtt}
For solutions $v$ to \eqref{lin-inhom} we have 
\begin{equation}\label{vtt-est}
\begin{aligned}
\| \partial_t^2 v(t) \|_{H^{-\frac32}} &\lesssim \|v[t]\|_{H^\frac12} \qquad n \geq 3, \\
\| \partial_t^2 v(t) \|_{H^{-\frac{11}{8}}} &\lesssim \|v[t]\|_{H^\frac58} \qquad n = 2.
\end{aligned} 
\end{equation}
\end{lemma}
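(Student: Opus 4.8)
The plan is to solve the equation \eqref{lin-inhom} algebraically for $\partial_t^2 v$ at a fixed time and then estimate the resulting expression term by term. Expanding the divergence form equation $\D_\alpha(\hg^{\alpha\beta}\D_\beta v)=0$ and isolating the time-time derivative, one obtains
\[
\hg^{00}\,\partial_t^2 v = -(\D_0\hg^{00})\,\D_0 v - \D_0(\hg^{0j}\D_j v) - \D_i(\hg^{i\beta}\D_\beta v).
\]
Since $|g^{00}|\approx 1$ and $g\approx 1$ in our regime, the coefficient $\hg^{00}=g^{-\frac12}g^{00}$ is bounded away from zero, so it suffices to estimate the right hand side of this identity in $H^{-\frac32}$ (resp.\ $H^{-\frac{11}{8}}$ when $n=2$) and then divide by $\hg^{00}$. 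The latter operation is harmless: $\hg^{00}$ is a smooth function of $\partial u$, hence by Moser estimates it is bounded in $L^\infty H^{s-1}$ with $s-1>n/2$, and since $\tfrac32\le s-1$ (resp.\ $\tfrac{11}{8}\le s-1$) throughout the admissible range \eqref{s-AIT3} (resp.\ \eqref{s-AIT2}), multiplication by $(\hg^{00})^{-1}$ preserves the relevant negative-order Sobolev space.

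For the right hand side I would use the ingredients already available in this section: $\hg\in L^\infty H^{s-1}$ with $s-1>n/2$, and $\partial\hg\in L^\infty H^{s-2}$ by \eqref{utt-easy} in Lemma~\ref{l:all-u} (the $\D_0\hg$ component using $\D_0^2 u\in L^\infty H^{s-2}$ via the minimal surface equation), together with the hypothesis $v[t]\in\H^\frac12$, i.e.\ $v\in H^\frac12$, $\D_0 v\in H^{-\frac12}$ (resp.\ $\H^\frac58$ when $n=2$). Then each summand is handled by a standard fractional Sobolev product estimate. The term $(\D_0\hg^{00})\D_0 v$ is a product $H^{s-2}\cdot H^{-\frac12}\hookrightarrow H^{-\frac32}$, valid because $s>s_c=n/2+1$. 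In $\D_0(\hg^{0j}\D_j v)$ one distributes the time derivative: $(\D_0\hg^{0j})\D_j v$ is again of the form $H^{s-2}\cdot H^{-\frac12}$, while $\hg^{0j}\D_0\D_j v$ has $\D_0\D_j v\in H^{-\frac32}$ and $\hg^{0j}\in L^\infty H^{s-1}$ is a multiplier on $H^{-\frac32}$ (since $s-1>n/2$ and $\tfrac32\le s-1$). Finally $\D_i(\hg^{i\beta}\D_\beta v)$ is $\D_i$ applied to $\hg^{i\beta}\D_\beta v\in H^{-\frac12}\subset\D_i^{-1}H^{-\frac32}$, using that $\D_\beta v\in H^{-\frac12}$ and $\hg^{i\beta}$ is a multiplier on $H^{-\frac12}$. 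The two-dimensional case is identical with $\tfrac12,-\tfrac12,-\tfrac32$ replaced by $\tfrac58,-\tfrac38,-\tfrac{11}{8}$, the key product estimate $H^{s-2}\cdot H^{-\frac38}\hookrightarrow H^{-\frac{11}{8}}$ again requiring only $s>s_c=2$, which is guaranteed by \eqref{s-AIT2}.

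There is no genuine obstacle here — the lemma is essentially a bookkeeping exercise. The only point needing care is verifying that the exponent $s$, constrained by \eqref{s-AIT2}--\eqref{s-AIT3}, always lies in the range where the product estimates apply; as noted above the governing condition reduces in every term to $s>s_c$, together with the membership $\partial^2 u,\partial\hg\in L^\infty H^{s-2}$ supplied by Lemma~\ref{l:all-u}. I would present the argument exactly in the three steps above: isolate $\hg^{00}\partial_t^2 v$, estimate the three groups of terms by product inequalities, and conclude by dividing by $\hg^{00}$.
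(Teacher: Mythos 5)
Your approach is the same as the paper's: solve the divergence-form equation for $\partial_t^2 v$, invert $\hg^{00}$, and close by fractional product estimates using $h(\partial u)\in H^{s-1}$ (Moser) and $\partial \hg, \partial^2 u \in H^{s-2}$. The paper writes the schematic identity $\partial_t^2 v = h(\partial u)\,\partial_x\partial v + h(\partial u)\,\partial^2 u\,\partial v$ directly and then invokes the two product estimates $H^{s-1}\cdot H^{-\frac32}\to H^{-\frac32}$ and $H^{s-1}\cdot H^{s-2}\cdot H^{-\frac12}\to H^{-\frac32}$; your term-by-term expansion is an unpacked version of the same computation.

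There is one genuine error in your account of the two-dimensional case, and it is precisely the subtlety the lemma is designed to handle. You write that the estimate $H^{s-2}\cdot H^{-\frac38}\hookrightarrow H^{-\frac{11}{8}}$ requires only $s>s_c=2$. That is not correct. A product estimate $H^a\cdot H^b\to H^c$ needs, in addition to the scaling condition $a+b-c\geq n/2$, the nondegeneracy condition $a+b\geq 0$; with $a=s-2$, $b=-\frac38$ this forces $s\geq\frac{19}{8}=2+\frac38$, which is exactly the two-dimensional threshold in \eqref{s-AIT2}. Indeed, this is the whole reason the exponents are shifted by $\frac18$ in $n=2$: with the unshifted pair one would need $(s-2)+(-\frac12)\geq 0$, i.e.\ $s\geq\frac52$, which the admissible range in two dimensions does not guarantee. (For $n\geq 3$ the constraint $s>\frac52$ is implied by $s>s_c=\frac{n}{2}+1$, which is why you can get away with only quoting $s>s_c$ there.) Your conclusion is right, but the justification as written hides the mechanism the paper makes a point of highlighting; the governing condition in 2D is $s>s_c+\frac38$, not $s>s_c$.
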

\begin{proof}
We consider the case $n \geq 3$, and comment on the case $n=2$ at the end. Using the equation \eqref{lin-inhom} for $v$, we may write 
\[
\partial_t^2 v = h(\partial u) \partial_x \partial v 
+  h(\partial u) \partial^2 u \partial v .
\]
Here by Moser inequalities we have
\[
\|h(\partial u)\|_{H^{s-1}} \ll 1.
\]
Then, using also \eqref{utt-easy}  the conclusion of the Lemma follows from the straightforward  multiplicative estimates
\[
H^{s-1} \cdot H^{-\frac32} \to H^{-\frac32},
\qquad H^{s-1} \cdot H^{s-2} \cdot H^{-\frac12} \to H^{-\frac32}.
\]
where it is important that $s > \frac{n}2+1$ and $s > \frac52$.
This last condition is not valid in dimension $n=2$, where we only ask that $s > 2+\frac38$. This is why the Sobolev exponents in this case need to be increased by $\frac18$.

\end{proof}

We now return to the quadratic estimate for the source term in \eqref{paralin}:

\begin{lemma}\label{l:pre-paradiff-eqn}
Let $v \in S^\frac12_{AIT}$ satisfy \eqref{lin-inhom}. Then $v$ also solves the inhomogeneous paradifferential equation
\begin{equation}
\begin{aligned}
T_{\ag} v &= f,
\end{aligned}
\end{equation}
with source term $f$  satisfies the following bounds:

a) For $n \geq 3$ we have the uniform bound
\begin{equation}\label{easy3}
\| f\|_{H^{-\frac54}} \ll  \|v\|_{L^\infty \H^\frac12},  
\end{equation}
and the space-time bound
\begin{equation}\label{strong-F-est3}
\|f\|_{L^p L^q}  \ll 
\|v\|_{L^\infty \H^\frac12}
\end{equation}
with
\[
\frac{1}{q} = \frac{1}{n - \frac12} + \frac12, \qquad \frac{1}{p} + \frac{1}{q} 
 = 1.
\]
a) For $n =2 $ we have the uniform bound
\begin{equation}\label{easy2}
\| f\|_{H^{-1}} \ll  \|v\|_{L^\infty \H^\frac58},  
\end{equation}
and the space-time bound
\begin{equation}\label{strong-F-est2}
\begin{aligned}
\|f\|_{L^4 L^2}  \ll 
\|v\|_{L^\infty \H^\frac58}.
\end{aligned}
\end{equation}
\end{lemma}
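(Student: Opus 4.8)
The plan is to estimate the source term
\[
f = -\D_\alpha T_{\D_\beta v}\hg^{\alpha\beta} - \D_\alpha \Pi(\D_\beta v, \hg^{\alpha\beta})
\]
by splitting it according to which component of $\partial v$ appears, and by distinguishing the $\alpha \ne 0$ and $\alpha = 0$ contributions. The only genuine obstacle is the presence of $\partial_t^2 v$, which arises precisely in the piece where the outer $\D_\alpha$ and an inner $\D_\beta$ are both time derivatives; for this we invoke Lemma~\ref{l:vtt}, which gives $\partial_t^2 v \in H^{-3/2}$ ($H^{-11/8}$ in $2d$). For all other terms $\partial v$ carries at most one time derivative, so everything is controlled by the $\H^{1/2}$ (resp. $\H^{5/8}$) norm of $v$ together with $\|\partial u\|_{L^\infty_{t,x}}$-type quantities, i.e. by the smallness built into \eqref{u-have-now} via the Moser bounds $\|\hg - m\|_{H^{s-1}} \ll 1$ and $\|\partial\hg\|_{S^{s-2}_{AIT}} \ll 1$ from Lemma~\ref{l:all-u}.

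First I would move one derivative off $f$, writing $f = \D_\alpha f^\alpha$ with
\[
f^\alpha = -T_{\D_\beta v}\hg^{\alpha\beta} - \Pi(\D_\beta v, \hg^{\alpha\beta}),
\]
and note that for the uniform bounds \eqref{easy3}, \eqref{easy2} it suffices to estimate $f^\alpha$ one Sobolev degree higher, i.e. in $H^{-1/4}$ ($H^0$ in $2d$). Here the paraproduct $T_{\D_\beta v}\hg^{\alpha\beta}$ maps $\partial v \in H^{-1/2}$ against $\hg \in L^\infty \cap H^{s-1}$, giving a bound in $H^{-1/2} \subset H^{-3/4}$, and after the outer derivative a bound in $H^{-7/4}$, which is more than enough; and $\Pi(\D_\beta v, \hg^{\alpha\beta})$ uses $\|\hg\|_{BMO^{1/4+}} \ll 1$ together with $\partial v \in H^{-1/2}$ to land in $H^{-1/4}$ by Coifman--Meyer. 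The $\D_\beta v$ factor is harmless when $\beta \ne 0$ or when only one time derivative is produced; the full $f$ then lies in the claimed negative Sobolev space after restoring the $\D_\alpha$, since at worst we lose one further derivative. The smallness ``$\ll$'' comes from the smallness of $\|\hg - m\|$ in the relevant norms.

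For the space-time bounds \eqref{strong-F-est3}, \eqref{strong-F-est2} I would instead keep $f = \D_\alpha f^\alpha$ and estimate $f$ directly in $L^p_t L^q_x$. The exponents are chosen (as in Lemma~\ref{l:all-u}) so that the Strichartz information on $u$, namely $\partial u, \partial^2 u \in S^{s-2}_{AIT}$ and the consequence \eqref{hp-du}, can be multiplied against the energy norm $\partial v \in L^\infty_t H^{-1/2}$. Concretely, expanding $\D_\alpha$ across the paraproduct and the high-high term,
\[
f = T_{\partial_x \partial v}\hg + T_{\partial v}\partial\hg + \Pi(\partial v, \partial\hg) + \text{(terms with }\partial_x\partial\hg, \partial^2 v\text{)},
\]
each summand pairs a factor controlled in a mixed-norm Strichartz space (coming from $u$ through $\hg$) with $v$ in the energy space; the exponent relation $\frac1p + \frac1q = 1$ (resp. $\frac2p+\frac1q=1$) and the Sobolev index $\sigma \ge 0$ in Lemma~\ref{l:all-u} are exactly what makes Hölder in time plus the product estimate close, with a spare $\delta_0 > 0$ of regularity. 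The $\partial_t^2 v$ contribution is again the delicate one: it multiplies $\Pi(\hg, \cdot)$ or $T_\cdot \hg$ and is only in $L^\infty_t H^{-3/2}$ (resp. $H^{-11/8}$), so one must spend the gained $\delta_0$ derivatives from \eqref{hp-du} and the smallness of $\|\partial\hg\|$ on it; this is where the constraint $s > s_c + \frac14$ (resp. $s > s_c + \frac38$) is used, since it guarantees $\delta_0 > 0$. Finally I would record that all implied constants are $\ll 1$ because every term contains at least one factor of $\hg - m$ or $\partial\hg$ measured in a norm that \eqref{u-have-now} makes small, completing the proof.
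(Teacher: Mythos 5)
Your decomposition of the source term and the high-level strategy — splitting according to whether the outer $\D_\alpha$ lands on $v$ or on $\hg$, and invoking Lemma~\ref{l:vtt} to cope with $\partial_t^2 v$ — match the paper's proof. However, the quantitative portion of your argument for the uniform bound \eqref{easy3} contains an error that makes it fail as written.

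You begin by correctly identifying the target: $f = \D_\alpha f^\alpha$ with $f^\alpha \in H^{-1/4}$ would give $f \in H^{-5/4}$. But for the low-high paraproduct you then estimate $T_{\D_\beta v}\hg^{\alpha\beta}$ using only $\hg \in L^\infty$, landing only in $H^{-1/2}$, and conclude "after the outer derivative a bound in $H^{-7/4}$, which is more than enough." This is backwards: since $H^{-5/4} \subsetneq H^{-7/4}$, a bound in $H^{-7/4}$ is strictly \emph{weaker} than what is claimed in \eqref{easy3}, so it does not suffice. The resolution is that you are throwing away too much regularity on the metric: $\hg$ is not merely bounded but lies in $H^{s-1}$ with $s-1 > \frac{n}{2}+\frac14$, so it has an extra $\frac14+$ derivatives in $L^\infty$ beyond boundedness. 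Using this (via Bernstein, as in your $\Pi$ estimate where you correctly invoke $\|\hg\|_{BMO^{1/4+}}$), the paraproduct in fact lands in $H^{-1/4-\epsilon}$, which is what you need; the $H^{-1/2}$ figure you wrote is not sharp enough and the $H^{-7/4}$ conclusion drawn from it is wrong. The same point applies to the term $T_{\D_\alpha\D_\beta v}\hg$ once you have pulled the divergence inside.

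The paper takes a somewhat cleaner route that avoids handling the two claims separately: it expands the $\D_\alpha$ across the Bony decomposition and estimates \emph{every} term directly in $L^q$ with $\frac1q = \frac12 + \frac{1}{n-\frac12}$, controlling the metric contribution via $\|\bD^{1/2}\D_\alpha\hg\|_{L^{n-1/2}} \lesssim \|\D u\|_{W^{3/2,\,n-1/2}}$. That single $L^q$ estimate gives both conclusions at once: the uniform Sobolev bound follows from the embedding $L^q \hookrightarrow H^{-5/4}$ (here one needs $n \geq 3$ to check that $\frac{n}{n-1/2} < \frac54$), and the $L^p L^q$ bound follows from Hölder in time together with the Strichartz control of $\|\D u(t)\|_{W^{3/2, n-1/2}}$ in $L^p_t$. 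If you want to keep your negative-Sobolev framing, you must track the extra $\frac14+$ derivatives of $\hg$ throughout — you cannot drop to the $L^\infty$ estimate for the paraproduct coefficient — and you should double-check the direction of the embedding arrows.
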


\begin{proof}
To avoid cluttering the notations, we prove the result in the case $n \geq 3$. The two dimensional case is identical up to appropriate
adjustments of $L^p$ exponents.

We write
\[
-f = T_{ \D_{\alpha}\D_{\beta} v} \hg^{\alpha \beta} + T_{\D_{\beta} v} \D_{\alpha} \hg^{\alpha \beta} + \Pi(\D_{\alpha}\D_{\beta} v, \hg^{\alpha \beta}) +  \Pi(\D_{\beta} v, \D_{\alpha}\hg^{\alpha \beta}).
\]
For the two terms where $\D_\alpha$ has fallen on $g$, we have 
\[
\|T_{\D_{\beta} v} \D_{\alpha} \hg^{\alpha \beta} + \Pi(\D_{\beta} v, \D_{\alpha}\hg^{\alpha \beta}) \|_{L^q} \lesssim \|v\|_{\H^\frac12} \| \bD^\half \D_{\alpha}\hg^{\alpha \beta}\|_{L^{n - \frac12}} \lesssim_\AA \|v\|_{\H^\frac12}\|\D u\|_{W^{\frac32, n - \frac12}}.
\]
Finally, in the cases where the $\D_\alpha$ has fallen on $v$, we easily obtain the same estimate due to a good balance of derivatives. Here we use 
Lemma~\ref{l:vtt} to bound second derivatives of $v$.

\end{proof}

\subsection{The normal form analysis}
The estimate in Lemma~\ref{l:pre-paradiff-eqn} is suboptimal as it does not recognize the cubic structure of the source. This is due to components of the source in which the linearized variable $v$ is the second highest frequency, and which are not efficiently balanced with respect to derivatives. In fact, these cubic terms may heuristically be viewed as quadratic with a low frequency coefficient.

To better understand the source terms, we begin with a better 
description of the metric coefficients.
By applying Lemma~\ref{l:R} to $g^{-\half}g^{\alpha \beta}$ and rearranging, we may write 
\begin{equation}\label{g-paradiff}
\hg^{\alpha \beta} = - T_{\hg^{\alpha \beta} \partial^{\gamma} u + \hg^{\alpha \gamma} \partial^{\beta} u + \hg^{\beta \gamma}  \partial^{\alpha} u}\partial_{\gamma}u + R(\D u) ,
\end{equation}
where $R$ satisfies favourable balanced bounds,
\begin{equation}\label{good-R}
\| \partial R \|_{L^n} \lesssim \AAs^2, \qquad \| \partial R \|_{L^\infty} \lesssim \BB^2.
\end{equation}

To obtain a cubic estimate for \eqref{paralin}, we substitute \eqref{g-paradiff} in \eqref{paralin} and write
\begin{equation}\label{paralin2}
\begin{aligned}
T_{\ag} v &= N_{1}(u) + N_{2}(u),
\end{aligned}
\end{equation}
where
\begin{equation*}
\begin{aligned}
N_{1}(u) &= \D_\alpha T_{\D_\beta v} T_{\hg^{\alpha \beta} \partial^{\gamma} u + \hg^{\alpha \gamma} \partial^{\beta} u + \hg^{\beta \gamma}  \partial^{\alpha} u} \D_\gamma u + \D_\alpha \Pi(\D_\beta v, T_{\hg^{\alpha \beta} \partial^{\gamma} u + \hg^{\alpha \gamma} \partial^{\beta} u + \hg^{\beta \gamma}  \partial^{\alpha} u} \D_\gamma u)
\end{aligned}
\end{equation*}
consists of the essentially quadratic, nonperturbative components, while 
\begin{equation*}
\begin{aligned}
N_{2}(u) &= - \D_\alpha T_{\D_{\beta}v}R(\D u) -\D_\alpha \Pi(\D_{\beta}v, R(\D u))
\end{aligned}
\end{equation*}
consists of the balanced, directly perturbative components. We address the essentially quadratic components in $N_1(u)$ by passing to a renormalization $\tilde v$ of $v$, 
\begin{equation}\label{lin-nf}
\begin{aligned}
\tilde v &= v - T_{T_{\D^\gamma u}  \D_\gamma v} u - T_{T_{\D^\gamma u} v}\D_\gamma u -  \Pi(T_{\D^\gamma u} \D_\gamma v, u) - \Pi(T_{\D^\gamma u} v, \D_\gamma u) := v+ v_2.
\end{aligned}
\end{equation}

This renormalization eliminates the components of the source where the linearized variable $v$ is the second highest frequency. We thus replace $N_1(u)$ with a source consisting of terms with $v$ only at the third highest frequency, and hence may be viewed as authentically cubic.

\begin{proposition}\label{p:pre-paradiff-eqn}
Let $v \in S^\frac12_{AIT}$ be a solution for \eqref{lin-inhom}. 
Then the following two properties hold:

(i) Equivalent norms:
\begin{equation}\label{bdd-nf}
\| \tilde v[t] \|_{\H^\frac12} \approx \| v[t] \|_{\H^\frac12}    .
\end{equation}

(ii) $\tilde v$ solves a good linear paradifferential 
equation of the form
\begin{equation}\label{nf-paradiff-eqn}
\begin{aligned}
T_{\ag}\tilde v &= \D_t f_1 + f_2
\end{aligned}
\end{equation}
where the source terms are perturbative: 
\begin{equation}\label{strong-F-est}
\|f_2\|_{(S^\frac12_{AIT})'} \ll \|v\|_{S^\frac12_{AIT}}, \qquad \|f_1\|_{(S^{-\frac12}_{AIT})'} \ll \|v\|_{L^\infty \H^\frac12},
\end{equation}
as well as
\begin{equation}\label{extra-F-est}
\|f_1 (t)\|_{S^{-\frac12}_{AIT}} \lesssim \AA \|v[t]\|_{\H^\frac12}   . 
\end{equation}

The same result holds in two space dimensions at the level of $v \in \H^{\frac58}$.
\end{proposition}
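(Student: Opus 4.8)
\textbf{Proof strategy for Proposition~\ref{p:pre-paradiff-eqn}.}
The plan is to follow the same two-pronged structure used for the full equation in Lemma~\ref{l:nfu2}, adapted to the linearized setting: first establish the boundedness of the normal form correction $v_2$, and then compute $T_{\ag} \tilde v$ and show that all the unbalanced terms either cancel against $N_1(u)$ by construction or can be absorbed into $\D_t f_1 + f_2$. Throughout, the control parameters $\AAs \ll 1$, $\|\BB\|_{L^2} \ll 1$ are available from the hypothesis \eqref{u-have-now}, and we freely use \eqref{easy3}--\eqref{strong-F-est3} (resp. \eqref{easy2}--\eqref{strong-F-est2}) as well as the structural decomposition \eqref{paralin2}.

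\textbf{Step 1: boundedness of $v_2$.} Each of the four terms defining $v_2$ in \eqref{lin-nf} is a bilinear (paraproduct or balanced) expression in $(\partial u, v)$ with a low-frequency coefficient built from $\partial^\gamma u$. Using the Coifman--Meyer paraproduct bounds of Section~\ref{s:not}, together with $\|\partial u\|_{L^\infty_t B^0_{\infty,1}} = \AA \ll 1$, we get $\|v_2[t]\|_{\H^\frac12} \lesssim \AA \|v[t]\|_{\H^\frac12}$. The only subtlety is the time-derivative component: to estimate $\partial_t v_2$ in $\H^{-\frac12}$ we distribute $\partial_t$, and when it lands on $v$ producing $\partial_t^2 v$ we invoke Lemma~\ref{l:vtt} to place $\partial_t^2 v$ in $H^{-\frac32}$ (resp.\ $H^{-\frac{11}{8}}$), which is exactly compensated by the extra half-derivative smoothing from the low-frequency coefficient; when $\partial_t$ lands on a $\partial^\gamma u$ coefficient we use that $\partial^2 u \in S^{s-2}_{AIT}$ from \eqref{utt-easy} and that $s-2 > \tfrac n2 - 1$ so the relevant paraproduct remains bounded. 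This yields \eqref{bdd-nf}, and simultaneously the auxiliary bound $\|f_1(t)\|_{S^{-\frac12}_{AIT}} \lesssim \AA \|v[t]\|_{\H^\frac12}$ once $f_1$ is identified in Step 3 (the $f_1$ terms are precisely the ones where $\partial_t$ has been pulled out of $v_2$-type expressions).

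\textbf{Step 2: the algebraic cancellation.} Apply $T_{\ag}$ to $\tilde v = v + v_2$. By \eqref{paralin2}, $T_{\ag} v = N_1(u) + N_2(u)$, and $N_2(u)$ is already perturbative: using \eqref{good-R} for $R(\partial u)$ and the multiplicative/dual-Strichartz bounds, $\|N_2(u)\|_{(S^\frac12_{AIT})'} \ll \|v\|_{S^\frac12_{AIT}}$, so it goes into $f_2$ (its own $\partial_\alpha$ with $\alpha=0$ contributes to $f_1$). It therefore remains to show $T_{\ag} v_2 = -N_1(u) + (\text{perturbative})$. This is the heart of the argument and mirrors the computation in the proof of Lemma~\ref{l:nfu2}: expand $\D_\alpha T_{\hg^{\alpha\beta}} \D_\beta$ acting on each term of $v_2$, distribute the derivatives by the Leibniz rule (Lemma~\ref{l:para-leibniz}, Lemma~\ref{l:para-assoc}), and use that modulo balanced errors $\D_\alpha T_{\hg^{\alpha\beta}} \D_\beta$ applied to the ``free'' factors $u$ and $v$ can be replaced, respectively, by $T_{\hg}$-contractions governed by Lemma~\ref{l:tp-u}, Lemma~\ref{l:tp-du} (for $u$) and by $N_1(u) + N_2(u)$ itself (for $v$, since $v$ solves \eqref{lin-inhom}). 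The leading, unbalanced contributions are exactly those where the metric derivative hits neither $u$ nor $v$ but the paraproduct structure keeps $v$ at second-highest frequency; the coefficients of $v_2$ in \eqref{lin-nf} were chosen precisely so that these match $N_1(u)$ term-by-term and cancel. Everything left over either has a genuine derivative balance (hence is $O(\BB^2)\|v\|_{\H^\frac12}$ or lies in $(S^\frac12_{AIT})'$ via the bilinear Strichartz/Hölder estimates) or is a spatial derivative of a bounded quantity that we peel off into $f_2$, or a time derivative that we peel off into $f_1$. Second time derivatives of $u$ appearing in these manipulations are handled by \eqref{utt-easy} and the corrected expression $\hat\partial_t^2 u$ as in Definition~\ref{d:dt2u}, while second time derivatives of $v$ are handled by Lemma~\ref{l:vtt}; this is cleaner here than in Sections~\ref{s:para}--\ref{s:ee-full-eqn} precisely because, under the Strichartz hypothesis \eqref{u-have-now}, Lemma~\ref{l:all-u} gives us room to spare.

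\textbf{Step 3 and the main obstacle.} Collecting terms, the perturbative remainder naturally splits as $\D_t f_1 + f_2$, with $f_1$ gathering the contributions that carry an explicit $\partial_t$ falling on a factor we cannot afford to differentiate further, and $f_2$ the rest; the bounds \eqref{strong-F-est} then follow from the dual-Strichartz estimates for bilinear forms (using the Strichartz norm control $\|u\|_{S^s_{AIT}} \ll 1$ together with $\|v\|_{S^\frac12_{AIT}}$ for the high-frequency factor and $L^\infty_t$-Sobolev control for the remaining low-frequency factor), exactly in the spirit of \eqref{strong-F-est3}. The two-dimensional case is identical after shifting all Sobolev indices up by $\tfrac18$ and replacing the $L^pL^q$ Strichartz pair accordingly, using the $n=2$ halves of Lemmas~\ref{l:vtt}, \ref{l:all-u}, \ref{l:pre-paradiff-eqn}. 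I expect the genuine difficulty to be Step 2: verifying that the four-term correction in \eqref{lin-nf} produces an \emph{exact} cancellation of the unbalanced part of $N_1(u)$ — as opposed to cancellation only modulo terms that are still unbalanced — requires carefully tracking which index in $\hg^{\alpha\beta}\partial^\gamma u + \hg^{\alpha\gamma}\partial^\beta u + \hg^{\beta\gamma}\partial^\alpha u$ is contracted against the derivative landing on $v$ versus on $u$, and in particular keeping the divergence structure intact so that the leftover $\partial_t$ terms really can be written as $\D_t f_1$ rather than as a genuinely unbounded source. This is where the null structure of the linearized difference operator — the fact that $\hg^{\alpha\beta} - $ its paralinearization exhibits a $Q_0$-type form — is essential.
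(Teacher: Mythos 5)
Your proposal is correct and follows essentially the same route as the paper: bound $v_2$ and $\partial_t v_2$ using Lemma~\ref{l:vtt} and \eqref{utt-easy}, reduce $N_2$ perturbatively, and show the unbalanced part of $N_1$ cancels against $T_{\ag}v_2$ by applying the paraproduct product/commutator lemmas and organizing the leftover into $\D_t f_1 + f_2$ depending on whether the free index is temporal. The paper carries out the cancellation you call the ``main obstacle'' in five explicit substeps (rearranging $N_1$ to $N_1'$, integration by parts to expose an instance of $T_{\ag}v$, Leibniz rule in $\alpha$, etc.), which is precisely the careful index- and divergence-structure tracking you correctly anticipated would be the crux.
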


\begin{proof}
(i) For the bound \eqref{bdd-nf}, it suffices to  estimate $v_2$ as follows:
\begin{equation}\label{v2-nf}
\| v_2(t) \|_{H^\frac12} \lesssim \AAs \| v[t]\|_{\H^\frac12},    
\end{equation}
\begin{equation}\label{dtv2-nf}
\| \partial_t v_2(t) \|_{H^{-\frac12}} \lesssim \AAs \| v[t]\|_{\H^\frac12}.
\end{equation}

The first term of $v_2$ can be directly estimated using scale invariant $\AA$ bounds,
\[
\|  T_{T_{\D^\gamma u}  \D_\gamma v} u\|_{H^\frac12} \lesssim 
\| T_{\D^\gamma u}  \D_\gamma v \|_{H^{-\frac12}} \| u \|_{Lip}
\lesssim \| \partial u \|_{L^\infty} \|  \D_\gamma v \|_{H^{-\frac12}} \| u \|_{Lip} \lesssim \AA^2\|  \D_\gamma v \|_{H^{-\frac12}}.
\]
The third and the fourth terms are similar. However, for the second
term we need to use the $\AAs$ control norm combined with Bernstein's inequality:
\[
\|  T_{T_{\D^\gamma u} v} \D_\gamma u\|_{H^\frac12} \lesssim 
\| T_{\D^\gamma u}  v \|_{H^{\frac12}} \| \D u \|_{W^{\frac12,2n}}
\lesssim \| \partial u \|_{L^\infty} \|  v \|_{H^{\frac12}} \| \D u \|_{W^{\frac12,2n}} \lesssim \AA \AAs \|   v \|_{H^{\frac12}}.
\]

We next consider \eqref{dtv2-nf}, where we distribute the time derivative,
obtaining several types of terms:
\smallskip

a) Terms with distributed derivatives, namely  
$T_{T_{\partial u} \partial v} \partial u$ and $\Pi(T_{\partial u} \partial v , \partial u)$. We estimate the first
by 
\[
\| T_{T_{\partial u} \partial v} \partial u\|_{H^{-\frac12}}
\lesssim \|T_{\partial u} \partial v\|_{H^{-\frac12}}
\| \partial u\|_{L^\infty} \lesssim \AA^2 \| \partial v\|_{H^{-\frac12}},
\]
and the second, using Sobolev embeddings, by 
\[
\| \Pi(T_{\partial u} \partial v , \partial u)\|_{H^{-\frac12}}
\lesssim \| \Pi(T_{\partial u} \partial v , \partial u)\|_{L^\frac{2n}{n-1}} \lesssim \| T_{\partial u} \partial v \|_{H^{-\frac12}} \| \partial u\|_{W^{\frac12,2n}} \lesssim \AA\AAs 
\| \partial v \|_{H^{-\frac12}}.
\]
\smallskip

b) Terms with two derivatives on the high frequency $u$,
namely $T_{T_{\partial u}  v} \partial^2 u$ and $\Pi(T_{\partial u} v , \partial^2 u)$. In view of the bound \eqref{utt-easy}, the corresponding estimate is nearly identical to case (a) above.

\smallskip

c) Terms with $\partial_t \partial^\gamma u$. Here we know that 
$\partial_t \partial^\gamma u \in H^{s-2}$, so we arrive at estimates which are also similar to case (a).

\smallskip

d) Terms with two derivatives on $v$. If one of them is spatial (i.e. $\gamma \neq 0$) then this is similar to or better than case (a). So we are left with the expressions $T_{T_{\partial u} \partial_t^2 v}  u$ and $\Pi(T_{\partial u} \partial_t^2 v ,  u)$. But there we can use the bound
\eqref{vtt-est} and complete the analysis again as in case (a).

\bigskip

(ii) The proof of \eqref{nf-paradiff-eqn} along with the estimates \eqref{strong-F-est}, \eqref{extra-F-est} will occur in four steps. 

\

1)  We first estimate the balanced source term component $N_2(u)$ from \eqref{paralin2}. We consider below the paraproduct $T$ term, but the $\Pi$ term is similar. We first consider the cases where the outer derivative $\D_\alpha = \D_i$ is a spatial derivative, which we will place in $f_2$. We have by Lemma~\ref{l:Moser-control} (see \eqref{good-R} above)
\[
\| \D_i T_{\D_{\beta}v} R(\D u)\|_{H^{-1/2}} \lesssim \|v\|_{\H^{1/2}}\|\partial R(\D u)\|_{L^\infty} \lesssim_{\AA} \BB^2 \|v\|_{\H^{1/2}},
\]
where the $\BB^2$ factor is integrable in time.
We place the case where $\D_\alpha = \D_0$ in $\D_t f_1$, estimating 
\[
\| T_{\D_{\beta}v} R(\D u)\|_{H^{1/2}} \lesssim \|v\|_{\H^{1/2}}\|\partial R(\D u)\|_{L^\infty} \lesssim_{\AA} \BB^2 \|v\|_{\H^{1/2}},
\]
and
\begin{equation*}
\begin{aligned}
\|T_{\D_{\beta}v} R(\D u)\|_{H^{-1/2}} \lesssim \|T_{\D_{\beta}v} R(\D u)\|_{L^{\frac{2n}{1 + n}}}\lesssim \|v\|_{\H^{1/2}}\|\partial R(\D u)\|_{L^{n}} \lesssim \AAs^2 \|v\|_{\H^{1/2}},
\end{aligned}
\end{equation*}
as well as 
\begin{equation*}
\begin{aligned}
\|\bD^{-\frac{n}2-\frac14}  T_{\D_{\beta}v} R(\D u)\|_{L^\infty} \lesssim \|T_{\D_{\beta}v} R(\D u)\|_{L^{2}}\lesssim \|\partial v\|_{\H^{-1/2}}\|\partial R(\D u)\|_{L^{2n}} \lesssim \AA\BB \|v\|_{\H^{1/2}}.
\end{aligned}
\end{equation*}
Here we have a single $\BB$ factor, which is $L^2$ in time, as needed 
for the $L^2 L^\infty$ Strichartz norm in \eqref{extra-F-est}.

\bigskip

2) Next, we apply product and commutator lemmas to exchange $N_1(u)$ for an equivalent expression up to perturbative errors, in preparation for comparison with the contribution from the normal form corrections. Here, we discuss the first term of $N_1(u)$,
\begin{equation}\label{sample-n1}
\D_\alpha T_{\D_\beta v} T_{\hg^{\alpha \beta} \partial^{\gamma} u} \D_\gamma u,
\end{equation}
but the remaining terms, including the balanced $\Pi$ terms, are similar, using the analogous product and commutator lemmas. 
We first consider the cases where the outer derivative $\D_\alpha = \D_i$ is a spatial derivative, and place all perturbative errors in $f_2$. By an application of product and commutator Lemmas~\ref{l:para-prod} and \ref{l:para-com}, we may replace \eqref{sample-n1} with
\[
\D_\alpha T_{\partial^{\gamma} u} T_{\D_\beta v} T_{\hg^{\alpha \beta}} \D_\gamma u.
\]
Then applying Lemmas~\ref{l:para-com} and \ref{l:para-prod2}, it suffices to consider
\[
\D_\alpha T_{\hg^{\alpha \beta}} T_{T_{\D^\gamma u}\D_\beta v} \D_\gamma u.
\]

In the case where $\D_\alpha = \D_0$, we place all perturbative errors in $\D_t f_1$. The bound for $f_1$ in \eqref{strong-F-est} is similar to the one for $f_2$, but there is a price to pay, namely that we also need to 
prove \eqref{extra-F-est}. Fortunately for \eqref{extra-F-est}
we may disregard all commutator structure and discard all the para-coefficients, as they are bounded and gain an $\AA$ factor, so we are left with proving a bound of the form
\[
\| T_{\partial v} \partial u\|_{S_{AIT}^{-\frac12}} \lesssim \|\partial v\|_{L^\infty H^{-\frac12}}.
\]
Here for the uniform bound we simply write at fixed time
\[
\|  T_{\partial v} \partial u\|_{H^{-\frac12}} \lesssim 
 \|\partial v\|_{ H^{-\frac12} } \| \partial u\|_{L^\infty}
 \lesssim \AA  \|\partial v\|_{ H^{-\frac12} },
\]
and for the $L^2 L^\infty$ bound we have
\[
\|  T_{\partial v} \partial u\|_{L^2} \lesssim 
 \BB \|\partial v\|_{ H^{-\frac12} } 
\]
using $\BB$ for the square integrability in time and then applying Bernstein's inequality in space to convert the  $L^2$ bound into $L^\infty$.

Applying the same analysis to the other terms of $N_1(u)$, we have reduced to
\begin{equation*}
\begin{aligned}
N_1'(u) &= \D_\alpha (T_{\hg^{\alpha \beta}}  T_{T_{\D^\gamma u}\D_\beta v}\D_\gamma u + T_{\hg^{\alpha \gamma}}  T_{T_{\D^\beta u}\D_\beta v}\D_\gamma u + T_{\hg^{\beta \gamma}}  T_{T_{\D^\alpha u}\D_\beta v}\D_\gamma u) \\
&\quad + \D_\alpha (\Pi(T_{\D^\gamma u} \D_\beta v, T_{\hg^{\alpha \beta}}\D_\gamma u) + \Pi( T_{\D^\beta u}\D_\beta v, T_{\hg^{\alpha \gamma}}\D_\gamma u) + \Pi(T_{\D^\alpha u}\D_\beta v, T_{\hg^{\beta \gamma}}\D_\gamma u)).
\end{aligned}
\end{equation*}

\

3) We next establish the cancellation between the normal form correction and $N_1'(u)$. In this step, we discuss only the low-high $T$ paraproduct contributions, and return to the $\Pi$ contributions in Step 4. Applying $T_{\ag}$ to the $T$ term of $v_2$ in \eqref{lin-nf}, we have the contribution
\begin{equation}\label{div-para-nf}
\begin{aligned}
- \D_{\alpha}T_{\hg^{\alpha \beta}} \D_{\beta} (T_{T_{\D^\gamma u}\D_\gamma v} u + T_{T_{\D^\gamma u} v}\D_\gamma u  ).
\end{aligned}
\end{equation}

a) We first would like to observe that the cases where the derivatives $\D_\beta$ and $\D_\gamma$ are split, between $v$ and the high frequency $u$, cancel with the first two terms of $N_1'(u)$. The main task to verify before doing so is that the cases where the $\D_\beta$ falls on the lowest frequency para-coefficient $\D^\gamma u$ are perturbative due to an efficient balance of derivatives, and may be absorbed into $f_2$ or $f_1$. To see this, we analyze separately cases involving spatial versus time derivatives. In the case of spatial derivatives $\D_\alpha = \D_i$ and $\D_\beta = \D_j$, we directly estimate 
\[
\|\D_{i}T_{\hg^{ij}} T_{T_{\D_{j} \D^\gamma u}\D_\gamma v} u\|_{H^{-1/2}}  \lesssim_{\AA} \BB^2 \|v\|_{\H^{1/2}}.
\]
In the case where $\D_\beta = \D_0$, we obtain the same estimate in the same manner, except when $\D_\gamma = \D_0$. In this case, we may use Lemma~\ref{l:utt} to estimate the lowest frequency $\D_0^2 u$.

It remains to consider the case $\D_\alpha = \D_0$, which we put in $\D_t f_1$. We have
\[
\|T_{\hg^{0\beta}} T_{T_{\D_{\beta} \D^\gamma u}\D_\gamma v} u\|_{H^{1/2}}  \lesssim_{\AA} \BB^2 \|v\|_{\H^{1/2}}
\]
as before. For $f_1$ however, we also require an  estimate 
for the full Strichartz norm in \eqref{extra-F-est}.
We separate $\D_\beta$ again into spatial and time derivatives. For the spatial case, we have by Sobolev embeddings,
\begin{equation*}
\begin{aligned}
\|T_{\hg^{0j}} T_{T_{\D_{j} \D^\gamma u}\D_\gamma v} u\|_{H^{-1/2}} \lesssim \| T_{T_{\D_{j} \D^\gamma u}\D_\gamma v} u\|_{L^{\frac{2n}{1 + n}}} 
\lesssim \|\bD^{1/2}\D^\gamma u\|_{L^{2n}} \| v\|_{\H^{1/2}}\|\D u\|_{L^{\infty}} 
\lesssim \AA \AAs \| v\|_{\H^{1/2}}
\end{aligned}
\end{equation*}
for the uniform bound, as well as 
\begin{equation*}
\begin{aligned}
\|T_{\hg^{0j}} T_{T_{\D_{j} \D^\gamma u}\D_\gamma v} u\|_{L^2} &\lesssim  \|\bD^{1/2}\D^\gamma u\|_{L^{2n}} \| v\|_{\H^{1/2}}\|\D u\|_{BMO^\frac12} 
\lesssim \BB \AAs \| v\|_{\H^{1/2}}
\end{aligned}
\end{equation*}
for the $L^2 L^\infty$ bound.

For the case $\D_\beta = \D_0$, the lowest frequency includes an instance of $\D_0^2 u$, where we apply Lemma~\ref{l:utt}. This contributes a spatial component $\hat \partial^2_t u$ which is estimated as before, as well as a balanced $\Pi$ interaction, namely $\pi_2(u)$. This case is estimated by 
\begin{equation*}
\begin{aligned}
\| T_{\hg^{00}} T_{T_{\pi_2(u)}\D_0 v} u\|_{H^{-1/2}} &\lesssim \AA \| T_{T_{\pi_2(u)}\D_0 v} u\|_{L^{\frac{2n}{1 + n}}} 
\lesssim \AA \|\pi_2(u)\|_{L^n} \| v\|_{\H^{1/2}}\|\D u\|_{L^\infty} 
\lesssim \AA^2\AAs^2 \| v\|_{\H^{1/2}}
\end{aligned}
\end{equation*}
for the energy norm respectively 
\begin{equation*}
\begin{aligned}
\| T_{\hg^{00}} T_{T_{\pi_2(u)}\D_0 v} u\|_{L^2} &\lesssim \AA \| T_{T_{\pi_2(u)}\D_0 v} u\|_{L^2} \lesssim \AA \|\pi_2(u)\|_{L^{2n}} \| v\|_{\H^{1/2}}\|\D u\|_{L^\infty} \lesssim \AA^2\AAs \BB \| v\|_{\H^{1/2}}
\end{aligned}
\end{equation*}
for the $L^2 L^\infty$ bound.

Having dismissed the perturbative cases via this analysis, we observe an exact cancellation with the first two terms of $N_1'(u)$. Collecting the remaining paraproduct terms from $N_1'(u)$ and \eqref{div-para-nf}, we have
\begin{equation}\label{Tsubstep1}
\begin{aligned}
\D_\alpha T_{\hg^{\beta \gamma}}  T_{T_{\D^\alpha u}\D_\beta v}\D_\gamma u - \D_{\alpha}T_{\hg^{\alpha \beta}} T_{T_{\D^\gamma u} \D_\beta \D_\gamma v} u 
- \D_{\alpha}T_{\hg^{\alpha \beta}} T_{T_{\D^\gamma u}v} \D_\beta \D_\gamma  u.
\end{aligned}
\end{equation}

\medskip

b) Before proceeding, we rewrite the first term in \eqref{Tsubstep1} as follows, with the key step being an integration by parts which reveals an instance of $T_\ag v$. Reindexing, we have
\[
\D_\gamma T_{\hg^{\beta \alpha}}  T_{T_{\D^\gamma u}\D_\beta v}\D_\alpha u.
\]
Then applying Lemmas~\ref{l:para-prod2} and \ref{l:para-com} to commute $T_{\hg^{\alpha \beta}}$, similar to step 2), we replace this by
\[
\D_\gamma T_{T_{\D^\gamma u} T_{\hg^{\beta \alpha}} \D_\beta v}\D_\alpha u.
\]

Simulating an integration by parts with respect to $\D_\alpha$, we write this as 
\[
\D_\alpha\D_\gamma T_{T_{\D^\gamma u} T_{\hg^{\beta \alpha}} \D_\beta v} u - \D_\gamma T_{\D_\alpha T_{\D^\gamma u} T_{\hg^{\beta \alpha}} \D_\beta v} u.
\]
We will carry the first of these terms forward to 3c), while the latter term is perturbative. To see this, we observe that $\D_\alpha$ may commute through $T_{\D^\gamma u}$, similar to the analysis in 3a). Thus we arrive at the expression
\[
\D_\gamma T_{ T_{\D^\gamma u} \D_\alpha T_{\hg^{\beta \alpha}} \D_\beta v} u = \D_\gamma T_{ T_{\D^\gamma u} T_\ag v} u.
\]
We consider separately via $f_2$ and $\D_t f_1$ the contributions 
corresponding to $\D_\gamma = \D_i$ and $\D_\gamma = \D_0$ respectively. For the bound \eqref{strong-F-est}, using Lemma~\ref{l:pre-paradiff-eqn} and the Strichartz exponents $(p_1,q_1)$
given by
\begin{equation}\label{p1q1}
\frac{1}{p_1} = \frac{1}{n - \frac12}, \qquad \frac{1}{p_1} + \frac{1}{q_1} 
 = \frac12,
\end{equation}
we estimate, in both cases,
\begin{equation*}
\begin{aligned}
\|T_{ T_{\D^\gamma u} T_\ag v} u\|_{(S_{AIT}^{-\frac12})'} &
\lesssim \|\bD^{\frac32+\delta} T_{ T_{\D^\gamma u} T_\ag v} u\|_{L^{p'_1} L^{q'_1}} \\
&\lesssim \AA \|T_\ag v\|_{L^{p}L^q} \|u\|_{L^2 W^{\frac32+\delta, \infty}}. \end{aligned}
\end{equation*}
It remains to prove the bound \eqref{extra-F-est}, but this is again a simpler bound where we have a considerable gain. Indeed, using only 
$H^s$ Sobolev bounds but including \eqref{utt-easy} and \eqref{vtt-est}
we obtain at fixed time
\[
\| T_{ T_{\D^\gamma u} T_\ag v} u\|_{L^2} \lesssim \AA
\| T_\ag v\|_{H^{-\frac54}} \| u \|_{H^s} ,
\]
which suffices for all the Strichartz bounds.

\medskip

c) Returning to \eqref{Tsubstep1} and replacing the first term via the analysis in 3b), we have 
\begin{equation*}
\begin{aligned}
\D_\alpha(\D_\gamma T_{T_{\D^\gamma u} T_{\hg^{\beta \alpha}} \D_\beta v} u - T_{\hg^{\alpha \beta}} T_{T_{\D^\gamma u} \D_\beta \D_\gamma v} u 
- T_{\hg^{\alpha \beta}} T_{T_{\D^\gamma u}v} \D_\beta \D_\gamma  u).
\end{aligned}
\end{equation*}
We observe a cancellation between the first two terms. To see this, we apply the Leibniz rule for the $\D_\gamma$ derivative on the first term. Similar to 3a), cases where the derivative falls on the lowest frequency $\D^\gamma u$ or $\hg^{\beta \alpha}$ are perturbative. We also have a term which cancels the second term, leaving us with 
\begin{equation*}
\begin{aligned}
\D_\alpha ( T_{T_{\D^\gamma u} T_{\hg^{\beta \alpha}} \D_\beta v}\D_\gamma u
- T_{\hg^{\alpha \beta}} T_{T_{\D^\gamma u}v} \D_\beta \D_\gamma  u).
\end{aligned}
\end{equation*}
Applying also product and commutator Lemmas~\ref{l:para-com} and \ref{l:para-prod2} as in 2), we rewrite this as
\begin{equation*}
\begin{aligned}
\D_\alpha T_{\D^\gamma u} (T_{T_{\hg^{\beta \alpha}} \D_\beta v}\D_\gamma u
- T_{v}T_{\hg^{\alpha \beta}} {\D_\beta \D_\gamma} u).
\end{aligned}
\end{equation*}

\medskip

d) We apply the Leibniz rule with respect to $\D_\alpha$. Here we observe that cases where $\D_\alpha$ falls on lower frequency instances $u$ or $g$ are perturbative. Note that in contrast to the previous substeps, we no longer have the $\D_\alpha$ divergence and so must put all terms in $f_2$.

We consider for instance the term 
\[
T_{\D_\alpha\D^\gamma u} T_v T_{\hg^{\alpha \beta}} {\D_\beta \D_\gamma} u.
\]
Excluding the case of two time derivatives in $\D_\alpha\D^\gamma u$, this is easily estimated due to a favorable balance of derivatives. In the case of two time derivatives, we have $\D_\alpha\D^\gamma u \in \DCC$
so we can use the decomposition in Definition~\ref{d:CC}, say 
$\D_\alpha\D^\gamma u = h_1+h_2$. The first component can be thought of as 
a spatial derivative and is again easily estimated. It remains to consider 
the contribution of the second term $h_2 \in \BB^2 L^\infty$:
\begin{equation*}
\begin{aligned}
\| T_{h_2} T_v T_{\hg^{\alpha \beta}} {\D_\beta \D_\gamma} u \|_{H^{-1/2}} &\lesssim \AA \| h_2\|_{L^\infty} \| v\|_{H^\frac12} \|  \D^2  u \|_{H^{s-2}} \\
&\lesssim_\AA \BB^2 \|v\|_{\H^{1/2}}.
\end{aligned}
\end{equation*}
A similar analysis applies in the cases where $\D^\alpha$ falls on a low frequency metric coefficient $g$.

\medskip

e) We record the remaining terms after applying the Leibniz rule, and will observe instances of $T_{\ag}$ for which we use the equation \eqref{paralin}, as well as a cancellation. 
We arrive at
\begin{equation*}
\begin{aligned}
 T_{\D^\gamma u} (T_{\D_\alpha T_{\hg^{\beta \alpha}} \D_\beta v}\D_\gamma u + T_{T_{\hg^{\beta \alpha}}\D_\beta v} {\D_\alpha\D_\gamma} u
- T_{ \D_\alpha v}T_{\hg^{\alpha \beta}}  {\D_\beta \D_\gamma} u - T_{v}\D_\alpha T_{\hg^{\alpha \beta}}  {\D_\beta \D_\gamma} u)
\end{aligned}
\end{equation*}
which, reindexing the second term, may be written in the form
\begin{equation}\label{step-f}
\begin{aligned}
 T_{\D^\gamma u} (T_{T_\ag v} \D_\gamma u + T_{T_{\hg^{\alpha \beta}} \D_\alpha v} {\D_\beta\D_\gamma} u
- T_{ \D_\alpha v}T_{\hg^{\alpha \beta}}  {\D_\beta \D_\gamma} u - T_{v} \D_\alpha T_{\hg^{\alpha \beta}}  {\D_\beta \D_\gamma} u).
\end{aligned}
\end{equation}

We apply \eqref{strong-F-est3} to the first term, and estimate
in a dual Strichartz norm with $(p_1,q_1)$ as in \eqref{p1q1},
\begin{equation*}
\begin{aligned}
\|  T_{T_\ag v} \D_\gamma u \|_{(S_{AIT}^{\frac12})'} &
\lesssim \|\bD^{\frac12+\delta} T_{T_\ag v} \D_\gamma u\|_{L^{p'_1} L^{q'_1}} 
\lesssim \AA \|T_\ag v\|_{L^{p}L^q} \|\partial u\|_{L^2 W^{\frac12+\delta, \infty}} .
\end{aligned}
\end{equation*}
For the last term, on the other hand, we use a Strichartz bound for $v$
and match it with the bound \eqref{hp-du} in Lemma~\ref{l:all-u},
\[
\| \bD^{\delta} T_{v} \D_\alpha T_{\hg^{\alpha \beta}}  {\D_\beta \D_\gamma} u\|_{L^{p'_3}L^{q'_3}} 
\lesssim 
\| \bD^{-\delta}  v  \|_{L^{p'_3}L^{q'_3}}
\| \bD^{\delta_0} \D_\alpha T_{\hg^{\alpha \beta}}  {\D_\beta \D_\gamma} u\|_{L^{p_4} L^{q_4}} \ll \| v\|_{S^{\frac12}_{AIT}},
\]
where
\[
\frac{1}{p_3} = \frac{1}{n-\frac12}, \qquad
\frac{1}{p_4} = 1 - \frac{2}{p_3},
\qquad \frac{1}{p_3} + \frac{1}{q_3}
= \frac12, \qquad \frac{1}{p_4}+ \frac{1}{q_4}=1.
\]
Here the Strichartz exponents $p_3$ and $q_3$ are chosen so that the first factor
on the right is controlled by $\| v\|_{S^{\frac12}_{AIT}}$ and $\delta$
is arbitrarily small. On the other hand $\delta_0$ is a fixed positive 
parameter which depends only on the distance between $s$ and its lower bound.

Lastly, we perturbatively estimate the remaining pair of terms in \eqref{step-f},
\[
 \text{Diff} = T_{T_{\hg^{\alpha \beta}} \D_\alpha v}\D_\beta\D_\gamma u
- T_{ \D_\alpha v}T_{\hg^{\alpha \beta}} \D_\beta \D_\gamma u .
\]
This is a minor variation of the para-associativity Lemma~\ref{l:para-assoc}. The difference is nonzero only if the 
frequencies of the three factors $\partial_\alpha v$, $\hg^{\alpha \beta}$
and $\D_\beta \D_\gamma u$ are ordered in a nondecreasing manner. 
Thus in view of our assumption \eqref{u-have-now} we obtain
\[
\|\text{Diff}\|_{H^{-\frac12}} \lesssim \sum_{i \leq j < k}
2^{\frac{i-j}2} \|\partial v_i\|_{H^{-\frac12}} \| \bD^\frac12 \hg\|_{L^\infty} \| \bD^\frac12 \partial u\|_{L^\infty} \ll \|\partial v\|_{H^{-\frac12}} ,
\]
as needed.


\

4) It remains to consider the cancellation between the balanced $\Pi$ terms in the normal form correction and in $N_1'(u)$. Here the analysis is identical to the analysis for the low-high $T$ contributions in Step 3, due to the analogous structure for the $T$ and $\Pi$ terms in both $v_2$ and $N_1'(u)$. The main care that is needed is to observe that all negative Sobolev exponent norms have been addressed in Step 3 by either using a divergence structure, or by Sobolev embeddings, which apply equally well to the balanced $\Pi$ case.

\end{proof}

\subsection{Reduction to the paradifferential equation} 
Here we first use the well-posedness result for the linear paradifferential
equation in order to obtain a good bound for $\tv$. The source terms are perturbative by \eqref{strong-F-est} and Theorem~\ref{t:Str-move-around},
so the solution $\tv$ must satisfy the bound
\begin{equation}\label{tva}
\| \tv \|_{S_{AIT}^\frac12} + \| \partial_t \tv \|_{S_{AIT}^{-\frac12}}
\lesssim \| \tv[0]\|_{\H^\frac12} + c \| v \|_{S_{AIT}^\frac12} + \| \partial_t v \|_{S_{AIT}^{-\frac12}}, \qquad c \ll 1.
\end{equation}
It remains to show that the Strichartz estimates carry over to $v$.
For this, it suffices to show that 
\begin{equation}\label{tvb}
    \| v_2 \|_{S_{AIT}^\frac12} + \| \partial_t v_2 \|_{S_{AIT}^{-\frac12}}
    \ll \| v \|_{L^\infty \H^\frac12}.
\end{equation}
If this is true, then combining the last two bounds with the norm equivalence \eqref{bdd-nf} we obtain the desired bound for the linearized evolution 
\eqref{lin-inhom}, namely 
\begin{equation}\label{v-Str}
    \| v \|_{S_{AIT}^\frac12} + \| \partial_t v \|_{S_{AIT}^{-\frac12}}
    \ll \| v[0] \|_{\H^\frac12}
\end{equation}    
with a universal implicit constant. This concludes the proof of Theorem~\ref{t:linearized} in dimension $n \geq 3$. The case $n=2$
is virtually identical.

It remains to prove \eqref{tvb}. The energy norm for $v_2$ has already been estimated in part (i) of Proposition~\ref{p:pre-paradiff-eqn}, so it remains to consider the $L^2 L^\infty$
norm in three and higher dimensions. This is a soft bound, where we only need to use 
the energy bound for $v$ on the right, and not the full Strichartz norm, as we would also have been allowed. There are eight norms to estimate; most of them are similar, so we consider a representative sample, leaving the rest for the reader.

For a streamlined unbalanced bound we consider the term 
\[
\| \bD^{-\frac{n-2}2-\frac14 - \delta} T_{T_{\partial^\gamma u} v} \partial_\gamma u \|_{L^2 L^\infty} \lesssim \| T_{\partial^\gamma u} v\|_{L^\infty L^{\frac{2n}{n-1}}}
\| \bD^{\frac14 -\delta} \partial u \|_{L^\infty} \lesssim \AA \| v\|_{L^\infty \H^\frac12},
\]
where we have used Bernstein's inequality twice and the Strichartz bound for $u$.
This pattern is followed for all unbalanced terms.

For the worst balanced case, we apply the time derivative to $v$ in the next to last 
term in $v_2$. Then we have to estimate
\[
\begin{aligned}
\| \bD^{-\frac{n}2-\frac14 - \delta} \Pi(T_{\partial^\gamma u} \partial^2 v, u) \|_{L^2 L^\infty}&  \lesssim \| \Pi(T_{\partial^\gamma u} \partial^2 v, u) \|_{L^2 L^{\frac{4n}{2n-1}} }\\
&\lesssim \AA \| \partial^2 v\|_{L^\infty H^{-\frac32}} 
\| \bD^{\frac32+\frac14} u\|_{L^2 L^\infty} \lesssim \AA \| v\|_{L^\infty \H^{\frac12}},
\end{aligned}
\]
where we have used Bernstein's inequality twice, Lemma~\ref{l:vtt} and the Strichartz bound for $u$.


\section{Short time Strichartz estimates}
\label{s:ST}

The aim of this section is to provide a more detailed 
overview of the local well-posedness result in \cite{ST},
and at the same time to provide a formulation of this result 
which applies in a large data setting, but for a short time.
Instead of working with the equation \eqref{NLW-gen}, here 
it is easier to work with the problem
\begin{equation}\label{ST-eqn}
g^{\alpha\beta}(\bu) \partial_\alpha \partial_\beta \bu  = h^{\alpha\beta}(\bu)
\partial_\alpha \bu \, \partial_\beta \bu
\end{equation}
for a possibly vector valued function $\bu$. This is exactly the set-up
of \cite{ST}, and has the advantage that it is scale invariant. 
We recall that the scaling exponent for this problem is $s_c = \frac{n}2$.
In our problem, we will apply the results in this section to the 
function $\bu = \partial u$.

We begin with a review of the local well-posedness result in \cite{ST}, but where we describe also the structure of the Strichartz estimates:

\begin{theorem}[Smith-Tataru \cite{ST}]\label{t:ST}
Consider the problem \eqref{ST-eqn} with initial data satisfying 
\begin{equation}\label{reg-data}
\| \bu[0]\|_{\H^{s_1}} \ll 1.   
\end{equation}
where \begin{equation}\label{reg-s2}
s_1> s_c+\frac34,   \qquad n = 2,  
\end{equation}
respectively
\begin{equation}\label{reg-s3}
s_1> s_c+\frac12,   \qquad n \geq 3.
\end{equation}
Then the solution exists on the time interval $[0,1]$, and satisfies the following 
Strichartz estimates
\begin{equation}\label{se2}
\| \bD^{\delta_0}\partial \bu\|_{L^4 L^\infty} \lesssim 1 , \qquad n=2 ,  
\end{equation}
respectively
\begin{equation}\label{se3}
\| \bD^{\delta_0}\partial \bu\|_{L^2 L^\infty} \lesssim 1 , \qquad n \geq 3.
\end{equation}
with a small $\delta_0 > 0$.
\end{theorem}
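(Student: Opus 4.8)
\textbf{Proof proposal for Theorem~\ref{t:ST}.} The statement is essentially a restatement of the Smith--Tataru local well-posedness theorem \cite{ST}, so the plan is not to reprove it from scratch but to carefully extract the Strichartz bounds \eqref{se2}, \eqref{se3} that are implicit in that proof, and to cast them in the scale-invariant large-data-for-short-time form used here. The starting point is that the main theorem of \cite{ST}, applied to \eqref{ST-eqn} (this is permissible since, as noted in the footnote after Theorem~\ref{t:ST-intro}, the result carries over from the $g=g(u)$ case to equations of the form \eqref{NLW-gen}, and \eqref{ST-eqn} is of that type with the additional scaling structure $s_c = n/2$), gives, under the smallness assumption \eqref{reg-data} with $s_1$ as in \eqref{reg-s2}--\eqref{reg-s3}, a solution $\bu$ on the time interval $[0,1]$ together with the characteristic energy and Strichartz control. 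Concretely, \cite{ST} establishes the almost-lossless Strichartz estimate for the solution at regularity $s_1$, which in our notation reads $\|\bu\|_{S_{ST}^{s_1}} \lesssim \|\bu[0]\|_{\H^{s_1}}$; unpacking the definition of $S_{ST}^{s_1}$, this is
\[
\| \bu\|_{L^\infty \H^{s_1}} + \| \bD^{s_1 - \frac34 - \delta} \bu\|_{L^4 L^\infty} \lesssim 1, \qquad n = 2,
\]
and
\[
\| \bu\|_{L^\infty \H^{s_1}} + \| \bD^{s_1 - \frac{n-1}2 - \delta} \bu\|_{L^2 L^\infty} \lesssim 1, \qquad n \geq 3,
\]
for any $\delta > 0$.

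The second step is purely bookkeeping: I rewrite these bounds in terms of $\partial \bu$ rather than $\bu$ and identify the surviving derivative gain $\delta_0$. Applying the Strichartz bound to $\partial \bu$ instead of $\bu$ costs one derivative, so we obtain $\| \bD^{s_1 - \frac74 - \delta} \partial \bu\|_{L^4 L^\infty} \lesssim 1$ in two dimensions and $\| \bD^{s_1 - \frac{n+1}2 - \delta} \partial \bu\|_{L^2 L^\infty} \lesssim 1$ in higher dimensions. Since $s_1 > s_c + \frac34 = \frac{n}2 + \frac34$ when $n=2$, the exponent $s_1 - \frac74 - \delta$ is strictly positive for $\delta$ small, say equal to $\delta_0 := s_1 - \frac74 - \delta > 0$; similarly when $n \geq 3$, the hypothesis $s_1 > \frac{n}2 + \frac12$ gives $s_1 - \frac{n+1}2 - \delta = \delta_0 > 0$. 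Since $\bD^{\delta_0} f$ with $\delta_0 > 0$ dominates (up to a constant, using the inhomogeneous nature of $\bD$) a fixed positive power of $\bD$, this yields exactly \eqref{se2} and \eqref{se3}. One should note here that because we use the inhomogeneous operator $\bD = \langle D_x \rangle$, there is no low-frequency issue and no loss in discarding excess derivatives.

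The one genuine subtlety, and the step I expect to need the most care, is the passage from the fixed-normalization statement in \cite{ST} to the \textbf{scale-invariant} statement of Theorem~\ref{t:ST}, i.e. making sure the smallness in \eqref{reg-data} is measured in the homogeneous-flavored norm appropriate to the scaling $s_c = n/2$ of \eqref{ST-eqn}, and that the time interval is genuinely $[0,1]$ rather than a data-dependent short interval. This is the ``interesting twist due to the need to use homogeneous Sobolev norms'' alluded to in the outline. The resolution is standard: since \eqref{ST-eqn} is invariant under $\bu(t,x) \mapsto \bu(\lambda t, \lambda x)$ and $s_c = n/2$ is the scaling exponent, the result of \cite{ST} — which is stated for small data on a unit time interval — can be applied after rescaling, and the smallness threshold in the inhomogeneous $\H^{s_1}$ norm used in \cite{ST} is implied by the scale-invariant smallness \eqref{reg-data} because $s_1 > s_c$. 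I would spell this out by choosing the rescaling parameter so that the rescaled data has small inhomogeneous $\H^{s_1}$ norm, invoking \cite{ST} on $[0,1]$ for the rescaled solution, deriving the Strichartz bounds there, and then rescaling back; the Strichartz norms $L^4 L^\infty$ and $L^2 L^\infty$ of $\bD^{\delta_0} \partial \bu$ transform with a nonnegative power of $\lambda$ under this scaling (precisely because $\delta_0$ was chosen to keep the total scaling weight nonnegative), so the bound is preserved. The remaining details — that the energy estimate is preserved, that finite speed of propagation lets one localize if needed — are routine and I would only sketch them.
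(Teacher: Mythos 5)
Your derivative count is correct and this part of the argument matches what the paper intends: applying the Smith--Tataru almost-lossless Strichartz estimate $\|\bu\|_{S^{s_1}_{ST}} + \|\partial_t\bu\|_{S^{s_1-1}_{ST}} \lesssim \|\bu[0]\|_{\H^{s_1}}$, shifting by one derivative to land on $\partial\bu$, and then using the hypotheses \eqref{reg-s2}, \eqref{reg-s3} (with $s_c=n/2$ for \eqref{ST-eqn}) to read off a positive leftover exponent $\delta_0$. The paper offers no proof of Theorem~\ref{t:ST} at all --- it is stated purely as a restatement of \cite{ST} --- so the "proof" here is essentially the bookkeeping you describe, and you have it right.

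That said, two things in your write-up are off-target. First, \eqref{ST-eqn} with $g^{\alpha\beta}(\bu)$ and a quadratic form in $\partial\bu$ on the right is exactly the model problem treated in \cite{ST}; no appeal to the footnote's extension to $g=g(\partial u)$ is required, so that remark is backwards. Second, and more substantially, you spend the bulk of the proposal on a rescaling/homogeneous-norm argument that is simply not needed for Theorem~\ref{t:ST}: the hypothesis \eqref{reg-data} is already phrased in the inhomogeneous norm $\H^{s_1}$ on the unit interval $[0,1]$, which is precisely the normalization in \cite{ST}, so there is no scale-invariance gap to close. The "interesting twist due to homogeneous Sobolev norms" mentioned in the paper's outline refers to the passage from Theorem~\ref{t:ST} to the large-data short-time result Theorem~\ref{t:ST-large} (and its preparatory Theorems~\ref{t:smooth}, \ref{t:ST+}), not to Theorem~\ref{t:ST} itself. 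You have conflated the two statements; the scaling argument you sketch --- with the worry that the inhomogeneous $\bD$ does not rescale cleanly --- is a genuine issue, but it belongs two theorems later, and the paper resolves it there by a localization/scaling/partition-of-unity argument rather than by the naive global rescaling you gesture at.
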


In addition, another conclusion of the work in \cite{ST}, which is used as an intermediate step 
in the proof of the theorem above, is that the linearized problem around the solutions in Theorem~\ref{t:ST} is well-posed in a range of Sobolev spaces,
and almost lossless Strichartz estimates hold for them.
Precisely, we have the following:

\begin{theorem}[\cite{ST}]\label{t:ST-lin}
Let $\bu$ be a solution for \eqref{ST-eqn} in the time interval $[0,1]$ as in Theorem~\ref{t:ST}. Then the linear equation 
\begin{equation}\label{box-g-gen}
\left\{
 \begin{aligned}
 g^{\alpha \beta}(\bu) \partial_\alpha \partial_\beta v = 0 \\
 v[0] = (v_0,v_1) 
 \end{aligned}
 \right.
 \end{equation}
is well-posed in $\H^r$ the same time interval  for $1 \leq r \leq s_1+1$, and the solutions satisfy the uniform and Strichartz estimates \eqref{Str-hom} for the same range of $r$. 
\end{theorem}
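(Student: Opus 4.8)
\textbf{Proof plan for Theorem~\ref{t:ST-lin}.} The statement asserts well-posedness in $\H^r$ for $1 \le r \le s_1+1$ together with uniform and Strichartz estimates for the linear equation \eqref{box-g-gen} attached to a solution $\bu$ of \eqref{ST-eqn}. Since this is precisely one of the components of the work in \cite{ST}, the plan is not to redo that analysis from scratch but to recall its structure and assemble the conclusion from the two pieces already available: the short-time/semiclassical Strichartz machinery of \cite{ST} (wave packet parametrices at the sharp regularity level) and the elementary functional-analytic facts collected in Section~\ref{s:Strichartz}. First I would record that the metric $g(\bu)$ inherits from Theorem~\ref{t:ST} the regularity $\partial g \in L^1 L^\infty$ (in fact better, through the Strichartz bounds \eqref{se2}, \eqref{se3}), so that Theorem~\ref{t:easy-wp} already gives $\H^r$ well-posedness in the classical range, and energy functionals $E^r$ exist with control parameter $B(t) = \|\partial g(t)\|_{L^\infty} \in L^1$; this handles the ``uniform estimate'' part of \eqref{Str-hom}, namely the $L^\infty H^r$ component, for all $r$ in the stated range.

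The substance is the Strichartz estimate $\|\bD^{r - \frac{n-1}{2} - \delta} v\|_{L^2 L^\infty} \lesssim \|v[0]\|_{\H^r}$ (respectively the $L^4 L^\infty$ version in $2d$). The plan here is to follow \cite{ST} in two stages. Stage one: prove the estimate on small semiclassical time intervals $T_k = 2^{-\alpha k}$ associated to each dyadic frequency $2^k$, where on such an interval the wave packet parametrix construction of \cite{ST} applies; the key input is that $\bu$ solves \eqref{ST-eqn}, so the metric enjoys wave-packet coherence (the metric itself is a solution to a nonlinear wave equation), which is exactly what allows the parametrix to be essentially lossless at the regularity threshold \eqref{reg-s2}, \eqref{reg-s3}. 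Stage two: sum the short-time estimates over the $\sim 2^{\alpha k}$ intervals partitioning $[0,1]$, at each fixed frequency $2^k$, picking up a loss of $2^{\alpha k \cdot (1/2)}$ from the $\ell^2$-to-$\ell^1$ passage in interval summation; this loss is absorbed into the $\bD^{-\delta}$ factor by choosing $\alpha$ small relative to the gap between $s_1$ and its lower bound. The fact that the estimate holds for the full range $1 \le r \le s_1+1$ rather than just $r = s_1$ follows because the parametrix bounds are frequency-localized and the metric regularity $\partial g \in L^1 L^\infty$ lets one conjugate by $\bD^{r-1}$ perturbatively, exactly as in Step~2 of the proof of Proposition~\ref{p:Str-allr}; the upper endpoint $r = s_1+1$ is the natural one since $g(\bu)$ is one derivative smoother than $\bu$.

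Well-posedness in $\H^r$ (existence, uniqueness, continuous dependence) then follows by the standard combination of the energy estimate $\frac{d}{dt} E^r(v) \lesssim B(t) E^r(v)$ with $B \in L^1$ for existence and uniqueness, upgraded by the Strichartz bounds which are propagated along the approximating sequence; continuous dependence comes from the linearity of the equation together with the a priori bounds. I would also note the Duhamel extension \eqref{Str-hom-ext} to cover inhomogeneous source terms if needed, though the statement as given is homogeneous. \textbf{The main obstacle} is Stage one: the construction and control of the wave packet parametrix at the sharp regularity, which is the technical heart of \cite{ST} and cannot be shortcut — here one genuinely relies on the structural assumption that $\bu$ solves \eqref{ST-eqn} (not merely that $g$ has Sobolev regularity), since for generic $C^\sigma$ metrics with $\sigma < 2$ such estimates fail by the counterexamples of \cite{SS-counter, ST-counter}. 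The interval-summation step and the extension to the range of $r$ are, by contrast, routine given the frequency-localized short-time bounds.
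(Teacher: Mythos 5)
The paper does not prove Theorem~\ref{t:ST-lin} at all; it is quoted verbatim (with the tag ``[\cite{ST}]'') and used as a black box, with the surrounding text only adding two clarifying remarks (that the assumption $g^{00}=-1$ can be removed by dividing through, and that \eqref{box-g-gen} is not literally the linearized equation but is preferred in \cite{ST} because it allows a larger range of $r$). Your proposal correctly recognizes this — ``the plan is not to redo that analysis from scratch'' — so in that sense you and the paper agree: the content is simply an invocation of [ST], and the technical heart (the wave packet parametrix at critical regularity, powered by the wave-packet coherence of a metric which itself solves a nonlinear wave equation) is exactly the piece that cannot be shortcut, as you correctly flag.

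Where your summary of the [ST] mechanism goes astray is your Stage two. Summing short-time Strichartz bounds over $\sim 2^{\alpha k}$ intervals of length $T_k = 2^{-\alpha k}$, with a loss of $T_k^{-1/2}$ absorbed into $\bD^{-\delta}$, is precisely the \emph{older} approach of \cite{BC,T:nlw1,BC1,T:nlw3}, which yields Strichartz estimates \emph{with a fixed loss of derivatives} determined by $\alpha$; the introduction of the present paper explicitly distinguishes this from the Smith–Tataru argument, whose whole point is to be \emph{almost lossless}. The $\delta$ in the $S_{ST}^r$ norms is an arbitrarily small parameter and does not arise from an $\ell^2$-to-$\ell^1$ interval-summation penalty; rather, in \cite{ST} the parametrix is propagated coherently across the full unit time interval, with a more delicate (roughly logarithmic) time decomposition, and the small loss comes from the parametrix error terms. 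If one actually ran the computation in your Stage two with $\alpha$ tied to the metric's $C^\sigma$ regularity, one would land on a fixed fractional loss (of the kind the present paper does accept for $S_{AIT}$, where lossy interval summation \emph{is} the mechanism — see the ``Long time Strichartz estimates'' step in Section~\ref{s:final}), not an arbitrarily small one. So your Stage two describes how $S_{AIT}$-type bounds are produced in this paper, not how $S_{ST}$ is produced in \cite{ST}.

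One smaller inaccuracy: the upper endpoint $r = s_1+1$ is not because ``$g(\bu)$ is one derivative smoother than $\bu$.'' Since $g=g(\bu)$ is an algebraic (smooth) function of $\bu$, it lives at the same Sobolev level as $\bu$, namely $\H^{s_1}$. The bound $r\leq s_1+1$ comes from differentiating the equation \eqref{box-g-gen}: conjugating by $\bD^{r-1}$ is perturbative as long as $r-1\leq s_1$, since that is the maximal number of derivatives available on the coefficients.
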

 We note that in \cite{ST} it is also assumed that $g^{00}= -1$, akin to our metric $\tg$; but it is clear that such an assumption is not needed in the above theorems, as one can simply divide the equation by $g^{00}$.

We also remark that the equation \eqref{box-g-gen} is not the same as the linearized equation. The reason \eqref{box-g-gen} is preferred in \cite{ST} is 
the extended upper bound for $r$. It is also noted in \cite{ST} that for a range of $r$ with a lower upper bound, the conclusion of the last theorem is also valid 
for the full linearized equation; this is a straightforward perturbative argument. From below, the Sobolev exponent $r = 1$ suffices in dimension $n\geq 3$ in \cite{ST}, though 
it is also clear that this is not optimal.
Indeed, for dimension $n = 2$ the above result is extended in \cite{ST} to the range $\frac34 \leq r \leq s_1+1$, and the linearized equation is  shown to be well-posed in $\H^\frac34$; see \cite[Lemma~A4]{ST}; the same method 
also works in higher dimension.

We also remark that if the linearized equation is in divergence form, (which can be arranged in the present paper, see \eqref{hg-lin}), then, by duality,
(forward/backward) well-posedness in $\H^r$ implies (backward/forward) well-posedness in $\H^{1-r}$, with the center point at $r = \frac12$.
This motivates why, in the context of the present paper,  it is easiest to study the linearized equation exactly in $\H^\frac12$. Unfortunately our argument runs 
into a technical obstruction in dimension $n=2$, which is why we make a slight adjustment there and work instead in $\H^\frac58$.

To summarize, in the present paper we will not need directly the conclusion of Theorem~\ref{t:ST-lin}, but rather a minor variation of it where we also consider
the divergence form equation and its associated paradifferential flow,
and we lower the range for $r$ in order to include the space $\H^\frac12$ ($\H^{\frac58}$ in dimension two).

In the proof of the main result of this paper, we will need to use this result for solutions which are not small in $\H^s$, so we cannot apply it directly.
Instead, we will seek to rephrase it and use it in a large data setting via 
a scaling argument.

The difficulty we face is that rescaling  keeps homogeneous Sobolev norms unchanged, rather than the inhomogeneous ones.  A first step in this direction is to consider smooth solutions, but which may be large at low frequency:

\begin{theorem}\label{t:smooth}
Consider the problem \eqref{ST-eqn} with initial data satisfying 
\begin{equation}\label{reg-data-hom}
\| \bu[0]\|_{\dot \H^N \cap \dot \H^{s_c}} + \| \bu(0)\|_{L^\infty}  \ll 1.   
\end{equation}
Then the solution exists up to time $1$, and satisfies the uniform bound
\begin{equation}\label{reg-point}
\| \bu\|_{L^\infty}  \ll 1.   
\end{equation}
and the Sobolev bound
\begin{equation}\label{reg-sobolev}
\| \bu\|_{L^\infty(0,1;\dot \H^N \cap \dot \H^{s_c})} \lesssim   \| \bu[0]\|_{\dot \H^N + \dot \H^{s_c}}.
\end{equation}

In addition,
\begin{equation}
\| \bu[\cdot]] \|_{L^\infty(0,1; \H)} \lesssim \|\bu[0]\|_{\H}    
\end{equation}
whenever the right hand side is finite.
\end{theorem}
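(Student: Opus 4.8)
\textbf{Proof plan for Theorem~\ref{t:smooth}.}

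The plan is to obtain the solution in Theorem~\ref{t:smooth} as a rescaling of solutions produced by Theorem~\ref{t:ST}, combined with a standard bootstrap to propagate the homogeneous and inhomogeneous norms. First I would fix a solution $\bu$ to \eqref{ST-eqn} with data as in \eqref{reg-data-hom}, and use finite speed of propagation together with scaling to localize: for each dyadic frequency $2^k$, rescale by $\lambda = 2^k$ via $\bu_\lambda(t,x) = \bu(\lambda t, \lambda x)$. Because the equation \eqref{ST-eqn} is scale invariant with critical exponent $s_c = \tfrac n2$, the rescaled data has frequency support near $1$ and is small in $\H^{s_1}$ (here we combine the $\dot H^{s_c}$ smallness with the $\dot H^N$ smallness to control the inhomogeneous $\H^{s_1}$ norm at unit frequency, and the $L^\infty$ smallness to handle the very lowest frequencies). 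Applying Theorem~\ref{t:ST} on the time interval $[0,1]$ for the rescaled problem and undoing the scaling yields a solution on $[0, 2^{-k}]$ at frequency $2^k$, with the Strichartz bounds \eqref{se2}, \eqref{se3} on that short time scale. Summing these short-time Strichartz estimates over the dyadic pieces — using the slowly varying nature of frequency envelopes and Bernstein to convert between norms — then produces the uniform bound \eqref{reg-point} and the Strichartz-type control needed to run a bootstrap.

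Next I would set up the bootstrap for the homogeneous norms. Starting from \eqref{reg-point}, the coefficients $g^{\alpha\beta}(\bu)$ and $h^{\alpha\beta}(\bu)$ are smooth functions of $\bu$, hence uniformly bounded and Lipschitz; by Moser estimates (cf.\ Lemma~\ref{l:R}) they satisfy $\|g(\bu)\|_{\dot H^\sigma \cap L^\infty}, \|h(\bu)\|_{\dot H^\sigma \cap L^\infty} \lesssim \|\bu\|_{\dot H^\sigma \cap L^\infty}$ for $0 \le \sigma \le N$. I would then run the classical energy estimate for the quasilinear wave equation in $\dot H^\sigma$: differentiating and pairing against the appropriate time-like multiplier gives
\[
\frac{d}{dt} \|\bu[t]\|_{\dot\H^\sigma}^2 \lesssim \|\partial \bu(t)\|_{L^\infty} \, \|\bu[t]\|_{\dot\H^\sigma}^2 + \text{(lower order)},
\]
and the $L^\infty$ norm of $\partial \bu$ is controlled in $L^2_t$ (or $L^4_t$ in $2d$) by the Strichartz estimate just obtained, hence in $L^1_t$ on $[0,1]$ by Cauchy–Schwarz. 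Gronwall's inequality then closes the bootstrap and gives \eqref{reg-sobolev} for $\sigma = s_c$ and $\sigma = N$; interpolation covers the intermediate range. The same energy argument, run in $\H = \H^1$ rather than a homogeneous space (using that $|I| \le 1$ so inhomogeneous and homogeneous norms are comparable up to the $L^\infty$ and $\dot H^1$ pieces), yields the final inhomogeneous bound $\|\bu[\cdot]\|_{L^\infty(0,1;\H)} \lesssim \|\bu[0]\|_{\H}$ whenever the right side is finite.

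The main obstacle I anticipate is the interface between scaling and the inhomogeneous function spaces: Theorem~\ref{t:ST} is stated with inhomogeneous $\H^{s_1}$ data, but rescaling is only clean for homogeneous norms, so the localization step requires care in separating the genuinely high-frequency behavior (handled by scaling plus \cite{ST}) from the low-frequency and mean behavior (handled by the $L^\infty$ hypothesis and a crude energy estimate). Concretely, one must check that the rescaled data stays small in $\H^{s_1}$ uniformly in $k$, which uses that $s_c < s_1 < N$ so that the inhomogeneous norm at unit frequency is dominated by a geometric combination of the $\dot H^{s_c}$ and $\dot H^N$ norms — this is the place where the gap between $s_c$ and $N$, and the smallness in \eqref{reg-data-hom}, are both essential. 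Once the short-time Strichartz estimates are in hand the rest is routine energy-estimate bookkeeping, of exactly the type described in Section~\ref{s:Strichartz} and used throughout the paper.
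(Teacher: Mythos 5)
Your scaling argument has a genuine gap, and the detour through Strichartz estimates is both unnecessary and, as set up, doesn't work. The immediate problem is the claim that the rescaled data $\bu_\lambda(0,\cdot)=\bu(0,\lambda\cdot)$ with $\lambda=2^k$ is small in $\H^{s_1}$. Homogeneous Sobolev norms scale as $\|\bu_\lambda\|_{\dot H^\sigma}=\lambda^{\sigma-n/2}\|\bu\|_{\dot H^\sigma}$, so while the $\dot H^{s_c}$ piece is scale-invariant, the $\dot H^N$ piece grows like $2^{k(N-s_c)}$, and the $\H^{s_1}$ norm of $\bu_\lambda[0]$ grows like $2^{k(s_1-s_c)}$; the smallness hypothesis of Theorem~\ref{t:ST} fails for large $k$. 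The only way to fix this within your scheme is to truncate the data to frequency $\sim 2^k$ before rescaling — but then you are solving a different nonlinear problem, and there is no frequency-by-frequency superposition principle for \eqref{ST-eqn}. Separately, even granting the application of Theorem~\ref{t:ST}, undoing the scaling only produces a solution on $[0,2^{-k}]$, and you do not explain how ``summing over dyadic pieces'' yields existence on $[0,1]$. (The scaling + localization mechanism you are describing is closer to what the paper does for Theorem~\ref{t:ST-large}, not for Theorem~\ref{t:smooth}, and there it is organized quite differently: one physical-space localization for fixed $M$, not a sum over Fourier dyadic pieces.)

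The paper's proof is much simpler and avoids Strichartz entirely. Since the data is small in $\dot H^N \cap \dot H^{s_c}\cap L^\infty$ with $N$ large, after subtracting a constant and localizing in physical space the data is small in the inhomogeneous space $\H^N$, so existence of a regular solution up to time $1$ is classical (Hughes–Kato–Marsden). To propagate the homogeneous Sobolev bounds one reduces to the paradifferential flow and conjugates by $\bD^\sigma$, reducing $\dot\H^\sigma$ bounds to $\H$ bounds. The control parameter $\|\partial\bu\|_{L^1_t L^\infty}$ needed to close Gronwall's inequality is supplied directly by Sobolev embedding from the $\dot H^N\cap \dot H^{s_c}$ bound (since $N$ is large and $s_c=n/2$), with no recourse to dispersive estimates. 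Your second paragraph — the energy estimate and Gronwall bookkeeping — is sound, but you should feed it with the Sobolev bound rather than a Strichartz bound, and you can then drop the rescaling machinery of the first paragraph entirely.
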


\begin{proof}
Locally, after subtracting a constant, the data is small in $\H^N$ so the existence of 
regular solutions is classical. It remains to establish energy estimates in homogeneous 
Sobolev norms. The problem reduces to the case of the paradifferential flow, and, by conjugation with a power of $\bD$, to bounds in $\H$ which are straightforward. 
\end{proof}

A second step is the following variation of Theorem~\ref{t:ST},
where we consider a small $\H^{s_1}$ perturbation of a small and smooth data:

\begin{theorem}\label{t:ST+}
Consider the problem \eqref{ST-eqn} with initial data $\bu[0]$ of the form
\begin{equation}
\bu[0] = \bu^{lo}[0] + \bu^{hi}[0]    ,
\end{equation}
where  the two components satisfy
\begin{equation}\label{reg-data+}
\| \bu^{lo}[0]\|_{\H^{N}} \ll 1, \qquad    \| \bu^{hi}[0]\|_{\H^{s_1}} \leq \epsilon \ll 1.
\end{equation}
Then the solution $u$ exists on the time interval $[0,1]$, and satisfies the following Strichartz estimates
\begin{equation}\label{se2+}
\| \bD^{\delta_0}\partial (\bu-\bu^{lo})\|_{L^4 L^\infty} \lesssim \epsilon , \qquad n=2   
\end{equation}
respectively
\begin{equation}\label{se3+}
\| \bD^{\delta_0}\partial (\bu-\bu^{lo})\|_{L^2 L^\infty} \lesssim \epsilon , \qquad n \geq 3.
\end{equation}
with a small $\delta_0 > 0$.
\end{theorem}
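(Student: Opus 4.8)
The strategy is to reduce Theorem~\ref{t:ST+} to Theorem~\ref{t:ST} combined with Theorem~\ref{t:smooth} by viewing $\bu$ as a perturbation of the smooth, low-frequency solution $\bu^{lo}$. First I would solve the auxiliary problem \eqref{ST-eqn} with the small smooth data $\bu^{lo}[0]$ alone; by Theorem~\ref{t:smooth} this produces a global-in-$[0,1]$ smooth solution $\bu^{lo}$ satisfying the uniform bound \eqref{reg-point} and the homogeneous Sobolev bounds \eqref{reg-sobolev}, and in particular $\bu^{lo}$ is bounded in $L^\infty(0,1;\H^N)$ with constant $\lesssim 1$. Then I would set $\bw = \bu - \bu^{lo}$ as the new unknown, where $\bu$ is the (not yet constructed) solution with data $\bu^{lo}[0]+\bu^{hi}[0]$, so that $\bw[0] = \bu^{hi}[0]$ has $\H^{s_1}$ norm $\leq \epsilon \ll 1$.

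The heart of the argument is that $\bw$ solves a perturbed version of \eqref{ST-eqn}. Expanding $g^{\alpha\beta}(\bu^{lo}+\bw)$ and $h^{\alpha\beta}(\bu^{lo}+\bw)$ around $\bu^{lo}$ and using that $g,h$ are smooth functions of their arguments, $\bw$ satisfies a quasilinear wave equation of the form
\[
g^{\alpha\beta}(\bu^{lo}+\bw)\partial_\alpha\partial_\beta \bw = \tilde h^{\alpha\beta}(\bu^{lo},\bw)\partial_\alpha \bw\,\partial_\beta \bw + G^{\alpha\beta}(\bu^{lo},\bw)\partial_\alpha \bu^{lo}\,\partial_\beta \bw + F(\bu^{lo},\bw),
\]
where the coefficients depend smoothly on $\bu^{lo}$ (hence are smooth and bounded, with all the good estimates inherited from Theorem~\ref{t:smooth}) and $F$ collects the terms quadratic in $\partial\bu^{lo}$, which is smooth and can be absorbed as a smooth, essentially lower-order source. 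The key structural point is that the metric $g^{\alpha\beta}(\bu^{lo}+\bw)$ differs from $g^{\alpha\beta}(\bu^{lo})$ only by $O(\bw)$, and the semilinear terms still have the null-free quadratic-gradient structure required by \cite{ST}. I would then run the Smith--Tataru fixed-point/iteration scheme of Theorem~\ref{t:ST} directly for $\bw$, treating $\bu^{lo}$ as a fixed smooth background: since $\bu^{lo}$ already satisfies Strichartz and energy bounds with constant $\lesssim 1$ (those are classical for smooth metrics, or follow from Theorem~\ref{t:ST-lin} applied to the smooth solution), the iteration closes for $\bw$ in the ball of size $\epsilon$ in $\H^{s_1}$ on $[0,1]$, yielding $\| \bD^{\delta_0}\partial\bw\|_{L^2 L^\infty}\lesssim \epsilon$ (respectively $L^4 L^\infty$ for $n=2$), which is exactly \eqref{se3+} (resp.\ \eqref{se2+}). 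The existence of $\bu = \bu^{lo}+\bw$ on $[0,1]$ follows from the existence of $\bw$.

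The main obstacle is verifying that the Smith--Tataru machinery applies to the \emph{perturbed} equation for $\bw$ uniformly in the smooth background $\bu^{lo}$: one must check that the parametrix construction, the wave-packet coherence estimates, and the bootstrap in \cite{ST} only use (i) that the metric is a small $\H^{s_1}$ perturbation of a smooth metric with controlled higher Sobolev norms, and (ii) the semilinear source has the admissible quadratic structure --- both of which hold here, but require care because $\bu^{lo}$ is only small in the \emph{homogeneous} norms $\dot\H^N\cap\dot\H^{s_c}$ and in $L^\infty$, not necessarily in $\H^{s_1}$. The resolution is that all the estimates in \cite{ST} that actually get used are invariant under adding a bounded smooth background (the frequency-localized analysis only ever sees $\partial \bu^{lo}$, which lies in $\dot H^{s_1-1}\subset$ the relevant Strichartz spaces with small norm by \eqref{reg-sobolev} and interpolation), so the constants stay uniform. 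A secondary, more routine point is that the source term $F(\bu^{lo},\bw)$ involving $\partial\bu^{lo}\,\partial\bu^{lo}$ must be placed in $L^1 H^{s_1-1}$ (or the appropriate dual Strichartz space) with small norm; this is immediate from the smoothness and uniform bounds on $\bu^{lo}$ together with $|I|\leq 1$.
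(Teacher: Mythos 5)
Your route is genuinely different from the paper's, and you have overlooked a simplification that makes the paper's proof almost immediate and avoids exactly the heavy reverification you flag as the ``main obstacle''. The key observation is that under the hypotheses \eqref{reg-data+} the \emph{combined} data $\bu[0] = \bu^{lo}[0] + \bu^{hi}[0]$ is still small in $\H^{s_1}$ (since $\|\bu^{lo}[0]\|_{\H^{N}}\ll 1$ with $N\ge s_1$), so Theorem~\ref{t:ST} applies directly and already gives existence of $\bu$ on $[0,1]$; there is no fixed-point iteration left to run. The only thing to prove is the improved $\epsilon$-scaling in the Strichartz estimate for $\bu^{hi}=\bu-\bu^{lo}$. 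For that, the paper does not re-run the nonlinear scheme. It writes a \emph{linear} wave equation for $\bu^{hi}$ with metric $g^{\alpha\beta}(\bu)$ --- the already-constructed metric, to which Theorem~\ref{t:ST-lin} applies as a black box --- and a source
\[
f^{hi} := -\bigl(g^{\alpha\beta}(\bu) - g^{\alpha\beta}(\bu^{lo})\bigr)\partial_\alpha\partial_\beta\bu^{lo} + \bigl(\text{similar }h\text{ differences}\bigr),
\]
which vanishes when $\bu^{hi}=0$ and satisfies $\|f^{hi}(t)\|_{H^{s_1-1}}\lesssim\|\bu^{hi}(t)\|_{H^{s_1}}$; a Gronwall/bootstrap then produces the estimates \eqref{se2+}--\eqref{se3+} with constant $\epsilon$. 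By contrast, your proposal re-runs the full Smith--Tataru nonlinear iteration for the difference $\bw$ treating $\bu^{lo}$ as a background, which requires re-verifying the parametrix construction, wave-packet coherence and bootstrap of \cite{ST} under a large smooth perturbation of the metric; you correctly identify this as delicate, but then only assert it goes through. That assertion is exactly the nontrivial work the paper's argument avoids by keeping the metric $g(\bu)$ (not $g(\bu^{lo})$) and invoking the linear Theorem~\ref{t:ST-lin}. If you want to complete your version, you would have to actually carry out that reverification; the paper's route sidesteps it entirely and is the intended argument.
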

We remark that the solutions in this second theorem are still covered by Theorem~\ref{t:ST}.
The only difference is that the constant in the Strichartz bound depends only on 
the $u^{hi}[0]$ bound.

\begin{proof}
This follows by a direct application of the results in Theorem~\ref{t:ST} and 
Theorem~\ref{t:ST-lin}. We write an equation for $u^{hi}=u-u^{lo}$,
\[
g^{\alpha \beta}(u) \partial_\alpha \partial_\beta u^{hi} = 
- (g^{\alpha\beta}(u) - g^{\alpha\beta}(u^{lo})) \partial_\alpha \partial_\beta u_{lo} := f^{hi},
\]
where the source term $f^{hi}$ can be estimated at fixed time by 
\[
\| f^{hi}\|_{H^{s_1-1}} \lesssim \|u^{hi} \|_{H^{s_1}},
\]
and thus it is perturbative. Then we apply the Strichartz estimates in Theorem~\ref{t:ST-lin} to $u^{hi}$, and the desired conclusion follows.

\end{proof}

Now we consider the large data problem, where we show local well-posedness by a scaling argument.  The price to pay will be that the time interval for which we have the solutions will be  shorter. Precisely, we will show that

\begin{theorem}
\label{t:ST-large}
For any $s_1$  as in \eqref{reg-s2}, \eqref{reg-s3} 
there exists $\delta_0 > 0$ so that the following holds: 
For any $M > 0$ and any solution  $\bu$ to the problem \eqref{NLW-gen} with initial data satisfying 
\begin{equation}\label{Mdata}
\| \bu[0]\|_{\dot \H^{s_1}} \ll M, \qquad \|\bu[0]\|_{\dot \H^{s_c}} \ll 1.   
\end{equation}
we have:

a) The solution exists up to time $T_M$ given by
\begin{equation}\label{TM}
T_M^\sigma = M^{-1}, \qquad  \sigma = s_1-s_c,
\end{equation}
with uniform bounds 
\begin{equation}\label{Msoln}
\| \bu[\cdot]\|_{C(0,T_M;\H^{s_1})} \lesssim M, \qquad 
\|\bu[\cdot]\|_{C(0,T_M;\dot \H^{s_c})} \lesssim 1,
\end{equation}
as well as 
\begin{equation}\label{Mdata-low}
\| \bu[\cdot]\|_{C(0,T_M;\H)} \lesssim 
\|\bu[0]\|_{\H},
\end{equation}
whenever the right hand side is finite.

b) The solution $\bu$ satisfies the following 
Strichartz estimates in $[0,T_M]$:
\begin{equation}\label{Str-M2}
\|\langle T_M D'\rangle^{\delta_0}\partial \bu\|_{L^4 L^\infty} \lesssim T_M^{-\frac34}, \qquad n=2  , 
\end{equation}
respectively
\begin{equation}\label{str-M3}
\| \langle T_M D'\rangle^{\delta_0}\partial \bu\|_{L^2 L^\infty} \lesssim T_M^{-\frac12}, \qquad n \geq 3.
\end{equation}

c) Furthermore, the homogeneous Strichartz estimates \eqref{Str-hom} also hold in $\H^r$ for 
the associated linear equations \eqref{box-g-gen}, on the same time intervals for $r \in [1,s_1]$.

\end{theorem}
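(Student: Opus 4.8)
The plan is to prove Theorem~\ref{t:ST-large} by a rescaling argument that converts the small-$\dot\H^{s_c}$, large-$\dot\H^{s_1}$ data of \eqref{Mdata} into data which is small in the relevant inhomogeneous Sobolev spaces, so that Theorems~\ref{t:smooth}, \ref{t:ST+} and \ref{t:ST-lin} apply on the unit time scale. Concretely, set $\lambda = T_M$ and define the rescaled solution $\bu_\lambda(t,x) = \bu(\lambda t, \lambda x)$, which solves the same equation \eqref{ST-eqn} since the problem is scale invariant. Under this rescaling $\dot\H^{s_c}$ is invariant, so $\|\bu_\lambda[0]\|_{\dot\H^{s_c}} \ll 1$ as before, while $\|\bu_\lambda[0]\|_{\dot\H^{s_1}} = \lambda^{s_1-s_c}\|\bu[0]\|_{\dot\H^{s_1}} = \lambda^\sigma M \cdot (\text{small}) \ll 1$ by the choice \eqref{TM} of $T_M$. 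The point is that homogeneous norms scale cleanly; the inhomogeneous norms do not, which is exactly the technical nuisance flagged in the section introduction, and is handled by splitting the data into low and high frequency pieces as in Theorem~\ref{t:ST+}.

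The key steps, in order, are as follows. First, I would perform the frequency decomposition $\bu_\lambda[0] = \bu_\lambda^{lo}[0] + \bu_\lambda^{hi}[0]$, say at frequency $1$: the low piece is bounded and smooth at every homogeneous Sobolev level (controlled by $\|\bu_\lambda[0]\|_{\dot\H^{s_c}}$ together with $\|\bu_\lambda(0)\|_{L^\infty}$, which is again scale invariant), and the high piece is small in $\H^{s_1}$ because $\|\bu_\lambda^{hi}[0]\|_{H^{s_1}} \lesssim \|\bu_\lambda[0]\|_{\dot\H^{s_1}} \ll 1$. Second, apply Theorem~\ref{t:smooth} to the low-frequency data to obtain a global-in-$[0,1]$ smooth solution $\bu_\lambda^{lo}$ with the homogeneous Sobolev bounds \eqref{reg-sobolev} and the pointwise bound \eqref{reg-point}. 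Third, apply Theorem~\ref{t:ST+} with this $\bu_\lambda^{lo}$ as the smooth background and $\bu_\lambda^{hi}[0]$ as the small perturbation to conclude that $\bu_\lambda$ exists on $[0,1]$ and satisfies the Strichartz bounds \eqref{se2+}, \eqref{se3+} for $\partial(\bu_\lambda - \bu_\lambda^{lo})$; combined with the Strichartz estimate for $\bu_\lambda^{lo}$ (which comes from Theorem~\ref{t:ST-lin} applied around the smooth small-data solution, or directly since $\bu_\lambda^{lo}$ is smooth and small) this gives a Strichartz bound for $\partial\bu_\lambda$ itself of the form $\|\bD^{\delta_0}\partial\bu_\lambda\|_{L^qL^\infty}\lesssim 1$. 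Fourth, undo the rescaling: rewrite all the bounds for $\bu_\lambda$ as bounds for $\bu$ on $[0,T_M]$, tracking the scaling weights. The $\H^{s_1}$ and $\dot\H^{s_c}$ bounds become \eqref{Msoln}, the low-norm bound becomes \eqref{Mdata-low} (using that $\dot\H\cap\dots$ estimates transfer by the last assertion of Theorem~\ref{t:smooth}), and the Strichartz bound $\|\bD^{\delta_0}\partial\bu_\lambda\|_{L^qL^\infty}\lesssim 1$ becomes, after changing variables in both time and space, $\|\langle T_M D'\rangle^{\delta_0}\partial\bu\|_{L^qL^\infty}\lesssim T_M^{-1/q'}$, i.e.\ \eqref{Str-M2} for $q=4$ and \eqref{str-M3} for $q=2$; here the operator $\bD$ turns into $\langle T_M D'\rangle$ precisely because frequency $2^k$ for $\bu_\lambda$ corresponds to frequency $2^k/\lambda$ for $\bu$. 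Fifth, part (c) follows by applying the same rescaling to the linear equation \eqref{box-g-gen}: Theorem~\ref{t:ST-lin} gives well-posedness and homogeneous Strichartz estimates \eqref{Str-hom} for $g^{\alpha\beta}(\bu_\lambda)$ in $\H^r$ for $1\le r\le s_1+1$ on $[0,1]$, and rescaling back transfers these to $[0,T_M]$ for $r\in[1,s_1]$ (the shrinkage of the upper range by one is a harmless consequence of using inhomogeneous norms after scaling, and $r\in[1,s_1]$ is all that is needed).

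The main obstacle, and the only place where real care is needed, is the bookkeeping of inhomogeneous versus homogeneous norms under the rescaling: since $\lambda = T_M \le 1$, low frequencies are \emph{amplified} when passing to $\bu_\lambda$, so one must check that the low-frequency part of the data stays controlled in the \emph{homogeneous} scale-invariant norm $\dot\H^{s_c}$ (it does, automatically) rather than hoping for control in $\H^{s_c}$ (which would fail). This is exactly why the statement is phrased with homogeneous norms on the data and why Theorem~\ref{t:smooth} was set up with $\dot\H^N\cap\dot\H^{s_c}$ rather than $\H^N$; the decomposition $\bu^{lo}+\bu^{hi}$ must be chosen so that $\bu^{lo}$ absorbs \emph{all} the low-frequency mass and $\bu^{hi}$ is genuinely small in an inhomogeneous norm at a level strictly above scaling. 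Once this split is in place, every remaining step is a mechanical change of variables plus a citation of the already-established unit-time results, so there is no further analytic difficulty — the content of the theorem is entirely in recognizing that $T_M^\sigma = M^{-1}$ is the exact threshold making the rescaled high-frequency data small.
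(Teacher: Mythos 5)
Your rescaling reduction and frequency split match the paper's opening moves, but there is a genuine gap at the heart of the argument: after rescaling and writing $\bu_\lambda[0]=\bu_\lambda^{lo}[0]+\bu_\lambda^{hi}[0]$, you cannot apply Theorem~\ref{t:smooth} to $\bu_\lambda^{lo}$, nor Theorem~\ref{t:ST+} with $\bu_\lambda^{lo}[0]$ as the smooth background, because neither hypothesis is verifiable from \eqref{Mdata}. Theorem~\ref{t:smooth} requires $\|\bu_\lambda^{lo}(0)\|_{L^\infty}\ll 1$ and Theorem~\ref{t:ST+} requires $\|\bu_\lambda^{lo}[0]\|_{\H^N}\ll 1$ in the \emph{inhomogeneous} norm. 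Both fail: the low-frequency piece is small in $\dot\H^{s_c}\cap\dot\H^N$, but $s_c=\tfrac n2$ sits exactly at the endpoint where Sobolev embedding into $L^\infty$ breaks down (Bernstein yields $\|P_{<0}f\|_{L^\infty}\lesssim\sum_{k<0}\|P_kf\|_{\dot H^{n/2}}$, an $\ell^1$ sum, while $\dot H^{n/2}$ controls only $\ell^2$), and the $L^2$ content at frequencies $\ll 1$ is wholly uncontrolled by homogeneous norms. Your parenthetical ``$\|\bu_\lambda(0)\|_{L^\infty}$, which is again scale invariant'' treats this quantity as given, but it is not part of \eqref{Mdata}, and its smallness is precisely the missing ingredient.

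What closes the gap in the paper's own proof is a spatial localization by finite speed of propagation: one partitions the initial slice into unit cubes $Q$, subtracts the local average $\bar\bu^{lo}_{0,Q}$, and truncates. Because $\nabla\bu^{lo}_0$ \emph{is} small in $L^\infty$ (the extra derivative restores $\ell^1$ summability in the low-frequency Bernstein sum), the subtracted-and-truncated data $\bu_Q[0]$ is genuinely small in $\H^N$ and in $L^\infty$; the local solutions produced by Theorems~\ref{t:smooth} and \ref{t:ST+} then coincide with $\bu$ on smaller cubes up to time $1$, and the square-summable estimates are reassembled via a partition of unity. This step is not optional bookkeeping — without the subtraction of local constants there is no way to pass from homogeneous control of the data to the inhomogeneous-norm and $L^\infty$ smallness that the unit-scale results demand. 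You flag the homogeneous-vs.-inhomogeneous tension as ``the only place where real care is needed,'' which is the right diagnosis, but the proposal as written does not supply the mechanism that resolves it.
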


\begin{proof}
As stated, the result is invariant with respect to scaling. Precisely $M$
plays the role of a scaling parameter, and by scaling we can set it to $1$.

It remains to prove the result for $M=1$ in which case $T_M=1$. In a nutshell, the idea of the proof is to use the finite proof of propagation to  localize the problem and, by scaling, to reduce it to the case when Theorems~\ref{t:smooth},\ref{t:ST+} can be applied.  To fix the notations, we will consider the case $n \geq 3$ in what follows; the two dimensional case is identical after obvious changes in notations.

 On the Fourier side we split the initial data into two components, 
\[
\bu[0] = \bu^{lo}[0] + \bu^{hi}[0], \qquad \bu^{lo}[0] = P_{<0} \bu[0],
\]
and we denote by $\bu$ and $\bu^{lo}$ the corresponding solutions.

On the other hand on the physical side we partition the initial time slice $t=0$ into cubes $Q$ of size $1$, and consider a partition of unity associated to the covering by $8Q$,
\[
1 = \sum \chi_Q,
\]
and define the localized initial data
\[
\bu_{Q}[0] = (\chi_Q(\bu_0 - \bar \bu^{lo}_{0,Q}), \chi_Q u_1), \qquad \bar \bu^{lo}_{0,Q} = \fint_Q \bu^{lo}_0\, dx,
\]
which agrees with $\bu[0]$ in $6Q$ up to a constant. The speed of propagation for solutions 
$\bu$ with $|\bu| \ll 1$ is close to $1$, therefore the corresponding solutions $w_Q$
agree with $\bu$ in $4Q$ (again, up to a constant) in $[0,1]$, assuming both exist up to this time.

Next we consider the existence and properties of the solutions $\bu_Q$ in the time interval
$[0,T_M]$. 
For $\bu_{Q}[0]$ we have a low-high decomposition, 
\[
\bu_{Q}[0] =
(\chi_{T_M^{-1} Q}(\bu^{lo}_0 - \bar \bu^{lo}_{0,Q}), \chi_{T_M^{-1}} Q \bu^{lo}_1)
+ \chi_{T_M^{-1} Q}(\bu^{hi}_0, Q \bu^{hi}_1)
:=\bu^{lo}_{Q}[0] + \bu^{hi}_{Q}[0] 
\]

Now we consider energy bounds for the initial data.
For $\bu^{lo}$ we have
\begin{equation}\label{tu-lo}
\| \bu^{lo} [0]\|_{\dot H^{s_c} \cap \dot \H^{N}}  \ll 1.
\end{equation}
Since $s_1-1 < n/2$, after localization this also implies that the low frequency components
satisfy
\begin{equation}
\| \bu^{lo}_Q [0]\|_{\H^{N}}  \ll 1.
\end{equation}
which is exactly as in Theorem~\ref{t:smooth}, respectively Theorem~\ref{t:ST+}.

On the other hand, for the high frequency bounds we have the almost orthogonality
relation
\begin{equation}
\sum_Q \|\bu_{Q}[0]\|_{\H^{s_1}}^2 \ll 1.  
\end{equation}
By Theorem~\ref{t:ST}, it follows that the solutions $\bu_Q$ exist up to time $1$,
and satisfy the Strichartz bounds
\begin{equation}
\| \bD^{\delta_0}\partial \bu_Q\|_{L^2 L^\infty} \lesssim 1.    
\end{equation}
Theorem~\ref{t:ST+} allows us to improve this to
\begin{equation}
\| \bu_Q - \bu_Q^{lo} \|_{L^\infty \H^{s_1}}+
\| \bD^{\delta_0}\partial (\bu_Q-\bu_Q^{lo})\|_{L^2 L^\infty} \lesssim  \|\bu^{hi}_{Q}[0]\|_{\H^{s_1}}.
\end{equation}

The solutions $\bu_Q$, respectively $\bu^{lo}_Q$ agree with $\bu$, $\bu^{lo}$ in $[0,1]\times 4\tilde Q$. Then we can recombine the $\bu_Q$ bounds using a partition of unity on the unit spatial scale. We obtain a $\bu$ bound,
namely
\begin{equation}
    \| \bu - \bu^{lo} \|_{L^\infty \H^{s_1}}+
\| \bD^{\delta_0}\partial (\bu-\bu^{lo})\|_{L^2 L^\infty} \lesssim \| \bu^{hi}_{Q}[0]\|_{\H^{s_1}}
\ll 1.
\end{equation}
On the other hand for $\bu_{lo}$ we have the bounds given by Theorem~\ref{t:smooth}. 

The energy bounds for $\bu-\bu^{lo}$ and $\bu^{lo}$ combined yield the desired energy bound \eqref{Msoln} in the theorem. In terms of the Strichartz bounds
\eqref{Str-M2}, we already have them for $\bu - \bu^{lo}$ so it remains to 
prove them for $\bu^{lo}$. But there we trivially use Sobolev embeddings
and Holder's inequality in time.

It remains to consider the Strichartz estimates for $\H^1$ solutions to the linearized equation. 
By the same finite speed of propagation argument as above, it suffices to prove them for the linearization around the localized solutions $\bu_Q$. But this follows by  Theorem~\ref{t:ST-lin}.
\end{proof}

To conclude this section we reinterpret the above result in the context of the minimal surface equation, exactly in the for it will be used in the last section.
We keep the same notations, with the only change that now $s_c = \frac{n}2+1$:
 
\begin{theorem}
\label{t:ST-large-ms}
For any $s_1$  as in \eqref{reg-s2}, \eqref{reg-s3} 
there exists $\delta_0 > 0$ so that the following holds: 
For any $M > 0$ and any solution  $u$ to the problem \eqref{msf} with initial data satisfying 
\begin{equation}\label{Mdata-ms}
\| u[0]\|_{\dot \H^{s_1}} \ll M, \qquad \|u[0]\|_{\dot \H^{s_c}} \ll 1.   
\end{equation}
We have:

a) The solution exists up to time $T_M$ given by
\begin{equation}\label{TM-ms}
T_M^\sigma = M^{-1}, \qquad  \sigma = s_1-s_c,
\end{equation}
with uniform bounds 
\begin{equation}\label{Msoln-ms}
\| u[\cdot]\|_{C(0,T_M;\H^{s_1})} \lesssim M, \qquad 
\|u[\cdot]\|_{C(0,T_M;\dot \H^{s_c})} \lesssim 1.
\end{equation}
as well as 
\begin{equation}\label{Mdata-low-ms}
\| u[\cdot]\|_{C(0,T_M;\H)} \lesssim 
\|u[0]\|_{\H},
\end{equation}
whenever the right hand side is finite.

b) The solution $\bu$ satisfies the following 
Strichartz estimates in $[0,T_M]$:
\begin{equation}\label{Str-M2-ms}
\|\langle T_M D'\rangle^{\delta_0}\partial^2 u\|_{L^4 L^\infty} \lesssim T_M^{-\frac34}, \qquad n=2  , 
\end{equation}
respectively
\begin{equation}\label{str-M3-ms}
\| \langle T_M D'\rangle^{\delta_0}\partial^2u\|_{L^2 L^\infty} \lesssim T_M^{-\frac12}, \qquad n \geq 3.
\end{equation}

c) Furthermore, the homogeneous Strichartz estimates \eqref{Str-hom} also hold in $\H^r$ for 
the associated linear equations \eqref{box-g-gen}, on the same time intervals for $r \in [1,s_1]$.  Also the the full Strichartz estimates 
\eqref{Str-full} with $S= S_{ST}$ hold for the linear paradifferential equation hold in $\H^r$ on the same time intervals for all real $r$.

\end{theorem}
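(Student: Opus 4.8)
The plan is to deduce Theorem~\ref{t:ST-large-ms} from Theorem~\ref{t:ST-large} by applying the latter to the vector-valued function $\bu = \partial u$, which solves an equation of the form \eqref{ST-eqn}. First I would observe that if $u$ solves the minimal surface equation $g^{\alpha\beta}(\partial u)\partial_\alpha\partial_\beta u = 0$, then differentiating and using the formula \eqref{dg-dp} for $\partial g^{\alpha\beta}/\partial p_\gamma$ shows that each $\partial_\gamma u$ solves
\[
g^{\alpha\beta}(\bu)\partial_\alpha\partial_\beta (\partial_\gamma u) = \frac{\partial g^{\alpha\beta}}{\partial p_\delta}(\bu)\,\partial_\gamma\partial_\delta u\,\partial_\alpha\partial_\beta u = h^{\alpha\beta}_\gamma(\bu)\,\partial_\alpha\bu\,\partial_\beta\bu,
\]
which is exactly \eqref{ST-eqn} with $\bu = \partial u$, now with scaling exponent $s_c = n/2$ for the $\bu$-problem, matching $s_c = n/2 + 1$ for the $u$-problem. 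The constraint \eqref{Mdata-ms} on $u[0]$ translates directly into \eqref{Mdata} for $\bu[0]$, since $\|u[0]\|_{\dot\H^{s_1}} = \|\partial u[0]\|_{\dot\H^{s_1-1}}$ after shifting $s_1$ by one, and similarly for the $\dot\H^{s_c}$ norm (the constraint \eqref{time-like-data} guarantees that the metric is uniformly Lorentzian, so the hypotheses of Theorem~\ref{t:ST-large} on $g$ are met).

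Next I would transfer the conclusions. Parts (a) and (b) of Theorem~\ref{t:ST-large} applied to $\bu = \partial u$ give exactly \eqref{Msoln-ms}, \eqref{Mdata-low-ms} (after reintegrating in the lowest norm, which is harmless since the $\H$ norm controls $\partial u$ up to a constant that propagates by the equation) and the Strichartz bounds \eqref{Str-M2-ms}, \eqref{str-M3-ms}, since $\partial\bu = \partial^2 u$. One small point to check: the existence time $T_M$ in \eqref{TM} is $M^{-1/(s_1-s_c)}$, and since the shift by one unit affects both $s_1$ and $s_c$ equally, $\sigma = s_1 - s_c$ is unchanged under the translation, so \eqref{TM-ms} is literally \eqref{TM}. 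For part (c), the linear equations \eqref{box-g-gen} have coefficients $g^{\alpha\beta}(\bu) = g^{\alpha\beta}(\partial u)$, so the homogeneous Strichartz estimates for $\H^r$, $r \in [1,s_1]$, transfer verbatim from Theorem~\ref{t:ST-large}(c).

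The one genuinely new piece is the last sentence of (c): the full Strichartz estimates \eqref{Str-full} with $S = S_{ST}$ for the linear \emph{paradifferential} equation in all real $\H^r$. Here I would invoke the machinery already set up in Section~\ref{s:Strichartz}. The solutions $u$ produced above have $\partial g \in L^1 L^\infty$ on $[0,T_M]$ — indeed $\|\partial g\|_{L^1 L^\infty} \lesssim T_M^{1-1/2}\|\bD^{\delta_0}\partial^2 u\|_{L^2 L^\infty} \lesssim 1$ by \eqref{str-M3-ms} (with the analogous $L^4$ computation for $n=2$) and the chain rule, since $\partial g = (\partial g/\partial p)(\partial u)\cdot\partial^2 u$. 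Combined with the homogeneous $\H^1$ Strichartz estimate for \eqref{box-g-nodiv} from part (c), Proposition~\ref{p:Str-allr} upgrades this to homogeneous Strichartz estimates in all $\H^r$ for both paradifferential flows \eqref{Tbox-g-div} and \eqref{Tbox-g-nodiv}, and then the Corollary to Theorem~\ref{t:Str-move-around} (equivalently, Theorem~\ref{t:Str-move-around} applied forward and backward, using that forward and backward well-posedness coincide here) yields the full estimates \eqref{Str-full+}, hence \eqref{Str-full}, for all real $r$. The main obstacle is really bookkeeping: making sure the translation of Sobolev indices by one is done consistently, that the $L^1 L^\infty$ bound on $\partial g$ is genuinely uniform in $M$ (which it is, precisely because the Strichartz bounds \eqref{Str-M2-ms}–\eqref{str-M3-ms} are stated with the semiclassical weight $\langle T_M D'\rangle^{\delta_0}$ and the $T_M^{-1/2}$ or $T_M^{-3/4}$ scaling factor, so that the time integral recovers an $O(1)$ constant), and that the smooth-data/finite-speed-of-propagation reductions inside Theorem~\ref{t:ST-large} do not interfere with the paradifferential reformulation — but all of these are routine given the results already assembled.
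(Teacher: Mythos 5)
Your proposal is correct and takes essentially the same approach as the paper: apply Theorem~\ref{t:ST-large} to $\bu = \partial u$ to obtain parts (a), (b), and the first sentence of (c), and for the paradifferential Strichartz estimates observe that the short-time Strichartz bound for $\partial^2 u$ combined with H\"older's inequality gives $\partial g(\partial u) \in L^1([0,T_M];L^\infty)$ with uniform constant, after which Proposition~\ref{p:Str-allr} and the corollary to Theorem~\ref{t:Str-move-around} deliver the full paradifferential Strichartz estimates for all real $r$.
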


The theorem is obtained by applying the previous theorem to $\bu = \partial u$.
For the Strichartz estimates for the linear paradifferential equation we observe in addition that we have the bound
\[
\| \partial^2 g\|_{L^1(0,T_M; L^\infty)} \lesssim 1.
\]
Then the $r=1$ case of the Strichartz estimates for the linear equations \eqref{box-g-gen} together with Proposition~\ref{p:Str-allr}
imply the desired conclusion.

\section{Conclusion: proof of the main result}\label{s:final}

After using the finite speed of propagation to reduce to the small data problem, 
here we combine our balanced energy estimates with  the short time Strichartz bounds in order to complete the proof of our main result in Theorem~\ref{t:main}. 
Our rough solutions are constructed as limits of smooth solutions obtained 
by regularizing the initial data, so the emphasis is on obtaining favourable estimates for these smooth solutions.

\subsection{Reduction to small data.} 
 By Sobolev embeddings, the initial data satisfies 
\[
\| u_0\|_{C^{1,\sigma}} + \| u_1\|_{C^{\sigma}} \lesssim 1, \qquad \sigma = s -\frac{n}2-1.
\]
 Then given $x_0 \in \R^n$, within a small ball $B(x_0,4r)$ 
we have 
\[
|u_0(x) - (u_0(x_0)+(x-x_0)\partial u(x_0))| + | u_1 - u_1(x_0)| \lesssim r^{\sigma}.
\]
This allows us to truncate the above differences near $x_0$ to obtain the localized data
\[
\begin{aligned}
u_{0}^{r,x_0}(x) = & \  (u_0(x_0)+(x-x_0)\partial u(x_0)) + \chi(r^{-1}(x-x_0))
u_0(x) - (u_0(x_0)+(x-x_0)\partial u(x_0)),
\\
u_{1}^{r,x_0}(x) = & \ u_1(x_0) + \chi(r^{-1}(x-x_0)) (u_1 - u_1(x_0)) ,
\end{aligned}
\]
where $\chi \in \mathcal D(\R^n)$ is equal to $1$ in $B(0,2)$ and $0$ outside $B(0,4)$.

Let $\epsilon > 0$. Then for small enough $r$, depending on $\epsilon$, these initial data are close to the initial  data for the linear solution to the minimal surface equation given by
\[
\tu^{x_0,r}(t,x) = (u_0(x_0)+(x-x_0)\partial u(x_0)) + t u_1(x_0), 
\]
in the sense that
\begin{equation}\label{small-data}
\| u^{x_0,r}[0] - \tu^{x_0,r}[0] \|_{\H^s} \leq \epsilon \ll 1.
\end{equation}
This will be our smallness condition for the initial data, with 
$\partial \bu^{x_0,r}$ in a compact subset of the set described in \eqref{time-like-data}.

To reduce the problem to the case when the initial data satisfies instead the simpler smallness condition 
\begin{equation}\label{small-data+}
\| u^{x_0,r}[0]  \|_{\H^s} \leq \epsilon \ll 1
\end{equation}
it suffices to apply a linear transformation in the Minkowski space $\R^{n+2}$
which preserves the time slices but maps our linear solution $\tu^{x_0,r}$ to the zero solution. The price we pay for this is that the background Minkowski metric 
is then changed to another Lorentzian metric.  But the new metric belongs to a compact set in the space of flat Lorenzian metrics for which the time slices 
are uniformly space-like and the graph of the zero function is uniformly time-like.
Hence our small data result applies uniformly to these localized solutions, see Remark~\ref{r:all-Lorentz}.
Then, due to the finite speed of propagation, we also obtain solutions up to time $O(r)$ for the original problem.

\subsection{Uniform bounds for regularized solutions}
Let $s$ be as in Theorem~\ref{t:main}. Given an initial data $u[0] \in \H^s$ which is small,
\begin{equation}\label{small-data-last}
  \| u[0] \|_{\H^s} \leq \epsilon \ll 1,    
\end{equation}
we consider a continuous family of frequency localizations 
\[
u^h[0] = P_{<h}u[0]
\]
to frequencies $\leq 2^h$. For fixed $h$ and a short time which may depend on $h$, these solutions exist by Theorem~\ref{t:ST-large}. Further, they are smooth and also depend smoothly on $h$. Finally, we consider the functions
\[
v^h = \frac{d}{dh} u^h.
\]
These functions solve the linearized equation
around $u^h$, with initial data 
\begin{equation}
v^h[0]= P_h u[0],   
\end{equation}    
which is localized at frequency $2^h$.
The functions $v^h$ will be measured in $\H^\frac12$ in dimension $n\geq 3$ and in $\H^\frac58$ in dimension $n=2$.
Thus the initial data for $v^h$ satisfies the bound
\begin{equation}\label{vh-data}
\begin{aligned}
\|v^h[0]\|_{\H^\frac12} \lesssim 2^{-(s-\frac12)h} \epsilon    \qquad n \geq 3,
\\
\|v^h[0]\|_{\H^\frac58} \lesssim 2^{-(s-\frac58)h} \epsilon    \qquad n = 2.
\end{aligned}
\end{equation}

Our first objective will be to show that these  solutions exist on a time interval which does not depend on $h$, and satisfy uniform bounds:

\begin{theorem} \label{t:uh}
The above solutions $u^h$ have the following properties:
\begin{description}
\item[a) Uniform lifespan and uniform bounds]
The solutions $u^h$ exist up to time $1$, with uniform bounds
\begin{equation}\label{uh-s}
\| u^{h}[\cdot] \|_{C(0,1;\H^s)} \lesssim \epsilon,   
\end{equation}
and higher regularity bounds
\begin{equation}\label{uh-s+1}
\| u^{h}[\cdot] \|_{C(0,1;\H^{s+1})} \lesssim 2^h \epsilon.    
\end{equation}

\item[b) Bounds for the linearized flow] The linearized equation around $u^h$ is well-posed in $\H^\frac12$,
with uniform estimates in $[0,1]$, uniformly in $h$,
\begin{equation}
\begin{aligned}
\| v\|_{L^\infty(0,1;\H^\frac12)} \lesssim \|v[0]\|_{\H^\frac12}, 
\qquad n \geq 3,
\\
\| v\|_{L^\infty(0,1;\H^\frac58)} \lesssim \|v[0]\|_{\H^\frac58}, 
\qquad n =2,
\end{aligned}
\end{equation}
and uniform Strichartz estimates with loss of derivatives,
\begin{equation}
\begin{aligned}
\| \bD^{-\frac{n}2-\frac14-\delta} \partial v\|_{L^2 L^\infty} \lesssim  \|v[0]\|_{\H^\frac12}, \qquad n \geq 3,
\\
\| \bD^{-\frac{n}2-\frac14-\delta} \partial v\|_{L^4 L^\infty} \lesssim  \|v[0]\|_{\H^\frac58}, \qquad n = 2,
\end{aligned}
\end{equation}
for any $\delta > 0$.
\end{description}
\end{theorem}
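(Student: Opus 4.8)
The plan is to prove Theorem~\ref{t:uh} via a continuous induction (bootstrap) argument in the parameter $h$, treating the bounds for $u^h$ and for the increments $v^h$ simultaneously. First I would set up the bootstrap quantities: for $u^h$ these are the long-time Strichartz norms $\|u^h\|_{S^s_{AIT}} + \|\partial_t u^h\|_{S^{s-1}_{AIT}}$ together with the energy bounds \eqref{uh-s}, and for $v^h$ these are the $\H^\frac12$ (respectively $\H^\frac58$) energy and Strichartz bounds asserted in part (b). The bootstrap assumptions are taken to be these same quantities with the implicit constants enlarged by a fixed factor; the goal is to recover them with the original constants. The starting point is Theorem~\ref{t:ST-large-ms}: on short time intervals of length $T_M$ with $M \sim 2^{(s-s_c)h}\epsilon$, the short-time Strichartz estimates hold for $u^h$ and for the associated linear and paradifferential flows. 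Since $s > s_c + \frac14$ in higher dimensions (respectively $s > s_c+\frac38$ in $2d$), the loss exponent in the short-time Strichartz bound can be absorbed into the gain coming from interval summation, which is the key numerology underlying the whole argument.

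The core of the argument is then the interval-summation / continuity loop, carried out in three linked steps exactly as outlined in the paper's introduction. Step one: assuming the bootstrap bounds, partition $[0,1]$ into $O(2^{\alpha h})$ subintervals of length $\sim T_M$ adapted to the frequency $2^h$, apply the short-time Strichartz estimates of Theorem~\ref{t:ST-large-ms} for $u^h$ and $v^h$ on each subinterval, and sum up. The key point is that on each subinterval the Strichartz constant is controlled by $T_M$ to a negative power, but the number of intervals times that power, combined with the frequency localization of the data $v^h[0]$ at $2^h$ as in \eqref{vh-data}, produces the long-time Strichartz bounds for $u^h$ with a net gain in $\epsilon$. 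These long-time Strichartz bounds then feed into the balanced energy estimates: by Theorem~\ref{t:ee} for the full equation (which controls $\|u^h[\cdot]\|_{\H^s}$ once $\BB \in L^2_t$ with small norm — precisely what the Strichartz bound on $\bD^{1/2}\partial u^h$ in $L^2L^\infty$ respectively $L^4L^\infty$ provides) and by Theorem~\ref{t:para-wp} for the paradifferential equation, we close the first half of the bootstrap, namely the bounds on $u^h$. For the higher regularity bound \eqref{uh-s+1} one simply applies the $\H^{s+1}$ energy estimate from Theorem~\ref{t:ee}, noting the data is frequency-truncated at $2^h$.

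Step two and three close the second half of the loop, the bounds for $v^h$. Having established the long-time energy and Strichartz estimates for the linear paradifferential flow $T_{\hP}$ in $\H^\frac12$ ($\H^\frac58$ if $n=2$) — the energy part from Theorem~\ref{t:para-wp}, the Strichartz part by combining the short-time Strichartz bounds with the long-time paradifferential energy bounds via the same interval-summation mechanism as in step one (this is where Theorem~\ref{t:Str-move-around} on moving source terms around is used to handle the $f = \partial_t f_1 + f_2$ structure) — we invoke Theorem~\ref{t:linearized}. That theorem precisely says: given energy and Strichartz bounds for $u$ in $S^s_{AIT}$ and given well-posedness plus full Strichartz for the paradifferential flow, the homogeneous linearized flow \eqref{lin-inhom} is well-posed in $\H^\frac12$ ($\H^\frac58$) and satisfies the $S_{AIT}$ Strichartz estimates. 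Applying this to $u^h$ and $v^h$ yields exactly the conclusions of part (b), and in particular reproduces the uniform energy bound for $v^h$, closing the bootstrap.

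Finally one runs the continuity argument: the set of $h$ for which the enlarged bootstrap bounds hold on $[0,1]$ is nonempty (it contains small $h$, where the data is genuinely small in $\H^s$ and classical theory applies, or where Theorem~\ref{t:ST-large-ms} already gives time $1$), it is closed by the continuous dependence of smooth solutions on $h$, and it is open because on that set we have just shown the bounds actually hold with the smaller constants, leaving room to perturb $h$. Hence the bounds hold for all $h$, and letting $h \to \infty$ (with the $v^h$ bounds providing the telescoping control $u^{h_2} - u^{h_1} = \int_{h_1}^{h_2} v^h\, dh$ needed for convergence) gives the theorem. The main obstacle, and the place where all the hard analysis of the earlier sections is consumed, is ensuring that the interval-summation in step one actually gains rather than loses: one must verify that the number of subintervals $2^{\alpha h}$, the negative power $T_M^{-\beta}$ of the subinterval length in the short-time Strichartz constant, and the derivative gap between $s$ and the threshold $s_c + \frac14$ (resp. $s_c+\frac38$) are in the right quantitative relationship, uniformly in $h$, and simultaneously that the resulting $\BB$ norm is not merely finite but small in $L^2_t$ so that the balanced energy estimates of Theorems~\ref{t:ee} and \ref{t:para-wp} genuinely close rather than merely propagate with growth. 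This balancing of exponents, together with the bookkeeping needed to keep the paradifferential and linearized estimates compatible across the subinterval decomposition, is the delicate heart of the proof; everything else is assembly of the modules already proved.
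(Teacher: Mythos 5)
Your plan follows the same overall architecture as the paper's proof: a bootstrap in $h$, short-time Strichartz estimates from the rescaled Smith--Tataru theory (Theorem~\ref{t:ST-large-ms}), interval summation to obtain long-time Strichartz bounds and hence $\BB\in L^2_t$, the balanced energy estimates of Theorems~\ref{t:ee} and \ref{t:para-wp} to close the $u^h$ side, and then the paradifferential-to-linearized transfer of Theorem~\ref{t:linearized} to close the $v^h$ side. That chain of modules is exactly right.

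There are, however, a few issues with the bootstrap design worth flagging. First, your listed bootstrap quantities for $u^h$ are the $S^s_{AIT}$ norm and the $\H^s$ bound \eqref{uh-s}, but you omit the higher-regularity bound $\| u^{h}[\cdot]\|_{C(0,1;\H^{s+1})} \leq 2^h$ (the paper's \eqref{boot-uh-hi}). This is not cosmetic: the Smith--Tataru exponent $s_1$ lies strictly above the threshold $s_c+\frac12$ (resp. $s_c+\frac34$), so $s_1$ may exceed $s$, and the only way to bound $\|u^h(t)\|_{\H^{s_1}}\lesssim 2^{(s_1-s)h}$ at positive times --- which is what sets the size $M_h$ and hence the interval length $T_h$ in Theorem~\ref{t:ST-large-ms} --- is by interpolating the $\H^s$ bound with an $\H^{s+1}$ bound that has already been propagated in time. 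Without \eqref{boot-uh-hi} as a bootstrap assumption, you cannot even launch the short-time Strichartz step at times $t>0$. (Relatedly, your formula $M\sim 2^{(s-s_c)h}\epsilon$ should read $M_h\sim 2^{(s_1-s)h}$, coming from that interpolation.) Second, you promote the long-time $S^s_{AIT}$ norm of $u^h$ and the full $v^h$ Strichartz bounds to bootstrap quantities; the paper explicitly avoids this (it keeps only the three energy-type bounds \eqref{boot-uh}--\eqref{boot-vh}) precisely because those quantities are derivable from the energy bootstrap via the short-time theory, and carrying them as assumptions only complicates the continuity argument without buying anything. Third, the remark that $\BB$ must be not merely finite but small in $L^2_t$ is not quite right: the paper only obtains $\|\BB\|_{L^2}\lesssim 1$, and the bootstrap closes because the data is $O(\epsilon)$ and the resulting Gronwall factor $e^{C\|\BB\|_{L^2}^2}$ is a universal constant; the smallness comes from $\epsilon$, not from $\BB$.

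Apart from these bookkeeping points, the numerological heart of the argument --- that the number of subintervals $\sim T_h^{-1}$, the per-interval Strichartz constant $\sim T_h^{-1/2}$ (resp. $T_h^{-3/4}$), and the derivative margin of $s$ over the threshold must be reconciled uniformly in $h$ --- is correctly identified, and the modular use of Theorems~\ref{t:Str-move-around} and \ref{t:linearized} matches the paper's Propositions~\ref{p:Str-v} and \ref{p:Str-v-lin}.
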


The exponent $s+1$ in \eqref{uh-s+1} is chosen so that it falls into the range of existing theory, where we already have well-posedness and continuous dependence. We remark that,
as a corollary of part (b), we also obtain uniform bounds for the functions $v_h$,
namely 
\begin{equation}\label{vh-uniform}
\begin{aligned}
\| v^h\|_{L^\infty(0,1;\H^\frac12)} \lesssim \epsilon  2^{(s-\frac12)h}, 
\qquad n \geq 3,  
\\
\| v^h\|_{L^\infty(0,1;\H^\frac58)} \lesssim \epsilon  2^{(s-\frac58)h},
\qquad n = 2.
\end{aligned}
\end{equation}

\subsection{ The bootstrap assumptions}
Our proof of the main result in Theorem~\ref{t:uh} will be formulated as a bootstrap argument. Then the question is what is a good bootstrap assumption. Having the bounds
for the linearized equation as part of the bootstrap assumption would be technically
complicated. On the other hand, not having any assumptions at all related to the linearized equation would introduce too many difficulties in getting the argument started.
As it turns out, there is a good middle ground, which is to have the uniform energy bounds on both $u^h$ and $v^h$ as part of the bootstrap assumptions, which are then set as follows:
\begin{enumerate}[label=\roman*)]
\item Uniform $\H^s$ bounds:
\begin{equation}\label{boot-uh}
\| u^{h}[\cdot] \|_{C(0,1;\H^s)} \leq 1,    
\end{equation}
\item Higher regularity bounds:
\begin{equation}\label{boot-uh-hi}
\| u^{h}[\cdot] \|_{C(0,1;\H^{s+1})} \leq 2^h,     
\end{equation}
\item Difference bounds,
\begin{equation}\label{boot-vh}
\begin{aligned}
\| v_h\|_{L^\infty(0,1;\H^\frac12)} \leq  2^{-(s-\frac12)h}, \qquad n=3,
\\
\| v_h\|_{L^\infty(0,1;\H^\frac58)} \leq  2^{-(s-\frac58)h}, \qquad n=2.
\end{aligned}
\end{equation}
\end{enumerate}

The $v_h$ bootstrap bound will be useful in particular in order 
to obtain good low frequency bounds for differences of the $u^h$
functions,
\begin{equation}
\| u^h - u^k\|_{L^\infty(0,1;\H^\frac12)} \lesssim   2^{-(s-\frac12)h}, \qquad h \leq k,   
\end{equation}
with the obvious change in two dimensions.

To avoid having a bootstrap assumption on a noncompact set of functions, we may freely
restrict the range of $h$.
Precisely, given an arbitrary threshold $h_0$, we assume the bootstrap assumption to hold for all $h \leq h_0$ and show that the desired bounds hold in the same range. Since $h_0$
plays no role in the analysis, we will simply drop it in the proofs.

\subsection{ Short time Strichartz estimates for $u^h$ and $v^h$}
Our goal here is to use the results in Theorem~\ref{t:ST-large-ms} together with our 
bootstrap assumption in order to obtain short time Strichartz estimates for both 
$u^h$ and $v^h$.  

By the bootstrap assumptions \eqref{boot-uh} and \eqref{boot-uh-hi}, we may bound 
the local well-posedness norm $\H^{s_1}$ of the solution $u^h$ by 
\begin{equation}
\| u^h[\cdot]\|_{L^\infty \H^{s_1}} \lesssim M_h:= 2^{h(s_1-s)}.
\end{equation}
Then the result of Theorem~\ref{t:ST-large} is valid on time intervals $I_h$ of length
\[
|I_h| = T_h := M_h^{-\frac{1}{\sigma}} = 2^{-\frac{s_1- s}{s_1-s_c}h}.
\]
 In practice, $s_1$ will be chosen as close as possible to the threshold in \eqref{reg-s2},\eqref{reg-s3}. This will insure that in all dimensions we have
\[
\frac{s_1-s}{s_1-s_c} < \frac12.
\]

In particular, by Theorem~\ref{t:ST-large} it follows that the solution $u^h$ satisfies full Strichartz estimates 
on such intervals, 
\begin{equation} \label{uh-short3}
\| \bD^{1+\delta_0} \partial u^h\|_{L^2(I_h; L^\infty)} \lesssim T_h^{-\frac12}, \qquad n \geq 3,  \end{equation}
respectively 
\begin{equation}\label{uh-short2}
\| \bD^{1+\delta_0} \partial u^h\|_{L^4(I_h; L^\infty)} \lesssim T_h^{-\frac34}, \qquad n =2.  \end{equation}
Also the linearized problem and the linear paradifferential flow will be well-posed 
in $\H^\frac12$ and will satisfy Strichartz estimates on similar time intervals,
\begin{equation}\label{v-short3}
\| \bD^{-\frac{n}{2}-\delta} \partial v \|_{L^2(I_h; L^\infty)}  \lesssim    \| v\|_{L^\infty(I_h;\H^\frac12)}, \qquad n \geq 3,
\end{equation}
respectively
\begin{equation}\label{v-short2}
\| \bD^{-\frac{n}2-\frac18-\delta} \partial v \|_{L^4(I_h; L^\infty)}  \lesssim    \| v\|_{L^\infty(I_h;\H^\frac58)}, \qquad n =2,
\end{equation}
where the $L^\infty$ norm on the right may be replaced by the same $\H^\frac12$ norm evaluated at some fixed time within $I_h$. The last set of bounds may be in particular applied to $v^h$, which, in view of our bootstrap assumption, yields
\begin{equation}\label{vh-short3}
\| \bD^{-\frac{n}{2}-\delta} \partial v^h \|_{L^2(I_h; L^\infty)}  \lesssim  
2^{-(s-\frac12) h}, \qquad n \geq 3,
\end{equation}
respectively
\begin{equation}\label{vh-short2}
\| \bD^{-\frac{n}2 -\frac18-\delta} \partial v^h \|_{L^4(I_h; L^\infty)}  \lesssim 2^{-(s-\frac58) h}  , \qquad n =2.
\end{equation}

\subsection{ Long time Strichartz estimates for $u^h$ and $v^h$}
Our objective now is to obtain long time Strichartz bounds by simply adding up the short time bounds. Some care is needed when using \eqref{vh-short2} and \eqref{vh-short3} because,
as $h$ increases, we gain on one hand in the bound on the right, but we loose 
in the size of the interval $I_h$. However, the gain overrides the loss, so integrating in $h$ we arrive at the difference bound
\begin{equation}\label{du-short3}
\| \bD^{-\frac{n}{2}-\delta} \partial (u^h-u^k) \|_{L^2(I_h; L^\infty)}  \lesssim  
2^{-(s-\frac12) h}, \qquad n \geq 3, \quad h < k,
\end{equation}
respectively
\begin{equation}\label{du-short2}
\| \bD^{-\frac{n}{2}-\frac18-\delta} \partial (u^h-u^k) \|_{L^4(I_h; L^\infty)}  \lesssim 2^{-(s-\frac58) h}  , \qquad n =2, \quad h < k.
\end{equation}

Now we are able to obtain Strichartz bounds for $u^h$ on the full time interval $[0,1]$, simply by adding the short time bounds. Precisely, we claim that
for some small universal  $\delta_1 > 0$ we have 
\begin{equation}\label{Bdelta3}
\| \bD^{\frac12+\delta_1} P_k \partial u^h\|_{L^2(0,1; L^\infty)} \lesssim 1, \qquad n \geq 3,  \end{equation}
respectively 
\begin{equation}\label{Bdelta2}
\| \bD^{\frac12+\delta_1} P_k \partial u^h\|_{L^4(0,1; L^\infty)} \lesssim 1 \qquad n =2.  
\end{equation}
To see this, we differentiate cases depending on how $k$ and $h$ compare.
We fix the dimension to $n \geq 3$ for clarity.

\medskip

a) If $k \geq h$, then we simply apply \eqref{uh-short3} or \eqref{uh-short2}, taking the loss from the number of intervals. For instance in three and higher dimensions we get for $\delta_1 \leq \delta_0$
\[
\begin{aligned}
\| \bD^{\frac12+\delta_1} P_k \partial u^h\|_{L^2(0,1; L^\infty)} 
\lesssim & \  T_h^{-\frac12} \sup_{I_h} \| \bD^{\frac12+\delta_1} P_k \partial u^h\|_{L^2(I_h; L^\infty)} 
\\ \lesssim & \  T_h^{-\frac12} 2^{-\frac{k}2} \sup_{I_h} \| \bD^{1+\delta_0} P_k \partial u^h\|_{L^2(I_h; L^\infty)}
\\ \lesssim & \  T_h^{-\frac12} 2^{-\frac{h}2} T_h^{-\frac12} = 2^{(\frac{s_1-s}{s_1-s_c}   -\frac12 )h} \leq 1,
\end{aligned}
\]
for a favourable choice of $s_1$; for instance $s_1 = s+\frac14$ suffices,
as then $\frac{s_1-s}{s_1-s_c} < \frac12$. 
The two dimensional argument is similar. 

\medskip

b) if $k<h$ instead, then we first write 
\[
P_k u^h = P_k u^k + P_k(u^h-u^k). 
\]
Here the first term was already estimated before, while for the second we use
\eqref{du-short3} or \eqref{du-short2}, where the loss from the interval size is 
only in terms of $k$ and not $h$. In dimension three and higher this yields
\[
\begin{aligned}
\|\bD^{\frac12+\delta_1} P_k\partial (u^h-u^k)\|_{L^2(0,1; L^\infty)} \lesssim & \ T_k^{-\frac12}
\sup_{I_h} \| P_k\partial (u^h-u^k)\|_{L^2(0,1; L^\infty)} 
\\
\lesssim & \  T_k^{-\frac12} 2^{-(s-\frac12) k} 2^{(\frac{n+1}2+\delta)k}
= 2^{( \frac{s_1-s_c}{2\sigma} -(s-\frac12)+\frac{n+1}2+\delta)k}
\\
\lesssim & \ 2^{ (\frac{s_1-s_c}{2\sigma} - (s-s_c)+ \delta)k} \leq 1,
\end{aligned}
\]
again for a good choice of $s_1$ (same as above) and a small enough $\delta$.

In particular, the estimates \eqref{Bdelta3}, respectively \eqref{Bdelta2}
allow us to estimate our control parameter $\BB$ as follows:
\begin{equation}\label{BB3}
\| \BB\|_{L^2[0,1]} \lesssim 1, \qquad n \geq 3,    
\end{equation}
respectively 
\begin{equation}\label{BB2}
\| \BB\|_{L^4[0,1]} \lesssim 1, \qquad n =2    .
\end{equation}
This in turn allows us to use Theorem~\ref{t:ee} to control the energy growth 
for the full equation, and in particular to prove the bounds \eqref{uh-s} and \eqref{uh-s+1}, thus closing part of the bootstrap loop, namely for the bounds 
\eqref{boot-uh} and for \eqref{boot-uh-hi}.

\subsection{Strichartz estimates for the paradifferential flow}

Our objective here is to establish Strichartz estimates with loss of derivatives
for the linear paradifferential flow around $u^h$. Thus, we consider an $\H^\frac12$ solution $v$ for the paradifferential flow around $u^h$,
and we seek to estimate it dyadic pieces in the Strichartz norm, with frequency losses:

\begin{proposition}\label{p:Str-v}
Under the bootstrap assumptions \eqref{boot-uh}, \eqref{boot-uh-hi} and \eqref{boot-vh},
$\H^r$ solutions $v$ for the linear paradifferential equation 
\begin{equation}
\partial_\alpha T_{g^{\alpha\beta}(\partial u^h)} \partial_\beta v = f   
\end{equation}
satisfy the Strichartz estimates \eqref{Str-full+} with $S = S_{AIT}$ for all $r \in \R$.
\end{proposition}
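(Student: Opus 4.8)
\textbf{Proof plan for Proposition~\ref{p:Str-v}.}

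The plan is to reduce the long-time Strichartz estimate with loss to a sum of short-time almost-lossless estimates coming from Smith--Tataru, exactly as was done for the full equation $u^h$ in the previous subsections, but now for the paradifferential flow. First I would reduce to the homogeneous case $f=0$: by the abstract reduction in Section~\ref{s:Strichartz}, specifically Theorem~\ref{t:Str-move-around} together with Proposition~\ref{p:v-vs-bv-hom} and Proposition~\ref{p:dual+}, it suffices to prove the homogeneous estimate \eqref{Str-hom} with $S = S_{AIT}$ forward in $\H^r$ and backward in $\H^{1-r}$; the duality argument of Section~\ref{s:Strichartz} handles the backward problem, and since the paradifferential flow is well-posed in $\H^r$ for all real $r$ (Theorem~\ref{t:para-wp}), one then obtains the full form \eqref{Str-full+}. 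Next, I would reduce to a single value of $r$: by Proposition~\ref{p:Str-allr}, once we know the $\H^1$ Strichartz estimate for the homogeneous second-order equation $g^{\alpha\beta}(\partial u^h)\partial_\alpha\partial_\beta v = 0$ on a time interval together with the bound $\|\partial g\|_{L^1 L^\infty}$ on that interval, we get the $\H^r$ paradifferential Strichartz estimates for all real $r$; here the needed $\|\partial g\|_{L^1 L^\infty}$ bound follows from \eqref{BB3}, \eqref{BB2} and the bootstrap bounds, since $\partial g = h(\partial u^h)\,\partial^2 u^h$ and we control $\partial^2 u^h$ in the appropriate $L^2$ or $L^4$ spaces via \eqref{uh-short3}, \eqref{uh-short2}.

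With these reductions in hand, the core of the argument is the interval-summation step. On each short interval $I_h$ of length $T_h = 2^{-\frac{s_1-s}{s_1-s_c}h}$, Theorem~\ref{t:ST-large-ms}(c) gives the full Strichartz estimates \eqref{Str-full} with $S = S_{ST}$ (almost lossless) for the linear paradifferential flow around $u^h$, in $\H^r$ for all real $r$, with constants uniform in the choice of interval. I would then glue these short-time estimates across $[0,1]$: for a frequency-$2^k$ piece $P_k v$, the number of intervals is $\sim T_h^{-1}$, which contributes a loss of $T_h^{-1/2}$ in the $L^2_t$ Strichartz norm (respectively $T_h^{-3/4}$ in $L^4_t$ when $n=2$); this loss is absorbed at the cost of $2^{-\eta k}$ derivatives for a suitable $\eta$, using that the almost-lossless bound carries extra powers of $2^k$ to spare. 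This is precisely the bookkeeping carried out for $u^h$ in \eqref{Bdelta3}, \eqref{Bdelta2}; the arithmetic is identical, with the choice $s_1 = s + \tfrac14$ (or any $s_1$ with $\frac{s_1-s}{s_1-s_c} < \frac12$) ensuring the exponents work out so that the $L^2_t$ (respectively $L^4_t$) loss of $\tfrac14$ derivatives in $n\geq 3$ (respectively $\tfrac18$ in $n=2$) suffices, which is exactly the loss built into $S_{AIT}$. One must be slightly careful with the high-frequency range $k \geq h$ versus the low-frequency range $k < h$: in the latter, following the $u^h$ argument, one does not pay the full $T_h^{-1}$ per-interval loss, since the relevant per-interval size is governed by $k$ rather than $h$; here the boundedness of the paracoefficients $T_{g^{\alpha\beta}}$ (uniformly, using only $\AA \ll 1$ and continuity of $g$, see the remark after Proposition~\ref{p:Str-allr}) lets us freely localize and split as in \eqref{du-short3}, \eqref{du-short2} without creating obstructions.

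The main obstacle I expect is not any single estimate but the careful verification that the hypotheses of Theorem~\ref{t:ST-large-ms} and of Proposition~\ref{p:Str-allr} hold uniformly on each $I_h$ under only the bootstrap assumptions: one needs $\|u^h[0]\|_{\dot\H^{s_1}} \ll M_h = 2^{h(s_1-s)}$ and $\|u^h[0]\|_{\dot\H^{s_c}}\ll 1$, which come from \eqref{boot-uh}, \eqref{boot-uh-hi} via interpolation, and one needs the $\|\partial g\|_{L^1(I_h;L^\infty)}$ bound to be $O(1)$ uniformly (not merely $O(T_h^{-1})$), which requires using the already-established long-time bound \eqref{BB3}/\eqref{BB2} rather than the crude short-time bound --- so this subsection must be placed after the long-time Strichartz estimates for $u^h$ are closed, as it indeed is in the bootstrap ordering. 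A secondary technical point is the passage between the second-order equation $g^{\alpha\beta}\partial_\alpha\partial_\beta v = 0$ used in Theorem~\ref{t:ST-large-ms}(c) and the divergence-form paradifferential flow $\partial_\alpha T_{g^{\alpha\beta}}\partial_\beta v = f$ appearing in the statement: this is exactly the content of Proposition~\ref{p:Str-allr} (Steps 1--4 there, commuting $T_{g^{\alpha\beta}}$ through $\partial_\alpha$ and inverting $T_{g^{00}}$ perturbatively), so it is already available. Once these verifications are done, the interval summation is routine and yields \eqref{Str-full+} with $S = S_{AIT}$, completing the proof.
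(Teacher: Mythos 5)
Your plan is correct in outline and matches the paper's strategy for the reductions (using Theorem~\ref{t:Str-move-around} to pass to the homogeneous problem, energy estimates from Theorem~\ref{t:para-wp}, and short-time almost-lossless Strichartz from Theorem~\ref{t:ST-large-ms}(c) plus interval summation). It also handles the high-frequency regime $k \geq h$ correctly. However, there is a genuine gap in how you treat the low-frequency regime $k < h$, and it is precisely there that the proof has real content.

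You observe that for $k < h$ one should ``not pay the full $T_h^{-1}$ per-interval loss'' because ``the relevant per-interval size is governed by $k$ rather than $h$,'' and then appeal vaguely to localizing and splitting ``as in \eqref{du-short3}, \eqref{du-short2}.'' But Theorem~\ref{t:ST-large-ms}(c) only produces the almost-lossless paradifferential Strichartz estimates on intervals of length $T_h$, because the hypothesis $\|u^h[0]\|_{\dot\H^{s_1}} \ll M$ with $M = M_h = 2^{h(s_1-s)}$ is what determines the interval length. There is no direct way to lengthen that interval to $T_k$ while still referring to the metric $g(\partial u^h)$. Moreover, unlike the argument for $u^h$ itself, the solution $v$ is a single function, not a family indexed by $h$, so there is no analog of $P_k u^h = P_k u^k + P_k(u^h-u^k)$ available.

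What the paper actually does for $k < h$ is a perturbation argument: it views $P_k v$ as an \emph{approximate solution} of the linear paradifferential equation around the lower-frequency regularization $u^k$, writing
\[
T_{\tP(u^k)} P_k v = [T_{\tP(u^k)}, P_k] v + P_k \partial_\alpha T_{g^{\alpha\beta}(\partial u^h) - g^{\alpha\beta}(\partial u^k)} \partial_\beta v,
\]
so that the almost-lossless Strichartz on intervals of length $T_k$ (now available for the $u^k$ flow from Theorem~\ref{t:ST-large-ms}(c)) can be invoked. The source terms must then be shown to be perturbative in $L^1(I_k;H^{-1/2})$: the commutator is controlled by $\|\partial^2 u^k\|_{L^1(I_k;L^\infty)}\lesssim 1$ coming from \eqref{uh-short3}/\eqref{uh-short2}, and the metric difference $g(\partial u^h) - g(\partial u^k)$ is expanded as $\int_k^h g'(\partial u^j)\,\partial v^j\,dj$ and controlled by the key estimate \eqref{want-vj}, whose proof requires a two-case frequency analysis combining the bootstrap bound \eqref{boot-vh} with the short-time Strichartz \eqref{vh-short3}/\eqref{vh-short2} for $v^j$. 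This comparison between the $T_{P(u^h)}$ and $T_{P(u^k)}$ flows, and the use of the $v^j$ Strichartz estimates to control the metric difference, is the essential missing step in your proposal; without it, the interval summation at low frequencies does not close.
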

Compared with the full Strichartz bounds, here we have a loss of $1/4$ derivative in dimension $3$ and higher, respectively $1/8$ derivative in dimension $2$.

\begin{proof}
Our starting point is Theorem~\ref{t:Str-move-around}, which allows us to reduce the problem 
to proving the homogeneous Strichartz estimates \eqref{Str-hom} for the corresponding homogeneous equation, again for all real $r$. To prove the proposition in this case, we have two tools at our disposal:

\begin{enumerate}[label=(\roman*)]
\item The energy estimates of Theorem~\ref{t:para-wp}. In view of the bounds \eqref{BB2}
and \eqref{BB3}, these give
uniform $\H^r$ bounds for $v$,
\[
\| v \|_{L^\infty \H^r} \lesssim \|v[0]\|_{\H^r}.
\]

\item The short time Strichartz estimates \eqref{Str-hom} with $S=S_{ST}$ on the $T_h$ time scale,
provided by Theorem~\ref{t:ST-large-ms}. Adding these with respect to the time intervals, 
we arrive at
\begin{equation}\label{v-long3-}
\| |D|^{-\frac{d}{2}-\delta} \partial v \|_{L^2(0,1; L^\infty)}  \lesssim  T_h^{-\frac12}  \| v[0]\|_{\H^\frac12}, \qquad n \geq 3,
\end{equation}
respectively
\begin{equation}\label{v-long2-}
\| |D|^{-\frac98-\delta} \partial v \|_{L^4(I_h; L^\infty)}  \lesssim   T_h^{-\frac14} \| v[0]\|_{\H^\frac58}, \qquad n =2.
\end{equation}
\end{enumerate}
\bigskip
Now we want to use these tools in order to prove the long term bounds \eqref{Str-hom} with $S=S_{AIT}$ on the unit time scale.
Given the expression for $T_h$, our first observation 
is that the estimates \eqref{v-long3-}, respectively \eqref{v-long2-}
suffice for our bounds at frequencies $ \geq h$, but not below that.

Thus, consider a lower frequency $k < h$, and seek to estimate $P_k v$. At this frequency, we have the correct estimate for the solution $\tv$ to the linear
paradifferential equation around $u_k$. It remains to compare $v$ and $\tv$. For this we use the $T_{\tP(u^k)}$ flow, and we think of $P_k v$
as an approximate solution for this flow,
\[
T_{P(u^k)} P_k v = [T_{P(u^k)},P_k] v
+ P_k
\partial_\alpha T_{g^{\alpha\beta}_h - g^{\alpha \beta}_k}
\partial_\beta v.
\]
We can bound the source terms  as follows, fixing the dimension to $n \geq 3$:
\[
\| T_{P(u^k)} P_k v \|_{L^1(I_k,H^{-\frac12})}
\lesssim \left(\| \partial^2 u^k \|_{L^1(I_k, L^\infty)}
+ 2^k \| P_{<k} (g(\partial u^h) - g(\partial u^k)) \|_{L^1(I_k,L^\infty)}\right)
\| v\|_{L^\infty \H^\frac12}.
\]
To conclude it suffices to estimate 
\begin{equation}\label{str-uk}
\|  \partial^2 u^k \|_{L^1(I_k, L^\infty)} \lesssim 1 ,  
\end{equation}
\begin{equation}\label{str-duk}
\|  P_{<k} (g(\partial u^h) - g(\partial u^k)) \|_{L^1(I_k,L^\infty)}
\lesssim 2^{-k}.
\end{equation}
The first bound follows from our earlier Strichartz estimates for $u^k$, see \eqref{uh-short2}, \eqref{uh-short3}. For the second bound,
we expand and then it suffices to have 
\begin{equation}\label{want-vj}
\|  P_{<k} (g'(\partial u^j) \partial v^j) \|_{L^1(I_k,L^\infty)} \lesssim 2^{-k} 2^{-c(j-k)}, \qquad j > k,
\end{equation}
with a positive constant $c$ in order to allow for integration in $j$. 
We expand paradifferentially, depending on the frequencies of the two factors above.
It suffices to consider the following two cases:

\medskip

a) $v^j$ has the frequency below $2^k$. Then we use the Strichartz bounds for $v_j$ over intervals $I_j$, and then sum over such intervals. For instance in dimension $n \geq 3$ we get
\[
\begin{aligned}
\| P_{<k} \partial v_j\|_{L^1(I_k;L^\infty)}
\lesssim & \ |I_k| |I_j|^{-\frac12} \sup_{I_j} \| P_{<k} \partial v_j\|_{L^2(I_j;L^\infty)}
\\
\lesssim & \ 2^{(\frac{n}2+\delta) k} 2^{-(s-\frac12) j}
\frac{|I_k|}{|I_j|} |I_j|^\frac12
\\
= & \ 2^{[\frac{n}2+\delta - \frac{s_1-s}{s_1-s_c}]  k} 
2^{-(s-\frac12 - \frac{s_1-s}{2(s_1-s_c)}) j} 
\\
= & \  2^{-k} 2^{[(s_c-s)(1-  \frac{1}{2(s_1-s_c)}) +\delta ]k }
2^{-(s-\frac12 - \frac{s_1-s}{2(s_1-s_c)}) (j-k)}.
\end{aligned}
\]
Here the coefficient of $j-k$ is negative by a large margin, 
while the coefficient of $k$ in the middle factor is also negative 
since $s_1 - s_c > \frac12$ and $\delta$ is arbitrarily small.
Hence we obtain a bound as desired in \eqref{want-vj}.
\medskip

b) The balanced case, where both frequencies have size $2^l$ with $l \geq k$.
This is easier, as we have a better energy bound for the first factor.
Hence in this case it is more efficient to estimate the output by applying Bernstein's inequality first,
\[
\begin{aligned}
\| P_{<k} [P_{l}g'(\partial u^j) P_l \partial v^j] \|_{L^1(I_k,L^\infty)} \lesssim & \  
|I_k|^\frac12 2^{\frac{nk}2}  \| P_{l}g'(\partial u^j) \|_{L^\infty(I_k,L^2)}
\|P_l \partial v^j \|_{L^2(I_k,L^\infty)} 
\\ 
\lesssim & \  
|I_k| |I_j|^{-\frac12} 2^{\frac{nk}2}  \| P_{l}g'(\partial u^j) \|_{L^\infty L^2}
\sup_{I_j}\|P_l \partial v^j \|_{L^2(I_j,L^\infty)} 
\\ 
\lesssim & \ |I_k| |I_j|^{-\frac12} 
2^{\frac{nk}2} 2^{-(s-1) l} 2^{(\frac{n}2+\delta) l} 2^{-(s-\frac12) j}
\\
= & \  2^{[\frac{n}2 - \frac{s_1-s}{s_1-s_c}]  k}   2^{(\frac{n}2-s+1+\delta) l} 
2^{-(s-\frac12 - \frac{s_1-s}{2(s_1-s_c)}) j}
\\
= & \  2^{-k} 2^{[(s_c-s)(2 -  \frac{1}{2(s_1-s_c)})+\delta]  k}   2^{(\frac{n}2-s+1+\delta) (l-k)} 
2^{-(s-\frac12 - \frac{s_1-s}{2(s_1-s_c)}) (j-k)},
\end{aligned}
\]
which is better than in case (a).
\end{proof}

\subsection{Strichartz estimates for the linearized flow}

Our aim here is to show that that we have $\H^\frac12$  well-posedness and Strichartz estimates with loss of derivatives for the linearized flow around $u^h$ in $\H^\frac12$ (respectively $\H^\frac58$ if $n=2$):

\begin{proposition}\label{p:Str-v-lin}
Under the bootstrap assumptions \eqref{boot-uh}, \eqref{boot-uh-hi} and \eqref{boot-vh}, the linearized equation around $u^h$ is well-posed in 
$\H^\frac12$ (respectively $\H^\frac58$ if $n=2$), and its solutions
satisfy the full Strichartz estimates \eqref{Str-full+} with $S = S_{AIT}$.
\end{proposition}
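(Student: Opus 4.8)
The plan is to deduce the Proposition directly from Theorem~\ref{t:linearized}, applied to the regularized solution $u^h$, once its two hypotheses are verified. Since the conclusion of Theorem~\ref{t:linearized} concerns only the space $\H^\frac12$ (resp.\ $\H^\frac58$ when $n=2$), we are free to invoke that theorem with an auxiliary exponent $s' \in (s_c+\tfrac14, s]$ (resp.\ $s' \in (s_c+\tfrac38, s]$ for $n=2$) which we may take as close to the threshold as convenient; this is what gives us the room needed in the bookkeeping below.

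\emph{Verifying the $S_{AIT}$ bound for $u^h$.} We must check $\|u^h\|_{S^{s'}_{AIT}} + \|\partial_t u^h\|_{S^{s'-1}_{AIT}} \ll 1$, uniformly in $h$. The $L^\infty H^{s'}$ and $L^\infty H^{s'-1}$ components are immediate from the energy bound \eqref{uh-s}, which has already been established at this point in the argument. For the Strichartz components, both reduce, after using the equation to write $\partial_t^2 u^h = h(\partial u^h)\,\partial_x\partial u^h$, to controlling $\bD^{s'-s_c+\frac14-\delta}\partial u^h$ in $L^2 L^\infty$ for $n\geq 3$, resp.\ $\bD^{s'-s_c+\frac18-\delta}\partial u^h$ in $L^4 L^\infty$ for $n=2$. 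Summing the per-frequency long-time Strichartz bounds \eqref{Bdelta3}, resp.\ \eqref{Bdelta2}, over the frequencies $\leq 2^h$ present in $u^h$ produces a geometric series in $k$ whose ratio is $2^{(s'-s_c)-\frac14-\delta-\delta_1}$, resp.\ $2^{(s'-s_c)-\frac38-\delta-\delta_1}$; choosing $s'$ close enough to the threshold makes this ratio negative, so the sum converges uniformly in $h$ and is dominated by the low frequencies, where $\partial u^h$ is controlled by the $\epsilon$-small energy of the data from \eqref{small-data-last}. This yields the required smallness.

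\emph{Verifying the paradifferential hypothesis and assembling.} Theorem~\ref{t:linearized} also requires that the linear paradifferential flow around $u^h$ be well-posed in $\H^\frac12$ (resp.\ $\H^\frac58$) and satisfy the full Strichartz estimates \eqref{Str-full+} with $S=S_{AIT}$; this is exactly Proposition~\ref{p:Str-v} specialized to $r=\frac12$ (resp.\ $r=\frac58$), just proved under the same bootstrap assumptions \eqref{boot-uh}, \eqref{boot-uh-hi}, \eqref{boot-vh}. With both hypotheses in hand, Theorem~\ref{t:linearized} gives well-posedness of the linearized equation \eqref{lin-inhom} around $u^h$ in $\H^\frac12$ (resp.\ $\H^\frac58$) together with the homogeneous Strichartz estimates \eqref{Str-hom} with $S=S_{AIT}$. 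To pass to the full inhomogeneous bound \eqref{Str-full+}: in dimension $n\geq 3$ the equation is in divergence form with $r=\frac12=1-r$, so forward well-posedness in $\H^\frac12$ is equivalent, by the duality argument of Section~\ref{s:Strichartz}, to backward well-posedness in $\H^\frac12$, and the homogeneous Strichartz estimates hold for both time directions by time reversal; Theorem~\ref{t:Str-move-around} then delivers \eqref{Str-full+}. In dimension $n=2$ only the homogeneous estimate is needed in the sequel; the inhomogeneous extension follows by the minor additional argument indicated after Theorem~\ref{t:linearized}, combining the dual Strichartz bounds of Proposition~\ref{p:dual+} with the Christ--Kiselev lemma, which is available because of the $\delta$-loss built into $S_{AIT}$.

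The main obstacle is the bookkeeping in the first step: the long-time Strichartz bounds \eqref{Bdelta3}, \eqref{Bdelta2} are available only frequency-by-frequency, whereas the $S_{AIT}$ norm entering the hypothesis of Theorem~\ref{t:linearized} sums over frequencies, so one is forced to work at an exponent $s'$ near the threshold and to track carefully the interplay between $\epsilon$, the loss exponents $\delta,\delta_1$, and the frequency envelopes in order to recover both convergence of the dyadic sum and the needed smallness. Once this is done, applying the Proposition to $v=v^h$ (whose data satisfies \eqref{vh-data}) yields the uniform bounds \eqref{vh-uniform}, closing the remaining half of the bootstrap loop.
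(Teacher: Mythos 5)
Your proposal matches the paper's argument: the paper proves this proposition in two sentences, by invoking Theorem~\ref{t:linearized} with Proposition~\ref{p:Str-v} as the paradifferential input, which is exactly your route. You are more explicit than the paper in verifying the $S_{AIT}$ hypothesis on $u^h$ (which the paper leaves implicit) and in discussing the homogeneous-to-full upgrade via duality and Theorem~\ref{t:Str-move-around}, but the structure of the argument is the same.
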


Here we use the analysis in Section~\ref{s:linearized}. Precisely Theorem~\ref{t:linearized} there shows that the above proposition follows 
directly from the similar result in Proposition~\ref{p:Str-v} for the 
linear paradifferential equation.

\subsection{ Closing the bootstrap argument} 
Combining the Strichartz estimates for the linear paradifferential equation in 
Proposition~\ref{p:Str-v} with the result of Theorem~\ref{t:linearized}, 
it follows that the linearized flow around $u^h$ is wellposed in $\H^\frac12$ (respectively $\H^\frac58$ if $n=2$), with 
the same Strichartz estimates as in Proposition~\ref{p:Str-v}, which is exactly part (b) of Theorem~\ref{t:uh}. As a consequence,
the initial data bound \eqref{vh-data} for $v$ implies the uniform bound \eqref{vh-uniform}, which in turn closes the bootstrap assumption \eqref{boot-vh}.

\subsection{The well-posedness result}

In order to be able to obtain a complete well-posedness argument, we follow the outline in \cite{IT-primer}, and  measure the size of the functions $u^h$ and $v^h$ in terms of frequency envelopes. Precisely, we consider a normalized frequency envelope $\epsilon c_h$ for $u[0]$ in $\H^s$.
Then for the localized initial data we have the bounds
\begin{equation}
\| u^h[0]\|_{\H^s} \lesssim \epsilon,    
\end{equation}
\begin{equation}
\| u^h[0]\|_{\H^{s+1}} \lesssim 2^h \epsilon c_h.    
\end{equation}
On the other hand, fixing the dimension to $n \geq 3$, we will measure $v^h$ in $\H^{\frac12}$, where for the 
initial data we have 
\begin{equation}
\begin{aligned}
\|v^h[0]\|_{\H^\frac12} \lesssim 2^{-(s-\frac12)h} c_h, \qquad n\geq 3.
\end{aligned}
\end{equation}

Then by Theorem~\ref{t:uh}, we obtain corresponding uniform bounds for the solutions on the time interval $[0,1]$,

\begin{equation}\label{uh-uniform}
\| u^h[\cdot]\|_{L^\infty(0.1;\H^s)} \lesssim \epsilon,    
\end{equation}
\begin{equation}\label{uh-unif+1}
\| u^h[\cdot]\|_{L^\infty(0,1;\H^{s+1})} \lesssim 2^h \epsilon c_h.    
\end{equation}
Similarly, the linearized increments $v_h$ satisfy the uniform bounds 
\begin{equation}\label{vh-unif}
\| v_h\|_{L^\infty(0,1;\H^\frac12)} \lesssim \epsilon  2^{(s-\frac12)s} c_h.  
\end{equation}

Integrating the last bound with respect to $h$, we obtain the difference bounds
\begin{equation}\label{uh-diff}
\| u^h - u^k\|_{L^\infty(0,1;\H^\frac12)} \lesssim \epsilon  2^{(s-\frac12)s} c_h, \qquad h <k.  
\end{equation}
This implies that the limit 
\[
u = \lim_{h \to \infty} u^h
\]
exists in $C(0,1;\H^\frac12)$. In view of \eqref{uh-uniform},
the limit $u$ will also satisfy 
\[
\| u\|_{L^\infty(0,1;\H^s)} \lesssim \epsilon.
\]
We can also prove that we have the previous convergence in this stronger topology. To see this, we consider unit increments in $h$, and compare $u_h$ with $u_{h+1}$, using \eqref{uh-unif+1}
on one hand, and \eqref{uh-diff} on the other hand.
This yields
\begin{equation}\label{uh-diff-high}
\| u^h - u^{h+1}\|_{C(0,1;\H^{s+1})} \lesssim 2^h \epsilon c_h,    
\end{equation}
respectively 
\begin{equation}\label{uh-diff-lo}
\| u^h - u^{h+1}\|_{C(0,1;\H^\frac12)} \lesssim \epsilon  2^{(s-\frac12)s} c_h, \qquad h <k.  
\end{equation}
These two bounds balance exactly at frequency $2^h$,
and measure the $\H^s$ norm but with decay away from frequency $2^k$. Hence the differences are almost orthogonal in $\H^s$,
and, summing them up, we obtain
\begin{equation}\label{uh-diff=}
\| u^h - u^{k}\|_{C(0,1;\H^{s})} \lesssim  \epsilon c_{[h,k]}.    \end{equation}
This implies uniform convergence in $\H^s$. Thus our solution 
$u$ is uniquely identified as the strong $\H^s$ uniform limit of $u^h$.

The continuous dependence and the weak Lipschitz dependence follow exactly as in \cite{IT-primer}.

\bibliographystyle{plain}

\end{document}